\documentclass{amsart}
\usepackage{amssymb,mathtools}
\usepackage[margin=0.9in]{geometry}
\usepackage{graphics}
\usepackage{hyperref}
\usepackage[capitalize]{cleveref}
\usepackage{tikz}
\usepackage{tikz-cd}
\usepackage{bm}
\usetikzlibrary{decorations.markings}
\usepackage{nccmath}

\makeatletter
\def\part{%
  \@startsection{part}
  {0}
  {\z@}
  {\linespacing\@plus\linespacing}
  {.5\linespacing}
  {\let\@secnumfont\relax\normalfont\Large\bfseries\raggedright}%
}
\makeatother

\newtheorem{conjecture}{Conjecture}
\newtheorem{theorem}[conjecture]{Theorem}
\newtheorem{lemma}[conjecture]{Lemma}
\newtheorem{proposition}[conjecture]{Proposition}
\newtheorem{corollary}[conjecture]{Corollary}
\newtheorem{definition}[conjecture]{Definition}
\newtheorem{defn}[conjecture]{Definition}

\newtheorem{problem}[conjecture]{Problem}

\newtheorem{remark}[conjecture]{Remark}

\newtheorem{example}[conjecture]{Example}
\def\bT{{\overline{\T}}}
\def\dL{{\mathfrak L}}

\def\tSigma{\Sigma'}
\def\one{{\bf 1}}
\def\Trop{{\rm Trop}}
\def\O{{\mathcal{O}}}
\def\Res{{\rm Res}}
\def\res{{\rm res}}
\def\P{{\mathbb P}}
\def\Q{{\mathbb Q}}
\def\Z{{\mathbb Z}}
\def\C{{\mathbb C}}

\def\build{{\mathcal{G}}}
\def\Gr{{\rm Gr}}
\def\R{{\mathbb R}}

\def\A{{\mathcal{A}}}
\def\bA{{\bar {\mathcal{A}}}}
\def\M{{\mathcal{M}}}
\def\F{{\mathcal{F}}}
\def\bU{{\bar U}}
\def\N{N}
\def\0{{\hat 0}}

\def\T{{\mathcal{T}}}

\def\B{{\mathcal{B}}}
\def\tilM{{\widetilde{M}}}
\def\tM{{\widetilde\M}}
\def\L{{\mathcal{L}}}
\def\Frac{{\rm Frac}}
\def\dlog{{\rm dlog}}
\def\reg{{\rm reg}}
\def\dR{{\rm dR}}
\def\Spec{{\rm Spec}}

\def\I{{\mathcal{I}}}

\def\tE{{\widetilde E}}

\def\Ker{{\rm Ker}}
\def\sp{{\rm span}}
\def\rk{{\rm rk}}
\def\del{\kern-0.8pt{\setminus}\kern-0.8pt}
\def\v{{\mathbf{v}}}
\def\Fl{{\rm Fl}}
\def\A{{\mathcal{A}}}
\def\rOS{{\bar\A}}
\def\B{{\mathcal{B}}}
\def\I{{\mathcal{I}}}
\newcommand\ip[1]{\langle #1 \rangle}

\def\x{{\mathbf{x}}}
\def\y{{\mathbf{y}}}
\def\v{{\mathbf{v}}}
\def\u{{\mathbf{u}}}
\def\bchi{{\bar \chi}}

\def\nbc{{\textbf{nbc}}}
\def\a{{\mathbf{a}}}
\def\b{{\mathbf{b}}}

\def\bE{{\bar E}}
\def\bM{{\overline{\M}}}
\def\be{{\bar e}}
\def\OS{{\rm OS}}
\def\lf{{\rm lf}}
\def\bOmega{{\overline{\Omega}}}
\def\bP{{\overline{P}}}
\def\bQ{{\overline{Q}}}
\def\tb{{\tilde b}}

\def\codim{{\rm codim}}

\def\bG{{\overline{G}}}
\def\rOS{{\bar A}}
\def\OS{A}
\def\Int{{\rm Int}}
\def\an{{\rm an}}
\def\At{{\mathfrak{A}}}
\def\atom{c}
\def\Hom{{\rm Hom}}
\def\be{{\bar e}}
\def\bomega{{\bar \omega}}
\def\minL{\hat {\mathfrak o}}
\def\pFl{{\Delta}}
\def\sep{{\rm sep}}
\def\flip{\tilde}
\def\emptyflag{\varnothing}

\def\image{{\rm image}}
\def\z{{\mathbf{z}}}
\def\F{{\mathcal{F}}}
\def\tF{{\tilde \F}}

\newcommand\bip[1]{{\overline{\langle #1 \rangle}}}
\newcommand\gBip[1]{{\langle #1 \rangle}_{\L}}
\newcommand\gDBip[1]{{\langle #1 \rangle}^{\L}}

\newcommand\dRip[1]{\langle #1 \rangle^{\dR}}
\newcommand\gdRip[1]{\langle #1 \rangle^{\nabla}}
\newcommand\gDdRip[1]{\langle #1 \rangle_{\nabla}}
\newcommand\dRipp[1]{\langle #1 \rangle^{\dR'}}
\newcommand\bdRip[1]{\overline{\langle #1 \rangle}^{\dR}}
\newcommand\DdRip[1]{\langle #1 \rangle_{\dR}}

\newcommand\halfip[1]{\langle #1 \rangle}

\numberwithin{conjecture}{section}
\numberwithin{equation}{section}

\author{Thomas Lam}
\address{Department of Mathematics, University of Michigan, 2074 East Hall, 530 Church Street, Ann Arbor, MI 48109-1043, USA}
\email{\href{mailto:tfylam@umich.edu}{tfylam@umich.edu}}

\begin{document}
\begin{abstract}
In the 1990s, Kita--Yoshida and Cho--Matsumoto introduced intersection forms on the twisted (co)homologies of hyperplane arrangement complements.  We give a closed combinatorial formula for these intersection pairings.  We show that these intersection pairings are obtained from (continuous and discrete) Laplace transforms of subfans of the Bergman fan of the associated matroid.  We compute inverses of these intersection pairings, allowing us to identify (variants of) these intersection forms with the contravariant form of Schechtman--Varchenko, and the bilinear form of Varchenko.

Building on parallel joint work with C. Eur, we define a notion of scattering amplitudes for matroids.  We show that matroid amplitudes satisfy locality and unitarity, and recover biadjoint scalar amplitudes in the case of the complete graphic matroid.  We apply our formulae for twisted intersection forms to deduce old and new formulae for scattering amplitudes.
\end{abstract}
\title{Matroids and amplitudes}
\maketitle

\setcounter{tocdepth}{1}
\tableofcontents
\section{Introduction}
The theory of hyperplane arrangements is one of the central topics in algebraic combinatorics and combinatorial algebraic geometry.
Let $\bA = \{H_1,H_2,\ldots,H_n\} \subset \P^d$ denote an arrangement of hyperplanes in complex projective space, and let $\bU:= \P^d \setminus \bA$ denote the hyperplane arrangement complement.  Brieskorn \cite{Brie}, following ideas of Arnold, showed that the cohomology ring $H^*(\bU)$ is generated by the classes of the 1-forms $df_j/f_j$, where $f_j$ is a linear function cutting out the hyperplane $H_j$.  Orlik and Solomon \cite{OS} subsequently described the ring $H^*(\bU)$ by generators and relations, defining the \emph{Orlik-Solomon algebra} $\OS^\bullet(M)$, where $M$ denotes the matroid of $\bA$.

Motivated by connections to the theory of hypergeometric functions, attention turned to twisted cohomologies of hyperplane arrangement complements.
Gauss's hypergeometric function is distinguished by being the solution to a second-order linear differential equation with three regular singular points.  The Aomoto-Gelfand generalized hypergeometric functions \cite{Aom,Gel} generalize Gauss's hypergeometric function by allowing singularities along hyperplanes in $\P^d$.  These generalized hypergeometric functions can be viewed as twisted periods, pairings between algebraic deRham twisted cohomology $H^*(\bU, \nabla_\a)$ and Betti twisted homology $H^*(\bU,\L_\a)$ groups.  Esnault, Schechtman, and Viehweg \cite{ESV} and Schechtman, Terao, and Varchenko \cite{STV} showed that under a genericity hypothesis \eqref{eq:Mon}, elements of the twisted cohomologies $H^*(\bU, \nabla_\a)$ could again be represented by global algebraic logarithmic forms.  Thus $H^*(\bU, \nabla_\a)$ can be identified with the cohomology of the \emph{Aomoto complex} $(\OS^\bullet(M),\omega)$; see \eqref{eq:Aomotocomplex}.

In the 1990s, Cho and Matsumoto \cite{CM} and Kita and Yoshida \cite{KY} introduced intersection pairings on these twisted (co)homologies, which we denote
\begin{align*}
\gdRip{\cdot,\cdot}&: H^*(\bU, \nabla_\a) \otimes H^*(\bU, \nabla_{-\a}) \to \C, \\
\gBip{\cdot,\cdot}&: H_*(\bU, \L_\a) \otimes H_*(\bU, \L_{-\a}) \to \C. 
\end{align*}
The first goal of this work is to give a closed formula for these intersection pairings, which we call the \emph{(twisted) deRham cohomology} (resp. \emph{(twisted) Betti homology}) intersection forms, following the terminology of \cite{BD}.  Explicit formulae for these pairings were previously known, for example, in the one-dimensional case \cite{CM,KY}, the case of a generic arrangement \cite{Matgen}, and the braid arrangement \cite{MHhom, Miz}.  A general method to compute $\gBip{\cdot,\cdot}$ is given in \cite{KY2}, and this approach is further studied in \cite{Tog}.

Our explicit formulae for $\gdRip{\cdot,\cdot}$ and $\gBip{\cdot,\cdot}$ reveal new connections between existing constructions.  The Bergman fan $\Sigma_{\bU}$ of $\A$ is a polyhedral fan \cite{Bergman,FS,AK}, in modern language the \emph{tropical variety} associated to the very affine variety $\bU$.
First, we show that $\gdRip{\cdot,\cdot}$ and $\gBip{\cdot,\cdot}$ can be expressed as a Laplace transform and a discrete Laplace transform of various subfans of the Bergman fan.  In particular, this gives an interpretation of the Cho-Matsumoto twisted period relations as interpolating between continuous and discrete Laplace transforms.  Second, we prove that the twisted deRham cohomology intersection form $\gdRip{\cdot,\cdot}$ is essentially equal to the ``contravariant form" of Schechtman and Varchenko \cite{SV}, and the Betti homology intersection form $\gBip{\cdot,\cdot}$ is essentially equal to the inverse of Varchenko's bilinear form on a real hyperplane arrangement \cite{Var}.  

Our work is heavily motivated by the theory of scattering amplitudes from physics.  Cachazo-He-Yuan \cite{CHYarbitrary} introduced around a decade ago a new approach to tree-level scattering amplitudes in various theories: biadjoint scalar, Yang-Mills, gravity, ...  This approach relies on the \emph{scattering equations} on the configuration space $M_{0,n+1}$ of $n+1$ points on $\P^1$ to produce rational functions on kinematic space.  Mizera \cite{Miz} first observed that the Cachazo-He-Yuan formalism could be interpreted in terms of the twisted intersection forms of \cite{CM,KY} for the hyperplane arrangement complement $\bU = M_{0,n+1}$, and he showed that $\gdRip{\cdot,\cdot}$ and $\gBip{\cdot,\cdot}$ described \emph{biadjoint scalar amplitudes} and the \emph{inverse string theory KLT kernel} respectively.   Scattering potentials and scattering equations had appeared earlier in the mathematical literature, especially in work of Varchenko \cite{Varcrit,Varbook,VarBethe,Varquantum} where they were called \emph{master functions} and critical point equations.

One of the starting points of our work is to replace the space $M_{0,n+1}$ with an arbitrary (oriented) matroid.  We rely on the concurrent parallel work \cite{EL} joint with C. Eur, where we construct \emph{canonical forms for oriented matroids}.  The work \cite{EL} shows that topes of oriented matroids can be viewed as positive geometries \cite{ABL,LamPosGeom}, and in the current work we use their canonical forms as an input to the CHY construction of scattering amplitudes.  More precisely, the construction of \cite{EL} replaces the \emph{Parke-Taylor forms} from physics, allowing us to systematically use the formalism of matroids in our theory.  An eventual goal of this work is to understand the twisted periods of hyperplane arrangement complements in matroid-theoretic terms \cite{Lamstring}.

In the last part of this work, we give some immediate applications of our results to physics: a new formula for biadjoint scalar amplitudes using \emph{temporal Feynman diagrams}, a construction of scattering forms for matroids, and new formulae for various determinants of amplitudes.  Further applications to physics will be pursued in separate future work.

\section{Main results}
Let $M$ be the matroid associated to the hyperplane arrangement $\bA$, defined on the ground set $E$, and let $\M$ be the corresponding oriented matroid.  Thus $M$ has rank $r = d+1$ where $d$ is the dimension of the projective hyperplane arrangement $\bA$.  The lattice of flats of $M$ is denoted $L(M)$, with minimum $\hat 0$ and maximum $\hat 1$.  Let $\OS^\bullet(M)$ denote the Orlik-Solomon algebra of $M$, and $\rOS^\bullet(M)$ the reduced Orlik-Solomon algebra; see \cref{sec:OS}.  Thus $\rOS^\bullet(M)$ is isomorphic to the cohomology ring $H^*(\bU)$ of the projective hyperplane arrangement complement $\bU$.  We always assume that the hyperplane arrangement $\bA$ is essential.  We let $\OS(M) = \OS^r(M)$ denote the top homogeneous component of $\OS^\bullet(M)$.

Fix $0 \in E$.  Let $\T, \T^+, \T^\star, \T^0$ denote the set of topes, the set of topes $P$ satisfying $P(0) = +$, the set of topes bounded with respect to a general extension $\star$, and the set of bounded topes with respect to $0$, respectively.  See \cref{sec:matroids}.

\subsection{Canonical forms for oriented matroids}
A \emph{positive geometry} is a semialgebraic subset $X_{\geq 0}$ of a projective algebraic variety $X$ \cite{ABL,LamPosGeom} satisfying certain axioms.  Any positive geometry is equipped (by definition) with a rational top-form $\Omega(X_{\geq 0})$ on $X$, called the \emph{canonical form} of the positive geometry $X_{\geq 0}$.  We will not need the full definition of positive geometry in this work.  Instead, we note that every full-dimensional (oriented) projective polytope $P \subset \P^d$ is a positive geometry and is thus equipped with a distinguished top-form $\Omega_P$, satisfying the recursion: 

\noindent
(a) if $P$ is a point then $\Omega_P = \pm 1$ depending on orientation, and 

\noindent
(b) if $\dim(P) > 0$, then all the poles of $\Omega_P$ are simple and along facet hyperplanes, and we have the recursion $\Res_F \Omega_P = \Omega_F$, for any facet $F$ of $P$.  

In \cite{EL}, Eur and the author generalize canonical forms to oriented matroids, showing the existence of distinguished elements in the Orlik-Solomon algebra that play the role of canonical forms.

\begin{theorem}[see \cref{thm:EL}]\label{thm:ELintro}
To each tope $P \in \T$, there is a distinguished canonical form $\Omega_P \in \OS(M)$, satisfying the recursions of canonical forms.  Furthermore, the collection $\{\Omega_P \mid P \in \T^\star\}$ give a basis of $\OS(M)$.
\end{theorem}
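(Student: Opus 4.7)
The plan is induction on the rank $r$ of $M$, constructing $\Omega_P$ by its residues along each hyperplane. The base case $r = 1$ is immediate: $\OS(M)$ is one-dimensional and $\Omega_P = \pm 1$ is determined by the orientation of $P$.

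For the inductive step, I would use the residue maps $\partial_e : \OS^r(M) \to \OS^{r-1}(M/e)$ for each $e \in E$. The facets of the tope $P$ are indexed by a subset $\At(P) \subseteq E$, and for $e \in \At(P)$ the tope $P$ induces a tope $F_e$ of the contracted oriented matroid $\M/e$, whose canonical form $\Omega_{F_e}$ is given by induction. The canonical-form recursion then forces
\[
\partial_e \Omega_P = \Omega_{F_e} \text{ for } e \in \At(P), \qquad \partial_e \Omega_P = 0 \text{ otherwise.}
\]
Uniqueness follows because a class in $\OS^r(M)$ with vanishing residue along every hyperplane must vanish, a standard Brieskorn-type fact. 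Existence requires checking, for every corank-two flat $F$, that the prescribed residue data is compatible with the circuit relations in $\OS^\bullet(M)$ supported on $F$. This compatibility reduces to the rank-two restriction $M|_F$, where the canonical forms of the facets adjacent to the codimension-two face $P \cap F$ satisfy an alternating-sign circuit relation that one can verify directly.

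For the basis statement, the count $|\T^\star| = |\mu_M(\hat 0, \hat 1)| = \dim \OS(M)$ (Las~Vergnas/Zaslavsky) reduces the problem to linear independence of $\{\Omega_P \mid P \in \T^\star\}$. I would establish this by exhibiting a triangular transition to the \nbc-basis: the generic extension $\star$ supplies a distinguished bijection between $\T^\star$ and the set of \nbc-bases (sending a bounded tope to its ``top vertex'' in the realized picture), and I expect the leading term of $\Omega_P$ in the \nbc-expansion to recover, up to sign, the corresponding basis vector, with strictly smaller corrections in a lexicographic order on bases.

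The main obstacle I anticipate is the corank-two consistency check in the existence step: one must verify that $(\Omega_{F_e})_{e \in \At(P)}$ genuinely lifts to an element of $\OS^r(M)$, i.e., lies in the image of the total residue map $\bigoplus_e \partial_e$. This amounts to a finite combinatorial identity for canonical forms on cyclic rank-two arrangements, and once it is established the recursion propagates cleanly up the lattice of flats and the basis claim follows from the dimension count together with the triangular change of basis.
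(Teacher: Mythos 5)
This theorem is imported from the parallel joint paper \cite{EL} and is \emph{not proved} in the present paper --- \cref{thm:EL} carries the citation tag $[\mathrm{EL}]$ and the text only uses its statement. So there is no in-paper argument to compare your proposal against, and the comparison you are asked to make is moot in the strict sense. What I can do is assess your sketch on its own terms.

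Your outline follows the plausible residue-recursion strategy, but as written it has two real gaps plus a couple of slips. The heart of the existence step is the one you flag yourself: that the prescribed residue data $(\pm\Omega_{P/\atom})_{\atom}$ lies in the image of the total residue map $\bigoplus_\atom \Res_\atom$. You assert this reduces to a local check at corank-two flats, but you do not establish that corank-two compatibility is \emph{sufficient} (rather than merely necessary) for a lift to exist --- that is a genuine homological claim about the Orlik--Solomon algebra which needs an argument, not just an invocation. You also say this reduces to the rank-two \emph{restriction} $M|_F$; it should be the rank-two \emph{contraction} $M/F$, since a corank-two flat $F$ has $\rk(F)=r-2$. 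The second gap is the basis claim: you assert a ``distinguished bijection between $\T^\star$ and the set of \nbc-bases'' and a triangular transition via a ``top vertex,'' but you supply neither the bijection nor the triangularity. The counting $|\T^\star|=\mu^+(M)$ is fine (it is \cref{prop:numbertopes}), but the paper's own mechanism for the basis statement is different and more explicit --- it records the change-of-basis formula $e_B = \sum_{P \in \T^B}\Omega_P$ in \eqref{eq:cone}, which makes the matrix $\Omega_P \leftrightarrow e_B$ a $0/1$ incidence matrix of the ``bounded part'' relation rather than a triangular one in a lex order on \nbc-bases. Finally, a small slip: the recursion bottoms out at rank $0$ (where $\Omega_P = \chi(\emptyset)\in\OS^0(M)\cong\Z$), not at rank $1$, and your $\partial_e\Omega_P = \Omega_{F_e}$ is missing the sign $P(e)$ that the paper needs because of how it normalizes the chirotope of $\M/\atom$.

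So: the approach is a reasonable guess at what \cite{EL} does, but as a proof it is incomplete at exactly the two points that carry the weight --- sufficiency of corank-two compatibility for existence, and the combinatorial argument for linear independence --- and it should not be presented as more than a plan.
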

For the case that $P$ is a chamber of a real hyperplane arrangement, the canonical form $\Omega_P$ is the usual one associated to a projective polytope.  Canonical forms play a special role in our computations: we will compute our intersection pairings with respect to the basis of \cref{thm:ELintro}. 

\subsection{Matroid intersection forms}
Let $R := \Z[\a] = \Z[a_e \mid e \in E]$ and $S := \Z[\b] = \Z[b_e \mid e \in E]$ be the polynomial rings in variables $a_e$ (resp. $b_e$), and let $Q = \Frac(R)$ and $K = \Frac(S)$ be their fraction fields.  When the parameters are specialized to complex numbers, the variables $a_e,b_e$ are related by $b_e = \exp(- \pi i a_e)$ (see \cref{sec:twistedco}).
Our main objects of study are two combinatorially defined bilinear forms
\begin{align*}
\dRip{\cdot,\cdot}&: \OS(M) \otimes \OS(M) \to Q, \\
\halfip{\cdot,\cdot}_B&:  \Z^{\T^+} \otimes \Z^{\T^+} \to K,
\end{align*}
called the \emph{deRham cohomology twisted intersection form} and \emph{Betti homology twisted intersection form} respectively.  We remark that $\dRip{\cdot,\cdot}$ is defined for an arbitrary matroid while $\halfip{\cdot,\cdot}_B$ is only defined in the setting of an oriented matroid.  The bilinear form $\dRip{\cdot,\cdot}$ is defined (\cref{def:dR}) by using \emph{residue maps} on the Orlik-Solomon algebra, and the bilinear form $\halfip{\cdot,\cdot}_B$ is defined (\cref{def:Bettipair}) directly using the combinatorics of the Las Vergnas lattice of flats.

\subsection{Laplace transforms of Bergman fan}
We explain the combinatorics of $\dRip{\cdot,\cdot}$ and $\halfip{\cdot,\cdot}_B$ in the language of \emph{Bergman fans}.

Bergman \cite{Ber} defined the logarithmic limit-set of an algebraic variety, with the aim of studying the behavior of the variety at infinity.  We view Bergman's construction as a \emph{tropical variety}: the set of valuations of points of the variety defined over the field of Puiseux series.  When the variety is a linear space, the Bergman fan depends only on the matroid of that linear space.  The Bergman fan $\Sigma_M$ of a matroid $M$ was further studied by Ardila and Klivans \cite{AK} and Feichtner and Sturmfels \cite{FS}.

We shall consider a particular fan structure on $\Sigma_M$: the maximal cones $C_{F_\bullet}$ are $d$-dimensional cones indexed by $F_\bullet \in \Fl(M)$, where $\Fl(M)$ denotes the set of complete flags of flats of $M$.  Other \emph{nested fan structures} on $\Sigma_M$ are considered in \cref{sec:building}.  Associated to a tope $P \in \T$, the \emph{Bergman fan of $P$}, $\Sigma_M(P)$, is the subfan of $\Sigma_M$ consisting of all cones $C_{F_\bullet}$ where $F_\bullet \in \Fl(P)$; see \cite{AKW}.

In \cref{prop:noover}, we introduce a canonical decomposition of the intersection of positive Bergman fans: for $P,Q \in \T$, we introduce a collection $G^{\pm}(P,Q)$ of partial flags of lattices, and we have
$$
\Sigma_M(P) \cap \Sigma_M(Q) = \bigsqcup_{G_\bullet \in G^{\pm}(P,Q)} \Sigma_M(P,G_\bullet),
$$
where both sides of the equality are viewed as collections of $d$-dimensional cones.

In \cref{sec:Bergman}, we introduce two integral operators $\L$ and $\dL$ called the \emph{continuous Laplace transform} and \emph{discrete Laplace transform} respectively.  These operators are defined as an integral and as a sum over lattice points respectively, and produce rational functions in $\a$ and $\b$ respectively when applied to subfans of $\Sigma_M$.  

\begin{theorem}[\cref{thm:deRhamfan} and \cref{thm:Bettifan}] \label{thm:fan}
Let $P,Q \in \T$ be topes.  Then
\begin{align*}
\dRip{\Omega_P,\Omega_Q} &= \sum_{G_\bullet \in G^{\pm}(P,Q)} (\pm)^r (-1)^{\sum_{i=1}^s \rk(G_i)} \L(\Sigma_M(P,G_\bullet)) \\
\halfip{P,Q}_B&= (-1)^d \sum_{G_\bullet \in G^{\pm}(P,Q)} (\pm)^r b(G_\bullet) \dL(\Sigma_M(P,G_\bullet)).
\end{align*}
In particular, $\dRip{\Omega_P,\Omega_P} = \L(\Sigma_M(P))$ and $\halfip{P,P}_B = (-1)^d \dL(\Sigma_M(P,G_\bullet))$.
\end{theorem}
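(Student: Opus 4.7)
The plan is to reduce both formulas to combinatorial sums over maximal flags in $\Fl(P) \cap \Fl(Q)$ by expanding the canonical forms $\Omega_P,\Omega_Q$ (and the tope classes in the Betti case) in flag-indexed bases, and then to recognize each flag contribution as the (continuous or discrete) Laplace transform of the corresponding top-dimensional Bergman cone $C_{F_\bullet}$. The decomposition in \cref{prop:noover} of $\Sigma_M(P)\cap \Sigma_M(Q)$ into pieces $\Sigma_M(P,G_\bullet)$ indexed by $G^{\pm}(P,Q)$ then regroups these flag-by-flag contributions into the partial-flag sum claimed in the statement.

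For the deRham pairing, I would start from the iterated-residue definition (\cref{def:dR}). By \cref{thm:ELintro}, the canonical form $\Omega_P$ is determined by its iterated facet residues, and $\Res_{F_\bullet}\Omega_P =: \epsilon_P(F_\bullet) \in \{0,\pm 1\}$ is nonzero exactly for $F_\bullet \in \Fl(P)$. Combining with the analogous statement for $\Omega_Q$, the pairing unwinds to
\[
\dRip{\Omega_P,\Omega_Q} \;=\; \sum_{F_\bullet \in \Fl(P) \cap \Fl(Q)} \epsilon_P(F_\bullet)\,\epsilon_Q(F_\bullet)\, R(F_\bullet),
\]
where $R(F_\bullet)$ is the rational function in $\a$ produced by the residue pairing at the flag. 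The content of the deRham part of \cref{thm:fan} is then the identity
\[
R(F_\bullet) \;=\; \L(C_{F_\bullet}) \;=\; \prod_{i=1}^d \frac{1}{\sum_{e \in F_i} a_e},
\]
which follows by directly integrating $e^{-\langle\a,x\rangle}$ over the simplicial cone spanned by the characteristic vectors $\mathbf{1}_{F_1},\ldots,\mathbf{1}_{F_d}$ modulo $\mathbf{1}_{\hat 1}$, using that these rays form a lattice basis for the relevant quotient lattice.

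The Betti formula proceeds in parallel. Expanding the tope classes $[P],[Q]\in\Z^{\T^+}$ through the combinatorial definition of $\halfip{\cdot,\cdot}_B$ (\cref{def:Bettipair}) on the Las Vergnas lattice, each flag $F_\bullet \in \Fl(P)\cap\Fl(Q)$ contributes a geometric-series-type rational function in $\b$, which one identifies with the discrete Laplace transform
\[
\dL(C_{F_\bullet}) \;=\; \sum_{x\in C_{F_\bullet}\cap \Z^E} \prod_e b_e^{x_e} \;=\; \prod_{i=1}^d \frac{1}{1-\prod_{e \in F_i} b_e}.
\]
The factor $b(G_\bullet)$ in the statement arises from the half-open decomposition required to sum geometric series over closed cones when grouping flags by their common partial flag $G_\bullet$, and the global $(-1)^d$ comes from the parity between the dimension of the arrangement and the orientation of Bergman cones.

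The main obstacle is showing that within each piece $\Sigma_M(P,G_\bullet)$ of \cref{prop:noover}, the sign $\epsilon_P(F_\bullet)\,\epsilon_Q(F_\bullet)$ is constant across all top-dimensional completions $F_\bullet$ of $G_\bullet$, and equals $(\pm)^r(-1)^{\sum_i\rk(G_i)}$ in the deRham case (respectively accounts for $b(G_\bullet)$ in the Betti case). Granting this constancy, additivity of $\L$ and $\dL$ over disjoint unions of cones assembles the flag-by-flag sum into $\L(\Sigma_M(P,G_\bullet))$ or $\dL(\Sigma_M(P,G_\bullet))$, and summing over $G^{\pm}(P,Q)$ yields the theorem. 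This sign constancy is essentially dual to the fact that $G^{\pm}(P,Q)$ records exactly the common sign information of $P$ and $Q$ along the relevant flats; once it is established, the remaining work is a bookkeeping of Laplace integrals against simplicial cones.
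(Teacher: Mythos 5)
Your treatment of the deRham formula is essentially the paper's argument: expand $\dRip{\Omega_P,\Omega_Q}$ as a sum of iterated residues over $\Fl(P)\cap\Fl(Q)$ (\cref{lem:FlP}), note that the sign $\Res_{F_\bullet}\Omega_P\cdot\Res_{F_\bullet}\Omega_Q$ is constant over each block $\bG_\bullet$ and equals $(\pm)^r(-1)^{\sum_i\rk(G_i)}$ (\cref{lem:Gsign}), and then observe $1/a_{F_\bullet}=\L(C_{F_\bullet})$ so that the blockwise sum assembles into $\L(\Sigma_M(P,G_\bullet))$ because $\L$ vanishes on cones of lower dimension. That part is fine.

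The Betti half has a genuine gap. First, $\halfip{\cdot,\cdot}_B$ is not defined as a pairing that expands over maximal flags $F_\bullet\in\Fl(P)\cap\Fl(Q)$; by \cref{def:Bettipair} it is already a sum over \emph{partial} flags $E_\bullet\in\bG_\bullet$, and what must be proved in \cref{thm:Bettifan} is precisely the identity
\[
\sum_{E_\bullet\in\bG_\bullet}\frac{1}{\tb_{E_\bullet}}=(-1)^d\,\dL\bigl(\Sigma_M(P,G_\bullet)\bigr).
\]
Second, the assembly step you invoke fails for $\dL$: unlike $\L$, the discrete Laplace transform does \emph{not} vanish on lower-dimensional cones, so adding up $\dL(C_{F_\bullet})$ over the closed maximal cones of $\Sigma_M(P,G_\bullet)$ overcounts every lattice point lying on a shared face and does not equal $\dL$ of the subfan. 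You gesture at a ``half-open decomposition,'' but neither exhibit one nor derive the overall $(-1)^d$ from it; the vague appeal to ``parity between the dimension and the orientation of Bergman cones'' is not a proof. The paper instead computes, for each lattice point $\y$ in the relative interior of a cone $C_{E_\bullet}$, the coefficient of $\b^{2\y}$ in $\sum_{E_\bullet\in\bG_\bullet}1/\tb_{E_\bullet}$; by \cref{lem:dLap} this coefficient is an alternating sum over the refinements $E'_\bullet\geq E_\bullet$ in $\pFl(P)$, i.e.\ a reduced Euler characteristic of an upper order ideal. The crucial input making that Euler characteristic equal $(-1)^d$ is \cref{thm:sphere}: the Las Vergnas face lattice $L(P)$ is the (augmented) face lattice of a regular cell decomposition of a sphere, so the relevant upper order ideal has the face structure of a ball. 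Without this topological ingredient your Betti argument does not close.
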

The sign $(\pm)^r$ is explained in  \cref{thm:deRhamfan}, and the quantity $b(G_\bullet)$ is a signed monomial in the $b$-variables, defined in \cref{def:Bettipair}.  We show in \cref{prop:nondeg} and \cref{thm:Bettinondeg} that two bilinear forms are non-degenerate.  In \cref{sec:building}, we show that \cref{thm:fan} is compatible with other \emph{nested fan structures} on $\Sigma_M$.

\begin{example}\label{ex:3pt}
Let $\A$ be the arrangement of three points $\{z_1,z_2,z_3\}$ in $\P^1(\R)$.  Thus $M = M(\A) = U_{2,3}$ is the uniform matroid of rank $2$ on three elements $E = \{1,2,3\}$.  The Bergman fan $\Sigma_M$ consists of three rays (see \cref{fig:posBerg}), which we draw in $\R^E/\one$.  Let $P,Q,R$ be the three topes (modulo negation) given by the intervals $P = [z_1,z_2]$, $Q = [z_2,z_3]$, and $R = [z_3,z_1]$.  The intersection $\Sigma_M(P) \cap \Sigma_M(R)$ consists of the single cone $C_{F_\bullet}$ where $F_\bullet = (\hat 0 \subset \{1\} \subset \hat 1)$.  By \cref{thm:fan}, we have 
$$
\dRip{\Omega_P,\Omega_R} = - \frac{1}{a_1}, \qquad \halfip{P,R}_B = -\frac{b_1}{b_1^2-1} = b_1(1+b_1^2 + b_1^4 + \cdots).
$$
On the other hand, $\Sigma_M(P)$ is the union of two cones, $C_{F_\bullet}$ and $C_{F'_\bullet}$ where $F'_\bullet = (\hat 0 \subset \{2\} \subset \hat 1)$.  By \cref{thm:fan}, we have 
$$
\dRip{\Omega_P,\Omega_P} = \frac{1}{a_1} + \frac{1}{a_2}, \qquad \halfip{P,P}_B = 1 + \frac{1}{b_1^2-1} + \frac{1}{b_2^2-1} = -\left(1 + (b_1^2 + b_1^4 + \cdots) + (b_2^2 + b_2^4+ \cdots) \right).
$$

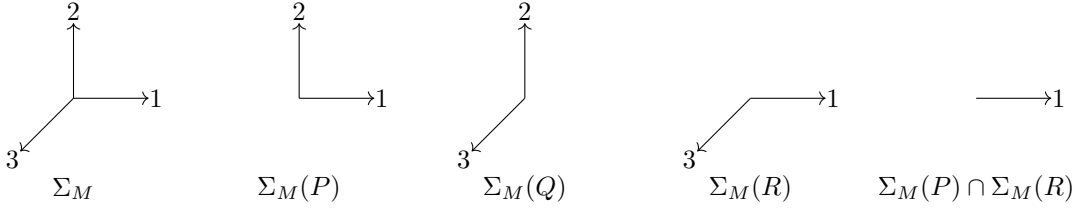
\begin{figure}
\begin{center}
\begin{tikzpicture}
\draw[->] (0:0) -- (0:1);
\node (A1) at (0:1.1) {$1$};
\draw[->] (0:0) -- (90:1);
\node (A2) at (90:1.15) {$2$};
\draw[->] (0:0) -- (225:1);
\node (A3) at (225:1.15) {$3$};
\node (AA) at (270:1.2) {$\Sigma_M$};
\begin{scope}[shift={(3,0)}]
\draw[->] (0:0) -- (0:1);
\node (A1) at (0:1.1) {$1$};
\draw[->] (0:0) -- (90:1);
\node (A2) at (90:1.15) {$2$};
\node (AA) at (270:1.2) {$\Sigma_M(P)$};
\end{scope}
\begin{scope}[shift={(6,0)}]
\draw[->] (0:0) -- (90:1);
\node (A2) at (90:1.15) {$2$};
\draw[->] (0:0) -- (225:1);
\node (A3) at (225:1.15) {$3$};
\node (AA) at (270:1.2) {$\Sigma_M(Q)$};
\end{scope}
\begin{scope}[shift={(9,0)}]
\draw[->] (0:0) -- (0:1);
\node (A1) at (0:1.1) {$1$};
\draw[->] (0:0) -- (225:1);
\node (A3) at (225:1.15) {$3$};
\node (AA) at (270:1.2) {$\Sigma_M(R)$};
\end{scope}
\begin{scope}[shift={(12,0)}]
\draw[->] (0:0) -- (0:1);
\node (A1) at (0:1.1) {$1$};
\node (AA) at (270:1.2) {$\Sigma_M(P) \cap \Sigma_M(R)$};
\end{scope}
\end{tikzpicture}
\end{center}
\caption{Positive Bergman fans and their intersections.}
\label{fig:posBerg}
\end{figure}
\end{example}

\subsection{Twisted intersection forms}
We recall the definition of the intersection forms on twisted (co)homology due to Cho and Matsumoto \cite{CM} and Kita and Yoshida \cite{KY}.  For more details, see \cref{sec:twistedco}.  
Let $\bA$ be a projective hyperplane arrangement, and let $E$ be the indexing set for hyperplanes given by $\{f_e = 0\}$, with $0 \in E$ the hyperplane at infinity.  Let $a_e$, $e \in E$ be complex parameters. 
Consider the meromorphic 1-form 
$$
\omega = \omega_\a = \sum_e a_e \dlog f_e = \sum_{e \in E \setminus 0} a_e \dlog(f_e/f_0) \in \Omega^1(\bU)
$$
on $\bU$, where we assume that $\sum_{e \in E} a_e = 0$, or equivalently, $a_0 = - \sum_{e \in E \setminus 0} a_e$.  We have a logarithmic connection
$(\O_\bU,\nabla_\a := d + \omega \wedge)$ on the trivial rank one vector bundle $\O_\bU$ on $\bU$.  The flat (analytic) sections of $\nabla_\a$ define a complex rank one local system $\L_\a$ on $\bU$.  Up to isomorphism, the local system $\L_\a$ is determined by a representation of the fundamental group $\pi_1(\bU)$; the natural generators $\gamma_e, e \in E$ of $\pi_1(\bU)$ are sent to the monodromy values $b_e = \exp(-\pi i a_e)$. 

When the genericity hypothesis
\begin{equation}\label{eq:Mon}
a_F = \sum_{e \in F} a_e \notin \Z \mbox{ for all connected }F \in L(M) \setminus \{ \hat 0, \hat 1\}
\end{equation}
is satisfied, a theorem of Kohno \cite{Koh} (see \cref{thm:Koh}) states that we have \emph{regularization} isomorphisms
$$
\reg: H^{\lf}_k(\bU,\L_\a) \stackrel{\cong}{\longrightarrow} H_k(\bU,\L_\a), \qquad
\reg: H^k(\bU,\nabla_\a) \stackrel{\cong}{\longrightarrow} H^k_c(\bU,\nabla_\a)
$$
between locally-finite (or Borel-Moore) twisted homology and usual twisted homology, and between twisted cohomology and compactly supported twisted cohomology.  These isomorphisms are inverse to the natural maps between these (co)homologies.  The intersection forms $\gdRip{\cdot,\cdot}$ and $\gBip{\cdot,\cdot}$ are defined by composing the Poincar\'e-Verdier duality pairings with the regularization isomorphism:
\begin{align*}
\gdRip{\cdot,\cdot}&: H^d(\bU,\nabla^\vee_\a) \otimes H^d(\bU,\nabla_\a) \xrightarrow{{\rm id} \otimes \reg} H^d(\bU,\nabla^\vee_\a) \otimes H^d_c(\bU,\nabla_\a) \xrightarrow{\text{Poincar\'e-Verdier}} \C, \\
\gBip{\cdot,\cdot}&: H^{\lf}_d(\bU,\L^\vee_\a) \otimes H^{\lf}_d(\bU,\L_\a)  \xrightarrow{{\rm id} \otimes \reg}H^{\lf}_d(\bU,\L^\vee_\a) \otimes H_d(\bU, \L_\a)  \xrightarrow{\text{Poincar\'e-Verdier}} \C.
\end{align*}
In the deRham case $\gdRip{\cdot,\cdot}$, we view this as a bilinear form on the Aomoto cohomology $\rOS(M,\omega)$ of the Orlik-Solomon algebra, using the result \cref{thm:ESV} of Esnault--Schechtman--Viehweg \cite{ESV}.  In the Betti case $\gBip{\cdot,\cdot}$, we choose a basis of twisted cycles with the \emph{standard loading}, and obtain a bilinear form on $\Z^{\T^0}$.  In both cases, somewhat surprisingly, the bilinear form turns out to be symmetric.
 
 It has long been expected that the intersection forms $\gdRip{\cdot,\cdot}$ and $\gBip{\cdot,\cdot}$ have explicit combinatorial formulae.  For instance, we may quote Matsumoto and Yoshida \cite[p. 228]{MYrecent}: ``We expect that these intersection numbers can be expressed combinatorially in a closed form."  In \cref{thm:dRpairmain} and \cref{thm:Bettipairmain} we resolve this question in the affirmative. 
 
 \begin{theorem}\label{thm:combgeom}
 In the case of a projective hyperplane arrangement, the geometrically defined intersection forms $\gdRip{\cdot,\cdot}$ and $\gBip{\cdot,\cdot}$ agree with the combinatorially defined intersection forms $\dRip{\cdot,\cdot}$ and $\halfip{\cdot,\cdot}_B$ when the parameters satisfy $\sum_{e\in E} a_e =0$ (resp. $\prod_{e \in E} b_e =1$).
 \end{theorem}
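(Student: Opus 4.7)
The strategy is to verify the two identifications on a basis, exploiting that $\dRip{\cdot,\cdot}$ and $\halfip{\cdot,\cdot}_B$ both have explicit combinatorial evaluations (\cref{thm:fan}), and matching these against local formulas for the geometric pairings. Throughout, I would use the canonical form basis $\{\Omega_P : P \in \T^\star\}$ of $\OS(M)$ from \cref{thm:ELintro} on the deRham side (composed with the ESV isomorphism identifying $\OS$-classes with $H^d(U,\nabla_\a)$), and on the Betti side the basis of positive topes $\T^+$ loaded with the standard branch of the multivalued function defining $\L_\a$.

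For the deRham comparison, the regularization of $\Omega_P$ to a compactly supported class can be built by a partition-of-unity construction on a wonderful compactification of $U$. The standard reasoning going back to Cho--Matsumoto then converts the Poincar\'e--Verdier pairing $\gdRip{\Omega_P,\Omega_Q}$ into a sum of iterated residues over maximal flags $F_\bullet$ of flats, weighted by reciprocals of products $a_{F_1} a_{F_2} \cdots a_{F_d}$. The key point is that the flags contributing a nonzero iterated residue to $\Omega_P$ are precisely those indexing the maximal cones of the Bergman fan $\Sigma_M(P)$, and in the cross-term only flags in $\Sigma_M(P) \cap \Sigma_M(Q)$ survive. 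After matching signs, this is precisely the Laplace-transform expansion in \cref{thm:fan}, since $\L(\Sigma_M(P,G_\bullet))$ is by construction a sum of such reciprocal monomials over the cones of the subfan.

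For the Betti comparison, I would compute $\gBip{P,Q}$ by the Kita--Yoshida deformation scheme of \cite{KY2,Tog}: perturb the regularized cycle associated to $Q$ so that it meets the cycle for $P$ transversely, and sum the local contributions. Near each transverse intersection, the contribution is a geometric series in the monodromy values $b_F = \exp(-\pi i a_F)$ around the flats traversed in the regularization, producing rational functions of the form $\prod (b_F^2-1)^{-1}$ --- exactly the kernel of the discrete Laplace transform $\dL$. The combinatorial bookkeeping of which flats, with which signs, contribute is precisely the decomposition $\Sigma_M(P)\cap\Sigma_M(Q) = \bigsqcup_{G_\bullet \in G^{\pm}(P,Q)} \Sigma_M(P,G_\bullet)$ from \cref{prop:noover}, and the $b(G_\bullet)$ prefactors of \cref{def:Bettipair} are exactly the monodromy factors picked up by the standard loading. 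Summing gives the right-hand side of \cref{thm:fan}.

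The main obstacle will be obtaining uniform control of the local analysis on an arbitrary essential projective arrangement, since the existing formulas of \cite{CM,KY,Matgen,MHhom,Miz} assume genericity or a particular combinatorial type. The cleanest workaround is induction on the rank $r$: the case $r=1$ is the classical Cho--Matsumoto/Kita--Yoshida computation, and the inductive step uses the compatibility of Poincar\'e--Verdier duality with the residue along a flat. On the Orlik--Solomon side this compatibility is built into the definition of $\dRip{\cdot,\cdot}$ via residue maps (\cref{def:dR}); on the Betti side it corresponds to pushing the deformation across the flat and recognising the resulting lower-rank integrals. Once the inductive local formulas are set up, matching them against the Laplace and discrete-Laplace descriptions of \cref{thm:fan} is combinatorially routine, and the non-degeneracy statements (\cref{prop:nondeg} and \cref{thm:Bettinondeg}) leave no room for a discrepancy on the spaces in question.
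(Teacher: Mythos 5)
Your proposal is essentially correct and follows the same route as the paper: both sides of the comparison are carried out by regularizing against a wonderful compactification $X_{\max}$ and applying the local Cho--Matsumoto residue computation (for deRham) and the Kita--Yoshida transverse deformation (for Betti), then matching the resulting sums over flags/wonderful faces against \cref{thm:fan}.

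The one place you diverge is the proposed rank induction to handle non-generic arrangements. The concern is legitimate --- Matsumoto's formula in \cite{Matgen} is stated for a generic arrangement, where $\P^d$ itself is a normal-crossing compactification --- but the workaround you suggest is more elaborate than what the paper does. The paper's observation (\cref{prop:dRresidue}) is simply that the residue computation is \emph{local}: it is carried out near each normal-crossing vertex of the boundary, and so applies verbatim once one has \emph{any} smooth compactification with normal-crossing boundary. The maximal wonderful compactification $X_{\max}$ provides exactly this for an arbitrary essential arrangement, with vertex strata indexed by $\Fl(M)$; \cref{lem:monod} then supplies the residues of $\pi^*\nabla_\a$ along the $D_F$, which are the $a_F$. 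After this, no further induction is needed for the main localization. The mild induction you anticipate does appear in the paper, but only in \cref{lem:ResF}, where one checks that the one-variable-at-a-time analytic residue at a normal-crossing vertex agrees with the iterated combinatorial residue map on the Orlik--Solomon algebra; it does not drive the Poincar\'e--Verdier comparison itself. Similarly, on the Betti side the paper does not push the deformation flat by flat; it computes the intersection number with $Q_Z$ (the deformation of $Q$ into the imaginary directions) directly in the wonderful model (\cref{prop:Bettiint}), and then the matching with $G^{\pm}(P,Q)$ is purely combinatorial via \cref{prop:noover}. So the approach is the same; the paper's organization is just a bit more economical in avoiding the rank recursion.
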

 
The basic approach to the computation of the intersection forms is the same as in the original works \cite{CM,KY}, and carried out in various cases in, for example, \cite{MOY,MY,Goto,Tog,MHcoh,MHhom}.  Our key novelty lies in the systematic use of the wonderful compactification $X_{\max}$ of $\bU$ associated to the maximal building set.  

\begin{remark}\label{rem:descent}
For generic parameters, the bilinear form $\dRip{\cdot,\cdot}$ is non-degenerate on $\OS(M)$, but in \cref{sec:Aomoto} we show that when $\sum_e a_e = 0$ is satisfied, the bilinear form $\dRip{\cdot,\cdot}$ descends to the Aomoto cohomology $\rOS(M,\omega)$.  Similarly, for generic parameters the bilinear form $\halfip{\cdot,\cdot}_B$ is non-degenerate on $\Z^{\T^+}$, but when $\prod_{e \in E} b_e =1$, the rank drops, and it restricts to a non-degenerate bilinear form on $\Z^{\T^0}$ (see \cref{thm:Bettinondeg}).

We view the bilinear forms $\dRip{\cdot,\cdot}$ and $\halfip{\cdot,\cdot}_B$ with generic parameters as the ``correct" combinatorial objects, as they lead to the most elegant combinatorics.  We expect these bilinear forms can be geometrically interpreted as {\bf local} twisted intersection forms for the corresponding central hyperplane arrangement.
\end{remark}

Recall that a very affine variety $U$ is a closed subvariety of a complex torus.  The description of the intersection forms in terms of the Bergman fan (\cref{thm:fan}) is especially attractive because of the following natural problem.
 
\begin{problem}\label{prob:Bergman}
Generalize \cref{thm:combgeom} to arbitrary very affine varieties $U$ by replacing the Bergman fan $\Sigma_M$ with the tropicalization $\Trop(U)$.
\end{problem}
We point the reader to \cite[Section 6]{LamModuli} for more discussion in this direction.  In the case that $U$ is the uniform matroid stratum of the Grassmannian $\Gr(k,n)$, \cref{prob:Bergman} is related to the study of the generalized biadjoint scalar amplitudes of Cachazo-Early-Guevara-Mizera \cite{CEGM,CEZ,CEZ24}.

\subsection{deRham homology intersection form}
For a subset $B \subseteq E$, denote
$$a^B:= \prod_{b \in B} a_b.$$ 
For two bounded topes $P,Q \in \T^\star$, we define in \cref{def:DdR} the set $\B(P,Q)$, consisting of all bases $B \in \B(M)$ such that both topes $P$ and $Q$ belong to the \emph{bounded simplex} cut out by $B$.  The \emph{deRham homology intersection form} on $\Z^{\T^\star}$ is defined to be
$$
\DdRip{P,Q} := \sum_{B \in \B(P,Q)} a^B.
$$

\begin{theorem}
The bilinear form $\frac{1}{a_E}\DdRip{\cdot,\cdot}$ is the inverse of the bilinear form $\dRip{\cdot,\cdot}$ with respect to the basis $\{\Omega_P \mid P \in \T^\star\}$.
\end{theorem}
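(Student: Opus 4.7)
By \cref{thm:ELintro}, the canonical forms $\{\Omega_P \mid P \in \T^\star\}$ give a basis of $\OS(M)$, and by \cref{prop:nondeg} the pairing $\dRip{\cdot,\cdot}$ is non-degenerate over $Q = \Frac(R)$. The theorem therefore reduces to verifying the matrix identity
\[
\sum_{R \in \T^\star} \dRip{\Omega_P, \Omega_R} \cdot \DdRip{R, Q} \;=\; a_E \cdot \delta_{P, Q}, \qquad P, Q \in \T^\star. \tag{$\ast$}
\]

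My plan is to attack $(\ast)$ by expanding $\DdRip{R, Q} = \sum_{B \in \B(R, Q)} a^B$ and interchanging the order of summation. This recasts the left-hand side as
\[
\sum_{B : Q \in \sigma_B} a^B \cdot \dRip{\Omega_P,\,\Xi_B}, \qquad \Xi_B := \sum_{R \in \T^\star \cap \sigma_B} \Omega_R,
\]
where $\sigma_B$ denotes the bounded simplex associated to the basis $B$. By the positive-geometry recursion of \cref{thm:ELintro}, the element $\Xi_B \in \OS(M)$ equals (up to a sign) the canonical form of the simplex $\sigma_B$ itself, which in turn is a signed multiple of the top-degree Orlik--Solomon monomial $e_B$.

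I would next apply the Laplace-transform formula from \cref{thm:fan} to compute $\dRip{\Omega_P, \Xi_B}$: because the Bergman fan of a simplex consists of the single maximal cone $C_B$ with $\L(C_B) = 1/a^B$, and the intersection $\Sigma_M(P) \cap \Sigma_M(\sigma_B)$ is controlled by the partial flag collection $G^{\pm}(P, \sigma_B)$, I expect a clean closed expression for $\dRip{\Omega_P, \Xi_B}$ whose denominator is exactly $a^B$, so that the outer factor $a^B$ in the summation cancels. After this cancellation, $(\ast)$ becomes a signed combinatorial sum indexed by bases $B$ with $P, Q \in \sigma_B$, and it remains to show this sum equals $a_E \cdot \delta_{P,Q}$.

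For $P \neq Q$ I anticipate an orientation-reversing involution (arising from the circuit/cocircuit axioms of the oriented matroid $\M$, applied to any element of the symmetric difference of the bases $B$) producing total cancellation; for $P = Q$, the surviving signed sum should evaluate to $a_E$ via a telescoping identity analogous to the defining Aomoto relation $\sum_e a_e = a_E$. The main obstacles are (a) the careful bookkeeping of signs through both the canonical-form recursion (\cref{thm:ELintro}) and the Laplace-transform formula (\cref{thm:fan}), and (b) establishing the final telescoping/involution argument that produces the factor $a_E$ on the diagonal. An alternative route is to import the known inverse of the Schechtman--Varchenko contravariant form, to which $\dRip{\cdot,\cdot}$ is closely related (cf.\ the introduction), and translate it into the $\{\Omega_P\}$ basis; this still requires analogous sign analysis.
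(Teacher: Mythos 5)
Your reduction to the matrix identity
\[
\sum_{R \in \T^\star} \dRip{\Omega_P, \Omega_R}\,\DdRip{R,Q} \;=\; a_E\,\delta_{P,Q}
\]
is correct, and your interchange of summation to rewrite the left side as $\sum_{B:\,Q\in\T^B} a^B\,\dRip{\Omega_P,\Xi_B}$ with $\Xi_B = \sum_{R\in\T^B}\Omega_R = e_B$ is a genuine observation; this last equality is precisely \eqref{eq:cone}, and the paper also relies on it at the corresponding point (in the proof of \cref{lem:W}). After that, however, your plan goes off the rails.

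The specific claim that ``$\dRip{\Omega_P,\Xi_B}$ is a Laplace transform of the Bergman fan of a simplex, with denominator exactly $a^B$'' is false. The local Bergman fan $\Sigma_M(B)$ is not a single simplicial cone but a union of cones $C_{F_\bullet}$ indexed by flags $F_\bullet$ generated by $B$, and the linear forms appearing in the denominators are the $a_{F_i}$ for flats $F_i$ of $M$, not the individual $a_b$ for $b\in B$. A small example already kills the claim: in $U_{2,3}$ with $\Omega_P = e_1\wedge e_2$ and $B=\{2,3\}$, one computes $\dRip{\Omega_P, e_2\wedge e_3} = -\tfrac{1}{a_2}$, whose denominator is $a_2$, not $a^B = a_2 a_3$. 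The factor $a^B$ therefore does not cancel against $\dRip{\Omega_P,e_B}$ term by term, and what is actually left after the interchange is a sum of flag-by-flag contributions $\sum_{F_\bullet} \tfrac{r(P,F_\bullet)}{a_{F_\bullet}}\sum_{B:\,Q\in\T^B} a^B\,r(B,F_\bullet)$ that still has to be shown to collapse to $a_E\delta_{P,Q}$. That collapse --- your ``orientation-reversing involution for $P\neq Q$'' and ``telescoping to $a_E$ for $P=Q$'' --- is the entire mathematical content of the theorem, and you have not supplied it; the paper explicitly acknowledges that such a direct combinatorial proof should exist but declines to carry it out precisely because the bookkeeping is nontrivial.

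The route the paper actually takes is the one you mention only in passing at the end. It factors the problem through the Schechtman--Varchenko flag space $\F^k$: \cref{lem:V} identifies the Gram matrix of $\dRipp{\cdot,\cdot}=\tfrac{1}{a_E}\dRip{\cdot,\cdot}$ in the basis $\{\Omega_P\}$ with the matrix of the map $R^r\colon\OS(M)_Q\to\F^r_Q$; \cref{lem:W} identifies the Gram matrix of $\DdRip{\cdot,\cdot}$ with the matrix of $S^r\colon\F^r_Q\to\OS(M)_Q$ (this is where \eqref{eq:cone} enters); and then \cref{prop:SVinverse} (Schechtman--Varchenko's $S^k\circ R^k=\mathrm{id}$) immediately gives the matrix identity. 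If you want to pursue your direct route, the real task is to prove the equivalent of $S^r\circ R^r=\mathrm{id}$ from scratch, and the cleanest way to do that is the inductive argument in the proof of \cref{prop:SVinverse}, which proceeds flat-by-flat and uses the Orlik--Solomon relation \eqref{eq:SV} rather than an involution on bases.
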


\begin{figure}
\begin{center}
$$
\begin{tikzpicture}[extended line/.style={shorten >=-#1,shorten <=-#1},
 extended line/.default=1cm]
 \draw[fill=none,dashed](0,0) circle (3.8);
 \draw[extended line] (90:3) -- (210:3);
  \draw[extended line] (90:3) -- (330:3);
   \draw[extended line] (330:3) -- (210:3);
 \draw[extended line] (90:3) -- (270:3);   
 \draw[extended line] (210:3) -- (30:3);   
  \draw[extended line] (330:3) -- (150:3);   
  \node[color=blue] at (100:4.1) {$(13)$};
  \node[color=blue] at (90:4.1) {$(23)$};
    \node[color=blue] at (80:4.1) {$(12)$};

  \node[color=blue] at (330:4.2) {$(34)$};
    \node[color=blue] at (337:4.1) {$(14)$};

  \node[color=blue] at (30:4.2) {$(24)$};
  \node[color=blue] at (180:4) {$\star$};
\node[color=red] at (120:1) {$1234$};
\node[color=red] at (60:1) {$1324$};
\node[color=red] at (0:1) {$1342$};
\node[color=red] at (-60:1) {$1432$};
\node[color=red] at (-120:1) {$1423$};
\node[color=red] at (-180:1) {$1243$};
\end{tikzpicture}
$$
\end{center}
\caption{The configuration space of $5$ point on $\P^1$, drawn with a general extension $\star$ at infinity.}
\label{fig:M05star}
\end{figure}
\begin{example}\label{ex:KLTexample}
In \cref{fig:M05star} we have drawn the hyperplane arrangement associated to the configuration space $M_{0,5}$, with a general extension $\star$ drawn as the ``circle at infinity".  The set $\T^\star$ consists of the six labeled regions bounded with respect to $\star$ and which are labeled by the permutations $w \in S_4$ satisfying $w(1) = 1$.  Two simplices contain both $1234$ and $1342$, namely $B = \{(12),(13),(14)\}$ and $\{(12),(13),(34)\}$.  One additional simplex $B = \{(12),(13),(24)\}$ contains both $1234$ and $1324$.  We obtain
$$
\DdRip{1234,1342} = a_{12}a_{13}( a_{14} + a_{34}), \qquad \DdRip{1234,1324} = a_{12}a_{13}( a_{14} +a_{24}+ a_{34}).
$$
\end{example}

\begin{remark}
The elegance of the deRham homology intersection form, and in particular the fact that it is positive, suggests that there is a direct geometric interpretation of this form, without relying on the duality with the deRham cohomology intersection form.
\end{remark}

\subsection{Betti cohomology intersection form}
Given $P,Q \in \T^+$, define the \emph{separating set}
$$
\sep(P,Q) := \{ e \in E \setminus 0 \mid P(e) \neq Q(e)\} \subset E.
$$
In the case of an affine hyperplane arrangement, these are the set of hyperplanes, not including the plane at infinity, that separate $P$ from $Q$.  The Betti cohomology intersection form on $\Z^{\T^+}$ is defined to be
$$
\ip{P,Q}^B := b_{\sep(P,Q) }+ (-1)^r b_{E \setminus \sep(P,Q)} = \ip{Q,P}^B
$$
for $P,Q \in \T^+$. In fact, $\ip{P,Q}^B$ is actually defined for $P,Q \in \T$, and $\ip{P,Q}^B= (-1)^r \ip{P,-Q}^B$.  The following result is \cref{thm:Bettiinverse}.

\begin{theorem}
The $\T^+ \times \T^+$ matrices $(-1)^{r-1}(1- b_E)^{-1}\ip{\cdot,\cdot}^B_{\T^+}$ and $\ip{\cdot,\cdot}^{\T^+}_B$ are inverse.
\end{theorem}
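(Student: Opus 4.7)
The inversion claim is equivalent to the identity
$$\sum_{Q \in \T^+} \ip{P,Q}^B \halfip{Q,R}_B \;=\; (-1)^{r-1}(1-b_E)\,\delta_{P,R}, \qquad P,R \in \T^+.$$
My plan is to establish this identity directly, by substituting the Bergman-fan formula for $\halfip{Q,R}_B$ provided by \cref{thm:fan} and interchanging the order of summation. After substitution, the outer sum is indexed by pairs $(F_\bullet,G_\bullet)$ where $F_\bullet$ is a maximal flag of $L(M)$ and $G_\bullet$ is a compatible partial flag, and the inner sum runs over topes $Q \in \T^+$ such that $F_\bullet \in \Fl(Q)$ and $G_\bullet \in G^\pm(Q,R)$.

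Now the discrete Laplace transform $\dL(\Sigma_M(Q,G_\bullet))$ factors as a product of geometric series $\prod_i (1 - b_{F_i})^{-1}$ indexed by the rank-one steps of $F_\bullet$, and the expression $\ip{P,Q}^B = b_{\sep(P,Q)} + (-1)^r b_{E \setminus \sep(P,Q)}$ also factors over the atoms of $F_\bullet$ once $F_\bullet$ is fixed, since the sign of $Q$ on each atom of $F_\bullet$ is an independent binary choice. The inner sum over $Q$ therefore decomposes as a product of contributions, one per rank-one step. Rank-one steps where $P$ and $R$ agree on the corresponding atom contribute a clean rational expression, while steps where they disagree contribute a factor that cancels by a two-term telescoping — forcing the off-diagonal ($P \neq R$) entries of the product to vanish, and pinning the diagonal scalar as $(-1)^{r-1}(1-b_E)$, where the factor $(1-b_E)$ arises precisely from the constraint $P(0) = +$ defining $\T^+$.

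The principal obstacle will be the careful sign bookkeeping: the twist $(\pm)^r$, the monomial $b(G_\bullet)$ defined in \cref{def:Bettipair}, the orientation of the Bergman cones, and the discrete-Laplace signs must all be aligned across terms. A cleaner alternative would be induction on $|E|$ by deletion-contraction of a chosen element $e \in E \setminus 0$. The topes in $\T^+$ split into those containing $e$ with a fixed sign, corresponding to topes of $M \setminus e$, and those whose boundary meets the hyperplane $e$, corresponding to topes of $M/e$; both $\ip{\cdot,\cdot}^B$ and $\halfip{\cdot,\cdot}_B$ respect this block structure in compatible ways. Verifying compatibility of the two recursions reduces the inversion to that for smaller matroids, with base case the single-element matroid $U_{1,1}$ where both forms are $1\times 1$ and the identity is immediate.
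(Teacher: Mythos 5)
Your first approach has a concrete obstacle: you claim that $\dL(\Sigma_M(Q,G_\bullet))$ ``factors as a product of geometric series $\prod_i(1-b_{F_i})^{-1}$ indexed by the rank-one steps of $F_\bullet$,'' but $\Sigma_M(Q,G_\bullet)$ is a \emph{fan}, not a single simplicial cone: by \cref{def:Bettipair} and \cref{lem:dLap}, its discrete Laplace transform is a signed \emph{sum} over all partial flags $E_\bullet$ refining $G_\bullet$, and does not factor as a product over a single complete flag. You also assert that after fixing $F_\bullet$, the sign $Q(e)$ ``on each atom of $F_\bullet$ is an independent binary choice,'' but this is only true for boolean matroids; for a general oriented matroid the signs of a tope on distinct hyperplanes are highly constrained by the circuit axioms, so the inner sum over $Q$ does not decompose as a product of per-atom contributions. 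Neither claim can be rescued, and without them the ``telescoping'' you invoke cannot be made precise.

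Your second suggestion (deletion--contraction on an element $e \in E \setminus 0$) is closer in spirit to what works for \emph{determinants} in this paper (cf.\ the proofs of \cref{prop:detnbc} and \cref{thm:Aomotodet}), but the block structure you describe does not simply hold for the inverse: topes of $\T^+(\M)$ project onto topes of $\T^+(\M\setminus e)$ many-to-one and the two forms do not respect a clean block decomposition, so ``compatibility of the two recursions'' is an assertion that would itself need a substantial argument. The paper proves the theorem by a different route: it writes the matrix product as $\sum_{Q\in\T^+}\ip{P,Q}_B\ip{Q,R}^B = U(P,R) + V(P,R)$ (splitting $\ip{Q,R}^B$ into its two monomials) and then shows $U(P,P)=(-1)^{r-1}$, $U(P,R)=0$ for $P\neq R$, by expanding in the $\beta(E_\bullet)=\prod_i\tb_{E_i}^{-1}$ basis and identifying the resulting coefficients with Euler characteristics of subcomplexes of the Las Vergnas face lattice $L(P)$. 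The crucial tools you are missing are \cref{thm:sphere} (that $L(P)$ is the face lattice of a regular CW sphere, hence has Euler characteristic $0$ and lower intervals have vanishing reduced Euler characteristic) and, for the off-diagonal case, a shelling argument showing that certain order complexes are contractible. These topological inputs are where the real content lies, and your sketch does not engage with them.
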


\begin{example}
Consider the hyperplane arrangement of \cref{fig:M05star} and take $0$ to be the hyperplane $(12)$.  Then we have
$\halfip{1234,1324}^B = b_{23} - b_{12}b_{13}b_{14}b_{24}b_{34}$ and $\halfip{1234,1423}^B = b_{24}b_{34}-b_{12}b_{13}b_{14}b_{23}$.
\end{example}

\begin{remark}
The elegance of the Betti cohomology intersection form suggests that there is a direct geometric interpretation of this form, without relying on the duality with the Betti homology intersection form.
\end{remark}

\subsection{Relation to the bilinear forms of Schechtman--Varchenko and Varchenko}
In \cite{SV}, motivated by the study of Knizhnik-Zamolodchikov equations, Schechtman and Varchenko introduced a \emph{contravariant form} $\ip{\cdot,\cdot}^{SV}$ on the Orlik-Solomon algebra $\rOS(M)$ of a hyperplane arrangement.  Their bilinear form is an analogue of the Shapovalov form of a highest weight representation of a Kac-Moody algebra.  The contravariant form is generalized to an arbitrary matroid by Brylawski and Varchenko \cite{BV}, and the restriction of the form to ``singular vectors" (corresponding to the Aomoto cohomology of the Orlik-Solomon algebra) was studied by Falk and Varchenko \cite{FalkVar}.  The following result is proved as \cref{cor:SVform}; see also \cref{rem:a0infinity}.

\begin{corollary}\label{cor:SV}
The Schechtman--Varchenko contravariant form $\ip{\cdot,\cdot}^{SV}$ for a central hyperplane arrangement is equal to the deRham intersection form $\dRip{\cdot,\cdot}$ up to an overall factor of $a_E$.  For an affine arrangement, the Schechtman--Varchenko contravariant form $\ip{\cdot,\cdot}^{SV}$ is obtained from the deRham intersection form $\dRip{\cdot,\cdot}$ by evaluating at $a_0 = \infty$.
\end{corollary}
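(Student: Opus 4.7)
The plan is to identify the two bilinear forms on $\OS(M)$ by writing each as a sum over complete flags of flats, and then matching matrix entries on the flag basis dual to the residue functionals $\{\Res_{F_\bullet}\}$. First, I would recall the Brylawski--Varchenko flag-presentation of the contravariant form: on $\OS^r(M)$, for any $x,y$ one has (up to an overall scalar)
\[
\ip{x,y}^{SV} \;=\; \sum_{F_\bullet \in \Fl(M)} \frac{\Res_{F_\bullet}(x)\,\Res_{F_\bullet}(y)}{\prod_{i=1}^{r} a_{F_i}},
\]
which is diagonal in the flag basis. This is the version of $\ip{\cdot,\cdot}^{SV}$ adapted to the central arrangement where all $a_e$, including $a_0$, are free parameters with no linear constraint.

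Second, I would unpack \cref{def:dR}: by construction, $\dRip{x,y}$ is also built from the residue functionals $\Res_{F_\bullet}$ on $\OS(M)$, each flag contributing a Laplace-transform weight. The consistency check is provided by \cref{thm:fan}, which evaluates the form on the canonical basis $\{\Omega_P\}$: each maximal cone $C_{F_\bullet} \subset \Sigma_M$ contributes its continuous Laplace transform $1/\prod_i a_{F_i}$, which is exactly the coefficient appearing in the SV formula above. Since $\{\Omega_P\}_{P \in \T^\star}$ is a basis of $\OS(M)$ (\cref{thm:ELintro}) and the residues $\Res_{F_\bullet}(\Omega_P)$ are controlled combinatorially by whether $F_\bullet \in \Fl(P)$, the two forms agree entry by entry on this basis, up to a global normalization. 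Tracking how the central SV form is usually rescaled to enforce the sum-zero constraint $\sum_e a_e = 0$ then produces exactly the factor $a_E$ relating the two pairings.

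For the affine case, where $0 \in E$ is distinguished as the hyperplane at infinity and $a_0 = -\sum_{e \neq 0} a_e$, one recovers the SV form by sending $a_0 \to \infty$ in $\dRip{\cdot,\cdot}$. In the residue/flag expansion, every flag containing a non-top flat $F_i$ with $0 \in F_i$ acquires an $a_0$ in its denominator and drops out in the limit, while the remaining flags (those where $0$ appears only at the top of the flag) contribute exactly the SV form of the affine arrangement. The main obstacle will be pinning down the signs and precise scalar normalization between the several equivalent presentations of $\ip{\cdot,\cdot}^{SV}$ in the literature (flag basis, $\beta$-nbc basis, Aomoto complex) and the residue-based $\dRip{\cdot,\cdot}$ of \cref{def:dR}; this is essentially bookkeeping, but easy to get wrong in reconciling the unconstrained central convention with the sum-zero convention, and in verifying that the $a_0 \to \infty$ limit is well-defined on $\rOS(M)$ rather than only on $\OS(M)$.
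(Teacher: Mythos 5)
Your approach is essentially the same as the paper's: compare both bilinear forms in the flag basis via the residue functionals $\Res_{F_\bullet}$ and observe that the weight $1/a'_{F_\bullet}=\prod_{i=1}^{r}1/a_{F_i}$ of the contravariant form differs from the weight $1/a_{F_\bullet}=\prod_{i=1}^{r-1}1/a_{F_i}$ of $\dRip{\cdot,\cdot}$ precisely by the factor $1/a_{F_r}=1/a_E$, with the affine case following by sending $a_0\to\infty$ to kill every flag through a flat containing $0$. One small caution: in the middle of your argument you assert the Laplace-transform coefficient is ``exactly'' the SV coefficient before noting the global $a_E$ discrepancy — state from the outset that the ranges of the two denominator products differ by the top term $a_{F_r}=a_E$, and then the detour through $\{\Omega_P\}$ and \cref{thm:fan} becomes unnecessary, as in the paper's direct computation with $e_S,e_{S'}$.
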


Schechtman and Varchenko \cite[(4.7.4)]{SV} relate the contravariant form to twisted (co)homology via an asymptotic formula.  As described in \cref{rem:BBM} below, Belkale, Brosnan, and Mukhopadhyay \cite{BBM} show that the twisted deRham cohomology intersection form $\gdRip{\cdot,\cdot}$ can be obtained from $\ip{\cdot,\cdot}^{SV}$.  This should be compared to our \cref{thm:combgeom} and \cref{cor:SV}. 

\begin{remark}\label{rem:BBM}
Let $\bU$ be a projective hyperplane arrangement with matroid $M$, and let the $a_e$ be generic.
View the Schechtman--Varchenko contravariant form as a map $S:\rOS(M)^* \to \rOS(M)$ (\cref{prop:Fk} and \eqref{eq:RS}).  Then \cite[(2.7)]{BBM} show that the composition
\begin{equation}\label{eq:BBM}
\rOS(M, \omega)^* \to \rOS(M)^* \stackrel{S}{\longrightarrow} \rOS(M) \longrightarrow \rOS(M, \omega)
\end{equation}
can be identified with $\gdRip{\cdot,\cdot}$, after composing with the isomorphism $\rOS(M,\omega) \cong H^*(\bU, \nabla_\a)$.
Note that in \eqref{eq:BBM} the bilinear form $\ip{\cdot,\cdot}^{SV}$ (giving rise to the map $S:\rOS(M)^* \to \rOS(M)$) has full rank on $\rOS(M)$, in contrast to our description of $\gdRip{\cdot,\cdot}$ (\cref{rem:descent}).

We thank Prakash Belkale for explaining the results of \cite{BBM} to us.
\end{remark}

In \cite{Var}, Varchenko introduces a bilinear form $\ip{\cdot,\cdot}^V$ on a real configuration of hyperplanes.  As Varchenko observes, the contravariant form $\ip{\cdot,\cdot}^{SV}$ is the quasiclassical limit of $\ip{\cdot,\cdot}^V$.  The bilinear form $\ip{\cdot,\cdot}^V$ was generalized to the setting of oriented matroids in \cite{HV,Ran}.

\begin{corollary}[{\cref{cor:Var}}]
Varchenko's bilinear form $\ip{\cdot,\cdot}^V$ is obtainted from the Betti cohomology intersection form $\ip{\cdot,\cdot}^B$ by evaluating at $b_0 = 0$.  Equivalently, Varchenko's bilinear form is the inverse of the Betti homology intersection form $\halfip{\cdot,\cdot}_B$, after evaluating at $b_0 = 0$.
\end{corollary}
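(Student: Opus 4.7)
The plan is to deduce both claims as essentially immediate consequences of the preceding theorem, which asserts that $(-1)^{r-1}(1-b_E)^{-1}\ip{\cdot,\cdot}^B$ and $\halfip{\cdot,\cdot}_B$ are inverse matrices on $\Z^{\T^+}$. The central observation is that for $P, Q \in \T^+$ the specialization $b_0 = 0$ simplifies both the Betti cohomology form and the scalar prefactor $(1-b_E)^{-1}$ very cleanly.

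First I would unpack Varchenko's bilinear form in the notation of this paper. On $\Z^{\T^+}$ (or more generally the tope space of an oriented matroid, following the extension in \cite{HV,Ran}), Varchenko's form is the monomial in the hyperplane weights supported on the separating set,
$$
\ip{P,Q}^V = \prod_{e \in \sep(P,Q)} b_e = b_{\sep(P,Q)},
$$
where the $b_e$ play the role of Varchenko's weights. For the first assertion I would then compare this against
$$
\ip{P,Q}^B = b_{\sep(P,Q)} + (-1)^r\, b_{E \setminus \sep(P,Q)}.
$$
Since $P, Q \in \T^+$ gives $P(0) = Q(0) = +$, we have $0 \notin \sep(P,Q)$, hence $0 \in E \setminus \sep(P,Q)$, so $b_0$ divides the monomial $b_{E \setminus \sep(P,Q)}$. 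The specialization $b_0 = 0$ kills the second term, giving $\ip{P,Q}^B\big|_{b_0 = 0} = b_{\sep(P,Q)} = \ip{P,Q}^V$.

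For the equivalent second assertion, I would invoke the preceding theorem. Since $0 \in E$, the monomial $b_E$ is divisible by $b_0$, so $b_E\big|_{b_0 = 0} = 0$ and the scalar $(1-b_E)^{-1}$ specializes to $1$. The identity stating that $(-1)^{r-1}(1-b_E)^{-1}\ip{\cdot,\cdot}^B$ and $\halfip{\cdot,\cdot}_B$ are inverse, evaluated at $b_0 = 0$, then reads that $(-1)^{r-1}\ip{\cdot,\cdot}^V$ and $\halfip{\cdot,\cdot}_B\big|_{b_0 = 0}$ are inverse, which is the corollary (the overall sign $(-1)^{r-1}$ being absorbed into the standard sign convention for $\ip{\cdot,\cdot}^V$ in \cite{Var,HV,Ran}).

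The one point requiring genuine verification, and what I expect to be the main (minor) obstacle, is that the specialization $b_0 = 0$ is well-defined on $\halfip{\cdot,\cdot}_B \in K$. I would check this using the Bergman fan formula of \cref{thm:fan}: the denominators of $\dL(\Sigma_M(P,G_\bullet))$ are products of factors $1 - b^F$ indexed by flats $F$ appearing as rays of $\Sigma_M$; such a factor specializes to $1$ when $0 \in F$ (since then $b^F\big|_{b_0=0} = 0$) and remains generically nonzero when $0 \notin F$. Hence no denominator vanishes at $b_0 = 0$, the inverse relation of the preceding theorem passes to the $b_0 = 0$ fiber, and the corollary follows.
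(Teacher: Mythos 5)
Your proposal is correct and follows essentially the same route as the paper: observe that $0 \notin \sep(P,Q)$ for $P,Q \in \T^+$ forces $b_0 \mid b_{E\setminus\sep(P,Q)}$, so $\ip{\cdot,\cdot}^B|_{b_0=0} = \ip{\cdot,\cdot}^V$, and then specialize \cref{thm:Bettiinverse} at $b_0=0$, noting $(1-b_E)^{-1}|_{b_0=0}=1$. The only addition you make is explicitly verifying that the $b_0=0$ specialization of $\halfip{\cdot,\cdot}_B$ is well-defined (via the denominator description from the Bergman-fan formula, where the factors are $1-b_F^2$ rather than $1-b^F$ as you wrote, though the substance of the check is right); the paper takes this for granted.
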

This appears to be the first geometric interpretation of Varchenko's bilinear form $\ip{\cdot,\cdot}^V$.

Among the deep properties of their contravariant form $\ip{\cdot,\cdot}^{SV}$, Schechtman--Varchenko \cite{SV} proved a formula for its determinant (recalled in \cref{thm:SVdet}), and an analogous determinant for $\ip{\cdot,\cdot}^V$ is given in \cite{Var}.  We give variants of these results: in \cref{thm:Aomotodet} we compute the determinant of $\bdRip{\cdot,\cdot}$ on Aomoto cohomology, and in \cref{thm:Bettihomdet}, we compute the determinant of $\halfip{\cdot,\cdot}_B$ on the lattice $\Z^{\T^+}$.

\subsection{Scattering amplitudes}
Our work is motivated by the theory of scattering amplitudes in physics, and especially the scattering equations of Cahcazo-He-Yuan \cite{CHYarbitrary}.  For a survey intended for mathematicians, we refer the reader to \cite{LamModuli}.  In the CHY formalism for the scattering of $n+1$ particles, \emph{kinematic space} $K_{n+1}$ (roughly, the space of momentum vectors of $n$ particles) is coupled with the \emph{worldsheet}, the moduli space $M_{0,n+1}$ by \emph{scattering equations} (S.E.).  Various scattering amplitudes can then be obtained via the CHY ansatz:
$$
{\rm amplitude} = \sum_{\text{solns } p \text{ to S.E.}} f(p) 
$$
where $f(p)$ is a rational function on $M_{0,n+1}$ evaluated at the solution $p$ to the scattering equations.  The choice of function $f(p)$ depends on the specific quantum field theory: biadjoint scalar, Yang-Mills, gravity, and so on.  As explained in \cite{LamModuli} and reviewed in \cref{sec:veryaffine}, the \emph{biadjoint scalar} amplitudes can be viewed as functions $A(\Omega,\Omega')$ that depend on the choice of two rational top-forms $\Omega,\Omega'$, and this definition extends the CHY formalism to the setting of very affine varieties.  Here, the very affine variety $U$ takes the role of the worldsheet, replacing the moduli space $M_{0,n+1}$.

In \cref{sec:amplitude}, we define amplitudes for matroids using the deRham intersection form $\dRip{\cdot,\cdot}$ and the canonical forms of \cref{thm:EL}.  We show in \cref{thm:AP} the basic properties of ``locality" and ``unitarity" for matroid amplitudes.  This result exposes a surprising parallel between the dichotomy of deletion-contraction in matroid theory and factorization phenomena in quantum field theory.

In the case of $U = M_{0,n+1}$, the relationship between twisted cohomology and CHY amplitudes was first observed by Mizera \cite{Miz}, and this equality was proven in a general setting by Matsubara-Heo \cite[Corollary 2.7]{MHcoh}.  In \cref{sec:scatform}, we give a new proof of this equality in the case that $U$ is a hyperplane arrangement complement.  Our approach relies on the definition of a scattering correspondence \cref{def:scatcorr}, which has appeared in the setting of hyperplane arrangements \cite{CDFV} and in likelihood geometry \cite{Huh,HS}.  

In \cref{sec:M0n}, we spell out some of our results in the case $U = M_{0,n+1}$, which is the case of the complete graphic matroid $M = M(K_n)$.  We obtain a new formula (\cref{thm:temporal}) for biadjoint scalar amplitudes in terms of objects we coin \emph{temporal Feynman diagrams}.  We show (\cref{thm:Frost}) that the celebrated field-theory KLT (Kawai-Lewellen-Tye) matrix \cite{BDSV} can be obtained from our results in a form that is different to the existing literature.  In \cref{cor:det1} and \cref{cor:det2}, we give new formulae for determinants of matrices of partial amplitudes.  We summarize the basic analogies between matroids and quantum field theory in the following table.
\begin{center}
\begin{tabular}{|c|c|}
\hline

worldsheet & matroid \\
\hline
kinematic space & dual of Lie algebra of intrinsic torus \\
\hline
\# of solutions to scattering equations & beta invariant \\
\hline
Parke-Taylor form & canonical form of a tope \\
\hline
biadjoint scalar partial amplitude & Laplace transform of Bergman fan \\
\hline
inverse string KLT matrix & discrete Laplace transform of Bergman fan\\
\hline
physical poles & connected flats \\
\hline
factorization & deletion-contraction \\
\hline
Feynman diagram & flag of flats \\
\hline
\end{tabular}
\end{center}

\subsection{Matroids and motives}
We have largely excluded from this work a discussion of the generalized hypergeometric functions 
\begin{equation}\label{eq:AG}
\int_{[P]} \varphi_P \; \Omega
\end{equation}
studied by Aomoto \cite{Aom} and Gelfand \cite{Gel}.  These integral functions are a main motivation for the study of twisted (co)homologies of hyperplane arrangement complements.  Indeed, the integrals \eqref{eq:AG} are given by pairings between twisted cocycles $[\Omega] \in H^d(U,\nabla_\a)$ and twisted cycles $[P \otimes \varphi_P] \in H_d(U,\L^\vee_\a)$.  As noted in the original work of Cho and Matsumoto \cite{CM}, the computation of the intersection forms $\ip{\cdot,\cdot}^\nabla$ and $\ip{\cdot,\cdot}_{\L}$ leads to explicit period relations for the twisted periods \eqref{eq:AG}.  See for example \cite{MOY,MY,Goto}.  We briefly discuss twisted period relations in \cref{sec:beta}.

The relation to scattering amplitudes suggests one to focus on the special case when $\Omega = \Omega_P$ is a canonical form in \eqref{eq:AG}.  The resulting integral functions, which we call \emph{string amplitudes for hyperplane arrangements}, will be studied in the work \cite{Lamstring}.  In the special case that $U = M_{0,n+1}$, these functions are the open string theory amplitudes at tree-level; see \cite{AHLstringy,BD,Miz}. 

Let us explicitly articulate one of the main directions that our work opens up.

\begin{problem}\label{prop:motives}
For an oriented matroid $\M$, define and study the space of all twisted period matrices $\mathbf{P}^\a$ (as in \cref{sec:beta}) compatible with $\M$.
\end{problem}

We view \cref{prop:motives} as a step towards \emph{(twisted) motives} for matroids.  We have seen that the intersection forms $\dRip{\cdot,\cdot}, \DdRip{\cdot,\cdot}, \halfip{\cdot,\cdot}^B,\halfip{\cdot,\cdot}_B$ exist even for matroids not arising from hyperplane arrangements.  A fundamental tension is the question: do the twisted period matrices $\mathbf{P}^\a$ exist when $M$ is a nonrealizable matroid?
\subsection*{Acknowledgements}
We acknowledge support from the National Science Foundation under grants DMS-1953852 and DMS-2348799.  We thank the Simons Foundation for support under a Simons Fellowship.  We are grateful to the Institute for Advanced Study, Princeton for supporting a visit during which part of this manuscript was completed.
We thank Chris Eur for our parallel joint work on canonical forms for matroids.  We thank Hadleigh Frost, June Huh, Sebastian Mizera, Oliver Schlotterer, Bernd Sturmfels, and Simon Telen for stimulating discussions.  We thank Prakash Belkale, Nick Early, and Alexander Varchenko for helpful comments on an earlier version of this manuscript.

\part{Combinatorics}

\section{Matroids}\label{sec:matroids}
We denote $[n]:=\{1,2,\ldots,n\}$.
\subsection{Conventions for matroids}
Let $M$ be a matroid of rank $r = d+1$ with ground set $E$.  We use the notation
\begin{align*}
\rk = \rk_M  &= \mbox{rank function of $M$,} \\ 
\B(M) &= \mbox{set of bases of $M$,} \\
\I_k(M) &= \mbox{$k$-element independent sets of $M$.} 
\end{align*}

An element $e \in E$ is a \emph{loop} if it belongs to no bases, and a \emph{coloop} if it belongs to all bases.  Two elements $e, e' \in E$ are called parallel if they belong to the same bases.  An element $e \in E$ is in \emph{general position} if $\rk(S \cup e) = \min(\rk(S) + 1,r)$ for any $S \subseteq E \setminus e$.

A matroid $M$ is called \emph{simple} if it has no loops and no parallel elements.
If $M,M'$ are matroids on the ground sets $E,E'$ with ranks $r, r'$, then the \emph{direct sum} $M\oplus M'$ is the rank $(r+r')$ matroid on the ground set $E \sqcup E'$ with bases $\B(M\oplus M') = \{B \sqcup B' \mid B \in \B(M), B' \in \B(M')\}$.  A matroid $M$ is called \emph{connected} or \emph{indecomposable} if it cannot be expressed as a non-trivial direct sum $M = M|_{E_1} \bigoplus M|_{E_2}$ where $E = E_1 \sqcup E_2$.

Let $L(M)$ denote the lattice of flats of $M$, and let $L^k(M)$ denote the set of flats of rank $k$.  Each flat $F \in L(M)$ is viewed as a subset of $E$.  By convention $L(M)$ has minimal element $\hat 0$ (consisting of all the loops) and maximal element $\hat 1 = E$.  We use $\vee$ and $\wedge$ to denote the join and meet operations of $L(M)$.  A flat $F$ is called \emph{connected} if the restriction $M^F$ (see \cref{ssec:extensions}) is connected.  An atom $a \in L(M)$ is a flat of rank one and we let $\At(M)$ denote the set of atoms of $M$.  An atom in a loopless matroid consists of an equivalence class of parallel elements of $M$.  We say that an atom $a\in \At(M)$ is a coloop if any of the elements in $a$ is a coloop.  For an example of $L(M)$, see \cref{fig:5line}.

An \emph{affine matroid} $(M,0)$ is a matroid $M$ together with a distinguished element $0 \in E$.  In terms of hyperplane arrangements, $0$ indexes the hyperplane at infinity.  We say that an affine matroid $(M,0)$ is generic at infinity if $0 \in E$ is in general position.  

\subsection{Some invariants}
We will be interested in the following invariants of a matroid $M$:
\begin{align*}
\chi_M(t) &= \mbox{characteristic polynomial}\\
\bchi_M(t) &= \mbox{reduced characteristic polynomial}\\
\mu^+(M) &= \mbox{unsigned M\"obius invariant} \\
\beta(M) &= \mbox{beta invariant} \\
w_\Sigma(M) = |\bchi_M(-1)| &= \mbox{(reduced) total Whitney invariant}
\end{align*}

Let $\mu = \mu_{L(M)}(x,y)$ denote the Mobius function of $L(M)$, where $[x,y]$ is an interval in $L$.  For $x \in L$, we set $\mu(x) := \mu(\hat 0, x)$.  Let $\mu(M):= \mu(\hat 1)$ denote the \emph{Mobius invariant} of $M$, and let $\mu^+(M) = |\mu(M)|$ denote the unsigned Mobius invariant.  Let $\chi_M(t)$ (resp. $\bchi_M(t)$) denote the \emph{characteristic polynomial} (resp. reduced characteristic polynomial) of $M$, given by
$$
\chi_M(t):= \sum_{F \in L(M)} t^{r - \rk(F)} \mu(F), \qquad \text{and} \qquad \bchi_M(t) := \chi_M(t)/(t-1).
$$
The \emph{beta invariant} $\beta(M)$ of $M$ is given by 
$$
\beta(M) := (-1)^{r+1} \left.\frac{d}{dt} \chi_M(t) \right|_{t=1}.
$$ 
If $e \in E$ is neither a loop nor a coloop, then we have the recursion
\begin{equation}\label{eq:betaeq}
\beta(M) = \beta(M/e) + \beta(M\setminus e)
\end{equation}
We have $\beta(M) = 0$ if and only if $M$ is disconnected, or a loop, or empty ($|E|=0$).

\subsection{Extensions and liftings}\label{ssec:extensions}
For a flat $F \in L(M)$, we have the matroids 
\begin{align*}
M^F &:= \text{restriction of $M$ to $F$} = \text{deletion of $E \setminus F$ from $M$}  \\
M_F&:= \text{contraction of $M$ by $F$}.
\end{align*} 
The lattice $L(M^F)$ of flats of $M^F$ (resp. $L(M_F)$ of flats of $M_F$) is isomorphic to the lower order ideal $[\hat 0, F] \subset L(M)$ (resp. upper order ideal $[F, \hat 1]\subset L(M)$).  For an element $e \in E$, we denote by $M\backslash e $ the deletion of $e$, and by $M/e = M_e$ the contraction of $M$ by $e$.  We call $(M, M' = M\backslash e, M'' = M/e)$ a deletion-contraction triple.  More generally, we have a deletion-contraction triple for any atom $a \in \At(M)$.

An \emph{extension} (resp. \emph{lifting}) $\tilM$ of $M$ is a matroid $\tilM$ on $\tE = E \cup \star$ such that the deletion $\tilM \backslash\star$ (resp. contraction $\tilM/\star$) is equal to $M$.   The extension or lifting $\tilM$ is called general if the element $\star$ is in general position in $\tilM$.  Given a matroid $M$ on $E$ , we often let $(\tilM, \star)$ denote an affine matroid on $\tilde E = E \cup \star$ which is a general extension of $M$ by an element $\star$.

\begin{lemma}\label{lem:betageneric}
Suppose that $(\tilM,\star)$ is a general extension of a non-loop matroid $M$.  Then $\mu^+(M) = \beta(\tilM)$.
\end{lemma}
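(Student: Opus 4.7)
The plan is to compare $\chi_{\tilM}(t)$ and $\chi_M(t)$ directly and then read off $\beta(\tilM)$ from the reduced characteristic polynomial at $t = 1$. Throughout, $\tilM$ has the same rank $r$ as $M$: if instead $\star$ were a coloop of $\tilM$, then $\beta(\tilM) = 0$, which would contradict $\mu^+(M) > 0$ for a loopless matroid of positive rank.

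First, I would enumerate the flats of $\tilM$. Using genericity, $\rk_{\tilM}(S \cup \star) = \min(\rk_M(S)+1,\, r)$ for $S \subseteq E$, and a short closure computation shows that $L(\tilM)$ consists of: (a) every flat $F$ of $M$ with $\rk_M(F) \leq r-1$, of the same rank; (b) the starred flats $F \cup \{\star\}$ for $F \in L(M)$ with $\rk_M(F) \leq r-2$, of rank $\rk_M(F)+1$; and (c) the top element $\hat 1_{\tilM} = \tilde E$ of rank $r$. The only subtle point is that rank-$(r{-}1)$ and rank-$r$ flats of $M$ are absorbed into $\hat 1_{\tilM}$ after adjoining $\star$.

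Next I would compute the Möbius function of $L(\tilM)$. For flats of type (a), the principal order ideal below $F$ is unchanged from $L(M)$, so $\mu_{\tilM}(F) = \mu_M(F)$. The key structural observation is that the interval $[\hat 0,\, F \cup \star]_{L(\tilM)}$ is isomorphic to the product poset $[\hat 0, F]_{L(M)} \times \{0<1\}$, so $\mu_{\tilM}(F \cup \star) = -\mu_M(F)$. Substituting these values into $\chi_{\tilM}(t) = \sum_{\tilde F}\mu_{\tilM}(\tilde F)\,t^{r - \rk_{\tilM}(\tilde F)}$, using $\chi_M(1) = 0$ to pin down the type-(c) term via $\chi_{\tilM}(1) = 0$, and simplifying yields the clean identity
\[
\chi_{\tilM}(t) \;=\; \frac{(t-1)\bigl(\chi_M(t) - \mu(M)\bigr)}{t}.
\]

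Finally, dividing by $(t-1)$ gives $\bchi_{\tilM}(t) = (\chi_M(t) - \mu(M))/t$, hence $\bchi_{\tilM}(1) = -\mu(M)$. The identity $\beta(N) = (-1)^{\rk(N)+1}\bchi_N(1)$, which is immediate from $\chi_N(t) = (t-1)\bchi_N(t)$ together with the definition of $\beta$, then yields $\beta(\tilM) = (-1)^{r+1}(-\mu(M)) = (-1)^r \mu(M) = \mu^+(M)$, where the last equality is the standard sign rule for the top Möbius value of a geometric lattice. The main obstacle is the structural description of $L(\tilM)$ and the product-poset identification of its starred intervals; the rest is polynomial bookkeeping. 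One could alternatively proceed via deletion–contraction by identifying $\tilM/\star$ with the rank-$(r{-}1)$ truncation of $M$, but this reduces to essentially the same lattice computation and seems less efficient than the direct approach.
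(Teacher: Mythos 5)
Your proposal is correct. The key intermediate identity $\chi_{\tilM}(t) = (\chi_M(t) - \mu(M))(t-1)/t$ is exactly the one the paper derives (recall $\chi_M(0) = \mu(M)$), and the endgame of evaluating at $t=1$ to read off $\beta$ is the same. What differs is how you arrive at that identity: the paper reduces to $M$ simple and sums the Whitney rank expansion $\chi(t) = \sum_{A \subseteq E}(-1)^{|A|}t^{r-\rk(A)}$ over subsets, splitting into $A$ with and without $\star$; you instead give an explicit structural description of $L(\tilM)$ in terms of $L(M)$ (the flats $F$ of rank $\le r-1$, the starred flats $F \cup \star$ for $\rk(F) \le r-2$, and $\hat 1$), and you compute the Möbius function directly via the product-poset isomorphism $[\hat 0, F\cup\star] \cong [\hat 0, F] \times \{0<1\}$, which cleanly gives $\mu_{\tilM}(F\cup\star) = -\mu_M(F)$. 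Your route is more lattice-theoretic and makes the structure of $L(\tilM)$ explicit, which is arguably more illuminating and avoids the Whitney formula; the paper's route is shorter in notation because it never has to describe $L(\tilM)$ at all. Both work at essentially the same level of generality (yours also implicitly reduces to the simple case, since flats are treated as subsets of $E$). One very minor remark: your opening aside about $\star$ being a coloop is harmless but not quite the right framing — for a generic extension of a rank-$r \ge 1$ matroid one has $\rk_{\tilM}(\tilde E) = r$ directly from the genericity condition, so $\star$ is never a coloop and no contradiction argument is needed.
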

\begin{proof}
We may assume that $M$ is simple.  Then we have 
$$
\chi_M(t) = \sum_{A \subset E} (-1)^{|A|} t^{r- \rk(A)}, \qquad \chi_{\tilM}(t) = \sum_{A \subset E \cup \star} (-1)^{|A|} t^{r- \rk(A)}.
$$
By genericity, if  $\star \notin A$ and $\rk(A) < d$ then $\rk(A \cup \star) = \rk(A) + 1$.  Also,  if $\rk(A) = r$ then $\rk(A \cup \star) = r$.  It follows that
$$
 \chi_{\tilM}(t) = (\chi_M(t)-\chi_M(0)) (1 - 1/t).
$$
Thus,
$$
(-1)^{r+1}  \beta(\tilM) = \left. \frac{d}{dt}  \chi_{\tilM}(t) \right|_{t=1} =\left(\chi'_M(t)(1-1/t)-(\chi_M(t)-\chi_M(0))(1/t^2)\right)|_{t=1} = \chi_M(0)-\chi_M(1) = \mu(M),
$$
where for the last equality we have used $\chi_M(1) = 0$.
\end{proof}

\begin{lemma}\label{lem:genericlift}
Suppose that $\overline{M}$ is a general lifting of a matroid $M$.  Then $\chi_{\overline{M}}(t) = (t-1) \chi_M(t)$.
\end{lemma}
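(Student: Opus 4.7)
The plan is to mimic the proof of \cref{lem:betageneric}, by applying Whitney's rank formula to both $M$ and $\bar M$ and relating the two via the lifting structure. The key input, playing the role that genericity of an extension played before, is that for a generic lifting $\bar M$ the rank function $\rk_{\bar M}$ is pinned down by $\rk_M$: the contraction identity $\bar M / \star = M$ gives $\rk_{\bar M}(B \cup \star) = \rk_M(B) + 1$ for $B \subseteq E$, while genericity of $\star$ in $\bar M$ yields $\rk_{\bar M}(B) = \rk_M(B)$ on all subsets $B \subseteq E$.

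First I would dispose of loops: if $e \in E$ is a loop of $M$, then $\rk_{\bar M}(\{e,\star\}) = \rk_M(\{e\}) + 1 = 1$, and the genericity relation forces $\rk_{\bar M}(\{e\}) = 0$, so $e$ remains a loop of $\bar M$; both $\chi_M(t)$ and $\chi_{\bar M}(t)$ vanish and the identity is trivial. Assume then that $M$ is loopless. Whitney's rank formula $\chi_{\bar M}(t) = \sum_{A \subseteq \bar E}(-1)^{|A|} t^{r+1-\rk_{\bar M}(A)}$ applies, and splitting the sum according to whether $\star \in A$ and substituting the two rank identities above should collapse the expression to
\begin{align*}
\chi_{\bar M}(t) &= \sum_{B \subseteq E} (-1)^{|B|} t^{r+1-\rk_{\bar M}(B)} - \sum_{B \subseteq E} (-1)^{|B|} t^{r+1-\rk_{\bar M}(B \cup \star)} \\
&= \sum_{B \subseteq E} (-1)^{|B|} t^{r+1-\rk_M(B)} - \sum_{B \subseteq E} (-1)^{|B|} t^{r-\rk_M(B)} \\
&= t\,\chi_M(t) - \chi_M(t) = (t-1)\,\chi_M(t),
\end{align*}
which is the claimed identity.

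The main obstacle is the assertion $\rk_{\bar M}(B) = \rk_M(B)$ for \emph{every} $B \subseteq E$, and not merely for those with $\rk_M(B) < r$. The stated genericity relation $\rk_{\bar M}(B \cup \star) = \min(\rk_{\bar M}(B)+1, r+1)$, combined with $\rk_{\bar M}(B \cup \star) = \rk_M(B)+1$, directly gives equality when $\rk_M(B) < r$, but only pins down $\rk_{\bar M}(B) \in \{r, r+1\}$ when $\rk_M(B) = r$. The resolution is that the convention for generic liftings used here forces $\star$ to be a coloop of $\bar M$ (equivalently, $\bar M = M \oplus \{\star\}$), consistent with the geometric picture of coning a projective hyperplane arrangement to add a ``hyperplane at infinity''. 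Once this is recognized, the identity $\chi_{\bar M}(t) = (t-1)\chi_M(t)$ is equally visible from the multiplicativity of $\chi$ over direct sums, using $\chi_{\{\star\}}(t) = t-1$.
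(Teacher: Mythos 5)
Your calculation is the one in the paper: both split the Whitney rank sum over $E \cup \star$ by whether $\star \in A$, use $\rk_{\bM}(A\cup\star) = \rk_M(A)+1$ from the contraction, and use $\rk_{\bM}(A) = \rk_M(A)$ for $A \subseteq E$ to collapse the two halves to $t\chi_M(t) - \chi_M(t)$. The paper writes the second identity without comment; you have correctly singled it out as the crux of the argument.

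Your worry is also not a quibble: the stated definition of ``generic'' really does not force that identity, and the lemma fails without it. Take $M = U_{2,3}$ and $\bM = U_{3,4}$, the free coextension. Then $\bM/\star = U_{2,3}$ and $\star$ is in general position in $U_{3,4}$, so $\bM$ is a generic lifting under the definitions as written; yet $\rk_{U_{3,4}}(\{1,2,3\}) = 3 \neq 2 = \rk_{U_{2,3}}(\{1,2,3\})$, and correspondingly $\chi_{U_{3,4}}(t) = t^3 - 4t^2 + 6t - 3$ while $(t-1)\chi_{U_{2,3}}(t) = t^3 - 4t^2 + 5t - 2$. So the lemma holds only under the additional hypothesis $\rk_{\bM}(E) = r$, i.e.\ $\star$ a coloop, i.e.\ $\bM = M \oplus \{\star\}$. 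That this is the intended reading is confirmed by the other uses of generic liftings in the paper: the proof of \cref{lem:wsigma} treats $\bM/E$ as the rank-one matroid on $\{\star\}$ (forcing $\rk_{\bM}(E)=r$), and the bijection $\bT^+(\bM) \cong \T(\M)$ invoked in the proof of \cref{thm:Bettidet} also requires $\bM\setminus\star = M$. With the coloop hypothesis made explicit, your multiplicativity one-liner $\chi_{\bM} = \chi_M \cdot \chi_{\{\star\}} = (t-1)\chi_M$ is the cleanest possible proof; the only correction to your write-up is that you should phrase this as an added hypothesis rather than as something the definition of generic lifting ``forces.''
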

\begin{proof}
We may assume that $M$ is simple and of rank $r$.  Then we have 
\begin{align*}
 \chi_{\bM}(t) &= \sum_{A \subset E \cup \star} (-1)^{|A|} t^{r+1- \rk_{\overline{M}}(A)} \\
 &= \sum_{A \subset E}(-1)^{|A|} t^{r+1- \rk_{M}(A)} + \sum_{A \cup \star \subset E \cup \star}(-1)^{|A|+1} t^{r+1- \rk_{M}(A)-1} \\
 &= t \chi_M(t) - \chi_M(t) = (t-1)\chi_M(t). \qedhere
\end{align*}
\end{proof}

\subsection{Flags of flats}
The order complex $\Delta(Q)$ of a poset $Q$ is the simplicial complex whose vertices are the elements of $Q$ and whose simplices are the chains of $Q$.  Define
$$\Delta(M) := \Delta(L(M)-\{\hat 0, \hat 1\}),$$ 
the order complex of the (reduced) lattice of flats in $M$.  The faces $E_\bullet \in \Delta(M)$ can be identified with partial flags of flats
$$
E_\bullet = \{\hat 0 = E_0 \subset E_1 \subset E_2 \subset \cdots \subset E_{s} \subset E_{s+1}= E = \hat 1\}
$$
which start at $\hat 0$ and end at $\hat 1 = E$, and have $s = s(E_\bullet)$ intermediate flats.  The facets, or maximal simplices, of $\Delta(M)$ can be identified with complete flags of flats
$$
F_\bullet = \{\hat 0 = F_0 \subset F_1 \subset F_2 \subset \cdots \subset F_{r-1} \subset F_r = E = \hat 1\}
$$
where $F_i$ is a flat of rank $i$.  We denote by $\Fl(M)$ the set of complete flags in $L(M)$, or equivalently, the set of facets of $\Delta(M)$.  Let $\Fl^k$ denote the set of saturated flags $F_\bullet = \{\hat 0 = F_0 \subset F_1 \subset \cdots \subset F_k \mid \rk(F_i) = i\}$ of length $k$ starting at $\hat 0$.  Let $\Fl^\bullet(M) = \bigcup_k \Fl^k(M)$ denote the set of all saturated flags in $L(M)$ starting at $\hat 0$.

\subsection{Oriented matroids}\label{sec:OM}

Let $\M$ be an oriented matroid with underlying matroid $M$.   We typically view $\M$ as a collection of \emph{signed covectors}, certain sign sequences $X: E \to \{+,0,-\}$ satisfying a collection of axioms \cite{OMbook}.  For a signed covector $X$, the zero set $X_0 \subset E$ is given by $X_0:= \{e \in E \mid X(e) = 0\} \in L(M)$, and is a flat.  The negative $-X$ 
of a signed covector $X$ is always a signed covector.  Given two signed covectors $X,Y$ of $\M$, the composition $X \circ Y$ is also a signed covector of $\M$ and is defined by
\begin{equation}\label{eq:compo}
(X \circ Y)(e) = \begin{cases} X(e) & \mbox{if $X(e) \neq 0$} \\
Y(e) & \mbox{if $X(e) = 0$.}
\end{cases}
\end{equation}
Oriented matroids can also be axiomatized using \emph{chirotopes}: a function $\chi: \B(M) \to \{+,-\}$ satisfying a collection of axioms.  The choice of $\M$ is equivalent to the choice of a pair $\chi,-\chi$ of opposite chirotopes.  We typically assume that a choice of chirotope has been fixed, omitting it from the notation.  

Let $\L = \L(\M)$ denote the lattice of signed covectors of $\M$.  We have $X \leq Y$ in $\L(M)$ if $Y$ is obtained from $X$ by setting some entries to 0.  By convention, $\L$ has a minimal element $\minL$ and a maximal element $\hat 1 = (0,0,\ldots,0)$.  In the poset $\L \setminus \{\minL,\hat 1\}$, the maximal elements are signed cocircuits, and the minimal elements are \emph{topes}.  We let $\T = \T(\M)$ denote the set of topes of $\M$.  The oriented matroid $\M$ is \emph{acyclic} if there is a tope $P \in \T$ with $P(e) = +$ for all $e \in E$.  

There is a surjective map of posets
$$
\phi: \L(\M)\setminus \minL \to L(M), \qquad X \mapsto X_0 = \{e \in E \mid X(e) = 0\}
$$
sending a signed covector to its zero set.  The rank $\rk(X)$ is defined to be $\rk(X) = \rk(\phi(X))$.

For a tope $P \in \T$, we let $\L(P):=[P,\hat 1]$ denote the closed interval between $P$ and $\hat 1$.  The lattice $\L(P)$ is known as the \emph{Las Vergnas face lattice}.  The restriction of $\phi$ to $\L(P)$ is injective, with image equal to $L(P) \subset L(M)$.  We will often identify $\L(P)$ and $L(P)$ via this map.  The elements of $\L(P)$ or $L(P)$ are called the \emph{faces} of $P$.  Rank one faces are called \emph{facets}.  Corank one faces are called \emph{vertices}.  If $\M$ is acyclic and $P$ is the positive tope, then $L(P)$ is the set of zero sets of the nonnegative signed covectors of $\M$.  We let $\Fl(P) \subset \Fl(M)$ be the set of flags of flats that belong to $L(P)$.  Similarly, define $\Delta(P):= \Delta(L(P) - \{\hat 0,\hat 1\})$ to be the order complex of the reduced part of $L(P)$.

\subsection{Affine oriented matroids}\label{sec:AOM}
An \emph{affine oriented matroid} is a pair $(\M,0)$ where $0 \in E$ is a distinguished element.  We let $\T^+= \T^+(\M)$ denote the set of topes $P\in \T$ satisfying $P(0) = +$.  Thus, $\T^+$ can be identified with the orbits of $\T$ under negation.

\begin{defn}
 Given an affine oriented matroid $(\M,0)$, we define the \emph{bounded complex} by
$$
\L^0 := \{\minL\} \cup \{X \in \L \setminus \minL \mid Y(0) = + \text{ for all } Y \geq X\} \subset \L.
$$
The set of bounded topes $\T^0(\M)$ of $(\M,0)$ are the minimal elements of $\L^0 \setminus \minL$.
\end{defn}
By definition, we have $\T^0 \subset \T^+$.

Now let $\tM$ be an extension of $\M$ by an element labelled $\star$.  Given a sign sequence $X: E \to \{+,0,-\}$, we denote by $(\epsilon, X)$ the sign sequence $\widetilde X$ on $\widetilde E = \{\star\} \sqcup E$ defined by $\widetilde X(\star) = \epsilon$ and $\widetilde X(e) = X(e)$ for all $e\in E$.  The pair $(\tM, \star)$ is an affine oriented matroid, and we let 
$$
\T^\star = \T^\star(\widetilde\M) := \{\mbox{topes $P \in \T(\M)$ such that $(+,P)$ is bounded in }(\tM,\star)\}.
$$ 
 
If $\tM$ is a general extension of $\M$, then there is a simpler description of the set of bounded topes, not requiring one to check all faces of $P$.

\begin{lemma}
Suppose that $\tM$ is a general extension of $\M$.  Then we have
\begin{equation}\label{eq:Tstar}
\T^\star = \T^\star(\widetilde\M) = \{P  \in \T(M) \mid (+,P) \text{ is a tope of $\tM$ but $(-,P)$ is not}\}.
\end{equation}
\end{lemma}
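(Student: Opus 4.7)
The plan is to prove both inclusions by contradiction, leveraging standard structural facts about the covector lattice $\L(\tM)$ of an oriented matroid.

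For the forward direction, suppose $P \in \T^\star$ but that $(-,P)$ is also a tope of $\tM$. Then $(+,P)$ and $(-,P)$ are two topes of $\tM$ whose separation set is $\{\star\}$. A standard covector-elimination argument for adjacent topes produces their common wall: the sign vector $(0,P)$, obtained from $(+,P)$ by zeroing the $\star$-coordinate, is a covector of $\tM$. Since $(0,P) \geq (+,P)$ in $\L(\tM)$ and $(0,P)(\star)=0$, this violates the boundedness condition defining $\L^b$, a contradiction.

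For the reverse direction, assume $(+,P)\in\T(\tM)$, $(-,P)\notin \T(\tM)$, and, toward a contradiction, that $(+,P)$ is not bounded. Then there is a proper covector $Y \geq (+,P)$ in $\L(\tM)$ with $Y(\star)=0$, so $Y=(0,X)$ for some $X \in \L(\M)$ with $X \geq P$. Matroid-genericity of the extension ensures that the zero set $X_0$ remains a flat of $\tM$; combining this with the structure theory of single-element extensions of oriented matroids, the presence of $(0,X)\in \L(\tM)$ forces the companion sign vector $(-,X)$ to also be a covector of $\tM$. The standard local description of the topes below a covector---the topes $T\in \L(\tM)$ satisfying $T \leq (-,X)$ correspond bijectively to topes of the restriction $\tM|_{X_0} = \M|_{X_0}$ via $T \mapsto T|_{X_0}$---then produces, from the tope $P|_{X_0}$ of $\M|_{X_0}$ (which is a tope because $P$ has no zeros), the sign vector $(-,P)$ as a tope of $\tM$, contradicting the hypothesis.

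The main obstacle is the reverse direction, where one must combine two distinct oriented-matroid tools: the theory of single-element extensions, to upgrade $(0,X)$ to the companion covector $(-,X)$, and the local star-of-covector description, to extract $(-,P)$ as a tope below $(-,X)$. The forward direction, by contrast, is essentially a one-line application of the existence of walls between adjacent topes.
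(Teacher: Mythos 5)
The paper states this lemma without giving a proof, so there is no reference argument to compare against; evaluating your proposal on its own, the argument is sound.  In the forward direction you correctly use the elimination axiom on the adjacent pair $(+,P)$, $(-,P)$ to produce the wall $(0,P)$; since its zero set is $\{\star\}\subsetneq\tE$, it is a proper covector, so it indeed witnesses failure of boundedness.  Correctly, genericity plays no role in this direction.  In the reverse direction you localize the use of genericity appropriately: it is what guarantees that a proper flat $X_0$ of $\M$ stays a flat of $\tM$, i.e.\ $\rk_{\tM}(X_0\cup\star)=\rk(X_0)+1$, which is equivalent to $\star$ being a coloop of the restriction $\tM|_{\{\star\}\cup X_0}\cong \{\star\}\oplus \M|_{X_0}$.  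That coloop decomposition is the precise reason why $(0,X)\in\L(\tM)$ implies $(-,X)\in\L(\tM)$: via the standard bijection between covectors $Z\le (0,X)$ and covectors of $\tM|_{\{\star\}\cup X_0}$, the covector $(-,0)$ of the direct sum pulls back to $(-,X)$.  Your final step, matching $P|_{X_0}\in\T(\M|_{X_0})$ to a tope $T\le(-,X)$ and checking $T=(-,P)$ on $E\setminus X_0$ via $X\geq P$, is correct.

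The one place you are a bit terse is the sentence asserting that ``the structure theory of single-element extensions'' upgrades $(0,X)$ to $(-,X)$.  As stated it is a black box; it would be better to make the coloop argument explicit (as above), since this is exactly where genericity enters and where the lemma would fail otherwise (e.g.\ if $\star$ were parallel to an element of $X_0$, so that $X_0$ is not a flat of $\tM$ and $(-,X)$ is not a covector).  One could also streamline by bypassing $(-,X)$ entirely and applying the star-of-$(0,X)$ bijection directly, noting that both $\star$-signs occur among topes below $(0,X)$ because $\star$ is a coloop in the corresponding restriction.  These are stylistic points; the proof is correct.
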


The following result appears in classical work of Greene, Las Vergnas, Zaslavsky \cite{GZ, LV}.
\begin{proposition}\label{prop:numbertopes}
Let $(\M,0)$ be an affine matroid and let $(\tM,\star)$ be a general extension of $\M$.  We have
\begin{align*}
|\T^+| &= w_\Sigma(M),  \qquad
|\T^\star| = \mu^+(M), \qquad
|\T^0| = \beta(M).
\end{align*}
\end{proposition}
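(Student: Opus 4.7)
The plan is to establish each formula by classical reductions, following the approach of Greene--Zaslavsky and Las Vergnas.

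For $|\T^+| = w_\Sigma(M)$, I would invoke the Zaslavsky theorem in its oriented-matroid form (Las Vergnas), which says that the total number of topes satisfies $|\T(\M)| = (-1)^r \chi_M(-1)$. Assuming $0$ is not a loop (otherwise both sides vanish or are easily handled separately), topes occur in antipodal pairs $\{P,-P\}$ with exactly one of each pair having $P(0)=+$, so $|\T^+| = |\T|/2 = |\chi_M(-1)|/2$. Using $\chi_M(t) = (t-1)\bchi_M(t)$, evaluation at $t=-1$ gives $|\bchi_M(-1)| = |\chi_M(-1)|/2$, and this equals $w_\Sigma(M)$ by definition.

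For $|\T^b| = \beta(M)$, I would argue by deletion-contraction on some element $e \in E \setminus 0$ that is neither a loop nor a coloop. The base cases (rank $\le 1$, $E = \{0\}$, or $M$ disconnected) are immediate since both sides vanish. For the inductive step, one partitions $\T^b(\M,0)$ according to whether the closure of the tope is disjoint from the hyperplane $e$ (these correspond to $\T^b(\M \setminus e, 0)$) or meets it in a facet (these correspond to $\T^b(\M/e, 0)$ via the contraction map on signed covectors). This matches the recursion $\beta(M) = \beta(M\setminus e) + \beta(M/e)$ from \eqref{eq:betaeq}, completing the induction.

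For $|\T^\star| = \mu^+(M)$, I would exhibit the bijection $\T^\star(\M) \to \T^b(\tM)$ given by $P \mapsto (+,P)$. This is well-defined and bijective using the characterization \eqref{eq:Tstar}: a tope $(+,P)$ of $\tM$ is bounded with respect to $\star$ exactly when its opposite $(-,P)$ fails to be a tope, because a face $Y \geq (+,P)$ with $Y(\star) \neq +$ would force a covector on the other side of $\star$, and in a generic extension this can only happen when the tope $P$ is split by $\star$. Applying the preceding identity to the affine oriented matroid $(\tM,\star)$ yields $|\T^\star(\M)| = |\T^b(\tM)| = \beta(\tM)$, and then \cref{lem:betageneric} gives $\beta(\tM) = \mu^+(M)$.

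The main obstacle is the deletion-contraction step for $|\T^b|$: one must verify carefully that the combinatorial definition of boundedness (all faces $Y \geq X$ satisfy $Y(0)=+$) is preserved by the natural maps on covectors induced by deletion and contraction, so that the partition of $\T^b(\M,0)$ matches the recursion. The other two formulas then follow essentially formally from this and from the Zaslavsky/Las Vergnas enumeration of topes, for which the references \cite{GZ, LV} provide the foundational arguments.
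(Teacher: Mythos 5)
The paper itself gives no proof of \cref{prop:numbertopes}: it is stated as a citation to the classical work of Greene, Las Vergnas, and Zaslavsky. So the question is only whether your proposed argument is sound.

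Your reductions for the first and third equalities are fine: $|\T^+| = |\T|/2 = |\chi_M(-1)|/2 = |\bchi_M(-1)| = w_\Sigma(M)$ via the Las Vergnas tope count, and $|\T^\star| = |\T^b(\tM)| = \beta(\tM) = \mu^+(M)$ via \cref{lem:betageneric}. Both reduce everything to the middle equality $|\T^b| = \beta(M)$.

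The deletion-contraction step for $|\T^b| = \beta(M)$, however, has a genuine gap. You claim to partition $\T^b(\M,0)$ into topes whose closure misses $H_e$ (matched with $\T^b(\M \setminus e,0)$) and topes whose closure meets $H_e$ in a facet (matched with $\T^b(\M/e,0)$). Neither claimed bijection holds. Take four lines in $\R^2$ in general position with $H_4$ cutting through the triangle formed by $H_1,H_2,H_3$ (for instance $H_1:y=0$, $H_2:y=x$, $H_3:y=-x+2$, $H_4:y=x/2+1/4$, with $0$ the line at infinity). Then $\beta(M)=3$ and the three bounded regions of $\A$ are: the two pieces of the big triangle on either side of $H_4$, and one small triangle bounded by $H_1,H_2,H_4$. \emph{All three} have $H_4$ as a facet, so your first class is empty, yet $|\T^b(\M\setminus 4)| = 1$ (the uncut triangle). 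Your second class has three elements but $|\T^b(\M/4)|=2$ (three points on a line give two bounded intervals), and two of the three bounded regions of $\A$ share the same facet on $H_4$, so the contraction map fails to be injective. The totals agree ($0+3 = 1+2 = 3$) by accident, but the individual correspondences do not exist. The recursion $\beta(M) = \beta(M\setminus e)+\beta(M/e)$ is of course true, and the count of bounded topes does satisfy the same recursion, but establishing that count recursion requires a more careful argument --- for example, choosing a side of $H_e$ and pairing only the region on the chosen side with the facet in $\A/e$ while sending the other piece to its image in $\A\setminus e$ (even then, one must address unbounded regions of $\A\setminus e$ that acquire a bounded piece, and bounded facets on $H_e$ flanked by two unbounded regions). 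The classical proofs instead go through the M\"obius function of the intersection lattice or the contractibility of the bounded complex; either route would close the gap, but the partition as you have stated it does not.
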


\section{Orlik-Solomon algebra and canonical forms}\label{sec:OS}
Let $M$ be a matroid on ground set $E$.
\subsection{Orlik-Solomon algebra}
Let $\Lambda^\bullet(E)$ denote the exterior algebra over $\Z$ generated by elements $e \in E$.  If $S = \{s_1,\ldots,s_k\} \subset E$ is an ordered set, then we write $e_S := e_{s_k} \wedge \cdots \wedge e_{s_1}$.  (We caution the reader that this convention is the reverse of that of \cite{EL}.)  Define the linear map $\partial: \Lambda^\bullet(E) \to \Lambda^{\bullet-1}(E)$ by
$$
\partial(e_{1} \wedge e_2 \wedge \cdots \wedge e_k) = \sum_{i=1}^k (-1)^{k-i} e_1 \wedge \cdots \wedge \widehat{e_i} \wedge \cdots \wedge e_k.
$$
We have $\partial^2 = 0$.  

\begin{definition}
The \emph{Orlik-Solomon algebra} $\OS^\bullet(M)$ is the quotient of the exterior algebra $\Lambda^\bullet(E)$ over $\Z$ by the ideal 
$$
I = (\partial e_S \mid S \subseteq E \text{ is dependent}).
$$
\end{definition}

When $E = [n]$, we denote the generators of $\OS(M)$ by $e_1,e_2,\ldots,e_n$ for clarity.  The Orlik-Solomon algebra is supported in degrees $0,1,\ldots,r$.

\begin{proposition}[\cite{OS,OTbook,SV}]
For each $k = 0,1,\ldots,r$, $\OS^k(M)$ is a free $\Z$-module with rank equal to the absolute value of the coefficient of $t^k$ in the characteristic polynomial $\chi_M(t)$.
\end{proposition}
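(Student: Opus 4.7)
The plan is to establish this via the \emph{no broken circuit} basis, which simultaneously proves freeness, gives an explicit basis, and matches the rank against $\chi_M(t)$ through Whitney's broken circuit theorem. Fix a linear order on $E$. Recall that a \emph{broken circuit} is a set of the form $C \setminus \{\min C\}$ where $C$ is a circuit of $M$, and an independent set $S$ is \emph{nbc} if it contains no broken circuit. I will prove that $\{e_S \mid S \text{ is nbc of size } k\}$ is a $\Z$-basis of $\OS^k(M)$; then Whitney's theorem, namely the identity
\begin{equation*}
\chi_M(t) = \sum_{k=0}^r (-1)^k \,\nbc_k(M)\, t^{r-k}
\end{equation*}
with $\nbc_k(M)$ the number of size-$k$ nbc sets, identifies the rank with $|[t^{r-k}]\chi_M(t)|$.

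First I would prove spanning. Given any wedge $e_S$ with $S$ independent, if $S$ contains a broken circuit $C \setminus \{e_0\}$ with $e_0 < \min(C\setminus \{e_0\})$, then $C = \{e_0\} \cup (C\setminus\{e_0\})$ is dependent and the Orlik--Solomon relation $\partial e_C = 0$ lets me rewrite $e_{C\setminus \{e_0\}}$ as a signed sum of terms $e_{C\setminus \{e_i\}}$ each of which has strictly smaller minimum than $e_0$. Wedging with $e_{S\setminus (C\setminus\{e_0\})}$ (and handling the repeated-index case via $e \wedge e = 0$ after further reductions) gives an expression of $e_S$ in terms of wedges $e_{S'}$ which are smaller in the lexicographic order determined by the chosen linear order on $E$. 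Descent on this well-founded order reduces any $e_S$ to a $\Z$-linear combination of nbc monomials.

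For linear independence, the cleanest route is to induct on $|E|$ via the deletion--contraction short exact sequence. Pick any non-loop $e \in E$; every element of $\OS^\bullet(M)$ can be written uniquely as $\alpha + e \wedge \beta$ with $\alpha,\beta$ not involving $e$, and one checks the sequence
\begin{equation*}
0 \longrightarrow \OS^\bullet(M\setminus e) \longrightarrow \OS^\bullet(M) \longrightarrow \OS^{\bullet-1}(M/e) \longrightarrow 0
\end{equation*}
is exact. Taking $e$ to be the \emph{maximum} element of $E$ in the chosen order, the nbc monomials of $M$ partition into those not involving $e$ (which are precisely the nbc monomials of $M \setminus e$) and those of the form $e_{S'} \wedge e$ where $S'$ is nbc in $M/e$ with respect to the induced order. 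By induction these two families are independent in $\OS^\bullet(M\setminus e)$ and $\OS^{\bullet-1}(M/e)$ respectively, and the short exact sequence then forces their union to be independent in $\OS^\bullet(M)$. The base case is a single element or the empty matroid, which is immediate. The rank identity then follows from the deletion--contraction recursion $\chi_M(t) = \chi_{M\setminus e}(t) - \chi_{M/e}(t)$ for non-loops, combined with the inductive hypothesis, or equivalently from Whitney's theorem applied directly.

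The main obstacle is exactness of the deletion--contraction sequence, specifically verifying that the natural map $\OS^\bullet(M\setminus e) \to \OS^\bullet(M)$ is injective. This is nontrivial because relations $\partial e_C = 0$ coming from circuits $C$ of $M$ containing $e$ produce identities among $e$-free monomials that a priori might not hold in $\OS^\bullet(M\setminus e)$. The standard way around this is to exhibit a retraction: one defines a $\Z$-linear section $\OS^\bullet(M) \to \OS^\bullet(M\setminus e)$ by projecting onto the $e$-free part in the nbc presentation and verifies it respects the Orlik--Solomon ideal by a careful case analysis on circuits containing $e$ (using that if $C \ni e$ is a circuit then for any circuit $C'$ of $M\setminus e$ contained in $C\cup C' \setminus e$, strong circuit elimination produces the needed relations). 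Once this is in place the proof is complete.
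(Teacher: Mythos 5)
The paper states this result by citation alone, so there is no in‑paper proof to compare with; your broken‑circuit route (nbc spanning, deletion--contraction independence, Whitney's theorem for the count) is exactly the one the cited references take. One small thing about the statement itself: as your displayed Whitney identity shows, the rank of $\OS^k(M)$ is $|[t^{r-k}]\chi_M(t)|$, not $|[t^{k}]\chi_M(t)|$ (for $U_{2,3}$, $\chi_M=t^2-3t+2$ and $\OS^2$ has rank $2$; and the paper itself notes $\OS^r(M)$ has rank $\mu^+(M)=|\chi_M(0)|$). The paper's ``$t^k$'' is presumably a slip; your proof targets the correct version.

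The genuine gap is in the step you yourself flag as the ``main obstacle.'' The retraction you propose does not work as stated. The naive projection $\Lambda^\bullet(E)\to\Lambda^\bullet(E\setminus e)$ that kills everything involving $e$ does not carry the Orlik--Solomon ideal of $M$ into that of $M\setminus e$: for a circuit $C$ containing $e$, $\partial e_C$ projects to $\pm\, e_{C\setminus e}$, and $C\setminus e$, being a proper subset of a circuit, is \emph{independent} in $M\setminus e$, so $e_{C\setminus e}$ is in general a nonzero element of $\OS^\bullet(M\setminus e)$ (e.g.\ $M=U_{2,3}$, $e=3$, $C=\{1,2,3\}$, where $e_1e_2\neq 0$ in $\OS^\bullet(M\setminus 3)$). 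Reading it instead as ``project onto the $e$-free part of the nbc expansion'' presupposes that the nbc monomials form a basis of $\OS^\bullet(M)$, which is exactly what you are trying to prove, so it is circular. The circuit-elimination remark does not rescue this: eliminating $e$ between two circuits through $e$ produces a circuit of $M\setminus e$, which only tells you which relations $\OS^\bullet(M\setminus e)$ already satisfies, not that the $e$-free part of the ideal of $M$ lands inside the ideal of $M\setminus e$. To close the gap you need a genuinely different argument for injectivity in \cref{prop:OSexact} — for instance, first establish the direct-sum decomposition over flats of \cref{prop:OSsum} and reduce the whole proposition to top degree, or run the joint induction so that after applying $\Res_e$ to a hypothetical relation and invoking the inductive nbc basis for $M/e$ you handle the remaining $e$-free part by a separate, non-retraction argument, as in Orlik--Terao's book. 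Everything else in your write-up — the spanning reduction by lexicographic descent, the nbc bookkeeping across deletion and contraction with $e$ taken to be the maximum element, and the deletion--contraction recursion for $\chi_M$ — is correct and standard.
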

In particular, $\OS(M)$ is a free $\Z$-module with rank $\mu^+(M)$.  
We have $\OS^0(M) \cong \Z$,  the isomorphism given by identifying the basis element $e_\emptyset \in \OS^0(M)$ with $1 \in \Z$.
Let 
\begin{align*}
\OS(M) &:= \OS^r(M) \mbox{ denote the top degree component of the Orlik-Solomon algebra.} 
\end{align*}
By convention, for the empty matroid $M_\emptyset$, we have $\OS(M_\emptyset) = \OS^0(M_\emptyset) \cong \Z$.  For a flat $F \in L(M)$ of rank $k$, define the subspace $\OS_F(M) \subset \OS^k(M)$ by
$$
\OS_F(M) = {\rm span}(e_S \mid S \in \I_k(M) \text{ and } \overline{S} = F) \cong \OS(M^F).
$$
\begin{proposition}[\cite{OTbook,SV}]\label{prop:OSsum}
We have a direct sum decomposition
$$
\OS^\bullet(M) = \bigoplus_{F \in L(M)} \OS_F(M).
$$
\end{proposition}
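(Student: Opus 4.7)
I would prove this by refining the standard $\Z$-grading on $\OS^\bullet(M)$ to an $L(M)$-grading. Give the exterior algebra $\Lambda^\bullet(E)$ a grading by the poset $L(M)$ in which $e_S$ has degree $\overline{S}$. Since $|S| = \rk(\overline{S})$ whenever $S$ is independent, this refines the cohomological $\Z$-grading on the independent-set generators, and $\OS_F(M)$ is by construction the image in $\OS^\bullet(M)$ of the degree-$F$ component.

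The crux is to verify that the Orlik--Solomon ideal $I = \langle \partial e_T : T \text{ dependent}\rangle$ is $L(M)$-homogeneous. First, I would replace the generators by the circuit relations $\{\partial e_C : C \text{ a circuit}\}$ together with the vanishings $e_T = 0$ for dependent $T$; the latter is a consequence of the former via the identity $(\partial e_C)\wedge e_{(T\setminus C)\cup \{c_0\}} = \pm e_T$ for any circuit $C \subseteq T$ and any chosen $c_0 \in C$ (the remaining summands cancel since they repeat an element of $C$). For a circuit $C$, minimality gives $\overline{C \setminus c} = \overline{C}$ for every $c \in C$, so $\partial e_C = \sum_c \pm e_{C \setminus c}$ is $L(M)$-homogeneous of degree $\overline{C}$; multiplying by any monomial $e_R$ preserves homogeneity since $\overline{(C \setminus c) \cup R} = \overline{C} \vee \overline{R}$ does not depend on $c$. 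Consequently $I$ is $L(M)$-homogeneous, the grading descends to $\OS^\bullet(M)$, and the decomposition $\OS^\bullet(M) = \bigoplus_F \OS_F(M)$ is immediate from reading off the degree-$F$ piece of the quotient.

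An alternative that sidesteps this homogeneity bookkeeping is a rank comparison. The natural map $\bigoplus_F \OS_F(M) \to \OS^\bullet(M)$ is surjective: each $e_S$ for independent $S$ lies in $\OS_{\overline{S}}(M)$, and the nbc basis theorem confirms that such elements span. Applying the preceding proposition both to $M$ and to each restriction $M^F$, then using the M\"obius expansion $\chi_M(t) = \sum_F \mu(\hat 0, F)\,t^{r - \rk(F)}$, one finds that both sides are free $\Z$-modules of total rank $|\mu(\hat 0, F)|$ in each putative $L(M)$-degree $F$, which forces the surjection to be an isomorphism. The main obstacle in either strategy is the careful handling of circuit-versus-general-dependent generators of $I$; the rank-count route bypasses this cleanly by reducing everything to the known characteristic polynomial identities.
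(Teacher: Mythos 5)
The paper states \cref{prop:OSsum} without proof, citing \cite{OTbook,SV}, so there is no in-paper argument to compare against; both of your strategies are sound. Your first argument --- grading $\Lambda^\bullet(E)$ by $L(M)$ via $e_S \mapsto \overline{S}$ and checking that the Orlik--Solomon ideal is $L(M)$-homogeneous because $\overline{C\setminus c}=\overline{C}$ for every $c$ in a circuit $C$, and wedging by $e_R$ shifts degree uniformly by $\vee\,\overline{R}$ --- is essentially the argument of the cited references and is the more structural of the two. One point you should make explicit there: you are implicitly using that $I$ is generated by the circuit boundaries $\partial e_C$ alone, not by all $\partial e_T$ with $T$ dependent. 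This does follow from your wedge identity together with the Leibniz rule $\partial(\alpha\wedge\beta)=\partial\alpha\wedge\beta\pm\alpha\wedge\partial\beta$ and $\partial^2=0$, which give $\partial e_T = \mp(\partial e_C)\wedge\partial e_{(T\setminus C)\cup\{c_0\}}\in(\partial e_C)$, but it is worth spelling out since the paper's definition of $I$ uses all dependent $T$.

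Your rank-count alternative is also correct, and in fact buys you slightly more: it simultaneously proves that the natural map $\OS(M^F)\to\OS^\bullet(M)$, $e_S\mapsto e_S$, is injective with image $\OS_F(M)$ (the paper builds this isomorphism into the definition of $\OS_F(M)$, but it is not automatic). Concretely, the composite $\bigoplus_F\OS(M^F)\to\bigoplus_F\OS_F(M)\to\OS^\bullet(M)$ is a surjection of free $\Z$-modules, and the M\"obius expansion of $\chi_M$ matches the total ranks, so the composite and hence each intermediate map is an isomorphism. Be a little careful with the phrasing ``both sides are free of rank $|\mu(\hat0,F)|$ in each $L(M)$-degree $F$'': a priori the codomain $\OS^\bullet(M)$ does not carry an $L(M)$-grading (that is exactly what you are proving), so the correct comparison is of total ranks $\sum_F|\mu(\hat0,F)|$ on each side.
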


\subsection{Broken circuits}\label{sec:nbc}
A basis of $\OS^\bullet(M)$ can be constructed from the broken circuit complex, dating back to work of Wilf and Brylawski.  
Fix a total ordering $\prec$ on $E$.  A broken circuit is a set $C' = C \setminus \min(C)$ where $C$ is a circuit, and the minimum $\min(C)$ is taken with respect to $\prec$.  An independent set $S \subset E$ is called \nbc~if it does not contain any broken circuits.  A basis $B \in \B(M)$ is called a \nbc-basis if it does not contain any broken circuits.  
For the following, see \cite{OTbook, Yuz}.
\begin{theorem}
The set $\{e_S \mid \mbox{S is \nbc~and } S \in \I_k(M)\}$ is a basis of $\OS^k(M)$.
\end{theorem}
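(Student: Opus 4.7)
The plan is to establish the two standard properties of a basis separately: that the set $\{e_S \mid S \text{ is \nbc}, \; S \in \I_k(M)\}$ spans $\OS^k(M)$, and that it is $\Z$-linearly independent. Spanning will use the defining Orlik--Solomon relations $\partial e_C = 0$ as a rewriting system, and linear independence will be obtained by deletion-contraction with respect to the largest element of the order $\prec$.

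For spanning, I would fix a reverse-lexicographic total order on $k$-subsets of $E$ (comparing by largest element first, then next-largest, and so on). Suppose $S \in \I_k(M)$ contains a broken circuit $B = C \del c_0$ where $c_0 = \min_\prec(C)$; note $c_0 \notin S$ since $S$ is independent. Writing $C = \{c_0, c_1, \ldots, c_s\}$ with $c_0 \prec c_1 \prec \cdots \prec c_s$, the relation $\partial e_C = 0$ in $\OS^{s}(M)$ yields
\[
e_B = e_{C \del c_0} \;=\; \sum_{i=1}^{s} (-1)^{i-1} e_{C \del c_i}.
\]
Each set $C \del c_i$ for $i \geq 1$ contains $c_0$ and is obtained from $B$ by replacing $c_i$ with $c_0 \prec c_i$, so it is strictly smaller than $B$ in the reverse-lex order. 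Wedging both sides with $e_{S \del B}$ (with appropriate sign) writes $e_S$ as a linear combination of $e_{S'}$, each $S'$ strictly smaller than $S$. Iterating this rewriting terminates because the order is well-founded, and produces an expression for $e_S$ as a $\Z$-linear combination of $e_T$ with $T$ \nbc.

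For linear independence, I would argue by induction on $|E|$. If $|E| = 0$ or every element of $E$ is a loop, the claim is trivial. Otherwise, let $e^* = \max_\prec(E)$. If $e^*$ is a loop, then $\OS^k(M) \cong \OS^k(M \del e^*)$ and nbc sets do not change, so we reduce. If $e^*$ is neither a loop nor a coloop, the core tool is the Orlik--Solomon deletion-contraction short exact sequence
\[
0 \longrightarrow \OS^k(M \del e^*) \stackrel{\iota}{\longrightarrow} \OS^k(M) \stackrel{\pi}{\longrightarrow} \OS^{k-1}(M / e^*) \longrightarrow 0,
\]
where $\iota$ is induced by the inclusion of the exterior algebras and $\pi$ sends $e_S$ to $\pm e_{S \del e^*}$ if $e^* \in S$ and to $0$ otherwise. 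The coloop case is handled similarly, using that then $\OS^\bullet(M) \cong \OS^\bullet(M \del e^*) \otimes \Lambda(e^*)$. The decisive point is that the nbc basis is compatible with this sequence: with the induced order on $E \del e^*$, the \nbc\ $k$-subsets of $M$ not containing $e^*$ are exactly the \nbc\ $k$-subsets of $M \del e^*$, while the \nbc\ $k$-subsets of $M$ containing $e^*$ correspond bijectively, via $S \mapsto S \del e^*$, to the \nbc\ $(k-1)$-subsets of $M / e^*$. Granted this compatibility and the inductive hypothesis, the nbc-elements in $\OS^k(M)$ must be linearly independent.

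The main obstacle is verifying the two bijections of nbc sets in the last step and the exactness of the deletion-contraction sequence. For the bijections, the subtle direction uses the fact that a circuit of $M/e^*$ is either a circuit of $M$ not containing $e^*$, or $C \del e^*$ for some circuit $C$ of $M$ containing $e^*$; since $e^*$ is the $\prec$-largest element, $e^*$ cannot be the minimum of any circuit it lies in, so broken circuits translate cleanly between $M$ and $M/e^*$. For the exactness, one checks directly that $\ker \pi \supseteq \image \iota$ using the Orlik--Solomon relations, and the reverse inclusion is forced once spanning is known on both ends together with the counting in the split direct sum $\Lambda^\bullet(E) = \Lambda^\bullet(E \del e^*) \oplus e^* \wedge \Lambda^\bullet(E \del e^*)$. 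Once both facts are in place, the inductive step completes the proof.
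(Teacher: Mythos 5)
The paper states this result without proof, referring the reader to \cite{OTbook, Yuz}, so there is no internal argument to compare against; you are supplying a proof of a background fact. Your overall route --- rewriting along broken-circuit relations for spanning, deletion--contraction with respect to $e^\ast = \max_\prec E$ for linear independence --- is the standard one, and both the rewriting step (showing each replacement $C \setminus c_i$ is strictly smaller than $B = C \setminus c_0$ in reverse-lex, hence the process terminates) and the bijections of \nbc\ sets under deletion and contraction are stated and justified correctly.

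The one place the argument is genuinely soft is your treatment of exactness of the deletion--contraction sequence, and in particular of injectivity of $\iota$, which is what the independence step actually needs. You assert that $\ker \pi \subseteq \image\iota$ and injectivity of $\iota$ are ``forced once spanning is known on both ends together with the counting in the split direct sum $\Lambda^\bullet(E) = \Lambda^\bullet(E\del e^\ast) \oplus e^\ast \wedge \Lambda^\bullet(E\del e^\ast)$,'' but this direct sum is a decomposition of the exterior algebra, not of $\OS^\bullet(M)$; the Orlik--Solomon ideal does not respect it, since $\partial e_S$ for $e^\ast \in S$ has components in both summands. As written, the ``forced by counting'' step either quietly invokes a rank formula for $\OS^k$ (which is equivalent in strength to what you are proving), or it is circular. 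The cleanest repair is to invoke the paper's own \cref{prop:OSexact}, which records exactly this sequence; for a simple matroid an atom is a single element, so it applies verbatim to $e^\ast$, and for matroids with loops the theorem is vacuous while for parallel elements one may pass to the simplification since \nbc\ sets are unchanged. Alternatively, one can sidestep exactness entirely by combining your spanning argument with the rank formula the paper records for $\OS^k(M)$ (absolute value of a coefficient of $\chi_M$) and Whitney's theorem that the number of \nbc\ $k$-sets equals that same quantity, giving a basis by cardinality comparison. Either patch closes the gap; the rest of your argument is sound.
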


\subsection{Reduced Orlik-Solomon algebra}
We assume now that we have an affine matroid $(M,0)$.  For clarity, the element of $\Lambda^\bullet(E)$ that corresponds to $0 \in E$ is denoted $e_0$.  Since $\partial^2 = 0$, the map $\partial$ descends to a map $\partial: \OS^\bullet(M) \to \OS^{\bullet-1}(M)$.  We let 
$$
\rOS^\bullet(M) := \partial(\OS^{\bullet}(M)) \subset \OS^\bullet(M)
$$ 
denote the \emph{reduced Orlik-Solomon algebra}.  The subalgebra $\rOS^\bullet(M)$ is generated by $\be:= e- e_0$ for $e \in E \setminus 0$, and it is also equal to the kernel of $\partial$ on $\OS^\bullet(M)$.  The reduced Orlik-Solomon algebra is supported in degrees $0,1,\ldots,r-1$.  For the next results, see \cite[Section 2.7]{Yuz01} and \cite[Proposition 3.2]{Dim}.
\begin{proposition}
For each $k = 0,1,\ldots,r-1$, $\rOS^k(M)$ is a free $\Z$-module with rank equal to the absolute value of the coefficient of $t^k$ in the reduced characteristic polynomial $\bchi_M(t)$.
\end{proposition}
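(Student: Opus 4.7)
The plan is to deduce the statement from the analogous rank formula for $\OS^\bullet(M)$ by proving that $(\OS^\bullet(M), \partial)$ is an acyclic complex, then extracting ranks of $\rOS^\bullet(M)$ from the resulting exact sequences. Assume $E \neq \varnothing$ (otherwise the claim is vacuous). The first step is acyclicity: pick any $e \in E$ and define $h \colon \OS^k(M) \to \OS^{k+1}(M)$ by $h(x) := e \wedge x$; this descends to $\OS^\bullet(M)$ because the OS-ideal is two-sided in the exterior algebra. Using $\partial(e) = 1$ and the Leibniz rule for $\partial$, one computes
$$(\partial h + h\partial)(x) = \partial(e \wedge x) + e \wedge \partial(x) = x,$$
so $h$ is a contracting chain homotopy. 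This shows $(\OS^\bullet(M), \partial)$ is acyclic, and in particular $\ker \partial = \image \partial$ in each degree, reconciling the two descriptions $\rOS^\bullet(M) = \partial(\OS^\bullet(M)) = \ker \partial$ stated in the excerpt.

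From acyclicity I obtain, for each $k \geq 1$, a short exact sequence of $\Z$-modules
$$0 \longrightarrow \rOS^k(M) \longrightarrow \OS^k(M) \xrightarrow{\partial} \rOS^{k-1}(M) \longrightarrow 0,$$
together with the boundary values $\rOS^0(M) = \OS^0(M)$ and $\rOS^r(M) = 0$ (the latter because $\OS^{r+1}(M) = 0$). Since every submodule of a finitely generated free $\Z$-module is free, each $\rOS^k(M)$ is free, establishing the freeness half of the proposition. The exact sequence then yields the rank recursion $\rk \rOS^k(M) = \rk \OS^k(M) - \rk \rOS^{k-1}(M)$ for $k \geq 1$, which together with the boundary values and the preceding Proposition determines every rank $\rk \rOS^k(M)$ from the ranks $\rk \OS^k(M)$.

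The final step matches these ranks with the coefficients of $\bchi_M(t)$. The defining identity $(t-1)\bchi_M(t) = \chi_M(t)$, read coefficient by coefficient, produces the same linear recursion between successive coefficients of $\bchi_M$ and $\chi_M$ as the rank recursion above produces between $\rk \rOS^\bullet$ and $\rk \OS^\bullet$. Rota's classical sign alternation for the coefficients of $\chi_M$ forces the corresponding alternation for $\bchi_M$, so absolute values behave additively across the recursion; that $\bchi_M$ is in fact a polynomial (i.e. $\chi_M(1)=0$) is reflected on the algebraic side by $\rOS^r(M) = 0$. An induction on $k$ then identifies $\rk \rOS^k(M)$ with $|[t^k]\bchi_M(t)|$ as claimed. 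The one delicate point I anticipate is the sign and index bookkeeping in this final matching, but once the indexing convention of the preceding Proposition is pinned down this reduces to an elementary identity between alternating partial sums.
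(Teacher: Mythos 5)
Your proof is correct, and the approach is the standard one. The paper does not actually supply an argument here---this Proposition, like its unreduced counterpart, is cited from the Orlik--Solomon/Orlik--Terao literature---so let me simply verify your reasoning. The operator $h(x)=e\wedge x$ descends to $\OS^\bullet(M)$ because the Orlik--Solomon ideal is an ideal, and together with the graded Leibniz rule $\partial(e\wedge x)=x-e\wedge\partial(x)$ you get $\partial h+h\partial=\mathrm{id}$, so $(\OS^\bullet(M),\partial)$ is acyclic and $\ker\partial=\mathrm{im}\,\partial$ in every degree. The resulting short exact sequences $0\to\rOS^k\to\OS^k\to\rOS^{k-1}\to 0$ show $\rOS^k$ is free (a submodule of a free $\Z$-module) and give the rank recursion, which closes against $\chi_M(t)=(t-1)\bchi_M(t)$ using Rota's sign alternation.

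Two points worth pinning down. First, the index bookkeeping you flag is genuinely off in the paper's statement: in both this Proposition and the preceding one the exponent should be $t^{r-k}$ (resp.\ $t^{r-1-k}$), not $t^k$---this is forced by the paper's own remark that $\OS(M)=\OS^r(M)$ has rank $\mu^+(M)=|\chi_M(0)|$. With that convention your base case reads $\rk\rOS^0=1=|[t^{r-1}]\bchi_M|$ and the inductive step is $|[t^{r-k}]\chi_M|=|[t^{r-k}]\bchi_M|+|[t^{r-1-k}]\bchi_M|$, exactly matching your recursion. Second, an implicit hypothesis: the chosen $e\in E$ must not be a loop, so that $h$ is nontrivial. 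If $M$ has a loop then $1\in I$ and $\OS^\bullet(M)=\rOS^\bullet(M)=0$, in which case the claim is vacuous (and the characteristic-polynomial side must be interpreted accordingly); assuming $M$ loopless, any $e\in E$ works, and your argument goes through.
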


Let $d:= r-1$ and
\begin{align*}
\rOS(M)&:= \rOS^d(M) \mbox{ denote the top degree component of the reduced Orlik-Solomon algebra.}
\end{align*}

\begin{proposition}\label{prop:OSrOS}
Let $M$ be a matroid with rank $r \geq 1$.  We have an isomorphism $\partial: \OS(M) \stackrel{\cong}{\longrightarrow} \rOS(M)$.
\end{proposition}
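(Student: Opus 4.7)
The plan is to read the statement directly off the definitions and facts already recorded in the excerpt; no substantial new argument is required.

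For surjectivity, the defining relation $\rOS^\bullet(M):=\partial(\OS^\bullet(M))$ specializes in top degree to
\[
\rOS(M) \;=\; \rOS^d(M) \;=\; \partial\bigl(\OS^{d+1}(M)\bigr) \;=\; \partial(\OS^r(M)) \;=\; \partial(\OS(M)),
\]
so $\partial:\OS(M)\to\rOS(M)$ is onto.

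For injectivity I would invoke the identification, recalled in the paragraph introducing $\rOS^\bullet$, that $\rOS^\bullet(M)$ also coincides with the kernel of $\partial$ acting on $\OS^\bullet(M)$. Applied in degree $r$, this reads
\[
\ker\!\bigl(\partial|_{\OS^r(M)}\bigr) \;=\; \rOS^r(M) \;=\; \partial\bigl(\OS^{r+1}(M)\bigr) \;=\; 0,
\]
because $\OS^\bullet(M)$ is supported in degrees $0,1,\ldots,r$ and hence $\OS^{r+1}(M)=0$. Combined with surjectivity this shows that $\partial$ is an isomorphism onto $\rOS(M)$.

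The only non-formal input is the identification $\rOS^\bullet(M)=\ker\partial$, i.e.\ the acyclicity of the Orlik--Solomon complex, which is the classical fact proved by exhibiting the chain contraction $\xi\mapsto e_0\wedge\xi$ (using $\partial^2=0$ and the defining relations). Since this fact has already been recorded in the excerpt, no real obstacle arises; the argument is essentially bookkeeping, and the hypothesis $r\geq 1$ is used only to ensure that the boundary degree in question is actually the top degree of the Orlik--Solomon complex. As a sanity check, $\OS^r(M)$ is free of rank $\mu^+(M)$, and the same rank equality holds for $\rOS^d(M)$ via $\chi_M(t)=(t-1)\bchi_M(t)$, so a surjection of free abelian groups of equal finite rank must be an isomorphism --- giving an alternative endgame if one wishes to bypass the kernel identification.
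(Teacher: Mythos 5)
Your proof is correct, and since the paper records this proposition without proof (it is presented as a formal consequence of the preceding facts about $\rOS^\bullet(M)$), your argument is essentially the only natural one: surjectivity is the definition $\rOS^{r-1}(M)=\partial(\OS^r(M))$, and injectivity follows from $\ker(\partial|_{\OS^r(M)})=\rOS^r(M)=\partial(\OS^{r+1}(M))=0$, using the stated identification of $\rOS^\bullet(M)$ with both the image and the kernel of $\partial$. Your closing remark also correctly isolates the one non-formal input (exactness of $(\OS^\bullet,\partial)$, which the paper asserts when it says $\rOS^\bullet=\ker\partial$), and the rank-counting alternative is valid since a surjective endomorphism of a finitely generated free $\Z$-module is an isomorphism.
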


\subsection{Canonical forms}
In this section we assume that an orientation $\M$ of $M$, together with a general extension $(\tM,\star)$ of $\M$, has been given.  Let $B \in \B(M)$ be a basis.  We now define topes that are in the bounded part of $B$; see \cite{EL}.  Let $C_B$ be the signed fundamental circuit on $B \cup \star$ of $\tM$ with $C_B(\star) = -$.  The circuit necessarily has support $B\cup \star$ by genericity of the extension $\tM$.  Note that $(+, C_B|_B)$, i.e.\ the sign sequence on $B\cup \star$ with $+$ at $\star$ and $C_B(i)$ at $i\in B$, is a tope in the restriction $\widetilde\M|_{B\cup \star}$ but $(-, C_B|_B)$ is not.  We say that a tope $P \in \T$ is in the \emph{bounded part} of $B$ if we have $P|_B = C_B|_B$.  Write
$$
\T^{B} = \{P \in \T \mid \mbox{$P$ is in the bounded part of $B$}\}.
$$

\begin{lemma}
For any basis $B$, we have $\T^B \subset \T^\star$.
\end{lemma}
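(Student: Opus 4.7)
The plan is to leverage the characterization \eqref{eq:Tstar} of $\T^\star$ valid for generic extensions: a tope $P \in \T(\M)$ satisfies $P \in \T^\star$ (equivalently, $(+,P)$ is a bounded tope of $(\tM,\star)$) if and only if $(+,P)$ is a tope of $\tM$ while $(-,P)$ is not. Given $P \in \T^B$, i.e.\ $P|_B = C_B|_B$, the task reduces to verifying these two statements about the sign-lifts of $P$ to $\tE = E \cup \star$.

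The first step, which I expect to be the main content of the proof, is to rule out $(-,P)$ as a tope of $\tM$ using circuit--covector orthogonality. By construction $(-,P)(\star) = - = C_B(\star)$ and $(-,P)|_B = P|_B = C_B|_B$, so $(-,P)$ agrees with the signed circuit $C_B$ on the entire support $B \cup \star$ of $C_B$ (whose support is indeed all of $B\cup\star$ by genericity of $\tM$). Any signed covector $X$ of $\tM$ must be orthogonal to every signed circuit $C$: either their supports are disjoint, or there exist $e, e' \in \mathrm{supp}(X) \cap \mathrm{supp}(C)$ with $X(e) = C(e)$ and $X(e') = -C(e')$. Since $(-,P)$ exhibits no sign disagreement with $C_B$ on the common support, $(-,P)$ is not even a covector of $\tM$, let alone a tope.

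The second step is to verify that $(+,P)$ is a tope of $\tM$, which becomes essentially automatic once the first step is done. Indeed, since $\M = \tM \setminus \star$, the tope $P$ is the restriction of some covector $(\epsilon,P) \in \L(\tM)$ with $\epsilon \in \{+,0,-\}$; if $\epsilon = 0$, composing $(0,P)$ with a covector of $\tM$ nonzero at $\star$ (such exists by genericity) realizes $(+,P)$ as a covector. In every case at least one of $(+,P),(-,P)$ is a covector of $\tM$, and being zero-free such a lift is automatically a tope. Having excluded $(-,P)$, we conclude $(+,P) \in \T(\tM)$, and \eqref{eq:Tstar} yields $P \in \T^\star$, which is the intended meaning of $\T^b$ in the present setting (via the bijection $\T^b(\tM,\star) \cong \T^\star$ given by $(+,P)\mapsto P$). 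The remainder beyond the orthogonality argument is routine bookkeeping with the extension structure of $\tM$.
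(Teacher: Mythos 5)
Your proof is correct and follows essentially the same route as the paper's: the key observation in both is that $(-,P)$ agrees with $C_B$ on the entire support $B\cup\star$, hence fails circuit--covector orthogonality and cannot be a tope (or even a covector) of $\tM$, after which \eqref{eq:Tstar} gives $P\in\T^\star$. The paper leaves the verification that $(+,P)$ is indeed a tope implicit (it would follow even more directly by noting that $(0,P)$ fails the same orthogonality test, so $(+,P)$ is the only possible lift of $P$ to a covector of $\tM$), whereas you spell it out via composition with a covector nonzero at $\star$; both ways are fine.
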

\begin{proof}
If a tope $(+,P)$ satisfies $P|_B = C_B|_B$ for some basis $B$ of $\M$, then $P$ is automatically bounded in $\tM$ since $(-,P)$ cannot be orthogonal to $C_B$.
\end{proof}

For an \emph{unordered} basis $B \in M$, we say that an ordering $(b_1,b_2,\ldots,b_r)$ of $B$ is positive if $\chi(b_r,b_{r-1},\ldots,b_1) = +$, where $\chi$ is the chirotope of $\M$.  
We define an element
$$
e_B := \chi(b_r,b_{r-1},\ldots,b_1)  e_{b_r} \wedge \cdots \wedge e_{b_1}
\in A(M)$$
where $(b_1,b_2,\ldots,b_r)$ is any ordering of $B$.

In the following result we will use the residue maps between Orlik-Solomon algebras, reviewed in \cref{sec:residue}; see \cite{EL} for further details.
\begin{theorem}[{\cite[Theorem 2.10]{EL}}] \label{thm:EL}
For each $P \in \T(\M)$, there exists a distinguished element $\Omega_P \in \OS(M)$ satisfying the following properties:
\begin{enumerate}
\item
The \emph{canonical form} $\Omega_P$ is invariant under simplification of matroids, satisfies $\Omega_{-P} = (-1)^r \Omega_P$, and is uniquely characterized by the following recursion.  If $\M$ is the rank $0$ empty matroid with chirotope $\chi$, then $\Omega_P = \chi(\emptyset) \in \OS^0(M)$.  If $r \geq 1$, then for any atom $\atom \in \At(M)$, we have
$$
\Res_\atom \Omega_P = \begin{cases} P(e)\, \Omega_{P/\atom} \in \OS(M/\atom) &\mbox{if $\atom \in L(P)$,} \\
0 & \mbox{otherwise.}
\end{cases}
$$
Here, $P/\atom = P_\atom \in \T(\M/\atom)$ is the tope given by $P/\atom = P|_{E \setminus \atom}$, and the chirotope of $\M/\atom$ is fixed by choosing $e \in \atom$ and setting $\chi_{\M/\atom}(e_1,\ldots,e_{r-1}) := \chi_{\M}( e_1,\ldots,e_{r-1},e)$.
\item
For a general extension $(\tM, \star)$ of $\tM$, the elements
$$
\{\Omega_P \mid P \in \T^\star\}
$$
form a basis of $\OS(M)$, and for any basis $B \in \B(M)$, we have
\begin{equation}\label{eq:cone}
(-1)^{|C^{-1}_B(-)|-1}e_B = \sum_{P \in \T^B} \Omega_P.
\end{equation}
\end{enumerate}
\end{theorem}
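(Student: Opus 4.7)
The plan is to prove parts (1) and (2) simultaneously by induction on the rank $r$ of $M$. The base case $r=0$ is immediate from the prescribed definition $\Omega_P = \chi(\emptyset) \in \OS^0(M) \cong \Z$. For the inductive step with $r \geq 1$, I would take the residue recursion in (1) as a constructive definition of $\Omega_P$, after verifying consistency. The joint residue map $\bigoplus_{a \in \At(M)} \Res_a : \OS(M) \to \bigoplus_a \OS(M/a)$ is injective on the top degree of a loopless rank-$r$ matroid, because every $\nbc$-basis from Section \ref{sec:nbc} begins with some atom, so any $\omega \in \OS(M)$ is determined by the collection of its residues. Thus $\Omega_P$ is uniquely characterized by the prescribed residues, provided consistency holds. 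Consistency amounts to the prescribed values lying in the image of the joint residue map, which is cut out by the anti-commutation identities $\Res_a \Res_b + \Res_b \Res_a = 0$ at rank-$2$ flats $a \vee b$. Applied to the prescribed values, these translate into the identity $P(e_a)\,(P/a)(e_b) = -P(e_b)\,(P/b)(e_a)$, which follows from antisymmetry of the chirotope together with the compatibility of the chirotope convention chosen for $\M/(a \vee b)$ in the recursion.

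Having constructed $\Omega_P$ for every tope $P \in \T$, I would next verify (2). The identity $\Omega_{P^-} = (-1)^r \Omega_P$ follows by induction, since negating $P$ flips the sign of each of the $r$ residues in a complete chain. For the cone formula $e_B = \sum_{P \in \T^B} \Omega_P$, I would take $\Res_a$ of both sides, where $a = \overline{\{b\}}$ for a fixed $b \in B$: the left side becomes $\pm e_{B \setminus b} \in \OS(M/a)$, while the right side partitions over topes with $a \in L(P)$, and after applying $P \mapsto P/a$ becomes the cone sum for the basis $B \setminus b$ in $\M/a$ (using genericity of the restriction $\widetilde\M|_{B \cup \star}$ to match the bounded-part condition). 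The inductive cone formula in rank $r-1$ then matches both sides, and joint injectivity of residues upgrades this to the equality in $\OS(M)$. To conclude that $\{\Omega_P : P \in \T^\star\}$ is a basis, I would apply Proposition \ref{prop:numbertopes}, which gives $|\T^\star| = \mu^+(M) = \rk_\Z \OS(M)$: the cone formula implies spanning, since the $e_B$ span $\OS(M)$ by Proposition \ref{prop:OSsum} and each $\T^B \subseteq \T^\star$ by the lemma preceding the theorem, and then the cardinality match forces linear independence.

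The main obstacle I anticipate is the sign bookkeeping in the consistency check for residues: the chirotope of the contraction $\M/a$ depends on a choice of representative $e \in a$, and verifying that the prescribed values $P(e)\,\Omega_{P/a}$ glue coherently across different atoms and different representatives requires careful handling. A related subtlety is the residue computation in the cone formula, where one must precisely track how bounded parts $\T^B$ behave under contraction by an atom, using the fact that the generic extension restricts compatibly to $B \cup \star$. Once these sign conventions are settled, the remaining steps reduce to bookkeeping with the $\nbc$-basis and the direct-sum decomposition of Proposition \ref{prop:OSsum}.
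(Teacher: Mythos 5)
This theorem is imported from the reference \cite{EL} (the parallel joint work with Eur) and is not proved in the present paper, so there is no proof of the paper's own to compare against; what you have written is a from-scratch attempt.

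Your high-level strategy — take the residue recursion as a definition, establish uniqueness via injectivity of the joint residue map, establish existence via a consistency check, then deduce part (2) by applying $\Res_\atom$ to both sides of the cone identity and invoking the count $|\T^\star|=\mu^+(M)$ — is a sensible plan. But the injectivity step has a genuine gap as you have written it. The claim that ``every $\nbc$-basis begins with some atom'' does not give the conclusion: a single residue $\Res_{\atom}$ is not injective on $\OS^r(M)$ when $\atom$ is not a coloop (the kernel is $\iota_{\atom}(\OS^r(M\backslash \atom))$, cf.\ \cref{prop:OSexact}). Concretely, in $U_{2,3}$ both $\nbc$-bases $\{1,2\}$ and $\{1,3\}$ contain the smallest element $1$, yet $\Res_1(e_1e_2 - e_1e_3) = e_2 - e_3 = 0$ in $\OS^1(M/1)$ because $2$ and $3$ are parallel after contraction. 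So injectivity of $\Res_{\atom}$ for the first atom cannot be used to read off the $\nbc$-coefficients. Joint injectivity of $\bigoplus_\atom \Res_\atom$ \emph{is} true, but you need a different argument; the cleanest one already available in the paper is the flag-space duality (\cref{sec:flagspace}): since $\nu:\OS^r(M)\to(\F^r)^*$, $x\mapsto\sum_{F_\bullet}\Res_{F_\bullet}(x)\,\delta_{F_\bullet}$, is an isomorphism, and every complete-flag residue $\Res_{F_\bullet}$ factors through some $\Res_{F_1}$, vanishing of all atomic residues forces $x=0$. Alternatively, a deletion--contraction induction using the exact sequences of \cref{prop:OSexact} (commuting $\Res_a$ past $\iota_{a_0}$) also works.

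Two further points need more than assertion. First, your consistency step — that the image of the joint residue map is cut out exactly by the anti-commutation identities $\Res_a\Res_b+\Res_b\Res_a=0$ at rank-$2$ flats — is really a restatement of the flag-space duality: it amounts to saying that the only relations in $\F^r$ beyond those coming from $\F^{r-1}$ (inductively) are the length-one-jump relations at the bottom. This is essentially \cite[Theorem 2.4]{SV} as quoted in the paper, and you should invoke it rather than treat the anti-commutation identities as self-evidently sufficient. Second, in the residue computation for the cone formula, the step where you claim that $\Res_\atom$ of the right-hand side ``partitions over topes with $\atom\in L(P)$'' and then ``becomes the cone sum for $B\setminus b$ in $\M/\atom$'' requires a bijection between $\{P\in\T^B : \atom\in L(P)\}$ and $\T^{B\setminus b}(\M/\atom)$; this is not automatic, since $\atom$ need not be a facet of every tope in $\T^B$ (e.g.\ in a rank-$3$ arrangement where a fourth point subdivides the simplex on $B$, only some of the resulting sub-topes have $\atom$ as a facet). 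That bijection is the real combinatorial content and should be proved, not assumed.

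With those three repairs — flag-space duality for injectivity and for the consistency condition, and an explicit proof of the restriction/contraction bijection in the cone-formula induction — the outline would become a complete argument.
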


By \cref{prop:OSrOS}, the set $\{\bOmega_P := \partial \Omega_P \mid P \in \T^\star\}$ is a basis of $\rOS(M)$.

\begin{example}
Let $\M$ be the oriented matroid of rank 2 associated to the arrangement of three points on $\P^1$, as in \cref{ex:3pt}.  Then $\OS^\bullet(M)$ is generated by $e_1,e_2,e_3$ with the relation $e_2 e_1 - e_3 e_1 + e_3e_2 = 0$.  The canonical forms of $P,Q,R$ are $\Omega_P = e_2 e_1, \Omega_Q = e_3 e_2, \Omega_R =  e_1 e_3$.  Any two of these give a basis of $\OS(M)$.  The reduced canonical forms are $\partial \Omega_P = e_2 - e_1, \partial \Omega_Q = e_3 - e_2, \partial \Omega_R = e_1 - e_3$.  Any two of these give a basis of $\rOS(M)$.
\end{example}

\begin{remark}
In the case that $M$ arises from a real hyperplane arrangement $\bA$, the canonical forms of \cref{thm:EL}, are the usual canonical forms of polytopes \cite{ABL,LamPosGeom}.  These forms have also appeared in the work of Yoshinaga \cite{Yos} where they are referred to as the ``chamber basis".
\end{remark}

\section{DeRham cohomology intersection form}
\subsection{Residue maps}\label{sec:residue}
\begin{proposition}[{\cite[Proposition 2.2]{EL}}]\label{prop:OSexact}
For every atom $\atom \in \At(M)$, we have a short exact sequence 
\[
0\longrightarrow \OS^\bullet(M\backslash \atom) \overset{\iota_{\atom}}\longrightarrow \OS^\bullet(M) \overset{\Res_{\atom}}\longrightarrow \OS^{\bullet-1}(M/\atom) \to 0
\]
where $\iota_{\atom}(e_I) = e_I$ for $I \subseteq E \setminus c$, and $\Res_{\atom}(e_I) = e_{I\setminus e}$ if $I = (e \in \atom,i_1, \dots, i_{k-1})$ and $\Res_{\atom}(e_I) = 0$ if $I \cap \atom = \emptyset$.
These maps restrict to give the short exact sequence
\[
0\longrightarrow \rOS^\bullet(M\backslash \atom) \longrightarrow \rOS^\bullet(M) \longrightarrow \rOS^{\bullet-1}(M/\atom) \to 0.
\]
\end{proposition}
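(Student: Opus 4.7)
I would establish the unreduced short exact sequence via a broken-circuit basis argument, then restrict to the reduced subalgebras by a diagram chase. The first step is to check that both maps descend to the Orlik--Solomon quotients. For $\iota_{\atom}$ this is immediate since every circuit of $M\backslash \atom$ is a circuit of $M$. For $\Res_{\atom}$, one must verify that $\Res_{\atom}(\partial e_D) = 0$ in $\OS^{\bullet-1}(M/\atom)$ for every dependent $D$: the case $D \cap \atom = \emptyset$ is automatic; when $D = \{e\}\sqcup D'$ with $e \in \atom$, a direct expansion yields $\Res_{\atom}(\partial e_D) = \pm\partial e_{D'}$, which vanishes since $D'$ is dependent in $M/\atom$; and when $|D\cap \atom|\geq 2$, parallel elements in $\atom$ satisfy $e_e = e_{e'}$ in $\OS^1(M)$, so $e_I = 0$ whenever $I$ contains two elements of $\atom$, forcing the relevant terms to cancel in pairs. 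The composition $\Res_{\atom}\circ \iota_{\atom} = 0$ is obvious from the definitions.

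For exactness of the unreduced sequence, I would use the $\nbc$ basis with a total order on $E$ placing all of $\atom$ at the end, and set $e_\star := \min \atom$. Under this order, the only broken circuits of $M$ that fit inside $E\backslash \atom$ come from circuits of $M\backslash \atom$, so $S\subseteq E\backslash\atom$ is $\nbc$ in $M$ iff it is $\nbc$ in $M\backslash \atom$. The broken circuit of a parallel pair $\{e_\star,e'\}\subseteq \atom$ is $\{e'\}$, so any $\nbc$ set of $M$ meets $\atom$ only in $e_\star$; a similar analysis of the remaining broken circuits (those containing $e_\star$) shows such a set takes the form $S_0 \cup \{e_\star\}$ with $S_0 \subseteq E\backslash \atom$ being $\nbc$ in $M/\atom$. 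Thus the $\nbc$ basis of $\OS^\bullet(M)$ partitions as $\iota_{\atom}(\nbc\text{-basis of }\OS^\bullet(M\backslash \atom)) \,\sqcup\, \{e_{S_0\cup\{e_\star\}}\}$, and $\Res_{\atom}$ carries the second piece bijectively (up to sign) onto the $\nbc$ basis of $\OS^{\bullet-1}(M/\atom)$. This simultaneously establishes injectivity of $\iota_{\atom}$, surjectivity of $\Res_{\atom}$, and exactness in the middle.

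For the reduced version, the identity $\iota_{\atom}\partial = \partial \iota_{\atom}$ is trivial, and a Leibniz computation gives $\Res_{\atom}\partial = -\partial \Res_{\atom}$, so both maps restrict to $\rOS^\bullet = \ker \partial$; injectivity and the complex property then transfer automatically. For surjectivity, given $y \in \rOS^{\bullet-1}(M/\atom)$, I would lift to some $\tilde x \in \OS^\bullet(M)$ with $\Res_{\atom}(\tilde x) = y$; then $\Res_{\atom}(\partial \tilde x) = -\partial y = 0$, so by the unreduced exactness there is a unique $u \in \OS^{\bullet-1}(M\backslash \atom)$ with $\iota_{\atom}(u) = \partial \tilde x$, and injectivity of $\iota_{\atom}$ applied to $\iota_{\atom}(\partial u) = \partial^2 \tilde x = 0$ forces $\partial u = 0$. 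Acyclicity of $(\OS^\bullet(M\backslash \atom),\partial)$ then produces $\delta$ with $\partial \delta = u$, and the corrected lift $x := \tilde x - \iota_{\atom}(\delta)$ lies in $\rOS^\bullet(M)$ and still satisfies $\Res_{\atom}(x) = y$. Exactness in the middle is an analogous two-step chase. The main obstacle I anticipate is the well-definedness verification for $\Res_{\atom}$, particularly the cancellation bookkeeping in the $|D\cap \atom|\geq 2$ case; once that is handled, the rest of the argument is a clean combinatorial matching on the $\nbc$ basis together with a routine diagram chase.
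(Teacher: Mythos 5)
The paper does not supply a proof of this proposition --- it is stated as a known fact about Orlik--Solomon algebras (this is, up to notational choices, Theorem~3.65 in Orlik--Terao's book \cite{OTbook}) --- so there is no in-paper argument to compare against, and the proposal must be judged on its own merits. Your route via the $\nbc$ basis is the standard one and the overall strategy is correct: verify the maps descend, obtain exactness from the $\nbc$ partition, and transport the sequence to $\rOS^\bullet = \ker\partial$ via the sign identity $\Res_\atom\partial=-\partial\Res_\atom$ together with acyclicity of $(\OS^\bullet,\partial)$.

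Two points deserve tightening. Your own flag about the $|D\cap\atom|\geq 2$ case is warranted: as written it is not quite an argument, since ``$e_I=0$ in $\OS^\bullet(M)$'' is a statement about the source, whereas what is needed is that $\Res_\atom$ carries the ideal of $\OS^\bullet(M)$ into that of $\OS^\bullet(M/\atom)$. The clean fix is to reduce to the simple case at the outset, as the paper does repeatedly; the Orlik--Solomon algebra depends only on the simplification, $\atom$ then becomes a singleton, and case (iii) disappears. Alternatively, note that the ideal is already generated by $\partial e_C$ over circuits $C$, and a circuit meeting $\atom$ twice \emph{is} a parallel pair $\{e,e'\}\subset\atom$, for which $\Res_\atom(\partial e_{\{e,e'\}})=1-1=0$ directly; one then extends by the graded-derivation property of $\Res_\atom$. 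Second, the $\nbc$ decomposition you invoke --- placing $\atom$ at the top of the order, taking $e_\star=\min\atom$, and splitting the $\nbc$ basis of $\OS^\bullet(M)$ into sets inside $E\setminus\atom$ and sets of the form $S_0\cup\{e_\star\}$ with $S_0$ an $\nbc$ set of $M/\atom$ --- is Brylawski's deletion--contraction for broken-circuit complexes. Your choice of ordering (distinguished elements largest) is the correct one for this decomposition to hold, but it is itself a nontrivial lemma and should be cited or proved rather than left at ``a similar analysis.'' With these two points addressed the argument is complete.
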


Now let $F_\bullet = (\hat 0 = F_0 \subset F_1 \subset \cdots \subset F_k) \in \Fl^k(M)$ be a saturated flag of flats.  Then $F_1$ is an atom in $L(M)$, and for each $i = 1,2,\ldots,k-1$, we have that the contraction $F_{i+1}/F_i$ of $F_{i+1}$ is an atom in the lattice of flats $L(M/F_i)$ of the contraction $M/F_i$.  Thus the following definition makes sense.

\begin{definition}
Let $F_\bullet = (\hat 0 = F_0 \subset F_1 \subset \cdots \subset F_k) \in \Fl^k(M)$ be a saturated flag of length $k$.  The \emph{residue map} $\Res_{F_\bullet}: \OS^\bullet(M) \to \OS^{\bullet - k}(M/F_k)$ of the flag $F_\bullet$ is the $k$-fold composition
$$
\Res_{F_\bullet} =  \Res_{F_k/F_{k-1}} \circ \cdots \circ \Res_{F_2/F_1} \circ \Res_{F_1}: \OS^\bullet(M) \to \OS^{\bullet-k}(M/F_k).
$$
For an element $x \in \OS^k(M)$, we view the residue $\Res_{F_\bullet}(x)$ of $x$ at $F_\bullet$ as an integer via the identification $\OS^0(M/F_k) \cong \Z$. 
\end{definition}

By \cref{prop:OSexact}, $\Res_{F_\bullet}$ restricts to a residue map $\Res_{F_\bullet}: \rOS^\bullet(M) \to \rOS^{\bullet - k}(M/F_k)$.

\begin{example}
Let $M = U_{2,3}$ be the uniform matroid of rank 2 on $\{e_1,e_2,e_3\}$.  Let $F_\bullet = (\hat 0 \subset \{e_1\} \subset \hat 1)$.  Then 
$$
\Res_{F_\bullet} e_2 \wedge e_1 = \Res_{F_\bullet} e_3 \wedge e_1 = 1, \qquad \text{and} \qquad \Res_{F_\bullet} e_3 \wedge e_2 = 0.
$$
This is consistent with the relation $e_2e_1 - e_3e_1 + e_3 e_2 = 0$ in $\OS(M) = \OS^2(M)$.
\end{example}

For $S \subset E$, let $L(S) \subseteq L(M)$ be the sublattice of $L(M)$ generated by the atoms in $S$.  Equivalently, $L(S) = L(M \backslash S)$ where $M \backslash S$ is the matroid obtained by deleting all elements not in $S$.

Now let $S \in \I_k(M)$ be an independent set of size $k$ and let $F_\bullet \in \Fl^k$ be a saturated flag of length $k$.
We say that $F_\bullet$ is \emph{generated} by $S$ if $F_\bullet$ is a maximal chain in $L(S)$.  In other words, each $F_i$ is spanned by a subset of $S$.  Given a pair $(S,F_\bullet)$ where $F_\bullet$ is generated by an \emph{ordered} independent set $S=(s_1,s_2,\ldots,s_k)$, we define a permutation $\sigma = \sigma(S,F_\bullet) \in S_k$ by
\begin{equation}\label{eq:sigma}
F_i = {\rm span}(s_{\sigma(1)},s_{\sigma(2)},\ldots,s_{\sigma(i)}), \qquad \text{for } i = 1, 2, \ldots,k.
\end{equation}

\begin{definition}\label{def:rSF}
Let $F_\bullet \in \Fl^k(M)$ be a saturated flag and $S = (s_1,\ldots,s_k)$ be an ordered independent set. Define the \emph{residue $r(S, F_\bullet) \in \{0,1,-1\}$ of $S$ at $F_\bullet$} as follows.  If $F_\bullet$ is not generated by $S$ then we set $r(S,F_\bullet)= 0$.  If $F_\bullet$ is generated by $S$, then we set $r(S,F_\bullet) = (-1)^{\sigma(S,F_\bullet)}$ to be the sign of the permutation $\sigma(S,F_\bullet)$.  
\end{definition}

The following comparison follows immediately from the definitions.  
\begin{lemma}\label{lem:rSF}
Let $F_\bullet \in \Fl^k(M)$ be a saturated flag and $S = (s_1,\ldots,s_k)$ be an ordered independent set.  Then
$$
\Res_{F_\bullet}(e_S) = r(S, F_\bullet).
$$
\end{lemma}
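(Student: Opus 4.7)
The plan is to prove the lemma by induction on the length $k$ of the flag, with the base case $k=0$ being trivial (both sides equal $1$ under the identification $\OS^0(M) \cong \Z$, and the empty permutation has sign $+1$).

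For the inductive step, I would unpack the residue map as $\Res_{F_\bullet} = \Res_{F'_\bullet} \circ \Res_{F_1}$, where $F'_\bullet = (\hat 0 \subset F_2/F_1 \subset \cdots \subset F_k/F_1)$ is the induced flag of length $k-1$ in $L(M/F_1)$. Since $S$ is independent and $F_1$ is an atom, at most one $s_j$ can lie in $F_1$ (two such elements would be parallel). The explicit formula for $\Res_{F_1}$ from \cref{prop:OSexact} then gives
\[
\Res_{F_1}(e_S) = \begin{cases} (-1)^{j-1}\, e_{S \setminus s_j} \in \OS^{k-1}(M/F_1) & \text{if } s_j \in F_1, \\ 0 & \text{if } S \cap F_1 = \emptyset,\end{cases}
\]
where $S\setminus s_j = (s_1,\ldots,\widehat{s_j},\ldots,s_k)$ retains the induced ordering, and the sign $(-1)^{j-1}$ comes from moving $s_j$ to the front of $e_S$.

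The key combinatorial point is to match the signs. Assume $s_j \in F_1$, so $j = \sigma(1)$ in the permutation $\sigma = \sigma(S,F_\bullet)$. Writing $\sigma$ as the sequence $(j,\sigma(2),\ldots,\sigma(k))$, the inversions split into two groups: those involving position $1$, which number exactly $j-1$ (the values in $\{\sigma(2),\ldots,\sigma(k)\} = \{1,\ldots,k\}\setminus\{j\}$ that are less than $j$), and the remaining inversions, which under the relabeling $m \mapsto m$ for $m<j$ and $m \mapsto m-1$ for $m>j$ correspond bijectively to inversions of $\sigma' = \sigma(S\setminus s_j, F'_\bullet)$. Hence $\mathrm{sgn}(\sigma) = (-1)^{j-1}\mathrm{sgn}(\sigma')$, which matches the sign emerging from $\Res_{F_1}$.

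The remaining obstacle, which is minor, is to verify the vanishing cases. If $S \cap F_1 = \emptyset$, then $F_1 \notin L(S)$, so $F_\bullet$ is not generated by $S$ and both sides vanish. If $s_j \in F_1$ but $F'_\bullet$ is not generated by $S\setminus s_j$ in $M/F_1$, then $F_\bullet$ cannot be generated by $S$ in $M$ (pulling back would produce a generating chain), so $r(S,F_\bullet)=0$, and the inductive hypothesis gives $\Res_{F'_\bullet}(e_{S\setminus s_j}) = 0$ as well. Combining the vanishing analysis with the sign comparison in the non-vanishing case completes the induction.
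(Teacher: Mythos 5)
Your proof is correct and matches the route the paper has in mind: the paper states that the lemma ``follows immediately from the definitions,'' and your argument is exactly the careful unpacking of those definitions by induction on $k$. The sign bookkeeping — splitting the inversions of $\sigma$ into those involving position $1$ (exactly $j-1$ of them) and those that biject onto inversions of $\sigma'$ via the order-preserving relabeling — is precisely right, and you correctly handle the vanishing cases ($S \cap F_1 = \emptyset$ forces $F_1 \notin L(S)$; and $F_\bullet$ generated by $S$ forces $F'_\bullet$ generated by $S \setminus s_j$ in $M/F_1$, so the contrapositive takes care of the other vanishing branch). No gaps.
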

\subsection{Definition of intersection form}
Let $R := \Z[\a]= \Z[a_e: e \in E]$ be the polynomial ring in variables $a_e$ indexed by $e$ and let $Q = \Frac(A) = \Q(a_e: e \in E)$ be the fraction field of rational functions.  For a subset $S \subset E$, define
$$
a_S:= \sum_{e \in S} a_e.
$$
For a flag $F_\bullet \in \Fl^k(M)$, define
$$
 \frac{1}{a_{F_\bullet}} := \prod_{i=1}^{k-1} \frac{1}{a_{F_i}} \in Q, \qquad \frac{1}{a'_{F_\bullet}} := \prod_{i=1}^{k} \frac{1}{a_{F_i}} \in Q.
 $$

\begin{definition}\label{def:dR}
The $Q$-valued \emph{deRham cohomology intersection form} on $\OS^k(M)$ is given by
$$
\dRip{x, y}:= \sum_{F_\bullet \in \Fl^k(M)} \Res_{F_\bullet}(x) \frac{1}{a_{F_\bullet}} \Res_{F_\bullet}(y).
$$
\end{definition}
We shall also use the slight modification 
$$
\dRipp{x,y} :=  \sum_{F_\bullet \in \Fl^k(M)} \Res_{F_\bullet}(x) \frac{1}{a'_{F_\bullet}} \Res_{F_\bullet}(y).
$$
It is clear from the definition that $\dRip{\cdot,\cdot}$ is a symmetric bilinear form.
We view $\dRip{\cdot,\cdot}$ both as a $Q$-valued form on $\OS^k(M)$, and as a $Q$-valued form on $\OS^k(M)_Q := \OS^k(M) \otimes_\Z Q$.

\begin{proposition}\label{prop:dRind}
Let $S, S'$ be two ordered independent sets of size $k$.  Then
$$
\dRip{e_S,e_{S'}}= \sum_{F_\bullet \in \Fl^k(M)} r(S, F_\bullet) \frac{1}{a_{F_\bullet}} r(S', F_\bullet) .
$$
\end{proposition}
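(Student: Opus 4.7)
The plan is that this proposition should follow essentially immediately by substituting \cref{lem:rSF} into \cref{def:dR}, so the main task is to articulate why no additional bookkeeping is required.

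First I would unfold \cref{def:dR} applied to the pair $(x,y) = (e_S, e_{S'})$, writing
\[
\dRip{e_S, e_{S'}} = \sum_{F_\bullet \in \Fl^k(M)} \Res_{F_\bullet}(e_S)\, \frac{1}{a_{F_\bullet}} \, \Res_{F_\bullet}(e_{S'}).
\]
Since $S$ and $S'$ are independent sets of size $k$, the elements $e_S, e_{S'} \in \OS^k(M)$ are well-defined (no dependent subset appears), so the expression makes sense term-by-term and the sum is a well-defined element of $Q$.

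Next I would apply \cref{lem:rSF} to each factor: it identifies $\Res_{F_\bullet}(e_S) = r(S,F_\bullet) \in \{0, \pm 1\}$, under the convention that $\OS^0(M/F_k) \cong \Z$, and analogously for $e_{S'}$. Substituting these two identifications yields exactly
\[
\dRip{e_S, e_{S'}} = \sum_{F_\bullet \in \Fl^k(M)} r(S,F_\bullet)\, \frac{1}{a_{F_\bullet}}\, r(S',F_\bullet),
\]
which is the claimed formula.

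The only mild subtlety I would want to flag is that \cref{lem:rSF} is stated as an equality of integers after invoking the identification $\OS^0(M/F_k) \cong \Z$, whereas \cref{def:dR} uses residues as integers multiplied by the rational coefficient $1/a_{F_\bullet}$; since both uses of the residue go through the same identification, the substitution is consistent. I do not foresee a genuine obstacle: this proposition is really just a translation of the Orlik–Solomon definition of $\dRip{\cdot,\cdot}$ into the combinatorial language of ordered bases of flags, and its main purpose is presumably to serve as the computational workhorse for subsequent proofs (e.g.\ \cref{thm:fan}).
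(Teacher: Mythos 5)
Your proposal is correct and matches the paper's own (one-line) proof exactly: both simply substitute \cref{lem:rSF} into \cref{def:dR}. The additional remarks about well-definedness and the $\OS^0(M/F_k)\cong\Z$ identification are fine sanity checks but not needed; there is nothing further to add.
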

\begin{proof}
Follows immediately from \cref{lem:rSF}.
\end{proof}

The formula in \cref{prop:dRind} will be improved in \cref{thm:localBF}.

\begin{example}\label{ex:boolean}
Let $M$ be the boolean matroid of rank $d$ on $E = \{e_1,\ldots,e_d\}$.  The flats of $M$ consists of all the subsets of $E$.  The complete flags of flats $F_\bullet$ are in bijection with saturated chains of subsets $F_\bullet(w) = \{ \emptyset \subset \{e_{w_1}\} \subset \{e_{w_1},e_{w_2}\} \subset \cdots \}$, or equivalently with permutations $w = w_1w_2 \cdots w_d$ of $\{1,2,\ldots, d\}$.  The only basis is $E$ and $\OS(M)$ is one-dimensional, spanned by $e_E$.  We have
$$
\dRip{e_E, e_E} = \sum_{w \in S_d} \frac{1}{a_{F_\bullet(w)}} = \sum_{w \in S_d}  \prod_{i=1}^{d-1} \frac{1}{a_{w_1} + \cdots + a_{w_d}} = \frac{a_E}{a_1 \cdots a_d}.
$$
\end{example}

\begin{proposition}\label{prop:dRdirectsum}
The bilinear form $\dRip{\cdot,\cdot}$ on $\OS^k(M)$ is compatible with the direct sum decomposition $\OS^k(M) = \bigoplus_{F \in L^k(M)} \OS_F(M)$ of \cref{prop:OSsum}.  That is, for distinct $F,F' \in L^k(M)$ and $x \in \OS_F(M)$, $x' \in \OS_{F'}(M)$, we have $\dRip{x,x'} =0$.
\end{proposition}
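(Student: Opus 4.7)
The plan is as follows. By bilinearity of $\dRip{\cdot,\cdot}$, it suffices to prove the vanishing on the generating sets: I will show that $\dRip{e_S,e_{S'}} = 0$ whenever $S = (s_1,\ldots,s_k)$ and $S' = (s'_1,\ldots,s'_k)$ are ordered $k$-element independent sets with $\overline{S} = F$ and $\overline{S'} = F'$ and $F \neq F'$. By \cref{prop:dRind}, we have
\[
\dRip{e_S,e_{S'}} = \sum_{F_\bullet \in \Fl^k(M)} r(S,F_\bullet)\,\frac{1}{a_{F_\bullet}}\, r(S',F_\bullet),
\]
so it is enough to argue that each summand vanishes.

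The key observation is that the top flat of any flag contributing nontrivially to this sum is forced. Suppose $F_\bullet = (\hat 0 = F_0 \subset F_1 \subset \cdots \subset F_k)$ satisfies $r(S, F_\bullet) \neq 0$. By \cref{def:rSF}, $F_\bullet$ is generated by $S$, so $F_k$ is spanned by some subset $T \subseteq S$. But $\rk(F_k) = k$ implies $|T| \geq k$, and since $|S| = k$ we must have $T = S$, hence $F_k = \overline{S} = F$. By the same argument applied to $S'$, $r(S',F_\bullet) \neq 0$ forces $F_k = \overline{S'} = F'$. Since $F \neq F'$, these two conditions are incompatible, and so every term in the sum vanishes.

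There is no real obstacle here; the statement is essentially a bookkeeping consequence of the fact that the residue maps $\Res_{F_\bullet}$ on $\OS^k(M)$ respect the grading of \cref{prop:OSsum} in the sense that $\Res_{F_\bullet}$ restricted to $\OS_G(M)$ vanishes unless $G = F_k$. One could alternatively organize the argument by grouping the sum in \cref{def:dR} according to the terminal flat $F_k \in L^k(M)$, observing that the contribution from a fixed $F_k$ only depends on the part of $x$ (resp.\ $y$) lying in $\OS_{F_k}(M)$, which reproduces the claimed block-diagonal structure. Either phrasing gives the result directly.
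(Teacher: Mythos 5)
Your proof is correct and takes essentially the same approach as the paper: both reduce to $x = e_S$, $x' = e_{S'}$ and observe that $r(S,F_\bullet)$ and $r(S',F_\bullet)$ can only be simultaneously nonzero when $F_k = \overline{S} = \overline{S'}$, which is impossible for $F \neq F'$. Your version just spells out the small additional step of why $r(S,F_\bullet)\neq 0$ forces $F_k = \overline S$ (namely that $F_k$ has rank $k$ and is spanned by a subset of the $k$-element set $S$), which the paper leaves implicit.
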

\begin{proof}
We may assume that $x = e_S$ and $x' = e_{S'}$ where $\overline{S} = F$ and $\overline{S'} = F'$.  Let $F_\bullet \in \Fl^k(M)$.  We have $r(S,F_\bullet) = 0$ unless $F_k = F$, and $r(S',F_\bullet) = 0$ unless $F_k = F'$.  Thus $r(S,F_\bullet) r(S', F_\bullet) = 0$ for all $F_\bullet \in \Fl^k(M)$, and hence $\dRip{x,x'}=0$.
\end{proof}

The following result states that $\dRip{\cdot,\cdot}$ is compatible with restriction to flats.
\begin{proposition}\label{prop:restrictF}
Let $F \in L(M)$.  The restriction of $\dRip{\cdot,\cdot}$ to $\OS_F(M)$ is equal to $\dRip{\cdot,\cdot}$ for $\OS(M^F)$.
\end{proposition}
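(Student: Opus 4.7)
The plan is to reduce to basis elements and unpack the defining sum directly. By \cref{prop:OSsum}, elements of $\OS_F(M)$ (with $k=\rk(F)$) are spanned by $e_S$ with $S\in\I_k(M)$ and $\bar S = F$, so by bilinearity it suffices to show that for two such ordered independent sets $S, S'$,
$$
\dRip{e_S,e_{S'}}_{M} \;=\; \dRip{e_S,e_{S'}}_{M^F},
$$
where on the right, $S,S'$ are viewed as bases of $M^F$ and the right-hand pairing is the top-degree deRham form of $M^F$.

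First I would cut down the indexing set of the sum. By \cref{prop:dRind}, the left-hand side is $\sum_{F_\bullet \in \Fl^k(M)} r(S,F_\bullet)\,\tfrac{1}{a_{F_\bullet}}\,r(S',F_\bullet)$. The key observation is that $r(S,F_\bullet)=0$ unless $F_\bullet$ is generated by $S$, which forces $F_i \subseteq \bar S = F$ for every $i$; by rank, $F_k = F$. So only flags with $F_k = F$ contribute. These flags correspond bijectively to complete flags in $\Fl^k(M^F)$ via the standard identification $L(M^F) \cong [\hat 0, F]\subset L(M)$.

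Next I would match the two summands under this bijection. The combinatorial residue $r(S,F_\bullet)$ from \cref{def:rSF} depends only on whether $F_\bullet$ is a chain in $L(S)$ and on the sign of the permutation $\sigma(S,F_\bullet)$; both data are intrinsic to $S$ and the chain of subsets and are indifferent to whether we work inside $M$ or inside $M^F$. Hence $r(S,F_\bullet)$ and $r(S',F_\bullet)$ are computed to the same integers in either matroid. Similarly, the weight $\frac{1}{a_{F_\bullet}} = \prod_{i=1}^{k-1}\frac{1}{a_{F_i}}$ involves only the atoms of $F_i \subseteq F$, and the variables $a_e$ for $e\in F$ appear identically in the ambient rings $R$ for $M$ and for $M^F$, so this factor matches as well.

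The only potential subtlety is that the left-hand residue $\Res_{F_\bullet}$ in $M$ takes values in $\OS^0(M/F)\cong\Z$ while the right-hand one takes values in $\OS^0(M^F/F)\cong\Z$; but \cref{lem:rSF} identifies both with the same integer $r(S,F_\bullet)$, so no honest obstruction arises. Putting the three observations together, the terms of the two sums agree term-by-term under the bijection $\{F_\bullet\in\Fl^k(M):F_k=F\}\leftrightarrow \Fl^k(M^F)$, which establishes the equality. I expect the main thing to be careful about is simply to articulate the identification of the flag sets and verify that the combinatorial data $r$ and $a_{F_\bullet}$ are intrinsic to the interval $[\hat 0, F]$; once this is said, the equality is immediate from \cref{prop:dRind}.
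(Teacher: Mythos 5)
Your proof is correct and follows the same route as the paper, which proves the proposition in one line by observing that $[\hat 0, F]\subset L(M)$ is isomorphic to $L(M^F)$; you have simply unpacked the details of why that lattice isomorphism makes the two sums of \cref{prop:dRind} match term by term.
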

\begin{proof}
The interval $[\hat 0, F]$ in $L(M)$ is isomorphic to $L(M^F)$.
\end{proof}

\subsection{Bilinear form on reduced Orlik-Solomon algebra}
\begin{proposition}\label{prop:dRpartial}
Let $x,y \in \OS_F(M)$.  Then
$$
\dRip{x,y} = \dRipp{\partial x, \partial y}.
$$
\end{proposition}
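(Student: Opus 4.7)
The plan is to reduce to basis elements and then match both sides term-by-term via a bijection on flags of flats. Assume $\rk(F) = k \geq 1$; by bilinearity it suffices to verify the identity for $x = e_S$ and $y = e_{S'}$, where $S = (s_1,\ldots,s_k)$ and $S' = (s'_1,\ldots,s'_k)$ are ordered bases of $M^F$. By \cref{prop:dRdirectsum} the left-hand sum reduces to flags $F_\bullet \in \Fl^k(M)$ with $F_k = F$, while the right-hand side is naturally a sum over flags $G_\bullet \in \Fl^{k-1}(M)$. The central bijection identifies such $F_\bullet$ with length-$(k-1)$ flags $G_\bullet$ satisfying $G_{k-1} \subsetneq F$, via $F_\bullet \mapsto (F_0,\ldots,F_{k-1})$; under this bijection $\prod_{i=1}^{k-1}\frac{1}{a_{F_i}} = \prod_{i=1}^{k-1}\frac{1}{a_{G_i}} = \frac{1}{a'_{G_\bullet}}$, so the $a$-factors on the two sides match.

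For the numerators, I would expand $\partial e_S = \sum_{j=1}^k (-1)^{j-1}\, e_{S \setminus s_j}$ and use \cref{lem:rSF} to get
$$
\Res_{G_\bullet}(\partial e_S) = \sum_{j=1}^k (-1)^{j-1}\, r(S \setminus s_j,\, G_\bullet),
$$
making $\dRipp{\partial e_S,\partial e_{S'}}$ a triple sum over $(G_\bullet, j, j')$. The key sign identity to establish is
$$
r\bigl(S,\,(G_\bullet, F)\bigr) = (-1)^{k-j}\, r(S \setminus s_j,\, G_\bullet),
$$
where $j \in [k]$ is the unique index with $\overline{S \setminus s_j} = G_{k-1}$; uniqueness comes from the independence of $S$ (if $\overline{S \setminus s_j} = \overline{S \setminus s_{j'}}$ for $j\ne j'$ then $s_j$ would lie in $\overline{S\setminus s_j}$, a contradiction). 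I would prove this identity by decomposing $\sigma(S, F_\bullet) = \eta\cdot\widetilde{\tau}$, where $\widetilde{\tau}$ is the extension by $k\mapsto k$ of $\tau := \sigma(S\setminus s_j, G_\bullet)$ and $\eta$ is the cycle $(j,\, j{+}1,\, \ldots,\, k)$, which has sign $(-1)^{k-j}$.

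Combining the $\partial$-sign $(-1)^{j-1}(-1)^{j'-1}$ with the permutation sign $(-1)^{k-j}(-1)^{k-j'}$ gives $(-1)^{2k-2} = 1$, so for each $(G_\bullet, j, j')$ with both residues nonzero the contribution to $\dRipp{\partial e_S,\partial e_{S'}}$ matches the contribution of $F_\bullet = (G_\bullet, F)$ to $\dRip{e_S,e_{S'}}$. Triples with $G_{k-1} \not\subset F$ are automatically killed, because $\overline{S\setminus s_j} \subseteq F$ forces $r(S\setminus s_j, G_\bullet) = 0$ whenever $G_{k-1} \not\subset F$, so the bijection extends to all flags without boundary terms.

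I expect the main obstacle to be the permutation-sign bookkeeping in the decomposition $\sigma = \eta\cdot\widetilde{\tau}$, which requires careful tracking of how removing $s_j$ from $S$ reindexes the resulting ordered tuple. The other ingredients—bilinearity, the flag bijection, and the matching of $a$-factors—are all straightforward once the conventions are fixed.
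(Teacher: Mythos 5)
Your proof is correct and takes essentially the same route as the paper's: reduce to $e_S$, $e_{S'}$, expand $\partial$ into the triple sum, and match the two sides through the flag bijection $F_\bullet \leftrightarrow (F_0,\ldots,F_{k-1})$ with the cycle-sign identity $\sigma(S,F_\bullet)=\eta\cdot\tilde\tau$. The paper's version of the argument states the chain of equalities without spelling out the bijection or the permutation-sign bookkeeping; your writeup supplies exactly those missing details and they are all correct.
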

\begin{proof}
Let $x = e_S$ and $y = e_T$ for ordered independent sets $S = (s_1,\ldots,s_k),T = (t_1,\ldots,t_k)$ such that $\bar S = \bar T = F$ for some flat $F$.  Let $F_\bullet \in \Fl^k(M)$.  Since $x,y \in \OS_F(M)$, we have $\Res_{F_\bullet}(x) = \Res_{F_\bullet}(y) = 0$ unless $F_k = F$. 
We calculate
\begin{align*}
\dRipp{\partial x, \partial y} &= \sum_{F_\bullet \in \Fl^{k-1}(M)} \Res_{F_\bullet}(\partial x) \frac{1}{a'_{F_\bullet}} \Res_{F_\bullet}(\partial y) \\
&= \sum_{i,j=1}^k (-1)^{i-1}(-1)^{j-1} \sum_{F_\bullet \in   \Fl^{k-1}(M) \mid F_{k-1} = \overline{S \setminus i} = \overline{T \setminus j}} r(S \setminus s_i, F_\bullet) \frac{1}{a'_{F_\bullet}} r(T \setminus t_j, F_\bullet) \\
&= \sum_{G_\bullet \in \Fl^k(M) \mid G_k = F} r(S, G_\bullet) \frac{1}{a_{G_\bullet}} r(T, G_\bullet) = \dRip{x,y}. \qedhere
\end{align*}
\end{proof}

Recall the reduced Orlik-Solomon algebra $\rOS^\bullet(M) \subset \OS^\bullet(M)$ from \cref{sec:OS}.

\begin{corollary}\label{cor:same}
The bilinear form $\dRipp{\cdot,\cdot}$ on $\rOS(M)$ agrees with the bilinear form $\dRip{\cdot,\cdot}$ on $\OS(M)$ under the isomorphism $\partial: \OS(M) \to \rOS(M)$ of \cref{prop:OSrOS}.
\end{corollary}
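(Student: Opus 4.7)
The plan is to deduce this corollary as an immediate specialization of \cref{prop:dRpartial} to the top-degree case. The key observation is that in degree $k = r$, the direct sum decomposition $\OS^r(M) = \bigoplus_{F \in L^r(M)} \OS_F(M)$ of \cref{prop:OSsum} collapses to a single summand, because $\hat 1 = E$ is the unique flat of rank $r$. Therefore $\OS(M) = \OS_{\hat 1}(M)$, and every pair $x, y \in \OS(M)$ automatically satisfies the hypothesis $x, y \in \OS_F(M)$ required by \cref{prop:dRpartial}.

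Applying \cref{prop:dRpartial} then yields $\dRip{x, y} = \dRipp{\partial x, \partial y}$ for all $x, y \in \OS(M)$. Since $\partial : \OS(M) \to \rOS(M)$ is an isomorphism by \cref{prop:OSrOS}, every pair of elements $u, v \in \rOS(M)$ is uniquely expressible as $u = \partial x$, $v = \partial y$, and the identity above translates to $\dRipp{u,v} = \dRip{\partial^{-1} u, \partial^{-1} v}$. This is precisely the statement that the two bilinear forms agree under the isomorphism $\partial$.

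There is no substantive obstacle to overcome: the corollary is a direct specialization of the preceding proposition to the top-degree setting, and the real content lies in the residue calculation inside the proof of \cref{prop:dRpartial} (matching the sum over $F_\bullet \in \Fl^{k-1}(M)$ ending at some corank-one subflat with the sum over $G_\bullet \in \Fl^{k}(M)$ ending at $F$ via the extra factor $a_{F_k} = a_E$ which converts $1/a_{G_\bullet}$ to $1/a'_{G_\bullet}$). The only thing worth flagging is the compatibility of the $\Z$-grading: $\partial$ shifts degree from $r$ to $r-1 = d$, and $\dRipp{\cdot,\cdot}$ on $\rOS^d(M)$ is defined using residues along flags in $\Fl^d(M)$ together with the extra factor $1/a_{F_d} = 1/a_E$, which is exactly what appears when comparing the two formulae in the proof of \cref{prop:dRpartial}.
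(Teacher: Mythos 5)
Your proposal is correct and is essentially the argument the paper has in mind: the corollary is stated without proof as an immediate consequence of \cref{prop:dRpartial}, and your observation that $\OS^r(M) = \OS_{\hat 1}(M)$ (since $\hat 1 = E$ is the unique rank-$r$ flat) is precisely what makes the hypothesis of that proposition automatic in top degree. The bookkeeping you flag about the extra factor $1/a_{F_d} = 1/a_E$ converting $1/a_{G_\bullet}$ to $1/a'_{G_\bullet}$ is also the right thing to check and is handled inside the proof of \cref{prop:dRpartial}.
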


\subsection{Intersection form on topes}\label{sec:pFl}
For a tope $P \in \T(\M)$ and a flag $F_\bullet \in \Fl(M)$, define 
$$
r(P, F_\bullet) := \Res_{F_\bullet}(\Omega_P).
$$

\begin{lemma}
For any $P \in \T(\M)$ and $F_\bullet \in \Fl(M)$, we have $r(P,F_\bullet) \in \{-1,0,1\}$.
\end{lemma}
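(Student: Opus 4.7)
The plan is to induct on the rank $r = \rk(M)$, using the recursive characterization of the canonical forms $\Omega_P$ from \cref{thm:EL}(1). The recursive definition is perfectly adapted to computing iterated residues along complete flags, so the induction is built directly into the definition.

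In the base case $r = 0$, the matroid is empty, the unique element of $\OS^0(M) \cong \Z$ representing $\Omega_P$ is $\chi(\emptyset) = \pm 1$, and the only flag $F_\bullet$ is the empty flag, so $r(P, F_\bullet) = \pm 1 \in \{-1,0,1\}$.

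For the inductive step, I would write the flag as $F_\bullet = (\hat 0 \subset F_1 \subset F_2 \subset \cdots \subset F_r = \hat 1)$. The first flat $F_1 \in \At(M)$ is an atom, and the residual flag $F_\bullet/F_1 := (\hat 0 \subset F_2/F_1 \subset \cdots \subset F_r/F_1)$ is a complete flag in the rank-$(r-1)$ matroid $M/F_1$. From the definition in \cref{sec:residue}, the residue map factors as $\Res_{F_\bullet} = \Res_{F_\bullet/F_1} \circ \Res_{F_1}$. Applying the first residue and invoking \cref{thm:EL}(1), there are two cases. If $F_1 \notin L(P)$, then $\Res_{F_1}\Omega_P = 0$, hence $r(P, F_\bullet) = 0$. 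If $F_1 \in L(P)$, then $\Res_{F_1}\Omega_P = P(e)\,\Omega_{P/F_1}$ for a chosen $e \in F_1$, so
$$
r(P, F_\bullet) \;=\; P(e)\,\Res_{F_\bullet/F_1}(\Omega_{P/F_1}) \;=\; P(e)\cdot r(P/F_1, F_\bullet/F_1),
$$
which lies in $\{-1,0,1\}$ by the inductive hypothesis applied to $\M/F_1$, since $P(e) \in \{\pm 1\}$.

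Honestly, there is no serious obstacle here: once one reads \cref{thm:EL}(1) as a recursive formula for residues, the claim drops out. The only point worth double-checking is that in the case $F_1 \in L(P)$, the product $P(e)\cdot \Omega_{P/F_1}$ is independent of the choice of $e \in F_1$, because changing $e$ simultaneously flips $P(e)$ and the chirotope of $\M/F_1$ used to define $\Omega_{P/F_1}$; this consistency is already built into the statement of \cref{thm:EL}, so the induction goes through without modification.
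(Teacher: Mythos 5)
Your proof is correct and follows the same route as the paper: induct on rank, peel off the first residue $\Res_{F_1}$, and invoke \cref{thm:EL}(1) to reduce to $\M/F_1$. The paper's version is slightly terser (it simply notes $\Res_{F_1}\Omega_P$ is $0$ or a canonical form and starts the induction at $r=1$), while you explicitly track the sign $P(e)\in\{\pm1\}$ and note the independence of the choice of $e\in F_1$; both are fine.
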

\begin{proof}
By \cref{thm:EL}, the residue $\Res_{F_1} \Omega_P$ is either 0, or it equals to another canonical form $\Omega_{P/F_1}$.  The result then follows from induction on the rank $r$, with the case $r = 1$ being trivial.
\end{proof}

Recall that
$$
\pFl(M) := \{E_\bullet = (\hat 0  \subset E_1 \subset \cdots \subset E_s \subset E = \hat 1)\}
$$ denotes the set of partial flags of flats in $L(M)$.  We always assume that a partial flag starts at $\hat 0$ and ends at $\hat 1$.  We let $s = s(E_\bullet)$ denote the number of flats in $E_\bullet$ that belong to the proper part $L(M) \setminus \{\hat0,\hat1\}$.

Let $L(P)$ denote the Las Vergnas face lattice of a tope $P \in \T(\M)$ (see \cref{sec:OM}), viewed as a subposet of $L(M)$.  Note that $L(P) = L(-P)$.  A \emph{wonderful face} of $P$ is a partial flag $G_\bullet = \{\hat 0 \subset G_1 \subset G_2 \cdots \subset G_s \subset \hat 1\}$ where $G_i \in L(P)$.  We let $\pFl(P) = \Delta(L(P) - \{\hat 0,\hat1\})$ denote the set of wonderful faces of $P$, viewed as a subcomplex of $\pFl(M)$.  The closure $\bG_\bullet \subset \pFl(P)$ of a wonderful face $G_\bullet$ is the set of all partial flags $G'_\bullet$ of wonderful faces that refine $G_\bullet$.  A \emph{wonderful vertex} of $P$ is a complete flag $F_\bullet \in \Fl(P)$.  Equivalently, $F_\bullet$ is a facet of $\pFl(P)$.  In particular, a wonderful vertex $F_\bullet$ is contained in the closure of a wonderful face $G_\bullet$ if every flat in $G_\bullet$ also appears in $F_\bullet$.  We endow $\pFl(P)$ with the poset structure $G'_\bullet \leq G_\bullet$ if and only $G'_\bullet \in \bG_\bullet$.  Write $\emptyflag \in \pFl(P)$ for the trivial flag $\{\hat 0 <  \hat 1\}$.

The relation between $\pFl(P)$ and the wonderful compactification is explained in \cref{prop:wonderfulface}.

\begin{lemma}\label{lem:fliptope}
Let $P$ be a tope and $F \in L(P)$.  Then there is a unique tope $P_{\flip F}$ on the antipodal side of $F$.  More precisely, we have
$$
P_{\flip F}(e) = \begin{cases} - P(e) & \mbox{if $e \in F$,} \\
P(e) & \mbox{if $e \notin F$.}
\end{cases}
$$
\end{lemma}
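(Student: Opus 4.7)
The plan is to realize $P_{\flip F}$ explicitly as a composition of signed covectors of $\M$. Since $F \in L(P)$, the isomorphism $\phi|_{\L(P)} : \L(P) \stackrel{\cong}{\to} L(P)$ recorded in \cref{sec:OM} supplies a unique signed covector $Y \in \L(P)$ with zero set $Y_0 = F$. Unwinding what $Y \in \L(P)$ means (i.e.\ $Y \geq P$ in $\L$), we get $Y(e) = P(e)$ for $e \notin F$ and $Y(e) = 0$ for $e \in F$.

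The key step is to form the composition $X := Y \circ (-P)$, which is a signed covector of $\M$ by the composition axiom \eqref{eq:compo}. A direct case-check using \eqref{eq:compo} then gives: for $e \notin F$, we have $Y(e) \neq 0$ so $X(e) = Y(e) = P(e)$; for $e \in F$, we have $Y(e) = 0$ so $X(e) = -P(e)$. This is precisely the sign sequence claimed in the ``More precisely'' clause. Since the support of $X$ coincides with that of $P$, and $P$ is a tope (minimal in $\L \setminus \{\minL, \hat 1\}$), the covector $X$ is itself a tope; this establishes existence of $P_{\flip F}$.

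To justify the language ``on the antipodal side of $F$'', I would observe that the same $Y$ also satisfies $Y \geq P_{\flip F}$, so $F \in L(P_{\flip F})$ as well; by construction $P$ and $P_{\flip F}$ agree off $F$ and are negatives of each other on $F$. Uniqueness is immediate because the explicit formula in the statement pins down the sign sequence.

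I do not anticipate a real obstacle: the content of the lemma is just the face-reversal operation on topes expressed through the covector composition axiom, and the only point requiring any care is the identification of the face $F \in L(P)$ with a unique covector $Y$ via the Las Vergnas face lattice, which is already in place from \cref{sec:OM}.
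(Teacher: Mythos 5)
Your proof is correct and is essentially the same as the paper's: the paper views $F$ as the signed covector $Y \in \L(P)$ with $Y_0 = F$ and defines $P_{\flip F} := F \circ P^- = Y \circ P^-$, exactly your $X = Y \circ (-P)$. You've simply spelled out the case-check and the fact that a full-support covector is a tope, which the paper leaves implicit.
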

\begin{proof}
Viewing $F$ as a signed covector, the tope $P_{\flip F}$ is given by the composition $F \circ (-P)$ (see \eqref{eq:compo}).  
\end{proof}

\begin{proposition}\label{prop:flipflag}
Let $G_\bullet \in \pFl(P)$.  Then there exists a tope $P_{\flip G_\bullet} \in \T$ satisfying 
\begin{equation}\label{eq:flip}
P_{\flip G_\bullet}(e) = P(e) (-1)^{\#\{1 \leq i \leq s \mid e \in G_i\}} 
\end{equation}
for all $e \in E$.  We have $(P_{\flip G_\bullet})_{\flip G_\bullet} = P$.
\end{proposition}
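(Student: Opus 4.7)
The plan is induction on $s$, iterating \cref{lem:fliptope}. The base case $s=0$ is the trivial flag, with $P_{\flip \emptyflag} := P$.

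For $s \geq 1$, the idea is to flip across the outermost face $G_s$ first. Since $G_s \in L(P)$, \cref{lem:fliptope} produces a tope $P' := P_{\flip G_s}$. To iterate on the truncated flag $G''_\bullet := \{\hat 0 \subset G_1 \subset \cdots \subset G_{s-1} \subset \hat 1\}$, I need the following stability claim: for every $F \in L(P)$ with $F \subseteq G_s$, one has $F \in L(P')$. To prove this, let $X, X_s \in \L(P)$ denote the unique faces of $P$ with zero sets $F$ and $G_s$ respectively. The composition $Y := X_s \circ X^-$, defined via \eqref{eq:compo}, is a signed covector by closure of $\L$ under composition and negation. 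Unwinding: on $E \setminus G_s$ one has $Y = X_s = P = P'$; on $G_s \setminus F$ one has $Y = X^- = -P = P'$; on $F$ one has $Y = X^- = 0$. Hence $Y$ has zero set $F$ and satisfies $Y \geq P'$ in $\L$, exhibiting $F \in L(P')$.

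By the stability claim, $G''_\bullet$ is a partial flag in $L(P')$, so by induction $(P')_{\flip G''_\bullet}$ is a well-defined tope whose sign at $e$ equals $P'(e)\cdot (-1)^{\#\{i < s \,:\, e \in G_i\}}$. Factoring $P'(e) = P(e)\cdot (-1)^{[e \in G_s]}$ and combining exponents gives exactly \eqref{eq:flip}; I set $P_{\flip G_\bullet} := (P')_{\flip G''_\bullet}$. For the involution $(P_{\flip G_\bullet})_{\flip G_\bullet} = P$: formula \eqref{eq:flip} shows $P_{\flip G_\bullet}$ agrees with $P$ on $E \setminus G_s$, so $G_s \in L(P_{\flip G_\bullet})$ via the same underlying covector $X_s$. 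The stability claim then propagates to each $G_i$, so $G_\bullet \in \pFl(P_{\flip G_\bullet})$ and the construction applies again; each sign gets multiplied by $(-1)^{2k} = 1$, recovering $P$.

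The main obstacle is the stability claim, after which the rest is a short manipulation of sign exponents. The claim is a standard composition in the signed-covector lattice, but one must identify the correct signed-covector representative of each flat in each tope's face lattice and check $Y \geq P'$ against the paper's convention that $\hat 1 = (0,\ldots,0)$ sits at the top of $\L$.
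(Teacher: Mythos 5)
Your proof is correct, and it takes the same basic strategy as the paper (iterate \cref{lem:fliptope} over the flats of the flag), but runs the iteration in the opposite order. The paper's one-line proof flips $G_1$ first, then $G_2$, up to $G_s$; you flip $G_s$ first and induct on the truncated flag. That reversal is why you need the explicit stability claim: after flipping $G_s$, the face representative of $G_i$ in $\L(P)$ is \emph{not} a face of $P' = P_{\flip G_s}$, and you correctly build the new representative as the composition $X_s \circ X^-$. In the paper's inside-out order this extra work disappears: after flipping $G_1,\dots,G_i$ the tope is unchanged outside $G_i$, and since $G_{i+1}\supset G_i$, the \emph{same} covector $X_{i+1}$ (with zero set $G_{i+1}$ and values $P(e)$ off $G_{i+1}$) still satisfies $X_{i+1}\ge$ the flipped tope, so each step of the iteration is immediate. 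Your version is more explicit and also yields a slightly more general stability statement ($F\in L(P')$ for every face $F\subseteq G_s$, not just the $G_i$), which is a reasonable standalone observation; the paper's order simply buys a shorter verification. One small point worth tightening: for the involution you invoke ``the stability claim then propagates to each $G_i$'' to conclude $G_\bullet\in\pFl(P_{\flip G_\bullet})$ — the cleanest way to make this airtight is to fold the statement $G''_\bullet\in\pFl\bigl((P')_{\flip G''_\bullet}\bigr)$ into the inductive hypothesis, after which $G_s\in L(P_{\flip G_\bullet})$ is your direct check via $X_s$ and the rest follows from the IH, so the formula \eqref{eq:flip} can legitimately be applied a second time.
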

\begin{proof}
Apply \cref{lem:fliptope} to $P$ and the flat $G_1 \in L(P)$ to obtain $P_{\flip G_1}$.  We have $G_2 \in L(P_{\flip G_1})$ since $G_1 \subset G_2$, so we may apply \cref{lem:fliptope} again to $P_{\flip G_1}$ and the flat $G_2 \in L(P_{\flip G_1})$.  Continuing in this manner, we obtain the tope $P_{\flip G_\bullet}$.
\end{proof}

For $P,Q \in \T$, define 
$$
G(P,Q):= \{G_\bullet \in \pFl(P) \mid Q = P_{\flip G_\bullet}\}, \qquad \text{and} \qquad G^{\pm}(P,Q):= G(P,Q) \cup G(P,-Q) .
$$

\begin{lemma}\label{lem:closurePQ}
Suppose that $G_\bullet \in G(P,Q)$.  Then the closure $\bG_\bullet \subset \pFl(M)$ is the same regardless of whether it is taken in $\pFl(P)$ or $\pFl(Q)$.
\end{lemma}

\begin{proof}
Let $E_\bullet \in \bG_\bullet$, where the closure is taken in $\pFl(P)$.  For each $E \in E_\bullet \setminus G_\bullet$, let $X \in \L(P)$ be a signed covector lifting $E$.  Similarly to \cref{prop:flipflag}, the formula $X_{\flip G_\bullet}(e) = X(e) (-1)^{\#\{1 \leq i \leq s \mid e \in G_i\}}$ determines a signed covector $X_{\flip G_\bullet}$, and $X_{\flip G_\bullet} \in \L(Q)$.  It follows that $E \in L(Q)$, and thus $E_\bullet \in \pFl(Q)$.
\end{proof}

\begin{proposition}\label{prop:noover}
\
\begin{enumerate}
\item We have $G(P,P) = \{\emptyflag\}$ consisting only of the trivial flag, and $G(P,-P) = \emptyset$.
\item We have $G(P,Q) = G(Q,P)$ and $G(-P,-Q) = G(P,Q)$.  We have $G^{\pm}(P,Q) = G^{\pm}(Q,P)$.
\item For distinct $G_\bullet, G'_\bullet \in G^{\pm}(P,Q)$, we have $\bG_\bullet \cap \overline{G'_\bullet} = \emptyset$.
\item We have $\bigsqcup_{G_\bullet \in G^{\pm}(P,Q)} \{F_\bullet \in (\bG_\bullet \cap \Fl(M))\} = \Fl(P) \cap \Fl(Q)$.
\end{enumerate}
\end{proposition}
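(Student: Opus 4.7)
The plan is to deduce all four parts from the parity formula \eqref{eq:flip}, together with one structural lemma: if $G_\bullet \in \pFl(P)$ and $Q = P_{G_\bullet}$, then every complete flag $F_\bullet \in \Fl(P)$ refining $G_\bullet$ automatically lies in $\Fl(Q)$. I would prove this lemma by a downward induction in the signed-covector lattice $\L$. Starting with $X^{Q,r} := \hat 1$, at each step I would pick $X^{Q,l-1}$ below $X^{Q,l}$ with zero set $F_{l-1}$; there are exactly two candidate covectors, namely the antipodal pair of topes of the rank-one oriented matroid $\M^{F_l}/F_{l-1}$. Since each flat $G_j$ of $G_\bullet$ equals some $F_{i_j}$, for $e \in F_l \setminus F_{l-1}$ the count $|\{j : e \in G_j\}|$ depends only on $l$, so $Q|_{F_l \setminus F_{l-1}}$ differs from $P|_{F_l \setminus F_{l-1}}$ by a uniform sign and coincides with exactly one of the two candidates; that choice is $X^{Q,l-1}$, and iterating witnesses $F_l \in L(Q)$ for all $l$.

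Part~(1) is a short parity check. Elements $e \in E \setminus G_s$ are flipped zero times in \eqref{eq:flip}, ruling out $P_{G_\bullet} = P^-$. For $P_{G_\bullet} = P$, an element $e \in G_k \setminus G_{k-1}$ is flipped $s - k + 1$ times, and requiring this to be even for every $k = 1, \ldots, s$ forces $s = 0$. For (2), given $G_\bullet \in \pFl(P)$ with $P_{G_\bullet} = Q$, I would extend $G_\bullet$ to any complete flag in $\Fl(P)$ (possible because $L(P)$ is graded) and apply the structural lemma to conclude $G_\bullet \in \pFl(Q)$; together with the involution $(P_{G_\bullet})_{G_\bullet} = P$ from \cref{prop:flipflag} this gives $G(P,Q) = G(Q,P)$. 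The identity $G(P^-, Q^-) = G(P, Q)$ follows from $L(P) = L(P^-)$ and $(P^-)_{G_\bullet} = (P_{G_\bullet})^-$, and together with the previous identity it gives $G^\pm(P,Q) = G^\pm(Q,P)$.

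For (3), suppose $H_\bullet = (\hat 0 < H_1 < \cdots < H_t < \hat 1) \in \bG_\bullet \cap \overline{G'_\bullet}$, and view $G_\bullet, G'_\bullet$ as subsets $I, I' \subseteq \{1, \ldots, t\}$. Any $e \in E \setminus H_t$ is flipped zero times by both $G_\bullet$ and $G'_\bullet$, so $P_{G_\bullet}(e) = P_{G'_\bullet}(e) = P(e)$, ruling out the antipodal case and forcing $P_{G_\bullet} = P_{G'_\bullet}$. The resulting parity equalities $|I \cap [k, t]| \equiv |I' \cap [k, t]| \pmod{2}$ for $k = 1, \ldots, t$ telescope to $[k \in I] = [k \in I']$, so $I = I'$ and $G_\bullet = G'_\bullet$, contradicting distinctness.

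For (4), the inclusion LHS~$\subseteq$~RHS is immediate from the structural lemma applied to $F_\bullet \in \bG_\bullet \cap \Fl(M) \subseteq \Fl(P)$, with disjointness supplied by (3). For the reverse inclusion, given $F_\bullet \in \Fl(P) \cap \Fl(Q)$, the restrictions $P|_{F_k \setminus F_{k-1}}$ and $Q|_{F_k \setminus F_{k-1}}$ are topes of the rank-one oriented matroid $\M^{F_k}/F_{k-1}$, hence equal or antipodal; let $\epsilon_k \in \{+,-\}$ record the ratio and set $\delta_k \in \{0,1\}$ with $(-1)^{\delta_k} = \epsilon_k$. The unique sub-flag of $F_\bullet$ whose flip of $P$ matches either $Q$ or $Q^-$ is indexed by $I = \{k \in \{1, \ldots, r-1\} : \delta_k \neq \delta_{k+1}\}$, with the sign of $\epsilon_r$ distinguishing the two cases. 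The main obstacle is the structural lemma — once it is in hand, the remaining steps reduce to parity bookkeeping.
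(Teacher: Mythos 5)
Your proof is correct, and it diverges from the paper's argument in two meaningful places. First, you isolate and prove a \emph{structural lemma} (if $G_\bullet \in \pFl(P)$ and $Q = P_{\flip G_\bullet}$, then every complete flag in $\Fl(P)$ refining $G_\bullet$ also lies in $\Fl(Q)$) by a downward induction in the covector lattice, using the rank-one restriction/contraction $\M^{F_l}/F_{l-1}$ at each step and the key observation that $|\{j : e \in G_j\}|$ is constant on $F_l \setminus F_{l-1}$. The paper instead dismisses the inclusion $\subseteq$ in (4) as ``clear''; your lemma makes this precise, and you reuse it cleanly to obtain $G(P,Q) \subseteq G(Q,P)$ in part (2). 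Second, your proof of the reverse inclusion in part (4) is a direct, non-inductive parity argument: you read off $\epsilon_k$ from the rank-one pieces $\M^{F_k}/F_{k-1}$ and reconstruct the unique $I = \{k : \delta_k \neq \delta_{k+1}\}$, whereas the paper inducts on the rank by contracting $F_1$ and doing a case analysis on $P|_{F_1}$ versus $Q|_{F_1}$. The trade-off is that the paper's induction is shorter but leaves bookkeeping implicit, while your approach is more self-contained and makes the uniqueness in part (3) (which is the same telescoping idea in both treatments) fall out of the same computation that produces $G_\bullet$ in part (4). One small point worth stating explicitly when you invoke gradedness of $L(P)$: this follows from \cref{thm:sphere}, since $L(P)$ is the face lattice of a regular cell decomposition of a sphere; and in part (1) the argument that $P_{G_\bullet} \neq P^-$ needs $E \setminus G_s \neq \emptyset$, which holds because $G_s$ is a proper flat.
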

\begin{proof}
(1) is clear from the definitions.
For (2), the equality $G(P,Q)= G(Q,P)$ follows from the last statement of \cref{prop:flipflag} and the equality $G(-P,-Q) = G(P,Q)$ is clear from the definitions.  The last equality $G^{\pm}(P,Q) = G^{\pm}(Q,P)$ also follows.

For (3), suppose that $F_\bullet \in \bG_\bullet \cap \bG'_\bullet$ for some wonderful vertex $F_\bullet$ and $G_\bullet \neq G'_\bullet$.  Then $P_{\flip G_\bullet}$ and $P_{\flip G'_\bullet}$ are both obtained from $P$ by flipping the signs of some subset of $\{F_1,\ldots, F_{r-1}\}$.  Suppose that $P_{\flip G_\bullet} = P_{\flip G'_\bullet}$.  Then \eqref{eq:flip} shows that $\{e \in E \mid P(e) = P_{\flip G_\bullet}(e)\}$ uniquely determines $G_\bullet$ (once $F_\bullet$ has been fixed), forcing the contradiction $G_\bullet = G'_\bullet$.  However, it is not possible to have $P_{\flip G_\bullet} = Q$ and $P_{\flip G'_\bullet} = -Q$ because $P_{\flip G_\bullet}(e) = P_{\flip G'_\bullet}(e)$ for any $e \in E \setminus F_{r-1}$.  It follows that if $G_\bullet \neq G'_\bullet$ then $P_{\flip G_\bullet} \neq P_{\flip G'_\bullet}$.

For (4), the union is disjoint by (3).  The inclusion $\subseteq$ is clear from \cref{lem:closurePQ}.  To prove the inclusion $\supseteq$, we proceed by induction.  Assume that $r > 1$, and let $F_\bullet \in  \Fl(P) \cap \Fl(Q)$.  Then by induction, $F_\bullet/F_1 \in \bG'_\bullet \cap \Fl(M_{F_1})$ for some $G'_\bullet \in G^{\pm}(P_{F_1},Q_{F_1})$, where $P_{F_1} = P|_{E \setminus F_1}$ and $Q_{F_1} = Q|_{E \setminus F_1}$.  After possibly replacing $Q$ by $-Q$, we may suppose that $(P_{F_1})_{\flip G'_\bullet} = Q_{F_1}$.  If $P|_{F_1} = Q|_{F_1}$, then $P_{\flip G_\bullet} = Q$ for $G_\bullet$ the natural lift of $G'_\bullet$ (adding no additional flats so that $s(G_\bullet) = s(G'_\bullet)$).  If $P|_{F_1} = -Q|_{F_1}$, then instead we lift $G'_\bullet$ to a partial flag in $\pFl(M)$ and then add $F_1$ to it to obtain $G_\bullet$ (so that $s(G_\bullet) = s(G'_\bullet)+1$).  In both cases, we have shown that $F_\bullet \in \bG_\bullet$ for some $G_\bullet \in G^{\pm}(P,Q)$.
\end{proof}

In ``big" examples, we typically have $|G^{\pm}(P,Q)| \in \{0,1\}.$  

\begin{example}\label{ex:3pttope}
We give an example where $|G^{\pm}(P,Q)| > 1$.  Consider the two-dimensional arrangement of two lines $\ell_1,\ell_2$ in $\R^2$, and let $\ell_0$ denote the line at infinity.  
$$
\begin{tikzpicture}
\draw (0:1.5)--(180:1.5);
\draw (90:1.5)--(270:1.5);
\draw (0,0) circle (1.5);
\node[color=blue] at (45:1.65) {$0$};
\node[color=blue] at (7:1.2) {$1$};
\node[color=blue] at (95:1.2) {$2$};

\node[color=red] at (45:0.75) {\scriptsize $+++$};
\node[color=red] at (135:0.75) {\scriptsize$++-$};
\node[color=red] at (225:0.75) {\scriptsize$+--$};
\node[color=red] at (-45:0.75) {\scriptsize$+-+$};
\end{tikzpicture}
$$

The corresponding matroid $M$ is the boolean matroid of rank three on three elements $E = \{0,1,2\}$.  The set $\T^+$ consists of four topes: $(+,+,+),(+,-,+),(+,-,-),(+,+,-)$.  Then 
\begin{align*}
G^{\pm}((+,+,+),(+,+,+)) &= \{(\hat 0 \subset \hat 1)\}, \\
G^{\pm}((+,+,+),(+,-,+)) &= \{(\hat 0 \subset \{1\} \subset \hat 1), (\hat 0 \subset \{2\} \subset \{1,2\} \subset \hat 1), (\hat 0  \subset \{0\} \subset \{0,1\} \subset \hat 1), (\hat 0  \subset \{0,2\} \subset \hat 1)\}, \\
G^{\pm}((+,+,+),(+,-,-)) &= \{(\hat 0 \subset \{1,2\} \subset \hat 1), (\hat 0 \subset \{0\} \subset \hat 1), (\hat 0 \subset \{1\} \subset \{0,1\} \subset \hat 1), (\hat 0 \subset \{2\}\subset \{0,2\} \subset \hat 1)\}, \\
G^{\pm}((+,+,+),(+,+,-)) &= \{(\hat 0 \subset \{2\} \subset \hat 1), (\hat 0 \subset \{1\} \subset \{1,2\} \subset \hat 1), (\hat 0  \subset \{0\} \subset \{0,2\} \subset \hat 1), (\hat 0  \subset \{0,1\} \subset \hat 1)\}.
\end{align*}
\end{example}

\begin{lemma}\label{lem:FlP}
Let $F_\bullet \in \Fl(M)$ and $P \in \T$.  We have $r(P,F_\bullet)  \neq 0$ if and only if $F_\bullet \in \Fl(P)$.
\end{lemma}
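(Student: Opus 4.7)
The plan is to induct on the rank $r$ of $M$, using the atom-by-atom definition of $\Res_{F_\bullet}$ together with the residue recursion for canonical forms from \cref{thm:EL}(1).

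The base case $r=0$ is immediate: $\Fl(M) = \Fl(P) = \{\emptyset\}$, and $\Omega_P = \chi(\emptyset) \in \{\pm 1\}$, so $r(P, \emptyset) \neq 0$ trivially. For the inductive step, write $F_\bullet = (\hat 0 \subset F_1 \subset F_2 \subset \cdots \subset F_r = \hat 1)$. By the definition of $\Res_{F_\bullet}$ as a composition,
\[
\Res_{F_\bullet}(\Omega_P) = \Res_{F_\bullet/F_1}\bigl(\Res_{F_1}(\Omega_P)\bigr),
\]
where $F_\bullet/F_1 = (\hat 0_{M/F_1} \subset F_2/F_1 \subset \cdots \subset F_r/F_1) \in \Fl(M/F_1)$. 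Now apply \cref{thm:EL}(1) to the atom $F_1 \in \At(M)$: if $F_1 \notin L(P)$ then $\Res_{F_1}(\Omega_P) = 0$, while if $F_1 \in L(P)$ then $\Res_{F_1}(\Omega_P) = P(e)\,\Omega_{P/F_1}$ for a chosen $e \in F_1$, and in particular this is (up to a nonzero sign) the canonical form $\Omega_{P/F_1} \in \OS(M/F_1)$ of the tope $P/F_1 \in \T(\M/F_1)$.

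In the first case we are done immediately: $\Res_{F_\bullet}(\Omega_P) = 0$ and also $F_\bullet \notin \Fl(P)$ since its first flat already fails to lie in $L(P)$. In the second case, since $M/F_1$ has rank $r-1$, the inductive hypothesis applied to $P/F_1 \in \T(\M/F_1)$ and the flag $F_\bullet/F_1$ gives
\[
\Res_{F_\bullet/F_1}(\Omega_{P/F_1}) \neq 0 \iff F_\bullet/F_1 \in \Fl(P/F_1),
\]
and combining with the nonvanishing of $\Res_{F_1}(\Omega_P)$ (a nonzero integer multiple of $\Omega_{P/F_1}$) we conclude $\Res_{F_\bullet}(\Omega_P) \neq 0 \iff F_\bullet/F_1 \in \Fl(P/F_1)$.

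It remains to match the two conditions on $F_\bullet$, namely: $F_\bullet \in \Fl(P)$ if and only if $F_1 \in L(P)$ and $F_\bullet/F_1 \in \Fl(P/F_1)$. This is the only non-formal step, and it is the point where one uses the structure of the Las Vergnas face lattice from \cref{sec:OM}: the map $\phi: \L(P) \to L(P) \subseteq L(M)$ is an isomorphism of posets, and under the identification $[F_1, \hat 1]_{L(M)} \cong L(M/F_1)$, the restriction to the upper interval matches $L(P) \cap [F_1, \hat 1]$ with $L(P/F_1)$ for $P/F_1 \in \T(\M/F_1)$. This is standard for oriented matroid contractions; I would cite \cite{OMbook} or \cref{sec:OM}. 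Once this lattice identification is in hand, the equivalence of the two conditions on flags is immediate, completing the induction. I expect the only subtlety to be this compatibility of Las Vergnas face lattices under contraction, but this is routine given the setup already developed in \cref{sec:OM}.
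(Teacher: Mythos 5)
Your proof is correct and follows the same route as the paper: induction on rank via the residue recursion of \cref{thm:EL}(1), reducing to the contraction $M/F_1$ and using the identification $L(P/F_1) \cong [F_1, \hat 1] \subset L(P)$. The paper's proof is a compressed version of exactly this argument.
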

\begin{proof}
By \cref{thm:EL}, we have $\Res_{F_1}(\Omega_P) \neq 0$ if and only if $F_1 \in L(P)$ is a facet of $P$.  In this case, $\Res_{F_1}(\Omega_P) = \Omega_{P/F_1}$, and $L(P/F_1)$ is isomorphic to the interval $[F_1, \hat 1] \subset L(P)$.  The result then follows by induction.
\end{proof}

\begin{lemma}\label{lem:Gsign}
Let $P, Q \in \T$ and $G_\bullet \in G(P,Q)$.  Suppose $F_\bullet \in \bG_\bullet \cap \Fl(M)$.  Then 
$$
r(P,F_\bullet) r(Q,F_\bullet) = (-1)^{\sum_{i=1}^s \rk(G_i)}.
$$
\end{lemma}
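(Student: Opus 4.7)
The plan is to compute both $r(P,F_\bullet)$ and $r(Q,F_\bullet)$ by iterating the recursion for canonical forms in \cref{thm:EL}, and then use the explicit flip formula \eqref{eq:flip} to compare them.

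First, I would choose once and for all a system of representatives $e_i \in F_i \setminus F_{i-1}$ for $i = 1,2,\ldots,r$. By \cref{lem:FlP}, the flag $F_\bullet$ lies in $\Fl(P)$ (and also in $\Fl(Q)$, since the flip operation only changes signs, not the face lattice). So at each step of the residue recursion, the atom $F_i/F_{i-1}$ is a facet of the relevant tope, and the recursion in \cref{thm:EL} gives a nonzero answer. Using the convention for the chirotope of $\M/c$ stated in \cref{thm:EL}, iterating the recursion $r$ times produces the identity
\[
r(P,F_\bullet) \;=\; \chi_\M(e_1,e_2,\ldots,e_r)\,\prod_{i=1}^r P(e_i),
\]
and identically
\[
r(Q,F_\bullet) \;=\; \chi_\M(e_1,e_2,\ldots,e_r)\,\prod_{i=1}^r Q(e_i).
\]
Here the chirotope factor records the cumulative sign coming from how the chirotope of each successive contraction is normalized with respect to the chosen $e_i$, and it depends only on $F_\bullet$ and the fixed choices, not on $P$ or $Q$.

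The second step is simply to multiply these two expressions. The chirotope factors are the same and so square to $+1$, leaving
\[
r(P,F_\bullet)\,r(Q,F_\bullet) \;=\; \prod_{i=1}^r P(e_i)\,Q(e_i).
\]
Now apply the flip formula \eqref{eq:flip} from \cref{prop:flipflag}, which is applicable because $G_\bullet \in G(P,Q)$ means $Q = P_{\flip G_\bullet}$. It gives $P(e_i) Q(e_i) = (-1)^{\#\{j\,:\,e_i \in G_j\}}$, so
\[
r(P,F_\bullet)\,r(Q,F_\bullet) \;=\; (-1)^{\sum_{i=1}^r \#\{j\,:\,e_i \in G_j\}} \;=\; (-1)^{\sum_{j=1}^s \#\{i\,:\,e_i \in G_j\}}
\]
after swapping the order of summation.

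Finally, since $F_\bullet \in \bG_\bullet$, each $G_j$ is one of the flats in $F_\bullet$, say $G_j = F_{k_j}$ with $k_j = \rk(G_j)$. The element $e_i$, which lies in $F_i \setminus F_{i-1}$, belongs to $F_{k_j}$ if and only if $i \le k_j$, so $\#\{i : e_i \in G_j\} = k_j = \rk(G_j)$. Substituting yields the desired formula
\[
r(P,F_\bullet)\,r(Q,F_\bullet) \;=\; (-1)^{\sum_{j=1}^s \rk(G_j)}.
\]
The only subtle point is the chirotope bookkeeping in the first step, but choosing the same representatives $e_i$ when computing $r(P,F_\bullet)$ and $r(Q,F_\bullet)$ causes the two chirotope factors to cancel automatically in the product, sidestepping the issue.
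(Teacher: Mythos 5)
Your proof is correct, and it takes a genuinely different route from the paper's. The paper proves \cref{lem:Gsign} by induction on the number $s$ of intermediate flats in $G_\bullet$, peeling off $G_1$ at each step, applying \cref{thm:EL} to compute the residue along the initial segment of $F_\bullet$ up to $G_1$, and invoking the inductive hypothesis for the contraction $\M_{G_1}$. You instead iterate the recursion of \cref{thm:EL} all the way down to obtain the closed-form expression $r(P,F_\bullet) = \chi_\M(e_1,\ldots,e_r)\prod_i P(e_i)$, and the observation that the chirotope factor cancels in the product $r(P,F_\bullet)\,r(Q,F_\bullet)$ lets you reduce everything to a clean double-count of the incidences between the $e_i$ and the $G_j$, using the flip formula \eqref{eq:flip} as the only structural input. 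Your argument is arguably simpler and more transparent; the paper's induction has the minor advantage of not needing the global chirotope bookkeeping. One small imprecision: to run the recursion for $Q$ you need $F_\bullet \in \Fl(Q)$, and your parenthetical justification (that the flip does not change the face lattice) is not quite a theorem in the paper. The clean way to get this is \cref{prop:noover}(4), which states exactly that $F_\bullet \in \bG_\bullet \cap \Fl(M)$ for $G_\bullet \in G^\pm(P,Q)$ implies $F_\bullet \in \Fl(P) \cap \Fl(Q)$; alternatively, $F_\bullet \in \bG_\bullet \subset \pFl(P)$ gives $F_\bullet \in \Fl(P)$ directly from the definition of closure, without needing \cref{lem:FlP} at all. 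With that citation fixed, the argument goes through.
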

\begin{proof}
We proceed by induction on $s$.  If $s = 0$ then $G_\bullet = \emptyflag$ and $P = Q$ and the claim is clear.  
Suppose $s \geq 1$, and let $p = \rk(G_1)$.  Pick $f_1,f_2,\ldots,f_p$ so that $F_i = \sp(f_1,\ldots,f_i)$ and fix the chirotope of $\M_{G_1}$ by 
$$
\chi_{\M_{G_1}}(e_1,\ldots,e_{r-p}) := \chi_{\M}(e_1,\ldots,e_{r-p},f_p,f_{p-1},\ldots,f_1).
$$
Then by \cref{thm:EL}, we have
$$
\Res_{F_p = G_1} \circ \cdots \circ \Res_{F_1} \Omega_P = \prod_{i=1}^p P(f_i) \Omega_{P_{G_1}},
$$
where $P_{G_1} = P|_{E \setminus G_1} \in \T(\M_{G_1})$, and similarly for $Q$.  It follows from the definitions that $G_\bullet/G_1 = (\hat 0 = G_1/G_1,G_2/G_1,\ldots,) \in G(P_{G_1},Q_{G_1})$.  By the inductive hypothesis, we have 
$$
r(P_{G_1},F_\bullet/G_1) r(Q_{G_1},F_\bullet/G_1) = (-1)^{\sum_{i=2}^s \rk(G_i) - \rk(G_1)}.
$$
By \cref{prop:flipflag}, we have $\prod_{i=1}^p P(f_i) Q(f_i) = (-1)^{sp}$.  Thus 
\begin{align*}
r(P,F_\bullet) r(Q,F_\bullet) &= (-1)^{\sum_{i=2}^s \rk(G_i) - p} \prod_{i=1}^p P(f_i) Q(f_i) = (-1)^{\sum_{i=2}^s (\rk(G_i) - p) + sp } =  (-1)^{\sum_{i=1}^s \rk(G_i)}. \qedhere
\end{align*}
\end{proof}

\begin{theorem}\label{thm:dRtope}
Let $P,Q \in \T$.  Then 
$$
\dRip{\Omega_P,\Omega_Q} = \sum_{G_\bullet \in G^{\pm}(P,Q)} (\pm)^r (-1)^{\sum_{i=1}^s \rk(G_i)} \sum_{F_\bullet \in \bG_\bullet \cap \Fl(M)} \frac{1}{a_{F_\bullet}},
$$
where the sign $(\pm)^r$ is equal to $1$ or $(-1)^r$ depending on whether $G_\bullet$ belongs to $G(P,Q)$ or $G(P,-Q)$.  In particular, 
$$
\dRip{\Omega_P,\Omega_P} = \sum_{F_\bullet \in \Fl(P)} \frac{1}{a_{F_\bullet}}.
$$
\end{theorem}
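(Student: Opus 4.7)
The plan is to simply unwind the definition of $\dRip{\cdot,\cdot}$ on the canonical forms $\Omega_P, \Omega_Q$, and then reorganize the resulting sum using the combinatorial machinery developed in \cref{sec:pFl}.

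First, by \cref{def:dR} and the definition of $r(P,F_\bullet) = \Res_{F_\bullet}(\Omega_P)$, we have
\[
\dRip{\Omega_P,\Omega_Q} = \sum_{F_\bullet \in \Fl(M)} r(P,F_\bullet)\, r(Q,F_\bullet)\, \frac{1}{a_{F_\bullet}}.
\]
By \cref{lem:FlP}, both $r(P,F_\bullet)$ and $r(Q,F_\bullet)$ vanish unless $F_\bullet \in \Fl(P) \cap \Fl(Q)$, so the effective index set is $\Fl(P)\cap\Fl(Q)$.

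Second, I invoke \cref{prop:noover}(4), which decomposes $\Fl(P)\cap\Fl(Q)$ as a disjoint union $\bigsqcup_{G_\bullet \in G^\pm(P,Q)}(\bG_\bullet \cap \Fl(M))$. Inserting this decomposition into the sum yields
\[
\dRip{\Omega_P,\Omega_Q} = \sum_{G_\bullet \in G^\pm(P,Q)} \sum_{F_\bullet \in \bG_\bullet \cap \Fl(M)} r(P,F_\bullet)\, r(Q,F_\bullet)\, \frac{1}{a_{F_\bullet}}.
\]

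Third, I evaluate the sign $r(P,F_\bullet) r(Q,F_\bullet)$ on each piece. When $G_\bullet \in G(P,Q)$, i.e.\ $P_{\flip G_\bullet} = Q$, \cref{lem:Gsign} gives $r(P,F_\bullet) r(Q,F_\bullet) = (-1)^{\sum_{i=1}^{s}\rk(G_i)}$ uniformly in $F_\bullet \in \bG_\bullet \cap \Fl(M)$. When instead $G_\bullet \in G(P,Q^-)$, i.e.\ $P_{\flip G_\bullet} = Q^-$, I apply \cref{lem:Gsign} to the pair $(P,Q^-)$ and use the identity $\Omega_{Q^-} = (-1)^r \Omega_Q$ from \cref{thm:EL}, which gives $r(Q,F_\bullet) = (-1)^r r(Q^-,F_\bullet)$, to obtain $r(P,F_\bullet) r(Q,F_\bullet) = (-1)^r (-1)^{\sum_{i=1}^{s}\rk(G_i)}$. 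Together these two cases produce exactly the prefactor $(\pm)^r (-1)^{\sum_{i=1}^{s}\rk(G_i)}$ in the stated formula. The specialization $P=Q$ then follows because \cref{prop:noover}(1) gives $G^\pm(P,P) = \{\emptyflag\}$ with $s = 0$ and $\bar\emptyflag \cap \Fl(M) \cap \Fl(P) = \Fl(P)$.

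The only subtle step is bookkeeping the sign in the $Q^-$ case; everything else is a direct application of \cref{lem:FlP}, \cref{prop:noover}(4), and \cref{lem:Gsign}, so I expect no genuine obstacle beyond careful manipulation of signs.
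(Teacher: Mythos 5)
Your proof is correct and follows essentially the same route as the paper: restrict the flag sum to $\Fl(P)\cap\Fl(Q)$ via \cref{lem:FlP}, partition that set using \cref{prop:noover}, and evaluate the constant sign on each block with \cref{lem:Gsign} together with $\Omega_{Q^-}=(-1)^r\Omega_Q$. (If anything, your citation of \cref{prop:noover}(4) for the decomposition is slightly more precise than the paper's citation of \cref{prop:noover}(3), which only gives disjointness.)
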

\begin{proof}
By \cref{lem:FlP} and \cref{prop:noover}(3),
$$
\dRip{\Omega_P,\Omega_Q} = \sum_{F_\bullet \in \Fl(P) \cap \Fl(Q)} \pm \frac{1}{a_{F_\bullet}} = \sum_{G_\bullet \in G^{\pm}(P,Q)} \sum_{F_\bullet \in \bG_\bullet \cap \Fl(M)} \pm \frac{1}{a_{F_\bullet}}.
$$
Since $\Omega_{-Q} = (-1)^r \Omega_Q$, by \cref{lem:Gsign}, the sign $\pm$ is equal to $(\pm)^r (-1)^{\sum_{i=1}^s \rk(G_i)}$.  
The last statement follows from \cref{prop:noover}(1).
\end{proof}

\begin{example}
Continue \cref{ex:3pttope}.  We have
\begin{align*}
\dRip{\Omega_{(+,+,+)},\Omega_{(+,+,+)}} &= \frac{1}{a_1 a_2} + \frac{1}{a_0 a_1} + \frac{1}{a_0 a_2}, \\
\dRip{\Omega_{(+,+,+)},\Omega_{(+,-,-)}} &= \frac{1}{a_1 a_2} + \frac{1}{a_0 a_1} + \frac{1}{a_0 a_2}, \\
\dRip{\Omega_{(+,+,+)},\Omega_{(+,+,-)}} &= -\frac{1}{a_1 a_2} - \frac{1}{a_0 a_1} - \frac{1}{a_0 a_2}. 
\end{align*}
\end{example}

\section{DeRham homology intersection form}
By \cref{prop:dRdirectsum} and \cref{prop:restrictF}, to understand the bilinear form $\dRip{\cdot,\cdot}$ it suffices to consider the form on the top homogeneous component $\OS(M) = \OS^r(M)$ of the Orlik-Solomon algebra.  We henceforth focus on this case.  In this section, we investigate the dual $\DdRip{\cdot,\cdot}$ of the symmetric bilinear form $\dRip{\cdot,\cdot}$.  We discover remarkable combinatorics when we compute $\DdRip{\cdot,\cdot}$ on the basis dual to the canonical forms in \cref{thm:EL}.

\subsection{Non-degeneracy}
\begin{proposition}\label{prop:nondeg}
The symmetric bilinear form $\dRip{\cdot,\cdot}$ is non-degenerate on $\OS(M)_Q := \OS(M) \otimes_{\Z} Q$.
\end{proposition}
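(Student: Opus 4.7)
The plan is to prove non-degeneracy by exhibiting a real specialization at which $\dRip{\cdot,\cdot}$ becomes positive definite, and then noting that the determinant of the Gram matrix (in any basis of $\OS(M)$) is a rational function in the $a_e$ which, being nonzero at one specialization, is not identically zero. This will give non-degeneracy over $Q$.

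Concretely, specialize $a_e = 1$ for all $e \in E$, so that every $a_{F_\bullet} = \prod_{i=1}^{r-1} |F_i|$ is a positive integer. From the defining formula in \cref{def:dR},
$$
\dRip{x,x}\big|_{a_e = 1} \;=\; \sum_{F_\bullet \in \Fl(M)} \frac{(\Res_{F_\bullet} x)^2}{a_{F_\bullet}} \;\geq\; 0,
$$
with equality if and only if $x$ lies in the kernel of the total residue map
$$
\rho \colon \OS(M) \to \Z^{\Fl(M)}, \qquad x \mapsto (\Res_{F_\bullet} x)_{F_\bullet \in \Fl(M)}.
$$
Hence the task reduces to showing that $\rho$ is injective.

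Injectivity of $\rho$ I would prove by induction on the rank $r$, exploiting the short exact sequence of \cref{prop:OSexact}. Assume $x \in \OS^r(M)$ satisfies $\rho(x) = 0$. For any atom $c \in \At(M)$, restricting the sum to flags $F_\bullet$ with $F_1 = c$ and using the factorization $\Res_{F_\bullet} = \Res_{F_\bullet/c} \circ \Res_c$ shows that $\Res_c(x) \in \OS^{r-1}(M/c)$ also has all flag residues zero. The inductive hypothesis then forces $\Res_c(x) = 0$ for every atom $c$, so $x \in \bigcap_c \ker \Res_c = \bigcap_c \iota_c \OS^r(M \setminus c)$. The remaining step is to show this intersection is trivial: writing $x = \sum_S c_S e_S$ in the NBC basis and picking $S_0$ in the support of $x$ that is minimal in the lex order induced from the chosen total order on $E$, I would argue that the atom $c := \min(S_0)$ cannot be avoided. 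Namely, $\Res_c(e_{S_0})$ is a nonzero NBC monomial in $\OS^{r-1}(M/c)$ that cannot be cancelled by the images of the other supported basis elements, forcing $\Res_c(x) \neq 0$ and contradicting $\rho(x) = 0$.

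Once injectivity of $\rho$ is in hand, the specialization $a_e = 1$ makes $\dRip{\cdot,\cdot}$ strictly positive definite on $\OS(M) \otimes_\Z \R$, so its Gram matrix has nonzero determinant at this point and therefore nonvanishing Gram determinant as an element of $Q$. The main obstacle is the last combinatorial step, ruling out cancellations among NBC basis elements after residue at a common atom; this is essentially the classical NBC-triangularity fact in Orlik--Solomon theory. An alternative is to work directly in the canonical form basis $\{\Omega_P : P \in \T^\star\}$ from \cref{thm:EL} and use $\Res_{F_\bullet}(\Omega_P) = r(P,F_\bullet) \in \{0,\pm 1\}$ to exhibit linear independence of the residue vectors $\{\rho(\Omega_P)\}$ directly; both routes reduce to the same structural injectivity of $\rho$.
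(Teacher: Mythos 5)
Your proof is correct, but it takes a genuinely different route from the paper's. You reduce everything to the injectivity of the total flag-residue map $\rho\colon \OS(M)\to\Z^{\Fl(M)}$, and then observe that the defining formula makes $\dRip{\cdot,\cdot}$ positive definite under the real specialization $a_e=1$, so the Gram determinant is a nonzero rational function. The paper instead runs a deletion--contraction induction with a degree argument: it takes a putative homogeneous kernel element $\eta$ of minimal degree, and uses \cref{lem:deleteform}, \cref{lem:contractform}, \cref{lem:Resa0} to show that $\eta$ must be divisible by $a_\atom$ for an atom $\atom$, contradicting minimality. Your approach is more elementary and yields the extra information that the form is positive definite at real positive parameters; the paper's approach is self-contained given the deletion/contraction lemmas, which are developed anyway for later use, and does not require any reference to the flag-space machinery.

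One remark on the last combinatorial step: you do not need to reprove the injectivity of $\rho$ by hand. The map you call $\rho$ is exactly the map $\nu$ from \cref{sec:flagspace}, and the fact that $\nu\colon\OS^k(M)\to(\F^k)^*$ is an isomorphism is \cite[Theorem 2.4]{SV}, which the paper already quotes. (Indeed, the paper remarks right after \cref{prop:nondeg} that the proposition also follows from the results of \cite{SV}.) Your inductive sketch of this injectivity --- using \cref{prop:OSexact} to reduce to $\Res_\atom(x)=0$ for all atoms $\atom$, and then arguing $\bigcap_\atom \ker\Res_\atom=0$ via lexicographic minimality in an \nbc\ basis --- is plausible but incomplete as stated, so the cleaner move is simply to invoke the flag-space duality. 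With that citation, the argument is airtight.
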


\cref{prop:nondeg} can also be deduced from the results of \cite{SV}.    In \cref{thm:dRmain}, we will sharpen \cref{prop:nondeg} by explicitly inverting the bilinear form matrix. 
We prove \cref{prop:nondeg} using residue maps, which will be useful in the sequel.

Assume that $M$ is a simple matroid.  Let $\atom \in \At(M)$ be an atom, which we view as both an element of $L(M)$ and as an element of $\OS^1(M)$.  Let $R_{\atom} := R/(a_\atom)$.  
Let $M' = M \backslash \atom = M^{E \backslash \atom}$ and $M'' = M/\atom = M_\atom$.
Let $\theta_{\atom}: R \to R_{\atom}$ be the quotient map that sends $a_{\atom}$ to $0$. 
 
 \begin{lemma}\label{lem:deleteform}
 For $x,y \in A(M')$, we have
$$
\dRip{x,y}_{M'} = \theta_{\atom} \dRip{\iota_{\atom} x, \iota_{\atom} y}_{M}.
$$

\end{lemma}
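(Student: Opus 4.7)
The plan is to reduce to monomials by bilinearity and compare the formula of \cref{prop:dRind} term by term. For $x = e_S$ and $y = e_T$ with ordered $k$-element independent sets $S, T \subseteq E \setminus \atom$ of $M'$, the identities $\iota_\atom e_S = e_S$ and $\iota_\atom e_T = e_T$ reduce the claim to comparing
\begin{align*}
\dRip{e_S, e_T}_{M'} &= \sum_{F'_\bullet \in \Fl^k(M')} r(S, F'_\bullet)_{M'} \, \frac{1}{a_{F'_\bullet}} \, r(T, F'_\bullet)_{M'}, \\
\theta_\atom \dRip{e_S, e_T}_{M} &= \theta_\atom \sum_{F_\bullet \in \Fl^k(M)} r(S, F_\bullet)_{M} \, \frac{1}{a_{F_\bullet}} \, r(T, F_\bullet)_{M}.
\end{align*}

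The central step is a rank-preserving bijection $\Phi$ between flags $F_\bullet \in \Fl^k(M)$ generated in $M$ by $S$ and flags $F'_\bullet \in \Fl^k(M')$ generated in $M'$ by $S$, defined by $\Phi(F_\bullet)_i := F_i \setminus \atom$ with inverse $F'_\bullet \mapsto (\overline{F'_i}_M)$. The key identity $\overline{A}_M \setminus \atom = \overline{A}_{M'}$ for $A \subseteq E \setminus \atom$ puts both sides in bijection with saturated chains $A_0 \subsetneq \cdots \subsetneq A_k$ of subsets of $S$, with $\rk_M(F_i) = \rk_{M'}(F'_i) = i$ because $S$ is independent in both matroids. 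Since $\Phi$ preserves the generating chain $A_\bullet$, it preserves the permutation $\sigma(S, A_\bullet)$ of \eqref{eq:sigma} and hence the residue signs $r(S, F_\bullet)_M = r(S, F'_\bullet)_{M'}$; the same bijection also matches the $T$-residues.

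Matching denominators is then straightforward: $a_{F_i} = a_{F'_i} + a_\atom \cdot [\atom \in F_i]$, so $\theta_\atom(a_{F_i}) = a_{F'_i}$. To ensure $\theta_\atom$ is well-defined on each term, I will check that for $1 \leq i \leq k-1$ the flat $F_i = \overline{A_i}_M$ is spanned by the nonempty set $A_i \subseteq E \setminus \atom$, and hence cannot equal $\atom$ as a subset of $E$; thus $a_{F'_i}$ is a nonzero polynomial and $1/a_{F_i}$ lies in the localization $R_{(a_\atom)} \subset Q$. Applying $\theta_\atom$ term-by-term and using $\Phi$ then produces exactly the $M'$-sum.

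The main technical hurdle is precisely this well-definedness of $\theta_\atom$: a priori, some flag in the $M$-sum could have $a_{F_i}$ divisible by $a_\atom$, obstructing the specialization. The condition $S \subseteq E \setminus \atom$ is what guarantees this does not happen, and once verified, the rest of the argument is routine bookkeeping via the bijection $\Phi$.
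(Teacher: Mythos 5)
Your proof takes essentially the same approach as the paper: reduce to monomials via \cref{prop:dRind}, exhibit the bijection (via $F_i \mapsto F_i \setminus \atom$, or equivalently the injection $\iota\colon L(M') \hookrightarrow L(M)$ the paper uses in the other direction) between flags generated by $S$ in $M$ and in $M'$, match residue signs since both are computed from the same chain $A_\bullet \subseteq S$, and observe $\theta_\atom(a_{F_i}) = a_{F'_i}$. Your explicit check that $\theta_\atom$ is well defined on each term — since each $F_i$ ($1 \le i \le k-1$) contains a nonempty subset of $S \subseteq E \setminus \atom$, so $F_i \neq \atom$ and $a_{F_i}$ is not divisible by $a_\atom$ in the simple matroid $M$ — is a correct point that the paper leaves implicit.
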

\begin{proof}
It suffices to show that for two bases $B,B' \in \B(M')$, we have
$$
\dRip{e_B, e_{B'}}_{M'} = \theta_{\atom} \dRip{\iota_{\atom} e_B, \iota_{\atom} e_{B'}}_{M}.
$$
For any flag $F_\bullet \in \Fl(M')$, by \cref{def:rSF} we have that the residues $r(B,F_\bullet)$ and $r(B',F_\bullet)$ are the same regardless of whether they are calculated inside $M$ or $M'$.  Let $F'_{\bullet}$ be a flag in $L(M')$ generated by $B$.  Since we have an injection $\iota: L(M') \hookrightarrow L(M)$, the flag $F'_\bullet$ can also be identified with a flag $F_\bullet = \iota_{\atom}(F'_\bullet)$ in $L(M)$ generated by $B$.  We have
$$
\frac{1}{a_{F'_\bullet}} =\theta_{\atom} \frac{1}{a_{F_\bullet}}
$$
and the result follows from \cref{prop:dRind}.
\end{proof}

\begin{lemma}\label{lem:contractform}
For $x,y \in \OS(M'')$, we have
$$
\res_{\atom=0} \dRip{x \wedge \atom, y \wedge \atom}_{M} =  \dRip{x,y}_{M''},
$$
where $\res_{\atom=0}: Q \to Q_\atom = \Frac(R_\atom)$ is the map that sends $f(x)$ to $\theta_{\atom}(x_{\atom} f(x))$, if this is well-defined.
\end{lemma}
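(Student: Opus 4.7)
The approach is to expand the left-hand side using the flag-sum definition of $\dRip{\cdot,\cdot}_M$ and then argue that the residue operator $\res_{\atom=0}$ kills every summand except those indexed by a very restricted set of flags. Write
\[
\dRip{\atom\wedge x,\atom\wedge y}_M=\sum_{F_\bullet\in\Fl^r(M)}\Res_{F_\bullet}(\atom\wedge x)\,\frac{1}{a_{F_\bullet}}\,\Res_{F_\bullet}(\atom\wedge y).
\]
Each residue $\Res_{F_\bullet}(\atom\wedge x)$ lies in $\Z$, so any pole at $a_\atom=0$ in a single summand must arise from $\frac{1}{a_{F_\bullet}}=\prod_{i=1}^{r-1}\frac{1}{a_{F_i}}$. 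Simplicity of $M$ lets us identify $\atom$ with a single element $c\in E$, and then $a_{F_i}\vert_{a_c=0}=\sum_{e\in F_i\setminus\{c\}}a_e$ is a nonzero polynomial in the remaining variables unless $F_i=\atom$. Because the chain $\hat 0\subsetneq F_1\subsetneq\cdots$ is strictly increasing, $F_i=\atom$ forces $i=1$, so the pole at $a_c=0$ is simple, $\res_{\atom=0}\dRip{\atom\wedge x,\atom\wedge y}_M$ is well-defined, and only flags with $F_1=\atom$ contribute.

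For such a flag, two pieces remain to be computed. First, writing $x=\sum_I c_I\,e_I$ with $I\subseteq E\setminus\{c\}$ of size $r-1$, the explicit formula for $\Res_\atom$ in \cref{prop:OSexact} gives $\Res_\atom(e_c\wedge e_I)=e_I$, hence $\Res_\atom(\atom\wedge x)=x$ viewed as an element of $\OS(M'')$. Iterating the residue maps along the contracted flag
\[
F_\bullet/\atom:=(\hat 0=F_1/\atom\subset F_2/\atom\subset\cdots\subset F_r/\atom=\hat 1)\in\Fl^{r-1}(M''),
\]
yields $\Res_{F_\bullet}(\atom\wedge x)=\Res_{F_\bullet/\atom}(x)$, and similarly for $y$. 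Second, since $a_{F_i}\vert_{a_c=0}=a_{F_i/\atom}$ as a linear form in the $a$-variables of $M''$ indexed by $E\setminus\{c\}$,
\[
\res_{\atom=0}\frac{1}{a_{F_\bullet}}=\theta_\atom\!\left(\frac{a_c}{a_c\prod_{i=2}^{r-1}a_{F_i}}\right)=\prod_{i=2}^{r-1}\frac{1}{a_{F_i/\atom}}=\frac{1}{a_{F_\bullet/\atom}}.
\]

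To conclude, the map $F_\bullet\mapsto F_\bullet/\atom$ is a bijection from $\{F_\bullet\in\Fl^r(M):F_1=\atom\}$ onto $\Fl^{r-1}(M'')$, so reindexing the surviving sum gives
\[
\res_{\atom=0}\dRip{\atom\wedge x,\atom\wedge y}_M=\sum_{G_\bullet\in\Fl^{r-1}(M'')}\Res_{G_\bullet}(x)\,\frac{1}{a_{G_\bullet}}\,\Res_{G_\bullet}(y)=\dRip{x,y}_{M''},
\]
as claimed. The one delicate step is the pole analysis at the start: one might worry that cancellations between summands with $F_1\neq\atom$ could conspire to produce a pole at $a_\atom=0$, but this is impossible because each such summand is a $\Z$-multiple of a single monomial $\frac{1}{a_{F_\bullet}}$ whose factors all satisfy $a_{F_i}\vert_{a_c=0}\neq 0$ (since $F_i\supseteq F_1\neq\atom$ guarantees $F_i\setminus\{c\}\neq\emptyset$).
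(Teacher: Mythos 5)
Your proof is correct and follows the same essential approach as the paper: expand $\dRip{\atom\wedge x,\atom\wedge y}_M$ as a sum over flags, observe that $\res_{\atom=0}$ annihilates every term except those with $F_1=\atom$, and reindex the surviving sum via the bijection $F_\bullet\mapsto F_\bullet/\atom$ onto flags of $M''$. The extra remarks you add --- the argument that the pole at $a_\atom=0$ is at most simple, the explicit identity $a_{F_i}\vert_{a_c=0}=a_{F_i/\atom}$, and the aside ruling out cancellations between flags with $F_1\neq\atom$ --- are accurate and make the well-definedness of $\res_{\atom=0}$ explicit where the paper leaves it implicit.
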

\begin{proof}
It suffices to show that for two bases $B,B' \in \B(M)$, we have
$$
\res_{\atom=0} \dRip{e_B\wedge \atom, e_{B'}\wedge \atom}_{M} =  \dRip{e_B,e_{B'}}_{M''}.
$$
For a flag $F_\bullet \in \Fl(M)$ with $F_1 = \atom$, we let $(F/\atom)_\bullet \in \Fl(M'')$ be the flag defined by $(F/\atom)_i = F_{i+1} \backslash \atom$. 
The pairing $ \dRip{e_B\wedge \atom,  e_{B'}\wedge \atom}_{M}$ is a sum of terms $\pm \frac{1}{a_{F_\bullet}}$ for various flags $F_\bullet$.  We have 
$$
\res_{\atom = 0}  \frac{1}{a_{F_\bullet}} = \begin{cases}  \frac{1}{a_{(F/\atom)_\bullet}} & \mbox{if $F_1 = \atom$,} \\
 0 & \mbox{otherwise.} 
 \end{cases}
 $$
 Thus $\Res_{\atom = 0}  \dRip{e_B \wedge \atom, e_{B'}\wedge \atom}_{M}$ can be expressed as a sum over flags in $\Fl(M'')$, and comparing with \cref{prop:dRind} we see that it equals to $\dRip{e_B,e_{B'}}_{M''}$.
 \end{proof}
 
 \begin{lemma}\label{lem:Resa0}
 For any $x \in \iota_{\atom}(\OS(M'))$ and $y \in \OS^{r-1}(M)$, we have $\res_{\atom=0} \dRip{x,y \wedge \atom}= 0$.
 \end{lemma}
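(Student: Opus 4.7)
The strategy is straightforward: I would expand the pairing via the residue formula of \cref{def:dR} and show that $\res_{\atom=0}$ selects only those flags with $F_1 = \atom$, but those flags contribute zero when the first argument lies in $\iota_\atom(\OS(M'))$.

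First I would analyze which $F_\bullet \in \Fl(M)$ satisfy $\res_{\atom=0}\bigl(\tfrac{1}{a_{F_\bullet}}\bigr) \neq 0$. Since $a_{F_\bullet} = \prod_{i=1}^{r-1} a_{F_i}$ with $a_{F_i} = \sum_{e \in F_i} a_e$, and since $\rk(F_i) = i \geq 2$ for $i \geq 2$ forces $F_i \neq \atom$, the restriction $a_{F_i}|_{a_\atom = 0}$ is nonzero for $i \geq 2$ (it still contains the variables $a_e$ for $e \in F_i \setminus \atom$, which is nonempty). Thus $\tfrac{1}{a_{F_\bullet}}$ has a simple pole at $a_\atom = 0$ precisely when $F_1 = \atom$, and otherwise $\res_{\atom=0}\bigl(\tfrac{1}{a_{F_\bullet}}\bigr) = 0$. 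This also confirms that $\res_{\atom=0}$ is well-defined on $\dRip{x,\atom \wedge y}$, since $a_\atom$ enters any denominator with multiplicity at most one.

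Next, I would invoke \cref{prop:OSexact}, which gives the short exact sequence $0 \to \OS^\bullet(M\setminus\atom) \xrightarrow{\iota_\atom} \OS^\bullet(M) \xrightarrow{\Res_\atom} \OS^{\bullet-1}(M/\atom) \to 0$; in particular $\Res_\atom \circ \iota_\atom = 0$. For $x \in \iota_\atom(\OS(M'))$ this gives $\Res_\atom(x) = 0$, and when $F_1 = \atom$ the residue map factors as $\Res_{F_\bullet} = \Res_{F_\bullet/\atom} \circ \Res_\atom$, so $\Res_{F_\bullet}(x) = 0$ as well.

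Combining these two observations, in the sum
\[
\dRip{x,\atom \wedge y} = \sum_{F_\bullet \in \Fl(M)} \Res_{F_\bullet}(x) \, \frac{1}{a_{F_\bullet}} \, \Res_{F_\bullet}(\atom \wedge y),
\]
every term either has $F_1 \neq \atom$ (so its contribution vanishes under $\res_{\atom=0}$) or has $F_1 = \atom$ (so already $\Res_{F_\bullet}(x) = 0$). Hence $\res_{\atom=0} \dRip{x, \atom \wedge y} = 0$. There is no real obstacle: the proof simply marries the pole analysis of $1/a_{F_\bullet}$ with the exactness $\Res_\atom \circ \iota_\atom = 0$, and these are the two ingredients already baked into \cref{def:dR} and \cref{prop:OSexact}.
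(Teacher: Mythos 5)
Your proof is correct and follows essentially the same route as the paper: you show that a $1/a_\atom$ pole in the summand requires $F_1 = \atom$, and that in that case $\Res_{F_\bullet}(x)$ already vanishes because $\Res_\atom \circ \iota_\atom = 0$ by \cref{prop:OSexact}. The only (welcome) addition is that you explicitly verify the pole at $a_\atom = 0$ is at most simple, so $\res_{\atom = 0}$ is well-defined — a point the paper leaves implicit.
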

 \begin{proof}
 The operation $\res_{\atom=0}$ will annihilate $\dRip{x, y \wedge \atom}$ unless there are terms that involve $1/a_\atom$.  These terms appear in the summands of \cref{prop:dRind} for flags $F_\bullet$ with $F_1 = \atom$.  But if $F_\bullet$ is a flag with $F_1 = \atom$, then $\Res_{F_1}(\iota_{\atom}(\OS(M'))) = 0$, so $\Res_{F_\bullet}(\iota_{\atom}(\OS(M'))) = 0$.  It follows that $\res_{\atom=0} \dRip{x, y \wedge \atom}= 0$.
 \end{proof}

\begin{proof}[{Proof of \cref{prop:nondeg}}]

The statement reduces to the case that $M$ is simple which we assume.  Suppose that $0 \neq \eta \in \OS(M)_Q$ belongs to the kernel of $\dRip{\cdot,\cdot}$.  By clearing denominators, we may assume that $\eta \in \OS(M)_R:=\OS(M) \otimes_\Z R$.  Since the pairing $\dRip{\cdot,\cdot}$ is homogeneous of degree $-d$, we may assume that $\eta$ is a homogeneous element, that is $\eta =\sum_{B\in M} p_B(\a) e_B$ where $p_B(\a) \in R$ all have the same degree.  We assume that $\eta \neq 0$ has been chosen to have minimal degree.

Pick an atom $\atom$.  Write
$$
\eta = \eta' + \eta'' \wedge \atom
$$
for $\eta'$ and $\eta''$ not depending on $\atom$.  Note that $\eta'$ and $\eta''$ are not uniquely determined by $\eta$.  For example, if $e_1,e_2,e_3$ are dependent, then $e_2 e_1 - e_3 e_1 + e_3 e_2 = 0$, so $(e_2-e_3)e_1= - e_3 e_2$, where both $e_2-e_3$ and $-e_3e_2$ do not involve $e_1$.  The map $\Res_{\atom}: \OS(M) \to \OS(M'')$ can be extended to a map $\Res_{\atom}:\OS(M)_R \to \OS(M'')_{R}$.  The map $\theta_{\atom}: R \to R_{\atom}$ can be applied to coefficients to give a map $\theta_{\atom}:\OS(M)_R \to \OS(M)_{R_{\atom}}$.  By composition we obtain a map $\theta_{\atom} \Res_{\atom}: \OS(M)_R \to \OS(M'')_{R_{\atom}}$. 

Consider $\theta_{\atom} \Res_{\atom} \eta = \theta_{\atom} \eta''$.  By \cref{lem:contractform}, we deduce that
$$
\dRip{\theta_{\atom} \eta'', \tau''}_{M''} = \res_{\atom=0} \dRip{\theta_{\atom} \eta'' \wedge \atom,  \tau''  \wedge \atom}_M= \res_{\atom=0} \dRip{\eta, \tau'' \wedge \atom}_M =0
$$
for any $\tau'' \in \OS(M'')$.  In the second equality, we used $\res_{\atom=0} \dRip{\eta', \tau'' \wedge \atom}_M = 0$ which holds by \cref{lem:Resa0}, and $\res_{\atom=0} \dRip{\eta''' \wedge \atom, \tau'' \wedge \atom} = 0$ if $\eta''' \in \Ker(\theta_{\atom})$ allowing us to replace $\theta_{\atom} \eta''$ by $\eta''$.

By induction we may assume that $\dRip{\cdot,\cdot}_{M''}$ is non-degenerate, and so we have $\theta_{\atom} \eta'' = 0$ inside $\OS(M'')_{R_{\atom}}$, or equivalently, $\Res_{\atom}(\eta) = \eta'' \in \Ker(\theta_{\atom})$ as an element of $\OS(M'')_R$.  Thus, 
$$
\eta \in \iota_{\atom}(\OS(M')_R) + \Ker(\theta_{\atom}).
$$
Let $\eta = \iota_{\atom}(\nu) \mod \Ker(\theta_{\atom})$.  Then by \cref{lem:deleteform}, we deduce that $ \dRip{\theta_{\atom} \nu, \OS(M')}_{M'} =  \theta_{\atom}\dRip{ \nu, \OS(M')}_{M'}  =0$ and by induction, we must have $\theta_{\atom}\nu = 0$.  Thus $\eta \in \Ker(\theta_{\atom})$, or equivalently, $\eta = a_\atom \mu$ for some homogeneous element $\mu \in \OS(M)_R$.  This contradicts our assumption that $\eta$ was chosen to have minimal degree.  
\end{proof}

\subsection{deRham homology pairing}
In this section we work with a general extension $(\tM, \star)$ of $\M$ by $\star$, viewed as an affine oriented matroid.  Let $\T^\star$ denote the corresponding set of bounded topes \eqref{eq:Tstar}.  We have $\tE = E \cup \star$. 

We define a symmetric bilinear form $\DdRip{\cdot,\cdot}$ on $\Z^{\T^\star}$ with values in $R = \Z[\a] =  \Z[a_e \mid e \in E]$.  For $P \in \T^\star$, we write $P$ to also denote the corresponding basis element of $\Z^{\T^\star}$.  Denote
$$a^B:= \prod_{b \in B} a_b, \qquad \mbox{for $B \subseteq E$.}$$ 

\begin{definition}\label{def:DdR}
For two bounded topes $P,Q \in \T^\star$, define 
$$
\B(P,Q) = \{B \in \B(M) \mid P, Q \in \T^{B}\}.
$$
Define the $R$-valued \emph{deRham homology intersection form} on $\Z^{\T^\star}$ by
$$
\DdRip{P,Q} := \sum_{B \in \B(P,Q)} a^B.
$$ 
\end{definition}
By definition $\DdRip{\cdot,\cdot}$ is a symmetric bilinear form, homogeneous of degree $r$.

\begin{theorem}\label{thm:dRmain}
The bilinear form $\frac{1}{a_E}\DdRip{\cdot,\cdot}$ (resp. $\DdRip{\cdot,\cdot}$)  is the inverse of the bilinear form $\dRip{\cdot,\cdot}$ (resp. $\dRipp{\cdot,\cdot}$) with respect to the basis $\{\Omega_P \mid P \in \T^\star\}$ of $\OS(M)$.
\end{theorem}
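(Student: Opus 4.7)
The plan is to reformulate the theorem as a single identity of rational functions, then prove that identity by induction on the rank using the atom-residue machinery from the proof of \cref{prop:nondeg}.

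First, I would reduce the claim to an identity intrinsic to $\OS(M)$. Since $\dRip{\cdot,\cdot}$ is non-degenerate on $\OS(M)_Q$ by \cref{prop:nondeg}, it suffices to verify that the matrices $G_{PQ} := \dRip{\Omega_P,\Omega_Q}$ and $D_{QR} := \DdRip{Q,R}$ on the basis $\{\Omega_P : P \in \T^\star\}$ satisfy $GD = a_E \cdot \mathrm{Id}$. Expanding $\DdRip{Q,R} = \sum_{B : Q, R \in \T^B} a^B$, interchanging summations, and applying \cref{thm:EL}(ii) to recognize $\sum_{Q \in \T^B} \Omega_Q = e_B$ (using that $\T^B \subseteq \T^\star$), the matrix product $(GD)_{PR}$ collapses to $\dRip{\Omega_P,\eta_R}$, where
$$
\eta_R \;:=\; \sum_{B \in \B(M) :\, R \in \T^B} a^B\, e_B \;\in\; \OS(M) \otimes_\Z R.
$$
So the theorem reduces to proving the identity
$$
\dRip{\Omega_P,\eta_R} \;=\; a_E\, \delta_{P,R} \qquad \text{for all } P, R \in \T^\star. \qquad (\ast)
$$
The version for $(\dRipp{\cdot,\cdot}, \DdRip{\cdot,\cdot})$ is equivalent: on $\OS^r(M)$ every complete flag has $F_r = E$, so $\tfrac{1}{a'_{F_\bullet}} = \tfrac{1}{a_E}\cdot\tfrac{1}{a_{F_\bullet}}$, and hence $\dRipp{\cdot,\cdot} = \tfrac{1}{a_E}\dRip{\cdot,\cdot}$ on $\OS(M)$.

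Second, I would establish $(\ast)$ by induction on $\rk(M)$, with a secondary induction on $|E|$, mimicking the argument for \cref{prop:nondeg}. Fix an atom $\atom \in \At(M)$ and split
$$
\eta_R \;=\; \eta_R' \;+\; e_\atom \wedge \eta_R''
$$
according to whether $\atom \cap B = \emptyset$ (so $\eta_R' \in \iota_\atom\bigl(\OS(M \backslash \atom)\bigr)$) or $\atom \subset B$ (yielding a lift $\eta_R''$ from $\OS^{r-1}(M/\atom)$). Applying the operators $\theta_\atom$ and $\res_{\atom=0}$ to both sides of $(\ast)$ and using \cref{lem:deleteform}, \cref{lem:contractform}, and \cref{lem:Resa0}, the identity $(\ast)$ gets converted into the analogous identities for $M \backslash \atom$ and $M/\atom$, which hold by induction. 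The canonical-form recursion \cref{thm:EL}(i) guarantees compatibility on the deRham side: $\Res_\atom \Omega_P$ is either zero (when $\atom \notin L(P)$) or equals $\pm\Omega_{P/\atom}$, while $\theta_\atom$ applied to $\Omega_P$ behaves appropriately with respect to $\iota_\atom$. The base case $r = 1$ is a direct rank-one computation generalizing \cref{ex:boolean}.

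The main obstacle will be the combinatorial bookkeeping of how the family $\{B \in \B(M) : R \in \T^B\}$ decomposes under $\atom$-deletion and $\atom$-contraction, and how the associated $\T^B$-structures transfer. Specifically, the definition of $\T^B$ depends on the auxiliary generic extension $(\tM,\star)$, so one must track how this extension restricts to generic extensions of $M \backslash \atom$ and $M/\atom$, how the fundamental circuits $C_B$ behave under restriction/contraction, and when the tope $R$ survives to the bounded part of the corresponding smaller bases (with the test $\atom \in L(R)$ controlling which summands of $\eta_R$ are nonzero after residue). Parallel care is needed to match the leading factor $a_E$ with $a_{E \backslash \atom}$ (under deletion) and with what emerges from $\res_{\atom=0}$ (under contraction). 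An alternative route would be to invoke \cref{thm:dRtope} directly, rewriting $\dRip{\Omega_P,\Omega_Q}$ as a sum over partial flags in $G^\pm(P,Q)$ and exhibiting a sign-reversing involution on the resulting double sum in $(\ast)$ when $P \neq R$; the obstacle in that approach is producing such an involution uniformly over all pairs $(P,R)$.
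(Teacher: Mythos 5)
Your reduction to the single identity
$$
\dRip{\Omega_P,\eta_R} = a_E\,\delta_{P,R},
\qquad
\eta_R := \sum_{B:\, R \in \T^B} a^B\, e_B,
\qquad (\ast)
$$
is correct and is in fact the same algebraic manipulation that underlies the paper's argument (see the proof of \cref{lem:W}, which twice applies the identity $e_B = \sum_{Q \in \T^B}\Omega_Q$ exactly as you do). You also correctly observe that $\dRipp{\cdot,\cdot} = a_E^{-1}\dRip{\cdot,\cdot}$ on $\OS^r(M)$, so only one version needs to be proved. The invocation of \cref{prop:nondeg} in this reduction is unnecessary — for square matrices $GD = a_E\,\mathrm{Id}$ already means each is invertible with $D = a_E G^{-1}$ — but it is harmless.

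The gap is in the induction. The template you want to borrow from the proof of \cref{prop:nondeg} is designed to rule out a nonzero kernel element by a \emph{minimal-degree-counterexample} argument: starting from a hypothetical $\eta$ of minimal degree, one shows $\eta \in \Ker(\theta_\atom)$, i.e.\ $\eta = a_\atom\mu$, contradicting minimality. Proving the explicit identity $(\ast)$ is logically different. Setting $\phi := \dRip{\Omega_P,\eta_R} - a_E\delta_{P,R}$, your plan to apply $\theta_\atom$ and $\res_{\atom=0}$ to $(\ast)$ and reduce by induction would — even if the bookkeeping of $\T^B$ under deletion/contraction could be controlled — only yield $\theta_\atom\phi = 0$ and $\res_{\atom=0}\phi = 0$ for atoms $\atom$. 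Because neither operation is injective, this does not force $\phi = 0$: you would still need to rule out that $\phi$ is a nonzero degree-one rational function whose poles all lie on $\{a_F = 0\}$ for higher-rank flats $F$, and you have not supplied an argument for this (the fact that $\dRip{\cdot,\cdot}$ only has poles at connected flats is \cref{cor:denom}, which the paper deduces \emph{from} \cref{thm:dRmain} and the nested-fan refinement, so it is not available here without separate proof). There is also a second issue you partly flag: \cref{lem:deleteform} and \cref{lem:contractform} require both arguments of the pairing to lie in $\iota_\atom(\OS(M'))$, resp.\ to be of the form $\atom \wedge(\cdot)$; since $\Omega_P$ is neither when $\atom \in L(P)$, you would need to split both $\Omega_P$ and $\eta_R$, handle the non-canonicity of the split, and control the cross-terms via \cref{lem:Resa0}, all of which is elided. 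For comparison, the paper closes this step not by deletion-contraction but by identifying the two matrices with the maps $R^r$ and $S^r$ into and out of the flag space $\F^r$ (\cref{lem:V}, \cref{lem:W}) and invoking $S^r\circ R^r = \mathrm{id}$ (\cref{prop:SVinverse}); that identity is proved by a one-step rank induction using the Orlik--Solomon relation $\bigl(\sum_i(-1)^{k-i}e_{s_1}\cdots\widehat{e_{s_i}}\cdots e_{s_k}\bigr)\wedge\bigl(\sum_{e\in F}a_e e\bigr) = a_F e_S$, sidestepping all of the deletion-contraction and pole-control bookkeeping.
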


\begin{corollary}
Viewing the $a_e$ as complex parameters, the bilinear form $\dRip{\cdot,\cdot}$ on $\OS(M)$ is non-degenerate when $a_E \neq 0$ and \eqref{eq:Mon} is satisfied.
\end{corollary}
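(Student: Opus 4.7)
My plan is to read this as an immediate consequence of \cref{thm:dRmain} once the correct pole structure is identified. By \cref{thm:dRmain}, in the canonical basis $\{\Omega_P \mid P \in \T^\star\}$, the Gram matrix $A := [\dRip{\Omega_P, \Omega_Q}]_{P,Q \in \T^\star}$ of the bilinear form has two-sided inverse $B := \tfrac{1}{a_E}[\DdRip{P,Q}]_{P,Q \in \T^\star}$ over $Q$. The matrix $B$ has polynomial entries divided by $a_E$, so at any specialization $\a^*$ with $a_E(\a^*) \neq 0$ it is automatically well-defined over $\C$. Writing $N := |\T^\star| = \mu^+(M)$ (by \cref{prop:numbertopes}), non-degeneracy of $\dRip{\cdot,\cdot}$ at $\a^*$ is then equivalent to invertibility of $B(\a^*)$, i.e.\ to the polynomial $\det[\DdRip{\cdot,\cdot}]$ being nonzero at $\a^*$.

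Taking determinants in the identity $AB = I$ gives the rational identity
\[
  \det[\dRip{\cdot,\cdot}] \cdot \det[\DdRip{\cdot,\cdot}] \;=\; a_E^N.
\]
The remaining content is to locate the zeros of the polynomial $\det[\DdRip{\cdot,\cdot}]$, equivalently the poles of the rational function $\det[\dRip{\cdot,\cdot}]$. I would show by induction on the rank of $M$ that these poles lie only along the hyperplanes $\{a_F = 0\}$ for connected proper flats $F \subsetneq E$. The inductive step combines the Orlik--Solomon direct sum decomposition \cref{prop:OSsum} with its compatibility \cref{prop:dRdirectsum} and \cref{prop:restrictF}, together with the deletion-contraction identities \cref{lem:deleteform} and \cref{lem:contractform}: writing $M$ as a deletion-contraction triple along an atom $\atom$, the entries of $A$ restrict to the analogous Gram matrices for $M \setminus \atom$ and $M/\atom$ after the operations $\theta_{\atom}$ and $\res_{\atom=0}$, so apparent poles at disconnected intermediate flats $F$ must cancel in the sum of \cref{thm:dRtope}.

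Granting this pole analysis, the hypothesis \eqref{eq:Mon} gives $a_F(\a^*) \neq 0$ for every connected proper flat $F$, so $\det[\dRip{\cdot,\cdot}]$ has no pole at $\a^*$; combined with $a_E(\a^*) \neq 0$ and the determinant identity, this forces $\det[\DdRip{\cdot,\cdot}](\a^*) \neq 0$. Thus $B(\a^*)$ is invertible in $\C^{N\times N}$, so the bilinear form $\dRip{\cdot,\cdot}$ is well-defined and non-degenerate at $\a^*$.

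The main obstacle is the inductive step showing that the only actual poles of $\det[\dRip{\cdot,\cdot}]$ are along connected flats; an alternative shortcut is to appeal to the identification of (a rescaling of) $\dRip{\cdot,\cdot}$ with the Schechtman--Varchenko contravariant form (anticipating \cref{cor:SVform}), whose determinant is computed in closed form in \cite{SV} and visibly factors over connected flats.
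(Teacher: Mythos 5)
Your overall strategy — read the corollary off the inverse relation of \cref{thm:dRmain} and control the pole locus of the Gram matrix — is the same one the paper uses; routing through determinants is a harmless cosmetic variant. The paper's proof is a one-liner that cites \cref{cor:denom} (the entries of $\dRip{\cdot,\cdot}$ have denominators only among $a_F$ for \emph{connected} flats $F$; this is proved later by re-triangulating the Bergman fan with the minimal building set, \cref{thm:deRhamfannested}) together with the fact that $\DdRip{\cdot,\cdot}$ has polynomial entries.

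The gap in your main path is the proposed deletion-contraction induction for the pole structure of $\det[\dRip{\cdot,\cdot}]$. The lemmas you cite (\cref{lem:deleteform}, \cref{lem:contractform}, \cref{prop:dRdirectsum}, \cref{prop:restrictF}) describe compatibilities of the form with deletion, contraction, and the Orlik--Solomon direct sum in degree $k$, but for the top degree $k=r$ the decomposition of \cref{prop:OSsum} is trivial, and those lemmas do not by themselves show that the apparent simple poles at \emph{disconnected} flats $F$ cancel in the flag sum of \cref{thm:dRtope}. That cancellation (e.g.\ $\frac{1}{a_1 a_{12}}+\frac{1}{a_2 a_{12}}=\frac{1}{a_1 a_2}$ when $\{1,2\}$ is a disconnected rank-$2$ flat) is a real combinatorial fact, and the paper proves it by passing to nested fan structures, not by deletion-contraction. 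Your stated fallback, however, does close the argument: by \cref{cor:SVform} the pairing $\dRipp{\cdot,\cdot}$ agrees with the Schechtman--Varchenko contravariant form, and \cref{thm:SVdet} gives $\det \dRip{\cdot,\cdot}$ in closed form as $a_E^{\mu^+(M)-\beta(M)}\prod_F a_F^{-\beta(M^F)\mu^+(M_F)}$ over proper flats $F$; since $\beta(M^F)=0$ for disconnected $F$, this is visibly finite and nonzero exactly when $a_E\neq 0$ and \eqref{eq:Mon} holds, which completes the corollary without needing \cref{cor:denom}.
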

\begin{proof}
In \cref{cor:denom}, we will show that the matrix entries of $\dRip{\cdot,\cdot}$ only have the linear forms $a_F$ in the denominator, where $F$ varies over connected flats.  Since $\DdRip{\cdot,\cdot}$ has polynomial entries, we obtain the stated result from \cref{thm:dRmain}.
\end{proof}

\begin{example}
Consider the line arrangement with five lines labeled $E = \{a,b,c,d,e\}$ and five regions labeled $1,2,3,4,5$ as in \cref{fig:5line}.  
We use the five parameters $a,b,c,d,e$ in place of $a_e, e \in E$.

\begin{figure}
\begin{center}
$$
\begin{tikzpicture}[extended line/.style={shorten >=-#1,shorten <=-#1},
 extended line/.default=1cm]
 \useasboundingbox (0,-0.3) rectangle (12,2);
\draw (0,0) -- (5,0);
\draw[extended line] (1,0) --(3,1);
\draw[extended line=0.4cm] (1.5,-0.3) --(3,2);
\draw[extended line=0.7cm] (3,0) --(3,1.6);
\draw[extended line] (4,0) --(3,1);
\node[color=blue] at (-0.2,0) {$a$};
\node[color=blue] at (0,-0.5) {$b$};
\node[color=blue] at (1.5,-0.5) {$c$};
\node[color=blue] at (3.2,-0.5) {$d$};
\node[color=blue] at (4.7,-0.5) {$e$};

\node[color=red] at (2.85,1.45) {$1$};
\node[color=red] at (2.55,1.02) {$2$};
\node[color=red] at (1.6,0.15) {$5$};
\node[color=red] at (2.5,0.35) {$3$};
\node[color=red] at (3.3,0.3) {$4$};
\begin{scope}[shift={(7,0.5)}]
\node (h0) at (2,-0.8) {$\hat 0$};
\node (a) at (0,0) {$a$};
\node (b) at (1,0) {$b$};
\node (c) at (2,0) {$c$};
\node (d) at (3,0) {$d$};
\node (e) at (4,0) {$e$};
\node (ab) at (-1.5,1) {$ab$};
\node (ac) at (-0.5,1) {$ac$};
\node (ad) at (0.5,1) {$ad$};
\node (ae) at (1.5,1) {$ae$};
\node (bc) at (2.5,1) {$bc$};
\node (bde) at (3.5,1) {$bde$};
\node (cd) at (4.5,1) {$cd$};
\node (ce) at (5.5,1) {$ce$};
\node (abcde) at (2,1.8) {$abcde$};
\draw (a)--(ab)--(b);
\draw (a)--(ac)--(c);
\draw (a)--(ad)--(d);
\draw (a)--(ae)--(e);
\draw (b)--(bc)--(c);
\draw (b)--(bde)--(d);
\draw (e)--(bde);
\draw (c)--(cd)--(d);
\draw (c)--(ce)--(e);
\draw (h0)--(a);
\draw (h0)--(b);
\draw (h0)--(c);
\draw (h0)--(d);
\draw (h0)--(e);
\draw (abcde)--(ab);
\draw (abcde)--(ac);
\draw (abcde)--(ad);
\draw (abcde)--(ae);
\draw (abcde)--(bc);
\draw (abcde)--(bde);
\draw (abcde)--(cd);
\draw (abcde)--(ce);
\end{scope}
\end{tikzpicture}
$$
\end{center}
\caption{Left: a line arrangement in $\P^2$ consisting of 5 lines.  The line at infinity is the general extension $\star$ and not one of the hyperplanes of the arrangement. Right: the lattice of flats $L(M)$.}
\label{fig:5line}
\end{figure}

\noindent
The deRham cohomology intersection form $\dRip{\cdot,\cdot}$ is given by

\scalebox{0.75}{\hspace*{-0.8cm}
$
\begin{bmatrix}
 \frac{1}{d (b+d+e)}+\frac{1}{e (b+d+e)}+\frac{1}{c d}+\frac{1}{c e} & -\frac{1}{e (b+d+e)}-\frac{1}{c e} & -\frac{1}{d (b+d+e)} & \frac{1}{e (b+d+e)}+\frac{1}{d (b+d+e)} & 0 \\
 -\frac{1}{e (b+d+e)}-\frac{1}{c e} & \frac{1}{b c}+\frac{1}{b (b+d+e)}+\frac{1}{e (b+d+e)}+\frac{1}{c e} & -\frac{1}{b c}-\frac{1}{b (b+d+e)} & -\frac{1}{e (b+d+e)} & \frac{1}{b c} \\
 -\frac{1}{d (b+d+e)} & -\frac{1}{b c}-\frac{1}{b (b+d+e)} & \frac{1}{a c}+\frac{1}{a d}+\frac{1}{b c}+\frac{1}{b (b+d+e)}+\frac{1}{d (b+d+e)} & -\frac{1}{a d}-\frac{1}{d (b+d+e)} & -\frac{1}{a c}-\frac{1}{b c} \\
 \frac{1}{e (b+d+e)}+\frac{1}{d (b+d+e)} & -\frac{1}{e (b+d+e)} & -\frac{1}{a d}-\frac{1}{d (b+d+e)} & \frac{1}{a d}+\frac{1}{a e}+\frac{1}{d (b+d+e)}+\frac{1}{e (b+d+e)} & 0 \\
 0 & \frac{1}{b c} & -\frac{1}{a c}-\frac{1}{b c} & 0 & \frac{1}{a b}+\frac{1}{a c}+\frac{1}{b c}
\end{bmatrix}.
$}
For example, the $(1,3)$-entry is equal to $-1/(d (b+d+e))$ because there is a single flag $F_\bullet = (\hat 0 \subset \{d\} \subset \{b,d,e\} \subset \hat 1)$ for which both residues $\Res_{F_\bullet} \bOmega_{P_1}$ and $\Res_{F_\bullet}  \bOmega_{P_3}$ are non-zero.  This can be deduced from \cref{thm:EL}.

\noindent
The deRham homology intersection form $\DdRip{\cdot,\cdot}$ is given by 
$$
\begin{bmatrix}
 a c d+b c d+c d e & a c d+b c d & a c d & 0 & 0 \\
 a c d+b c d & a c d+a c e+b c d+b c e & a c d+a c e & a c e & 0 \\
 a c d & a c d+a c e & a b d+a b e+a c d+a c e & a b e+a c e & a b d+a b e \\
 0 & a c e & a b e+a c e & a b e+a c e+a d e & a b e \\
 0 & 0 & a b d+a b e & a b e & a b c+a b d+a b e 
\end{bmatrix}.
$$
For example, the $(1,2)$-entry is equal to $acd+bcd$ because the two simplices bounded by $a,c,d$ and $b,c,d$ contain both of the chambers $1$ and $2$.
\end{example}

\cref{thm:dRmain} can be proven by induction in a direct combinatorial manner.  We instead proceed indirectly, using the flag space of \cite{SV}.  This has the advantage of directly connecting our constructions to \cite{SV}.

\subsection{Flag space}\label{sec:flagspace}
Let $\tF^k$ denote the free abelian group on elements $[F_\bullet]$ for $F_\bullet \in \Fl^k$.  Let $G_\bullet = (G_0 \subset G_1 \subset \cdots \subset G_{j-1} \subset G_{j+1} \subset \cdots \subset G_k)$ be a partial flag with a single jump, where $\rk(G_i) = i$.  For $L \in L(M)$ satisfying  $G_{j-1} < L <G_{j+1}$, let $G^L_\bullet := (G_0 \subset \cdots \subset G_{j-1} \subset L \subset G_{j+1} \subset \cdots \subset G_k) \in \Fl^k$.  
\begin{definition}
The \emph{flag space} $\F^k$ is the quotient of $\tF^k$ by the submodule generated by the elements
$$
\sum_{L \in (G_{j-1},G_{j+1})} [G^L_\bullet]
$$
for all $0 < j < k$ and all partial flags $G_\bullet$ with a single jump.  
\end{definition}

Define a map $\eta: \tF^k \to \OS^k(M)^* = \Hom(\OS^k(M),\Z)$ by the formula
\begin{equation}\label{eq:etadef}
(\eta([F_\bullet]), x) =  \Res_{F_\bullet} x
\end{equation}
for $x \in \OS^k(M)$.  Abusing notation, we may also write $\Res_y: \OS^k(M) \to \OS^k(M)$ for an arbitrary $y \in \tF^k$. 

\begin{lemma}\label{lem:Resdes}
The action of $\tF^k$ descends to $\F^k$.
\end{lemma}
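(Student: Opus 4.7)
The plan is to show that the pairing $\eta$ of \eqref{eq:etadef} vanishes on each relator $\sum_{L \in (G_{j-1}, G_{j+1})} [G^L_\bullet]$. Equivalently, fixing a partial flag $G_\bullet$ with a single jump at position $j$, it suffices to prove
$$\sum_{L \in (G_{j-1}, G_{j+1})} \Res_{G^L_\bullet}(x) = 0 \qquad \text{for every } x \in \OS^k(M).$$
Since $\OS^k(M)$ is spanned by the classes $e_S$ for ordered independent sets $S = (s_1, \ldots, s_k)$ of size $k$, it is enough to establish the identity on each such generator $e_S$.

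By \cref{lem:rSF}, $\Res_{G^L_\bullet}(e_S) = r(S, G^L_\bullet)$, which is $(-1)^{\sigma}$ if $G^L_\bullet$ is generated by $S$ via the permutation $\sigma = \sigma(S, G^L_\bullet)$ of \eqref{eq:sigma}, and zero otherwise. The requirement that $G^L_\bullet$ be generated by $S$ pins down, independently of $L$, which elements of $S$ span each $G_i$ for $i < j$ and for $i > j$; the remaining freedom is the ordering of the two elements $s_a, s_b \in S$ that lie in $G_{j+1} \setminus G_{j-1}$. I would check that the two possible orderings yield two valid distinct intermediate flats $L, L' \in (G_{j-1}, G_{j+1})$, and that the corresponding generating permutations differ precisely by the transposition $(a, b)$, so the signs $(-1)^{\sigma}$ are opposite and cancel pairwise in the sum.

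The only way this argument could fail is if $s_a$ and $s_b$ were parallel in the contraction $M/G_{j-1}$, forcing a single intermediate flat. But such parallelism would give $\rk(G_{j-1} \cup \{s_a, s_b\}) = j < j+1 = \rk(G_{j+1})$, contradicting the fact that $s_a, s_b$ together with a $G_{j-1}$-basis drawn from $S$ are required to span $G_{j+1}$. Hence two distinct $L$'s always appear, the contributions cancel, and the entire sum vanishes. The main step requiring care is simply the bookkeeping of permutation signs from \cref{def:rSF}; everything else is a direct unraveling of definitions.
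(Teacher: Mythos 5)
Your proof is correct and uses essentially the same cancellation as the paper's: for a fixed generator $e_S$ exactly two intermediate flats $L$ can make $G^L_\bullet$ generated by $S$, and the associated permutations $\sigma(S,G^L_\bullet)$ differ by a transposition, so the two contributions cancel. The paper streamlines the bookkeeping by first reducing to the case $j=1$ (factoring the residue maps on either side of the jump) and then phrasing the cancellation as the anti-commutativity $\Res_{e'}\Res_e\,e_S + \Res_e\Res_{e'}\,e_S = 0$; this is the same transposition-sign argument in a different dress.
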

\begin{proof}
We need to show that for any partial flag $G_\bullet$ with a single jump, we have that $\sum_L \Res_{G^L_\bullet}$ acts by zero on $\OS^k(M)$.  Since $\Res_{G^L_\bullet}$ is a composition of residue maps, we reduce immediately to the case $j = 1$.  We may further assume that $M$ is simple.  Let $e_S \in \OS^k(M)$ for $S \subset E$.  If $|S \cap G_2| < 2$ then $ \Res_{G^L_\bullet} e_S = 0$ for any $L$.  If $|S \cap G_2| > 2$ then $S$ is not independent and $e_S = 0$.  If $S \cap G_2 = \{e,e'\}$, then 
\begin{equation*}
\sum_L \Res_{G^L_\bullet} e_S = \Res_{G_p} \cdots \Res_{G_3} (\Res_{e'} \Res_{e} e_S + \Res_{e} \Res_{e'} e_S) = 0.  \qedhere
\end{equation*}
\end{proof}

Suppose that $k = r$.  By \cref{thm:EL}, $\OS(M)$ has basis $\{\Omega_P \mid P \in \T^\star\}$.  Let $\{\delta_P \mid P \in \T^\star\}$ denote the dual basis of $\OS(M)^*$.  In this basis, the homomorphism $\eta: \F^r \to \OS(M)^*$ is given by
\begin{equation}\label{eq:etadeltaP}
\eta([F_\bullet]) = \sum_{P \in \T^\star} r(P,F_\bullet) \delta_P.
\end{equation}
Let $\delta_{F_\bullet} \in (\tF^k)^*$ be the linear functional taking the value $1$ on $[F_\bullet]$ and $0$ on all other flags.
Define a map $\nu: \OS^k(M) \to (\tF^k)^*$ by 
$$
\nu(x) := \sum_{F_\bullet \in \Fl^k} \Res_{F_\bullet}(x) \delta_{F_\bullet}.
$$
By the proof of \cref{lem:Resdes}, $\nu$ has image in the subspace $(\F^k)^* \subset (\tF^k)^*$.

\begin{proposition}
The two maps $\eta: \F^k \to \OS^k(M)^*$ and $\nu:\OS^k(M) \to (\F^k)^*$ are transpose to each other.
\end{proposition}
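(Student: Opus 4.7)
The plan is to verify the transpose identity
\[
\ip{\eta(y),\, x} \;=\; \ip{\nu(x),\, y}
\qquad \text{for all } y\in\F^k,\ x\in\OS^k(M),
\]
by direct computation on representatives $y = [F_\bullet]$, where $F_\bullet \in \Fl^k$.  Since both $\eta$ and $\nu$ are $\Z$-linear and $\F^k$ is spanned by classes of complete flags, it suffices to check the identity on these generators.

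First, by the definition \eqref{eq:etadef} of $\eta$, the left-hand side evaluates to
\[
\ip{\eta([F_\bullet]),\, x} \;=\; \Res_{F_\bullet}(x).
\]
On the other hand, by the definition of $\nu$ and the pairing $\ip{\delta_{F'_\bullet},[F_\bullet]} = \delta_{F_\bullet,F'_\bullet}$ on $\tF^k$, the right-hand side evaluates to
\[
\ip{\nu(x),\, [F_\bullet]} \;=\; \sum_{F'_\bullet \in \Fl^k} \Res_{F'_\bullet}(x)\, \ip{\delta_{F'_\bullet},[F_\bullet]} \;=\; \Res_{F_\bullet}(x).
\]
The two expressions agree, which is the desired identity.

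The only remaining subtlety is that in order to interpret $\ip{\nu(x),\, y}$ as a pairing with $y \in \F^k$ (and not merely $y \in \tF^k$), we need $\nu(x)$ to descend to $(\F^k)^*$.  This has already been observed immediately after the definition of $\nu$: the proof of \cref{lem:Resdes} shows that $\sum_{L} \Res_{G^L_\bullet}$ annihilates $\OS^k(M)$ for every partial flag $G_\bullet$ with a single jump, which is precisely the statement that $\nu(x)$ vanishes on the defining relations of $\F^k$.  Thus the routine computation above, combined with this descent, completes the proof; no serious obstacle arises.
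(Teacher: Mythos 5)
Your proof is correct and follows essentially the same route as the paper's: both sides of the transpose identity are evaluated on a flag generator $[F_\bullet]$ (the paper additionally takes $x = e_S$, a harmless specialization) and are shown to equal $\Res_{F_\bullet}(x)$. The remark about descent of $\nu(x)$ to $(\F^k)^*$ is a welcome explicit note; the paper handles the same point in the sentence immediately preceding the proposition.
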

\begin{proof}
Let $S \in \I_k(M)$ and $F_\bullet \in \Fl^k$.  We have
\begin{align*}
([F_\bullet], \nu(e_S)) &= ([F_\bullet],  \sum_{F'_\bullet \in \Fl^k} \Res_{F'_\bullet}(e_S) \delta_{F'_\bullet}) = \Res_{F_\bullet}(e_S) \stackrel{\eqref{eq:etadef}}{=} (\eta([F_\bullet]), e_S). \qedhere
\end{align*}
\end{proof}

A fundamental property of the flag space $\F^k$ is the duality with $\OS^k(M)$.
\begin{proposition}[{\cite[Theorem 2.4]{SV}}] \label{prop:Fk}
The maps $\eta: \F^k \to \OS^k(M)^*$ and $\nu: \OS^k(M) \to (\F^k)^*$ are isomorphisms.  
\end{proposition}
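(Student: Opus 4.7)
Since $\eta$ and $\nu$ are mutual transposes between finitely generated free $\Z$-modules, it suffices to prove that $\eta$ is an isomorphism. My plan is to construct a $\Z$-basis of $\F^k$ indexed by nbc independent sets of size $k$, which pairs (up to a triangular unimodular change) with the nbc basis of $\OS^k(M)$ from \cref{sec:nbc}. Fix the total order $\prec$ on $E$ used in the nbc construction. For each nbc independent set $S = \{s_1 \prec \cdots \prec s_k\} \in \I_k(M)$, define the \emph{standard flag} $F_\bullet(S)$ by $F_i(S) := \overline{\{s_1,\ldots,s_i\}}$. I will argue that $\{[F_\bullet(S)] \mid S \text{ nbc of size } k\}$ is a $\Z$-basis of $\F^k$.

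The first step is triangularity. By \cref{lem:rSF}, $\Res_{F_\bullet(S')}(e_S) = r(S,F_\bullet(S'))$, which vanishes unless $F_\bullet(S')$ is generated by $S$. Order nbc sets lexicographically by their $\prec$-sorted tuples. The diagonal entry $r(S,F_\bullet(S))$ equals $+1$ because the permutation of \cref{def:rSF} relating $S$ to its own standard flag is the identity. For the off-diagonal vanishing, suppose $S$ generates $F_\bullet(S')$ with $s_{\sigma(i)}$ contributing the $i$-th rank increment; the nbc property of $S'$ forces $s'_i = \min_\prec(F_i(S') \setminus F_{i-1}(S'))$, so $s_{\sigma(i)} \succeq s'_i$ for each $i$. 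A counting argument then shows that the $\prec$-sorted tuple $(s_1,\ldots,s_k)$ componentwise dominates $(s'_1,\ldots,s'_k)$, forcing $S \succeq S'$ in lex order. Hence the pairing matrix is triangular with $+1$ on the diagonal, and in particular unimodular.

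The second step is that the standard nbc flags span $\F^k$. Given an arbitrary $F_\bullet \in \Fl^k$, at the first position $i$ where $F_\bullet$ fails to match a standard nbc flag, apply the defining relation $\sum_L [G_\bullet^L] = 0$, with $G_\bullet$ the partial flag obtained by deleting $F_i$ and $L$ ranging over flats in the open interval $(F_{i-1},F_{i+1})$, to express $[F_\bullet]$ as a $\Z$-combination of flags strictly smaller in a suitable lex-type term order on flags induced from $\prec$. Iterating this reduction, which terminates by well-foundedness of the order, places every $[F_\bullet]$ in the span of the standard nbc flags. This is formally parallel to the classical straightening argument for the nbc basis of $\OS^k(M)$.

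Combining the two steps, $\{[F_\bullet(S)]\}$ is a $\Z$-basis of $\F^k$ and $\eta$ maps it to a basis of $\OS^k(M)^*$ via a triangular unimodular change from the dual nbc basis, so $\eta$ is an isomorphism; transposing gives the same for $\nu$. The main obstacle will be the spanning step: one must pin down a well-founded term order on flags compatible with both the flag relations and $\prec$ so that each reduction strictly decreases the order and terminates in the nbc span. Everything else follows formally from the residue computations and nbc formalism developed in the earlier sections.
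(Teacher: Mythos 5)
The paper does not give its own proof of this proposition; it simply cites \cite[Theorem~2.4]{SV}, so your proposal is being measured only against correctness.

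Your strategy (build a triangular pairing between a flag basis of $\F^k$ and the $\nbc$-basis of $\OS^k(M)$) is reasonable in spirit, but the specific construction breaks down already for $M = U_{2,3}$ with $E = \{1,2,3\}$, $1 \prec 2 \prec 3$, $k=2$. The $\nbc$ sets of size $2$ are $\{1,2\}$ and $\{1,3\}$, but both have the same standard flag: $F_1 = \overline{\{1\}} = \{1\}$ and $F_2 = \overline{\{1,2\}} = \overline{\{1,3\}} = \{1,2,3\}$. So the map $S \mapsto F_\bullet(S)$ is not injective, your proposed indexing set produces only one distinct element of $\F^2$, and $\F^2$ has rank $2$ (the free module $\tF^2$ has rank $3$ and the single defining relation $[F^{(1)}_\bullet]+[F^{(2)}_\bullet]+[F^{(3)}_\bullet]=0$ cuts it down by one). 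No family of fewer than two flags can be a basis, so the spanning step cannot be repaired without changing the construction. The same example also falsifies the lemma you invoke for triangularity: for the $\nbc$ set $S' = \{1,3\}$ you have $s'_2 = 3$, but $\min_\prec(F_2(S')\setminus F_1(S')) = \min_\prec\{2,3\} = 2$, so the asserted characterization of $\nbc$ via $s'_i = \min_\prec(F_i(S')\setminus F_{i-1}(S'))$ is false (the correct $\nbc$ characterization uses the \emph{upper} closures $\overline{\{s_i,\ldots,s_k\}}$ rather than the lower ones $\overline{\{s_1,\ldots,s_i\}}$).

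The root of the trouble is that a flag $F_\bullet \in \Fl^k$ only records the closures $\overline{\{s_1,\ldots,s_i\}}$, which lose information about $S$ itself. If you want a triangularity proof, you need a set of flags of the right cardinality, chosen by different combinatorics. Schechtman--Varchenko's own argument is homological (a Koszul-type complex computing $\F^\bullet$, plus a rank count); alternatively, one can pick a basis of $\Fl^k$ compatible with an EL-shelling of $L(M)$ and show triangularity against the $\nbc$-basis of $\OS^k(M)$, but the selection rule for the flags must involve atoms/labels of the shelling rather than the naive map $S \mapsto F_\bullet(S)$. As written, the proposal proves neither spanning nor triangularity.
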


\begin{remark}
The flag spaces $\F^k$ form a complex $(\F^\bullet, d)$ where the differential $d$ is defined in \cite[(2.2.1)]{SV}.  The cohomology of this complex is naturally isomorphic to the reduced cohomology of the order complex of $L(M) \setminus \{\hat 0, \hat 1\}$.  See \cite[Remark 3.8]{FT}.
\end{remark}

\subsection{Proof of \cref{thm:dRmain}}
In this section, we extend coefficients of $\OS^\bullet(M)$ and $\F^\bullet$ from $\Z$ to $Q$.  
Following \cite{SV}, define linear maps $R^k: \OS^k(M)_Q \to \F^k_Q$ and $S^k: \F^k_Q \to \OS^k(M)_Q$ by
\begin{align}\label{eq:RS}
\begin{split}
R^k(x) &:=\sum_{F_\bullet \in \Fl^k} \Res_{F_\bullet}(x) \frac{1}{a'_{F_\bullet}} [F_\bullet], \\
S^k([F_\bullet]) &:= \sum_{S \in \I_k(M)} r(S,F_\bullet) a^S e_S.
\end{split}
\end{align}

\begin{proposition}[{\cite[Lemma 3.4.4]{SV}}]\label{prop:SVinverse}
For any $k$, we have $S^k \circ R^k = {\rm id}$.
\end{proposition}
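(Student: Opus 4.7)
The plan is to prove $S^k \circ R^k = \mathrm{id}$ on $\OS^k(M)_Q$ by induction on the rank of $M$. The base case $k = 0$ is immediate, since $R^0$ and $S^0$ are both the identity on $\OS^0(M)_Q \cong \Q$.

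First I would reduce to the top-degree case $k = r$. The decomposition $\OS^k(M) = \bigoplus_{F \in L^k(M)} \OS_F(M) \cong \bigoplus_F \OS(M^F)$ from \cref{prop:OSsum} is preserved by $R^k$: since $\Res_{F_\bullet}$ vanishes on $\OS_F(M)$ unless $F_k = F$, the operator $R^k$ sends $\OS_F(M)_Q$ into the subspace of $\F^k$ spanned by complete flags terminating at $F$. An analogous statement for $S^k$ (using that $r(S,F_\bullet) = 0$ unless $F_k = \overline{S}$ by \cref{def:rSF}) shows that both operators split as direct sums over $F \in L^k(M)$ of their top-degree analogues for the matroids $M^F$. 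Hence it suffices to prove $S^r \circ R^r = \mathrm{id}$ on $\OS(M)_Q$ for a matroid of rank $r$.

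For the inductive step I would fix an atom $c \in \At(M)$ and exploit the short exact sequence
\[
0 \to \OS(M \setminus c) \xrightarrow{\iota_c} \OS(M) \xrightarrow{\Res_c} \OS^{r-1}(M/c) \to 0
\]
of \cref{prop:OSexact}, together with the analogous decomposition $\F^r(M) = \F^r(M)^c \oplus \F^r(M)^{\neg c}$ of the flag space according to whether $F_1 = c$; the summand $\F^r(M)^c$ is naturally identified with $\F^{r-1}(M/c)$ via $[F_\bullet] \mapsto [F_\bullet/c]$, and $\F^r(M \setminus c)$ embeds into $\F^r(M)^{\neg c}$ by lifting flags. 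The heart of the argument is to verify that, under these identifications, both $R^r$ and $S^r$ intertwine $\iota_c, \Res_c$ with the corresponding flag-space operations, up to explicit rescalings depending on $a_c$. The identity $S^r \circ R^r = \mathrm{id}$ on $\OS(M)_Q$ then reduces, via the two splittings, to the corresponding identities for $M \setminus c$ and $M/c$, available by induction on the pair $(r,|E|)$.

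The main obstacle is the parameter bookkeeping. For a flag $F_\bullet$ with $F_1 = c$, each factor $a_{F_i}$ appearing in $a'_{F_\bullet} = \prod_{i=1}^r a_{F_i}$ equals $a_c + a_{F_i \setminus c}$ as a linear form in $R$, which differs from the parameter $a_{F_i/c}$ that would be used for the contracted flag in $M/c$; similarly, one must track how the absence of the variable $a_c$ in $R$ for $M \setminus c$ interacts with the flags of $M$ not beginning with $c$. Organizing these shifts so that they cancel against the matching adjustments in $a^S$ from the definition of $S^r$ and in the permutation signs $r(S,F_\bullet)$ is the delicate combinatorial step; I expect the cleanest organization is to first establish a cleared-denominator version of the identity as a polynomial identity in $R$ (multiplying through by $\prod_F a_F$ over connected flats, as justified by \cref{cor:denom}) before inverting.
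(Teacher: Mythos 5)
Your approach is genuinely different from the paper's, and it has a gap at its central step.

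The paper's proof of $S^k \circ R^k = \mathrm{id}$ is an induction on $k$ that peels off the \emph{top} of the flag: fixing $T$ with $\bar T = F$, one separates the factor $1/a_F$ and reduces the sum over pairs $(Z,F_\bullet)$ with $F_k = F$ to a sum over $(Z^-, F^-_\bullet)$ of one lower degree, in the \emph{same} matroid $M$, and then closes the argument with the Orlik--Solomon identity $(\sum_i (-1)^{k-i} e_{S\setminus s_i}) \wedge (\sum_{e\in F} a_e e) = a_F e_S$. This stays entirely inside $M$ and never changes the ground set, which is what makes the parameter bookkeeping trivial.

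By contrast, you propose a deletion--contraction induction at a fixed atom $c$, splitting $\F^r(M)$ by whether $F_1 = c$. Two things go wrong. First, the contraction side does \emph{not} intertwine up to a rescaling: for a flag $F_\bullet$ with $F_1 = c$, the factor $a'_{F_\bullet}$ equals $a_c \prod_{i\geq 2}(a_c + a_{F_i/c})$, and the $a_c$ inside each parenthesis is an \emph{additive} shift relative to $a'_{(F_\bullet/c)}$ computed in $M/c$. This shift cannot be absorbed by an ``explicit rescaling depending on $a_c$'' as your proposal asserts; the only clean relation the paper has here is \cref{lem:contractform}, which equates the two sides \emph{after} applying $\res_{a_c=0}$, i.e.\ only in the limit $a_c\to 0$. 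But $S^r\circ R^r=\mathrm{id}$ is an exact identity in $Q = \Frac(R)$, and verifying its specializations at $a_c=0$ for each atom $c$ (even after clearing denominators) does not determine a polynomial of positive degree in the $a_e$'s — so the inductive step cannot close. Second, the complementary block $\F^r(M)^{\neg c}$ is not in bijection with $\F^r(M\setminus c)$: a flag of $M$ with $F_1\neq c$ may well have $c \in F_i$ for some $i\geq 2$, and such flags have no natural counterpart in $M\setminus c$; the map you describe is only an inclusion of $\F^r(M\setminus c)$ into $\F^r(M)^{\neg c}$, not an isomorphism, so the proposed two-term direct-sum reduction of $S^r\circ R^r$ is not available.

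If you want to salvage a deletion--contraction strategy, you would need something stronger than the specializations at $a_c=0$ — for instance a priori degree bounds in each $a_c$ that make the specializations determining, or an argument that works in a polynomial extension where the $a_F$ are treated as independent before specializing. Neither is sketched in your proposal. The paper's route — fixing the top flat and descending one step at a time within $M$, using the displayed Orlik--Solomon relation — sidesteps the additive shift entirely and is the cleaner path here.
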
 
\begin{proof}
Proceed by induction on $k$.  The case $k=1$ is straightforward.
Let $S = \{s_1,\ldots,s_k\}$ be an ordered independent set with closure $F:=\bar S $.   Then for $e \in F \setminus S$, the set $S \cup e$ is dependent, giving
$$
e \wedge (\sum^k_{i=1} (-1)^{k-i} e_{s_k} \wedge \cdots \widehat{e_{s_i}} \cdots \wedge e_{s_1}) = e_S.
$$
Thus we have
\begin{equation}\label{eq:SV}
 (\sum_{e \in F} a_e e) \wedge(\sum^k_{i=1} (-1)^{k-i} e_{s_k} \wedge \cdots \widehat{e_{s_i}} \cdots \wedge e_{s_1})= a_F e_S.
\end{equation}
Fix an independent set $T \in \I_k$ and let $F=\bar T \in L^k(M)$.  We have
$$
S^k \circ R^k(e_T) = \sum_{F_\bullet} \frac{1}{a'_{F_\bullet}} r(T,F_\bullet) \sum_{Z} r(Z,F_\bullet) a^Z e_{Z} = \sum_{Z,F_\bullet} \frac{a^Z}{a'_{F_\bullet}}  r(T,F_\bullet) r(Z,F_\bullet) e_Z
$$
where the summation can be restricted to pairs $(Z, F_\bullet) \in \I_k \times \Fl^k$ such that both $Z$ and $T$ generate $F_\bullet$, and in particular $F_k = F$.  For each such pair $(Z,F_\bullet)$, there exists a unique $ b\in Z$ such that $b \notin F_{k-1}$ and a unique $t_i \in T = \{t_1,t_2,\ldots,t_k\}$ such that $t_i \notin F_{k-1}$.  We may rewrite the sum as
$$
S^k\circ R^k(e_T) = \frac{1}{a_F}\sum_{b \in F}  \sum_{i =1}^k (-1)^{k-i} \frac{a_b}{a_G} e_b \left(\sum_{Z^-, F^-_\bullet} \frac{1}{a_{F^-_\bullet}} r(Z^-, F^-_\bullet) r(T^-, F^-_\bullet) a^{Z^-} e_{Z^-} \right)
$$
where $G = \overline{T \setminus t_i}$, and $Z^- = Z \setminus b$, and $T^- = T \setminus t_i$, and $F^-_\bullet \in \Fl^{k-1}$ is obtained by dropping $F_k$ from $F_\bullet$.  We compute, using the inductive hypothesis,
\begin{align*}
S^k \circ R^k (e_T)  &= \frac{1}{a_F}  \left(  \sum_{b\in F}a_b e_b  \right)\sum_{i =1}^k (-1)^{k-i}  \left((S^{k-1} \circ R^{k-1})(e_{T \setminus t_i})\right)\\
&= \frac{1}{a_F}  \left(  \sum_{b\in F}a_b e_b  \right) \wedge \left(\sum_{i =1}^k (-1)^{k-i} e_{T \setminus t_i} \right) & \mbox{by inductive hypothesis}\\
& = e_T &\mbox{by \eqref{eq:SV}.} & \qedhere
\end{align*}
\end{proof}

Define two $\T^\star \times \T^\star$ matrices
$$
V(P,Q):=\frac{1}{a_E} \dRip{\Omega_P,\Omega_Q} = \dRipp{\Omega_P,\Omega_Q}, \qquad W(P,Q):= \DdRip{P,Q}.
$$

\begin{lemma}\label{lem:V}
The matrix $V$ is the matrix of $R^r: \OS(M)_Q \to \F^r_Q$ with respect to the basis $\{\Omega_P \mid P \in \T^\star\}$ of $\OS(M)$ and $\{\delta_P \mid P \in \T^\star\}$ of $\eta:\F^r \cong \OS(M)^*$.
\end{lemma}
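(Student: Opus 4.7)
The statement amounts to unwinding three definitions---that of $R^r$, that of $\eta$, and that of $V$---and observing that they assemble into the same expression. The plan is to compute $\eta(R^r(\Omega_Q))$ explicitly, read off its coefficient in the dual basis $\{\delta_P\}$, and identify this coefficient with $V(P,Q)$.

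First, I apply the definition \eqref{eq:RS} of $R^r$ to the basis element $\Omega_Q$:
$$
R^r(\Omega_Q) = \sum_{F_\bullet \in \Fl^r(M)} \Res_{F_\bullet}(\Omega_Q) \, \frac{1}{a'_{F_\bullet}} [F_\bullet] = \sum_{F_\bullet \in \Fl^r(M)} r(Q, F_\bullet) \, \frac{1}{a'_{F_\bullet}} [F_\bullet],
$$
using the definition $r(Q,F_\bullet) = \Res_{F_\bullet}(\Omega_Q)$ from \cref{sec:pFl}. Next I apply $\eta$ to both sides and use formula \eqref{eq:etadeltaP}:
$$
\eta(R^r(\Omega_Q)) = \sum_{F_\bullet \in \Fl^r(M)} r(Q,F_\bullet) \, \frac{1}{a'_{F_\bullet}} \sum_{P \in \T^\star} r(P, F_\bullet) \, \delta_P.
$$
Interchanging the order of summation, the coefficient of $\delta_P$ in $\eta(R^r(\Omega_Q))$ equals
$$
\sum_{F_\bullet \in \Fl^r(M)} r(P, F_\bullet) \, \frac{1}{a'_{F_\bullet}} \, r(Q, F_\bullet).
$$

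Finally, by the definition of the deRham cohomology intersection form applied to canonical forms---together with the identity $r(\cdot, F_\bullet) = \Res_{F_\bullet}(\Omega_\cdot)$---the above expression is exactly $\dRipp{\Omega_P, \Omega_Q}$, which by definition equals $V(P,Q)$. Since $\dRipp{\cdot,\cdot}$ is symmetric, the roles of $P$ and $Q$ are interchangeable, so this coefficient is indeed the $(P,Q)$-entry of the matrix of $R^r$ (composed with the isomorphism $\eta$) with respect to the stated bases. There is no real obstacle here; the whole argument is bookkeeping, and the only subtle point is keeping straight that the matrix of a linear map $T: U \to V$ with respect to bases $\{u_j\}, \{v_i\}$ has $(i,j)$-entry equal to the coefficient of $v_i$ in $T(u_j)$, which together with the symmetry of $V$ makes the identification immediate.
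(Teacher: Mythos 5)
Your proof is correct and matches the paper's approach, which simply asserts that the lemma ``follows from the definitions''; you have unwound those definitions explicitly and correctly, tracking $R^r$, $\eta$, and $\dRipp{\cdot,\cdot}$ to arrive at $V(P,Q)$. The one thing worth emphasizing is the identity $\Res_{F_\bullet}(\Omega_Q) = r(Q,F_\bullet)$, which you use and which is indeed the definition from \cref{sec:pFl}, so the step is airtight.
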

\begin{proof}
Follows from the definitions.
\end{proof}

\begin{lemma}\label{lem:W}
The matrix $W$ is the matrix of the linear map $S^r: \F^r \to \OS(M)$
with respect to the basis $\{\delta_P \mid P \in \T^\star\}$ of $\F^r \cong \OS(M)^*$ and $\{\Omega_P \mid P \in \T^\star\}$ of $\OS(M)$.
\end{lemma}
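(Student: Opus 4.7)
The plan is to verify the claimed identity of linear maps by evaluating $S^r$ on the spanning set $\{[F_\bullet] \mid F_\bullet \in \Fl(M)\}$ of $\F^r$. Applying $\eta^{-1}$ to \eqref{eq:etadeltaP} gives
\[
[F_\bullet] = \sum_{P \in \T^\star} r(P, F_\bullet)\,\eta^{-1}(\delta_P) \quad \text{in } \F^r,
\]
so the desired formula $S^r(\eta^{-1}(\delta_P)) = \sum_Q \DdRip{P,Q}\,\Omega_Q$ is equivalent to the collection of identities
\[
S^r([F_\bullet]) = \sum_{Q \in \T^\star} \biggl(\sum_{P \in \T^\star} r(P, F_\bullet)\,\DdRip{P, Q}\biggr) \Omega_Q
\]
as $F_\bullet$ varies over $\Fl(M)$; the vectors $\bigl(r(P, F_\bullet)\bigr)_{P}=\eta([F_\bullet])$ span $\OS(M)^*$, so these identities collectively pin down $S^r(\eta^{-1}(\delta_P))$ for each $P$.

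Next I would compute the left-hand side directly from the definition \eqref{eq:RS}. At degree $k = r$ the sum ranges over bases; choosing positive orderings so that $e_B$ in the formula coincides with the chirotope-adjusted canonical element used in \cref{thm:EL} gives $S^r([F_\bullet]) = \sum_{B \in \B(M)} r(B, F_\bullet)\, a^B\, e_B$. Substituting \eqref{eq:cone}, $e_B = \sum_{P \in \T^B} \Omega_P$, and using the containment $\T^B \subseteq \T^\star$ established earlier in the text, the right-hand side becomes
\[
S^r([F_\bullet]) = \sum_{Q \in \T^\star} \biggl(\sum_{B\,:\, Q \in \T^B} r(B, F_\bullet)\,a^B\biggr) \Omega_Q.
\]

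Matching coefficients of $\Omega_Q$ reduces the lemma to the scalar identity
\[
\sum_{P \in \T^\star} r(P, F_\bullet)\,\DdRip{P, Q} \;=\; \sum_{B\,:\, Q \in \T^B} r(B, F_\bullet)\, a^B,
\]
which I would prove as follows. Unfold $\DdRip{P, Q} = \sum_{B \in \B(P,Q)} a^B$ and use $B \in \B(P, Q) \Longleftrightarrow P, Q \in \T^B$ to rewrite the left-hand side, after swapping the order of summation, as $\sum_{B\,:\, Q \in \T^B} a^B \sum_{P \in \T^B} r(P, F_\bullet)$. The identity then follows at once from applying $\Res_{F_\bullet}$ to both sides of \eqref{eq:cone}: this gives $\sum_{P \in \T^B} r(P, F_\bullet) = \Res_{F_\bullet}(e_B) = r(B, F_\bullet)$, closing the argument.

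I do not expect a genuine obstacle here; the only delicate points are the sign bookkeeping via positive orderings (so that the $e_B$ appearing in the formula for $S^r$ and in the relation \eqref{eq:cone} refer to the same element) and the containment $\T^B \subseteq \T^\star$ that legitimizes reading off coefficients in the basis $\{\Omega_Q \mid Q \in \T^\star\}$, both of which are already in place.
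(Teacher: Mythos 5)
Your proof is correct and follows essentially the same route as the paper's: define the candidate map via $\delta_P\mapsto\sum_Q W(P,Q)\Omega_Q$, evaluate it on the spanning classes $[F_\bullet]$, swap the order of summation using $B\in\B(P,Q)\iff P,Q\in\T^B$, apply $\Res_{F_\bullet}$ to the cone decomposition \eqref{eq:cone} to collapse the inner sum $\sum_{P\in\T^B}r(P,F_\bullet)$ to $r(B,F_\bullet)$, and match against the definition \eqref{eq:RS} of $S^r$. The only cosmetic difference is that the paper computes one side and recognizes the result as $\sum_B a^B r(B,F_\bullet)e_B=S^r([F_\bullet])$, whereas you compute both sides of the identity independently before matching coefficients of $\Omega_Q$; your remarks on the sign convention matching $e_B$ with the chirotope-normalized element, and on the containment $\T^B\subseteq\T^\star$ that legitimizes reading off coefficients, are the right things to flag.
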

\begin{proof}
Define $S'(\delta_P) = \sum_{Q \in \T^\star} W(P,Q) \Omega_Q$.  Then 
\begin{align*}
S'([F_\bullet]) &= S'(\sum_{P \in \T^\star} r(P,F_\bullet) \delta_P) \\
&= \sum_{P \in \T^\star} r(P,F_\bullet) \sum_{Q \in \T^\star} \Omega_Q  \sum_{B \in \B(P,Q)} a^B & \mbox{by \cref{def:DdR}} \\
&= \sum_B a^B \left(\sum_{P \in \T^B} \Res_{F_\bullet}(\Omega_P) \right) \left(\sum_{Q \in \T^B} \Omega_Q\right) \\
&= \sum_B a^B  r(B,F_\bullet) e_B & \mbox{by \eqref{eq:cone}}.
\end{align*}
Comparing with the definition of $S^r$, we find that $S' = S^r$.
\end{proof}

\cref{thm:dRmain} is equivalent to the matrix identity $VW = {\rm Id}$, which follows from \cref{prop:SVinverse}, \cref{lem:V} and \cref{lem:W}.

\subsection{Comparison to Schechtman--Varchenko contravariant form}

The following result compares our definition with the ``contravariant form'' of Schechtman and Varchenko \cite{SV} defined in the setting of affine hyperplane arrangements.  This form is extended to the setting of matroids by Brylawski and Varchenko \cite{BV}.

Let $\ip{\cdot,\cdot}_{SV}$ be the form on $\OS^k(M)$ induced by the map $R^k: \OS^k(M) \to (\F^k)^*$.  More precisely,
$$
\ip{x,y}_{SV} := (\eta(R^k(x)), y),
$$
where $(\cdot,\cdot)$ is the natural evaluation map on $\OS^k(M)^* \otimes \OS^k(M)$.

\begin{corollary}\label{cor:SVform}
Suppose that $x, y \in \OS^k(M)$.  Then 
$$
\dRipp{x,y} = \ip{x,y}_{SV} =\ip{y,x}_{SV}.
$$
\end{corollary}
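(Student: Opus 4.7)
The plan is to prove the statement by direct unwinding of the definitions. The key observation is that the Schechtman--Varchenko contravariant form $\ip{x,y}_{SV} = (\eta(R^k(x)), y)$ should, once $\eta$ and $R^k$ are expanded, produce exactly the sum-over-flags formula defining $\dRipp{x,y}$.

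First I would substitute the defining formula \eqref{eq:RS} for $R^k(x)$ to write
\[
\eta(R^k(x)) = \sum_{F_\bullet \in \Fl^k(M)} \Res_{F_\bullet}(x) \, \frac{1}{a'_{F_\bullet}} \, \eta([F_\bullet]).
\]
Then, applying this linear functional to $y$ and using $(\eta([F_\bullet]), y) = \Res_{F_\bullet}(y)$ from \eqref{eq:etadef}, I obtain
\[
\ip{x,y}_{SV} = \sum_{F_\bullet \in \Fl^k(M)} \Res_{F_\bullet}(x) \, \frac{1}{a'_{F_\bullet}} \, \Res_{F_\bullet}(y).
\]
Comparing with the definition of $\dRipp{x,y}$ given immediately after \cref{def:dR}, this expression is literally $\dRipp{x,y}$, establishing the first equality.

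For the second equality $\ip{x,y}_{SV} = \ip{y,x}_{SV}$, I would simply note that the formula just derived is manifestly symmetric in $x$ and $y$: swapping the two residue factors $\Res_{F_\bullet}(x)$ and $\Res_{F_\bullet}(y)$ (which are scalars) leaves each summand unchanged. Equivalently, this is the symmetry of $\dRipp{\cdot,\cdot}$ noted after its definition.

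There is essentially no obstacle here; the corollary is just a translation between the two pieces of notation. The only point that might deserve a line of justification is that the expansion of $\eta(R^k(x))$ as a linear functional on $\OS^k(M)$ may be computed termwise --- which is immediate since everything in sight is $Q$-linear after extension of scalars. No input beyond the definitions \eqref{eq:etadef}, \eqref{eq:RS}, and \cref{def:dR} is required.
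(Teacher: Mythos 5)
Your proof is correct and is essentially the same as the paper's: expand $R^k$ via \eqref{eq:RS}, evaluate $\eta([F_\bullet])$ on the second argument using \eqref{eq:etadef}, and compare with the definition of $\dRipp{\cdot,\cdot}$. The only cosmetic difference is that the paper carries out the computation on basis elements $e_S, e_{S'}$ while you work directly with general $x,y$, which amounts to the same thing by linearity.
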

\begin{proof}
For two independent sets $S,S' \in \I_k(M)$, we compute:
\begin{align*}
\ip{e_S,e_{S'}}_{SV} &= (\eta(R^k(e_S)), e_{S'}) = \sum_{F_\bullet} r(S,F_\bullet) \frac{1}{a'_{F_\bullet}} (\eta([F_\bullet]), e_{S'}) =  \sum_{F_\bullet} r(S,F_\bullet) \frac{1}{a'_{F_\bullet}} r(S',F_\bullet)  = \dRipp{e_S, e_{S'}}. \qedhere
\end{align*}
\end{proof}

\begin{remark}\label{rem:a0infinity}
Our symmetric bilinear form $\dRip{\cdot,\cdot}$ agrees with that of \cite{SV} in the case of a central hyperplane arrangement, and to that of \cite{BV}.  In the case of an affine hyperplane arrangement $\A$, the symmetric bilinear form $\ip{\cdot,\cdot}_{SV,\A}$ of \cite{SV} is obtained from our $\dRip{\cdot,\cdot}$ by ``removing contributions from infinity".  More precisely, for an affine matroid $(M,0)$ associated to an affine arrangement $\A$, we have
$$
\ip{\cdot,\cdot}_{SV,\A} = \dRip{\cdot,\cdot}|_{a_0 = \infty}.
$$  
The substitution $a_0 = \infty$ sends $1/a_F$ to 0 for any flat $F \ni 0$ containing $0$.
\end{remark}

\subsection{Schechtman-Varchenko determinant}

The main result of Schechtman and Varchenko \cite{SV} (in the hyperplane arrangement case) and Brylawski and Varchenko \cite{BV} (in the general matroid case) is the following determinantal formula.

\begin{theorem}[\cite{SV,BV}]\label{thm:SVdet}
The determinant of the form $\dRipp{\cdot,\cdot}$ on the free $\Z$-module $\OS(M)$ is equal to 
$$
\Delta' = \frac{1}{\prod_{F \in L(M)\setminus \hat 0} a_F^{\beta(M^F) \mu^+(M_F)}}.
$$
The determinant of the form $\dRip{\cdot,\cdot}$ on $\OS(M)$ is equal to 
$$
\Delta = \frac{a_E^{\mu^+(M)-\beta(M)}}{\prod_{F \in L(M)\setminus \{\hat 0,\hat 1\}} a_F^{\beta(M^F) \mu^+(M_F)}}.
$$
\end{theorem}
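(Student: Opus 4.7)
The plan is to reduce both determinant identities to the classical Schechtman--Varchenko/Brylawski--Varchenko determinant theorem, using the identification of $\dRipp{\cdot,\cdot}$ with the Schechtman--Varchenko contravariant form already established in \cref{cor:SVform}. This reduction has two steps: a rescaling that passes between $\dRip{\cdot,\cdot}$ and $\dRipp{\cdot,\cdot}$, and then invocation of the cited determinant.

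For the rescaling, I would observe that on $\OS(M) = \OS^r(M)$ every flag $F_\bullet \in \Fl^r(M)$ must end at $F_r = \hat 1 = E$, so directly from the definitions
$$
\frac{1}{a'_{F_\bullet}} \;=\; \frac{1}{a_E}\cdot\frac{1}{a_{F_\bullet}},
$$
and therefore $\dRipp{\cdot,\cdot} = a_E^{-1}\,\dRip{\cdot,\cdot}$ as bilinear forms on $\OS(M)$. Since $\dim_\Z \OS(M) = \mu^+(M)$, this yields $\det \dRip{\cdot,\cdot} = a_E^{\mu^+(M)}\, \det \dRipp{\cdot,\cdot}$. The $F = \hat 1$ factor in $\Delta'$ is $a_E^{\beta(M^E)\mu^+(M_E)} = a_E^{\beta(M)}$, using $M^E = M$ and $\mu^+(M_E) = 1$ for the empty matroid $M_E$. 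Extracting this factor from $\Delta'$ and multiplying by $a_E^{\mu^+(M)}$ converts $\Delta'$ into $\Delta$. Hence the two formulas are equivalent, and it suffices to establish the $\Delta'$ identity.

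For the $\Delta'$ identity, \cref{cor:SVform} identifies $\dRipp{\cdot,\cdot}$ on $\OS^r(M)$ with the Schechtman--Varchenko contravariant form $\ip{\cdot,\cdot}_{SV}$. The formula $\Delta'$ for the determinant of $\ip{\cdot,\cdot}_{SV}$ on the top component of the Orlik--Solomon algebra is the main theorem of \cite{SV} in the hyperplane arrangement case and of \cite{BV} in full matroid generality, so citing it concludes the argument. Because this theorem is invoked directly, there is no serious obstacle once \cref{cor:SVform} is in hand; the entire proof is a bookkeeping exercise around the scalar factor $a_E$.

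If one wished instead to give a proof internal to this paper, the natural alternative would be induction on $|E|$ via deletion-contraction at an atom $\atom \in \At(M)$. The short exact sequence
$$
0 \to \OS^\bullet(M\setminus \atom) \xrightarrow{\iota_\atom} \OS^\bullet(M) \xrightarrow{\Res_\atom} \OS^{\bullet-1}(M/\atom) \to 0
$$
of \cref{prop:OSexact}, together with the compatibility lemmas \cref{lem:deleteform}, \cref{lem:contractform}, and \cref{lem:Resa0}, would let one present the Gram matrix of $\dRip{\cdot,\cdot}$ in a block-triangular form whose diagonal blocks are controlled by $\dRip{\cdot,\cdot}_{M\setminus\atom}$ and, after the residue at $a_\atom = 0$, by $\dRip{\cdot,\cdot}_{M/\atom}$. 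The exponents $\beta(M^F)\mu^+(M_F)$ would emerge from the recursion $\beta(M) = \beta(M\setminus \atom) + \beta(M/\atom)$ and a matching additivity for $\mu^+$. The main difficulty on this alternative route will be choosing the splitting of the exact sequence carefully enough that the block structure is genuinely triangular rather than merely exact, so that the factor of $a_\atom$ produced at each step carries exactly the predicted exponent $\beta(M^\atom)\mu^+(M_\atom)$ with no spurious contributions.
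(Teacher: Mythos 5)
Your proposal is correct and matches the paper's treatment: the theorem is cited from \cite{SV,BV}, with \cref{cor:SVform} providing the identification $\dRipp{\cdot,\cdot}=\ip{\cdot,\cdot}_{SV}$, and the $a_E$ bookkeeping you carry out (with $\beta(M^E)\mu^+(M_E)=\beta(M)$) is exactly what converts $\Delta'$ into $\Delta$ via $\dRip{\cdot,\cdot}=a_E\,\dRipp{\cdot,\cdot}$ on $\OS(M)$. The deletion--contraction sketch you append is not needed for the stated result and is not pursued in the paper.
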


For $F$ an atom, we have $\beta(M^F) = 1$, so the exponent $\beta(M^F) \mu^+(M_F)$ is equal to $\mu^+(M_F)$.  For $F = E$, we have $\mu^+(M_F) = 1$, so the exponent $\beta(M^F) \mu^+(M_F)$ is equal to $\beta(M)$.

\section{Aomoto complex intersection form}
In this section, we consider an affine oriented matroid $(\M,0)$, and study the situation when the parameters $a_e \in \C$ are specialized to complex numbers satisfying 
\begin{equation}\label{eq:sumto0}
a_E = \sum_{e \in E} a_e = 0,
\end{equation}
or equivalently, $a_0 = - \sum_{e \in E \setminus 0} a_e$.  In this section, we always assume that \eqref{eq:Mon} is satisfied. 
By \cref{cor:denom}, $\dRip{\cdot,\cdot}$ is defined when \eqref{eq:Mon} is satisfied.

\begin{remark}
Falk and Varchenko \cite{FalkVar} study the Schechtman-Varchenko contravariant form on the \emph{subspace of singular vectors} within the flag space $\F^r$, which is dual to the setting of this section.
\end{remark}

\begin{remark}
Instead of taking $a_e, e \in E$ to be complex parameters, we could alternatively work in the ring $R_0 = R/(a_E)$ and its fraction field $Q_0 = \Frac(F_0)$.
\end{remark}

\subsection{Aomoto complex}\label{sec:Aomoto}
Let $a_e$, $e \in E$ be complex parameters.  Consider the element 
$$
\omega = \sum_e a_e e \in \OS^1(M) \otimes_{\Z} \C.
$$
Since $\omega \wedge \omega = 0$, multiplication by $\omega$ gives a chain complex, the \emph{Aomoto complex}:
\begin{equation}\label{eq:Aomotocomplex}
\OS^0(M) \otimes_\Z \C \stackrel{\omega}{\longrightarrow} \OS^1(M) \otimes_\Z \C \stackrel{\omega}{\longrightarrow} \cdots \stackrel{\omega}{\longrightarrow} \OS^r(M) \otimes_\Z \C,
\end{equation}
denoted $(\OS^\bullet(M), \omega)$.  When $\sum_e a_e = 0$, we have $\omega \in \rOS^1(M)$, and we obtain a subcomplex $(\rOS^\bullet(M), \omega) \subset (\OS^\bullet(M),\omega)$.  We let $\OS^\bullet(M,\omega)$ (resp. $\rOS^\bullet(M,\omega)$) denote the cohomologies of the Aomoto complex.  

The cohomology of the Aomoto complex was initially considered in the study of the topology of hyperplane arrangement complements; see \cref{sec:twistedco}.  Yuzvinsky \cite{Yuz} studied the cohomology from the abstract perspective of the Orlik-Solomon algebra. 

\begin{theorem}[{\cite[Proposition 2.1 and Theorem 4.1]{Yuz}}]\label{thm:Yuz}\
\begin{enumerate}
\item Suppose that $\sum_e a_e \neq 0$.  Then we have $\OS^\bullet(M,\omega) = 0$.
\item  
Suppose that \eqref{eq:sumto0} and \eqref{eq:Mon} hold.  Then we have $\rOS^k(M,\omega) = 0$ unless $k = d$, and $ \dim \rOS^{d}(M,\omega) = \beta(M)$.
\end{enumerate}
\end{theorem}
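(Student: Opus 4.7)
The strategy is to exhibit an explicit chain contraction. The boundary operator $\partial$ (from Section 3) is a graded derivation of degree $-1$ on the exterior algebra that preserves the Orlik--Solomon relations and so descends to $\OS^\bullet(M)$. Since $\omega$ has degree $1$ and $\partial\omega = \sum_e a_e = a_E$, the Leibniz rule gives
\[
\partial(\omega\wedge x) = a_E\, x - \omega\wedge\partial x \qquad\text{for all } x\in\OS^\bullet(M).
\]
Thus $h := a_E^{-1}\,\partial$ is a well-defined operator of degree $-1$ satisfying $h\circ(\omega\wedge\cdot) + (\omega\wedge\cdot)\circ h = \mathrm{id}$, so the identity is null-homotopic and $\OS^\bullet(M,\omega)=0$.

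\textbf{Plan for part (2), dimension count via Euler characteristic.} Assuming the vanishing $\rOS^k(M,\omega)=0$ for $k<d$ (handled below), the dimension $\dim\rOS^d(M,\omega)=\beta(M)$ will follow from a Poincar\'e polynomial computation. Starting from the decomposition $\OS^k(M)\cong\rOS^k(M)\oplus e_0\wedge\rOS^{k-1}(M)$, the Poincar\'e polynomials satisfy $\pi_M(t)=(1+t)\bar\pi_M(t)$. Combining this with $\pi_M(t)=(-t)^r\chi_M(-1/t)$ and differentiating at $t=-1$ yields $\bar\pi_M(-1)=\chi'_M(1)=(-1)^{r+1}\beta(M)=(-1)^d\beta(M)$. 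This is the Euler characteristic of $(\rOS^\bullet(M),\omega)$, so if the cohomology is concentrated in degree $d$, then $\dim\rOS^d(M,\omega)=\beta(M)$.

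\textbf{Plan for part (2), the vanishing.} I would argue by induction on $|E|$, using a non-coloop atom $c\in\At(M)$ (which exists whenever $\beta(M)>0$; the degenerate coloop/disconnected cases are checked directly and give $\beta(M)=0$, matching the acyclicity of the Koszul-type complex). The exact sequence of \cref{prop:OSexact},
\[
0\to\rOS^\bullet(M\setminus c)\xrightarrow{\iota_c}\rOS^\bullet(M)\xrightarrow{\Res_c}\rOS^{\bullet-1}(M/c)\to0,
\]
is a short exact sequence of chain complexes once one equips $\rOS^\bullet(M\setminus c)$ with the Aomoto differential for the parameters $\{a_e\}_{e\ne c}$ and $\rOS^\bullet(M/c)$ with the Aomoto differential for the induced parameters $\tilde a_{[e]}=\sum_{e'\in[e]}a_{e'}$ (summing over the parallel class $[e]$ in $M/c$). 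One must verify that $\Res_c\circ(\omega_M\wedge\cdot)=(\omega_{M/c}\wedge\cdot)\circ\Res_c$ and $\iota_c\circ(\omega_{M\setminus c}\wedge\cdot)=(\omega_M\wedge\cdot)\circ\iota_c$; the latter is immediate, while the former is a direct calculation writing $x=c\wedge y+z$ in the notation of Section 4.1. The genericity hypothesis \eqref{eq:Mon} for $M$ transfers to $M\setminus c$ and $M/c$ because connected flats of these minors pull back to connected flats of $M$ (with the same parameter sums). The resulting long exact sequence then combines the inductive hypotheses $\rOS^k(M\setminus c,\omega_{M\setminus c})=0$ and $\rOS^k(M/c,\omega_{M/c})=0$ for $k<d-1$ to force $\rOS^k(M,\omega)=0$ for $k<d$.

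\textbf{Expected main obstacle.} The delicate step is verifying that the deletion-contraction sequence \eqref{prop:OSexact} is compatible with the Aomoto differentials and that the genericity condition \eqref{eq:Mon} is inherited by $M\setminus c$ and $M/c$. In particular, one must keep track of how parallel classes in $M/c$ aggregate the original parameters, and confirm that $a_F\notin\mathbb Z$ for connected flats $F$ of the minors. An alternative, avoiding this bookkeeping, is to filter $\OS^\bullet(M)$ by the flat-rank filtration compatible with \cref{prop:OSsum} (using that $\omega\wedge\OS_F\subset\bigoplus_{G\gtrdot F}\OS_G$) and to run a spectral sequence whose $E_1$-page involves only local Aomoto complexes $(\OS(M^F),\omega|_F)$, reducing the vanishing to the top-only concentration on connected flats --- this local structure is the same phenomenon that underlies \cref{cor:denom} and the factorization properties invoked elsewhere in the paper.
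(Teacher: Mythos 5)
The paper does not prove this theorem; it is quoted directly from Yuzvinsky's work, so there is no proof of the paper's own to compare against. I will therefore assess your argument on its own merits.

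Your part~(1) is correct: $\partial$ is a graded derivation of degree $-1$ with $\partial\omega=a_E$, so $\partial(\omega\wedge x)=a_E x-\omega\wedge\partial x$, giving the Koszul contracting homotopy $h=a_E^{-1}\partial$ when $a_E\neq 0$. Your Euler-characteristic computation in part~(2) is also sound: from $\pi_M(t)=(1+t)\bar\pi_M(t)$ and $\pi_M(t)=(-t)^r\chi_M(-1/t)$ one gets $\bar\pi_M(-1)=\chi'_M(1)=(-1)^d\beta(M)$, so once the vanishing below degree $d$ is known, the dimension follows.

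The genuine gap is in the deletion--contraction step. You assert that $\iota_c\circ(\omega_{M\setminus c}\wedge\cdot)=(\omega_M\wedge\cdot)\circ\iota_c$ is ``immediate.'' It is false: for $x\in\OS^\bullet(M\setminus c)$ one has $\omega_M\wedge\iota_c(x)-\iota_c(\omega_{M\setminus c}\wedge x)=a_c\,c\wedge\iota_c(x)$, which is generically nonzero. A concrete counterexample: take $M=U_{2,3}$ on $\{1,2,3\}$ with $c=1$ and $x=e_2$; then $\omega_M\wedge e_2$ contains the term $a_1 e_1\wedge e_2\notin\iota_1(\OS(M\setminus 1))$. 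One can similarly check that $\Res_c$ fails to intertwine the two Aomoto differentials (with $x=e_2$, $\Res_1(\omega_M\wedge e_2)=a_1\bar e_2$ while $\omega_{M/1}\wedge\Res_1(e_2)=0$). So \cref{prop:OSexact} is \emph{not} a short exact sequence of Aomoto complexes, and the long exact sequence you propose does not exist as stated. There is a second problem compounding this: under \eqref{eq:sumto0} one has $a_{E\setminus c}=-a_c\neq 0$ generically, so for $M\setminus c$ the hypothesis $\sum a_e=0$ fails, and hence $\rOS^\bullet(M\setminus c,\omega_{M\setminus c})$ is not even defined (as the paper notes, $\omega\in\rOS^1$ requires the sum-to-zero condition). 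The induction would have to pass through the full $\OS^\bullet$ complex for the deletion and use part~(1) there, which changes the shape of the argument substantially.

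Your fallback suggestion --- filtering by the rank of flats compatibly with the decomposition $\OS^\bullet(M)=\bigoplus_F\OS_F(M)$ of \cref{prop:OSsum}, so that the $E_1$-page of the resulting spectral sequence is built out of the local complexes on the intervals $[\hat 0,F]$ --- is the correct route and is essentially Yuzvinsky's argument. If you pursue that, the main points to check are that $\omega\wedge\OS_F\subseteq\bigoplus_{G\gtrdot F}\OS_G$ (so the filtration is respected), that the genericity hypothesis \eqref{eq:Mon} guarantees the local complexes contribute only in top degree, and that the contributions on non-connected flats vanish. That would give a self-contained proof; the deletion--contraction route, as written, does not.
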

Denote $\rOS(M,\omega):=  \rOS^{d}(M,\omega)$ for the non-vanishing cohomology group of the complex $(\rOS^\bullet(M),\omega)$.  Henceforth, we always assume that $\sum_e a_e = 0$ when considering the cohomology $\rOS(M,\omega)$.  We have the following comparison (cf.  \cite[Theorem 4.1]{Yuz}).

\begin{proposition}\label{prop:OSrOStwisted}
Suppose that \eqref{eq:sumto0} and \eqref{eq:Mon} hold.
The isomorphism $\partial: \OS^r(M) \otimes \C \to  \rOS^{r-1}(M) \otimes \C$ of \cref{prop:OSrOS} descends to an isomorphism $\partial: \OS^r(M,\omega) \to  \rOS^{r-1}(M,\omega) = \rOS(M,\omega)$.
\end{proposition}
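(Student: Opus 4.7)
The plan is to exploit the fact that $\partial$ is an antiderivation and that, under the hypothesis \eqref{eq:sumto0}, the twisting form $\omega = \sum_e a_e\, e$ lies in $\rOS^1(M)\otimes\C = \ker\partial$. Consequently $\partial\omega = 0$, and for any $x$ one computes
\begin{equation*}
\partial(\omega \wedge x) = (\partial\omega) \wedge x - \omega \wedge \partial x = -\omega \wedge \partial x,
\end{equation*}
so $\partial$ anticommutes with multiplication by $\omega$. This will be the central computation driving everything.

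First I would package this into the short exact sequence of Aomoto complexes
\begin{equation*}
0 \longrightarrow (\rOS^\bullet(M),\omega) \longrightarrow (\OS^\bullet(M),\omega) \xrightarrow{\ \partial\ } (\rOS^{\bullet-1}(M),\omega) \longrightarrow 0,
\end{equation*}
where exactness on the chain level uses the identification $\rOS^\bullet(M) = \ker\partial = \image\partial$ inside $\OS^\bullet(M)$ (valid after tensoring with $\C$). The maps are chain maps precisely because of the anticommutation identity above (the sign is harmless for cohomology). Then I would write down the associated long exact sequence in cohomology and feed in \cref{thm:Yuz}: under \eqref{eq:sumto0} and \eqref{eq:Mon} the reduced Aomoto cohomology vanishes outside degree $d = r-1$, and $\rOS^r(M) = 0$ outright since $\rOS^\bullet$ is supported in degrees $\le r-1$. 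Restricting the LES to degrees near $r$ collapses everything except the single segment
\begin{equation*}
0 \longrightarrow \OS^r(M,\omega) \xrightarrow{\ \partial\ } \rOS^{r-1}(M,\omega) \longrightarrow 0,
\end{equation*}
yielding the desired isomorphism.

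If a reader prefers a direct argument avoiding the long exact sequence, the same conclusion comes out in two lines. Surjectivity: given a cocycle $z \in \rOS^{r-1}(M)$, lift it to $x\in\OS^r(M)$ with $\partial x = z$ using \cref{prop:OSrOS}; this $x$ is automatically a cocycle since $\OS^{r+1}(M) = 0$, and by construction $\partial[x] = [z]$. Injectivity: if $\partial x = \omega\wedge y$ for some $y\in\rOS^{r-2}(M)$, write $y = \partial y'$ with $y' \in \OS^{r-1}(M)$; then $\partial x = \omega\wedge\partial y' = -\partial(\omega\wedge y')$, so $\partial(x+\omega\wedge y') = 0$, and the injectivity of $\partial$ on $\OS^r(M)$ (again \cref{prop:OSrOS}) forces $x = -\omega\wedge y' \in \omega\cdot\OS^{r-1}(M)$, i.e.\ $[x] = 0$.

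There is no genuine obstacle here; the only point needing care is the sign bookkeeping in $\partial(\omega\wedge\,\cdot\,) = -\omega\wedge\partial(\cdot)$ and the verification that $\omega \in \rOS^1(M)$, both of which rely on the standing hypothesis \eqref{eq:sumto0}. The hypothesis \eqref{eq:Mon} enters only to invoke \cref{thm:Yuz} and guarantee the acyclicity that makes the LES collapse to a single isomorphism.
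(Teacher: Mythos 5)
Your key identity $\partial(\omega\wedge x) = -\omega\wedge\partial x$, resting on $\partial\omega = \sum_e a_e = 0$ under \eqref{eq:sumto0}, is exactly the paper's central computation, and your direct argument (lift via \cref{prop:OSrOS} for surjectivity, push coboundaries back through $\partial$ for injectivity) amounts to the paper's observation that $\partial$ restricts to an isomorphism $\omega\,\OS^{r-1}(M) \to \omega\,\rOS^{r-2}(M)$ and hence induces an isomorphism of quotients.

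One minor remark on your long-exact-sequence alternative: \cref{thm:Yuz} is not actually needed. The terms flanking the segment of interest in the LES are $\rOS^r(M,\omega)$ and $\rOS^{r+1}(M,\omega)$, and these vanish for the trivial reason that $\rOS^\bullet(M)$ is supported in degrees $\le r-1$ (so the underlying groups $\rOS^r(M)$ and $\rOS^{r+1}(M)$ are already zero). The collapse therefore uses only \eqref{eq:sumto0}, not the genericity condition \eqref{eq:Mon} --- and indeed the paper's proof, like your direct argument, never invokes \eqref{eq:Mon} either; it is in the hypothesis only because it is a standing assumption of the section.
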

\begin{proof}
For any two elements $\alpha, \beta$ of $A^\bullet$, we have the Leibniz rule:
$$
\partial( \alpha \wedge \beta) = \pm \partial(\alpha) \wedge \beta + \alpha \wedge \partial(\beta)
$$
which holds generally for the contraction of a differential form $\alpha \wedge \beta$ against a vector field $\partial$.
Now, let $\alpha = \omega$ and $\beta \in A^\bullet(M)$.  Then $\partial(\omega) = \sum_{e \in E} a_e = 0$, so
\begin{equation}\label{eq:partialomega}
\partial( \omega \wedge \beta) =\omega \wedge \partial(\beta).
\end{equation}
It follows that $\partial$ sends the subspace $\omega \OS^{r-1}(M) \subset \OS^r(M)$ isomorphically to the subspace $\omega \rOS^{r-2}(M) \subset \rOS^{r-1}(M)$.  Thus $\partial$ descends to an isomorphism $\partial: \OS^r(M,\omega) \cong \rOS^{r-1}(M,\omega)$.
\end{proof}

\begin{lemma}\label{lem:AMgeneric}
Let $(\tilM,\star)$ be a general extension of $M$ by $\star$.
Then $\rOS(M)_\C := \rOS(M)\otimes_\Z \C \cong \rOS(\tilM, \omega)$.
\end{lemma}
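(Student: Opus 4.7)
The plan is to construct an explicit map $\phi: \rOS(M)_\C \to \rOS(\tilM, \omega)$ by composing the inclusion $\iota_\star: \rOS^{r-1}(M) \hookrightarrow \rOS^{r-1}(\tilM)$ from \cref{prop:OSexact} with the projection onto Aomoto cohomology, and then to prove $\phi$ is an isomorphism via a dimension match together with a direct injectivity argument.

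First I would unlock genericity. The Kohno hypothesis \eqref{eq:Mon} for $\tilM$ applied to the connected atom $\{\star\} \in L(\tilM)$ forces $a_\star \notin \Z$; combined with $a_{\tilde E} = 0$ this yields $a_E = -a_\star \neq 0$, so by \cref{thm:Yuz}(1) the unreduced Aomoto complex $(\OS^\bullet(M), \omega_M)$ is acyclic. I would then set up a convenient decomposition of $\OS^\bullet(\tilM)$: since $\star$ is generic, the $k$-element circuits of $M$ and $\tilM/\star$ coincide for $k \leq r-1$, so the natural map $\OS^k(M) \to \OS^k(\tilM/\star)$ is an isomorphism for $k \leq r-2$. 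Combining this with the short exact sequence of \cref{prop:OSexact} produces a canonical direct sum
\[ \OS^k(\tilM) = \iota_\star(\OS^k(M)) \oplus \star \wedge \iota_\star(\OS^{k-1}(M)) \qquad \text{for } k \leq r - 1. \]

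With this decomposition in hand, the closedness condition $\partial z = 0$ forces every $z \in \rOS^k(\tilM)$ to have the form $z = z_1 - \star \wedge \partial z_1$ with $z_1 \in \OS^k(M)$ arbitrary, giving a linear isomorphism $\rOS^k(\tilM) \cong \OS^k(M)$ for $k \leq r-1$. A direct computation, using $a_\star = -a_E$ and the Leibniz identity $\partial(\omega_M z_1) = a_E z_1 - \omega_M \partial z_1$, then yields
\[ \omega \wedge (z_1 - \star \wedge \partial z_1) = \omega_M z_1 - \star \wedge \partial(\omega_M z_1), \]
so $\omega \wedge \rOS^{r-2}(\tilM)$ corresponds to $\omega_M \wedge \OS^{r-2}(M)$. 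Under these identifications, $\rOS(\tilM, \omega) \cong \OS^{r-1}(M)/\omega_M \OS^{r-2}(M)$, and $\phi$ becomes the tautological composition of the inclusion $\rOS^{r-1}(M) \hookrightarrow \OS^{r-1}(M)$ with the quotient map.

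The conclusion follows quickly. Acyclicity of $(\OS^\bullet(M), \omega_M)$ identifies $\OS^{r-1}(M)/\omega_M \OS^{r-2}(M)$ with $\omega_M \OS^{r-1}(M) = \OS^r(M)$, so both source and target of $\phi$ have dimension $\mu^+(M)$ (using \cref{prop:OSrOS} for the source and \cref{lem:betageneric} together with \cref{thm:Yuz}(2) for the target). For injectivity, suppose $x \in \rOS^{r-1}(M)$ equals $\omega_M w$ for some $w \in \OS^{r-2}(M)$; applying $\partial$ gives $a_E w = \omega_M \partial w$, and since $a_E \neq 0$ we deduce $w \in \omega_M \OS^{r-3}(M)$, whence $x = \omega_M^2(\cdot) = 0$. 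The main technical obstacle will be establishing the canonical direct-sum decomposition of $\OS^\bullet(\tilM)$ and verifying the compatibility formula for $\omega \wedge z$; once this framework is in place, the dimension match and the Koszul-style injectivity argument are essentially formal.
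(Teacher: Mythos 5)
Your proof is correct, and it takes a genuinely different route from the paper's.

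The paper proves the lemma by composing the inclusion $\iota_\star: \rOS(M)_\C \hookrightarrow \rOS(\tilM)_\C$ with the projection to $\rOS(\tilM,\omega)$, showing this map $\kappa$ is surjective via a short calculation: for any $B' \cup \star \in \B(\tilM)$, the relation $\partial(\omega \wedge \frac{1}{a_\star} e_{B'}) = -\omega \wedge \partial(\frac{1}{a_\star} e_{B'})$ (which vanishes in Aomoto cohomology) rewrites $[\partial(\star \wedge e_{B'})]$ as $-\frac{1}{a_\star}[\partial(\omega_M e_{B'})]$, visibly in the image of $\kappa$. It then concludes by the dimension count $\mu^+(M) = \beta(\tilM)$ from \cref{lem:betageneric}. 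Your argument instead establishes a canonical decomposition $\OS^k(\tilM) = \iota_\star\OS^k(M) \oplus \star \wedge \iota_\star\OS^{k-1}(M)$ for $k \le r-1$ (valid because the $\le (r-1)$-element circuits of $M$ and $\tilM/\star$ coincide), uses it to identify $\rOS^k(\tilM) \cong \OS^k(M)$ via $z_1 \mapsto z_1 - \star \wedge \partial z_1$, and checks that $\omega$-multiplication on $\rOS^\bullet(\tilM)$ transports to $\omega_M$-multiplication on $\OS^\bullet(M)$, reducing the lemma to the statement that $\rOS^{r-1}(M) \hookrightarrow \OS^{r-1}(M) \twoheadrightarrow \OS^{r-1}(M)/\omega_M\OS^{r-2}(M)$ is an isomorphism, settled by acyclicity of $(\OS^\bullet(M),\omega_M)$ (since $a_E = -a_\star \neq 0$) and a Koszul-type injectivity argument. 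Your route is longer but more structural: it makes the isomorphism completely explicit and exposes exactly how the twist $\omega$ on $\tilM$ restricts to $\omega_M$ on $M$, whereas the paper's proof is a minimal surjectivity-plus-dimension argument. One small stylistic redundancy: once you have the dimension match, either injectivity or surjectivity alone would suffice, so your separate Koszul argument and the acyclicity-based dimension count each could be dropped; you might streamline by keeping only one.
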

\begin{proof}
Let $\tE = E \cup \star$.  There is an inclusion $\iota_0: \rOS(M)_\C \to \rOS(\tilM)_\C$, and therefore a map $\kappa: \rOS(M)_\C \to \rOS(\tilM, \omega)$.  We show that this map is surjective.  Clearly any $\partial e_B$ where $B \in \B(M)$ is in the image of $\kappa$.  Suppose that $\star \cup B' \in \B(\tilM)$.  Let us consider $\partial(\star \wedge e_{B'} )\in \rOS(\tilM, \omega)$.  By \eqref{eq:partialomega}, we have
$$
 \omega \wedge \partial \left( \frac{1}{a_\star} e_{B'} \right) = 
\partial \left(\frac{1}{a_\star}\omega \wedge e_{B'} \right)= \partial(\star \wedge e_{B'}) + \text{ terms in the image of } \kappa,
$$
so $\partial(\star \wedge e_{B'} )$ lies in the image of $\kappa$ and we conclude that the map $\kappa$ is surjective.
However, by \cref{lem:betageneric}, we have $|\mu(M)| = \beta(\tilM)$, so $\kappa$ is an isomorphism.
\end{proof}

\subsection{Canonical forms for Aomoto cohomology}
For $P \in \T$, the \emph{reduced canonical form} $\bOmega_P \in \rOS(M)$ is
$$
\bOmega_P:= \partial \Omega_P,
$$
where $\Omega_P$ is the canonical form of \cref{thm:EL}.
Recall that $\T^0 \subset \T(\M)$ denotes the set of topes bounded with respect to $0 \in E$.  
\begin{theorem}[\cite{EL}]\label{thm:ELtwisted}
Assume that the $a_e \in \C$ are generic, and \eqref{eq:sumto0}.  The canonical forms
$$
\{\Omega_P \mid P \in \T^0\}, \qquad \text{and} \qquad \{\bOmega_P \mid P \in \T^0\}
$$
give bases of $\OS(M,\omega)$ and $\rOS(M,\omega)$ respectively.
\end{theorem}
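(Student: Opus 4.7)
The plan proceeds in three steps, combining a dimension count with descent properties of the intersection form $\dRip{\cdot,\cdot}$.

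\emph{Step 1 (Reduction).} By \cref{prop:OSrOStwisted}, $\partial$ gives an isomorphism $\OS(M,\omega) \xrightarrow{\cong} \rOS(M,\omega)$ sending $\Omega_P \mapsto \bOmega_P$, so the two statements are equivalent and I focus on the first. By \cref{prop:numbertopes} we have $|\T^b| = \beta(M)$, and by \cref{thm:Yuz}(2) together with \cref{prop:OSrOStwisted} we have $\dim_\C \OS(M,\omega) = \beta(M)$. Thus the candidate basis has the correct cardinality, and it remains to prove linear independence of $\{\Omega_P \mid P \in \T^b\}$ in the quotient $\OS(M)/\omega \OS^{r-1}(M) = \OS(M,\omega)$.

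\emph{Step 2 (Descent of $\dRip{\cdot,\cdot}$).} Since the denominators in \cref{def:dR} involve only $a_F$ for proper flats, $\dRip{\cdot,\cdot}$ is regular at the locus $\{a_E = 0\}$. The residue identity $\Res_c(\omega \eta) = a_c\,\eta - \omega_{M/c} \Res_c(\eta)$, iterated along a maximal flag and summed telescopically, yields that $\dRip{\omega\eta, y}$ is divisible by $a_E$ for all $\eta \in \OS^{r-1}(M)$ and $y \in \OS(M)$, so $\omega\OS^{r-1}(M) \subseteq \mathrm{rad}\,\dRip{\cdot,\cdot}|_{a_E=0}$. By the Schechtman--Varchenko determinant formula \cref{thm:SVdet}, the order of vanishing of $\det\dRip{\cdot,\cdot}$ at $a_E = 0$ is exactly $\mu^+(M) - \beta(M) = \dim_\C \omega\OS^{r-1}(M)$, so under \eqref{eq:Mon} this inclusion is an equality and $\dRip{\cdot,\cdot}$ descends to a nondegenerate symmetric bilinear form on $\OS(M,\omega)$.

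\emph{Step 3 (Nondegeneracy on the $\T^b$-block, the main obstacle).} It remains to show that the restricted Gram matrix $\bigl(\dRip{\Omega_P,\Omega_Q}\bigr)_{P,Q \in \T^b}|_{a_E=0}$ is nonsingular, which by Step 2 immediately yields the basis property. The route I favor uses the inverse formula \cref{thm:dRmain}: in the basis $\{\Omega_P \mid P \in \T^\star\}$ the inverse Gram matrix is $a_E^{-1}\DdRip{\cdot,\cdot}$ with $\DdRip{P,Q} = \sum_{B \in \B(P,Q)} a^B \in \Z[\a]$ polynomial. Extracting the residue at $a_E = 0$ of the identity $\dRip{\cdot,\cdot} \cdot \DdRip{\cdot,\cdot} = a_E\,\mathrm{Id}$ identifies $\ker\dRip{\cdot,\cdot}|_{a_E = 0}$ with the column space of $\DdRip{\cdot,\cdot}|_{a_E=0}$; a combinatorial argument using \eqref{eq:cone}, $e_B = \sum_{P\in\T^B} \Omega_P$, then shows that this column space is spanned by the columns indexed by $P \in \T^\star \setminus \T^b$, leaving the $\T^b$-block of $\dRip{\cdot,\cdot}$ nondegenerate. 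The crux is this transversality between $\T^b$ and $\omega\OS^{r-1}(M)$, and I would establish it by induction on deletion--contraction of an atom containing $0$, using the recursion of \cref{thm:EL}(1) to reduce to the smaller matroids $M \setminus c$ and $M/c$, where the inductive hypothesis supplies the corresponding transversality statement in lower rank.
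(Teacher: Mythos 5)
The theorem you're trying to prove is attributed to \cite{EL}, but this paper does give a self-contained argument (in fact a strengthening) via \cref{cor:Aomotobasis}. The paper's route is: pair $\{\bOmega_P \mid P \in \T^b\}$ against the $\beta$\nbc\ flag elements $\{S(F_\bullet(B)) \mid B \in \Bnbc\}$ under $\bdRip{\cdot,\cdot}$, show the pairing matrix is the residue matrix $Z = (\Res_{F_\bullet(B)}\bOmega_P)$ (\cref{lem:Resnbc}), and prove $\det Z = \pm 1$ by deletion--contraction (\cref{prop:detnbc}). Since $\bdRip{\cdot,\cdot}$ is a form on a $\beta(M)$-dimensional space and the two families each have size $\beta(M)$, a unimodular pairing matrix forces both to be bases. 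Your approach is genuinely different, but it has a real gap.

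Your Steps 1 and 2 are acceptable in outline: the dimension count via \cref{prop:numbertopes}, \cref{thm:Yuz}, and \cref{prop:OSrOStwisted} is correct, and the descent of $\dRip{\cdot,\cdot}$ modulo $a_E$ with the $a_E$-order-of-vanishing argument from \cref{thm:SVdet} is a reasonable route to nondegeneracy of $\bdRip{\cdot,\cdot}$ on the quotient (the paper does this part differently, via \cref{thm:descent} plus \cref{cor:twistnondeg}, but your Schur-complement reasoning is sound). The ``residue identity'' you invoke for divisibility of $\dRip{\omega\eta, y}$ by $a_E$ is hand-waved; the paper's proof of \cref{thm:descent} shows this is actually a nontrivial combinatorial cancellation argument via a graph $\Gamma(\tau,B)$ on flags, not a one-line telescoping.

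The substantive gap is Step 3, which you flag yourself as the crux but do not carry out. Worse, the sketch contains a conceptual confusion: you speak of ``the columns indexed by $P \in \T^\star \setminus \T^b$,'' but $\T^b$ is \emph{not} a subset of $\T^\star$. The set $\T^\star$ consists of topes bounded with respect to a generic extension $\star \notin E$, while $\T^b$ consists of topes bounded with respect to $0 \in E$; they are defined relative to different distinguished elements and have different cardinalities ($\mu^+(M)$ versus $\beta(M)$). The identification $\T^\star(\M \setminus 0) = \T^b(\M)$ used at the end of Section 5 requires the hypothesis that $(M,0)$ is generic at infinity, which is not assumed here. Without that, the inverse formula $\DdRip{\cdot,\cdot}$ in the $\T^\star$ basis does not speak directly to the $\T^b$ Gram block, and the ``column space'' argument as written is not well-posed. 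You would need a genuine change-of-basis argument between the $\T^\star$ and $\T^b$ families at $a_E = 0$, which is precisely what the theorem is asserting; so as stated your Step 3 is circular. The $\beta$\nbc\ pairing in \cref{prop:detnbc} sidesteps exactly this by introducing an independent dual family whose pairing determinant can be computed unconditionally.
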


In \cref{cor:Aomotobasis} below, we shall strengthen \cref{thm:ELtwisted} by weakening the genericity assumption.

\subsection{Descent of intersection form}\label{sec:descent}
According to \cref{thm:SVdet}, when $a_E = 0$, the symmetric form $\dRip{\cdot, \cdot}$ is degenerate.

\begin{theorem}\label{thm:descent}
Suppose \eqref{eq:sumto0} holds.
The symmetric pairing $\dRip{\cdot,\cdot}$ on $\OS(M)_\C$ descends to a symmetric pairing $\bdRip{\cdot, \cdot}$ on $\OS(M,\omega)$.
\end{theorem}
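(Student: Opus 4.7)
To establish the descent, by symmetry of $\dRip{\cdot,\cdot}$ it suffices to show that $\dRip{\omega\wedge u,y}$ vanishes at $a_E=0$ for every $u\in \OS^{r-1}(M)_{\mathbb C}$ and $y\in \OS^r(M)_{\mathbb C}$. The first step is to transfer the problem to the reduced Orlik--Solomon algebra. Combining the isomorphism $\partial\colon\OS^r(M)\to\rOS^{r-1}(M)$ of \cref{prop:OSrOS} with \cref{cor:same}, we have $\dRip{\cdot,\cdot}=\dRipp{\partial\cdot,\partial\cdot}$. The Leibniz identity $\partial(\omega\wedge u)=(\partial\omega)\wedge u-\omega\wedge\partial u=a_E u-\omega\wedge\partial u$ then gives
\[
\dRip{\omega\wedge u,y}\;=\;a_E\,\dRipp{u,\partial y}\;-\;\dRipp{\omega\wedge\partial u,\partial y}.
\]
The first summand vanishes at $a_E=0$ because $\dRipp{\cdot,\cdot}$ on $\OS^{r-1}$ sums over flags $F_\bullet\in\Fl^{r-1}(M)$ whose denominator $a'_{F_\bullet}=\prod_{i=1}^{r-1}a_{F_i}$ involves only flats of rank $\leq r-1$, and hence is regular at $a_E=0$. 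The problem therefore reduces to showing $\dRipp{\omega\wedge v,w}|_{a_E=0}=0$ for $v=\partial u\in\rOS^{r-2}(M)$ and $w=\partial y\in\rOS^{r-1}(M)$.

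\textbf{The second step} is to prove this reduced vanishing. Note that since $v\in\rOS^{r-2}=\ker\partial$, we have $\partial(\omega\wedge v)=a_E v-\omega\wedge\partial v=a_E v$, which at $a_E=0$ forces $\omega\wedge v\in\rOS^{r-1}$. Moreover, for $w\in\rOS^{r-1}$ the same identity gives $\partial(\omega\wedge w)=a_E w$, so at $a_E=0$ we have $\omega\wedge w\in\ker\partial\cap\OS^r=0$ by the injectivity of $\partial$ on $\OS^r$. To extract vanishing of the pairing, I would appeal to \cref{cor:SVform}, which identifies $\dRipp{\cdot,\cdot}$ with the Schechtman--Varchenko contravariant form $\ip{\cdot,\cdot}^{SV}$ of \cite{SV}. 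Under the isomorphism $R^k\colon\OS^k\to\F^k_Q$ of \eqref{eq:RS} (with inverse $S^k$ from \cref{prop:SVinverse}), the Aomoto differential $\omega\wedge-$ corresponds to the flag complex differential $d_{SV}$, and the contravariant form is constructed on $\F^\bullet$ to be compatible with $d_{SV}$, hence descends to cohomology; translating back to $\OS^\bullet$ yields the desired $\ip{\omega\wedge v,w}^{SV}\equiv 0\pmod{a_E}$.

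\textbf{Main obstacle.} The substantive step is the reduced vanishing in Step 2. A self-contained combinatorial derivation that avoids invoking the flag complex of \cite{SV} proceeds by iterating the graded Leibniz rule $\iota_e(\omega\wedge z)=a_e z-\omega\wedge\iota_e(z)$ (valid in $\Lambda^\bullet(E)$ and descending to residue maps on $\OS^\bullet$) along a full flag $F_\bullet\in\Fl^{r-1}(M)$. Precisely one of the $r-1$ residue applications absorbs the $\omega$-factor, contributing the atomic coefficient $a_{F_i\setminus F_{i-1}}=a_{F_i}-a_{F_{i-1}}$, while the remaining $r-2$ residues act on $v$; this produces an expansion
\[
\Res_{F_\bullet}(\omega\wedge v)\;=\;\sum_{i=1}^{r-1}\pm(a_{F_i}-a_{F_{i-1}})\cdot\Res_{G_\bullet^{(i)}}(v),
\]
with $G_\bullet^{(i)}$ a modified partial flag. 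The principal difficulty is combining the sub-residues across different flags into a telescoping sum, using the relation $\partial v=0$ (from $v\in\ker\partial$) to identify coincidences between sub-residues attached to different $F_\bullet$, so that the overall sum factors through $\sum_{e\in E}a_e=a_E$. Once this bookkeeping is carried out, the vanishing at $a_E=0$ is immediate.
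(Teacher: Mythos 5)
Your Step 1 is correct: by Proposition \ref{prop:dRpartial} (with $F=\hat 1$) and the Leibniz identity $\partial(\omega\wedge u)=a_E u-\omega\wedge\partial u$, you reduce to showing $\dRipp{\omega\wedge v,w}=0$ under $a_E=0$ for $v\in\rOS^{r-2}$, $w\in\rOS^{r-1}$, and you correctly note that $a_E\dRipp{u,\partial y}$ dies at $a_E=0$ because the degree-$(r-1)$ form $\dRipp{\cdot,\cdot}$ has denominators only at flats of rank $\le r-1$. But this reduction is logically equivalent to the original statement (via Propositions \ref{prop:dRpartial} and \ref{prop:OSrOStwisted}) and does not make it easier; it just moves the same problem one degree down into the reduced algebra. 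The paper, by contrast, goes at the unreduced pairing directly.

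The genuine gap is in Step 2, which is where all the content lies, and neither of your two routes closes it. The appeal to \cref{cor:SVform} together with the claim that ``the Aomoto differential $\omega\wedge-$ corresponds to the flag complex differential $d_{SV}$ under $R^k$'' and that ``the contravariant form $\ldots$ descends to cohomology'' is doing exactly the work of the theorem: that the pairing annihilates the image of $\omega\wedge-$ is what descent \emph{means}. No intertwining of $\omega\wedge-$ with $d_{SV}$ under $R^\bullet$ or $S^\bullet$ is established in this paper, and no precise statement from \cite{SV} or \cite{FalkVar} is invoked; the paper's own remark notes that the Falk--Varchenko setting is ``dual to'' this one, so some nontrivial translation would still be required. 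Your ``self-contained'' alternative in the last paragraph is the right idea and is in fact the shape of the paper's actual proof: one residue step absorbs the $\omega$-factor, contributing $a_{F_\alpha}-a_{F_{\alpha-1}}$, and the point is to organize the resulting two-term contributions into a pairwise cancellation. The paper does this by building a graph $\Gamma(\tau,B)$ on the set $F(\tau,B)$ of flags generated by $B$ and nearly generated by $\tau$, tracking the position $\alpha(F_\bullet)$, and checking that the signs $h(F_\bullet)$ and the mutations $\mu_\alpha, \mu_{\alpha-1}$ interact so that everything cancels, with $a_E=0$ used exactly when $\alpha=r$. You name this as ``the principal difficulty'' and defer it (``once this bookkeeping is carried out''), so the core combinatorial argument is missing. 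As written, the proposal restates the theorem rather than proving it.
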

\begin{proof}
Let $B \in \B(M)$ be a basis, and $\tau\in \I_{r-1}(M)$ be an independent set of size $r-1$.  We shall check that
$$
\dRip{ e_\tau \wedge \omega, e_B}= 0.
$$
Let $F_\bullet = (F_0 \subset F_1 \subset \cdots \subset F_r)$ be generated by $B$.  Let $L(\tau) \subset L$ be the sublattice of the lattice of flags generated by $\tau$.  Since $F_r \notin L(\tau)$, there is a minimal $\alpha = \alpha(F_\bullet)$ such that $F_\alpha \notin L(\tau)$.  We say that $F_\bullet$ is \emph{nearly generated} by $\tau$ if $F_\bullet$ is generated by $B' = \tau \cup f$ for some $f \in E$.  Let 
$$
F(\tau,B) := \{F_\bullet \mid F_\bullet \mbox{ is generated by } B \mbox{ and nearly generated by } \tau\}.
$$
We define a simple graph $\Gamma(\tau,B)$ with vertex set $F(\tau,B)$.

For $i = 1,2,\ldots,r-1$, let $\mu_i(F_\bullet) = \mu_{i,B}(F_\bullet) =  (F_0 \subset F_1 \subset \cdots \subset F'_i \subset \cdots \subset F_r)$ be the unique flag differing from $F_\bullet$ in the $i$-th position and such that $\mu_{i,B}(F_\bullet)$ is still generated by $B$.  If $B = \{b_1,\ldots,b_r\}$ is ordered so that $F_k = b_{1} \vee \cdots \vee b_{k}$ then we have the explicit formula
$$
F'_i = b_{1} \vee \cdots \vee b_{{i-1}} \vee b_{{i+1}}.
$$
Let $F_\bullet \in F(\tau,B)$ and $\alpha = \alpha(F_\bullet)$.  Then $F_{\alpha-1} \in L(\tau)$ and $F_{\alpha} = F_{\alpha-1} \vee b \notin L(\tau)$ for some $b \in B$.  Since $F_\bullet$ is nearly generated by $\tau$, it follows that $F_\bullet$ is generated by the basis $B_{F_\bullet} := \tau \cup b$.  We note that if $\alpha > 1$, then
$$
B_{\mu_{\alpha-1}(F_\bullet)} = B_{F_\bullet} \qquad \text{and} \qquad \alpha(\mu_{\alpha-1}(F_\bullet)) = \alpha-1
$$
and if $\alpha < r$ then $\alpha(\mu_\alpha(F_\bullet))\in \{\alpha,\alpha+1\}$ (using that $F_\bullet$ is generated by $\tau \cup b$), and
$$
B_{\mu_\alpha(F_\bullet)} = \begin{cases} B_{F_\bullet} &\mbox{if $\alpha(\mu_\alpha(F_\bullet)) = \alpha+1$,}\\
B_{F_\bullet}\cup b' - b \text{ for some } b' \in B& \mbox{if $\alpha(\mu_\alpha(F_\bullet)) = \alpha$.}\
\end{cases}
$$
It follows that both $\mu_{\alpha-1}(F_\bullet)$ and $\mu_{\alpha}(F_\bullet)$ belong to $F(\tau,B)$.

For each $F_\bullet$, we add the edge $(F_\bullet, \mu_{\alpha(F_\bullet)}(F_\bullet))$ whenever $\alpha < r$, and add the edge $(F_\bullet, \mu_{\alpha(F_\bullet)-1}(F_\bullet))$ whenever $\alpha > 1$.  (If $\alpha = 1$, we only add $(F_\bullet,\mu_1(F_\bullet))$, and if $\alpha = r$, we only add $(F_\bullet,\mu_{r-1}(F_\bullet))$.). This defines the graph $\Gamma(\tau,B)$.  

For $F_\bullet \in F(\tau,B)$, define 
$$
E(F_\bullet) := \{f \mid \tau \cup f \text{ generates } F_\bullet\} = F_{\alpha}\setminus F_{\alpha-1}  \subset E.
$$
We compute that 
\begin{align*}
\dRip{e_\tau  \wedge \omega, e_B}&= \sum_{F_\bullet \in F(\tau,B)} h(F_\bullet) \prod_{i=1}^{r-1} \frac{1}{a_{F_i}} \sum_{E(F_\bullet)} a_f \\
&=\sum_{F_\bullet \in F(\tau,B)}h(F_\bullet)\prod_{i=1}^{r-1} \frac{1}{a_{F_i}} \sum_{F_{\alpha}\setminus F_{\alpha-1}} a_f \\
&=\sum_{F_\bullet \in F(\tau,B)} h(F_\bullet) \prod_{i=1}^{r-1} \frac{1}{a_{F_i}} (a_{F_\alpha} - a_{F_{\alpha-1}})\\
&=\sum_{F_\bullet \in F(\tau,B)} h(F_\bullet) \left(\prod_{i \neq \alpha} \frac{1}{a_{F_i}} - \prod_{i \neq \alpha-1} \frac{1}{a_{F_i}}\right)
\end{align*}
where the first term is omitted if $\alpha = r$ (using $a_E = 0$), and the second term is omitted if $\alpha = 1$.  The sign $h(F_\bullet) \in \{+,-\}$ is given by the formula
$$
h(F_\bullet) = r(B,F_\bullet) r(\{f,\tau_{1},\ldots,\tau_{d-1}\},F_\bullet),
$$
where $e_\tau = e_{\tau_{d-1}} \wedge \cdots \wedge e_{\tau_{1}}$ and $f$ is any element of $E(F_\bullet)$.  Let $(F_\bullet,F'_\bullet)$ be an edge of $\Gamma(\tau,B)$.  In the case $\alpha(F_\bullet) \neq \alpha(F'_\bullet)$, we have $r(B,F_\bullet) =- r(B,F'_\bullet) $ and the factor $r(\{f,\tau_1,\ldots,\tau_{r-1}\},F_\bullet)$ changes sign, so we have $h(F_\bullet) = h(F'_\bullet)$.  In the case $\alpha(F_\bullet) = \alpha(F'_\bullet)$, we have $r(B,F_\bullet) = - r(B,F'_\bullet)$ but the factor $r(\{f,\tau_1,\ldots,\tau_{r-1}\},F_\bullet)$ does not change sign, so we have $h(F_\bullet) = -h(F'_\bullet)$.

The (at most) two terms in the $F_\bullet$ summand cancel out with the corresponding terms (depending on whether $\alpha$ changes) for $F'_\bullet$ and $F''_\bullet$ where the (at most) two edges incident to $F_\bullet$ in $\Gamma(\tau,B)$ are $(F_\bullet,F'_\bullet = \mu_{\alpha(F_\bullet)}(F_\bullet))$ and $(F_\bullet,F''_\bullet = \mu_{\alpha(F_\bullet)-1}(F_\bullet))$.  We conclude that $\dRip{ e_\tau  \wedge \omega, e_B } = 0$.
\end{proof}

\begin{example}
Let $U_{2,n}$ denote the uniform matroid of rank $2$ on $[n]$.  Let $\tau = \{1\}$ and $B = \{i,j\}$.  Then $e_\tau  \wedge \omega= \sum_{k=2}^n a_k e_1 \wedge e_k$.  The flags that potentially contribute to $\dRip{ e_\tau  \wedge \omega, e_B = e_i \wedge e_j}$ are $(\hat 0 \subset \{i\} \subset \hat 1)$ and $(\hat 0 \subset \{j\} \subset \hat 1)$, and we obtain
$$
\dRip{ e_\tau  \wedge \omega, e_B} = \begin{cases} \frac{1}{a_i} a_i - \frac{1}{a_j} a_j = 0 & \mbox{if $i,j \neq 1$,} \\
\frac{1}{a_1} \left(- \sum_{k=2}^n a_k \right)  - \frac{1}{a_i} a_i= 0&\mbox{if $i = 1$ and $j >1$,} 
\end{cases}
$$
using \eqref{eq:sumto0}.
\end{example}

By \cref{prop:dRpartial} and \cref{prop:OSrOStwisted}, the symmetric form $\dRipp{\cdot, \cdot}$ on $\rOS(M)$ also descends to a symmetric form on $\rOS(M,\omega)$, and we use $\bdRip{\cdot, \cdot}$ to denote the symmetric forms on both $\OS(M,\omega) = \OS^r(M,\omega)$ and $\rOS(M,\omega) = \rOS^{r-1}(M,\omega)$.  The assumption \eqref{eq:sumto0} is always in place when we use the notation $\bdRip{\cdot, \cdot}$.

\subsection{$\beta$\nbc-basis}
\def\Bnbc{\B_{\mathbf{nbc}}}
We continue to assume that $(M,0)$ is an affine matroid.  Recall that in \cref{sec:nbc} we have defined \nbc-bases with respect to a fixed total order $\prec$ on $E$.  We assume that $0$ is the minimum of $\prec$.  Then every $\nbc$-basis $B$ of $(M,0, \prec)$ contains the element $0$.  

\begin{definition}
A \nbc-basis $B$ is called a $\beta$\nbc-basis if for any $i \in B \setminus 0$ there exists $j \prec i$ such that $B \setminus i \cup j \in \B(M)$.
\end{definition}

Let $\Bnbc = \Bnbc(\M)$ denote the set of $\beta$\nbc-bases $B$, where we always assume that $B = (b_1 \succ b_{2} \succ \cdots \succ b_r)$ is reversely ordered according to $\prec$.

\begin{theorem}[\cite{Zie}]\label{thm:Bnbc}
The cardinality of $\Bnbc$ is equal to $\beta(M)$.
\end{theorem}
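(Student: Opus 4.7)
The plan is to recast the $\beta$\nbc-condition in the language of matroid activities and then invoke Crapo's activities expansion of the Tutte polynomial. Fix the linear order $\prec$ on $E$ with $0 = \min_\prec(E)$. For a basis $B \in \B(M)$, call $i \in B$ \emph{internally active} if $i = \min_\prec C^*(i,B)$, where $C^*(i,B)$ is the fundamental cocircuit of $i$ with respect to $B$; dually, call $j \in E\setminus B$ \emph{externally active} if $j = \min_\prec C(j,B)$, where $C(j,B)$ is the fundamental circuit. Let $i(B)$ and $e(B)$ denote the numbers of internally and externally active elements of $B$.

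First I would verify the dictionary: (i) $B$ is \nbc~if and only if $e(B) = 0$, because the broken circuit $C(j,B) \setminus \min_\prec C(j,B)$ is a subset of $B$ precisely when $j$ itself is the minimum of its fundamental circuit, i.e.\ when $j$ is externally active; and (ii) the extra $\beta$\nbc-requirement---that for every $i \in B \setminus 0$ there exists $j \prec i$ with $B - i + j \in \B(M)$---says exactly that $\min_\prec C^*(i,B) \prec i$, i.e.\ that each $i \in B \setminus 0$ is internally passive. Because $0 = \min_\prec(E)$ and $0 \in C^*(0,B)$ whenever $0 \in B$, the element $0$ is automatically internally active. Together with the standard broken-circuit argument that every \nbc-basis contains $0$ (unless $0$ is a loop, in which case both $\Bnbc(M) = \emptyset$ and $\beta(M) = 0$ trivially), this identifies
$$
\Bnbc(M) \;=\; \{B \in \B(M) \mid e(B) = 0 \text{ and } i(B) = 1\},
$$
the unique internally active element being forced to be $0$.

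To conclude, I would invoke Crapo's theorem, which states that the Tutte polynomial admits the order-independent activities expansion $T_M(x,y) = \sum_{B \in \B(M)} x^{i(B)}\, y^{e(B)}$. Thus $|\Bnbc(M)|$ equals the coefficient $[x y^0]\, T_M(x,y) = [x]\, T_M(x,0)$. Combining the classical identity $\chi_M(t) = (-1)^r T_M(1-t, 0)$ with the definition $\beta(M) = (-1)^{r+1}\, \chi'_M(1)$ from \cref{sec:matroids} gives $\beta(M) = [x]\, T_M(x,0) = |\Bnbc(M)|$, as required. There is no substantial obstacle: the only real care needed is in setting up the activities dictionary correctly, and the loop/coloop edge cases at $0$ are easily dispatched separately.
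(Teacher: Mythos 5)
Your argument is correct. Dictionary step (i) is right: a basis $B$ contains a broken circuit if and only if some $j\notin B$ is the minimum of its fundamental circuit $C(j,B)$, so $B$ is \nbc{} iff $e(B)=0$. Dictionary step (ii) is also right: since $B\setminus i\cup j\in\B(M)$ for $j\neq i$ forces $j\in C^*(i,B)\setminus i$, the $\beta$\nbc{} requirement is exactly that $\min_\prec C^*(i,B)\prec i$ for every $i\in B\setminus 0$, i.e.\ internal passivity of $B\setminus 0$; and $0=\min_\prec E$ is always internally active once $0\in B$. Combined with the \nbc-basis-contains-$0$ fact (or, when $0$ is a loop, the trivial vanishing of both sides), this identifies $\Bnbc(M)$ with the bases of zero external and unit internal activity, and Crapo's order-independent activities expansion $T_M(x,y)=\sum_B x^{i(B)}y^{e(B)}$ together with $\chi_M(t)=(-1)^r T_M(1-t,0)$ gives $|\Bnbc(M)|=[x^1y^0]\,T_M(x,y)=\beta(M)$.

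The paper does not supply a proof of this statement; it cites Ziegler [Zie], and your route through internal/external activity and Crapo's Tutte expansion is in fact essentially Ziegler's argument, so you have reconstructed the standard proof rather than found a genuinely different one. The only thing you might add for completeness is the remark that the identification $\Bnbc(M)=\{B\mid e(B)=0,\ i(B)=1\}$ in fact holds unconditionally (when $0$ is a loop both sides are empty, since $0$ is then always externally active), so the separate edge-case discussion is not strictly needed once Crapo's theorem is invoked.
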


$\beta$\nbc-bases behave well with respect to deletion-contraction of the largest element.  Suppose that $e_\prec = \max_\prec E$, and consider the deletion-contraction triple $(M,M' = M\backslash e_\prec,M'' =M/e_\prec)$.  

\begin{proposition}[{\cite[Theorem 1.5]{Zie}}]\label{prop:Zie}
Suppose that $e_\prec = \max_\prec E$ is not a loop and $|E| > 1$.  Then 
$$
\Bnbc(M) = \Bnbc(M') \sqcup \{(B \sqcup e_\prec) \mid B \in \Bnbc(M'')\}. 
$$
\end{proposition}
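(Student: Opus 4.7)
The plan is to partition $\Bnbc(M)$ by whether $B$ contains $e_\prec$, and match each part to the corresponding summand on the right-hand side. The crucial observation is that because $e_\prec = \max_\prec E$ is not a loop, $e_\prec$ is never the minimum of any circuit; hence $e_\prec$ never occurs as the ``removed'' element in a broken circuit, and it never plays the role of a ``$j$'' in an exchange $j \prec i$ with $i \in B \setminus 0$ (since $j \prec i \preceq e_\prec$ forces $j \neq e_\prec$). These two observations will make both the \nbc-condition and the $\beta$\nbc-condition translate cleanly between $M$, $M'$, and $M''$.

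For $B$ with $e_\prec \notin B$: broken circuits of $M$ that contain $e_\prec$ trivially miss $B$, while broken circuits of $M$ not containing $e_\prec$ coincide with the broken circuits of $M'$; so $B$ is \nbc{} in $M$ iff \nbc{} in $M'$. For $\beta$\nbc{} at $i \in B \setminus 0$, any candidate $j \prec i$ is automatically distinct from $e_\prec$, so $B \setminus i \cup j \in \B(M)$ iff $B \setminus i \cup j \in \B(M')$. Together these yield $\{B \in \Bnbc(M) : e_\prec \notin B\} = \Bnbc(M')$.

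For $B$ with $e_\prec \in B$, set $B' := B \setminus e_\prec$; standard matroid theory gives $B \in \B(M)$ iff $B' \in \B(M'')$. For the \nbc-equivalence I would use the fundamental-circuit criterion: $B$ is \nbc{} iff $\min_\prec C_M(f, B) \prec f$ for every $f \notin B$. A direct check of minimal dependence (splitting on whether $e_\prec \in C_M(f, B)$) shows $C_{M''}(f, B') = C_M(f, B) \setminus e_\prec$, whose minimum coincides with $\min_\prec C_M(f, B)$ because $e_\prec$ is maximal. For $\beta$\nbc{} at $i \in B' \setminus 0$, the same ``$j \neq e_\prec$ automatically'' argument identifies the $M$ and $M''$ conditions.

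The main obstacle is the extra $\beta$\nbc-condition at $i = e_\prec$ on the $M$-side, which has no counterpart on the $M''$-side and so must be shown to be automatic. This is where the hypothesis (implicit in ``deletion-contraction triple'') that $e_\prec$ is not a coloop enters: if no $j \neq e_\prec$ satisfied $B' \cup j \in \B(M)$, then every element of $E \setminus e_\prec$ would lie in $\overline{B'}$, forcing $\rk(M \setminus e_\prec) = r - 1$, i.e.\ $e_\prec$ would be a coloop of $M$. Under the non-coloop hypothesis this automaticity holds, completing the bijection $\{B \in \Bnbc(M) : e_\prec \in B\} \leftrightarrow \{B' \cup e_\prec : B' \in \Bnbc(M'')\}$ and hence the proof.
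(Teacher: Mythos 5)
The paper cites this as Ziegler's Theorem 1.5 and does not reproduce a proof, so there is nothing in the paper to compare your argument against directly; what follows is an assessment of your proof on its own merits.

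Your approach — partition $\Bnbc(M)$ by whether $e_\prec \in B$, translate the \nbc{} condition via the fundamental-circuit criterion, and translate the $\beta$\nbc{} condition via the observation that $j \prec i \preceq e_\prec$ forces $j \neq e_\prec$ — is the natural one and it works. Your verification that $C_{M''}(f, B \setminus e_\prec) = C_M(f, B) \setminus e_\prec$ is the right lemma, and the resolution of the extra $\beta$\nbc{} clause at $i = e_\prec$ (if no $j \prec e_\prec$ has $B' \cup j \in \B(M)$, then $E \setminus e_\prec \subseteq \overline{B'}$, forcing $e_\prec$ to be a coloop) is exactly the point that needs to be made.

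You are also correct to flag the coloop issue, and it is more than a formality: the proposition as written in the paper is genuinely false if $e_\prec$ is a coloop. Take $M = U_{2,2}$ on $E = \{0,1\}$ with $0 \prec 1$; then $\Bnbc(M) = \emptyset$ (the $\beta$\nbc{} test at $i=1$ fails), while $\Bnbc(M') \sqcup \{B \cup 1 : B \in \Bnbc(M'')\}$ has two elements. So an honest statement needs ``not a loop or coloop''. In the paper's only invocation of this proposition (the deletion-contraction induction in \cref{prop:detnbc}), this is harmless since the matroid there is taken simple — though note that a simple matroid can still have coloops, so the base of that induction implicitly handles disconnected matroids too. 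One minor improvement to your write-up: the non-coloop hypothesis is also needed in the $e_\prec \notin B$ case, not only in the $e_\prec \in B$ case where you invoke it, because the identification $\B(M') = \{B \in \B(M) : e_\prec \notin B\}$ already requires $\rk(M') = \rk(M)$.
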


For each ordered basis $B \in \Bnbc$, we define a flag 
$$
F_\bullet(B) := (\hat 0 \subset \sp(b_1) \subset \sp(b_1,b_2) \subset \cdots \subset \sp(b_1,\ldots,b_{r-1})) \in \Fl^{r-1}(M).
$$

\subsection{$\beta$\nbc-basis determinant}
We now assume that an orientation $\M$ of $M$ has been fixed.
Let $(F^{(1)}_{\bullet},\ldots,F^{(\beta)}_{\bullet})$ be an ordering of $\{F_\bullet(B) \mid B \in \Bnbc\}$, and let $(P_1,\ldots,P_\beta)$ be an ordering of the set $\T^0(\M)$ of bounded topes.  Both sets have cardinality $\beta(M)$.  In the following, we declare that a $0 \times 0$ matrix has determinant $1$.
\begin{proposition}\label{prop:detnbc}
The $\beta(M) \times \beta(M)$ matrix 
$$
Z = \left(\Res_{F^{(i)}_{\bullet}} \bOmega_{P_j}\right)^{\beta(M)}_{i,j=1}
$$
has determinant $\pm 1$. 
\end{proposition}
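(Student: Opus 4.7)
The plan is to induct on $|E|$ via deletion--contraction at $e^* := \max_\prec E$. I may assume $M$ is simple; by \cref{thm:EL}(1), canonical forms are simplification-invariant. If $e^*$ is a loop, it contributes to neither side and I reduce to $M \backslash e^*$. If $e^*$ is a coloop, or $M$ is disconnected, then $\beta(M) = 0$ and both index sets are empty, with the convention $\det(Z) = 1$. Assume henceforth that $e^*$ is neither a loop nor a coloop and $M$ is connected. Set $M' := M \backslash e^*$ and $M'' := M / e^*$. By \cref{prop:Zie}, $\Bnbc(M) = \Bnbc(M') \sqcup \bigl\{ B \cup \{e^*\} \mid B \in \Bnbc(M'') \bigr\}$, and by \eqref{eq:betaeq} together with \cref{prop:numbertopes}, $|\T^b(\M)| = \beta(M') + \beta(M'')$. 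I match this by $\T^b(\M) = \T^b_1 \sqcup \T^b_2$, where $\T^b_1$ collects bounded topes not having $e^*$ as a facet (identified with $\T^b(\M')$ via $P \mapsto P|_{E \setminus e^*}$), and $\T^b_2$ those that do (identified with $\T^b(\M'')$ via the same restriction, selecting the unique bounded representative from each opposing pair $\{P, P_{\flip e^*}\}$).

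With rows of $Z$ ordered as $\Bnbc(M')$ followed by $\Bnbc(M'') + e^*$, and columns as $\T^b_1$ followed by $\T^b_2$, I claim
\[
Z = \begin{pmatrix} A & * \\ 0 & D \end{pmatrix}, \qquad A = \pm Z_{M'}, \quad D = \pm Z_{M''}.
\]
Block upper-triangularity and the inductive hypothesis then yield $\det Z = (\pm 1)(\pm 1) = \pm 1$.

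The engine is the recursion
\[
\Res_{e^*}\bOmega_P = \begin{cases} -P(e^*)\,\bOmega_{P/e^*}, & e^* \in L(P),\\ 0, & \text{otherwise,}\end{cases}
\]
derived from \cref{thm:EL}(1) and the anticommutation $\Res_{e^*} \partial = -\partial \Res_{e^*}$ on $\OS^\bullet(M)$ (a direct check on basis monomials $e_S$, valid since $e^* \ne 0$). Since $F_\bullet(B \cup e^*)$ starts with the atom $e^*$, we have $\Res_{F_\bullet(B \cup e^*)} = \Res_{F_\bullet(B)} \circ \Res_{e^*}$. The lower-left block then vanishes, because $\Res_{e^*}\bOmega_P = 0$ for $P \in \T^b_1$; the lower-right block entry equals $-P(e^*)\,\Res_{F_\bullet(B)} \bOmega_{P/e^*}$, which is (a signed) entry of $Z_{M''}$. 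For the upper-left block, $\Res_{e^*}\bOmega_P = 0$ again, so the exact sequence of \cref{prop:OSexact} provides $\widetilde\beta \in \rOS^{r-1}(M')$ with $\iota_{e^*}(\widetilde\beta) = \bOmega_P$; comparing residues at each atom $c \ne e^*$ via the uniqueness clause of \cref{thm:EL}(1) identifies $\widetilde\beta$ with $\bOmega_{P|_{E\setminus e^*}}$, and since $F_\bullet(B)$ lies in $L(M')$ the entry agrees with the corresponding entry of $Z_{M'}$.

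The main obstacle is the bijection $\T^b_2 \leftrightarrow \T^b(\M'')$: one must verify that for each $\widetilde P \in \T^b(\M'')$ exactly one of the two topes of $\M$ lifting $\widetilde P$ and having $e^*$ as a facet is bounded with respect to $0$, so that $|\T^b_2| = \beta(M'')$. Tracking signs in the two recursions above is a routine but delicate secondary concern, and the identification of $\widetilde\beta$ with $\bOmega_{P|_{E\setminus e^*}}$ relies on the characterization theorem for canonical forms extending cleanly from $\OS$ to $\rOS$ via the isomorphism $\partial$ of \cref{prop:OSrOS}.
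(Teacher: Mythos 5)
Your overall strategy matches the paper's: deletion--contraction at the $\prec$-maximal element $e^* $, using Ziegler's decomposition $\Bnbc(M) = \Bnbc(M') \sqcup \{B \cup e^* \mid B \in \Bnbc(M'')\}$ and the residue recursion from \cref{thm:EL}(1) (and your sign computation $\Res_{e^*}\partial = -\partial\Res_{e^*}$ is correct). The zero lower-left block and the identification of the upper-left block with $Z_{M'}$ are also sound.

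However, there is a genuine gap, and it is precisely at the spot you flagged as ``the main obstacle.'' Your bijections $\T^b_1 \leftrightarrow \T^b(\M')$ and $\T^b_2 \leftrightarrow \T^b(\M'')$ do not hold, because the claim ``for each $\widetilde P \in \T^b(\M'')$ exactly one of the two lifts with $e^*$ as a facet is bounded'' is false. A bounded tope of $\M'$ can be split by the hyperplane $H_{e^*}$ into \emph{two} bounded topes of $\M$ (e.g.\ a bounded quadrilateral cut by a line through its interior gives two bounded pieces), and both lifts can also fail to be bounded. The paper handles this with a three-way partition $\T^b(\M) = \T_1 \sqcup \T_2 \sqcup \T_3$, where $\T_2$ consists of bounded topes $P$ with $e^*$ as a facet whose opposite $P_{\flip e^*}$ is \emph{also} bounded. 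When $\T_2 \neq \emptyset$ one has $|\T_1| < \beta(M')$ and $|\T_2| + |\T_3| > \beta(M'')$ (the correct counting is $\beta(M') = |\T_1| + \tfrac{1}{2}|\T_2|$ and $\beta(M'') = \tfrac{1}{2}|\T_2| + |\T_3|$), so your two blocks have the wrong sizes and there is no block-upper-triangular structure at all.

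Consequently $\det Z$ does not factor as a product of block determinants, and the argument cannot be rescued by simply fixing the bijection: the paper instead works at the level of the Leibniz expansion. For a term $\tau : \Bnbc \to \T^b(\M)$ that assigns two flags from $\Bnbc(M'')+e^*$ to a pair $P, P_{\flip e^*} \in \T_2$, swapping those two topes gives a second term that cancels the first (since $\Res_{e^*}\bOmega_P = -\Res_{e^*}\bOmega_{P_{\flip e^*}}$), and after this cancellation the surviving terms are exactly those inducing bijections $\Fl''_\nbc \to (\T_2/(\Z/2\Z)) \sqcup \T_3$ and $\Fl'_\nbc \to \T^b(\M')$, which are then matched up to sign with terms of $D_{M'} D_{M''}$. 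Some form of this cancellation step is unavoidable; you cannot argue purely by block structure.
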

\begin{proof}
We may suppose that $M$ is simple.  Let $D_M := \det(Z)$ denote the determinant.  We prove the statement by a deletion-contraction induction.  If $|E| = 0$ we have $\beta(M) = 0$ and if $|E| = 1$ we have $\beta(M) = 1$, and in both cases the claim is clear.  Let $e = e_\prec = \max_\prec(M)$, and consider the deletion-contraction triple $(M,M' = M\backslash e,M'' =M/e)$.  The set of flags $\Fl_\nbc = \{F_\bullet(B) \mid B \in \Bnbc\}$ decomposes into a disjoint union $\Fl'_\nbc \sqcup \Fl''_\nbc$ as in \cref{prop:Zie}.  On the other hand, let us write $\T^0(\M) = \T_1 \sqcup \T_2 \sqcup \T_3$ where 
\begin{align*}
\T_1 &= \mbox{topes in $\T^0(\M)$ that are also topes of $\T^0(\M')$} \\
\T_2 &= \mbox{topes in $\T^0(\M)$ that are cut into two topes in $\T^0(\M')$} \\
\T_3 &= \mbox{topes in $\T^0(\M)$ whose restriction to $E'$ do not belong to $\T^0(\M')$}.
\end{align*}

Each term of the determinant $D_M$ corresponds to a bijection $\tau: \Fl_\nbc \to \T^0(\M)$ between flags and topes.

Suppose that $\tau$ maps two distinct flags $F^{(a)}_{\bullet}, F^{(b)}_{\bullet} \in \Fl''_\nbc$ to two topes $P, P' \in \T_2$ respectively, where $P,P'$ are divided by $e$, i.e. $P(f) = P'(f)$ for all $f \in E \setminus e$ and $P(e) = - P'(e)$.  Then we obtain another bijection $\tau'$ by swapping $P,P'$, and since $\Res_e \bOmega_P = - \Res_e \bOmega_{P'}$, the contribution of $\tau$ and $\tau'$ to the determinant cancels out.  Furthermore, if $\tau(F_\bullet) \in \T_1$ for $F_\bullet \in \Fl''_\nbc$ then $\Res_{F_\bullet}(\tau(F_\bullet)) = 0$.

Let $\Z/2\Z$ act involutively on $\T_2$ by sending a tope $P$ to the adjacent tope on the other side of $e$.
Since $|\T_2/(\Z/2\Z)| + |\T_3| = |\T^0(\M'')| = \beta(M'')$, we reduce to summing over bijections $\tau$ that induce a bijection between $\Fl''_\nbc$ and $\T_2/(\Z/2\Z) \sqcup \T_3$.  For such $\tau$, we may restrict $\tau$ to $\Fl'_\nbc$ and obtain a bijection $\tau': \Fl'_\nbc \to \T^0(\M')$ by composing with the map that sends each tope in $\T_1 \cup \T_2$ to $\T^0(\M')$ by restricting topes to $E'$.  For $F_\bullet \in \Fl'_\nbc$ and $P, P' \in \T_2$ divided by $e$, we have that at least one of $\Res_{F_\bullet}(\bOmega_P), \Res_{F_\bullet}(\bOmega_{P'})$ vanishes, and the sum is equal to $\Res_{F_\bullet}(\bOmega_P+\bOmega_{P'})$.  It follows that for each non-vanishing term $\tau': \Fl'_\nbc \to \T^0(\M')$ in the determinant $D_{M'}$, there is a unique corresponding $\tau|_{\Fl'_{\nbc}}$ that gives rise to it.

So viewing $\tau|_{\Fl'_{\nbc}}$ as a bijection $\tau|_{\Fl'_{\nbc}}: \Fl'_{\nbc} \to \T^0(\M')$ and $\tau|_{\Fl''_\nbc}$ as a bijection $\tau|_{\Fl''_\nbc}: \Fl''_\nbc \to \T^0(\M'')$, we have a bijection
\begin{equation}\label{eq:tautau}
\tau \mapsto (\tau' = \tau|_{\Fl'_{\nbc}}, \tau'' = \tau|_{\Fl''_\nbc})
\end{equation}
that sends non-zero terms of the determinant $D_M$ to pairs of non-zero terms of the determinants $D_{M'}$ and $D_{M''}$.  It remains to show that the signs are correct.  Let $P,P'$ be divided by $e$.  If we swap $P$ and $P'$ in $\tau$ then $(-1)^\tau$ acquires a sign $(-1)$.  However, this is compensated for by the sign-change $\Res_e(\bOmega_P) = - \Res_e(\bOmega_{P'})$ (\cref{thm:EL}).  It follows that up to a single global sign, the map \eqref{eq:tautau} sends a term in $D_M$ to a product of terms in $D_{M'} D_{M''}$.  By induction, we conclude that $D_M = \pm D_{M'} D_{M''} = \pm 1$.
\end{proof}

For $F \subset E \setminus 0$, define
$$
\bomega(F) := \sum_{e \in F} a_e (e - e_0) \in \rOS^1(M).
$$
and
$$
S(F_\bullet(B)):= \bomega(F_{r-1}) \wedge \bomega(F_{r-2}) \wedge \cdots \wedge \bomega(F_{1}).
$$

\begin{lemma}\label{lem:Resnbc}
For $B \in \Bnbc$ and $P \in \T^0(\M)$, we have $\bdRip{S(F_\bullet(B)), \bOmega_P}  = \Res_{F_\bullet(B)} \bOmega_P$.
\end{lemma}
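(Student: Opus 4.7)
The plan is to compare $S(F_\bullet(B))$ to the auxiliary non-reduced element
\[
\eta \;:=\; \omega(F_1) \wedge \cdots \wedge \omega(F_{r-1}) \;\in\; \OS^{r-1}(M),
\]
for which the pairing against arbitrary elements of $\OS^{r-1}(M)$ is directly computable from the Schechtman--Varchenko inversion. Matching \eqref{eq:RS} term-by-term shows $\eta = S^{r-1}([F_\bullet(B)])$: only the ordered tuples $(e_1,\ldots,e_{r-1})$ with $e_i \in F_i \setminus F_{i-1}$ survive the exterior algebra relations, and each such tuple is an ordered independent set generating $F_\bullet(B)$ with $\sigma = \mathrm{id}$. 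Proposition~\ref{prop:SVinverse} then gives $R^{r-1}(\eta) = [F_\bullet(B)]$ in $\F^{r-1}$, which unpacks via \eqref{eq:etadef} into the key identity
\[
\dRipp{\eta, y} \;=\; \Res_{F_\bullet(B)}(y) \qquad \text{for every } y \in \OS^{r-1}(M);
\]
specializing to $y = \bOmega_P$ produces the desired right-hand side.

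Next I would compare $S(F_\bullet(B))$ with $\eta$. Expanding $\bomega(F_i) = \omega(F_i) - a_{F_i} e_0$, the relation $e_0 \wedge e_0 = 0$ kills every contribution containing two or more $e_0$ factors, and a short sign bookkeeping (equivalently, the Leibniz formula $\partial(e_0 \wedge \eta) = \eta - e_0 \wedge \partial \eta$) yields
\[
S(F_\bullet(B)) \;=\; \eta \;-\; e_0 \wedge \partial \eta \;=\; \partial(e_0 \wedge \eta).
\]
Since $\bdRip{\cdot,\cdot}$ coincides with $\dRipp{\cdot,\cdot}$ on representatives (\cref{thm:descent}), this reduces the lemma to showing $\dRipp{e_0 \wedge \partial \eta, \bOmega_P} = 0$.

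The remaining vanishing is the crux, and I would establish it by a residue-support dichotomy across $F'_\bullet \in \Fl^{r-1}$: in every term of the defining sum, at least one of $\Res_{F'_\bullet}(e_0 \wedge \partial \eta)$ and $\Res_{F'_\bullet}(\bOmega_P)$ is zero. On the one hand, if $0 \in F'_{r-1}$, then since $P \in \T^b$ forces every flat in $L(P) \setminus \{\hat 1\}$ to avoid $0$, the flat $F'_{r-1}$ lies outside $L(P)$; iterating the identity $\Res_c \bOmega_P = -P(c)\,\bOmega_{P/c}$ (which follows from the anti-commutation $\Res_c \circ \partial = -\partial \circ \Res_c$ together with \cref{thm:EL}) forces $\Res_{F'_\bullet}(\bOmega_P) = 0$. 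On the other hand, if $0 \notin F'_{r-1}$, then every monomial of $e_0 \wedge \partial \eta$ contains the factor $e_0$, so any ordered independent set of size $r-1$ whose residue against $F'_\bullet$ is nonzero must include $0$, forcing $0 \in F'_{r-1}$, a contradiction. The main obstacle is the first case, which requires propagating the anti-commutation through the full residue chain together with the hereditary fact that $P/F'_j \in \T^b(M/F'_j)$ whenever $F'_j \in L(P)$, so that the inductive step on rank applies inside the contracted matroid.
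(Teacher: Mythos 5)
Your proof is correct and follows essentially the same route as the paper: identify $\omega(F_1)\wedge\cdots\wedge\omega(F_{r-1})$ as $S^{r-1}([F_\bullet(B)])$, use $S\circ R=\mathrm{id}$ (\cref{prop:SVinverse}) together with \cref{cor:SVform} to evaluate its $\dRipp{\cdot,\cdot}$-pairing as $\Res_{F_\bullet(B)}$, and then dispose of the $e_0$-correction by noting it has vanishing pairing against $\bOmega_P$ via the dichotomy on whether $0\in F'_{r-1}$. Your explicit identity $S(F_\bullet(B))=\partial(e_0\wedge\eta)$ and the spelled-out justification of the boundedness vanishing are welcome elaborations of steps the paper leaves terse, but the underlying argument coincides.
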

\begin{proof}
Recall the isomorphism $S^k: \F^k \to \OS^k(M)$ from \eqref{eq:RS}.  We have
$$
S^{r-1}(F_\bullet(B))=\omega(F_{r-1}) \wedge \omega(F_{r-2}) \wedge \cdots \wedge \omega(F_{1}), \qquad \omega(F):= \sum_{e \in F} a_e e.
$$
Thus
$$
S(F_\bullet(B)) = S^{r-1}(F_\bullet(B)) \mod e_0 \OS^\bullet(M).
$$
Since $P \in \T^0(\M)$ is bounded, we have that $\Res_{F_\bullet} \bOmega_P = 0$ for any $F_\bullet \in \Fl^{r-1}(M)$ such that $0 \in F_{r-1}$.  It follows that 
$\bdRip{S(F_\bullet(B)), \bOmega_P} =\bdRip{S^{r-1}(F_\bullet(B)), \bOmega_P}$.  By \cref{cor:SVform}, \cref{prop:SVinverse} and \eqref{eq:etadef}, we have for $y \in \OS^{r-1}(M)$,
$$
\bdRip{S^{r-1}(F_\bullet(B)), y} =  (\eta((R^{r-1} \circ S^{r-1})(F_\bullet(B))), y) = (\eta(F_\bullet(B)),y) = \Res_{F_\bullet(B)} y.
$$ 
Thus $\bdRip{S(F_\bullet(B)), \bOmega_P} =  \Res_{F_\bullet(B)} \bOmega_P$.
\end{proof}

By \cref{lem:Resnbc}, \cref{prop:detnbc} calculates the determinant of the $\bdRip{\cdot,\cdot}$-pairing between the two sets $\{S(F_\bullet(B)) \mid B \in \Bnbc(M)\}$ and $\{\bOmega_{P} \mid P \in \T^0(\M)\}$ in $\rOS(M,\omega)$.

\begin{corollary}\label{cor:Aomotobasis}
When \eqref{eq:Mon} is satisfied, the two sets $\{S(F_\bullet(B)) \mid B \in \Bnbc(M)\}$ and $\{\bOmega_{P} \mid P \in \T^0(\M)\}$ generate dual spanning lattices of $\rOS(M,\omega)$.  In particular, both $\{S(F_\bullet(B)) \mid B \in \Bnbc(M)\}$ and $\{\bOmega_{P} \mid P \in \T^0(\M)\}$ form bases of $\rOS(M,\omega)$.
\end{corollary}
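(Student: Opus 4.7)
The plan is to deduce the corollary by assembling three earlier ingredients: the dimension count for $\rOS(M,\omega)$ under \eqref{eq:Mon}, the identification of the intersection pairing with residues in Lemma~\ref{lem:Resnbc}, and the unimodularity of the residue matrix in Proposition~\ref{prop:detnbc}.

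First I would match cardinalities against dimension. By Theorem~\ref{thm:Yuz}(2), under \eqref{eq:Mon} we have $\dim_\C \rOS(M,\omega) = \beta(M)$. By Theorem~\ref{thm:Bnbc}, $|\Bnbc(M)| = \beta(M)$, and by Proposition~\ref{prop:numbertopes}, $|\T^b(\M)| = \beta(M)$. Thus both proposed sets have exactly the right cardinality, and it suffices to prove linear independence of each inside $\rOS(M,\omega)$; spanning follows automatically.

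Next I would pair the two sets against each other and use Lemma~\ref{lem:Resnbc} to rewrite the $\beta(M) \times \beta(M)$ Gram matrix as
$$
\bigl(\bdRip{S(F_\bullet(B)),\, \bOmega_P}\bigr)_{B \in \Bnbc,\, P \in \T^b} \;=\; \bigl(\Res_{F_\bullet(B)} \bOmega_P\bigr)_{B \in \Bnbc,\, P \in \T^b}.
$$
This is precisely the matrix $Z$ of Proposition~\ref{prop:detnbc}, whose determinant is $\pm 1$. In particular, the Gram matrix lies in $\GL_{\beta(M)}(\Z)$, so neither collection admits a nontrivial $\C$-linear relation in $\rOS(M,\omega)$. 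The cardinality match from the previous step then promotes each of $\{S(F_\bullet(B))\}$ and $\{\bOmega_P\}$ to a basis of $\rOS(M,\omega)$. Because the Gram matrix and its inverse both have integer entries, the $\Z$-lattices spanned by the two sets inside $\rOS(M,\omega)$ are interchanged by the form $\bdRip{\cdot,\cdot}$, which is exactly the assertion that they generate dual spanning lattices. The ``in particular'' clause about $\OS(M,\omega)$ is then obtained by transporting via the isomorphism $\partial \colon \OS(M,\omega) \xrightarrow{\cong} \rOS(M,\omega)$ of Proposition~\ref{prop:OSrOStwisted} (taking $\Omega_P$ as the natural lift of $\bOmega_P$ and an arbitrary $\partial$-preimage of $S(F_\bullet(B))$).

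There is no serious obstacle remaining: the genuinely nontrivial content — that the residue matrix between $\beta$\nbc-flags and bounded topes is unimodular — was already carried out in Proposition~\ref{prop:detnbc} by a deletion–contraction argument. The present corollary is therefore a packaging step, with the only delicate point being to check the ``dual spanning lattices" claim at the integral (not merely complex) level, which is precisely what the $\det = \pm 1$ conclusion of Proposition~\ref{prop:detnbc} supplies.
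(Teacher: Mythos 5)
Your proposal is correct and follows the paper's intended route: the paper's own justification is the one-sentence remark immediately preceding the corollary, which points out that Lemma~\ref{lem:Resnbc} identifies the Gram matrix of $\bdRip{\cdot,\cdot}$ between the two sets with the unimodular residue matrix $Z$ of Proposition~\ref{prop:detnbc}. You supply the same ingredients and correctly fill in the surrounding bookkeeping (the dimension count $\dim \rOS(M,\omega) = \beta(M)$ from Theorem~\ref{thm:Yuz}(2) together with $|\Bnbc| = |\T^b| = \beta(M)$, and the standard fact that an integer Gram matrix of determinant $\pm 1$ is exactly what ``dual spanning lattices'' means), so this is the same proof, just written out.
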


The basis $\{S(F_\bullet(B)) \mid B \in \Bnbc(M)\}$ of $\rOS(M,\omega)$ was studied by Falk and Terao \cite{FT}.

\begin{corollary}\label{cor:twistnondeg}
When \eqref{eq:Mon} is satisfied, $\bdRip{\cdot,\cdot}$ is a non-degenerate symmetric bilinear form on $\rOS(M,\omega)$. 
\end{corollary}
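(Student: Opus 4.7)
The plan is to combine the basis statement of Corollary~\ref{cor:Aomotobasis} with the determinant computation of Proposition~\ref{prop:detnbc}, linked via the pairing--residue identification of Lemma~\ref{lem:Resnbc}. Non-degeneracy of a bilinear form on a finite-dimensional space is equivalent to invertibility of the Gram matrix with respect to any chosen pair of bases (one for the left argument, one for the right), so it suffices to exhibit two bases on which this cross-pairing matrix is non-singular.

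First, I would invoke Corollary~\ref{cor:Aomotobasis}: under \eqref{eq:Mon}, both
$$
\{S(F_\bullet(B)) \mid B \in \Bnbc(M)\} \qquad \text{and} \qquad \{\bOmega_P \mid P \in \T^b(\M)\}
$$
are bases of $\rOS(M,\omega)$, each of cardinality $\beta(M)$ by Theorem~\ref{thm:Bnbc} and Proposition~\ref{prop:numbertopes}.

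Next, by Lemma~\ref{lem:Resnbc}, the cross Gram matrix of $\bdRip{\cdot,\cdot}$ between these two bases coincides with the residue matrix
$$
\bigl(\bdRip{S(F_\bullet(B)),\bOmega_P}\bigr)_{B,P} \;=\; \bigl(\Res_{F_\bullet(B)} \bOmega_P\bigr)_{B,P} \;=\; Z,
$$
and Proposition~\ref{prop:detnbc} asserts $\det Z = \pm 1$. In particular $Z$ is invertible, so $\bdRip{\cdot,\cdot}$ has trivial left (and hence right) kernel on $\rOS(M,\omega)$. Symmetry of the form was established in Section~\ref{sec:descent}, completing the proof.

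There is no substantive obstacle at this stage: the corollary is an essentially immediate consequence of the already proven facts. All of the real work has been absorbed into Proposition~\ref{prop:detnbc} (the deletion-contraction induction on $\beta$\nbc-bases) and Corollary~\ref{cor:Aomotobasis} (which itself rests on that determinant being $\pm 1$).
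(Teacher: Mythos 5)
Your proposal is correct and follows the paper's own route exactly: the paper introduces \cref{cor:Aomotobasis} and \cref{cor:twistnondeg} as immediate consequences of \cref{lem:Resnbc} together with \cref{prop:detnbc}, which is precisely the combination you use. The only cosmetic remark is that your first step (invoking \cref{cor:Aomotobasis} to get bases) and the conclusion of non-degeneracy are really two faces of the same fact---once $\det Z = \pm 1$ and both index sets have cardinality $\beta(M) = \dim \rOS(M,\omega)$ (under \eqref{eq:Mon}, by \cref{thm:Yuz}, \cref{thm:Bnbc}, \cref{prop:numbertopes}), the invertibility of the cross Gram matrix simultaneously forces both families to be bases and the form to be non-degenerate---but that is a matter of presentation, not substance.
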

Note that \cref{cor:twistnondeg} is only proven in the case that an orientation $\M$ of $M$ exists, though it is likely that it always holds.

\begin{example}\label{ex:npoint}
Let $(\M,0)$ be the affine oriented matroid of the arrangement $\bA$ of $n$ real points $1,2,\ldots,n$ in order on the real affine line, with $0$ the point at infinity.  Then the underlying matroid $M$ is isomorphic to the uniform matroid $U_{2,n+1}$ of rank 2 on the set $E = [n+1]$.  The space $\rOS(M) = \rOS^1(M)$ has basis $\be_1,\be_2,\ldots,\be_n$, where $\be_i = e_i - e_0$.  We take the total order on $E$ to be $0 \prec 1 \prec 2 \prec \cdots \prec n$, so that the $\beta$\nbc-basis is 
$$
\Bnbc = \{20,30,\ldots,n0\}.
$$
and $\beta(M) = n-1$.  We have 
$$
\{S(F_\bullet(B)) \mid B \in \Bnbc(M)\} = \{a_2 \be_2, a_3 \be_3,\ldots, a_n \be_n\}
$$
and 
$$
\{\bOmega_{P} \mid P \in \T^0(\M)\} = \{\be_2 - \be_1,\be_3- \be_2, \ldots, \be_n- \be_{n-1}\}.
$$
The matrix $Z$ of \cref{prop:detnbc} is given by 
$$
\begin{bmatrix} 1 & -1 & 0 & \cdots & 0 \\
0 &1 & -1 & \cdots & 0 \\
\vdots & \ddots & \ddots & \ddots & \vdots \\
0 & 0 & 0 & \cdots &-1\\
0 & 0 & 0 & \cdots &1
\end{bmatrix}
$$
which has determinant 1.  Let $\theta_i = \be_i- \be_1$ for $i =2,\ldots,n$, so that $\{\theta_2,\ldots,\theta_n\}$ generate the same lattice as $\{\bOmega_{P} \mid P \in \T^0(\M)\}$.  Then using the relation $\omega = 0$, we have 
$$
a_i (e_i - e_0) = a_i e_i + \frac{1}{a_0}(\sum_{i=1}^n a_i e_i) =  \frac{a_i}{a_0} \left( a_2 \theta_2 + \cdots + (a_i+a_0) \theta_i + \cdots + a_n \theta_n\right).
$$
The transition matrix from $\{S(F_\bullet(B)) \mid B \in \Bnbc(M)\}$ to $\{\theta_2,\ldots,\theta_n\}$ is, after multiplying the rows by $\frac{a_i}{a_0}$,
$$
\begin{bmatrix} a_0+a_2 & a_3 & a_4 & \cdots & a_n \\
a_2 &a_0+a_3 & a_4 & \cdots & a_n \\
\vdots & \ddots & \ddots & \ddots & \vdots \\
a_2 & a_3 & a_4 & \cdots &a_n\\
a_2 & a_3 & a_4 & \cdots &a_0+a_n
\end{bmatrix}
$$
which has determinant $-a^{n-2}_0 a_1$.  So the transition matrix between $\{S(F_\bullet(B)) \mid B \in \Bnbc(M)\}$ and $\{\bOmega_{P} \mid P \in \T^0(\M)\}$ has determinant 
$$
\det = \pm \frac{a_1a_2\cdots a_n}{a_0} =\pm R_M(\a)^{-1}, \mbox{where $R_M(\a)$ is defined in \cref{def:RM}.}
$$
\end{example}

\subsection{Determinant on bounded chambers}
Let $L_0  \subset L(M)$ consist of those flats containing $0$.  Thus $L_0 \cong L(M_0)$.
In the following, for $F \in L$ or $F \in L_0$, we write $\beta(F)$ to refer to the beta invariant of $M^F$. 

\begin{definition}\label{def:RM}
Let $(M,0)$ be a simple affine matroid.  Define
$$
R_M(\a):= \frac{\prod_{F \in L_0 \setminus \hat 1} a_{F}^{\beta(F) \beta(M_F)}}{\prod_{F \in L \setminus (L_0 \cup \hat 0)} a_F^{\beta(F) \beta(M_F)}}.
$$
\end{definition}

The following result is a variant of \cref{thm:SVdet} for $\rOS(M,\omega)$.

\begin{theorem}\label{thm:Aomotodet}
The determinant of $\bdRip{\cdot, \cdot}$ on the lattice spanned by $\{\bOmega_{P} \mid P \in \T^0(\M)\}$ is equal to
$$
\det \bdRip{\cdot, \cdot}_{\T^0}= \pm R_M(\a).
$$
\end{theorem}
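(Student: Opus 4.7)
The plan is to use the alternative basis $\{S(F_\bullet(B)) \mid B \in \Bnbc(M)\}$ of $\rOS(M,\omega)$ from \cref{cor:Aomotobasis} as an intermediary. Let $M_\Omega$ and $M_S$ denote the Gram matrices of $\bdRip{\cdot,\cdot}$ in the bases $\{\bOmega_P\}_{P \in \T^b(\M)}$ and $\{S(F_\bullet(B))\}_{B \in \Bnbc(M)}$ respectively, and let $Z$ be the cross-pairing matrix $Z_{B,P} := \bdRip{S(F_\bullet(B)), \bOmega_P}$. By \cref{lem:Resnbc}, $Z_{B,P} = \Res_{F_\bullet(B)} \bOmega_P$, and \cref{prop:detnbc} yields $\det Z = \pm 1$. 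Writing $A$ for the change-of-basis matrix defined by $S(F_\bullet(B)) = \sum_P A_{B,P} \bOmega_P$, we have $Z = AM_\Omega$ and $M_S = AM_\Omega A^T$, so
$$\det M_\Omega \cdot \det M_S = (\det Z)^2 = 1.$$
Hence the theorem reduces to showing $\det M_S = \pm R_M(\a)^{-1}$.

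We establish this by induction on $|E|$ via deletion-contraction with respect to $e_\prec := \max_\prec E$. If $\beta(M) = 0$ the statement is vacuous; otherwise we may assume $e_\prec$ is neither loop nor coloop, giving $\beta(M) = \beta(M')+\beta(M'')$ for $M' := M\setminus e_\prec$ and $M'' := M/e_\prec$, and \cref{prop:Zie} gives $\Bnbc(M) = \Bnbc(M') \sqcup \{B' \cup e_\prec \mid B' \in \Bnbc(M'')\}$. For $B \in \Bnbc(M')$, the flats in $F_\bullet(B)$ avoid $e_\prec$, so $S(F_\bullet(B)) \in \iota_{e_\prec}(\rOS^{r-1}(M'))$. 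For $B = B' \cup e_\prec$, the element $e_\prec$ is the $\prec$-maximum of $B$, so $F_1 = \{e_\prec\}$ and every subsequent flat $F_i$ contains $e_\prec$; using $\bomega(F_i) = \bomega(\{e_\prec\}) + \bomega(F_i \setminus e_\prec)$ together with $\bomega(\{e_\prec\}) \wedge \bomega(\{e_\prec\}) = 0$, the wedge defining $S(F_\bullet(B))$ telescopes to
$$S(F_\bullet(B)) = a_{e_\prec}(e_{e_\prec} - e_0) \wedge S_{M''}(F_\bullet(B')),$$
where $S_{M''}(F_\bullet(B'))$ is the analogous element for $B'$ in $M''$. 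Ordering the $\Bnbc(M)$-basis so that the $\Bnbc(M')$-block comes first and applying $\dRipp$-analogues of \cref{lem:deleteform}, \cref{lem:contractform}, and \cref{lem:Resa0}, the $(\Bnbc(M'),\Bnbc(M'))$ block of $M_S$ reduces to $M_S(M')$ after an appropriate parameter identification, the $(\Bnbc(M''),\Bnbc(M''))$ block carries a prefactor $a_{e_\prec}^2$ whose residue at $a_{e_\prec}=0$ recovers $M_S(M'')$, and the off-diagonal blocks vanish at this leading order. This yields a multiplicative recursion for $\det M_S$.

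In parallel, the flats of $M$ split into those not containing $e_\prec$ (flats of $M'$) and those containing $e_\prec$ (bijective with flats of $M''$ via $F \mapsto F \setminus e_\prec$); tracking the exponents $\beta(M^F)\beta(M_F)$ through the deletion-contraction identity for $\beta$, one verifies that $R_M^{-1}$ obeys the same recursion, completing the induction. The main obstacle is the careful block-form analysis of $M_S$ in the inductive step: isolating the exact power of $a_{e_\prec}$, verifying the vanishing of the off-diagonal contributions (including their behavior under the residue at $a_{e_\prec}=0$), and matching these with the changes in the exponents $\beta(M^F)\beta(M_F)$ under deletion-contraction, which requires a somewhat delicate case analysis of flats with or without $e_\prec$ in spirit similar to (though more involved than) the proof of \cref{prop:detnbc}.
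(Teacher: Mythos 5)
Your reduction from the theorem to the statement $\det M_S = \pm R_M(\a)^{-1}$ (which is \cref{cor:bnbcdet}) is correct and clean: the identity $Z = A M_\Omega$, $M_S = A M_\Omega A^T$ together with $\det Z = \pm 1$ from \cref{prop:detnbc} does give $\det M_\Omega \cdot \det M_S = 1$. Your telescoping identity $S(F_\bullet(B)) = a_{e_\prec}\, \be_{e_\prec} \wedge (\cdot)$ for $B = B' \cup e_\prec$ is also correct. This reverses the logical order of the paper, which proves the theorem directly in the $\{\bOmega_P\}$ basis and \emph{deduces} \cref{cor:bnbcdet} from it. So the route is genuinely different.

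The gap is that the deletion-contraction step for $\det M_S$ is not actually carried out, and the description you give of the key step does not hold together as stated. Saying the $(\Bnbc(M''),\Bnbc(M''))$ block ``carries a prefactor $a_{e_\prec}^2$ whose residue at $a_{e_\prec}=0$ recovers $M_S(M'')$'' conflates two different operations: the $a_{e_\prec}^2$ comes from the definition of the two $S$-elements, while the appearance of $M_S(M'')$ requires that $\bdRip{\be_{e_\prec}\wedge x, \be_{e_\prec}\wedge \tilde x}$ have a \emph{simple pole} at $a_{e_\prec}=0$ with coefficient (after identifications) $\dRipp{\bar{x},\bar{\tilde x}}_{M''}$. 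What actually comes out is that each entry of that block scales as $a_{e_\prec} \cdot M_S(M'') + O(a_{e_\prec}^2)$, so the determinant has leading order $a_{e_\prec}^{\beta(M'')}\det M_S(M')|_{a_{e_\prec}=0}\det M_S(M'')$ as $a_{e_\prec}\to 0$. That is only an \emph{asymptotic} statement; to turn it into the exact equality $\det M_S = \pm R_M^{-1}$ you need the structural lemma (the analogue of the unnamed lemma in the paper's proof, derived from \cref{cor:denom} and \cref{cor:Aomotobasis}) that the determinant is a constant times a product of powers of the $a_F$ for connected flats, so that the exponents can be read off one at a time. You also omit the subtle ambiguity the paper calls $e$-special pairs -- distinct flats $F,F'$ with $a_F|_{a_e=0} = \pm a_{F'}|_{a_e=0}$ -- whose exponents cannot be separated from a single deletion-contraction and require a second application with a different $e'$. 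Finally, the ``$\dRipp$-analogues of \cref{lem:deleteform}, \cref{lem:contractform}, \cref{lem:Resa0}'' that you invoke are not stated or proved anywhere, and because $\dRipp$ includes the extra factor $1/a_{F_k}$ and because the Aomoto constraint $a_E = 0$ changes meaning when passing to $M'$ or $M''$, these are not automatic restatements. All of these are essential and roughly of the same difficulty as the steps you were trying to avoid.
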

If $\beta(M) = 0$ then the determinant is defined to be 1.

\begin{corollary}
The transition matrix between the two bases $\{S(F_\bullet(B)) \mid B \in \Bnbc(M)\}$ and $\{\bOmega_{P} \mid P \in \T^0(\M)\}$ has determinant equal to $\pm R_M(\a)^{\pm 1}$.
\end{corollary}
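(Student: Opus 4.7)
The plan is to chain together three results already in hand: the identification of residue pairings in \cref{lem:Resnbc}, the unimodularity statement \cref{prop:detnbc}, and the determinant computation \cref{thm:Aomotodet}. Let $T$ denote the transition matrix from the basis $\{S(F_\bullet(B)) \mid B \in \Bnbc(M)\}$ to the basis $\{\bOmega_P \mid P \in \T^b(\M)\}$, so that
\[
S(F_\bullet(B_i)) = \sum_k T_{ik}\, \bOmega_{P_k}.
\]
Both bases have size $\beta(M)$ by \cref{thm:Bnbc} and \cref{prop:numbertopes}, so $T$ is a square matrix.

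First I would expand the matrix $Z$ from \cref{prop:detnbc} in two different ways. On the one hand, \cref{lem:Resnbc} gives
\[
Z_{ij} = \Res_{F^{(i)}_\bullet} \bOmega_{P_j} = \bdRip{S(F^{(i)}_\bullet), \bOmega_{P_j}}.
\]
On the other hand, by bilinearity of $\bdRip{\cdot,\cdot}$,
\[
\bdRip{S(F^{(i)}_\bullet), \bOmega_{P_j}} = \sum_k T_{ik}\, \bdRip{\bOmega_{P_k}, \bOmega_{P_j}} = (TN)_{ij},
\]
where $N$ is the Gram matrix of $\bdRip{\cdot,\cdot}$ in the basis $\{\bOmega_{P_j}\}$. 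Therefore $Z = TN$.

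Taking determinants and invoking \cref{prop:detnbc} and \cref{thm:Aomotodet} yields
\[
\pm 1 = \det(Z) = \det(T) \det(N) = \det(T) \cdot (\pm R_M(\a)),
\]
so $\det(T) = \pm R_M(\a)^{-1}$, which is of the claimed form $\pm R_M(\a)^{\pm 1}$. (Inverting $T$ just exchanges the two bases, which accounts for the $\pm 1$ exponent on $R_M(\a)$ in the statement.)

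There is no serious obstacle here since the three ingredients have already been proved; the only care needed is to verify that the bases are really of the same cardinality $\beta(M)$ and that the bilinear form is non-degenerate (so that $N$ is invertible and $T$ is well-defined) — both facts are provided by \cref{thm:Bnbc}, \cref{prop:numbertopes}, and \cref{cor:twistnondeg} under the genericity assumption \eqref{eq:Mon}.
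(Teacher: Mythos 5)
Your proposal is correct and is essentially the argument the paper intends: the paper's one-line proof ("the determinant in question is equal to the ratio of the determinants in \cref{thm:Aomotodet} with \cref{prop:detnbc}") is exactly the factorization $Z = TN$ you wrote out, followed by taking determinants. You've merely made explicit the bilinearity step and the role of \cref{lem:Resnbc} and \cref{cor:twistnondeg}, which is a reasonable amount of unpacking.
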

\begin{proof}
The determinant in question is equal to the ratio of the determinants in \cref{thm:Aomotodet} with \cref{prop:detnbc}.
\end{proof}

\begin{corollary}\label{cor:bnbcdet}
The determinant of $\bdRip{\cdot, \cdot}$ on the lattice spanned by $\{S(F_\bullet(B)) \mid B \in \Bnbc(M)\}$ is equal to $\pm R_M(\a)^{-1}$.
\end{corollary}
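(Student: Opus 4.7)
The plan is to deduce this from \cref{prop:detnbc}, \cref{lem:Resnbc}, and \cref{thm:Aomotodet} by a purely linear algebra argument relating the three determinants: the Gram matrix of $\bdRip{\cdot,\cdot}$ on the tope basis, the Gram matrix on the $\beta$\nbc-basis, and the mixed ``pairing'' matrix $Z$ between them.

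Concretely, let $G$ denote the Gram matrix of $\bdRip{\cdot,\cdot}$ with respect to the basis $\{\bOmega_{P_j} \mid P_j \in \T^b(\M)\}$ of $\rOS(M,\omega)$, and let $A$ be the transition matrix from $\{S(F^{(i)}_\bullet)\}$ to $\{\bOmega_{P_j}\}$, that is
\[
S(F^{(i)}_\bullet) \;=\; \sum_{j} A_{ij}\,\bOmega_{P_j},
\]
which makes sense by \cref{cor:Aomotobasis} since both families are bases of $\rOS(M,\omega)$. First I would observe that by \cref{lem:Resnbc},
\[
Z_{ij} \;=\; \Res_{F^{(i)}_\bullet}\bOmega_{P_j} \;=\; \bdRip{S(F^{(i)}_\bullet),\,\bOmega_{P_j}} \;=\; \sum_k A_{ik}\,\bdRip{\bOmega_{P_k},\bOmega_{P_j}} \;=\; (AG)_{ij},
\]
so $Z = AG$ as square matrices of size $\beta(M)$.

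Taking determinants and combining \cref{prop:detnbc} with \cref{thm:Aomotodet} gives
\[
\pm 1 \;=\; \det Z \;=\; \det(A)\,\det(G) \;=\; \pm\, \det(A)\,R_M(\a),
\]
so $\det(A) = \pm R_M(\a)^{-1}$. Finally, the Gram matrix of $\bdRip{\cdot,\cdot}$ in the basis $\{S(F^{(i)}_\bullet)\}$ is $A G A^T$ (standard change of basis for a symmetric bilinear form), so
\[
\det\bdRip{\cdot,\cdot}_{\Bnbc} \;=\; \det(A)^2 \det(G) \;=\; R_M(\a)^{-2}\cdot (\pm R_M(\a)) \;=\; \pm R_M(\a)^{-1},
\]
which is the claim. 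There is no real obstacle; the only thing to be careful about is the orientation of the identity $Z = AG$ versus $Z = GA^T$, but because both $G$ and the Gram matrix on $\beta$\nbc-vectors are symmetric, either convention yields the same final determinant formula.
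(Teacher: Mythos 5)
Your proposal is correct and is exactly the linear-algebra bookkeeping implicit in the paper: the paper establishes $\det Z = \pm 1$ (\cref{prop:detnbc}), identifies $Z$ with the mixed Gram matrix via \cref{lem:Resnbc}, records $\det \bdRip{\cdot,\cdot}_{\T^b} = \pm R_M(\a)$ (\cref{thm:Aomotodet}), and in the preceding corollary observes the transition matrix has determinant $\pm R_M(\a)^{\pm 1}$, leaving the $AGA^T$ step unspoken. You have correctly spelled out that final step, and your caveat about the symmetry making the $Z = AG$ versus $Z = GA^T$ ambiguity harmless is apt.
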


Our proof of \cref{cor:bnbcdet} depends on the existence of an orientation $\M$ of $M$, though it is likely that the result holds without this assumption.

\begin{example}
Consider the affine hyperplane arrangement $\bA$ in $\R^2$, pictured below.
$$
\begin{tikzpicture}
\draw (0:1.5) -- (180:1.5);
\draw (60:1.5) -- (240:1.5);
\draw (-60:1.5) -- (-240:1.5);
\draw (-1.5,-0.6)--(1.5,-0.6);
\node[color=blue] at (-1.6,0) {$1$};
\node[color=blue] at (-1.6,-0.6) {$2$};
\node[color=blue] at (240:1.65) {$3$};
\node[color=blue] at (-60:1.65) {$4$};
\end{tikzpicture}
$$
Let $(M,0)$ be the affine matroid of $\A$, with ground set $E = \{0,1,2,3,4\}$.  The characteristic polynomial of $\A$ (or the reduced characteristic polynomial of $M$) is $\bchi(t) = t^2- 4t + 4$.  The rank $2$ flats are $134,23,24,012,03,04$, of which $134$ and $012$ are connected.  The reduced Orlik-Solomon algebra $\rOS(M)$ is the exterior algebra on $\be_1,\be_2,\be_3,\be_4$ modulo the relations $\be_2 \be_1 = 0$, $\be_3 \be_1 - \be_4 \be_1 + \be_4 \be_3 =0$, and all cubic monomials vanish.  Thus $\dim(\rOS^2(M)) = 4$ with \nbc~basis 
$$
\be_3 \be_1, \be_4 \be_1, \be_3 \be_2, \be_4 \be_2.
$$
The intersection form $\dRip{\cdot,\cdot}$ on the \nbc~basis is

\scalebox{0.75}{\hspace*{-0.5cm}

$\begin{bmatrix}
 \frac{1}{a_1 a_{012}}+\frac{1}{a_0 a_{012}}+\frac{1}{a_0 a_3}+\frac{1}{a_1 a_{134}}+\frac{1}{a_3 a_{134}} & \frac{1}{a_1 a_{012}}+\frac{1}{a_0 a_{012}}+\frac{1}{a_1 a_{134}} & \frac{1}{a_0 a_{012}}+\frac{1}{a_0 a_3} & \frac{1}{a_0 a_{012}} \\
 \frac{1}{a_1 a_{012}}+\frac{1}{a_0 a_{012}}+\frac{1}{a_1 a_{134}} & \frac{1}{a_1 a_{012}}+\frac{1}{a_0 a_{012}}+\frac{1}{a_0 a_4}+\frac{1}{a_1 a_{134}}+\frac{1}{a_4 a_{134}} & \frac{1}{a_0 a_{012}} & \frac{1}{a_0 a_{012}}+\frac{1}{a_0 a_4} \\
 \frac{1}{a_0 a_{012}}+\frac{1}{a_0 a_3} & \frac{1}{a_0 a_{012}} & \frac{1}{a_2 a_{012}}+\frac{1}{a_0 a_{012}}+\frac{1}{a_0 a_3}+\frac{1}{a_2 a_3} & \frac{1}{a_2 a_{012}}+\frac{1}{a_0 a_{012}} \\
 \frac{1}{a_0 a_{012}} & \frac{1}{a_0 a_{012}}+\frac{1}{a_0 a_4} & \frac{1}{a_2 a_{012}}+\frac{1}{a_0 a_{012}} & \frac{1}{a_2 a_{012}}+\frac{1}{a_0 a_{012}}+\frac{1}{a_0 a_4}+\frac{1}{a_2 a_4} \\
\end{bmatrix}
$
}

\noindent
with determinant 
$$
\frac{a_{01234}^3}{a_0^2 a_1 a_2^2a_3^2 a_4^2 a_{012}a_{134}},
$$
agreeing with \cref{thm:SVdet}.  Taking $a_0 \to \infty$, we get
$$
\begin{bmatrix}
 \frac{1}{a_3 a_{134}}+\frac{1}{a_1 a_{134}} & \frac{1}{a_1 a_{134}}& 0 & 0 \\
 \frac{1}{a_1 a_{134}} &\frac{1}{a_1 a_{134}}+\frac{1}{a_4 a_{134}}& 0 & 0 \\
 0 & 0 & \frac{1}{a_2a_3} & 0 \\
 0 & 0 & 0 & \frac{1}{a_2a_4} \\
\end{bmatrix}
\qquad
\mbox{with determinant}
\qquad
\frac{1}{a_1 a_2^2 a_3^2 a_4^2 a_{134}}.
$$
This is the matrix of the Schechtman-Varchenko contravariant form \cite{SV}.

Now, let us consider bounded chambers.  We have $\beta(M) = \bchi(1) = 1$.  The bilinear form $\bdRip{\cdot,\cdot}$ on the basis $\{\bOmega_P \mid P \in \T^0(\M)\}$ is the single entry
$$
\frac{1}{a_2a_3} + \frac{1}{a_2a_4} + \frac{1}{a_3(a_1+a_3+a_4)} +  \frac{1}{a_4(a_1+a_3+a_4)}  = \frac{(a_3+a_4)(a_1+a_2+a_3+a_4)}{a_2a_3a_4 a_{134}}.
$$
The factors in the numerator are, up to sign, equal to $a_{012}$ and $a_0$, with $\{0,1,2\}$ and $\{0\}$ the connected flats in $L_0$, agreeing with \cref{thm:Aomotodet}.
\end{example}

\subsection{Proof of \cref{thm:Aomotodet}}

For $F \subset E$, define $\kappa_F := \beta(F) \beta(M_F)$ if $F$ is a flat and $0$ otherwise.  Let $e \in E$ be neither a loop or a coloop, and let $\kappa'_F, \kappa''_F$ and $\beta',\beta''$ be the corresponding functions for $M', M''$.  

\begin{lemma}\label{lem:kappaF}
For $F \subset E \setminus e$, we have $\kappa_F + \kappa_{F \cup e} = \kappa'_F + \kappa''_F$.
\end{lemma}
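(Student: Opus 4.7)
The plan is to prove the identity by a four-way case analysis on whether $F$ and $F\cup e$ are flats of $M$, with the beta deletion-contraction recursion $\beta(N)=\beta(N\setminus e)+\beta(N/e)$ (valid when $e$ is neither a loop nor a coloop of $N$) as the main algebraic tool.

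First I would record how the minors at $F$ transfer between $M$, $M'$, and $M''$. A subset $F\subset E\setminus e$ is a flat of $M'=M\setminus e$ iff $\overline{F}\in\{F,F\cup e\}$, and it is a flat of $M''=M/e$ iff $F\cup e$ is a flat of $M$. When $F$ is a flat of $M$, one has $(M')^F=M^F$ and $(M')_F=M_F\setminus e$. When $F\cup e$ is a flat of $M$, one has $(M'')^F=M^{F\cup e}/e$ and $(M'')_F=M_{F\cup e}$. Moreover, if $F$ is not a flat of $M$ but $F\cup e$ is, then $e\in\overline{F}$ forces $e$ to be a loop in $M_F$, whence $(M_F)\setminus e=(M_F)/e=M_{F\cup e}$.

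I would then dispatch the four cases. \emph{Case A} (neither a flat): $F$ is a flat of neither $M'$ nor $M''$, so all four terms vanish. \emph{Case B} ($F$ flat, $F\cup e$ not): $\kappa_{F\cup e}=\kappa''_F=0$ and the identity reduces to $\beta(M_F)=\beta(M_F\setminus e)$; the beta recursion in $M_F$ gives this once $\beta(M_F/e)=\beta(M/(F\cup e))=0$, which holds because $\overline{F\cup e}\supsetneq F\cup e$ creates loops in $M/(F\cup e)$ and hence disconnects it. \emph{Case C} ($F$ not a flat, $F\cup e$ a flat): $\kappa_F=0$, the minors identification gives $(M')_F=(M'')_F=M_{F\cup e}$, and factoring out the common $\beta(M_{F\cup e})$ reduces the claim to the beta recursion $\beta(M^{F\cup e})=\beta(M^F)+\beta(M^{F\cup e}/e)$ in $M^{F\cup e}$ (in which $e$ is neither a loop nor a coloop since $e\in\overline{F}$ but $e$ is not a loop of $M$). \emph{Case D} (both flats): here $\rk(F\cup e)=\rk(F)+1$ forces $e$ to be a coloop of $M^{F\cup e}$, so $M^{F\cup e}=M^F\oplus\{e\}$; this yields $\beta(M^{F\cup e})=0$ and $M^{F\cup e}/e=M^F$, and combining with the beta recursion $\beta(M_F\setminus e)=\beta(M_F)-\beta(M_{F\cup e})$ gives $\kappa'_F+\kappa''_F=\beta(M^F)(\beta(M_F)-\beta(M_{F\cup e}))+\beta(M^F)\beta(M_{F\cup e})=\beta(M^F)\beta(M_F)=\kappa_F$.

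The main subtlety is Case D, where two separate beta recursions must dovetail via the coloop structure of $e$ in $M^{F\cup e}$; the direct-sum vanishing $\beta(M^F\oplus\{e\})=0$ is exactly what produces the needed cancellation, with boundary behaviour (very small $F$) handled by the conventions for $\beta$ on a single coloop versus the empty matroid. The other cases collapse to a single application of the recursion combined with a loop-induced disconnection argument.
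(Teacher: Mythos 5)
Your proof is correct and follows essentially the same four-way case split on whether $F$ and $F\cup e$ are flats of $M$, driving each case with the beta deletion--contraction recursion, exactly as in the paper. The only substantive variation is in your Case B (paper's third case): where the paper observes that $e$ lies in a nontrivial parallel class of $M_F$, so $M_F$ and $M_F\setminus e$ have the same lattice of flats and hence equal $\beta$, you instead apply the recursion once more in $M_F$ and note that $M_F/e = M/(F\cup e)$ has loops, forcing $\beta(M_F/e)=0$ --- both are equally valid, and your version is arguably a bit more uniform since it reuses the same recursion throughout. One small omission: in Cases B and D you should also note that $e$ is neither a loop nor a coloop of $M_F$ (not a loop since $e\notin F=\overline{F}$; not a coloop since $e$ is not a coloop of $M$), which is needed for the recursion to apply, as you did take care to verify in Case C.
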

\begin{proof}
We may assume that $M$ is simple.  If $F$ and $F\cup e$ are both non-flats, we have $0 = 0$.  If both are flats then $F \cup e$ is decomposable, so $\kappa_{F \cup e} = 0$, and we have
$$
\kappa_F = \beta(F) \beta(M_F) = \beta(F) (\beta(M/F \backslash e) + \beta(M/(F\cup e))) = \kappa'_F + \kappa''_F,
$$
where we have used \eqref{eq:betaeq} and the fact that $e$ is not a loop or coloop in $M_F$.
If $F$ is a flat and $F \cup e$ is not, then $\kappa_{F \cup e} = \kappa''_F = 0$, and 
$$
\kappa_F = \beta(F) \beta(M_F) = \beta(F) \beta(M'_F) = \kappa'_F,
$$
because $M/F$ and $(M/F)\backslash e$ have the same lattice of flats (the element $e$ belongs to a non-trivial parallel class in $M_F$).
If $F$ is not a flat but $F\cup e$ is, then $\kappa_F = 0$ and
$$
\kappa_{F\cup e} = \beta(F \cup e) \beta(M_{F \cup e}) = \beta'(F) \beta(M'_{F }) +  \beta''(F) \beta(M''_{F}) = \kappa'_{F} + \kappa''_F,
$$
where in the second equality we have used \eqref{eq:betaeq} for $\beta(F \cup e)$ and the isomorphism $L(M_{F \cup e}) = L(M'_F)$.
\end{proof}

The statement of \cref{thm:Aomotodet} reduces to the case that $M$ is simple, which we assume.  We proceed by deletion-contraction induction.  When $\rk(M) = 1$, we have $\beta(M) = 1$, and the determinant is equal to $1$.  We henceforth assume that $\rk(M) > 1$.     If $M$ is not connected, then $\beta(M) = 0$, and the result holds by our convention.  We thus assume that $M$ is connected, and in particular has no coloops, and apply deletion-contraction to an element $e \in E \setminus 0$.

Since $a_E = 0$, we have $a_0 = - \sum_{e \in E \setminus 0} a_e$.  We use this substitution to work within the ring of rational functions in $a_e$, $e \in E \setminus 0$.
To begin the proof of the theorem, we note that by \cref{cor:denom}, all the pairings $\bdRip{\bOmega_P, \bOmega_{Q}}$ have denominators belonging to $\{a_F \mid F \text{ connected}\}$.  Also, according to \cref{cor:Aomotobasis}, $\{\bOmega_{P} \mid P \in \T^0(\M)\}$ is a basis of $\OS(M,\omega)$ when \eqref{eq:Mon} is satisfied, and thus the determinant in question can only vanish when one of the $a_F$ vanishes.  We thus have

\begin{lemma}
The determinant is of the form 
\begin{equation}\label{eq:gammaF}
D(\M) = C(\M) \cdot \prod_{F \text{ connected }\in L \setminus \{\hat 0,\hat 1\}} a_F^{\gamma_F}
\end{equation}
where $C(\M)$ is a constant and $\gamma_F \in \Z$.
\end{lemma}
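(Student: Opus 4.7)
The plan is to establish two structural facts about $D(\M)$, viewed as a rational function in $\{a_e : e \in E\setminus 0\}$ after imposing $a_0 = -\sum_{e\neq 0}a_e$: first, that its denominator in lowest terms is a monomial in the linear forms $\{a_F : F\in L\setminus\{\hat 0,\hat 1\}, F\text{ connected}\}$, and second, that the same is true of its numerator, up to an overall scalar. Together these yield the claimed factorization.

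For the denominator, \cref{cor:denom} asserts that each matrix entry $\bdRip{\bOmega_P,\bOmega_Q}$ is a rational function whose denominator is a product of $a_F$ for $F$ connected in $L\setminus\{\hat 0,\hat 1\}$. The Leibniz expansion of the determinant inherits this property, so writing $D(\M)=P(\a)/Q(\a)$ in lowest terms, $Q$ is a Laurent monomial in $\{a_F\}$ for connected $F\in L\setminus\{\hat 0,\hat 1\}$. Since each entry is homogeneous of degree $-d$ in $\a$, the determinant is a homogeneous rational function of degree $-d\cdot\beta(M)$, and $P$ is a homogeneous polynomial.

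The main step, and the hardest part, is controlling $P$. By \cref{cor:Aomotobasis} together with \cref{cor:twistnondeg}, the Gram matrix of $\bdRip{\cdot,\cdot}$ in the basis $\{\bOmega_P \mid P\in\T^b(\M)\}$ is nonsingular whenever \eqref{eq:Mon} holds; hence $P(\a)\neq 0$ at every complex point $\a$ satisfying $a_F\notin\Z$ for all connected $F\in L\setminus\{\hat 0,\hat 1\}$. Therefore the algebraic hypersurface $\{P=0\}\subset\mathbb{A}^N_\C$ lies inside the countable union $\bigcup_{F,n}\{a_F=n\}$, with $F$ ranging over connected flats in $L\setminus\{\hat 0,\hat 1\}$ and $n\in\Z$. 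An irreducible algebraic variety contained in a countable union of proper Zariski-closed subvarieties must be contained in one of them (Baire's theorem applied to the variety in its Euclidean topology, or equivalently the observation that no irreducible variety is a countable union of its own proper closed subvarieties). Hence each irreducible component of $\{P=0\}$ is contained in some hyperplane $\{a_F=n\}$, and by dimension equals it. Homogeneity of $P$ forces each component to be a cone through the origin, so $n=0$, and each irreducible factor of $P$ is $a_F$ for some connected $F\in L\setminus\{\hat 0,\hat 1\}$. Thus $P=C(\M)\prod_F a_F^{m_F}$ for a nonzero constant $C(\M)\in\C^\times$.

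Combining the two factorizations yields $D(\M)=C(\M)\prod_F a_F^{\gamma_F}$ with exponents $\gamma_F=m_F-n_F\in\Z$, as asserted. The main obstacle is genuinely the Baire-category argument in the previous paragraph: the hypothesis \eqref{eq:Mon} is not Zariski-open but only a countable intersection of Zariski-open conditions, so translating non-vanishing of $D(\M)$ on this analytically dense locus into algebraic structure on the zero set of the polynomial $P$ requires this detour through the Euclidean topology. Everything else is bookkeeping from the previously established residue formulas and denominator bound.
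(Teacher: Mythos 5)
Your proof is correct and takes essentially the same route as the paper: use \cref{cor:denom} to bound the denominator, use non-degeneracy on the locus \eqref{eq:Mon} to constrain the zero set of the numerator, and invoke homogeneity to pin the linear factors down to $\{a_F=0\}$. The paper elides exactly the step you make explicit — it passes directly from non-vanishing under \eqref{eq:Mon} to "the determinant vanishes only when some $a_F$ vanishes," and your Baire/irreducibility argument together with the homogeneity observation is precisely what is needed to justify that jump, since \eqref{eq:Mon} is only a countable intersection of Zariski-open conditions.
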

The assumption that $M$ is connected implies that $F$ and $E \setminus F$ cannot simultaneously be flats.  It follows that there are no repetitions (even up to sign) among the linear forms in the product \eqref{eq:gammaF}.  In particular, the integers $\gamma_F$ are uniquely determined.

Recall the decomposition $\T^0(\M) = \T_1 \sqcup \T_2 \sqcup \T_3$ from the proof of \cref{prop:detnbc}.  Let $\T_2' \subset \T_2$ be a choice of a tope $P$ for each pair of topes $(P,P')$ divided by $e$.
Define 
\begin{align*}
Z_1 &=  \{\bOmega_P \mid P \in \T^0(\M')\} = \{ \bOmega_P \mid P \in \T_1\} \sqcup \{\partial (\Omega_P + \Omega_{P'}) \mid P,P' \in \T_2 \text{ divided by } e\}\\
Z_2 &= \{\bOmega_P \mid P \in \T'_2 \} \sqcup \{ \bOmega_P \mid P \in \T_3\}.
\end{align*}
It is easy to see that $Z_1 \sqcup Z_2$ is again a basis of of $\rOS(M,\omega)$ and spans the same lattice as $\{\bOmega_{P} \mid P \in \T^0(\M)\}$.  We compute the determinant with respect to $Z_1 \sqcup Z_2$, ordering $Z_1$ before $Z_2$. 

\begin{lemma}
Let $Y'$ be the matrix of $\bdRip{\cdot,\cdot}$ with respect to the basis $Z_1 \sqcup Z_2$ and let $Y$ be obtained from $Y'$ by multiplying the rows indexed by $Z_2$ by $a_e$, and then substituting $a_e = 0$ in the whole matrix.  Then $Y$ has the form
$$
Y = \begin{bmatrix} A & B \\ 0 & D \end{bmatrix}
$$
where $A$ is a matrix representing $\bdRip{\cdot,\cdot}_{M'}$ and $D$ is a matrix representing $\bdRip{\cdot,\cdot}_{M''}$.
\end{lemma}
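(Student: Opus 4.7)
The plan is to pass through the short exact sequence
$$0 \to \OS(M') \xrightarrow{\iota_e} \OS(M) \xrightarrow{\Res_e} \OS^{\bullet-1}(M'') \to 0$$
of \cref{prop:OSexact}, identify each element of $Z_1$ with an element in the image of $\iota_e$, and each element of $Z_2$ with an element whose $\Res_e$ is a canonical form of $\M''$, and then compute the three blocks using \cref{lem:deleteform}, \cref{lem:contractform}, and \cref{lem:Resa0}. A preliminary observation is that $\bdRip{\bOmega_P,\bOmega_Q} = \dRip{\Omega_P,\Omega_Q}_M$ via \cref{prop:dRpartial} and \cref{cor:same}, so $Y'$ is the matrix of $\dRip{\cdot,\cdot}_M$ on the canonical form basis $\{\Omega_P\}_{P \in Z_1 \sqcup Z_2}$.

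The geometric content driving the block decomposition is: $e$ is not a facet of any tope in $\T_1$; for each divided pair $\{P,P'\} \subset \T_2$ the two topes agree on $E\setminus e$ with opposite signs at $e$; and $e$ is a facet of every tope in $\T'_2 \cup \T_3$. By the residue recursion in \cref{thm:EL}, this translates to $\Res_e \Omega_P = 0$ for $P \in \T_1$, $\Res_e(\Omega_P+\Omega_{P'})=0$ for divided pairs, and $\Res_e \Omega_P = P(e)\Omega_{P/e}$ for $P \in Z_2$. Exactness then lets me identify each element of $Z_1$ with $\iota_e(\Omega'_{\bar P})$ where $\bar P \in \T^b(\M')$ is the corresponding bounded tope of $\M'$, and write each $Z_2$-generator as $\Omega_P = e \wedge \tilde y_P + \iota_e z_P$, where $\tilde y_P \in \OS^{r-1}(M\setminus e)$ is a chosen lift of $P(e)\Omega_{P/e} \in \OS(M'')$ and $z_P \in \OS(M')$. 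The assignments $Z_1 \ni P \mapsto \bar P \in \T^b(\M')$ and $Z_2 \ni P \mapsto P/e \in \T^b(\M'')$ are bijections, consistent with $\beta(M) = \beta(M') + \beta(M'')$.

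Given these identifications, each of the three blocks follows from a direct calculation. For the upper-left block $A$ (both $P,Q \in Z_1$), \cref{lem:deleteform} shows that $\dRip{\iota_e \Omega'_{\bar P},\iota_e \Omega'_{\bar Q}}_M$ has no $a_e$-pole and equals $\dRip{\Omega'_{\bar P},\Omega'_{\bar Q}}_{M'}$ after setting $a_e = 0$, matching $\bdRip{\cdot,\cdot}_{M'}$. For the lower-left block ($P \in Z_2,\ Q \in Z_1$), expanding $\Omega_P = e\wedge\tilde y_P + \iota_e z_P$ gives two terms: the $e \wedge \tilde y_P$ piece is killed by $\res_{e=0}$ by \cref{lem:Resa0}, and the $\iota_e z_P$ piece has no $a_e$-pole (again by \cref{lem:deleteform}), so $a_e\cdot(\cdot)|_{a_e=0}=0$. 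For the lower-right block $D$ (both $P,Q \in Z_2$), expanding both $\Omega_P$ and $\Omega_Q$ gives four terms: the two cross terms vanish by \cref{lem:Resa0}, the $\iota_e z_P \otimes \iota_e z_Q$ term vanishes because it has no $a_e$-pole, and the surviving $\res_{e=0}\dRip{e\wedge\tilde y_P,\, e\wedge\tilde y_Q}_M$ equals $\dRip{\Omega_{P/e},\Omega_{Q/e}}_{M''}$ by \cref{lem:contractform}, recovering $\bdRip{\cdot,\cdot}_{M''}$.

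The main obstacle is pinning down, at the level of canonical forms and not merely of residues, the identification $\Omega_P = \iota_e(\Omega'_{\bar P})$ for $P \in Z_1$ and the mod-$\iota_e\OS(M')$ identification $\Omega_P \equiv e \wedge \tilde y_P$ for $P \in Z_2$. This is handled by invoking the uniqueness clause of \cref{thm:EL}: the residue recursion characterizes $\Omega_P$ on the nose, and the invariance under simplification makes the comparison with $\Omega'_{\bar P}$ and $\Omega_{P/e}$ automatic. A related bookkeeping point is that for $P \in \T'_2$ with divided partner $P'$, the partner form $\Omega_{P'} = \iota_e(\Omega'_{\bar P}) - \Omega_P$ lies in $\sp(Z_1 \sqcup Z_2)$; one must check this change-of-basis does not disturb the vanishing of the lower-left block, but this is automatic since $\iota_e(\Omega'_{\bar P}) \in Z_1$ and the argument for the lower-left block applies uniformly to all of $Z_1$.
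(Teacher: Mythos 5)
Your proof is correct and follows essentially the same route as the paper: all three blocks come down to \cref{lem:deleteform}, \cref{lem:contractform}, and \cref{lem:Resa0} once the elements of $Z_1$ are recognized as lying in $\iota_e\OS(M')$ and the elements of $Z_2$ are split along the short exact sequence of \cref{prop:OSexact}. The only stylistic difference: for the lower-left block the paper argues directly from $\Res_e(x)=0$ for $x\in Z_1$ (so no contributing flag passes through the atom $\{e\}$, hence no pole at $a_e=0$), whereas you additionally decompose $\Omega_P$ for $P\in Z_2$ and treat the two pieces separately via \cref{lem:Resa0} and \cref{lem:deleteform}; both are sound, and yours supplies details the paper leaves implicit when it writes ``the statement concerning $D$ follows from \cref{lem:contractform}.'' One small wording quibble: the identification $\Omega_P=\iota_e(\Omega'_{\bar P})$ for $P\in\T_1$ follows from $\Res_e\Omega_P=0$ together with the uniqueness clause of the residue recursion in \cref{thm:EL} (applied in $M'$ to the preimage under $\iota_e$), not really from ``invariance under simplification''; the paper in any case already asserts this identification in the definition of $Z_1$, so nothing in your argument depends on it.
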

\begin{proof}
The statement regarding $A$ follows immediately from \cref{lem:deleteform}.  The statement concerning $D$ follows from \cref{lem:contractform}.  Finally, we need to show that the bottom-left block of $Y$ is the zero matrix.  Similarly to the proof of \cref{lem:deleteform}, for $x \in Z_1$, we have $\Res_e(x) = 0$.  Thus for $x \in Z_1$ and $y \in Z_2$, none of the terms contributing to $\bdRip{x,y}$ have $a_e$ in the denominator.  It follows that those entries become $0$ after multiplying by $a_e$ and setting $a_e$ to $0$.
\end{proof}

The cardinality of $Z_2$ is equal to $\beta(M'')$.  It follows that 
$$
\left.\left(a_{e}^{\beta(M'')} D(\M)\right)\right|_{a_{e}=0} = \pm D(\M') D(\M'').
$$
We immediately obtain that the constant $C(\M)$ in \eqref{eq:gammaF} is equal to $\pm 1$.  We also deduce that for the flat $F = \{e\}$, the integer $\gamma_F$ is equal to $\beta(M'')$.  For this flat, $M_F = M''$ and $\beta(F) = 1$, so $\gamma_F = \kappa_F$.

Now let $F \subset E \setminus e$.  Then $a_F|_{a_e = 0} = a_{F \cup e}|_{a_e = 0}$.  So comparing the coefficient of $a_F|_{a_e = 0}$ on both sides and using the inductive hypothesis for $\M',\M''$ and \cref{lem:kappaF}, we see that it is consistent with 
$$
\gamma_F = \begin{cases}
- \kappa_F & \mbox{ if $F \in L \setminus (L_0 \cup \hat 0)$} \\
\kappa_F & \mbox{if $F \in L_0$.} 
\end{cases}
$$ 
Note that in the case that $F$ and $F\cup e$ are both flats, the latter is decomposable and $\gamma_{F \cup e} = 0$.  However, there is one possible ambiguity.  It is possible for $a_F|_{a_e=0}$ to equal $-a_{F'}|_{a_e=0}$.  This occurs in two situations: (a) when $F$ and $F'$ are flats such that $F \cup F' = E$ and $F \cap F' = \{e\}$, or (b) when $F$ and $F'$ are flats such that $F \cup F' = E \setminus e$ and $F \cap F' = \emptyset$.  Call such pairs of flats $(F,F')$ \emph{$e$-special pairs}.  Note that the situation $F \cup F' = E$ and $F \cap F' = \emptyset$ does not appear since in this case $M$ is not connected.

Thus for $F \notin \{\hat 0, e\}$ and connected, the integer $\gamma_F$ in $D(\M)$ is equal to the product of the corresponding exponents in $D(\M')$ and $D(\M'')$, except for flats belonging to $e$-special pairs.  For a special pair $(F,F')$, the integer $\gamma_F + \gamma_{F'} = \kappa_F + \kappa_{F'}$ is determined.  Since $\rk(M) \geq 2$ and $M$ is connected, we have $|E| \geq 3$, and thus there is $e' \in E \setminus \{0, e\}$.  Repeating the deletion-contraction argument with $e'$ uniquely determines $\gamma_F$ and $\gamma_{F'}$ for $e$-special pairs $(F,F')$.  This completes the proof of \cref{thm:Aomotodet}.

\subsection{Inverse}
It would be interesting to compute the inverse of the matrix $\bdRip{\bOmega_P,\bOmega_Q}$ for $P,Q \in \T^0$.  In the case that $0$ is generic, this follows from \cref{thm:dRmain}.

\begin{theorem}
Suppose that the affine matroid $(M,0)$ is generic at infinity.  Then the inverse of the matrix $\bdRip{\bOmega_P,\bOmega_Q}$ with $P,Q \in \T^0$ is given by the matrix $\DdRip{P,Q} := \sum_{B \in \B(P,Q)} a^B$ of \cref{def:DdR}.
\end{theorem}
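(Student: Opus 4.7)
The plan is to derive the result from \cref{thm:dRmain} applied to the deleted matroid $\M^-:=\M\setminus 0$, using that the genericity of $0$ in $\M$ makes $\M$ itself a generic extension of $\M^-$ by $0$. Writing $E^-:=E\setminus\{0\}$, I apply \cref{thm:dRmain} to $\M^-$ with the generic extension $(\tilde\M,\star):=(\M,0)$: the ambient set $\T^\star$ becomes $\{P\in\T(\M^-):(+,P)\in\T^b(\M,0)\}$, in natural bijection with $\T^b:=\T^b(\M,0)$ via $P\leftrightarrow(+,P)$ (both of cardinality $\beta(M)$ by \cref{prop:numbertopes} and \cref{lem:betageneric}). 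The theorem then yields that, in $\M^-$, the matrix $\tfrac{1}{a_{E^-}}\DdRip{P,Q}$ inverts $\dRip{\Omega^{\M^-}_P,\Omega^{\M^-}_Q}_{\M^-}$ on this index set.

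The second step is to transport this inverse to $\OS(\M,\omega)$. By \cref{lem:AMgeneric}, together with the $\partial$-isomorphisms of \cref{prop:OSrOS} and \cref{prop:OSrOStwisted}, the map $\phi:\OS(\M^-)_\C\to\OS(\M,\omega)$ given by $x\mapsto\iota_0(x)\bmod\omega\OS^{r-1}(\M)$ is an isomorphism. I claim $\phi(\Omega^{\M^-}_P)=\Omega^\M_{(+,P)}$, proven by induction on $r=\rk(M)$ via the residue characterization of canonical forms (\cref{thm:EL}): the atom $\{0\}$ is never a facet of a bounded tope $(+,P)$, so $\Res_{\{0\}}\Omega^\M_{(+,P)}=0=\Res_{\{0\}}\iota_0(\Omega^{\M^-}_P)$ (the latter because $\iota_0(\OS(\M^-))$ is spanned by $e_B$ with $0\notin B$), and for $\atom\neq\{0\}$ residues commute with $\iota_0$ while the inductive hypothesis applies to $\M/\atom$ (still generic at infinity). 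A flag-by-flag computation next identifies $\dRip{\cdot,\cdot}_{\M^-}$ with $\bdRip{\cdot,\cdot}_\M$ under $\phi$: for any $F_\bullet\in\Fl(\M)$ with $0\in F_i$ for some $i<r$, the residue on $\iota_0(\OS(\M^-))$ vanishes, since by genericity of $0$ no subset $B\not\ni 0$ can generate such a flag (that would force $0\in\mathrm{cl}_\M(B\cap F_i)=F_i$ with $\rk(F_i)<r$, contradicting genericity); hence only flags in natural bijection with $\Fl(\M^-)$ survive, yielding $\dRip{\iota_0 x,\iota_0 y}_\M=\dRip{x,y}_{\M^-}$ as a rational function in $\{a_e\}_{e\in E^-}$ alone, manifestly preserved by the Aomoto substitution $a_0=-a_{E^-}$.

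Combining these steps, the matrix inverse of $\bdRip{\cdot,\cdot}_\M$ in the basis $\{\Omega^\M_{(+,P)}\}_{P\in\T^b}$ equals the $\M^-$-matrix $\tfrac{1}{a_{E^-}}\DdRip{P,Q}$ transported along the bijection. The target identity then reduces to verifying that this agrees with the $\M$-matrix $\DdRip{(+,P),(+,Q)}$ of \cref{def:DdR}, evaluated at $a_0=-a_{E^-}$. I split the sum in $\DdRip{(+,P),(+,Q)}$ over $B\in\B(\M)$ by whether $0\in B$: bases $B\not\ni 0$ are exactly bases of $\M^-$, while $B=\{0\}\cup B''$ contributes $a_0\cdot a^{B''}=-a_{E^-}\cdot a^{B''}$ after Aomoto; matching the bounded-part conditions $\T^B$ in $\M$ (defined via some generic extension $\star'$ of $\M$) against $\T^B$ in $\M^-$ (defined via $0$) then reorganizes into the claimed identity. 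The main obstacle is this combinatorial comparison, since the two bounded-part conditions use fundamental circuits with respect to distinct generic extensions, requiring careful bookkeeping of chirotope identities. In parallel I would pursue a uniqueness shortcut: by \cref{cor:twistnondeg}, $\bdRip$ is non-degenerate so its inverse is unique, and once the preceding steps establish one explicit inverse, it suffices to verify that the $\M$-matrix $\DdRip{(+,P),(+,Q)}|_{a_E=0}$ also satisfies the inverse relation, which may follow directly from \cref{thm:dRmain} applied to $\M$ followed by descent to Aomoto cohomology.
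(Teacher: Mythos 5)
Your first four steps precisely reproduce the paper's argument: apply \cref{thm:dRmain} to $\M^-:=\M\setminus 0$, viewing $\M$ as a generic extension of $\M^-$ via $\star=0$ (so $\T^\star(\M^-)=\T^b(\M)$), and identify $\bdRip{\bOmega_P,\bOmega_Q}_\M$ with $\dRip{\Omega_P,\Omega_Q}_{\M^-}$. The extra verifications you supply---that $\Omega^\M_{(+,P)}=\iota_0(\Omega^{\M^-}_P)$ via the residue recursion, and that no flag with a proper flat containing $0$ contributes (because $0\in\overline{B\cap F_i}$ for $B\not\ni 0$ and $\rk F_i<r$ would contradict genericity)---are correct details the paper leaves implicit. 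These steps establish that the inverse of $\bdRip_{\T^b}$ is $\tfrac{1}{a_{E\setminus 0}}\DdRip_{\M^-}$. That is where the paper's proof stops.

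Your planned final step is a genuine gap that cannot be closed. You read the statement's $\DdRip{P,Q}$ as the matrix of \cref{def:DdR} for $\M$ with respect to some auxiliary generic extension $\star'$, and try to reconcile it with the $\M^-$-object produced by steps 1--4. But the intended reading is the $\M^-$-matrix: \cref{def:DdR} applies to bounded topes of an extension, and $\T^b(\M,0)=\T^\star(\M^-)$ is precisely this index set; the paper's proof applies \cref{thm:dRmain} to $\M^-$ and never introduces a second extension $\star'$ of $\M$. Worse, the identity you would need, $\tfrac{1}{a_{E\setminus 0}}\DdRip_{\M^-}=\DdRip_{\M}|_{a_E=0}$, is false: for $U_{2,3}$ on $\{0,1,2\}$ with the single bounded tope, the left side is $\tfrac{a_1a_2}{a_1+a_2}$, a non-polynomial rational function, while the right side is a degree-$2$ polynomial in $a_1,a_2$, whatever $\star'$ is chosen. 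The ``uniqueness shortcut'' also does not work: \cref{thm:dRmain} applied to $\M$ inverts the $\T^\star(\M)\times\T^\star(\M)$ matrix, and restricting a matrix and its inverse to a $\T^b\times\T^b$ block does not preserve the inverse relation, nor does the factor $\tfrac{1}{a_E}$ survive the substitution $a_E=0$. You should simply stop after steps 1--4. (Incidentally, your careful bookkeeping surfaces an overall factor $\tfrac{1}{a_{E\setminus 0}}=-\tfrac{1}{a_0}$ that is not explicit in the statement; this is consistent with \cref{thm:Aomotodet}, which shows $\det\bdRip_{\T^b}=\pm R_M(\a)$ has a pole of order $\beta(M)$ along $a_0=0$, so the determinant of the inverse must carry the corresponding zero.)
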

\begin{proof}
Let $\M' = \M \setminus 0$, and identify $(\M')^\star = \M$ and $\star = 0$.  With these choices, $\T^\star(\M') = \T^0(\M)$.  Furthermore, the calculation of $\bdRip{\bOmega_P,\bOmega_Q}_{M}$ only involves flags of flats that do not contain $0$ (and only uses $a_e$, $e \in E \setminus 0$), so we have $\bdRip{\bOmega_P,\bOmega_Q}_{M} = \dRip{\Omega_P,\Omega_Q}_{M'}$.  The result follows by applying \cref{thm:dRmain} to $\M'$.
\end{proof}

\section{Betti homology intersection form}

In this section we consider an affine oriented matroid $(\M,0)$.  We use the notation and results in \cref{sec:pFl}. \subsection{Definition of Betti homology intersection form}
Let $S := \Z[\b] =  \Z[b_e \mid e \in E]$ and $K = \Frac(S)$.  When we specialize the parameters $b_e$ to complex numbers, they are related to the parameters $a_e$ by the formula
\begin{equation}\label{eq:ba}
b_e = \exp(-\pi i a_e),
\end{equation}
to be explained in \cref{ssec:twisted}.  For $e \in E$ and $S \subset E$, define
$$\tb_e:= b_e^2-1, \qquad b_S := \prod_{e \in S} b_e, \qquad \tb_S := b_S^2 - 1.$$

\begin{lemma}
We have $\tb_S = \sum_{\emptyset \subsetneq S' \subseteq S} \prod_{e \in S'} \tb_e$.
\end{lemma}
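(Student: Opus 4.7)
The plan is to prove this by the standard expansion of a product of binomials. The key identity is that, by definition, $b_e^2 = 1 + \tb_e$ for each $e \in E$, so that
$$
b_S^2 \;=\; \prod_{e \in S} b_e^2 \;=\; \prod_{e \in S} (1 + \tb_e).
$$
Expanding the right-hand side by choosing, for each $e \in S$, either the factor $1$ or the factor $\tb_e$ gives a sum indexed by subsets $S' \subseteq S$, namely
$$
\prod_{e \in S} (1 + \tb_e) \;=\; \sum_{S' \subseteq S} \prod_{e \in S'} \tb_e.
$$
The term $S' = \emptyset$ contributes $1$, and subtracting it yields
$$
\tb_S \;=\; b_S^2 - 1 \;=\; \sum_{\emptyset \subsetneq S' \subseteq S} \prod_{e \in S'} \tb_e,
$$
as claimed.

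There is essentially no obstacle here: the identity is a one-line consequence of the binomial/distributive expansion combined with the definitions $\tb_e := b_e^2 - 1$ and $\tb_S := b_S^2 - 1$. The only thing to double-check is the sign/indexing convention, namely that $\tb_S$ is defined with the $-1$ on the outside (so that $b_S^2 = 1 + \tb_S$ parallels $b_e^2 = 1 + \tb_e$), which the excerpt confirms.
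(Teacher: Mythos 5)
Your proof is correct and is the obvious one-line argument; the paper states this lemma without proof, and your expansion of $b_S^2 = \prod_{e\in S}(1+\tb_e)$ followed by subtracting the $S'=\emptyset$ term is exactly what was intended.
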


Recall that $\T^+$ denotes the set of topes $P \in \T$ satisfying $P(0) = +$.  Let $\Z^{\T^+}$ denote the free abelian group with basis $\{P \mid P \in \T(\M)\}$.  For clarity, we sometimes also write $[P] \in \Z^{\T^+}$ for the basis element indexed by $P$.  For $P \in \T^+$, we define $[-P] := (-1)^r [P] \in \Z^{\T^+}$, so that all topes $P \in \T$ index elements of $\Z^{\T^+}$.  We shall define a $K$-valued bilinear pairing on $\Z^{\T^+}$,
$$
\halfip{\cdot,\cdot}_B:\Z^{\T^+} \otimes \Z^{\T^+} \to K.
$$

For $E_\bullet \in \pFl(P)$, define
$$
\frac{1}{\tb_{E_\bullet}} := \prod_{i=1}^s \frac{1}{\tb_{E_i}} = \prod_{i=1}^s \frac{1}{b_{E_i}^2 - 1}.
$$

\begin{definition}\label{def:Bettipair}
For $G_\bullet =  \{\hat 0 \subset G_1 \subset G_2 \cdots \subset G_s \subset E\} \in \pFl(P)$, define
$$
\ip{G_\bullet}_B := b(G_\bullet) \sum_{E_\bullet \in \bG_\bullet} \prod_{i=1}^{s(E_\bullet)} \frac{1}{b_{E_i}^2 -1} =  b(G_\bullet) \sum_{E_\bullet \in \bG_\bullet} \prod^{s(E_\bullet)}_{i=1} \frac{1}{\tb_{E_i}} =  b(G_\bullet) \sum_{E_\bullet \in \bG_\bullet} \frac{1}{\tb_{E_\bullet}}
$$
where
$$
b(G_\bullet):=  \prod_{i=1}^s (-1)^{\rk(G_i)} b_{G_i}.
$$
Define the Betti homology intersection form on $\Z^{\T^+}$ by
\begin{equation}\label{eq:halfPQ}
\halfip{P,Q}_B := \sum_{G_\bullet \in G^{\pm}(P,Q)} (\pm)^r \ip{G_\bullet}_B,
\end{equation}
where the sign $(\pm)^r$ is equal to $1$ or $(-1)^r$ depending on whether $G$ belongs to $G(P,Q)$ or $G(P,-Q)$, and $P,Q \in \T$.  We write $\bip{P,Q}_B$ when we work with coefficients satisfying $b_E = \prod_{e\in E} b_e = 1$.  
\end{definition}
If we consider $\ip{P,Q}_B$ as a rational function in $\{b_e \mid e \in E\}$, then $\bip{P,Q}_B$ is the image of that rational function in the fraction field of the ring $\Z[b_e \mid e \in E]/(\prod b_e = 1)$.  

It follows from the definitions that \eqref{eq:halfPQ} is consistent with $[P] = (-1)^r[-P]$.

\begin{remark}
Since the formula for $\ip{G_\bullet}_B$ uses $\bG_\bullet$, the expression $\ip{G_\bullet}_B$ depends on $\pFl(P)$ and thus on $P$.  However, if $G_\bullet \in G(P,Q)$, then $\ip{G_\bullet}_B$ is the same whether we consider $G_\bullet \in \pFl(P)$ or $G_\bullet \in \pFl(Q)$; see \cref{lem:closurePQ}.
\end{remark}

\begin{remark}
While the deRham cohomology intersection form $\dRip{\cdot,\cdot}$ is defined for an unoriented matroid $M$, the Betti homology intersection form $\halfip{\cdot,\cdot}_B$ is defined with a choice of orientation $\M$ of $M$.  It would be interesting to define $\halfip{\cdot,\cdot}_B$ without choosing an orientation.
\end{remark}

By \cref{prop:noover}, each term $\frac{1}{\tb_{E_\bullet}}$ appears at most once in $\halfip{P,Q}_B$.
\begin{proposition}\label{prop:ipneg}
We have $\halfip{P,Q}_B = \halfip{Q,P}_B = \halfip{-P,-Q}_B = \halfip{-Q,-P}_B$.
\end{proposition}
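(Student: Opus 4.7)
The proof should be a direct bookkeeping exercise built on Proposition \ref{prop:noover}(2) together with the remark following \cref{def:Bettipair} that $\ip{G_\bullet}_B$ is intrinsic to $G_\bullet$. The plan has two steps: match index sets (with their signed decompositions), then match term-by-term values.

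First I would check that all four defining sums are indexed by the same set. Proposition \ref{prop:noover}(2) directly supplies $G(P,Q) = G(Q,P)$ and $G(P^-,Q^-) = G(P,Q)$. Applying the ``swap both signs'' identity $G(X^-,Y^-) = G(X,Y)$ together with symmetry gives the chain
\[
G(P,Q^-) \;=\; G(Q^-,P) \;=\; G(Q,P^-) \;=\; G(P^-,Q).
\]
Consequently $G^{\pm}(P,Q) = G^{\pm}(Q,P) = G^{\pm}(P^-,Q^-) = G^{\pm}(Q^-,P^-)$ as sets, and a flag $G_\bullet$ lying in the ``$+1$'' part of one of the four expansions (i.e.\ in some $G(X,Y)$) also lies in the ``$+1$'' part of the other three, with the analogous statement for the ``$(-1)^r$'' part (i.e.\ for flags in some $G(X,Y^-)$). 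Thus the signs attached to $G_\bullet$ agree across the four sums.

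Second I would argue that the value $\ip{G_\bullet}_B$ is the same in each sum. The prefactor $b(G_\bullet)$ depends only on $G_\bullet$, so only the refinement set $\bG_\bullet$ is at issue. The passage $P \leftrightarrow P^-$ is trivial because $L(P) = L(P^-)$, hence $\pFl(P) = \pFl(P^-)$ and $\bG_\bullet$ is literally unchanged; likewise for $Q \leftrightarrow Q^-$. The passage $P \leftrightarrow Q$ is exactly what the remark following \cref{def:Bettipair} addresses: when $G_\bullet \in G(P,Q)$, the refinements of $G_\bullet$ inside $\pFl(P)$ and inside $\pFl(Q)$ coincide, so the summation $\sum_{E_\bullet \in \bG_\bullet} 1/\tb_{E_\bullet}$ is insensitive to which of $P,Q$ is taken as ambient.

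Combining the two steps term-by-term yields $\halfip{P,Q}_B = \halfip{Q,P}_B = \halfip{P^-,Q^-}_B = \halfip{Q^-,P^-}_B$. There is no real obstacle; the only point requiring attention is correctly matching the signed decomposition $G^{\pm} = G(\cdot,\cdot) \sqcup G(\cdot,\cdot^-)$ across the four expansions, which is handled by the chain of identities displayed above.
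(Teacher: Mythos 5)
Your proof is correct and takes essentially the same approach as the paper, whose proof consists of citing \cref{prop:noover}(2); you have simply made explicit the bookkeeping the paper leaves implicit, namely the chain of identities matching the signed decompositions $G^{\pm}=G(\cdot,\cdot)\sqcup G(\cdot,\cdot^-)$ across the four expansions, and the well-definedness of $\ip{G_\bullet}_B$ via $L(P)=L(P^-)$ together with the remark following \cref{def:Bettipair}.
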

\begin{proof}
Follows from \cref{prop:noover}(2).
\end{proof}

\begin{example}\label{ex:3pttopeB}
We calculate $\halfip{\cdot,\cdot}_B$ for the arrangement in \cref{ex:3pttope}.  We order $$\T^+ = \{(+,+,+), (+,-,+), (+,-,-), (+,+,-)\}.$$  We have the $4 \times 4$ matrix:
$$\halfip{\cdot,\cdot}^{\T^+}_B=
\begin{bmatrix}
 \frac{(b_1 b_2 b_0-1) (b_1 b_2 b_0+1)}{\left(b_1^2-1\right) \left(b_2^2-1\right) \left(b_0^2-1\right)} & -\frac{(b_1+b_2 b_0) (b_1 b_2 b_0-1)}{\left(b_1^2-1\right) \left(b_2^2-1\right) \left(b_0^2-1\right)} & \frac{(b_1 b_2+b_0) (b_1 b_2 b_0-1)}{\left(b_1^2-1\right) \left(b_2^2-1\right) \left(b_0^2-1\right)} & -\frac{(b_1 b_0+b_2) (b_1 b_2 b_0-1)}{\left(b_1^2-1\right) \left(b_2^2-1\right) \left(b_0^2-1\right)} \\
 -\frac{(b_1+b_2 b_0) (b_1 b_2 b_0-1)}{\left(b_1^2-1\right) \left(b_2^2-1\right) \left(b_0^2-1\right)} & \frac{(b_1 b_2 b_0-1) (b_1 b_2 b_0+1)}{\left(b_1^2-1\right) \left(b_2^2-1\right) \left(b_0^2-1\right)} & -\frac{(b_1 b_0+b_2) (b_1 b_2 b_0-1)}{\left(b_1^2-1\right) \left(b_2^2-1\right) \left(b_0^2-1\right)} & \frac{(b_1 b_2+b_0) (b_1 b_2 b_0-1)}{\left(b_1^2-1\right) \left(b_2^2-1\right) \left(b_0^2-1\right)} \\
 \frac{(b_1 b_2+b_0) (b_1 b_2 b_0-1)}{\left(b_1^2-1\right) \left(b_2^2-1\right) \left(b_0^2-1\right)} & -\frac{(b_1 b_0+b_2) (b_1 b_2 b_0-1)}{\left(b_1^2-1\right) \left(b_2^2-1\right) \left(b_0^2-1\right)} & \frac{(b_1 b_2 b_0-1) (b_1 b_2 b_0+1)}{\left(b_1^2-1\right) \left(b_2^2-1\right) \left(b_0^2-1\right)} & -\frac{(b_1+b_2 b_0) (b_1 b_2 b_0-1)}{\left(b_1^2-1\right) \left(b_2^2-1\right) \left(b_0^2-1\right)} \\
 -\frac{(b_1 b_0+b_2) (b_1 b_2 b_0-1)}{\left(b_1^2-1\right) \left(b_2^2-1\right) \left(b_0^2-1\right)} & \frac{(b_1 b_2+b_0) (b_1 b_2 b_0-1)}{\left(b_1^2-1\right) \left(b_2^2-1\right) \left(b_0^2-1\right)} & -\frac{(b_1+b_2 b_0) (b_1 b_2 b_0-1)}{\left(b_1^2-1\right) \left(b_2^2-1\right) \left(b_0^2-1\right)} & \frac{(b_1 b_2 b_0-1) (b_1 b_2 b_0+1)}{\left(b_1^2-1\right) \left(b_2^2-1\right) \left(b_0^2-1\right)} \\
\end{bmatrix}
$$
For instance, the $(1,2)$-entry is
\begin{align*}
\halfip{(+,+,+), (+,-,+)}_B = &- \frac{b_1}{b_1^2-1} \left(1 + \frac{1}{b_1^2b_2^2-1} + \frac{1}{b_1^2b_0^2-1}\right)  - \frac{b_1b_2^2}{(b_2^2-1)(b_1^2b_2^2-1)}\\
&- \frac{b_1b_0^2}{(b_0^2-1)(b_1^2b_0^2-1)}  - \frac{b_2b_0}{b_2^2b_0^2-1} \left( 1 +\frac{1}{b_2^2-1}+ \frac{1}{b_0^2-1}\right) ,
\end{align*}
the four terms corresponding to the four elements of 
$$
G^{\pm}((+,+,+),(+,-,+)) = \{(\hat 0 \subset \{1\} \subset \hat 1), (\hat 0 \subset \{2\} \subset \{1,2\} \subset \hat 1), (\hat 0  \subset \{0\} \subset \{0,1\} \subset \hat 1), (\hat 0  \subset \{0,2\} \subset \hat 1)\}.
$$
Note that the $4 \times 4$ matrix $\halfip{\cdot,\cdot}^{\T^+}_B$ has rank 4.  The corresponding matrix $\dRip{\cdot,\cdot}_{\T^+}$ has rank 1.
\end{example}

\subsection{Limit}
For $P,Q \in \T$, the intersection pairing $\dRip{\Omega_P,\Omega_Q}$ can be obtained from $\halfip{P,Q}_B$ by taking a limit.  In the following result, we view the intersection forms as rational functions in $\a$ and $\b$ respectively.

\begin{theorem}\label{thm:limit}
For $P,Q \in \T$, we have
$$
\dRip{\Omega_P,\Omega_Q} = \lim_{\alpha \to 0} \alpha^d \left.\halfip{P,Q}_B \right|_{b_e \to 1 + \alpha a_e/2}.
$$
\end{theorem}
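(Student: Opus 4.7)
The plan is to reduce to the closed formula for $\dRip{\Omega_P,\Omega_Q}$ in Theorem~\ref{thm:dRtope} via a direct Taylor expansion. The definition of $\halfip{P,Q}_B$ is a sum over $G_\bullet \in G^{\pm}(P,Q)$ of terms of the form $(\pm)^r\,\ip{G_\bullet}_B$, and $\dRip{\Omega_P,\Omega_Q}$ is a sum of matching shape over the same index set, so it suffices to show the term-by-term limit
\[
\lim_{\alpha \to 0} \alpha^d\,\ip{G_\bullet}_B\Big|_{b_e = 1 + \alpha a_e/2} = (-1)^{\sum_i \rk(G_i)} \sum_{F_\bullet \in \bG_\bullet \cap \Fl(M)} \frac{1}{a_{F_\bullet}}
\]
for each fixed $G_\bullet$; summing this against $(\pm)^r$ and invoking Theorem~\ref{thm:dRtope} will close the proof.

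To carry this out, I would Taylor-expand. Since $b_S = \prod_{e \in S}(1 + \alpha a_e/2) = 1 + \tfrac{\alpha}{2} a_S + O(\alpha^2)$, we get $\tb_S = b_S^2 - 1 = \alpha\, a_S + O(\alpha^2)$, and hence
\[
\frac{1}{\tb_{E_\bullet}} = \frac{1}{\alpha^{s(E_\bullet)}\,a_{E_\bullet}} \bigl(1 + O(\alpha)\bigr)
\]
for every partial flag $E_\bullet \in \bG_\bullet$. Recalling that $s(E_\bullet) \le d$ with equality precisely when $E_\bullet$ is a complete flag $F_\bullet \in \Fl(M) \cap \bG_\bullet$, multiplication by $\alpha^d$ kills the lower-order strata:
\[
\alpha^d \sum_{E_\bullet \in \bG_\bullet} \frac{1}{\tb_{E_\bullet}} \xrightarrow{\alpha \to 0} \sum_{F_\bullet \in \bG_\bullet \cap \Fl(M)} \frac{1}{a_{F_\bullet}}.
\]

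For the prefactor $b(G_\bullet) = \prod_{i=1}^{s}(-1)^{\rk(G_i)} b_{G_i}$, the factors $b_{G_i}$ each tend to $1$ as $\alpha \to 0$, so $b(G_\bullet) \to (-1)^{\sum_i \rk(G_i)}$. Combining the two displayed limits gives the term-by-term identity claimed above. Summing over $G_\bullet \in G^{\pm}(P,Q)$ and matching the external signs $(\pm)^r$ in Definition~\ref{def:Bettipair} with those in Theorem~\ref{thm:dRtope} yields the desired equality.

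The only point that requires a moment of care is the justification that the $s(E_\bullet) < d$ strata of $\bG_\bullet$ do not contribute, but this is automatic from the bound $\alpha^d / \tb_{E_\bullet} = O(\alpha^{d - s(E_\bullet)}) = O(\alpha)$; all $O(\alpha)$ corrections inside $1/\tb_{E_\bullet}$ are likewise suppressed by at least one positive power of $\alpha$ after the $\alpha^d$ prefactor is absorbed. So I do not anticipate a genuine obstacle: the theorem is really a statement that the Betti pairing is the discrete Laplace transform whose $\alpha \to 0$ classical limit is the continuous Laplace transform of the same subfans of the Bergman fan, in accordance with the Laplace-transform picture of Theorem~\ref{thm:fan}.
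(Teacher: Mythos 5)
Your proposal is correct and follows essentially the same approach as the paper's own proof: Taylor-expand $\tb_S = \alpha a_S + O(\alpha^2)$, observe that $\alpha^d/\tb_{E_\bullet} = O(\alpha^{d-s(E_\bullet)})$ so only complete flags $F_\bullet \in \bG_\bullet \cap \Fl(M)$ survive the limit, and compare term-by-term with \cref{thm:dRtope} via \cref{def:Bettipair}. You are slightly more explicit than the paper in tracking the limit $b(G_\bullet) \to (-1)^{\sum_i \rk(G_i)}$ separately from the $1/\tb_{E_\bullet}$ factor, which is a small but welcome clarification.
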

\begin{proof} 
With $b_e = 1 + \alpha a_e/2$, we have $b_e^2 = 1 + \alpha a_e + O(\alpha^2)$, and $\tb_e = \alpha a_e + O(\alpha^2)$.  Let $G_\bullet \in \pFl(M)$ and $E_\bullet \in \bG_\bullet$.  Then
$$
 \lim_{\alpha \to 0} \alpha^d \left. b(G_\bullet) \frac{1}{\tb_{E_\bullet}} \right|_{b_e \to 1 + \alpha a_e/2} = \lim_{\alpha \to 0} \alpha^d \prod_{i=1}^{s(E_\bullet)} \frac{1}{\alpha a_{E_i}} = \lim_{\alpha \to 0} \alpha^{d-s} \frac{1}{a_{E_\bullet}}.
$$
So in the limit $\lim_{\alpha \to 0}$ only full flags $E_\bullet \in \bG_\bullet \cap \Fl(M)$ (with $d = s$) contribute.  The result follows from comparing \cref{thm:dRtope} with \cref{def:Bettipair}.
\end{proof}

\subsection{Non-degeneracy}
In this section, we consider specializations of the parameters $b_e$ to complex numbers.  We consider the following genericity assumption:
\begin{equation}\label{eq:bMon}
\tb_F = b_F^2-1 \neq 0 \mbox{ for all connected }F \in L(M) \setminus \{ \hat 0, \hat 1\}
\end{equation}
This assumption is implied by \eqref{eq:Mon} when $\a$ and $\b$ are related by \eqref{eq:ba}.

Recall from \cref{prop:numbertopes} that $|\T^+| = w_\Sigma(M)$ and $|\T^0| = \beta(M)$.
\begin{theorem}\label{thm:Bettinondeg}\
\begin{enumerate}
\item
Suppose that \eqref{eq:bMon} holds and $b^2_E \neq 1$.  Then $\halfip{\cdot,\cdot}_B$ is non-degenerate on $\Z^{\T^+}$.
\item
Suppose that \eqref{eq:bMon} holds and $b_E = 1$.  Then 
the restriction of $\halfip{\cdot,\cdot}_B$ to $\Z^{\T^0}$ is non-degenerate.
\end{enumerate}
\end{theorem}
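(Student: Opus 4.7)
For part (1), the plan is to appeal to the forthcoming inverse identity \cref{thm:Bettiinverse}, which asserts that $\halfip{\cdot,\cdot}^{\T^+}_B$ has an explicit inverse given by the matrix $(-1)^{r-1}(1-b_E)^{-1}\ip{\cdot,\cdot}^B_{\T^+}$.  Under \eqref{eq:bMon}, the entries of $\ip{\cdot,\cdot}^B_{\T^+}$ are polynomials in $\b$, and the scalar factor $(1-b_E)^{-1}$ is finite precisely when $b_E \neq 1$.  A separate check using the explicit formula $\ip{P,Q}^B = b_{\sep(P,Q)} + (-1)^r b_{E\setminus\sep(P,Q)}$ (or equivalently, the determinant formula \cref{thm:Bettihomdet}) rules out the degeneracy at $b_E = -1$, which accounts for the condition $b_E^2 \neq 1$.

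For part (2), when $b_E = 1$ the full matrix on $\Z^{\T^+}$ is necessarily singular (the factor $1-b_E$ in the inverse vanishes), so the content is the non-degeneracy of the restriction to $\Z^{\T^b}$.  The plan is to use the limit relation \cref{thm:limit} to reduce the claim to the known non-degeneracy of the Aomoto intersection form.  Fix generic complex numbers $a_e$ satisfying $\sum_e a_e = 0$ and $a_F \notin \Z$ for every connected flat $F$, and specialize $b_e = \exp(-\pi i \alpha a_e)$ for a parameter $\alpha$.  Then $b_E = 1$ holds \emph{exactly}, \eqref{eq:bMon} holds for generic $\alpha$, and we have the Taylor expansion $b_e = 1 + \alpha(-\pi i a_e) + O(\alpha^2)$.

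Applying \cref{thm:limit} (after rescaling $\tilde a_e = -2\pi i a_e$ and using that $\dRip{\cdot,\cdot}$ is homogeneous of degree $-d$) yields, for $P, Q \in \T^b$,
\[
\halfip{P,Q}_B \;=\; (-2\pi i\,\alpha)^{-d}\,\dRip{\Omega_P, \Omega_Q} + o(\alpha^{-d})
\]
as $\alpha \to 0$.  Since $\sum a_e = 0$, the leading coefficient equals the Aomoto pairing $\bdRip{\Omega_P, \Omega_Q}$ of \cref{thm:descent}.  By \cref{thm:ELtwisted} the family $\{\Omega_P \mid P \in \T^b\}$ is a basis of $\OS(M,\omega)$, and by \cref{cor:twistnondeg} the Aomoto form $\bdRip{\cdot,\cdot}$ is non-degenerate on this space; hence the $\T^b \times \T^b$ determinant of $(\bdRip{\Omega_P, \Omega_Q})$ is a nonzero rational function of $a$.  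Consequently $\det \halfip{\cdot,\cdot}_B^{\T^b}$ cannot vanish identically on the hypersurface $\{b_E = 1\}$, establishing non-degeneracy.

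The main obstacle is the asymptotic matrix argument in part (2): one must carefully track that the leading-order coefficients of the individual Betti pairings assemble into the \emph{full} Aomoto matrix rather than collapsing to a degenerate one through cancellations, and verify that the specialization $b_e = \exp(-\pi i \alpha a_e)$ lies in the domain of definition of both $\halfip{\cdot,\cdot}_B$ and $\bdRip{\cdot,\cdot}$ for generic $\alpha$.  Once this is secured, the standard principle that a nonzero $\alpha$-leading coefficient forces the full rational function to be nonzero delivers the conclusion.
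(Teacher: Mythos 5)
Your proposal follows the same route as the paper: part (1) reduces to the inverse identity \cref{thm:Bettiinverse} and the determinant formula \cref{thm:Bettihomdet}, and part (2) applies the limit \cref{thm:limit} to the $\T^b\times\T^b$ matrix and invokes non-degeneracy of the Aomoto form (\cref{thm:Aomotodet}, equivalently \cref{cor:twistnondeg}).  The only cosmetic difference is your substitution $b_e = \exp(-\pi i\alpha a_e)$ in place of $b_e = 1 + \alpha a_e/2$; also note that in part (1) the inverse identity already gives non-degeneracy for every $b_E \neq 1$, so no ``separate check'' at $b_E=-1$ is needed --- the hypothesis $b_E^2 \neq 1$ is simply stronger than what is used.
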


\cref{thm:Bettinondeg}(1) follows from \cref{thm:Bettihomdet} below.  

\begin{proof}[Proof of \cref{thm:Bettinondeg}(2)]
It suffices to show that the $\T^0 \times \T^0$ matrix $\halfip{P,Q}_B$ is non-degenerate.  Applying \cref{thm:limit} to this matrix, we obtain the $\T^0 \times \T^0$ matrix in \cref{thm:Aomotodet}.  This matrix has non-vanishing determinant whenever \eqref{eq:sumto0} and \eqref{eq:Mon} are satisfied.  These two conditions follow from taking the limit of $b_E = 1$ and \eqref{eq:bMon} respectively.  It follows that the $\T^0 \times \T^0$ matrix $\halfip{P,Q}_B$ has a non-vanishing determinant.
\end{proof}

\subsection{Determinant}

\begin{theorem}\label{thm:Bettihomdet}
The determinant of $\halfip{\cdot,\cdot}_B$ on $\Z^{\T^+}$ is equal to 
$$
\det \halfip{\cdot,\cdot}_B^{\T^+} = (-1)^{(r-1)w_\Sigma(M)}\frac{(1-b_E)^{w_\Sigma(M)-\beta(M)}}{\prod_{F \in L(M) \setminus \{\hat0,\hat1\}} (1-b_F^2)^{\beta(F)w_\Sigma(M_F)}}.
$$
\end{theorem}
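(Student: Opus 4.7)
The plan is to reduce the computation to the determinant of the Betti cohomology intersection form $\ip{\cdot,\cdot}^B_{\T^+}$ via the inverse relation of \cref{thm:Bettiinverse}, and then to evaluate that determinant by a deletion-contraction induction modeled on the proof of \cref{thm:Aomotodet}.

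\textbf{Step 1 (Reduction via inverse).} Taking determinants of the identity
\[
\left((-1)^{r-1}(1-b_E)^{-1}\ip{\cdot,\cdot}^B_{\T^+}\right) \cdot \halfip{\cdot,\cdot}^{\T^+}_B \;=\; I
\]
from \cref{thm:Bettiinverse} and using $|\T^+| = w_\Sigma(M)$ from \cref{prop:numbertopes} gives
\[
\det \halfip{\cdot,\cdot}^{\T^+}_B \;=\; \frac{(-1)^{(r-1)w_\Sigma(M)}\,(1-b_E)^{w_\Sigma(M)}}{\det \ip{\cdot,\cdot}^B_{\T^+}}.
\]
Matching with the claimed formula, it suffices to establish the Varchenko-type identity
\[
\det \ip{\cdot,\cdot}^B_{\T^+} \;=\; (1-b_E)^{\beta(M)}\prod_{F \in L(M)\setminus\{\hat 0,\hat 1\}} (1-b_F^2)^{\beta(F)\,w_\Sigma(M_F)}.
\]

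\textbf{Step 2 (Deletion-contraction).} The plan is to prove the displayed identity by induction on $|E|$, applied to some $e \in E \setminus 0$. Decompose $\T^+(\M) = \T_1 \sqcup \T_2$, where $\T_1$ consists of topes whose fiber under restriction to $\M\setminus e$ is a singleton and $\T_2$ consists of pairs $\{P,P'\}$ of topes divided by $e$ (so that $\T_2/\mathord{\sim} \cong \T^+(\M/e)$), and change basis to
\[
\{[P] : P \in \T_1\}\;\cup\;\{[P]+[P'], [P]-[P']: \{P,P'\} \subset \T_2\}.
\]
Using $\sep(P',Q) = \sep(P,Q)\,\triangle\,\{e\}$ for divided $P,P'$, a direct calculation shows $\ip{P+P',Q+Q'}^B = 2(1+b_e)\,\ip{\bar P,\bar Q}^{B,\M\setminus e}$, $\ip{P-P',Q-Q'}^B = 2(1-b_e)\,\ip{\bar P,\bar Q}^{B,\M/e}$, and the sum-difference cross block vanishes. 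After a further triangularization to eliminate the $\T_1 \times (\T_2/\mathord{\sim})^-$ cross entries (which carry a factor of $(1-b_e)$), the Gram matrix becomes block-triangular with diagonal blocks proportional to the Gram matrices of $\M\setminus e$ and $\M/e$. The exponents of each $(1-b_F^2)$ in the resulting recursion are controlled by an identity for $\beta(F)w_\Sigma(M_F)$ under deletion-contraction analogous to \cref{lem:kappaF}, and the base cases $|E|=1$ and $\rk(M)=1$ are direct verifications.

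\textbf{Main obstacle.} The technical core is identifying the ``deformation factor'' $(1-b_E)^{\beta(M)}$. At $b_0 = 0$ the form $\ip{\cdot,\cdot}^B$ collapses to the classical Varchenko form (\cref{cor:Var}), so the product $\prod_F (1-b_F^2)^{\beta(F)w_\Sigma(M_F)}$ is pinned down by the classical Varchenko determinant, extended to oriented matroids by Hanlon--Varchenko. But pinning down the exponent of $(1-b_E)$ requires more: one matches it with the rank-drop of $\halfip{\cdot,\cdot}_B^{\T^+}$ from $w_\Sigma(M)$ to $\beta(M)$ at $b_E = 1$ given by \cref{thm:Bettinondeg}(2), which forces the exponent to be exactly $\beta(M)$ in the inverse matrix. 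The inductive step must carefully track the ``$e$-special flat'' phenomenon (where $F$ and $F \cup e$ produce the same specialization of $b_F$ at $b_e = 1$), as in the proof of \cref{thm:Aomotodet}, which requires running the deletion-contraction with a second choice of element to separate the contributions.
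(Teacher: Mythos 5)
\textbf{Step 1} of your proposal---deriving the determinant of $\halfip{\cdot,\cdot}_B^{\T^+}$ from that of $\ip{\cdot,\cdot}^B_{\T^+}$ via \cref{thm:Bettiinverse} and $|\T^+|=w_\Sigma(M)$---is exactly what the paper does; the remaining content is thus the determinant of $\ip{\cdot,\cdot}^B_{\T^+}$, which is \cref{thm:Bettidet}.

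For \textbf{Step 2}, you propose a direct deletion-contraction induction, whereas the paper proves \cref{thm:Bettidet} by a \emph{lifting} argument: it realizes $\ip{\cdot,\cdot}^B_{\T^+}$ and $\ip{\cdot,\cdot}^B_{\T^+}|_{b_0\mapsto -b_0}$ as the two diagonal blocks of the classical Varchenko form $\ip{\cdot,\cdot}^V_{\bT^+}$ for a generic lifting $\bM$ of $\M$ (in the symmetric/antisymmetric basis $\frac{1}{\sqrt 2}([\bP]\pm[\bP^-])$), applies the known Varchenko/Hanlon--Varchenko/Randell determinant (\cref{thm:BettiVar}) for $\bM$, and then uses \cref{lem:wsigma} and \cref{lem:Bettifactor} to extract the individual factors. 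The lifting argument buys you the fact that no deletion-contraction machinery on the $B$-form itself is needed; the exponent of $(1-b_E)$ falls out because $(1-b_E)|_{b_0\mapsto -b_0}=(1+b_E)$ and $(1+b_E)$ is ruled out as an irreducible factor by \cref{lem:Bettifactor}.

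Two concrete issues with your Step 2 as written. First, the block structure is more delicate than stated: the $\T_1\times\T_1$ block is the $\M$-form restricted to undivided topes, \emph{not} the $\M\setminus e$-form (e.g.\ for $P,Q\in\T_1$ with $e\notin\sep(P,Q)$, $\ip{P,Q}^B = b_S + (-1)^r b_{E\setminus S}$ while $\ip{\bar P,\bar Q}^{B,\M\setminus e}=b_S+(-1)^r b_{E\setminus S}/b_e$). So the promised block-triangularization into the $\M\setminus e$ and $\M/e$ Gram matrices requires a nontrivial Schur-complement analysis; the $1+b_e$ and $1-b_e$ factors you compute for the $(\T_2,\T_2)$ blocks are correct, but the $\T_1$ part does not split off cleanly. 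Second, the rank-drop argument via \cref{thm:Bettinondeg}(2) does not pin down the $(1-b_E)$ exponent as claimed: that statement only gives $\operatorname{rank}\geq\beta(M)$ at $b_E=1$, i.e.\ $\operatorname{corank}\leq w_\Sigma-\beta$, which via the Smith normal form yields an \emph{upper} bound $\operatorname{corank}\leq\operatorname{ord}_{b_E=1}\det\halfip{\cdot,\cdot}_B$ rather than the equality you need. Without a separate argument giving the reverse inequality (the paper's irreducible-factor constraint supplies exactly this), the exponent is not determined. So the deletion-contraction route is plausible in spirit but has real gaps; the paper's lifting argument sidesteps both of them.
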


\cref{thm:Bettihomdet} will be proved in \cref{sec:Betticohomdet}.

\begin{conjecture}\label{conj:Bettidet}
The determinant of the $\T^0 \times \T^0$ matrix $\bip{\cdot,\cdot}_B$ is equal to 
$$
\frac{ \prod_{F \in L_0 \setminus \hat 1} (1-b_{E \setminus F}^2)^{\beta(F)\beta(M_F)}
}{\prod_{F \in L \setminus (L_0 \cup \hat 0)} (1-b_F^2)^{\beta(F) \beta(M_F)}}.
$$
\end{conjecture}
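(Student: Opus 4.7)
The strategy is to mirror the proof of \cref{thm:Aomotodet}, combining a deletion-contraction induction on an element $e \in E\setminus 0$ with a limit argument using \cref{thm:limit} and the non-degeneracy provided by \cref{thm:Bettinondeg}(2). First, I would show that each entry $\bip{P,Q}_B$ is a rational function whose denominator divides $\prod_{F\text{ connected}}(1-b_F^2)$. From \cref{def:Bettipair} the denominators a priori come from $\tb_{E_i}=b_{E_i}^2-1$ for flats $E_i$ appearing in the partial flag $E_\bullet$, and the restriction to \emph{connected} flats should fall out of a Betti-side analogue of the cancellation mechanism underlying \cref{cor:denom}. Using $b_{E\setminus F}=b_F^{-1}$ (valid since $b_E=1$) to rewrite any apparent factors $(1-b_{E\setminus F}^2)$ in terms of $(1-b_F^2)$ up to units, together with \cref{thm:Bettinondeg}(2), this pins the determinant to the shape
$$
\det \bip{\cdot,\cdot}_B^{\T^b} = C(\M)\,\prod_{F\text{ connected},\, F\neq \hat 0,\hat 1}(1-b_F^2)^{\gamma_F}
$$
for integers $\gamma_F$ and a sign $C(\M)\in\{\pm 1\}$.

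Second, I would match the leading behavior against the conjectured formula. Substituting $b_e=1+\alpha a_e/2$ with $a_E=0$ and applying \cref{thm:limit} to $P,Q\in\T^b$ gives $\bip{P,Q}_B=\alpha^{-d}\bigl(\dRip{\Omega_P,\Omega_Q}+O(\alpha)\bigr)$, so
$$
\det \bip{\cdot,\cdot}_B^{\T^b}=\pm\,\alpha^{-d\beta(M)}R_M(\a)+O\bigl(\alpha^{-d\beta(M)+1}\bigr)
$$
by \cref{thm:Aomotodet}. Since $1-b_F^2\sim -\alpha a_F$ for every flat $F$, and (using $b_E=1$) $1-b_{E\setminus F}^2\sim +\alpha a_F$, the right-hand side of \cref{conj:Bettidet} has precisely the same leading expansion, provided the identity $\sum_{F\in L_0\setminus\hat 1}\beta(F)\beta(M_F)-\sum_{F\in L\setminus(L_0\cup\hat 0)}\beta(F)\beta(M_F)=-d\beta(M)$ holds; this identity is forced by the overall degree of $R_M(\a)$ in \cref{thm:Aomotodet}. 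This matches $C(\M)$ and supplies one linear relation among the $\gamma_F$.

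Third, I would run the deletion-contraction step. Pick $e\in E\setminus 0$ that is neither a loop nor a coloop and decompose $\T^b(\M)=\T_1\sqcup\T_2\sqcup\T_3$ as in the proof of \cref{prop:detnbc}. Set $Z_1:=\{[P]\mid P\in\T_1\}\cup\{[P]+[P']\mid P,P'\in\T_2\text{ divided by }e\}$ and $Z_2:=\{[P]\mid P\in\T_2'\sqcup\T_3\}$ with $\T_2'$ a section of the involution on $\T_2$. Then $Z_1$ is in bijection with a basis of $\Z^{\T^b(\M')}$ (via restriction of topes to $E\setminus e$) and $Z_2$ with a basis of $\Z^{\T^b(\M'')}$ (via residue at $e$). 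The goal is to show that, after multiplying the $Z_2$-rows by $\tb_e$ and specializing $b_e\to 1$ in a way compatible with $b_E=1$, the matrix becomes block triangular with diagonal blocks $\bip{\cdot,\cdot}_B^{\T^b(\M')}$ and $\bip{\cdot,\cdot}_B^{\T^b(\M'')}$. The Betti analogues of \cref{lem:deleteform} and \cref{lem:contractform} are the key inputs, and should follow by inspecting how partial flags in \cref{def:Bettipair} split under deletion and contraction of a single element. Granting these, the exponent computation for each $\gamma_F$ reduces exactly to the argument for \cref{thm:Aomotodet} via \cref{lem:kappaF}, with the \emph{$e$-special pair} ambiguity resolved by repeating deletion-contraction on a second element $e'\in E\setminus\{0,e\}$.

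The main obstacle will be verifying the Betti-side deletion/contraction compatibilities. On the Orlik--Solomon side these are clean algebraic statements about residue maps, but the Betti form is built from nested sums over partial flags of arbitrary length, and the splitting $\T^b(\M)=\T_1\sqcup\T_2\sqcup\T_3$ interacts nontrivially with the wonderful face lattice $\pFl(P)$ of each tope. Tracking how a general partial flag $G_\bullet\in\pFl(P)$ behaves as $P$ moves across the three blocks, and verifying that the sign $b(G_\bullet)$ together with the denominator $\tb_{E_\bullet}$ specialize coherently at $b_e=1$, is the technically most demanding step and the one most likely to require genuinely new combinatorial input.
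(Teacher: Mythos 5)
This statement is labeled a \emph{conjecture} in the paper (\cref{conj:Bettidet}), so there is no proof of it in the paper to compare against. Your proposal is therefore best read as a plausible strategy for attacking an open problem rather than as an alternative route to a known result.

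Your three-step plan (control of denominators, leading-order matching via \cref{thm:limit} and \cref{thm:Aomotodet}, deletion-contraction induction) is sensible and closely tracks the proof template of \cref{thm:Aomotodet}, which is indeed the natural model here. Two remarks on the parts you do carry out. First, the leading-order matching in step two is correct as far as it goes, but note that it only pins down the overall sign $C(\M)$ and a single linear relation $\sum_F\gamma_F$; it does not by itself determine the individual exponents $\gamma_F$ even when combined with the denominator constraint, so the whole weight of the argument rests on step three. Second, there is a mild but real bookkeeping issue in step one that you should not elide: the entries $\bip{P,Q}_B$ for $P,Q\in\T^b$ only have denominators $\tb_F$ with $F\notin L_0$ (because faces of bounded topes never contain $0$), yet the conjectured formula has a numerator indexed by $L_0\setminus\hat 1$. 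Under $b_E=1$ one can rewrite $1-b_{E\setminus F}^2$ as $-(1-b_F^2)/b_F^2$, so the numerator factors must emerge from cancellations in the determinant arithmetic, exactly as $a_F$ for $F\in L_0$ emerges in $R_M(\a)$; your step two implicitly relies on this, and it would be worth making explicit, since otherwise the ``Betti analogue of \cref{cor:denom}'' you invoke would seem to preclude any $L_0$ factors.

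The genuine gap is precisely the one you flag in step three. In the deRham proof of \cref{thm:Aomotodet}, the block-triangularity after multiplying rows by $a_e$ and setting $a_e=0$ falls out of \cref{lem:deleteform}, \cref{lem:contractform}, and \cref{lem:Resa0}, all of which are clean algebraic statements about residue maps on the Orlik--Solomon algebra. There are no stated Betti analogues of these lemmas, and it is not at all clear they hold in the naive form you propose. The Betti entries are infinite (geometric) sums over partial flags $E_\bullet$ of arbitrary length and the vector $Z_1$ combines two adjacent topes into $[P]+[P']$; one would need to show both (i) that $\bip{[P]+[P'],\cdot}_B$ specializes at $b_e=1$ to $\bip{\cdot,\cdot}_B$ for $\M'$ (the Betti analogue of \cref{lem:deleteform}), and (ii) that multiplying $\bip{[P],\cdot}_B$ for $P\in\T_2'\sqcup\T_3$ by $\tb_e$ and specializing recovers $\bip{\cdot,\cdot}_B$ for $\M''$ (the analogue of \cref{lem:contractform}). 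Point (ii) is especially delicate because, unlike the deRham case where each summand contains $a_e^{-1}$ at most once, the Betti summand for a flag $E_\bullet$ can pick up a factor $\tb_F^{-1}$ for every $F\ni e$, so there is no single ``pole order'' to isolate. Moreover the $b_E=1$ constraint means that $b_e\to 1$ forces $b_{E\setminus e}\to 1$ simultaneously, which must be reconciled with the constraints $b_{E'}=1$ (for $M'$) and $b_{E\setminus e}=1$ (for $M''$) that would govern the inductive invocations; this bookkeeping has no analogue on the deRham side where the constraint $a_E=0$ specializes additively. Until these compatibilities are established, the induction does not close, and this is very likely why the statement remains a conjecture in the paper.
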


\begin{example}\label{ex:5pt1}
Let us consider the arrangement $\bA$ in \cref{ex:npoint} with $n = 4$, taking the points to be $z_1,z_2,z_3,z_4 \in \R$.  We have $\beta(M) = n-1= 3$ and $w_\Sigma(M) = n+1 = 5$.
The bounded topes $\T^0$ consists of the $3$ intervals
$$
P_1 = [z_1,z_2], \qquad P_2 = [z_2,z_3], \qquad P_{3}= [z_3,z_4].
$$
We write down the intersection forms.  
The intersection matrix $\halfip{P,Q}_B$ restricted to $(P,Q) \in \T^0 \times \T^0$ is
$$ \halfip{\cdot,\cdot}^{\T^0}_B = 
\begin{bmatrix}
\frac{1}{b^2_1-1} + \frac{1}{b^2_2-1} + 1 & -\frac{b_2}{b^2_2-1} & 0  \\
 -\frac{b_2}{b^2_2-1} & \frac{1}{b^2_2-1} + \frac{1}{b^2_3-1} + 1 & -\frac{b_3}{b^2_3-1}   \\
0 &  -\frac{b_3}{b^2_3-1}  & \frac{1}{b^2_3-1} + \frac{1}{b^2_4-1} +1\end{bmatrix}
$$
with determinant 
$$
\det \halfip{\cdot,\cdot}_B^{\T^0} = \frac{\tb_{1234}}{ \tb_1 \tb_2 \tb_3 \tb_4} = - \frac{1- (b_1b_2b_3b_4)^2}{(1-b_1^2)(1-b_2^2)(1-b_3^2)(1-b_4^2)}.
$$
If we consider $\T^+$, we have two additional topes $P_4 = [z_4,\infty]$ and $P_5 = [-\infty,z_1]$.  We have
$$ \halfip{\cdot,\cdot}^{\T^+}_B =
\begin{bmatrix}
\frac{1}{b^2_1-1} + \frac{1}{b^2_2-1} + 1 & -\frac{b_2}{b^2_2-1} & 0 & 0 & -\frac{b_1}{b^2_1-1}  \\
 -\frac{b_2}{b^2_2-1} & \frac{1}{b^2_2-1} + \frac{1}{b^2_3-1} + 1 & -\frac{b_3}{b^2_3-1} &0 &0 \\
0 &  -\frac{b_3}{b^2_3-1}  & \frac{1}{b^2_3-1} + \frac{1}{b^2_4-1} +1 & -\frac{b_4}{b^2_4+1} &0 \\
0 &0 & -\frac{b_4}{b^2_4-1} &\frac{1}{b^2_4-1} + \frac{1}{b^2_0-1} +1 &-\frac{b_0}{b^2_0-1}  \\
-\frac{b_1}{b^2_1-1} &0 &0 & -\frac{b_0}{b^2_0-1} &\frac{1}{b^2_0-1} + \frac{1}{b^2_1-1} +1   \\
\end{bmatrix}
$$
with determinant 
$$
\det \halfip{\cdot,\cdot}_B^{\T^+}= \frac{(b_E-1)^2}{\tb_0 \tb_1 \tb_2 \tb_3 \tb_4} = - \frac{ (1 - b_0b_1b_2b_3b_4)^2}{(1-b_0^2)(1-b_1^2)(1-b_2^2)(1-b_3^2)(1-b_4^2)}.
$$
When $b_E = b_0b_1b_2b_3b_4 = 1$, $\halfip{\cdot,\cdot}_B$ is degenerate on $\Z^{\T^+}$ with a two-dimensional kernel.  However, it restricts to a non-degenerate symmetric bilinear form on $\Z^{\T^0}$.
\end{example}

\section{Betti cohomology intersection form}
\def\id{{\rm id}}

In this section, we assume that $(\M,0)$ is an affine oriented matroid. 

\subsection{Definition of Betti cohomology intersection form}

Given $P,Q \in \T$, define the \emph{separating set}
$$
\sep(P,Q) := \{ e \in E \mid P(e) \neq Q(e)\} \subset E.
$$
If $P, Q \in \T^+$, then $\sep(P,Q) \subseteq E \setminus 0$.

\begin{definition}\label{def:Betticohpair}
The $S$-valued Betti cohomology intersection form on $\Z^{\T^+}$ is given by
$$
\ip{P,Q}^B := b_{\sep(P,Q) }+(-1)^r b_{E \setminus \sep(P,Q)} = \ip{Q,P}^B
$$
for $P,Q \in \T^+$.
\end{definition}

Note that $\ip{P,Q}^B$ can be extended to $P, Q \in \T$, with $\ip{P,Q}^B= (-1)^r \ip{P,-Q}^B$.  The sign $(-1)^r$ is parallel to the signs in \cref{def:Bettipair} and \cref{thm:EL}(1).  The main result of this section is the following.

\begin{theorem}\label{thm:Bettiinverse}
The $\T^+ \times \T^+$ matrices $(-1)^{r-1}(1- b_E )^{-1}\ip{\cdot,\cdot}^B_{\T^+}$ and $\ip{\cdot,\cdot}^{\T^+}_B$ are inverse.
\end{theorem}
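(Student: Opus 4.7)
The plan is to compute the matrix product $AB$ entrywise, where $A = \ip{\cdot,\cdot}^B_{\T^+}$ and $B = \ip{\cdot,\cdot}^{\T^+}_B$, and verify that $AB = (-1)^{r-1}(1-b_E) I$. The first step is to exploit the parallel $\Z/2$-symmetries of the two forms. Since $\ip{P,Q}^B = b_{\sep(P,Q)} + (-1)^r b_{\sep(P,Q^-)}$ and $\ip{Q^-,R}_B = (-1)^r \ip{Q,R}_B$ (reflecting the convention $[Q^-] = (-1)^r[Q]$ in $\Z^{\T^+}$ and \cref{prop:ipneg}), the substitution $Q \mapsto Q^-$ in one of the two summands of $\ip{P,Q}^B$ combines them to give
\begin{equation*}
(AB)(P,R) \;=\; \sum_{Q \in \T^+} \ip{P,Q}^B \ip{Q,R}_B \;=\; \sum_{Q \in \T} b_{\sep(P,Q)} \, \ip{Q,R}_B,
\end{equation*}
replacing $\ip{P,Q}^B$ by the simpler Varchenko-style form $V(P,Q) := b_{\sep(P,Q)}$ summed over all topes.

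Next, I would expand $\ip{Q,R}_B$ via \cref{def:Bettipair} and reparametrize using \cref{prop:flipflag}: pairs $(Q, G_\bullet)$ with $G_\bullet \in G^{\pm}(Q,R)$ are in bijection with $\pFl(R) \times \{+,-\}$, corresponding to $Q = R_{\flip G_\bullet}$ or $Q = (R_{\flip G_\bullet})^-$. Using $b_{\sep(P,Q^-)} = b_E / b_{\sep(P,Q)}$ together with the $(\pm)^r$ sign in $G^\pm$, the two choices reassemble into $\ip{P, R_{\flip G_\bullet}}^B$, giving
\begin{equation*}
(AB)(P,R) \;=\; (-1)^d \sum_{G_\bullet \in \pFl(R)} b(G_\bullet) \, \ip{P, R_{\flip G_\bullet}}^B \sum_{E_\bullet \in \bG_\bullet} \frac{1}{\tb_{E_\bullet}}.
\end{equation*}

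The main combinatorial content is then showing this expression equals $(-1)^{r-1}(1-b_E)$ when $P=R$ and vanishes otherwise. I propose interchanging the sums once more so that the inner sum ranges over subflags $G_\bullet \subseteq E_\bullet$ of a fixed $E_\bullet \in \pFl(R)$. For each such $E_\bullet$, this inner sum should factor as a product over the consecutive strata $F_i \setminus F_{i-1}$, with each factor detecting whether $P$ and $R$ agree on that stratum. The product must vanish unless $P \in \{R, R^-\}$ on every stratum, which (since the full flag refinement spans $\hat 1$) forces $P = R$ or $P = R^-$; the factors $\tb_{F_i}$ in the denominator are then precisely cancelled by corresponding terms in the product expansion, leaving only the diagonal contribution normalized to the claimed scalar.

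The principal obstacle is establishing this factorization together with the sign bookkeeping --- reconciling the outer $(-1)^d$, the $\prod_i (-1)^{\rk(F_i)}$ inside $b(G_\bullet)$, and the $(-1)^r$ in $\ip{\cdot,\cdot}^B$ to land on exactly $(-1)^{r-1}(1-b_E)$. An alternative, possibly cleaner route would be deletion--contraction induction on $|E|$: choose an atom $\atom \in \At(M)$, use that topes of $\M$ split into those descending to $\M \setminus \atom$ and those cut by $\atom$, and that $\ip{\cdot,\cdot}_B$ decomposes compatibly through the residue map $\Res_\atom$ of \cref{prop:OSexact}. The scalar $(1 - b_E) = (1 - b_\atom \cdot b_{E \setminus \atom})$ factors in a way that cleanly interpolates between the inductive hypotheses for $\M \setminus \atom$ and $\M / \atom$, while $\ip{\cdot,\cdot}^B$ admits a bilinear expansion along the same decomposition, completing the induction.
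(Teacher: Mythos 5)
Your reformulation of the product $\sum_{Q \in \T^+} \ip{P,Q}^B \ip{Q,R}_B$ --- replacing $\ip{P,Q}^B$ by $b_{\sep(P,Q)}$ summed over all of $\T$, and then reparametrizing via the bijection between pairs $(Q,G_\bullet)$ with $G_\bullet \in G^{\pm}(Q,R)$ and the set $\pFl(R) \times \{+,-\}$ --- is valid and matches the paper's own setup, which writes the product as $U(P,R) + V(P,R)$ with
$$
U(P,R) = \sum_{G_\bullet \in \pFl(P)} \ip{G_\bullet}_B \, b_{\sep(R,Q_{G_\bullet})}, \qquad V(P,R) = (-1)^r \sum_{G_\bullet \in \pFl(P)} \ip{G_\bullet}_B \, b_{\sep(R,Q^-_{G_\bullet})}.
$$
From that point onward, however, your proposal does not supply the key input that makes the sum collapse.

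The ``factorization over consecutive strata'' is not a correct description of what happens. For a fixed $E_\bullet$, the sum over subflags $G_\bullet$ does not split into independent factors. The sign of $R_{\flip G_\bullet}$ on the stratum $E_i \setminus E_{i-1}$ is $R(e)\cdot(-1)^{m_i}$ with $m_i := \#\{j \geq i : E_j \in G_\bullet\}$ a \emph{cumulative} count, and the $b$-monomial $b_{G_\bullet} = \prod_{E_j \in G_\bullet} b_{E_j}$ expands to $\prod_k b_{E_k \setminus E_{k-1}}^{m_k}$, so the exponents $m_k$ form a nonincreasing integer sequence coupling all the strata. Moreover the expression $b_{\sep(P,R_{\flip G_\bullet})} + (-1)^r b_{E \setminus \sep(P,R_{\flip G_\bullet})}$ does not factor as a product over strata at all. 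What the paper actually does after this point is expand, using \cref{lem:Gsign}-type identities such as $b(G_\bullet) b_{\sep(P,Q_{G_\bullet})} = (-1)^{\sum_i\rk(G_i)} b_{G_s}^2 b_{G_{s-2}}^2 \cdots$, to get factors of $(1+\beta(G_i))$ for indices $i \equiv s \bmod 2$ and $\beta(G_j)$ otherwise, and then collect the coefficient of each $\beta(E_\bullet)$. That coefficient is a signed count over a compatibility poset (\cref{lem:compat}), and it is resolved not by a telescope but by \cref{thm:sphere}: $L(P)$ is the face lattice of a regular cell decomposition of a $(d-1)$-sphere, so the relevant upper and lower intervals have Euler characteristic $0$ or $(-1)^{r-1}$. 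The off-diagonal case $P \neq R$ additionally requires decomposing $\pFl(P)$ into three subfamilies according to how $G_s$ meets $\sep(P,R)$ and, for the final cancellation of the empty-flag coefficient, a shelling argument on the boundary of $P$ (\cite[Prop.\ 4.3.1, Lemma 4.7.28]{OMbook}). None of this topology is present in your proposal, and without it there is no reason for the sum to collapse.

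The deletion--contraction alternative has a separate problem: $\ip{\cdot,\cdot}_B$ is built from partial flags of flats in $L(P)$, so deletion or contraction at an atom $\atom$ perturbs every flat containing $\atom$ at once, and the map $\Res_\atom$ of \cref{prop:OSexact} lives on the Orlik--Solomon algebra, not on $\Z^{\T^+}$. The forms $\ip{\cdot,\cdot}_B$ and $\ip{\cdot,\cdot}^B$ do not admit a bilinear splitting along the decomposition of topes into those descending to $\M \setminus \atom$ and those cut by $\atom$, and the scalar $1-b_E = 1 - b_\atom b_{E \setminus \atom}$ does not factor in a way that can interpolate the scalars $1-b_{E\setminus\atom}$ that would appear for the minors. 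The paper's own inductions in the proof do appear, but they are on \emph{restrictions} $\M^F$ and \emph{contractions} $\M_F$ for flats $F$ entering the $G^1/G^2/G^3$ decomposition, not a single atomic deletion--contraction.
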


\subsection{Varchenko's bilinear form}
Define \emph{Varchenko's bilinear form} \cite{Var} on $\Z^{\T^+}$ by 
$$
\ip{P,Q}^V := b_{\sep(P,Q)}.
$$
It is immediate from the definition that $\ip{P,Q}^V= \ip{P,Q}^B_0 := \ip{P,Q}^B|_{b_0 = 0}$.  Evaluating \cref{thm:Bettiinverse} at $b_0 = 0$ gives the following corollary.
\begin{corollary}\label{cor:Var}
The inverse of the $\T^+ \times \T^+$ matrix $\halfip{\cdot, \cdot}_{0,B}:=\halfip{\cdot,\cdot}_B|_{b_0 = 0}$ is equal to $(-1)^{r-1} \ip{\cdot,\cdot}^V$.
\end{corollary}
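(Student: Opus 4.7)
The plan is to derive this as a direct specialization of Theorem \ref{thm:Bettiinverse} at $b_0 = 0$. The first step is to identify the specialization $\ip{P,Q}^B|_{b_0=0}$. Since every $P \in \T^+$ satisfies $P(0) = +$, for any $P,Q \in \T^+$ we have $0 \notin \sep(P,Q)$, hence $0 \in E \setminus \sep(P,Q)$. Therefore the monomial $b_{E\setminus \sep(P,Q)}$ contains $b_0$ as a factor and vanishes at $b_0 = 0$, while $b_{\sep(P,Q)}$ is independent of $b_0$. Consulting Definition \ref{def:Betticohpair} and the definition of $\ip{\cdot,\cdot}^V$ in the surrounding paragraph, this immediately gives $\ip{P,Q}^B|_{b_0=0} = b_{\sep(P,Q)} = \ip{P,Q}^V$.

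Next I would check that the prefactor in Theorem \ref{thm:Bettiinverse} specializes cleanly: with $b_0 = 0$ we have $b_E = \prod_{e\in E} b_e = 0$, so $(1-b_E)^{-1}$ evaluates to $1$. Consequently, naively specializing the identity of Theorem \ref{thm:Bettiinverse} at $b_0 = 0$ yields that $(-1)^{r-1}\ip{\cdot,\cdot}^V$ is inverse to $\halfip{\cdot,\cdot}_B|_{b_0=0}$.

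The only subtlety is verifying that setting $b_0 = 0$ is compatible with matrix inversion, i.e.\ that the specialization of $\halfip{\cdot,\cdot}_B$ is well-defined. Inspecting Definition \ref{def:Bettipair}, the denominators in $\halfip{P,Q}_B$ are products of factors $\tb_{E_i} = b_{E_i}^2 - 1$ indexed by flats $E_i \in L(M)\setminus\{\hat 0,\hat 1\}$; when $0 \in E_i$ such a factor specializes to $-1$, and otherwise it is unchanged and nonzero as a polynomial in the remaining $b_e$. Hence $\halfip{\cdot,\cdot}_B|_{b_0=0}$ is a well-defined matrix of rational functions in $\{b_e \mid e\neq 0\}$.

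Combined with the fact that the proposed inverse $(-1)^{r-1}\ip{\cdot,\cdot}^V$ is polynomial, the product of the two specialized matrices is obtained by specializing the product from Theorem \ref{thm:Bettiinverse}, which equals the identity. This yields the corollary. No real obstacle is anticipated: the entire argument is a clean specialization once Theorem \ref{thm:Bettiinverse} is in hand.
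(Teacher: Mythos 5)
Your proof is correct and takes the same route as the paper, which states it in one line: observe that $\ip{P,Q}^V = \ip{P,Q}^B|_{b_0=0}$ (since $0 \notin \sep(P,Q)$ by definition) and evaluate Theorem \ref{thm:Bettiinverse} at $b_0=0$, noting $b_E|_{b_0=0}=0$. The extra checks you supply (well-definedness of $\halfip{\cdot,\cdot}_B|_{b_0=0}$ because the denominator factors $\tb_F$ specialize to nonzero rational functions or to $-1$, and compatibility of specialization with matrix inversion) are correct and fill in details the paper leaves implicit.
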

\cref{cor:Var} generalizes \cite[Theorem 5.2]{Var} which describes the possible denominators that can appear in the inverse of $\ip{\cdot,\cdot}^V$.

\begin{remark}
Our bilinear form $\ip{P,Q}^{B}$ can be obtained from Varchenko's bilinear form as follows.  Let $(\bM,\star)$ be a general lifting of $\M$ by a new element $\star$.  Then $\bT^+ = \T^+(\bM)$ is naturally in bijection with $\T$.  Let $\bP,\bQ \in  \bT^+$ lift the topes $P,Q \in \T$ .  Then we have the equality $\ip{\bP,\bQ}_{\bM}^B = \ip{P,Q}^V + \ip{P,-Q}^V$.  
\end{remark}

\subsection{Determinant}\label{sec:Betticohomdet}

\begin{theorem}\label{thm:Bettidet}
The bilinear form $\ip{\cdot,\cdot}^B$ on $\Z^{\T^+}$ has determinant 
$$
\det \ip{\cdot,\cdot}^B_{\T^+} = (1 - b_E )^{\beta(M)} \prod_{F \in L(M) \setminus \{\hat0,\hat1\}} (1-b_F^2)^{\beta(F)w_\Sigma(M_F)}.
$$
\end{theorem}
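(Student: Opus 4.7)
The plan is to reduce to the Varchenko determinant formula, in its oriented-matroid form due to Hanlon--Varchenko and Randall, for the classical bilinear form $\ip{\cdot,\cdot}^V$ on the full tope set $\T$, and then extract our determinant by a block decomposition combined with a Galois-type symmetry under $b_0 \mapsto -b_0$. Order $\T = \T^+ \sqcup \T^-$ with $\T^- := \{P^- : P \in \T^+\}$. Using $\sep(P^-, Q^-) = \sep(P, Q)$ and $\sep(P, Q^-) = E \setminus \sep(P, Q)$ for $P, Q \in \T^+$, the full Varchenko matrix takes the symmetric block form
$$
V = \begin{pmatrix} X & Y \\ Y & X \end{pmatrix}, \qquad X_{PQ} = b_{\sep(P,Q)}, \quad Y_{PQ} = b_{E \setminus \sep(P,Q)}.
$$
The row/column operation identity $\det V = \det(X+Y)\det(X-Y)$ holds (it does not require $X, Y$ to commute), and with $\epsilon := (-1)^r$ the Betti cohomology matrix is precisely $X + \epsilon Y$.

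Next I would exploit the involution $\sigma\colon b_0 \mapsto -b_0$. Because $P, Q \in \T^+$ both satisfy $(\cdot)(0) = +$, we have $0 \notin \sep(P,Q)$ and $0 \in E \setminus \sep(P,Q)$, so $\sigma$ fixes $X$ and negates $Y$; hence $\sigma$ exchanges $\det(X + Y)$ with $\det(X - Y)$. Combined with the block identity this yields
$$
\det V = \det \ip{\cdot,\cdot}^B_{\T^+} \cdot \sigma\bigl(\det \ip{\cdot,\cdot}^B_{\T^+}\bigr).
$$
The Hanlon--Varchenko--Randall formula evaluates $\det V$ as a product $\prod_{F \in L(M) \setminus \hat 0} (1 - b_F^2)^{e(F)}$, with exponents $e(F) = 2\beta(F) w_\Sigma(M_F)$ for $\hat 0 \neq F \neq \hat 1$ and the single factor at $F = \hat 1$ being $(1-b_E^2)^{\beta(M)}$. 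For $F \neq \hat 1$ the factor $(1 - b_F^2)$ is $\sigma$-invariant and splits symmetrically, while $(1 - b_E^2) = (1-b_E)(1+b_E)$ admits the unique $\sigma$-asymmetric split $(1-b_E) \leftrightarrow (1+b_E)$. Matching the $\sigma$-factorization of $\det V$ against the block factorization isolates the candidate
$$
D := (1 - b_E)^{\beta(M)} \prod_{F \in L(M) \setminus \{\hat 0, \hat 1\}} (1 - b_F^2)^{\beta(F) w_\Sigma(M_F)}.
$$

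To pin down that $\det \ip{\cdot,\cdot}^B_{\T^+} = D$ rather than some $\sigma$-conjugate variant (the square-root ambiguity), I would specialize at $b_0 = 0$: by \cref{cor:Var}, $\ip{\cdot,\cdot}^B|_{b_0 = 0}$ reduces to Varchenko's original form $\ip{\cdot,\cdot}^V$ on $\T^+$, whose determinant can be extracted directly from Varchenko's formula (every factor containing $b_0$ trivializes at $b_0 = 0$, leaving the $F \not\ni 0$ contributions, which agrees with $D|_{b_0 = 0}$). Alternatively, a deletion-contraction induction on $|E|$, using the recursions $\beta(M) = \beta(M/e) + \beta(M\setminus e)$ and the analogous recursion for $w_\Sigma$ inherited from the characteristic polynomial, together with a block decomposition of $\ip{\cdot,\cdot}^B_\M$ separating topes according to their $e$-sign, gives the same answer. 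The main obstacle is this final identification step: matching exponents of the Varchenko product formula against the two $\sigma$-conjugate halves requires careful bookkeeping, and in particular verifying that the asymmetric factor $(1-b_E)^{\beta(M)}$ sits in $D$ (and not in its conjugate) relies essentially on the nondegeneracy of $\halfip{\cdot,\cdot}_B$ on $\Z^{\T^b}$ when $b_E = 1$ (\cref{thm:Bettinondeg}(2)), which forces the vanishing order of $\det \ip{\cdot,\cdot}^B_{\T^+}$ along $b_E = 1$ to be exactly $\beta(M) = |\T^b|$.
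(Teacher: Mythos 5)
Your strategy is the same as the paper's (block diagonalize the lifted Varchenko matrix, compare with the Varchenko determinant, exploit $b_0\mapsto -b_0$, fix the sign at $b_0=0$), and the block identity $\det\begin{pmatrix} X & Y \\ Y & X\end{pmatrix} = \det(X+Y)\det(X-Y)$ is a cleaner way to phrase the paper's orthogonal change of basis. But there are two substantive gaps.

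First, the formula you quote for $\det V$ with exponents $2\beta(F)w_\Sigma(M_F)$ is not just a citation of Hanlon--Varchenko--Randall: that theorem computes determinants of $\ip{\cdot,\cdot}^V$ on $\T^+$ for an \emph{affine} oriented matroid, so you must identify $V$ as the Varchenko matrix of the generic lifting $(\bM,\star)$ (not of $\M$ itself), and then compute $w_\Sigma(\bM_F)$. The factor of $2$ comes from $\chi_{\bM}(t) = (t-1)\chi_M(t)$ for a generic lifting (\cref{lem:genericlift} and \cref{lem:wsigma}): $w_\Sigma(\bM_F)=2w_\Sigma(M_F)$ for $F \subsetneq E$, and $w_\Sigma(\bM_E) = 1$. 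This is a short but genuine computation that your proposal elides.

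Second, and more seriously, the claim that for $F \neq \hat 1$ the factor $(1-b_F^2)$ ``splits symmetrically'' between $D$ and $\sigma(D)$ is not forced by $\sigma$-invariance. The irreducible factors over $\Z[\b]$ are $(1-b_F)$ and $(1+b_F)$; when $0 \in F$ these are $\sigma$-conjugate (not $\sigma$-fixed), so the equation $\det V = D\cdot\sigma(D)$ only determines the \emph{sum} of their exponents in $D$. A priori $D$ could contain $(1-b_F)^a(1+b_F)^b$ with $a\neq b$ while $\sigma(D)$ contains the swapped powers, and the product would still equal $(1-b_F^2)^{a+b}$. Your $b_0=0$ specialization cannot detect this, because for $F\ni 0$ both $(1\pm b_F)$ become $1$ at $b_0=0$. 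And invoking \cref{thm:Bettinondeg}(2) only speaks to a lower bound on the rank at $b_E=1$, not to exact orders of vanishing, and in any case only concerns the top flat $F=E$, leaving proper flats containing $0$ unaddressed. The paper's resolution is \cref{lem:Bettifactor} (itself a consequence of \cref{thm:Bettiinverse} and the denominator structure of $\halfip{\cdot,\cdot}_B$): the determinant $\det\ip{\cdot,\cdot}^B_{\T^+}$ is a constant times a product of $\tb_F$'s and $(1-b_E)$, which ties $(1-b_F)$ and $(1+b_F)$ together with equal exponents for all proper flats, and simultaneously excludes $(1+b_E)$. This is the ingredient your proposal is missing, and without it the ``square-root ambiguity'' remains open for every flat containing $0$.
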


\begin{proof}[Proof of \cref{thm:Bettihomdet}]
By \cref{prop:numbertopes}, we have $|\T^+| = w_\Sigma(M)$.  Combine \cref{thm:Bettidet} and \cref{thm:Bettiinverse} to obtain
\begin{align*}
\det \halfip{\cdot,\cdot}_B^{\T^+} &= (-1)^{(r-1) w_\Sigma(M)} (1- b_E)^{w_\Sigma(M)}\frac{1}{\det \ip{\cdot,\cdot}^B_{\T^+}}. \qedhere
\end{align*}
\end{proof}

\begin{example}\label{ex:5pt2}
The inverse of the matrix $\halfip{\cdot,\cdot}_B^{\T^+}$ from \cref{ex:5pt1} is $1/(b_E- 1)$ times
$$
 \ip{\cdot,\cdot}^B_{\T^+} =\begin{bmatrix}
 b_0 b_1 b_2 b_3 b_4+1 & b_0 b_1 b_3 b_4+b_2 & b_0 b_1 b_4+b_2 b_3 & b_0 b_1+b_2 b_3 b_4 & b_0 b_2 b_3 b_4+b_1 \\
 b_0 b_1 b_3 b_4+b_2 & b_0 b_1 b_2 b_3 b_4+1 & b_0 b_1 b_2 b_4+b_3 & b_0 b_1 b_2+b_3 b_4 & b_0 b_3 b_4+b_1 b_2 \\
 b_0 b_1 b_4+b_2 b_3 & b_0 b_1 b_2 b_4+b_3 & b_0 b_1 b_2 b_3 b_4+1 & b_0 b_1 b_2 b_3+b_4 & b_0 b_4+b_1 b_2 b_3 \\
 b_0 b_1+b_2 b_3 b_4 & b_0 b_1 b_2+b_3 b_4 & b_0 b_1 b_2 b_3+b_4 & b_0 b_1 b_2 b_3 b_4+1 & b_0+b_1 b_2 b_3 b_4 \\
 b_0 b_2 b_3 b_4+b_1 & b_0 b_3 b_4+b_1 b_2 & b_0 b_4+b_1 b_2 b_3 & b_0+b_1 b_2 b_3 b_4 & b_0 b_1 b_2 b_3 b_4+1 
\end{bmatrix},
$$
which has determinant 
$$\det \ip{\cdot,\cdot}^B_{\T^+} = (1-b_0^2)(1-b_1^2)(1-b_2^2)(1-b_3^2)(1-b^2_4) (1 - b_0b_1b_2b_3b_4)^3.$$
Setting $b_0 = 0$, we obtain Varchenko's matrix
$$
\ip{\cdot,\cdot}^V_{\T^+} = \begin{bmatrix}
1 & b_2 & b_2 b_3 & b_2 b_3 b_4 & b_1 \\
 b_2 & 1 & b_3 & b_3 b_4 & b_1 b_2 \\
b_2 b_3 & b_3 & 1 & b_4 & b_1 b_2 b_3 \\
b_2 b_3 b_4 & b_3 b_4 & b_4 & 1 & b_1 b_2 b_3 b_4 \\
b_1 & b_1 b_2 &b_1 b_2 b_3 & b_1 b_2 b_3 b_4 & 1 
\end{bmatrix}
$$
which has determinant 
$$\det \ip{\cdot,\cdot}^V_{\T^+} = (1-b_1^2)(1-b_2^2)(1-b_3^2)(1-b_4)^2.$$
\end{example}

\begin{example}
We continue \cref{ex:3pttopeB}.  The inverse of the matrix $\halfip{\cdot,\cdot}^{\T^+}_B$ is equal to $1/(1-b_0b_1b_2)$ times the matrix
$$ 
\ip{\cdot,\cdot}^B_{\T^+} = 
\begin{bmatrix}
 1-b_0 b_1 b_2 & b_1-b_0 b_2 & b_1 b_2-b_0 & b_2-b_0 b_1 \\
 b_1-b_0 b_2 & 1-b_0 b_1 b_2 & b_2-b_0 b_1 & b_1 b_2-b_0 \\
 b_1 b_2-b_0 & b_2-b_0 b_1 & 1-b_0 b_1 b_2 & b_1-b_0 b_2 \\
 b_2-b_0 b_1 & b_1 b_2-b_0 & b_1-b_0 b_2 & 1-b_0 b_1 b_2 \\
\end{bmatrix}
$$
which has determinant $\det \ip{\cdot,\cdot}^B_{\T^+} = (1-b_0^2)^2 (1-b_1^2)^2  (1-b_2^2)^2$.  Note that in this case $\beta(M) = 0$.
\end{example}

\begin{example}
Consider the line arrangement $\bA$ in $\P^2$, pictured below.  The parameters $b_e, e \in E$ are taken to be $a,b,c,d$, where $d = b_0$ corresponds to the line at infinity.  
$$
\begin{tikzpicture}
\draw (0.1,-0.1) -- (2.4,2.9);
\draw (-0.5,0.8) -- (3.5,0.8);
\draw (2.9,-0.1) -- (0.6,2.9);
\draw (1.5,1.25) circle (1.7);

\node[color=blue] at (0,-0.1) {$b$};
\node[color=blue] at (-0.6,0.8) {$a$};
\node[color=blue] at (3,-0.15) {$c$};
\node[color=blue] at (1.5,-0.6) {$d$};

\node[color=red] at (0.2,0.55) {$1$};
\node[color=red] at (0.5,1.5) {$2$};
\node[color=red] at (1.5,2.2) {$3$};
\node[color=red] at (2.5,1.5) {$4$};
\node[color=red] at (2.8,0.55) {$5$};
\node[color=red] at (1.5,0.4) {$6$};
\node[color=red] at (1.5,1.15) {$7$};
\end{tikzpicture}
$$
The intersection matrices are
$$
\ip{\cdot,\cdot}^B_{\T^+} =\begin{bmatrix}
 1-a b c d & a-b c d & a b-c d & a b c-d & b c-a d & c-a b d & a c-b d \\
 a-b c d & 1-a b c d & b-a c d & b c-a d & a b c-d & a c-b d & c-a b d \\
 a b-c d & b-a c d & 1-a b c d & c-a b d & a c-b d & a b c-d & b c-a d \\
 a b c-d & b c-a d & c-a b d & 1-a b c d & a-b c d & a b-c d & b-a c d \\
 b c-a d & a b c-d & a c-b d & a-b c d & 1-a b c d & b-a c d & a b-c d \\
 c-a b d & a c-b d & a b c-d & a b-c d & b-a c d & 1-a b c d & a-b c d \\
 a c-b d & c-a b d & b c-a d & b-a c d & a b-c d & a-b c d & 1-a b c d \\
\end{bmatrix}
$$
with determinant $\det \ip{\cdot,\cdot}^B_{\T^+} = (1-a^2)^3 (1-b^2)^3 (1-c^2)^3 (1-d^2)^3 (1-abcd)$ and 
\def\acd{[acd]}
\def\bcd{[bcd]}
\def\abc{[abc]}
\def\abd{[abd]}
\def\cd{[cd]}
\def\ac{[ac]}
\def\ad{[ad]}
\def\ab{[ab]}
\def\bd{[bd]}
\def\bc{[bc]}
$$\halfip{\cdot,\cdot}^{\T^+}_B = 
\begin{bmatrix}
 \frac{\acd}{\tilde a \tilde c \tilde d} & - \frac{a \cd}{\tilde a \tilde c \tilde d} & -\frac{c d}{\tilde c \tilde d} & \frac{d \ac}{\tilde a \tilde c \tilde d} & -\frac{a d}{\tilde a \tilde d} & -\frac{c \ad}{\tilde a \tilde c \tilde d} & \frac{a c}{\tilde a \tilde c} \\
-\frac{a \cd}{\tilde a \tilde c \tilde d} & \frac{\ab \cd}{\tilde a \tilde b \tilde c \tilde d} &- \frac{b \cd}{\tilde b \tilde c \tilde d} & \frac{\tilde a b c \tilde d-a b^2 \tilde c d+a \tilde c d}{\tilde a \tilde b \tilde c \tilde d} & \frac{d \ab}{\tilde a \tilde b \tilde d} & \frac{-a^2 b \tilde c d+a \tilde b c \tilde d+b \tilde c d}{\tilde a \tilde b \tilde c \tilde d} & -\frac{c \ab}{\tilde a \tilde b \tilde c} \\
 -\frac{c d}{\tilde c \tilde d} & -\frac{b \cd}{\tilde b \tilde c \tilde d} & \frac{\bcd}{\tilde b \tilde c \tilde d} & -\frac{c\bd}{\tilde b \tilde c \tilde d} & -\frac{b d}{\tilde b \tilde d} & \frac{d \bc}{\tilde b \tilde c \tilde d} & \frac{b c}{\tilde b \tilde c} \\
 \frac{d \ac }{\tilde a \tilde c \tilde d} & \frac{\tilde a b c \tilde d-a b^2 \tilde c d+a \tilde c d}{\tilde a \tilde b \tilde c \tilde d} & -\frac{c\bd}{\tilde b \tilde c \tilde d} & \frac{\ac \bd}{\tilde a \tilde b \tilde c \tilde d} & -\frac{a\bd}{\tilde a \tilde b \tilde d} & \frac{-a^2 \tilde b c d+a b \tilde c \tilde d+\tilde b c d}{\tilde a \tilde b \tilde c \tilde d} & -\frac{b\ac}{\tilde a \tilde b \tilde c} \\
 -\frac{a d}{\tilde a \tilde d} & \frac{d \ab }{\tilde a \tilde b \tilde d} & -\frac{b d}{\tilde b \tilde d} & -\frac{a\bd}{\tilde a \tilde b \tilde d} & \frac{\abd}{\tilde a \tilde b \tilde d} & -\frac{b\ad}{\tilde a \tilde b \tilde d} & \frac{a b}{\tilde a \tilde b} \\
 -\frac{c\ad}{\tilde a \tilde c \tilde d} & \frac{-a^2 b \tilde c d+a \tilde b c \tilde d+b \tilde c d}{\tilde a \tilde b \tilde c \tilde d} & \frac{d \bc}{\tilde b \tilde c \tilde d} & \frac{-a^2 \tilde b c d+a b \tilde c \tilde d+\tilde b c d}{\tilde a \tilde b \tilde c \tilde d} & -\frac{b \ad}{\tilde a \tilde b \tilde d} & \frac{\ad\bc}{\tilde a \tilde b \tilde c \tilde d} & -\frac{a\bc}{\tilde a \tilde b \tilde c} \\
 \frac{a c}{\tilde a \tilde c} & -\frac{c\ab}{\tilde a \tilde b \tilde c} & \frac{b c}{\tilde b \tilde c} & -\frac{b\ac}{\tilde a \tilde b \tilde c} & \frac{a b}{\tilde a \tilde b} & -\frac{a\bc}{\tilde a \tilde b \tilde c} & \frac{\abc}{\tilde a \tilde b \tilde c} \\
\end{bmatrix}
$$
where for clarity we have used the notation $\tilde a := a^2-1$, $[acd] := a^2c^2d^2 -1$, and so on.  We have $\beta(M) = 1$ and $w_\Sigma(M) = 7$, and
$$\det \halfip{\cdot,\cdot}^{\T^+}_B = \frac{(1 - a b c d)^6}{(1 - a^2)^3 (1 - b^2)^3 (1 - c^2)^3 (1 - d^2)^3}.$$
\end{example}

\subsection{Proof of \cref{thm:Bettidet}}

\begin{lemma}\label{lem:Bettifactor}
The determinant $\det \ip{\cdot,\cdot}^B_{\T^+}$  is a constant times a rational function whose irreducible factors belong to 
$$
\{\tb_F \mid F \in L(M) \setminus \{\hat 0,\hat 1\} \} \cup \{(1 - b_E)\}.
$$ 
\end{lemma}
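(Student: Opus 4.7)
My strategy is to deduce the factorization by combining the inversion relation of \cref{thm:Bettiinverse} with the explicit denominator structure of the Betti homology intersection form $\halfip{\cdot,\cdot}_B$.

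First I would observe that, by \cref{def:Betticohpair}, every entry of the matrix $\ip{\cdot,\cdot}^B_{\T^+}$ is a polynomial in $\b = (b_e)_{e \in E}$, so $\det \ip{\cdot,\cdot}^B_{\T^+} \in \Z[\b]$ is a polynomial. It therefore suffices to identify its irreducible factors in the UFD $\Q[\b]$.

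Next I would apply \cref{thm:Bettiinverse}, which gives $\ip{\cdot,\cdot}^B_{\T^+}\cdot\halfip{\cdot,\cdot}^{\T^+}_B = (-1)^{r-1}(1-b_E)\,I$. Taking determinants yields
\[ \det \ip{\cdot,\cdot}^B_{\T^+}\cdot \det \halfip{\cdot,\cdot}^{\T^+}_B \;=\; (-1)^{(r-1)w_\Sigma(M)}(1-b_E)^{w_\Sigma(M)}. \]
I would then read off from \cref{def:Bettipair} that every matrix entry $\halfip{P,Q}_B$ is a rational function whose denominators are products of factors $\tb_{E_i} = (b_{E_i}-1)(b_{E_i}+1)$ for $E_i$ ranging over flats in $L(M)\setminus\{\hat 0,\hat 1\}$. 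Hence we may write $\det \halfip{\cdot,\cdot}^{\T^+}_B = N/D$ with $N \in \Z[\b]$ and $D$ a product of factors of the form $b_F \pm 1$, $F \in L(M)\setminus\{\hat 0,\hat 1\}$. Substituting and clearing denominators gives
\[ N \cdot \det \ip{\cdot,\cdot}^B_{\T^+} \;=\; (-1)^{(r-1)w_\Sigma(M)}(1-b_E)^{w_\Sigma(M)}\, D. \]

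Finally I would invoke unique factorization in $\Q[\b]$. Every polynomial in the set
\[ \mathcal{S} \;:=\; \{b_F - 1,\; b_F + 1 : F \in L(M) \setminus \{\hat 0,\hat 1\}\}\cup\{b_E - 1\} \]
is irreducible: $b_F \pm 1$ has degree one in each variable $b_e$ with $e \in F$, and a putative nontrivial factorization is ruled out by a short comparison of monomial coefficients. The right-hand side of the displayed equation is a product of elements of $\mathcal{S}$, so every irreducible factor of $\det \ip{\cdot,\cdot}^B_{\T^+}$ must lie in $\mathcal{S}$; equivalently, each factor divides some $\tb_F$ or divides $(1-b_E)$, which is the claim.

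This argument is essentially bookkeeping given \cref{thm:Bettiinverse}, so there is no substantive obstacle. The only genuinely new verification is the irreducibility of the members of $\mathcal{S}$, which is elementary. Note that the lemma gives only an \emph{upper bound} on which factors can occur; extracting the precise exponents---the content of \cref{thm:Bettidet}---will require additional work beyond this lemma, presumably a deletion-contraction or specialization argument that tracks how each factor $\tb_F$ enters the determinant.
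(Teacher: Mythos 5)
Your argument is correct and takes essentially the same route as the paper, whose own proof is the one-liner ``Follows from the form of $\halfip{\cdot,\cdot}_B$ and \cref{thm:Bettiinverse}'': you combine \cref{thm:Bettiinverse} with the fact that the entries of $\halfip{\cdot,\cdot}_B$ lie in $S[\tb_F^{-1}]$, clear denominators, and invoke unique factorization together with the irreducibility of $b_F\pm1$ and $b_E-1$.

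One small point is worth watching: as written, your conclusion is that the irreducible factors of $\det\ip{\cdot,\cdot}^B_{\T^+}$ lie in $\{b_F-1,\,b_F+1,\,b_E-1\}$, i.e.\ that each factor \emph{divides} $\tb_F$ or $1-b_E$. The lemma's phrasing (and its downstream use in the proof of \cref{thm:Bettidet}, where one compares the exponent of $\tb_F$ in $\det$ and in $\det|_{b_0\mapsto -b_0}$) implicitly wants the slightly stronger conclusion that $(b_F-1)$ and $(b_F+1)$ enter with \emph{equal} multiplicity, so that the determinant is genuinely a constant times a monomial in the $\tb_F$'s and $(1-b_E)$. Your argument yields $N\cdot g=\pm(1-b_E)^{w_\Sigma}D$ with $D$ a product of $\tb_F$'s, but after reducing $N/D$ to lowest terms the reduced $D$ need not remain a product of $\tb_F$'s (a factor $b_F-1$ could cancel against $N$ while $b_F+1$ does not), so the pairing is not automatic. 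The gap can be closed by observing either that taking $N,D$ coprime forces $N\mid(1-b_E)^{w_\Sigma}$ (so $N$ is a power of $1-b_E$ and one still has $g=\pm(1-b_E)^{w_\Sigma-k}D$ with $D$ the uncancelled product of $\tb_F$'s only if one argues no $b_F\pm1$ cancels), or, more robustly, by noting that $\det\ip{\cdot,\cdot}^B_{\T^+}$ is independent of the choice of $0\in E$ (changing $0$ conjugates the matrix by a $\pm1$ diagonal) and applying the comparison in \cref{thm:Bettidet} with $0'\notin F$ for each proper flat $F$. To be fair, the paper's own proof is just as terse, so this is as much an observation about the source as about your write-up.
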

\begin{proof}
Follows from the form of $\halfip{\cdot,\cdot}_B$ and \cref{thm:Bettiinverse}.
\end{proof}

We deduce \cref{thm:Bettidet} from the following theorem of Varchenko \cite{Var}, generalized to the setting of oriented matroids in \cite{HV,Ran}.  Recall that $L_0 \subset L(M)$ denotes the subposet of flats $F$ that contain $0$.

\begin{theorem}[\cite{Var,HV,Ran}]\label{thm:BettiVar}
The $\T^+ \times \T^+$ matrix $\ip{\cdot,\cdot}^V_{\T^+}$ has determinant
$$
\det \ip{P,Q}^V_{\T^+} = \prod_{F \in L(M) \setminus \{L_0 \cup \hat 0\}} (-\tb_F)^{\beta(F)w_\Sigma(M_F)} =\prod_{F  \in L(M) \setminus  \{L_0 \cup \hat 0\}} (1-b_F^2)^{\beta(F)w_\Sigma(M_F)}.
$$
\end{theorem}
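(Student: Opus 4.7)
The plan is to prove Theorem \ref{thm:BettiVar} by induction on $|E|$ via deletion-contraction with respect to a non-loop element $e \in E \setminus 0$. The base case $E = \{0\}$ is immediate: $\T^+$ is a singleton and both sides equal $1$.

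For the inductive step, I would partition $\T^+(\M) = \T_{\rm reg} \sqcup \T_{\rm bdy}$, where $\T_{\rm bdy}$ consists of topes $P$ with $e$ on their boundary (the sign sequence $P^{[e]}$ obtained from $P$ by flipping the $e$-coordinate is not a tope) and $\T_{\rm reg}$ consists of the remaining topes, coming in antipodal pairs $\{P, P^{[e]}\}$. Geometrically, these pairs biject with $\T^+(M \setminus e)$ via restriction, while $\T_{\rm bdy}$ bijects with $\T^+(M/e)$ via contraction. Further split $\T_{\rm reg} = \T_{\rm reg}^+ \sqcup \T_{\rm reg}^-$ according to $P(e)$, so that the involution $P \leftrightarrow P^{[e]}$ identifies $\T_{\rm reg}^+ \cong \T_{\rm reg}^- \cong \T^+(M \setminus e)$.

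Ordering $\T^+ = \T_{\rm reg}^+ \sqcup \T_{\rm reg}^- \sqcup \T_{\rm bdy}$, write the Varchenko matrix $V = \ip{\cdot,\cdot}^V_{\T^+}$ in the corresponding $3 \times 3$ block form. The key identity is $V(P, Q^{[e]}) = b_e\, V(P, Q)$ for $P, Q \in \T_{\rm reg}^+$, since $\sep(P, Q^{[e]}) = \{e\} \sqcup \sep(P, Q)$. I would perform the simultaneous row and column operations sending each basis vector $[P^{[e]}]$ (for $P \in \T_{\rm reg}^+$) to $[P^{[e]}] - b_e [P]$; this annihilates the $\T_{\rm reg}^+ \times \T_{\rm reg}^-$ off-diagonal blocks and scales the $\T_{\rm reg}^- \times \T_{\rm reg}^-$ diagonal block by $(1 - b_e^2)$. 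A compensating diagonal rescaling of the $\T_{\rm bdy}$ basis vectors by powers of $b_e$ determined by $P(e)$ should then make the $\T_{\rm bdy} \times \T_{\rm bdy}$ block realize $V_{M/e}$ exactly, while the $\T_{\rm reg}^+ \times \T_{\rm reg}^+$ block already realizes $V_{M \setminus e}$. This yields a recursion
\[
\det V_M = (\text{explicit } b_e\text{-factor}) \cdot \det V_{M \setminus e} \cdot \det V_{M/e}.
\]
Applying the inductive hypothesis to $M \setminus e$ and $M / e$, the final step is to verify that the total exponent of each $\tb_F$ in $\det V_M$ matches $\beta(F) w_\Sigma(M_F)$, using the recursions $\beta(N) = \beta(N \setminus e') + \beta(N / e')$ and the Tutte identity $\chi_N(t) = \chi_{N \setminus e'}(t) - \chi_{N / e'}(t)$ applied to the appropriate minors, with the atom $F = \{e\}$ accounting for the explicit $b_e$-factor.

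The main obstacle will be the careful bookkeeping of the block reduction: the $\T_{\rm bdy}$ block of $V_M$ does not directly equal $V_{M/e}$, because its entries depend on the signs $P(e)$ for $P \in \T_{\rm bdy}$ (inherited from $\M$, not from $M/e$), and these sign dependencies must be absorbed by the diagonal rescaling in a way that is consistent with the $(1 - b_e^2)$ factors appearing on the $\T_{\rm reg}^-$ block. An alternative strategy, which may prove cleaner, is to compute the multiplicity of each $\tb_F$ in $\det V$ directly by analyzing the corank of $V$ specialized at $b_F^2 = 1$, and matching this corank to $\beta(F) w_\Sigma(M_F)$ via the factorization of $V$ along the interval $[\hat 0, F] \times [F, \hat 1]$ in $L(M)$.
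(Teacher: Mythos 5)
First, a remark on the comparison you were asked for: the paper does not prove \cref{thm:BettiVar} at all. It is imported from Varchenko \cite{Var} and its oriented-matroid generalizations \cite{HV,Ran}, and is then applied (to a generic lifting $\bM$) to deduce \cref{thm:Bettidet}. So you are attempting to supply a proof of a cited result, and as written your deletion--contraction strategy has genuine gaps. Already the combinatorial setup is off: $P$ and its $e$-flip $P^{[e]}$ are both topes precisely when $e$ supports a facet of $P$ (eliminate $e$ between $P$ and $P^{[e]}$, or use \cref{lem:fliptope}), so your parenthetical definition of $\T_{\mathrm{bdy}}$ contradicts the phrase ``$e$ on their boundary''; more importantly, the pairs $\{P,P^{[e]}\}$ are in bijection with $\T^+(\M/e)$, not $\T^+(\M\setminus e)$, while it is $\T_{\mathrm{reg}}^+\sqcup\T_{\mathrm{bdy}}$ that restricts bijectively to $\T^+(\M\setminus e)$. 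Consequently $|\T_{\mathrm{reg}}^+|=|\T^+(M/e)|$ and $|\T_{\mathrm{bdy}}|=|\T^+(M\setminus e)|-|\T^+(M/e)|$, so the $\T_{\mathrm{reg}}^+\times\T_{\mathrm{reg}}^+$ block cannot ``realize $V_{M\setminus e}$'' nor can the $\T_{\mathrm{bdy}}\times\T_{\mathrm{bdy}}$ block ``realize $V_{M/e}$'': the sizes do not even match.

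The analytic core also fails. After the operation $[P^{[e]}]\mapsto[P^{[e]}]-b_e[P]$, the pairing of the new vector with a tope $R\in\T_{\mathrm{bdy}}$ having $R(e)=-$ is $V(R,P^{[e]})-b_eV(R,P)=(1-b_e^2)\,b_{\sep(P,R)\setminus e}\neq 0$; such $R$ exist whenever some chamber on the negative side of $H_e$ does not have $e$ as a facet (e.g.\ three collinear points with $e$ the middle one and $R$ the far unbounded chamber), so the matrix is not block triangular and no factorization of $\det V_M$ drops out. Worse, no recursion of the proposed shape $\det V_M=c(b_e)\cdot\det V_{M\setminus e}\cdot\det V_{M/e}$, with the contraction matrix taken in the variables $b_f$, $f\neq e$, can hold: for three concurrent lines plus a generic line at infinity the theorem gives $\det V_M=(1-b_1^2)^2(1-b_2^2)^2(1-b_3^2)^2(1-b_1^2b_2^2b_3^2)$, and the irreducible factors $1\pm b_1b_2b_3$ cannot divide a product of polynomials each involving only $b_1,b_2$ or only $b_e=b_3$. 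The correct contraction factor must be a Varchenko-type form in which a flat $F\ni e$ carries the weight $b_F$ (including $b_e$); producing and controlling this twisting is exactly the ``bookkeeping'' you defer, and it is where the cited proofs do their real work. Your fallback (computing the corank of $V$ specialized at $b_F^2=1$ and matching it with $\beta(F)w_\Sigma(M_F)$) is much closer in spirit to \cite{Var,HV,Ran}, but as sketched it only gives a lower bound on the multiplicity of each factor $1\pm b_F$; you would still need a matching upper bound (a degree count, or a minor nonvanishing on $\{b_F^2=1\}$) together with an argument that $\det V$ has no other irreducible factors, none of which is addressed.
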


We apply \cref{thm:BettiVar} to the affine matroid $(\bM,\star)$, a general lifting of $\M$.  We denote the groundset of $\bM$ by $\bE = E \sqcup \star$.  We consider the topes $\bT^+$ of $\bM$ positive with respect to $\star$.  Then $\bT^+$ is in bijection with $\T = \T(\M)$ under the map $\bP \mapsto \bP|_E$.  For $P \in \T$, write $\bP \in \bT^+$ for the corresponding positive tope. 

We have
$$
\ip{\bP,\bQ}^V = \begin{cases} b_{\sep(P,Q)} &\mbox{ if $P,Q \in \T^+$,} \\
b_{\sep(-P,-Q)} & \mbox{ if $-P,-Q \in \T^+$,} \\
 b_{E \setminus \sep(P,-Q)} &\mbox{ if $P \in \T^+$ and $-Q \in \T^+$,} \\
b_{E \setminus \sep(-P,Q)} & \mbox{ if $-P \in \T^+$ and $Q \in \T^+$.} 
\end{cases}
$$
Here, $\ip{\cdot,\cdot}^V$ is calculated with respect to the affine matroid $(\bM,\star)$ while $\sep(\cdot,\cdot)$ is calculated with respect to $(\M,0)$.  Note that $b_\star$ does not appear in these formulae.

We extend the bilinear form $\ip{\cdot,\cdot}^V$ on $\Z^{\bT^+}$ to $\R^{\bT^+}$, and calculate the determinant with respect to the basis 
$$
\left\{ \frac{1}{\sqrt{2}}([\bP]+[-\bP]) \mid P \in \T^+ \right\} \cup \left\{ \frac{1}{\sqrt{2}}([\bP]-[-\bP]) \mid P \in \T^+ \right\}.
$$
We have that 
\begin{align*}
\ip{\frac{1}{\sqrt{2}}([\bP]+[-\bP]), \frac{1}{\sqrt{2}}([\bQ]+[-\bQ])}^V &= b_{\sep(P,Q)}+ b_{E \setminus \sep(P,Q)}  \\
\ip{\frac{1}{\sqrt{2}}([\bP]+[-\bP]), \frac{1}{\sqrt{2}}([\bQ]-[-\bQ])}^V&=0  \\
\ip{\frac{1}{\sqrt{2}}([\bP]-[-\bP]), \frac{1}{\sqrt{2}}([\bQ]-[-\bQ])}^V&=  b_{\sep(P,Q)}- b_{E \setminus \sep(P,Q)}.
\end{align*}
Thus the intersection matrix of $\ip{\cdot,\cdot}^V$ with respect to this basis is block diagonal.  One block is identical to $\ip{\cdot,\cdot}^B$ on the basis $\T^+$ and the other is identical to $ \ip{\cdot,\cdot}^B|_{b_0 \mapsto -b_0}$ on the same basis.  

\begin{lemma}\label{lem:wsigma}
Let $F \in L(\overline{M}) \setminus \hat 0$ be such that $\star \notin F$.  Then 
$$
w_\Sigma(\overline{M}_F) = \begin{cases} 1 & \mbox{if $F = E$} \\
2 w_\Sigma(M_F) & \mbox{if $F \subsetneq E$.}
\end{cases}
$$
\end{lemma}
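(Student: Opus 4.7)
The plan is to show that the contraction $\overline{M}/F$ is itself a generic lifting of $M/F$ by $\star$, after which \cref{lem:genericlift} directly computes $\chi_{\overline{M}_F}$ in terms of $\chi_{M_F}$. The only step requiring care is the identification of $\overline{M}_F$ as a generic lifting. For this I would check, for any $G \subseteq E \setminus F$, the two rank identities $\rk_{\overline{M}/F}(G) = \rk_{M/F}(G)$ and $\rk_{\overline{M}/F}(G \cup \star) = \rk_{M/F}(G) + 1$. Both follow from a direct expansion using the definition of contraction together with the identities $\rk_{\overline{M}}(A) = \rk_M(A)$ and $\rk_{\overline{M}}(A \cup \star) = \rk_M(A) + 1$ for $A \subseteq E$, which underlie the proof of \cref{lem:genericlift}. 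Together these identities say that $(\overline{M}/F)/\star = M/F$ and that $\star$ is generic in $\overline{M}/F$, i.e.\ $\overline{M}_F$ is a generic lifting of $M_F$.

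By \cref{lem:genericlift}, we therefore have $\chi_{\overline{M}_F}(t) = (t-1)\chi_{M_F}(t)$. When $F \subsetneq E$, the flat $F$ of $M$ satisfies $\rk_M(F) < r$, so $M_F$ has positive rank and $\bchi_{M_F}(t) = \chi_{M_F}(t)/(t-1)$ is a polynomial. Dividing the displayed identity by $(t-1)$ yields
$$
\bchi_{\overline{M}_F}(t) \;=\; \chi_{M_F}(t) \;=\; (t-1)\,\bchi_{M_F}(t),
$$
and evaluating at $t = -1$ gives $w_\Sigma(\overline{M}_F) = |\bchi_{\overline{M}_F}(-1)| = 2|\bchi_{M_F}(-1)| = 2\,w_\Sigma(M_F)$.

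For $F = E$, the contraction $M_E$ is the empty matroid, giving $\chi_{M_E}(t) = 1$, hence $\chi_{\overline{M}_E}(t) = t - 1$ and $\bchi_{\overline{M}_E}(t) = 1$, which yields $w_\Sigma(\overline{M}_E) = 1$. There is no real obstacle in this argument: everything reduces to \cref{lem:genericlift} once $\overline{M}_F$ is recognized as a generic lifting of $M_F$, and the only delicate point is the bookkeeping of ranks under contraction, which is routine.
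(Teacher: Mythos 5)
Your proof is correct and uses the same underlying idea as the paper: identify $\overline{M}_F$ as a generic lifting of $M_F$ and invoke \cref{lem:genericlift}. The only (inessential) difference is that you verify the generic-lifting claim via explicit rank identities and run the $F=E$ case through \cref{lem:genericlift} applied to the empty matroid, whereas the paper states the generic-lifting claim without proof and computes the $F=E$ case directly from the rank-one matroid on $\{\star\}$.
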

\begin{proof}
When $F = E$, the matroid $\overline{M}_F$ is a rank 1 matroid on the single element $\star$.  The reduced characteristic polynomial is equal to $1$, so $w_\Sigma = 1$.  When $F \subsetneq E$, the matroid $\overline{M}_F$ is a general lifting of $M_F$.  By \cref{lem:genericlift}, we have $\chi_{\overline{M}_F}(t) = (t-1)\chi_{M_F}(t)$ and it follows that $w_\Sigma(\overline{M}_F) = 2w_{\Sigma}(M_F)$.
\end{proof}
Using \cref{lem:wsigma}, we conclude that 
\begin{align*}
\det \ip{\cdot,\cdot}^B_{\T^+}  (\det \ip{\cdot,\cdot}^B_{\T^+})  |_{b_0 \mapsto -b_0} &= \pm \prod_{F \in L(\overline{M}) \setminus \{L_\star \cup \hat 0\}} (-\tb_F)^{\beta(F)w_\Sigma(\overline{M}_F)} \\
&= \pm \prod_{F \in L(M) \setminus \{\hat 0, \hat 1\}}(1-b_F^2)^{2\beta(F)w_\Sigma(M_F)}(1-b_E^2)^{\beta(M)}.
\end{align*}
Comparing this with \cref{lem:Bettifactor}, we deduce that for $L \neq \hat 1$, the factor $(1-b_F^2)^{2\beta(F)w_\Sigma(M_F)}$ factors up to sign as $(1-b_F^2)^{\beta(F)w_\Sigma(M_F)} (1-b_F^2)^{\beta(F)w_\Sigma(M_F)}$ in $\det \ip{\cdot,\cdot}^B_{\T^+}  (\det \ip{\cdot,\cdot}^B_{\T^+})  |_{b_0 \mapsto -b_0} $.  However, the factor $(1-b_E^2)^{\beta(M)}$ factors as $(1-b_E)^{\beta(M)} (1+b_E)^{\beta(M)}$.  This proves \cref{thm:Bettidet} up to sign.  To fix the sign, we substitute $b_0 = 0$ into the intersection matrix of $\ip{\cdot,\cdot}^B$, obtaining an instance of $\ip{\cdot,\cdot}^V$ for $(\M,0)$.  Applying \cref{thm:BettiVar} we see that the sign gives the stated formula.

\subsection{Proof of \cref{thm:Bettiinverse}}

Let $P \in \T$ and $L(P) \subset L(M)$ denote the face lattice of the tope $\T$.  By convention, $L(P)$ has minimal element $\hat 0 = \emptyset$ and maximal element $\hat 1 = E$.  For a graded poset $L$, we let $\chi(L):= \sum_{x \in L} (-1)^{\rk(x)}$ be its Euler characteristic.

\begin{theorem}[see {\cite[Theorem 4.3.5]{OMbook}}] \label{thm:sphere}
The lattice $L(P)$ is the (augmented with a $\hat 0$ and $\hat 1$) face lattice of regular cell decomposition of a $(d-1)$-dimensional sphere.  In particular, $L(P)$ is a graded poset with Euler characteristic equal to 0.  The subposet $L(P) \setminus \hat 1$ has Euler characteristic $(-1)^{d}$.
\end{theorem}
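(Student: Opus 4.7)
The plan is to invoke the Folkman--Lawrence topological representation theorem, which realizes the rank $r = d+1$ oriented matroid $\M$ as a signed pseudosphere arrangement on $S^{d}$, and then restrict attention to the open cell corresponding to the tope $P$. This is the route taken in \cite{OMbook} and the reason the statement is a consequence of their Theorem 4.3.5.

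First, I would apply topological representation to obtain a regular CW decomposition of $S^{d}$ whose augmented face poset is the big face lattice $\L(\M)$. The tope $P$ then corresponds to an open $d$-cell $C_P \subset S^{d}$ whose closure $\bar C_P$ is a closed $d$-ball. Its boundary $\partial \bar C_P \cong S^{d-1}$ inherits a regular CW structure from the ambient pseudosphere arrangement. Under the injection $\phi: \L(P) \hookrightarrow L(M)$ of Section \ref{sec:OM} sending a signed covector to its zero set, the closed faces of $\bar C_P$ correspond precisely to the elements of $L(P)$: a rank-$k$ element of $L(P)$ matches a $(d-k)$-dimensional face of $\bar C_P$. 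This identifies $L(P)$ (with $\hat 0$ and $\hat 1$ adjoined) as the augmented face lattice of a regular CW decomposition of $S^{d-1}$.

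From here, gradedness of $L(P)$ is automatic, since the face lattice of any regular CW complex is graded by cell dimension. For the Euler characteristic computation, I would let $f_j$ denote the number of $j$-cells of $\partial \bar C_P$ and use $\chi(S^{d-1}) = 1 + (-1)^{d-1}$. Translating cell dimensions to poset ranks via $j \leftrightarrow d-k$, one obtains
\begin{align*}
\chi(L(P) \setminus \{\hat 0, \hat 1\})
&= \sum_{k=1}^{d} (-1)^{k} f_{d-k}
= (-1)^{d} \sum_{j=0}^{d-1} (-1)^{j} f_{j}
= (-1)^{d}\bigl(1 + (-1)^{d-1}\bigr)
= (-1)^{d} - 1.
\end{align*}
Adding back the contribution $(-1)^{0} = 1$ from $\hat 0$ yields $\chi(L(P) \setminus \hat 1) = (-1)^{d}$, and then adding the contribution $(-1)^{d+1}$ from $\hat 1 = E$ yields $\chi(L(P)) = 0$.

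The one nontrivial input is the topological representation theorem; everything else is routine bookkeeping. Since the excerpt cites \cite[Theorem 4.3.5]{OMbook} directly, the Folkman--Lawrence construction is used as a black box, and no further obstacle arises.
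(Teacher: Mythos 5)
Your proposal is correct and follows the same route as the paper: the paper does not give its own proof but cites \cite[Theorem 4.3.5]{OMbook} directly, which is precisely the Folkman--Lawrence topological representation statement (applied to the tope $P$) that you unpack, and your Euler-characteristic bookkeeping from the $(d-1)$-sphere matches the claimed values. Nothing more is needed.
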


We shall show that 
$$
\sum_{Q \in \T^+} \ip{P,Q}_B \ip{Q,R}^B = (-1)^{r-1}(1 - b_E) \delta_{P,R}.
$$

For $P, R \in \T^+$, let us define 
\begin{equation}\label{eq:U}
U = U(P,R):= \sum_{G_\bullet \in \pFl(P)}  \ip{G_\bullet}_B b_{\sep(R,Q_{G_\bullet})}, \qquad V = V(P,R):=(-1)^r \sum_{G_\bullet \in \pFl(P)}  \ip{G_\bullet}_B b_{\sep(R,(-Q)_{G_\bullet})}.
\end{equation}
Note that if $Q_{G_\bullet} \in \T^+$ then $\sep(R,Q_{G_\bullet}) \subseteq E \setminus 0$, but if $(-Q)_{G_\bullet} \in \T^+$, then $0 \in \sep(R,Q_{G_\bullet})$.  The following identity follows from the definitions.

\begin{lemma}
We have
$$
\sum_{Q \in \T^+} \ip{P,Q}_B \ip{Q,R}^B = U(P,R) + V(P,R).
$$
\end{lemma}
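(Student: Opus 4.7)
The plan is to unpack both factors on the left-hand side using \cref{def:Bettipair} and \cref{def:Betticohpair}, and then reindex the resulting double sum by partial flags $G_\bullet \in \pFl(P)$ instead of topes $Q \in \T^+$. This is essentially a bookkeeping check, and I expect no genuine obstacle beyond tracking signs carefully.

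First I expand
\[
\ip{P,Q}_B \;=\; \sum_{G_\bullet \in G^{\pm}(P,Q)} \epsilon(G_\bullet)\, \ip{G_\bullet}_B,
\]
where $\epsilon(G_\bullet) = +1$ if $G_\bullet \in G(P,Q)$ and $\epsilon(G_\bullet) = (-1)^r$ if $G_\bullet \in G(P,Q^-)$. The key combinatorial point, immediate from \cref{prop:flipflag} together with the definition of $\T^+$, is that every $G_\bullet \in \pFl(P)$ determines a unique flipped tope $Q_{G_\bullet}$, exactly one of $Q_{G_\bullet}, Q_{G_\bullet}^-$ lies in $\T^+$, and these data set up a bijection between $\pFl(P)$ and pairs $(Q, G_\bullet)$ with $Q \in \T^+$ and $G_\bullet \in G^{\pm}(P,Q)$. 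Using this bijection to reindex the double sum yields
\[
\sum_{Q \in \T^+} \ip{P,Q}_B \ip{Q,R}^B \;=\; \sum_{G_\bullet \in \pFl(P)} \epsilon(G_\bullet)\, \ip{G_\bullet}_B\, \ip{Q(G_\bullet),R}^B,
\]
where $Q(G_\bullet) \in \{Q_{G_\bullet}, Q_{G_\bullet}^-\} \cap \T^+$ is the unique positive representative.

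Finally I use the identity $\sep(Q^-,R) = E \setminus \sep(Q,R)$ together with the symmetry $\ip{Q,R}^B = (-1)^r \ip{Q^-,R}^B$ (noted after \cref{def:Betticohpair}) to verify the uniform sign identity
\[
\epsilon(G_\bullet)\, \ip{Q(G_\bullet),R}^B \;=\; b_{\sep(Q_{G_\bullet},R)} + (-1)^r b_{\sep(Q_{G_\bullet}^-,R)},
\]
which holds in both cases $Q(G_\bullet) = Q_{G_\bullet}$ and $Q(G_\bullet) = Q_{G_\bullet}^-$: in the first case $\epsilon = 1$ and the identity is just the definition of $\ip{Q_{G_\bullet},R}^B$; in the second case $\epsilon = (-1)^r$ and the two factors of $(-1)^r$ cancel against the symmetry. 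Substituting this into the reindexed sum splits it into exactly the two sums defining $U(P,R)$ and $V(P,R)$ in \eqref{eq:U}, which gives the claimed equality. The only delicate point is the sign cancellation, but it is precisely the reason the pairing $\ip{Q,R}^B$ was defined by the symmetrized formula $b_{\sep} + (-1)^r b_{E \setminus \sep}$ in the first place, so no surprises arise.
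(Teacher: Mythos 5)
Your proof is correct and is essentially an unpacking of the paper's own one-line argument (``Follows from the definitions. The sign $(\pm)^r$ in the definition of $\ip{P,Q}_B$ cancels out with the sign $(-1)^r$ in the definition of $\ip{Q,R}^B$.''). Reindexing the double sum over $(Q,G_\bullet)$ as a single sum over $G_\bullet \in \pFl(P)$ and then checking the sign cancellation in both cases is exactly what the cited paper proof abbreviates.
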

\begin{proof}
Follows from the definitions.  The sign $(\pm)^r$ in \cref{def:Bettipair} of $\ip{P,Q}_B$ cancels out with the sign $(-1)^r$ in \cref{def:Betticohpair}.
\end{proof}

\begin{example} We continue \cref{ex:5pt1} and \cref{ex:5pt2}.  Let $P = P_1 = R$.  Then
\begin{align*}
U(P,R) &= \left(1 + \frac{1}{b_1^2-1} + \frac{1}{b_2^2-1} \right) \cdot 1 + \left( - \frac{b_2}{b_2^2-1} \right) \cdot b_2 + \left( - \frac{b_1}{b_1^2-1} \right) \cdot b_1 = (-1)^1, \\
V(P,R) &=  \left(1 + \frac{1}{b_1^2-1} + \frac{1}{b_2^2-1} \right) \cdot b_0b_1b_2b_3b_4b_5 + \left( - \frac{b_2}{b_2^2-1} \right) \cdot b_0b_1b_3b_4b_5 + \left( - \frac{b_1}{b_1^2-1} \right) \cdot b_0b_2b_3b_4b_5 = b_E.
\end{align*}
\end{example}

We shall show that $U(P,R) = (-1)^{r-1} \delta_{P,R}$.  The equality $V(P,R) = (-1)^r b_E \delta_{P,R}$ follows similarly.
We note that 
$$
 \sep(P,G_\bullet) := \sep(P,Q_{G_\bullet}) = (G_s \setminus G_{s-1}) \sqcup (G_{s-2} \setminus G_{s-3}) \sqcup \cdots.
$$  

\subsubsection{The case $P=R$}
Let us first assume that $P = R$.  Let $G_\bullet = \{\hat 0 < G_1 < G_2 < \cdots < G_s < \hat 1\}$.  Then
\begin{equation}\label{eq:sepG}
b(G_\bullet) b_{\sep(P,Q_{G_\bullet})} = (-1)^{\sum_i \rk(G_i)} b_{G_s}^2 b_{G_{s-2}}^2 \cdots,
\end{equation}
where $\sum_i \rk(G_i) := \sum_{i=1}^{s(G_\bullet)} \rk(G_i)$.
Thus with $\beta(G_i) = 1/\tb_{G_i}$ and $\beta(G_\bullet) = \prod_{i=1}^s \beta(G_i)$, we have
\begin{align}\label{eq:bipG}
\begin{split}
 \ip{G_\bullet}_B b_{\sep(P,Q_{G_\bullet})} &= (-1)^{\sum_i \rk(G_i)} \prod_{i \equiv s} \frac{b_{G_i}^2}{b_{G_i}^2-1} \prod_{j \not \equiv s} \frac{1}{b_{G_j}^2-1} \sum_{E_\bullet \in \overline{G}_\bullet} \prod_{E_k \notin G_\bullet} \frac{1}{\tb_{E_k}} \\ 
 &=  (-1)^{\sum_i \rk(G_i)} \prod_{i \equiv s}(1+\beta(G_i)) \prod_{j \not \equiv s} \beta(G_j) \sum_{E_\bullet \in \overline{G}_\bullet} \prod_{ E_k \notin G_\bullet} \beta(E_k).
 \end{split}
\end{align}
Here, $i \equiv r$ means equal parity.  Expanding the factors $(1+\beta(G_i))$ in \eqref{eq:bipG} and summing over $G_\bullet \in \pFl(P)$, we may write $U=U(P,P)$ as a sum
$$
U= \sum_{E_\bullet} u_{E_\bullet} \beta(E_\bullet)
$$
for some integer coefficients $u_{E_\bullet}$.

\def\co{\kappa}
Let us compute the coefficients $u_{E_\bullet}$.  We say that $E_\bullet$ is \emph{compatible} with $G_\bullet$ if $\beta(E_\bullet)$ appears in the expansion \eqref{eq:bipG} for $G_\bullet$.  Write $\co(E_\bullet) \subseteq \pFl(P)$ for the set of $G_\bullet$ such that $E_\bullet$ is compatible with $G_\bullet$.  Then we have 
$$
u_{E_\bullet} = \sum_{G_\bullet \in \co(E_\bullet)}  (-1)^{\sum_i \rk(G_i)}.
$$

\begin{lemma}\label{lem:compat}
$E_\bullet$ is compatible with $G_\bullet$ if and only if 
\begin{enumerate}
\item Every pair $(E_i,G_j)$ is comparable, that is, $E_i \leq G_j$ or $G_j \leq E_i$.
\item If $G_i$ is missing from $E_\bullet$ then $i \equiv s(G_\bullet)$.
\end{enumerate}
\end{lemma}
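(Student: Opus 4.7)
The plan is to simply read off, from the expansion of the right hand side of \eqref{eq:bipG}, exactly which chains $E_\bullet$ produce a monomial $\beta(E_\bullet)$ and then match this against conditions (1) and (2). First I would expand the factor $\prod_{i \equiv s}(1+\beta(G_i))$ as a sum over subsets $T \subseteq \{G_i : i \equiv s(G_\bullet)\}$; together with the mandatory factor $\prod_{j \not\equiv s}\beta(G_j)$ this gives a sum over subsets $T \subseteq G_\bullet$ with $T \supseteq \{G_j \mid j \not\equiv s(G_\bullet)\}$. Combined with the remaining sum $\sum_{E'_\bullet \in \bG_\bullet}\prod_{E_k \in E'_\bullet\setminus G_\bullet} \beta(E_k)$, each term in \eqref{eq:bipG} is indexed by a pair $(T,E'_\bullet)$ with $E'_\bullet$ a refinement of $G_\bullet$ in $\pFl(P)$, and contributes the monomial $\beta(E_\bullet)$ for the chain $E_\bullet := T \cup (E'_\bullet \setminus G_\bullet)$.

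Given this parametrization, I would verify the two directions of the equivalence. For $(\Leftarrow)$: if $E_\bullet$ satisfies (1), then $F_\bullet := E_\bullet \cup G_\bullet$ is totally ordered, hence a chain in $L(P)\setminus\{\hat 0,\hat 1\}$ that refines $G_\bullet$, so $F_\bullet \in \bG_\bullet$; condition (2) forces $T := E_\bullet \cap G_\bullet$ to contain every $G_j$ with $j\not\equiv s(G_\bullet)$, so the pair $(T,F_\bullet)$ exhibits $\beta(E_\bullet)$ as a term in the expansion. For $(\Rightarrow)$: if $E_\bullet$ arises from a pair $(T,E'_\bullet)$, then every element of $E_\bullet$ either lies in $G_\bullet$ or refines $G_\bullet$ inside the chain $E'_\bullet$, so every such element is comparable to each $G_j$, giving (1); and a missing $G_i$ must be absent from $T$, forcing $i \equiv s(G_\bullet)$, giving (2).

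The only step that deserves a moment of thought is checking that the map $(T,E'_\bullet)\mapsto E_\bullet$ does not collapse or miss any chains that satisfy (1) and (2) — in other words, that the inverse construction $E_\bullet \mapsto (E_\bullet\cap G_\bullet,\; E_\bullet \cup G_\bullet)$ actually lands in the index set. This is immediate from (1) (so the union is a chain), from (2) (so the intersection is a valid $T$), and from $E_\bullet,G_\bullet \subseteq L(P)$ (so the union lies in $\pFl(P)$). No further matroid input is needed, so I expect no substantive obstacle.
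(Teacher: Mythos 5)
Your proof is correct. The paper states \cref{lem:compat} without proof, and what you give is precisely the expected unpacking of the expansion in \eqref{eq:bipG}: the parametrization by pairs $(T,E'_\bullet)$ with $T \subseteq G_\bullet$ containing all $G_j$ of parity $\not\equiv s(G_\bullet)$ and $E'_\bullet \in \bG_\bullet$, together with the observation that $(T,E'_\bullet) \mapsto T \cup (E'_\bullet \setminus G_\bullet)$ has inverse $E_\bullet \mapsto (E_\bullet \cap G_\bullet,\, E_\bullet \cup G_\bullet)$, whose well-definedness is exactly conditions (1) and (2).
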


Now, let us first consider the case $E_\bullet = \emptyflag$.  Then by \cref{lem:compat}, $E_\bullet$ is compatible with $G_\bullet$ if and only if $s(G_\bullet) \in \{0,1\}$.
By \cref{thm:sphere}, we have $\chi(L(P)\setminus\hat 1) = (-1)^{r-1}$, so we conclude that the coefficient of $u_{\emptyflag}$ is $(-1)^{r-1}$.

Now suppose that $E_\bullet \neq \emptyflag$ is compatible with $G_\bullet \in \co(E_\bullet)$.  We have that $E_1 \subseteq G_2$.  Consider a non-empty subset $\co' \subset \co(E_\bullet)$ where 
$$
\co' := \{G'_\bullet \in \co(E_\bullet) \mid G'_{s'-i} = G_{s-i} \text{ for all $i$ such that } E_1 \subsetneq G_{s-i}\}.
$$
In other words, the higher rank flats in $G'_\bullet$ are the same as those of $G_\bullet$.  Then $\co'$ is in bijection with the lower interval $[\hat0,E_1]$.  The Euler characteristic of this lower interval is equal to 0 by \cref{thm:sphere} applied to the restriction $\M^{E_1}$.  It follows that the contribution of $\co'$ to the coefficient of $\beta(E_\bullet)$ in $ \ip{G_\bullet}_B b_{\sep(P,Q_{G_\bullet})}$ is equal to $0$.  We conclude that the coefficient of $u_{E_\bullet}$ for $E_\bullet \neq \emptyflag$ is equal to $0$.  This completes the proof for the case $P =R$.

\subsubsection{The case $P\neq R$}
Now we move on to the case that $P \neq R$.  We will proceed by induction on the rank $r$ of $\M$ and number of elements $|E|$ of the ground set.

Let $S := \sep(P,R)$.  In the following, for the empty flag $G_\bullet = \emptyflag$, we denote $G_s = \emptyset$.
Divide $\pFl(P)$ into three subsets:
\begin{align*}
G^1(P)&:= \{G_\bullet \mid G_s \cap S = \emptyset\} \\
G^2(P)&:= \{G_\bullet \mid \emptyset \neq G_s \cap S \subsetneq G_s\} \\
G^3(P)&:= \{G_\bullet \mid \emptyset \neq G_s \subseteq S\}.
\end{align*}

For a fixed $F \in L(P)$, write $G^F(P) :=  \{G_\bullet \in \pFl(P) \mid G_s = F\}$.
For $G_\bullet \in \pFl(P)$, define 
$$\sep_-:= \sep(P,G_\bullet) \cap \sep(P,R), \qquad \text{and} \qquad \sep_+:= \sep(P,R) \setminus \sep(P,G_\bullet).$$

\begin{lemma}\label{lem:G2}
The contribution of $G_\bullet \in G^2(P)$ to the summation \eqref{eq:U} is 0.
\end{lemma}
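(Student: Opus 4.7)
The plan is to reduce the identity to a local cancellation on each partial flag of $P$, and then establish that cancellation either by a sign-reversing involution on $G^{2}(P)$ or, failing that, by an Euler characteristic argument parallel to the one used for $P=R$ earlier in this section.

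First I would interchange the two summations built into $\ip{G_\bullet}_B = b(G_\bullet)\sum_{E_\bullet\in\bG_\bullet} 1/\tb_{E_\bullet}$, writing
\[
\sum_{G_\bullet\in G^{2}(P)} \ip{G_\bullet}_B\, b_{\sep(R,Q_{G_\bullet})} \;=\; \sum_{E_\bullet\in\pFl(P)} \frac{1}{\tb_{E_\bullet}} \sum_{\substack{G_\bullet\leq E_\bullet\\ G_\bullet\in G^{2}(P)}} b(G_\bullet)\, b_{\sep(R,Q_{G_\bullet})}.
\]
Since the factors $1/\tb_{E_\bullet}$ are linearly independent over $\Z[\b]$ as $E_\bullet$ varies, it suffices to show the inner sum vanishes for every $E_\bullet\in\pFl(P)$. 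A coarsening $G_\bullet\leq E_\bullet$ is parametrized by a subset $\sigma$ of the flats of $E_\bullet$, and the condition $G_\bullet\in G^{2}(P)$ becomes $f(E_{\max\sigma})=2$, where $f(F)\in\{1,2,3\}$ classifies a flat $F\in L(P)$ by whether $F\cap S=\emptyset$, $\emptyset\neq F\cap S\subsetneq F$, or $F\subseteq S$, respectively.

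The central step is to produce a sign-reversing involution on the set of coarsenings $\sigma$ with $f(E_{\max\sigma})=2$. Fixing $j=\max\sigma$, the fact that $E_j$ is of type $2$ provides a distinguished flat $F$ of $L(P)$ strictly below $E_j$ that is canonically attached to $E_j$ (for instance, the maximal face of $E_j$ lying in $S$, or an analogous combinatorial choice using \cref{lem:fliptope}). The involution toggles the membership of this $F$ in the flag $G_\bullet$. Toggling a single flat $F$ in $G_\bullet$ multiplies $b(G_\bullet)$ by $(-1)^{\rk(F)}b_F$ and toggles the parity $[e\in\sep(P,Q_{G_\bullet})]$ for every $e\in F$, thereby modifying $b_{\sep(R,Q_{G_\bullet})}$ by a factor of the form $\prod_{e\in F}b_e^{\epsilon_e}$ with explicit $\epsilon_e\in\{\pm 1\}$. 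The key identity to verify is that when $f(E_j)=2$ these two modifications multiply to $-1$, so that the paired terms cancel.

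The main obstacle, I expect, is twofold: (i) choosing the distinguished flat $F$ below $E_j$ so that the toggle is actually an involution on the relevant set of coarsenings, and (ii) treating the boundary case $\sigma=\{j\}$ where $G_\bullet$ consists only of the top flat $E_j$ and no lower flat is available to toggle. For (ii), I plan to appeal to \cref{thm:sphere}: the faces of $E_j$ in $L(P)$ form the augmented face poset of a sphere, and the Euler characteristic of the subposet of those faces meeting $S$ in a prescribed way should vanish precisely when $f(E_j)=2$. This parallels the vanishing of $u_{E_\bullet}$ for $E_\bullet\neq\emptyflag$ established in the $P=R$ argument, and should close out the lemma.
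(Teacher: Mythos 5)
The core of your argument is a local sign-reversing involution: for each coarsening $G_\bullet$ of a fixed $E_\bullet$ with top flat $E_j$ of type $2$, you toggle a distinguished flat $F$ strictly below $E_j$ and assert that the two modifications ``multiply to $-1$.'' That identity is false. Take the simplest case $G_\bullet = (\hat 0 < E_j < \hat 1)$ and $G'_\bullet = (\hat 0 < F < E_j < \hat 1)$. Then $\sep(P,Q_{G_\bullet}) = E_j$ and $\sep(P,Q_{G'_\bullet}) = E_j \setminus F$, so $\sep(R,Q_{G_\bullet}) = S \triangle E_j$ and $\sep(R,Q_{G'_\bullet}) = S \triangle (E_j\setminus F)$. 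For $e\in F\subset E_j$ one checks $e\in S\triangle E_j \iff e\notin S$ and $e\in S\triangle(E_j\setminus F)\iff e\in S$, so
$$
\frac{b_{\sep(R,Q_{G'_\bullet})}}{b_{\sep(R,Q_{G_\bullet})}} = \frac{b_{F\cap S}}{b_{F\setminus S}},
\qquad
\frac{b(G'_\bullet)}{b(G_\bullet)} = (-1)^{\rk F} b_F = (-1)^{\rk F} b_{F\cap S}\,b_{F\setminus S},
$$
whose product is $(-1)^{\rk F}\, b^2_{F\cap S}$. This is not $-1$ for generic $\b$ (it is not even independent of $\b$), so the paired terms do not cancel. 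A secondary issue is that your distinguished $F$ is chosen from the geometry of $E_j$ alone (e.g., a maximal face of $E_j$ lying in $S$), but such an $F$ need not be a flat of $E_\bullet$; if it is not, toggling $F$ in $G_\bullet$ leaves the set of coarsenings of $E_\bullet$, so the reduction in your first display no longer applies. These two obstructions together mean the proposed involution cannot be repaired without a substantially different idea.

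The paper's proof is not a term-by-term cancellation but an induction in rank. It groups $G_\bullet\in G^2(P)$ by their top flat $F=G_s$ (which satisfies $\emptyset\neq F\cap S\subsetneq F$), observes that $G^F(P)$ is in natural bijection with $\pFl(P^F)$ in the restriction $\M^F$, and checks that the corresponding sub-sum of $U(P,R)$ identifies with $U(P^F,R^F)$ computed in $\M^F$. Crucially, $F\not\subset S$ and $F\cap S\neq\emptyset$ force $R^F\neq P^F$, so the inductive hypothesis $U(P^F,R^F)=0$ applies, killing the entire contribution of $G^F(P)$ at once. The vanishing is thus a consequence of the theorem at lower rank, not a local cancellation on the flag poset, which is why the sign-reversing involution you are looking for does not exist.
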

\begin{proof}
Consider $G^F(P)$ for a fixed $F \in L(P)$ such that $\emptyset \neq F \cap S \subsetneq F$.  Then $G^F(P)$ is in natural bijection with $G(P^F)$ where $P^F \in \T(\M^F)$ is the tope $P^F=P|_{ F}$ in the restriction of $\M$ to $F$.  Furthermore, 
$\sep(R^F,(G_\bullet)^F) = \sep(R,G_\bullet)\cap F$, and by assumption that $F \not \subset S$, we have $R^F \neq P^F$.  By the inductive hypothesis applied to the two topes $P^F, R^F \in \T(\M^F)$ we deduce that the contribution sums to $0$.
\end{proof}

\begin{lemma}\label{lem:G3}
Suppose that $\emptyset \neq F \subseteq S$.  Then the contribution of $G^F(P) \subset G^3(P)$ to the summation \eqref{eq:U} is equal to 
$$
 \sum_{G_\bullet \in G^F(P)} \ip{G_\bullet}_B b_{\sep(R,Q_{G_\bullet})} = 
- b_S \beta(F)\sum_{E_\bullet \in \pFl(\M_F)} \beta(E_\bullet).
$$
\end{lemma}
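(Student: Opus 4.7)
\smallskip

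The plan is to decompose each summand along the splitting of $L(P)$ at $F$ into $[\hat 0, F]$ and $[F, \hat 1]$, factor out the piece coming from the upper interval, and reduce the remaining inner sum to the already-established diagonal case $U(P',P') = (-1)^{\rk(P')-1}$ of the theorem, applied now to the restriction $(\M^F, P^F)$.

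First, I would compute $\sep(R, Q_{G_\bullet})$ explicitly. Writing $G_\bullet = (\hat 0 \subset H_1 \subset \cdots \subset H_t \subset F \subset \hat 1) \in G^F(P)$ (so $s = t+1$) and using the hypothesis $F \subseteq S$, which implies $R(e) = -P(e)$ on $F$ and $R(e) = P(e)$ off $S$, a direct parity count based on \eqref{eq:flip} yields
\begin{equation*}
\sep(R, Q_{G_\bullet}) \;=\; (S \setminus F) \;\sqcup\; T(H_\bullet), \qquad T(H_\bullet) := \bigsqcup_{\substack{1 \le j \le t \\ j \equiv t \pmod 2}} (H_j \setminus H_{j-1}),
\end{equation*}
with the convention $H_0 = \hat 0$. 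Repeating the same parity count \emph{inside} the restriction $(\M^F, P^F)$ identifies $T(H_\bullet)$ with the separating set $\sep(P^F, (P^F)_{\flip H_\bullet})$ of the flipped tope computed in $\M^F$.

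Next, any refinement $E_\bullet \in \bG_\bullet$ decomposes uniquely as a refinement of $H_\bullet$ in $L(P) \cap (\hat 0, F)$, followed by the flat $F$, followed by a partial flag $E''_\bullet \in \pFl(P_F)$ in $L(P) \cap (F, \hat 1)$. Since $b(G_\bullet)/b(H_\bullet) = (-1)^{\rk(F)} b_F$, this factorization together with $\tb_{E_\bullet} = \tb_F \cdot \tb_{E'_\bullet} \cdot \tb_{E''_\bullet}$ gives
\begin{equation*}
\ip{G_\bullet}_B \;=\; (-1)^{\rk(F)} b_F \cdot \beta(F) \cdot \ip{H_\bullet}_B^{\M^F} \cdot \!\!\sum_{E''_\bullet \in \pFl(P_F)}\! \beta(E''_\bullet),
\end{equation*}
where $\ip{H_\bullet}_B^{\M^F}$ denotes $\ip{\cdot}_B$ computed in the restriction $\M^F$. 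Combining with Step~1 and using $b_F \cdot b_{S \setminus F} = b_S$, the natural bijection $G^F(P) \leftrightarrow \pFl(P^F)$, $G_\bullet \mapsto H_\bullet$, transforms the left-hand side of the lemma into
\begin{equation*}
(-1)^{\rk(F)}\, b_S\, \beta(F) \Bigl(\sum_{E''_\bullet \in \pFl(P_F)} \beta(E''_\bullet)\Bigr) \cdot \sum_{H_\bullet \in \pFl(P^F)} \ip{H_\bullet}_B^{\M^F} \cdot b_{\sep(P^F,\,(P^F)_{\flip H_\bullet})}.
\end{equation*}

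The rightmost factor is precisely $U^{\M^F}(P^F,P^F)$ from \eqref{eq:U} applied to $(\M^F, P^F)$. By the diagonal case $P = R$ of the theorem, already established just above for an arbitrary oriented matroid and tope, this evaluates to $(-1)^{\rk(F) - 1}$ (the rank of $\M^F$ is $\rk(F)$). The two signs combine to $-1$, yielding the claimed $-b_S\,\beta(F)\sum_{E_\bullet \in \pFl(\M_F)} \beta(E_\bullet)$. The main obstacle is the parity bookkeeping in Step~1 that identifies $T(H_\bullet)$ with a genuine separating set inside $\M^F$; once that identification is made, the remainder is a clean factorization plus invocation of the diagonal identity.
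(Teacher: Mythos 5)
Your proof is correct and takes essentially the same route as the paper: decompose each $G_\bullet \in G^F(P)$ at the flat $F$ into a lower part $H_\bullet$ and an upper part, factor $\ip{G_\bullet}_B\,b_{\sep(R,Q_{G_\bullet})}$ accordingly, and reduce the surviving lower-interval sum to the already-established diagonal identity $U^{\M^F}(P^F,P^F)=(-1)^{\rk(F)-1}$. The only cosmetic difference is that the paper manipulates the $b$-monomial exponents directly via \eqref{eq:sepG}, whereas you first identify $\sep(R,Q_{G_\bullet})=(S\setminus F)\sqcup T(H_\bullet)$ set-theoretically and recognize $T(H_\bullet)$ as the separating set inside $\M^F$; these are two presentations of the same bookkeeping.
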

\begin{proof}
Let $G_\bullet \in G^F(P)$.  We have $\sep(P,G_\bullet) \subset F \subset S$.  Thus $\sep(R,G_\bullet) = S \setminus \sep(P,G_\bullet)$.  We write
$$
\frac{\sep_+}{\sep_-} b_{G_s}^2 b_{G_{s-2}}^2 \cdots = \frac{b_S}{b_{\sep(P,G_\bullet)}^2} b_F^2 b_{G_{s-2}}^2 \cdots = b_Sb_{G_{s-1}}^2 b_{G_{s-3}}^2 \cdots,
$$
where in the last equality we used \eqref{eq:sepG}.
We may thus reduce the calculation of the contribution of $G^F(P)$ to a sum over $G(P^F)$.  We deduce that the contribution is equal to $(-1)^{\rk(F)} b_S U(P^F,P^F)  \beta(F)\sum_{E_\bullet \in \pFl(\M_F)} \beta(E_\bullet)$, where $U(P^F,P^F)$ is calculated in $\M^F$.  Since we showed the previous case that $U(P^F,P^F)= (-1)^{\rk(F)-1}$, our result holds.
\end{proof}

By \cref{lem:G3}, the contribution of $G^3(P)$ is
\begin{equation}\label{eq:G3}
 \sum_{G_\bullet \in G^3(P)} \ip{G_\bullet}_B b_{\sep(R,Q_{G_\bullet})} = - b_S \sum_{F \subseteq S} \sum_{E_\bullet \mid E_1 = F} \beta(E_\bullet) = -\sum_{E_\bullet \mid E_1 \subseteq S}  b_S \beta(E_\bullet).
\end{equation}

\begin{lemma}\label{lem:G1}
The coefficient of $\beta(E_\bullet)$ in the summation $\sum_{G_\bullet \in G^1(P)} \ip{G_\bullet}_B b_{\sep(R,Q_{G_\bullet})}$ vanishes if $E_1 \not \subset S$.
\end{lemma}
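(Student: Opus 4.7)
The first step is to exploit the hypothesis $G_\bullet \in G^1(P)$ to factor out $b_S$. Since $Q_{G_\bullet}$ differs from $P$ only on elements lying in an odd number of the flats $G_i$, we have $\sep(P, Q_{G_\bullet}) \subseteq G_s$, and combined with $G_s \cap S = \emptyset$ this gives $\sep(P, Q_{G_\bullet}) \cap S = \emptyset$. The identity $\sep(R, Q_{G_\bullet}) = \sep(P,R) \triangle \sep(P, Q_{G_\bullet})$ then yields $\sep(R, Q_{G_\bullet}) = S \sqcup \sep(P, Q_{G_\bullet})$, so
$$
\sum_{G_\bullet \in G^1(P)} \ip{G_\bullet}_B \, b_{\sep(R, Q_{G_\bullet})} \;=\; b_S \sum_{G_\bullet \in G^1(P)} \ip{G_\bullet}_B \, b_{\sep(P, Q_{G_\bullet})},
$$
which reduces the problem to an analogue of the $P=R$ computation restricted to $G^1(P) \subseteq \pFl(P)$.

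Next I would expand via \eqref{eq:bipG} and \cref{lem:compat}: the coefficient of $\beta(E_\bullet)$ in $\ip{G_\bullet}_B\, b_{\sep(P, Q_{G_\bullet})}$ equals $(-1)^{\sum_i \rk(G_i)}$ when $E_\bullet$ and $G_\bullet$ are compatible, and vanishes otherwise. So the problem further reduces to showing that for every $E_\bullet$ with $E_1 \not\subseteq S$,
$$
c_{E_\bullet} \;:=\; \sum_{G_\bullet \, \in \, G^1(P) \, \cap \, \co(E_\bullet)} (-1)^{\sum_i \rk(G_i)} \;=\; 0.
$$

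To prove this vanishing I would adapt the Euler-characteristic argument from the $P=R$ analysis earlier in this section. Fix $\eta \in E_1 \setminus S$, which exists by hypothesis. I split the compatible $G_\bullet$'s into the main case, where some flat of $G_\bullet$ strictly contains $E_1$, and the boundary case, where every flat of $G_\bullet$ lies in $[\hat 0, E_1]$. In the main case, fixing the flats of $G_\bullet$ strictly above $E_1$ pins down $G_s$, so the constraint $G_s \cap S = \emptyset$ is automatically preserved as the remaining flats vary over partial chains in $L(P^{E_1}) = [\hat 0, E_1] \cap L(P)$; by \cref{thm:sphere} applied to the restricted tope $P^{E_1}$, the alternating sum of $(-1)^{\sum \rk(G_i)}$ over this variation is zero, exactly as in the $P=R$ case. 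In the boundary case, compatibility condition (2) forces $s(G_\bullet) \in \{0,1\}$, and the contribution reduces to $\sum_{F \in [\hat 0, E_1) \cap L(P),\; F \cap S = \emptyset}(-1)^{\rk(F)}$, which I would kill by an inclusion-exclusion / M\"obius computation on $L(P^{E_1})$ in which the existence of $\eta \in E_1 \setminus S$ supplies the needed sign-reversing toggle via the rank-one flat $\overline{\{\eta\}}$.

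The main obstacle is the parity bookkeeping imposed by compatibility condition (2) of \cref{lem:compat}: missing flats of $G_\bullet$ must satisfy $j \equiv s(G_\bullet) \pmod 2$, forcing a careful separation of the summation by whether $E_1$ itself appears in $G_\bullet$ and by the parities of the upper and lower parts. One also has to verify that it is precisely the hypothesis $E_1 \not\subseteq S$, and not the weaker $E_1 \cap S \neq \emptyset$, that permits the cancellation in the boundary case, since the existence of $\eta \in E_1 \setminus S$ is needed to adjoin or delete a rank-one flat disjoint from $S$ without violating the defining condition of $G^1(P)$.
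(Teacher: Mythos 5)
Your first step, factoring out $b_S$ via $\sep(R,Q_{G_\bullet}) = S \sqcup \sep(P,Q_{G_\bullet})$ for $G_\bullet \in G^1(P)$, is correct and is exactly the paper's opening move. Your reduction to the vanishing of
$$
c_{E_\bullet} = \sum_{G_\bullet \in G^1(P) \cap \co(E_\bullet)} (-1)^{\sum_i \rk(G_i)}
$$
is also the right target. The paper organizes the cancellation differently (it groups by fixing $G_2,\ldots,G_s$ and varies the bottom flat $G_1$ over $\{F \in L(P) \mid F \subseteq D\}$ where $D = E_1 \setminus S$), whereas you split into a main case (some $G_i \supsetneq E_1$) and a boundary case (all $G_i \subseteq E_1$). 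Both organizations lead to the same two phenomena: when $E_1 \cap S = \emptyset$ one gets the full interval $[\hat 0, E_1]$ whose Euler characteristic vanishes by \cref{thm:sphere}, and when $E_1 \cap S \neq \emptyset$ one is left with the subposet $\{F \in L(P^{E_1}) \mid F \cap S = \emptyset\}$.

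However, there is a genuine gap in your boundary case. You propose to kill $\sum_{F \in [\hat 0, E_1]\cap L(P),\, F\cap S=\emptyset}(-1)^{\rk(F)}$ via a ``sign-reversing toggle via the rank-one flat $\overline{\{\eta\}}$.'' This does not work: $L(P^{E_1})$ is a geometric lattice, not a boolean one, so $F \mapsto F \vee \overline{\{\eta\}}$ is neither a fixed-point-free involution nor rank-shifting by $\pm 1$ (for instance, if $\eta \in F$ the map fixes $F$, and even when $\eta \notin F$ the join can jump rank by more than one and may acquire intersection with $S$). The mechanism that actually proves this vanishing is the shelling argument that the paper gives later in the same proof of \cref{thm:Bettiinverse}, for the $E_\bullet = \emptyflag$ case: choose a shelling order of the facets of $P^{E_1}$ induced by a shortest tope path from $P^{E_1}$ to $(P^{E_1})^-$ passing through $R^{E_1}$, so that the facets in $S \cap E_1$ come first; then $\{F \mid F \cap S = \emptyset\}$ is the complement in the contractible complex $L(P^{E_1}) \setminus \hat 1$ of a contractible initial union of facets (nonempty since $S \cap E_1 \neq \emptyset$, proper since $E_1 \not\subseteq S$), so its Euler characteristic is $1-1=0$. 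The hypothesis $E_1 \not\subseteq S$ is used exactly here, and not through a local toggle by a single atom. A secondary imprecision: in your main case you speak of the remaining flats varying over ``partial chains'' in $L(P^{E_1})$, but the $P=R$ argument you are adapting fixes the flats strictly above $E_1$ and varies a \emph{single} flat $G_1$ over the closed interval $[\hat 0, E_1]$, invoking $\chi(L(P^{E_1}))=0$ — summing $(-1)^{\sum_i \rk(G_i)}$ over chains is a different quantity and \cref{thm:sphere} does not directly control it.
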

\begin{proof}
We have
$$
\sum_{G_\bullet \in G^1(P)} \ip{G_\bullet}_B b_{\sep(R,G_\bullet)} = b_S \sum_{G_\bullet \in G^1(P)} \ip{G_\bullet}_B b_{\sep(P,G_\bullet)}.
$$
Suppose that $E_1 \not \subset S$.  Let $D = E_1 \setminus S$.  We can fix the rest of $G_\bullet$ and let $G_1$ vary over $[\hat 0, D]$.  (The case $G_1 = \hat 0$ means that we do not include any $G_1$ in the interval $(\hat 0, D]$.).  As in the proof of \cref{lem:G2}, these contributions cancel out.
\end{proof}

Now, fix $E_\bullet$ satisfying $\emptyset \neq E_1 \subseteq S$ and consider the coefficient of $\beta(E_\bullet)$.  The contribution from $G^3(P)$ is given by \eqref{eq:G3}, and is simply $-b_S$.  For $\emptyflag \neq G_\bullet \in G^1(P)$, we have $G_1 \cap S = \emptyset$ so $G_1 \cap E_1 = \emptyset$, and thus $\ip{G_\bullet}_B b_{\sep(R,G_\bullet)}$ does not contribute to the coefficient of $\beta(E_\bullet)$.  However, for the term $\ip{\emptyflag}_B b_{\sep(R,\emptyflag)} = \ip{\emptyflag}_B b_S$ we get a coefficient of $b_S$.  The contributions cancel and the coefficient of $\beta(E_\bullet)$ vanishes.

Finally, let us consider the coefficient of $\beta(E_\bullet)$ where $E_\bullet = \emptyflag$.  There is no contribution from $G^3(P)$.  The contribution from $G^1(P)$ is equal to
$$
b_S \sum_{G_1 \in L(P) \mid G_1 \cap S = \emptyset} (-1)^{\rk(G_1)}.
$$
The subposet $X(S) = \{G_1 \in L(P) \mid G_1 \cap S = \emptyset\} \subset L(P)$ is obtained from the ball $L(P) \setminus \hat 1$ as follows.  Let $H \subset E$ denote the facets of $P$.  We remove from $L(P) \setminus \hat 1$ the upper order ideal generated by $H \cap S = H \cap \sep(P,R)$.  

We show that the Euler characteristic of $X(S)$ is $0$ by using standard results on shellings of $L(P)$.  We find a shelling order on the facets $H$ of $P$ by considering a shortest path from $P$ to the negative tope $-P$.  By picking this path to pass through $R$, we can arrange the facets belonging to $H \cap \sep(P,R)$ to come first.  Finally, it is known that the union of any proper initial subset of facets in this shelling order is a contractible subcomplex; see \cite[Proposition 4.3.1 and Lemma 4.7.28]{OMbook}.  Since $X(S)$ is the complement of a contractible complex in a contractible complex, it has Euler characteristic $0$.  We have shown that $U(P,R) = 0$.  This completes the inductive step, and the proof of the theorem.

\section{Bergman fan and Laplace transform}\label{sec:Bergman}
\subsection{Bergman fan}
Let $\R^E$ be the vector space with basis $\epsilon_e$, $e \in E$.  For $S \subset E$, let $\epsilon_S \in \R^E$ denote the vector $\epsilon_S:= \sum_{s \in S} \epsilon_s$.  Then $\epsilon_E = \one$, the all $1$-s vector.
For each partial flag $G_\bullet \in \pFl(M)$, let $C'_{G_\bullet}$ denote the simplicial $s+1$-dimensional cone 
$$
C'_{G_\bullet}:= \sp_{\R_{\geq 0}}(\epsilon_{G_1}, \epsilon_{G_2}, \ldots, \epsilon_{G_s}, \epsilon_{G_{s+1}}=\one),
$$
and let $C_{G_\bullet}$ denote the simplicial $s$-dimensional cone
$$
C_{G_\bullet}:= \sp_{\R_{\geq 0}}(\epsilon_{G_1}, \epsilon_{G_2}, \ldots, \epsilon_{G_{s}}).
$$
Each cone $C'_{G_\bullet}$ is the direct product of $C_{G_\bullet}$ with $\R_{\geq 0} \cdot \one$.  When $G_\bullet = F_\bullet \in \Fl(M)$ is a complete flag, the cone $C'_{G_\bullet}$ is $r$-dimensional, and the cone $C_{F_\bullet}$ is $d = (r-1)$-dimensional.

\begin{remark}
In the literature, $C_{G_\bullet}$ is usually defined as the image of $C'_{G_\bullet}$ in the quotient space $\R^E/\one$.  The result is that the Laplace transform (resp. discrete Laplace transform) of $C_{G_\bullet}$ below would be defined modulo $\sum_e a_e = 0$ (resp. $\prod_e b_e = 1$).  \cref{thm:deRhamfan} and \cref{thm:Bettifan} would then give the forms $\bdRip{\cdot,\cdot}$ and $\bip{\cdot,\cdot}_B$ (instead of $\dRip{\cdot,\cdot}$ and $\ip{\cdot,\cdot}_B$).
\end{remark}

\begin{definition}
The \emph{Bergman fan} $\tSigma_M$ (resp. $\Sigma_M$) is the $r$-dimensional (resp. $(r-1)$-dimensional) fan in $\R^E$ given by the union of the cones $C'_{G_\bullet}$ (resp. $C_{G_\bullet}$) for $G_\bullet \in \pFl(M)$.  We denote by $|\tSigma_M|$ (resp. $|\Sigma_M|$) the support of the Bergman fan.  
\end{definition}
The maximal cones of $\tSigma_M$ (resp. $\Sigma_M$) are exactly the cones $C'_{F_\bullet}$ (resp. $C_{F_\bullet}$) for $F_\bullet \in \Fl(M)$.

There are other, coarser, fan structures on $|\Sigma_M|$ which we discuss in \cref{sec:building}; see \cite{FS}.  A simplicial cone $C \subset \R^E$ is \emph{unimodular} if the primitive integer vector generators of the cone can be extended to a $\Z$-basis of $\Z^E$.  The following result is easy to see directly.

\begin{lemma}\label{lem:unimodular}
For any $G_\bullet \in \pFl(M)$, the simplicial cone $C'_{G_\bullet}$ (resp. $C_{G_\bullet}$) is unimodular.  \end{lemma}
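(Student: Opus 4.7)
The plan is to exhibit an explicit integer change of basis that converts the generators of $C'_{G_\bullet}$ into the characteristic vectors of a partition of $E$, and then to extend those to a $\Z$-basis of $\Z^E$ by adjoining standard basis vectors.

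More precisely, write $G_\bullet = (\hat 0 = G_0 \subset G_1 \subset \cdots \subset G_s \subset G_{s+1} = E)$. First I would observe that the transition matrix from $(\epsilon_{G_1}, \epsilon_{G_2}, \ldots, \epsilon_{G_{s+1}})$ to $(\epsilon_{G_1 \setminus G_0}, \epsilon_{G_2 \setminus G_1}, \ldots, \epsilon_{G_{s+1} \setminus G_s})$ is unipotent lower triangular with integer entries (since $\epsilon_{G_{i+1} \setminus G_i} = \epsilon_{G_{i+1}} - \epsilon_{G_i}$), hence unimodular. So the two tuples generate the same sublattice of $\Z^E$, and it suffices to show that the characteristic vectors of the blocks $B_i := G_i \setminus G_{i-1}$ of the partition $E = B_1 \sqcup B_2 \sqcup \cdots \sqcup B_{s+1}$ extend to a $\Z$-basis of $\Z^E$.

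Next I would choose, for each $i = 1, \ldots, s+1$, a single element $y_i \in B_i$, and claim that the set
\[
\mathcal{B} := \{\epsilon_{B_1}, \epsilon_{B_2}, \ldots, \epsilon_{B_{s+1}}\} \;\cup\; \{\epsilon_y \mid y \in E \setminus \{y_1, \ldots, y_{s+1}\}\}
\]
is a $\Z$-basis of $\Z^E$. Cardinality is obviously $|E|$, so it is enough to check that $\mathcal{B}$ spans $\Z^E$ over $\Z$; this follows by noting that for each $i$ one recovers $\epsilon_{y_i}$ as $\epsilon_{B_i} - \sum_{y \in B_i \setminus y_i} \epsilon_y$, which already lies in the $\Z$-span of $\mathcal{B}$ since the blocks $B_i$ partition $E$.

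For the cone $C_{G_\bullet}$ the argument is identical: its primitive generators $\epsilon_{G_1}, \ldots, \epsilon_{G_s}$ are a subset of the generators of $C'_{G_\bullet}$, and the basis $\mathcal{B}$ above (or the analogous one obtained by putting $\epsilon_{G_{s+1}} = \epsilon_E$ back among the $\epsilon_y$'s to be retained) still contains them. There is no real obstacle here; the only thing to double-check is that the $\epsilon_{G_i}$ are themselves primitive integer vectors (they are $0/1$-vectors with at least one $1$), so that they are genuinely the primitive ray generators of the corresponding rays of $\Sigma_M$ and $\tSigma_M$.
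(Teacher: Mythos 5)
Your proof is correct, and since the paper only remarks that the lemma ``is easy to see directly'' without supplying an argument, your write-up fills that gap with the standard proof: the change of basis from $(\epsilon_{G_1},\ldots,\epsilon_{G_{s+1}})$ to the block characteristic vectors $(\epsilon_{B_1},\ldots,\epsilon_{B_{s+1}})$ is unipotent triangular, and the block characteristic vectors together with standard basis vectors for the non-distinguished elements of each block visibly form a $\Z$-basis of $\Z^E$. The reduction to $C_{G_\bullet}$ as a face of $C'_{G_\bullet}$ is also fine, since any subset of a set of vectors that extends to a $\Z$-basis trivially also extends to one.

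One small point worth making explicit: the identity $\epsilon_{G_1\setminus G_0} = \epsilon_{G_1}$ and the triangularity of your transition matrix both rely on $G_0 = \hat 0 = \emptyset$, i.e.\ on $M$ being loopless. This is implicit throughout the Bergman fan discussion (and the paper assumes simplicity when discussing building sets and nested fan structures), but if $\hat 0 \neq \emptyset$ your first step would fail as stated, because $\epsilon_{G_0}$ would not lie in the span of $\epsilon_{G_1},\ldots,\epsilon_{G_{s+1}}$. It is worth saying ``since $M$ is loopless, $G_0 = \emptyset$'' before invoking that change of basis.

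Your remark on primitivity is also correct: each $\epsilon_{G_i}$ is a nonzero $0/1$-vector, so the gcd of its coordinates is $1$ and it is the primitive generator of its ray.
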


\subsection{Laplace transform}
The \emph{Laplace transform} is the integral transform defined as follows.  Given a function $f(\x)$ on $\R^n$, the Laplace transform $\L(f) = \L(f)(\y)$ is the function on $\R^n$
$$
\L(f)(\y) = \int_{\R_{>0}^n} f(\x) \exp(-\x \cdot \y) d^n \x,
$$
which is defined on the domain $\Gamma \subset \R^n$ of convergence of the integral.

Recall that a cone $C$ is \emph{pointed} if it does not contain a line.  If $C = \sp_{\R_{\geq 0}}(\v_1,\ldots,\v_m)$ is a pointed cone in $\R^n$, we consider the analogous integral transform of the characteristic function of $C$:
\begin{equation}\label{eq:Lapint}
\L(C)(\y) = \int_C \exp(-\x \cdot \y) d^n \x.
\end{equation}
Define the dual cone 
$$
C^*:= \{\y \in \R^n \mid \x \cdot \y \geq 0 \text{ for all } \x \in \R^n\}.
$$
The integral \eqref{eq:Lapint} converges absolutely for $\y \in \Int(C^*)$ in the interior of the dual cone $C^*$  to a rational function.  We declare this rational function to be the Laplace transform $\L(C)$ of $C$, ignoring the domain of convergence of the integral.  If $C$ has dimension less than $n$, then we instead set $\L(C)(\y) = 0$.

\begin{lemma}\label{lem:Lsimplicial}
Suppose that $C = \sp_{\R_{\geq 0}}(\v_1,\ldots,\v_n)$ is a $n$-dimensional simplicial cone in $\R^n$.  Then the Laplace transform of $C$ is the rational function
$$
\L(C)(\y) =  \prod_{i=1}^n \frac{|\det(\v_1,\ldots,\v_n)|}{\v_i \cdot \y},
$$
and the integral \eqref{eq:Lapint} converges absolutely for $\y \in \Int(C^*)$.
\end{lemma}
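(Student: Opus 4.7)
The plan is to reduce the computation to a product of one-dimensional Laplace integrals by performing a linear change of variables from the standard coordinates on $\R^n$ to the coordinates adapted to the generators $\v_1,\ldots,\v_n$ of the simplicial cone.

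First, since $\v_1,\ldots,\v_n$ are linearly independent (the cone is $n$-dimensional and simplicial), the map $\Phi: \R_{\geq 0}^n \to C$ defined by $\Phi(t_1,\ldots,t_n) = \sum_{i=1}^n t_i \v_i$ is a bijection onto $C$, and its Jacobian is the constant $|\det(\v_1,\ldots,\v_n)|$. Under this change of variables, the bilinear pairing transforms as $\x \cdot \y = \sum_i t_i (\v_i \cdot \y)$, so the integrand $\exp(-\x\cdot \y)$ factors as $\prod_{i=1}^n \exp(-t_i (\v_i \cdot \y))$.

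Next I would verify the convergence claim. For $\y \in \Int(C^*)$, the defining property of the dual cone gives $\v_i \cdot \y > 0$ for all $i = 1,\ldots,n$ (strict positivity since $\y$ lies in the interior and each $\v_i$ is a nonzero extremal ray of $C$). Thus after change of variables the integral becomes
$$
\L(C)(\y) = |\det(\v_1,\ldots,\v_n)| \int_{\R_{\geq 0}^n} \prod_{i=1}^n \exp(-t_i(\v_i \cdot \y)) \, dt_1 \cdots dt_n,
$$
which by Fubini factors as a product of one-dimensional integrals. Each factor is $\int_0^\infty \exp(-t_i s_i)\, dt_i = 1/s_i$ with $s_i = \v_i \cdot \y > 0$, and the absolute convergence on $\Int(C^*)$ follows from positivity of each $s_i$. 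Multiplying these together gives the stated formula
$$
\L(C)(\y) = \prod_{i=1}^n \frac{|\det(\v_1,\ldots,\v_n)|}{\v_i \cdot \y}.
$$

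There is no real obstacle here — the argument is entirely standard multivariable calculus. The only mildly delicate point is confirming that $\Int(C^*)$ is exactly the locus where each $\v_i \cdot \y > 0$, which is immediate from the fact that a vector $\y$ lies in the interior of the dual of a simplicial cone with extremal rays $\R_{\geq 0} \v_i$ if and only if it pairs strictly positively with each of the generators.
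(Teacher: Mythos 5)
Your proof is correct and takes essentially the same route as the paper: both reduce to a product of one‑dimensional integrals via the linear substitution $\x = \sum_i t_i \v_i$, with the only cosmetic difference that the paper first rescales the $\v_i$ so that $\det(\v_1,\ldots,\v_n)=1$ and phrases the substitution via the dual basis, while you carry the Jacobian $|\det(\v_1,\ldots,\v_n)|$ explicitly. The convergence discussion matches as well.
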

\begin{proof}
By rescaling, we may assume that $\det(\v_1,\ldots,\v_n) = 1$.  Let $\{\u_1,\ldots,\u_n\}$ be the dual basis.  Then we may write 
$$
\x = z_1 \v_1 + z_2 \v_2 + \cdots + z_n \v_n, \qquad \y = w_1 \u_1 + \cdots + w_n \u_n
$$
and
$$
\int_C \exp(-\x \cdot \y) d^n \x = \int_{\R_{>0}^n} \exp(-(z_1w_1+\cdots +z_nw_n)) dz_1 \cdots dz_n = \prod_{i=1}^n \int_0^\infty \exp(-z_iw_i) dz_i = \prod_{i=1}^n \frac{1}{w_n},
$$
where for the last equality we have assumed that $w_i >0$, or equivalently, $\y \in \Int(C^*)$.
Finally, we note that $w_i = \v_i \cdot \y$. 
\end{proof}

\begin{lemma}
Let $C$ be a $n$-dimensional pointed cone in $\R^n$.  Then the integral \eqref{eq:Lapint} converges absolutely for $\y \in \Int(C^*)$ to a rational function.
\end{lemma}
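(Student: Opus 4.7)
The plan is to reduce the claim to the simplicial case already handled by \cref{lem:Lsimplicial} via a triangulation argument. More precisely, since $C$ is an $n$-dimensional pointed cone in $\R^n$, it admits a triangulation into finitely many full-dimensional simplicial cones $C_1,\ldots,C_N$, meeting only along common lower-dimensional faces. (This is classical; for instance, one can take a cross-section of $C$ by an affine hyperplane transverse to it — which exists precisely because $C$ is pointed — obtaining a bounded polytope, then triangulate that polytope and cone back.) The faces along which the $C_i$ meet are lower-dimensional and therefore Lebesgue null in $\R^n$, so the indicator function of $C$ agrees almost everywhere with $\sum_i \mathbf{1}_{C_i}$.

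Next, I would observe the dual-cone containment $C^*\subseteq C_i^*$ for each $i$: if $\y$ pairs nonnegatively with every vector in $C$, then it pairs nonnegatively with every vector in the subcone $C_i$. Passing to interiors gives $\Int(C^*)\subseteq \Int(C_i^*)$ for each $i$. By \cref{lem:Lsimplicial}, for $\y \in \Int(C_i^*)$ the integral
\[
\L(C_i)(\y) \;=\; \int_{C_i} \exp(-\x\cdot\y)\, d^n\x
\]
converges absolutely and equals an explicit rational function of $\y$. Hence for $\y \in \Int(C^*)$, the integral over $C$ converges absolutely by monotone convergence (split as a sum of nonnegative integrals over the $C_i$), and we obtain
\[
\L(C)(\y) \;=\; \sum_{i=1}^N \L(C_i)(\y),
\]
a finite sum of rational functions, hence rational.

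There is no real obstacle here; the only subtlety is to note that the rational function $\L(C)(\y)=\sum_i \L(C_i)(\y)$ is \emph{independent} of the chosen triangulation (as one would want before declaring $\L(C)$ well-defined as a rational function, extending it beyond the domain of convergence), but this is automatic because the two sides agree as actual functions on the open set $\Int(C^*)$, where both representations are absolutely convergent integrals equal to the same value — and rational functions agreeing on a nonempty open set are equal.
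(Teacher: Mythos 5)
Your proof is correct and follows essentially the same route as the paper: triangulate the pointed cone into full-dimensional simplicial subcones, note that $\Int(C^*)\subseteq\Int(C_i^*)$ for each piece, and invoke \cref{lem:Lsimplicial} to get absolute convergence and rationality of the finite sum. The closing remark on triangulation-independence is a nice addition but not needed for the statement as posed.
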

\begin{proof}
Triangulate $C$ with simplicial cones $C_1,C_2,\ldots,C_k$.  Since $C_i \subset C$, it follows that $C^* = \bigcap C_i^*$, so by \cref{lem:Lsimplicial}, the integral converges absolutely when $\y \in \Int(C^*)$, and equals the rational function $\L(C) = \sum_{i=1}^k \L(C_i)$.
\end{proof}

Now, if $C$ is a cone that contains a line then we define $\L(C):=0$.
These definitions are consistent in the following sense.

\begin{proposition}[{\cite[p.341]{Bar}}]\label{prop:Lval}
Suppose that $C_1,\ldots,C_k \subset \R^n$ are polyhedral cones, and we have the identity
\begin{equation}\label{eq:indicator}
\sum_i \alpha_i [C_i] = 0
\end{equation}
of indicator functions $[C_i]$.  Then
$$
\sum_i \alpha_i \L(C_i) = 0.
$$
\end{proposition}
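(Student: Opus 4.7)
The plan is to establish that $\L$ extends to a \emph{valuation} on the $\Z$-module spanned by indicator functions of polyhedral cones in $\R^n$, valued in the field of rational functions in $\y$. Once this is shown, the hypothesis $\sum_i \alpha_i [C_i] = 0$ in the module of indicator functions immediately yields $\sum_i \alpha_i \L(C_i) = 0$, which is the desired conclusion. The existence of such a valuation is classical (Khovanskii--Pukhlikov, Lawrence), and is essentially the content of the proposition.

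The construction has three steps. First, by \cref{lem:Lsimplicial}, $\L$ is already defined on full-dimensional simplicial pointed cones; I would then check compatibility with simplicial subdivision, namely that if $C = \sp_{\R_{\geq 0}}(\v_1,\ldots,\v_n)$ is subdivided by adding a ray along $\w = \sum_i \lambda_i \v_i$ with all $\lambda_i > 0$, producing sub-cones $C_i$ obtained by replacing $\v_i$ with $\w$, then $\L(C) = \sum_i \L(C_i)$. This reduces to a partial-fraction identity in the linear forms $\y \cdot \v_i$ and $\y \cdot \w$ that can be checked by direct manipulation. Iterating this and invoking the standard fact that any two triangulations of a fixed pointed cone are connected by elementary flips, one obtains $\L$ unambiguously on all full-dimensional pointed cones. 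Second, for a cone containing a line $L$, the convention $\L(C) = 0$ is consistent with the valuation property because the formal integral is invariant under translation by $L$, forcing any rational-function value to vanish. Third, lower-dimensional cones are assigned $\L(C) = 0$, which is dimensionally forced.

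The main obstacle is verifying that the formula of \cref{lem:Lsimplicial} actually defines a valuation on simplicial cones, i.e.\ that it respects every inclusion-exclusion identity among simplicial cone indicators, not merely the elementary subdivision identity above. A clean route (followed by Barvinok) is a Brion-type polar duality, expressing $\L(C)$ as an integral over the polar dual cone, where the valuation property becomes transparent. Crucially, the identity $\sum_i \alpha_i \L(C_i) = 0$ must be interpreted as an identity of rational functions in $\y$, not as an identity on any common domain of convergence: the intersection $\bigcap_i \Int(C_i^*)$ of the absolute-convergence regions may be empty (e.g.\ when some $C_j$ equals $-C_k$), so a naive ``integrate both sides against $\exp(-\x \cdot \y)$'' argument does not suffice, and the valuation formalism is genuinely needed.
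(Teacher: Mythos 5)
Note first that the paper does not supply a proof of this proposition at all: it is stated with the attribution \cite[p.341]{Bar} (Barvinok's book) and no \texttt{proof} environment follows. So there is no internal argument to compare against; you are, in effect, reconstructing Barvinok's exponential valuation theorem. Your proposal correctly identifies both the right structure (reduce to showing $\L$ is a valuation) and the essential pitfall (the regions $\Int(C_i^*)$ of absolute convergence can be disjoint, e.g.\ for $C$ and $-C$ in dimension one, so one cannot simply integrate the indicator-function identity against $e^{-\x\cdot\y}$). That diagnosis is exactly right and worth making explicit, since it is the reason this is a theorem rather than a triviality.

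That said, there are two substantive problems. First, the claim that ``any two triangulations of a fixed pointed cone are connected by elementary flips'' is not a standard fact and is false in general (Santos exhibited point configurations in dimension $\geq 5$ whose flip graph is disconnected), so this route to well-definedness on pointed cones is unsound. It is also unnecessary: for a full-dimensional pointed cone $C$, any two triangulations give the same rational function because they both compute the absolutely convergent integral $\int_C e^{-\x\cdot\y}\,d\x$ for $\y\in\Int(C^*)$, which is a nonempty open set --- this is exactly the argument the paper already makes in the lemma preceding the proposition. Second, and more importantly, the step you flag as the ``main obstacle'' --- that $\L$ respects \emph{arbitrary} integer linear relations among cone indicators, not just simplicial subdivision identities --- is precisely the content of the proposition, and your proposal defers it to a one-clause appeal to ``Brion-type polar duality'' without constructing the valuation or verifying any relation beyond subdivision. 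As written, this is a roadmap rather than a proof: the key lemma (existence of the exponential valuation extending the convergent integral on pointed cones and vanishing on cones containing lines) remains cited, not proved. If you intend for the proposition to stand on a citation, the argument collapses to the paper's own one-line attribution; if you intend to prove it, you need to actually supply the construction (e.g.\ via Lawrence's decomposition into half-open cones, or via the polar-duality/meromorphic-continuation argument for a fixed generic $\y$).
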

In other words, $\L$ is a valuation on the algebra of indicator functions of cones.    Note that $\L(C_i) = 0$ if $C_i$ is not full-dimensional, so in \eqref{eq:indicator}, we can replace indicator functions by measures of cones.  See also \cite{GLX} for a study of the closely related dual volume rational functions.

\subsection{Discrete Laplace transform}

The discrete Laplace transform is defined as follows.  Given a closed rational polyhedral cone $C$, we define
$$
\dL(C) := \sum_{\y \in C \cap \Z^n} \b^{2\y},
$$
where $\b^{2\y} = b_1^{2y_1} b_2^{2y_2} \cdots b_n^{2y_n}$.  Note the unusual factor of $2$, which is used to match geometric parameters that will be introduced later.  We similarly define $\dL(C^\circ)$ for a (relatively) open rational polyhedral cone $C^\circ$.  Then $\dL$ can be viewed as a linear operator on the indicator functions of the cones.  In particular,
$$
\dL(C^\circ) = \dL(C) + \sum_{F} (-1)^{\codim(F)} \dL(F)
$$
where the summation is over all proper faces of $C$, including the origin.  For example, for $C$ equal to a cone over a quadrilateral, we would have
$$
\dL(C^\circ) = \dL(C) - (\dL(F_1)+\dL(F_2)+\dL(F_3)+\dL(F_4)) +(\dL(F_{12})+\dL(F_{23})+\dL(F_{34})+\dL(F_{14})) - \dL(0),
$$
where $F_1,F_2,F_3,F_4$ are the four facets of $C$, and $F_{ij} = F_i \cap F_j$ are the codimension two faces.

\begin{lemma}\label{lem:discLap}
Suppose that $C$ is a unimodular simplicial cone generated by primitive integer vectors $\v_1,\v_2,\ldots,\v_n$.  Then
\begin{align*}
\dL(C) &= \prod_{i=1}^n \frac{1}{1 - \b^{2\v_i}}, \qquad
\dL(C^\circ) = \b^{2(\v_1+\v_2+\cdots+\v_n)} \prod_{i=1}^n \frac{1}{1 - \b^{2\v_i}}.
\end{align*}
We have the identity
$$
\dL(C^\circ) = \sum_{I \subseteq [n]} (-1)^{n-|I|}\prod_{i\in I}^n \frac{1}{1 - \b^{2\v_i}} = (-1)^n \sum_{I \subseteq [n]} \prod_{i\in I}^n \frac{1}{\b^{2\v_i}-1}.
$$
\end{lemma}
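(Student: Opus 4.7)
The plan is to exploit unimodularity directly: since $C$ is unimodular, its generators $\v_1,\ldots,\v_n$ extend to a $\Z$-basis of $\Z^n$, so every lattice point of $C$ has a unique expression $\y = k_1\v_1+\cdots+k_n\v_n$ with $k_i \in \Z_{\ge 0}$. I would substitute this parametrization into the definition of $\dL(C)$, obtaining
\[
\dL(C) = \sum_{(k_1,\ldots,k_n) \in \Z_{\ge 0}^n} \b^{2(k_1\v_1+\cdots+k_n\v_n)} = \prod_{i=1}^n \sum_{k_i \ge 0} (\b^{2\v_i})^{k_i} = \prod_{i=1}^n \frac{1}{1-\b^{2\v_i}},
\]
interpreted as an identity of rational functions (the summation converges in a suitable region, and both sides are rational). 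The same calculation for $C^\circ$, whose lattice points are the nonnegative integer combinations with every $k_i \ge 1$, gives an extra factor of $\b^{2(\v_1+\cdots+\v_n)}$ by shifting each index.

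For the inclusion-exclusion identity, I would write the indicator function of the relatively open cone as
\[
[C^\circ] = \sum_{I \subseteq [n]} (-1)^{n-|I|} [C_I],
\]
where $C_I := \sp_{\R_{\ge 0}}(\v_i \mid i \in I)$ is the face of $C$ spanned by the generators in $I$. Each $C_I$ is again a unimodular simplicial cone (its generators are part of the $\Z$-basis), so by the first part $\dL(C_I) = \prod_{i\in I}\frac{1}{1-\b^{2\v_i}}$. Applying $\dL$ as a valuation gives the middle expression, and rewriting $\frac{1}{1-\b^{2\v_i}} = -\frac{1}{\b^{2\v_i}-1}$ absorbs the $(-1)^{n-|I|}$ into a uniform $(-1)^n$ to produce the final form.

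There is really no obstacle here — unimodularity makes the lattice-point count factor completely, and the inclusion-exclusion is standard. The only point worth noting is that one should treat both formulas for $\dL(C^\circ)$ as equal rational functions: the direct parametrization gives the closed-form $\b^{2(\v_1+\cdots+\v_n)}\prod_i(1-\b^{2\v_i})^{-1}$, and a short algebraic check (e.g.\ writing $\b^{2\v_i}/(1-\b^{2\v_i}) = -1 - 1/(\b^{2\v_i}-1)$ and expanding the product, or equivalently verifying $\prod_i \b^{2\v_i} = \prod_i(1 + (\b^{2\v_i}-1))$ expanded via the binomial sum over $I$) confirms agreement with the inclusion-exclusion expression.
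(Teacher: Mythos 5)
Your proof is correct and takes essentially the same approach as the paper: the paper parametrizes $C \cap \Z^n$ via the unimodular basis to factor the lattice-point generating function, obtains $\dL(C^\circ)$ by the shift $\y \mapsto \y + \v_1 + \cdots + \v_n$, and treats the final identity as inclusion-exclusion over faces (stated just before the lemma). Your write-up supplies a bit more explicit justification for the third identity than the paper, but the route is the same.
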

\begin{proof}
The set $C \cap \Z^n$ is equal to $\{\alpha_1 \v_1 + \cdots + \alpha_n \v_n \mid \alpha_i \in \Z_{\geq 0}\}$.  We have
$$
\dL(C) = \sum_{\alpha \in \Z_{\geq 0}^n} \b^{2 (\alpha_1 \v_1 + \cdots + \alpha_n \v_n)} = \prod_{i=1}^n (1+ \b^{2\v_i} + \b^{4\v_i} + \cdots) = \prod_{i=1}^n \frac{1}{1 - \b^{2\v_i}}.
$$
The map $\y \mapsto \y + \v_1+ \v_2 + \cdots + \v_n$ sends $C \cap \Z^n$ bijectively to $C^\circ \cap \Z^n$, giving the second equation.
\end{proof}

For an arbitrary rational polyhedral cone $C$, we may define $\dL(C)$ by writing $[C]$ in terms of indicator functions of unimodular simplicial cones.  Again, we have $\dL(C) = 0$ if $C$ contains a line, but unlike the case of $\L(C)$, the rational function $\dL(C)$ does not vanish when $C$ is not full-dimensional.
We have the following analogue of \cref{prop:Lval}; see \cite[Theorem 3.3]{Bar}.

\begin{proposition}\label{prop:dLval}
Suppose that $C_1,\ldots,C_k \subset \R^n$ are rational polyhedral cones, and we have the identity \eqref{eq:indicator} of indicator functions.  Then
$$
\sum_i \alpha_i \dL(C_i) = 0.
$$
\end{proposition}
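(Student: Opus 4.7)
The plan is to model the proof on Barvinok's theory of lattice-point generating functions, where this valuation property is a central result. The key point is that $\dL$ is defined first on cones $C$ not containing a line as the series $\sum_{\y \in C \cap \Z^n} \b^{2\y}$ on its domain of convergence, then extended to all rational polyhedral cones as a rational function via the unimodular-simplicial formula of \cref{lem:discLap}, with the convention $\dL(C)=0$ whenever $C$ contains a line. What must be checked is that this rational-function extension is additive on indicator-function identities — precisely the content of the proposition.

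The first step will be to apply Brion's (or more efficiently Barvinok's) signed unimodular decomposition to expand each indicator as $[C_i] = \sum_j \gamma_{ij} [D_{ij}]$, where every $D_{ij}$ is either a unimodular simplicial cone or contains a line. The identity $\sum_i \alpha_i [C_i] = 0$ persists under such refinement. Since cones containing a line contribute $0$ to $\dL$ by our convention (and any identity among their indicator functions descends automatically after discarding their contribution), the problem reduces to verifying $\sum_{i,j} \alpha_i \gamma_{ij} \dL(D_{ij}) = 0$ with each $D_{ij}$ unimodular simplicial and each $\dL(D_{ij})$ given by the closed formula of \cref{lem:discLap}.

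The desired equality is then a putative identity of rational functions in $\b$, so it suffices to verify it on a nonempty open subset of $(\C^*)^n$. The main obstacle, as in all such arguments, is that the natural convergence domain $U_D = \{\b : |\b^{2\v_k}| < 1 \text{ for every primitive ray } \v_k \text{ of } D\}$ varies from cone to cone and opposite cones have disjoint such domains, so there is no single $\b$ at which all the defining series converge. To overcome this, I would fix a generic direction $\xi \in \R^n$ nonzero on every primitive ray appearing among the $D_{ij}$, consider the one-parameter specialization $\b^{2\v} = e^{-t(\xi\cdot\v)}$ for $t \in \R_{>0}$, further refine the decomposition so that $\xi$ has a constant sign on the rays of each piece, and use the reciprocity identity $\dL(-C) = (-1)^n \dL(C^\circ)$ together with inclusion-exclusion on faces to flip each piece on which $\xi$ is negative into a piece on which $\xi$ is positive. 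After these manipulations every individual series converges absolutely for $0 < t \ll 1$; summing the pointwise identity $\sum_{i,j} \alpha_i \gamma_{ij} [D_{ij}](\y) = 0$ term by term against $e^{-t(\xi\cdot\y)}$ over $\y \in \Z^n$ yields the identity as meromorphic functions of $t$, and hence (by varying $\xi$ and analytic continuation) as rational functions in $\b$. The hardest part is the careful bookkeeping around sign conventions and the verification that the line-containing cones contribute nothing after passage through the specialization, which is exactly the heart of Barvinok's valuation theorem.
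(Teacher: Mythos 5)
The paper offers no proof of this proposition; it is stated with a citation to \cite[Theorem 3.3]{Bar}, Barvinok's valuation theorem for exponential sums over lattice points in rational cones. Your proposal reconstructs the core of that theorem's proof, and the skeleton is right: refine via signed unimodular decomposition, pass through a one-parameter specialization $\b^{2\v}=e^{-t(\xi\cdot\v)}$ with $\xi$ generic, and use Stanley reciprocity to flip any piece on which $\xi$ is negatively directed so that every remaining series converges absolutely for small $t>0$.

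There is one genuine soft spot, which you yourself flag: the claim that line-containing cones ``contribute nothing after passage through the specialization.'' If $D$ contains a line $\R\v$ with $\v\in\Z^n$, the series $\sum_{\y\in D\cap\Z^n}e^{-t(\xi\cdot\y)}$ diverges for every $t>0$ (it is unbounded in both directions along the line), so you cannot simply evaluate the pointwise indicator identity against $e^{-t(\xi\cdot\y)}$ and drop those terms. The standard fix, which is really the heart of Barvinok's argument, is algebraic rather than analytic: translation by $\v$ is a bijection of $D\cap\Z^n$, which in any algebra where $\dL$ is defined forces $(\b^{2\v}-1)\,\dL(D)=0$ and hence $\dL(D)=0$ as a rational function, consistently with the paper's convention. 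One then shows that the assignment $C\mapsto\dL(C)$ factors through the quotient of the algebra of indicator functions by the ideal generated by indicators of line-containing cones, and applies the specialization argument only to the pointed unimodular pieces. With that step made precise, your outline matches the argument the paper is citing.
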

In other words, $\dL$ is a valuation on the algebra of indicator functions of cones.  

We will only need the following result for unimodular simplicial cones, but we state it in the natural generality.
\begin{proposition}
The Laplace transform and discrete Laplace transform are related by
$$
\L(C) = (-1)^n \lim_{\alpha \to 0} \alpha^n \dL(C)|_{b_i \mapsto 1 + \alpha a_i/2},
$$
where $C$ is a rational polyhedral cone in $n$-dimensions.
\end{proposition}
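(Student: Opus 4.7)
The plan is to reduce to the case of a unimodular simplicial cone via the valuation properties of $\L$ and $\dL$, then verify the identity by a direct Taylor expansion.

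Any rational polyhedral cone $C \subset \R^n$ admits a decomposition $[C] = \sum_i \alpha_i [C_i]$ with $\alpha_i \in \Z$ and each $C_i$ a unimodular simplicial cone of dimension at most $n$ (first triangulate into rational simplicial cones, then subdivide to achieve unimodularity, tracking boundary correction terms as signed indicator functions of lower-dimensional unimodular simplicial cones). By \cref{prop:Lval} and \cref{prop:dLval}, both $\L(C) = \sum_i \alpha_i \L(C_i)$ and $\dL(C) = \sum_i \alpha_i \dL(C_i)$. Since the operations $b_j \mapsto 1 + \alpha a_j/2$, multiplication by $\alpha^n$, and $\lim_{\alpha \to 0}$ are each $\C$-linear on the rational-function output, it suffices to prove the identity when $C$ itself is a unimodular simplicial cone.

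So suppose $C = \sp_{\R_{\geq 0}}(\v_1,\ldots,\v_k)$ with $\v_1,\ldots,\v_k$ a partial $\Z$-basis of $\Z^n$ and $k \leq n$. For each $i$, writing $\v_i = (v_{i1},\ldots,v_{in})$, direct expansion gives
$$\b^{2\v_i}\Big|_{b_j \mapsto 1 + \alpha a_j/2} \;=\; \prod_{j=1}^n \bigl(1 + \alpha a_j/2\bigr)^{2 v_{ij}} \;=\; 1 + \alpha(\v_i \cdot \a) + O(\alpha^2),$$
so $1 - \b^{2\v_i} = -\alpha(\v_i \cdot \a) + O(\alpha^2)$. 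Combined with \cref{lem:discLap} this yields
$$(-1)^n \alpha^n\, \dL(C)\Big|_{b_j \mapsto 1 + \alpha a_j/2} \;=\; (-1)^{n-k}\,\alpha^{n-k}\prod_{i=1}^k \frac{1}{\v_i \cdot \a}\,\bigl(1 + O(\alpha)\bigr).$$
Taking $\alpha \to 0$ gives $\prod_{i=1}^n (\v_i \cdot \a)^{-1}$ when $k = n$ and $0$ when $k < n$. By \cref{lem:Lsimplicial}, together with unimodularity (so $|\det(\v_1,\ldots,\v_n)| = 1$), this matches $\L(C)$ in both cases, since $\L(C) = 0$ by convention when $\dim C < n$. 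Finally, if $C$ contains a line, then $\L(C)$ and $\dL(C)$ are both zero by definition and the identity holds trivially.

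The only subtlety is whether $\lim_{\alpha\to 0}$ may be passed through the triangulation sum. The explicit computation above resolves this: for every unimodular simplicial $C_i$ of dimension $k_i \le n$, the expression $\alpha^n \dL(C_i)|_{b \mapsto 1 + \alpha \a/2}$ has a \emph{finite} limit in $\C(\a)$ (equal to $\L(C_i)$, possibly zero), so the limit commutes term by term with the finite $\Z$-linear combination. No cancellation of divergent leading terms between distinct pieces of the triangulation needs to be invoked, which is what makes this reduction go through cleanly.
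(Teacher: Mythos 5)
Your proof is correct and follows essentially the same route as the paper's: reduce to unimodular simplicial cones via the valuation properties (\cref{prop:Lval}, \cref{prop:dLval}), then verify the identity by Taylor expansion of $1 - \b^{2\v_i}$ using \cref{lem:discLap} and \cref{lem:Lsimplicial}. Your version is in fact slightly more careful than the paper's sketch: you make the $O(\alpha^{n-k})$ vanishing for lower-dimensional unimodular simplicial cones explicit (where the paper only asserts "it is not hard to see that the limit vanishes"), and you justify passing the limit through the finite triangulation sum.
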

\begin{proof}
Suppose first that $C$ has dimension less than $n$.  Then $\L(C)=0$ and it is not hard to see that the limit vanishes as well.

Next suppose that $C$ is a unimodular simplicial full-dimensional cone.  Then substituting $1- \b^{2\v}|_{b_i \mapsto 1 + \alpha a_i/2} = \alpha(\v \cdot \a) + O(\alpha^2)$ into \cref{lem:Lap} and \cref{lem:discLap}, we obtain the stated formula.  

Now let $C$ be an arbitrary full-dimensional rational polyhedral cone.  We may write (the indicator function of) $C$ as an alternating sum of unimodular simplicial cones; see \cite{BaPo}.  The stated formula then follows from the result for unimodular simplicial cones and \cref{prop:Lval,prop:dLval}.
\end{proof}

\subsection{Laplace transforms of the Bergman fan}

In the following, we shall utilize the $k$-dimensional Laplace transform of $k$-dimensional subfans of the Bergman fan $\tSigma_M$ or $\Sigma_M$.  The lattice $\Z^E \subset \R^E$ defines a measure in $\R^E$ by declaring that the unit cube of the lattice $\Z^E \subset \R^E$ has volume $1$.  A $k$-dimensional subspace $V \subset \R^E$ is \emph{unimodular} if the abelian group $V \cap \Z^E$ has a $\Z$-basis that can be extended to a $\Z$-basis of $\Z^E$.  We define a measure in $V$ by declaring that the unit cube in $V \cap \Z^E$ has volume $1$.  Let $C$ be a $k$-dimensional cone spanning a unimodular subspace $V \subset \R^E$.  We define the Laplace transform $\L(C) = \L^k(C)$ by using this measure.   It follows from \cref{lem:unimodular} that the subspace spanned by any cone $C \subset \Sigma_M$ (resp. $C' \subset \tSigma_M$) of the Bergman fan is a unimodular subspace.  

\begin{lemma}\label{lem:Lap}
Let $F_\bullet = \{F_0 \subset F_{1} \subset F_{2} \subset \cdots \subset F_{r-1} \subset F_r = E\}$ be a complete flag.
Let $C'_{F_\bullet}$ (resp. $C_{F_\bullet}$) be the corresponding cone of the Bergman fan.  Then the Laplace transform of $C'_{F_\bullet}$ (resp. $C_{F_\bullet}$) is equal to 
$$
\L(C') = \prod_{i=1}^r \frac{1}{a_{F_i}} = \frac{1}{a'_{F_\bullet}}, 
\qquad \L(C) = \prod_{i={1}}^{r-1} \frac{1}{a_{F_i}} = \frac{1}{a_{F_\bullet}}.
$$
\end{lemma}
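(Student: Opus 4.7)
The plan is to reduce the statement directly to \cref{lem:Lsimplicial}, using unimodularity to eliminate the Jacobian factor.

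First, I would observe that both $C'_{F_\bullet}$ and $C_{F_\bullet}$ are simplicial cones by construction, of dimensions $r$ and $r-1$ respectively, and by \cref{lem:unimodular} the spanning vectors $\{\epsilon_{F_1},\ldots,\epsilon_{F_r}\}$ (resp.\ $\{\epsilon_{F_1},\ldots,\epsilon_{F_{r-1}}\}$) extend to a $\Z$-basis of the ambient unimodular subspace intersected with $\Z^E$. Under the measure on the relevant subspace $V$ fixed by declaring the unit cube of $V\cap \Z^E$ to have volume $1$, the generators therefore form a basis whose ``determinant'' (the volume of the parallelepiped they span inside $V$) equals $1$.

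Next, I would apply \cref{lem:Lsimplicial} in that subspace. Writing $\a = \sum_e a_e \epsilon_e$ for the dual variable, the key computation is
\[
\epsilon_{F_i} \cdot \a = \sum_{e \in F_i} a_e = a_{F_i}.
\]
Substituting into the formula of \cref{lem:Lsimplicial} gives
\[
\L(C'_{F_\bullet}) = \prod_{i=1}^{r} \frac{1}{\epsilon_{F_i}\cdot \a} = \prod_{i=1}^{r}\frac{1}{a_{F_i}},
\qquad
\L(C_{F_\bullet}) = \prod_{i=1}^{r-1} \frac{1}{\epsilon_{F_i}\cdot \a} = \prod_{i=1}^{r-1}\frac{1}{a_{F_i}} = \frac{1}{a_{F_\bullet}},
\]
matching the two claimed formulas (with the second equality in the $C_{F_\bullet}$ case being the definition of $a_{F_\bullet}$).

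Essentially nothing is hard here; the only point requiring care is the compatibility between the measure used to define $\L$ on a cone spanning a proper subspace $V\subset \R^E$ and the ``$\det = 1$'' normalization coming from unimodularity. I would spell this out by noting that if $\{\epsilon_{F_1},\ldots\}$ is a $\Z$-basis of $V\cap \Z^E$, then in the coordinates $\x = \sum_i z_i \epsilon_{F_i}$ on $V$, the measure on $V$ is simply $dz_1\cdots dz_k$, so the one-dimensional integrals factor as in the proof of \cref{lem:Lsimplicial} and each contributes a factor $1/(\epsilon_{F_i}\cdot \a)$. This completes the argument.
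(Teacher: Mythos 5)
Your proof is correct and follows the route the paper clearly intends (the paper states the lemma without proof, relying on the unimodular-subspace discussion and \cref{lem:Lsimplicial} that immediately precede it). The key observation $\epsilon_{F_i}\cdot\a = a_{F_i}$ together with the fact that unimodularity kills the determinant factor in \cref{lem:Lsimplicial} is exactly what is needed, and your remark about the compatibility of the chosen measure on $V$ with the coordinates $z_i$ is the right point to make explicit.
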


\begin{lemma}\label{lem:dLap}
Let $E_\bullet = \{\emptyset = E_0 \subset E_{1} \subset E_{2} \subset \cdots \subset E_{s} \subset E_{s+1} = E\}$ be a partial flag.
Let $C'_{E_\bullet}$ (resp. $C_{E_\bullet}$) be the corresponding cone of the Bergman fan.  Then the discrete Laplace transform of $C'_{E_\bullet}$ (resp. $C_{E_\bullet}$) is equal to 
$$
\dL(C') = (-1)^{s+1} \prod_{i=1}^{s+1} \frac{1}{\tb_{E_i}}, 
\qquad \dL(C) =  (-1)^s \prod_{i=1}^s \frac{1}{\tb_{E_i}}= (-1)^s \frac{1}{\tb_{E_\bullet}}.
$$
\end{lemma}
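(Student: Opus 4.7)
The plan is to recognize this as a direct computation using the two structural lemmas already established earlier in the section. By \cref{lem:unimodular}, the simplicial cones $C'_{E_\bullet}$ and $C_{E_\bullet}$ are unimodular, with primitive integer generators $\epsilon_{E_1}, \ldots, \epsilon_{E_{s+1}}$ and $\epsilon_{E_1}, \ldots, \epsilon_{E_s}$ respectively. (Here $\epsilon_{E_{s+1}} = \epsilon_E = \one$.) Thus, the hypotheses of \cref{lem:discLap} are satisfied, and applying that formula gives
\[
\dL(C'_{E_\bullet}) = \prod_{i=1}^{s+1} \frac{1}{1 - \b^{2\epsilon_{E_i}}}, \qquad \dL(C_{E_\bullet}) = \prod_{i=1}^{s} \frac{1}{1 - \b^{2\epsilon_{E_i}}}.
\]

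Next, I would unpack $\b^{2\epsilon_{E_i}}$. By definition of $\epsilon_{E_i} = \sum_{e \in E_i} \epsilon_e$ and of $b_S = \prod_{e \in S} b_e$, we have $\b^{2\epsilon_{E_i}} = b_{E_i}^2$. Using the identity $1 - b_{E_i}^2 = -\tb_{E_i}$ (from the definition $\tb_S = b_S^2 - 1$), each factor in the product contributes a sign of $-1$, and the result follows directly:
\[
\dL(C'_{E_\bullet}) = (-1)^{s+1}\prod_{i=1}^{s+1} \frac{1}{\tb_{E_i}}, \qquad \dL(C_{E_\bullet}) = (-1)^s \prod_{i=1}^{s}\frac{1}{\tb_{E_i}},
\]
and the final equality matches the definition of $1/\tb_{E_\bullet}$ given at the start of the Betti section.

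There is no real obstacle here: the proof is essentially a one-line substitution into \cref{lem:discLap} once unimodularity (\cref{lem:unimodular}) is invoked. The only subtlety worth flagging is the bookkeeping of the sign $(-1)^{s+1}$ versus $(-1)^s$, which simply tracks whether the flag $E_\bullet$ is being augmented with the top cone generator $\epsilon_E = \one$ or not.
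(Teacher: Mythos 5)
Your proof is correct and is clearly the one the paper intends, as the lemma is stated without proof there precisely because it is the immediate substitution of $\v_i = \epsilon_{E_i}$, $\b^{2\epsilon_{E_i}} = b_{E_i}^2$, and $1 - b_{E_i}^2 = -\tb_{E_i}$ into \cref{lem:discLap}, with unimodularity supplied by \cref{lem:unimodular}. Nothing to add.
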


\subsection{Local Bergman fan}\label{sec:localBF}
Henceforth, we only work with the $(r-1)$-dimensional Bergman fan $\Sigma_M$.  For a basis $B \in \B(M)$, the \emph{local Bergman fan} $\Sigma_M(B) \subset \Sigma_M$ is the subfan given by the union of cones $C_{F_\bullet}$ where $F_\bullet$ is generated by $B$.  The support $|\Sigma_M(B)|$ is equal to the intersection of $|\Sigma_M|$ with the normal cone $C(e_B)$ to the vertex $e_B$ in the normal fan of the \emph{matroid polytope} $P_M$ \cite[Section 4]{FS}.  For $B,B' \in \B(M)$, define 
$$
\Sigma_M(B,B') := \Sigma_M(B) \cap \Sigma_M(B') \subset \Sigma_M
$$ 
to be the intersection of the two subfans $\Sigma_M(B)$ and $\Sigma_M(B')$.

\begin{proposition}
The support $|\Sigma_M(B,B')|$ of the fan $\Sigma_M(B,B')$ is the intersection of $|\Sigma_M|$ with the normal cone $C(G)$ where $G$ is the smallest face of $P_M$ containing $e_B$ and $e_{B'}$. 
\end{proposition}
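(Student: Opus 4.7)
The plan is to reduce the claim to the quoted description $|\Sigma_M(B)| = |\Sigma_M| \cap C(e_B)$ together with a standard fact about normal fans of polytopes.

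First, taking supports commutes with intersection of subfans (both $\Sigma_M(B)$ and $\Sigma_M(B')$ are subfans of the common fan $\Sigma_M$, so a cone lies in both iff its relative interior does), giving
$$
|\Sigma_M(B,B')| \;=\; |\Sigma_M(B)| \cap |\Sigma_M(B')| \;=\; |\Sigma_M| \cap C(e_B) \cap C(e_{B'}),
$$
where for the last equality we apply the quoted identity $|\Sigma_M(B)| = |\Sigma_M| \cap C(e_B)$ from the paragraph preceding the proposition.

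Next, I would invoke the general fact that in any normal fan of a polytope $P$, the normal cones satisfy $C(F_1) \cap C(F_2) = C(G)$, where $G$ is the smallest face of $P$ containing both $F_1$ and $F_2$. In our setting, $F_1 = \{e_B\}$ and $F_2 = \{e_{B'}\}$ are vertices of $P_M$, so their common refinement under the normal-fan correspondence is the normal cone to the smallest face $G \subseteq P_M$ that contains them both. Thus
$$
C(e_B) \cap C(e_{B'}) \;=\; C(G),
$$
and combining with the first display gives $|\Sigma_M(B,B')| = |\Sigma_M| \cap C(G)$, as desired.

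The only step that requires any thought is the normal-fan identity. It follows directly from the definition: $\y \in C(v)$ iff the linear functional $\y$ attains its maximum on $P_M$ at $v$. Hence $\y \in C(e_B) \cap C(e_{B'})$ iff $\y$ attains its maximum at both $e_B$ and $e_{B'}$, iff the face of $P_M$ maximizing $\y$ contains both vertices, iff that face contains the smallest face $G$ spanned by $\{e_B, e_{B'}\}$, iff $\y \in C(G)$. No matroid-specific input is used beyond the fact that $P_M$ is a polytope; so I expect no genuine obstacle in writing this out rigorously.
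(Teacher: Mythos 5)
Your proof is correct and follows essentially the same route as the paper: reduce via $|\Sigma_M(B,B')| = |\Sigma_M| \cap C(e_B) \cap C(e_{B'})$ and then invoke the normal-fan identity $C(e_B)\cap C(e_{B'}) = C(G)$. The only difference is that the paper asserts this last identity as standard, whereas you also sketch its proof.
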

\begin{proof}
We have 
$$
|\Sigma_M(B,B')| = |\Sigma_M(B)| \cap |\Sigma_M(B')| = (|\Sigma_M| \cap C(e_B)) \cap (|\Sigma_M| \cap C(e_{B'})) = |\Sigma_M| \cap C(e_B) \cap C(e_{B'}).$$
The two cones $C(e_B)$ and $C(e_{B'})$ are (maximal) cones in the normal fan $\N(P_M)$ of the matroid polytope $P_M$ of $M$.  The intersection $C(e_B) \cap C(e_{B'})$ is again a cone.  It is equal to $C(G)$, where $G$ is the smallest face of $P_M$ containing both vertices $e_B$ and $e_{B'}$.
\end{proof}

The fan $\Sigma_M(B,B')$ is a subfan of the pure $(r-1)$-dimensional fan $\Sigma_M$.  The $(r-1)$-dimensional part of $\Sigma_M(B,B')$ is the union of the cones $C_{F_\bullet}$ where $F_\bullet \in \Fl(M)$ is generated by both $B$ and $B'$.  Note that $|\Sigma_M(B,B')|$ is always non-empty since it contains the origin.  However, it may have dimension less than $r-1$.  Recall the definition of the permutation $\sigma(B,F_\bullet)$ from \eqref{eq:sigma}.

\begin{proposition}[{\cite[Proposition 4.5]{BV}}] \label{prop:BV}
Suppose that $F_\bullet \in \Fl(M)$ is generated by both $B$ and $B'$.  Then the permutation $\sigma(B,F_\bullet)\sigma(B',F_\bullet)^{-1}$ depends only on $B,B'$.
\end{proposition}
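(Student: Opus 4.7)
My plan is to show that $T(F_\bullet) := \sigma(B,F_\bullet)\sigma(B',F_\bullet)^{-1}$ is constant as $F_\bullet$ ranges over the complete flags generated by both $B$ and $B'$, by combining a local invariance under single-flat modifications with a connectedness argument for the associated dual graph.

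For local invariance, I consider two such flags $F_\bullet$ and $\tilde F_\bullet$ that agree in every position except position $i$, where $F_i \ne \tilde F_i$. Since $\sigma(B,F_\bullet)(j)$ is characterized as the unique index $k$ with $b_k \in F_j \setminus F_{j-1}$, the values $\sigma(B,F_\bullet)(j) = \sigma(B,\tilde F_\bullet)(j)$ must agree for $j \notin \{i,i+1\}$. The unordered pair at positions $i,i+1$ is the set of $k$ with $b_k \in F_{i+1}\setminus F_{i-1}$, which is unchanged by the modification; since $F_i \ne \tilde F_i$, the two indices must swap, giving $\sigma(B,\tilde F_\bullet) = \sigma(B,F_\bullet)\cdot(i,i+1)$. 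The identical argument applied to $B'$ yields the same transposition, and the two cancel in the product $T(\tilde F_\bullet) = T(F_\bullet)$.

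For connectedness, the key observation is that the subposet $L_{B,B'} := \{F \in L(M) : |B\cap F|=|B'\cap F|=\rk F\}$ is closed under meets and joins in $L(M)$: the submodular inequality $\rk(F\vee F')+\rk(F\wedge F') \le \rk F + \rk F'$ is forced to an equality on both sides after intersecting with $B$ (or with $B'$) and using independence. Thus $L_{B,B'}$ is a sublattice containing $\hat 0$ and $\hat 1$, and the complete flags generated by both bases correspond to its length-$r$ maximal chains. I would prove connectedness by induction on $r$: two length-$r$ chains sharing the rank-one element $x_1$ are connected by the inductive hypothesis applied in $M/x_1$, while two chains with distinct rank-one elements $x_1 \ne y_1$ are pivoted through the common rank-two element $x_1 \vee y_1 \in L_{B,B'}$, via intermediate length-$r$ chains of the form $\hat 0 \lessdot x_1 \lessdot (x_1\vee y_1) \lessdot \cdots$ and $\hat 0 \lessdot y_1 \lessdot (x_1\vee y_1) \lessdot \cdots$, which differ only at position one and each reduce the problem via the inductive hypothesis in the intervals $[x_1,\hat 1]$ and $[y_1,\hat 1]$.

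The most delicate point is the construction of these intermediate length-$r$ chains: starting from the original chain $\hat 0 \lessdot x_1 \lessdot x_2 \lessdot \cdots \lessdot x_r$, one forms the sequence $x_i \vee y_1$, and shows by a short case analysis on the smallest $k$ with $y_1 \le x_k$ (necessarily $k \ge 2$) that the resulting refinement $\hat 0 \lessdot x_1 \lessdot (x_1 \vee y_1) \lessdot (x_2 \vee y_1) \lessdot \cdots \lessdot (x_{k-1} \vee y_1) = x_k \lessdot x_{k+1} \lessdot \cdots \lessdot x_r$ has all consecutive rank differences equal to one, hence length exactly $r$. Once this construction is in place, the induction closes and Step 1 immediately yields constancy of $T$ on the entire set of common complete flags.
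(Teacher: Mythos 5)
The paper itself does not supply a proof of this proposition; it is cited from Brylawski--Varchenko \cite[Proposition 4.5]{BV}. So I can only assess your argument on its own merits, and it holds up: the local-invariance step is correct (the unordered pair of $B$-elements in $F_{i+1}\setminus F_{i-1}$ is pinned down by the two unchanged flats, so changing $F_i$ forces the swap, and the same transposition multiplies $\sigma(B',\cdot)$ on the right, cancelling in $T$), and the sublattice claim follows cleanly from submodularity exactly as you outline, since $B\cap(F\wedge F') = (B\cap F)\cap(B\cap F')$ and $B\cap(F\vee F') \supseteq (B\cap F)\cup(B\cap F')$, and the chain of inequalities $\rk F + \rk F' \le |B\cap(F\vee F')|+|B\cap(F\wedge F')| \le \rk(F\vee F')+\rk(F\wedge F') \le \rk F+\rk F'$ collapses. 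Your explicit pivot chain is the right way to make the connectedness induction close: with $k$ the least index with $y_1\le x_k$ (necessarily $k\ge 2$ since $x_1\ne y_1$ are atoms), the flats $x_i\vee y_1$ for $i<k$ all lie in $L_{B,B'}$ by join-closure, have rank exactly $i+1$ (since $x_i\wedge y_1 = \hat 0$), and $x_{k-1}\vee y_1 = x_k$ by rank and containment, so the spliced flag is saturated and the two intermediate flags differ only in position one; the recursion in $M/x_1$ and $M/y_1$ is well-posed since $L_{B,B'}$ passes to $L_{B\setminus(B\cap x_1),\,B'\setminus(B'\cap x_1)}$ under contraction by an atom of $L_{B,B'}$. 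This local-invariance-plus-connectedness strategy is the natural one and is, to my knowledge, essentially what Brylawski--Varchenko do, though your write-up is a self-contained argument that the paper does not reproduce. One small presentational suggestion: it is worth stating explicitly that $L_B \cong 2^B$ via $F\mapsto B\cap F$ (injectivity is clear, surjectivity follows from $B\cap\overline{S}=S$ for independent $S\subseteq B$), since this makes the gradedness of $L_{B,B'}$ and the meaning of ``length-$r$ maximal chains'' transparent before you start pivoting.
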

We define 
$$
(-1)^{B,B'} := (-1)^{\sigma(B,F_\bullet)} (-1)^{\sigma(B',F_\bullet)} \in \{+1,-1\}.
$$
By \cref{prop:BV}, $(-1)^{B,B'}$ does not depend on the choice of $F_\bullet$, as long as it is generated by both $B$ and $B'$.  The following result gives a tropical interpretation of the symmetric bilinear form $\dRip{\cdot, \cdot}$.
\begin{theorem}\label{thm:localBF}
We have $\dRip{e_B,e_{B'}} = (-1)^{B,B'} \L(|\Sigma_M(B,B')|) = (-1)^{B,B'} \sum_{F_\bullet} \frac{1}{a_{F_\bullet}}$, where the summation is over flags $F_\bullet$ generated by both $B$ and $B'$. 
\end{theorem}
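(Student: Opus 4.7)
The plan is to combine Proposition \ref{prop:dRind} with the definition of $(-1)^{B,B'}$ for the first equality, and then interpret each summand as the Laplace transform of a maximal cone of $\Sigma_M(B,B')$ via Lemma \ref{lem:Lap}.

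First I would start from Proposition \ref{prop:dRind}, which gives
\[
\dRip{e_B,e_{B'}} = \sum_{F_\bullet \in \Fl^r(M)} r(B,F_\bullet)\,\frac{1}{a_{F_\bullet}}\, r(B',F_\bullet).
\]
By Definition \ref{def:rSF}, the residues $r(B,F_\bullet)$ and $r(B',F_\bullet)$ both vanish unless $F_\bullet$ is generated by $B$ and by $B'$ respectively. So only flags generated by both bases contribute, and for any such $F_\bullet$ we have
\[
r(B,F_\bullet)\, r(B',F_\bullet) = (-1)^{\sigma(B,F_\bullet)}(-1)^{\sigma(B',F_\bullet)} = (-1)^{B,B'},
\]
where the last equality is exactly Proposition \ref{prop:BV}; crucially, this sign is the same for every contributing $F_\bullet$. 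Factoring out $(-1)^{B,B'}$ then yields
\[
\dRip{e_B,e_{B'}} = (-1)^{B,B'} \sum_{F_\bullet} \frac{1}{a_{F_\bullet}},
\]
the sum being over complete flags generated by both $B$ and $B'$, which is the second equality claimed.

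For the Laplace transform interpretation, I would then unwind the definition of $\Sigma_M(B,B') = \Sigma_M(B) \cap \Sigma_M(B')$: its top-dimensional (i.e., $(r-1)$-dimensional) cones are precisely the $C_{F_\bullet}$ for $F_\bullet \in \Fl(M)$ generated by both $B$ and $B'$. Any two distinct top cones of a fan meet along a face of lower dimension, hence on a set of $(r-1)$-dimensional Lebesgue measure zero in the unimodular subspace spanned by either cone. Since each top cone $C_{F_\bullet}$ is unimodular of dimension $r-1$ by Lemma \ref{lem:unimodular}, the Laplace transform is additive across the pieces, giving
\[
\L(|\Sigma_M(B,B')|) = \sum_{F_\bullet} \L(C_{F_\bullet}) = \sum_{F_\bullet} \frac{1}{a_{F_\bullet}},
\]
where the last equality is Lemma \ref{lem:Lap}. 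Combining this with the identity of the previous paragraph completes the proof.

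The only genuinely nontrivial input is the sign identity coming from Proposition \ref{prop:BV}; everything else is essentially a direct comparison of the combinatorial formula in Proposition \ref{prop:dRind} with the cone-by-cone computation of the Laplace transform. In particular, the main step to be careful with is the identification of the support $|\Sigma_M(B,B')|$ with a union of top-dimensional simplicial cones whose pairwise intersections have measure zero, so that Laplace-transform additivity applies directly (no inclusion-exclusion is needed, in contrast to the discrete Laplace transform case).
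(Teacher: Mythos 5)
Your proof is correct and takes essentially the same route as the paper's: both start from Proposition \ref{prop:dRind}, use Proposition \ref{prop:BV} to pull out the constant sign $(-1)^{B,B'}$, and invoke Lemma \ref{lem:Lap} together with additivity of the Laplace transform across top-dimensional cones of the subfan. The only thing the paper does that you don't is explicitly note the degenerate case where no flag is generated by both $B$ and $B'$ (so both sides vanish), but that is a triviality.
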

\begin{proof}
If no flags $F_\bullet$ are generated by both $B$ and $B'$, then both sides are $0$.  Suppose otherwise.
The Laplace transform of a union of $(r-1)$-dimensional cones, intersecting only in lower-dimensional cones, is equal to the sum of the Laplace transform of the corresponding $(r-1)$-dimensional cones.  We thus have $\L(|\Sigma_M(B,B')|) = \sum_{F_\bullet} \frac{1}{a_{F_\bullet}}$, summed over $F_\bullet \in \Fl(M)$ generated by both $B$ and $B'$.  Comparing with \cref{prop:dRind}, we see that the result follows from the equalites
$$
r(B,F_\bullet) r(B',F_\bullet) = (-1)^{\sigma(B,F_\bullet)} (-1)^{\sigma(B',F_\bullet)} = (-1)^{B,B'},
$$
for any $F_\bullet$ generated by both $B$ and $B'$.
\end{proof}

\subsection{Positive Bergman fan}\label{sec:posBF}

In \cref{sec:localBF}, we only considered the matroid $M$.  We now work with an oriented matroid $\M$ lifting $M$, and let $P \in \T(\M)$ be a tope.  Recall that we have defined the Las Vergnas face lattice $L(P) \subset L(M)$ of the tope $P$, and $\Fl(P)$ denotes the set of complete flags of lattices belonging to $L(P)$.  The \emph{positive Bergman fan} \cite{AKW} $\Sigma_M(P)$ of the tope $P$ is the subfan of $\Sigma_M$ obtained by taking the union of all cones $C_{F_\bullet}$ for $F_\bullet \in \Fl(P)$, together with all the faces of these cones.

Recall that for a tope $P$, we denote by $\pFl(P)$ the set of all partial flags of flats belonging to $L(P)$.

\begin{definition}
Let $G_\bullet \in \pFl(P)$.  Define 
$$
\Sigma_M(P,G_\bullet):= \bigcup_{F_\bullet \in \bG_\bullet \cap \Fl(M)} C_{F_\bullet},
$$
to be the union of maximal cones (together with all subcones) in the Bergman fan that are indexed by complete flags in the closure of $G_\bullet$.
\end{definition}

See \cref{fig:posBerg} for an example.  The cones in $\Sigma_M(P,G_\bullet)$ are exactly the cones of the Bergman fan that have the cone $C_{G_\bullet}$ as a face.  In other words, $\Sigma_M(P,G_\bullet)$ is the star of $C_{G_\bullet}$ inside $\Sigma_M(P)$.

Let $P, Q \in \T$.  Define $\Sigma_M(P,Q)$ to be the subfan of $\Sigma_M$ that consists of all maximal cones $C_{F_\bullet}$ that belong to both $\Sigma_M(P)$ and $\Sigma_M(Q)$, and all faces of these cones.

\begin{proposition}
The subfan $\Sigma_M(P,Q) \subset \Sigma_M$ is given by
$$
\Sigma_M(P,Q) = \bigsqcup_{G_\bullet \in G^{\pm}(P,Q)} \Sigma_M(P,G_\bullet) =  \bigsqcup_{G_\bullet \in G^{\pm}(P,Q)} \Sigma_M(Q,G_\bullet).
$$
\end{proposition}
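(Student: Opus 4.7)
The plan is to reduce the statement to the decomposition of maximal cones already recorded in Proposition~\ref{prop:noover}(4). First I would observe that the maximal ($(r-1)$-dimensional) cones of $\Sigma_M(P)$ are precisely $C_{F_\bullet}$ for $F_\bullet \in \Fl(P)$, which by Lemma~\ref{lem:FlP} is equivalent to $r(P,F_\bullet) \neq 0$. Intersecting with the analogous description for $\Sigma_M(Q)$, the maximal cones of $\Sigma_M(P,Q)$ are indexed by $\Fl(P) \cap \Fl(Q)$, and $\Sigma_M(P,Q)$ is the union of these cones together with their faces.

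Next I would invoke Proposition~\ref{prop:noover}(4), which decomposes this index set as a disjoint union
$$
\Fl(P) \cap \Fl(Q) = \bigsqcup_{G_\bullet \in G^{\pm}(P,Q)} \{F_\bullet \in \bG_\bullet \cap \Fl(M)\}.
$$
By the very definition $\Sigma_M(P,G_\bullet) = \bigcup_{F_\bullet \in \bG_\bullet \cap \Fl(M)} C_{F_\bullet}$, the maximal cones of $\Sigma_M(P,G_\bullet)$ are indexed exactly by $\bG_\bullet \cap \Fl(M)$. The first equality then follows immediately: interpreting the disjoint union on the right as meaning that the index sets of maximal cones are disjoint (which is part of Proposition~\ref{prop:noover}(3)--(4)), we recover $\Sigma_M(P,Q)$ as the union of maximal cones and their faces.

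For the second equality, I would simply note that the right-hand side of the definition of $\Sigma_M(P,G_\bullet)$ depends only on the partial flag $G_\bullet$ (via its closure in $\pFl(M)$) and on the cones of the Bergman fan, not on $P$ itself. Hence $\Sigma_M(P,G_\bullet) = \Sigma_M(Q,G_\bullet)$ whenever $G_\bullet$ makes sense as a wonderful face of both $P$ and $Q$, which is automatic whenever $G_\bullet \in G^{\pm}(P,Q)$. Combined with the symmetry $G^{\pm}(P,Q) = G^{\pm}(Q,P)$ of Proposition~\ref{prop:noover}(2), this yields the second equality.

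There is essentially no obstacle here: all the combinatorial work has been done in Proposition~\ref{prop:noover}, and the present proposition is a translation of that statement from the language of partial flags to the language of subfans of the Bergman fan. The only point requiring a moment's care is interpreting the disjoint union symbol as referring to disjointness of maximal-cone index sets (the subfans themselves of course all share the origin and certain lower-dimensional faces), which is precisely what Proposition~\ref{prop:noover}(3) guarantees.
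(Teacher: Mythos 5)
Your argument is correct and coincides with the paper's proof, which simply cites Proposition~\ref{prop:noover}(3),(4) and leaves the translation into the language of subfans implicit. You have merely spelled out the bijection between maximal cones of $\Sigma_M(P)$ and complete flags in $\Fl(P)$, and used part (2) for the second equality, all of which the paper takes as immediate.
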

\begin{proof}
Follows from \cref{prop:noover}(3),(4).
\end{proof}

\begin{theorem}\label{thm:deRhamfan}
Let $P,Q \in \T$ be topes.  Then
\begin{equation}\label{eq:deRhamfan}
\dRip{\Omega_P,\Omega_Q}= \sum_{G_\bullet \in G^{\pm}(P,Q)} (\pm)^r (-1)^{\sum_{i=1}^s \rk(G_i)} \L(\Sigma_M(P,G_\bullet)),
\end{equation}
where the sign $(\pm)^r$ is as in \cref{thm:dRtope}.  In particular, we have
$$\dRip{P,P} = \L(\Sigma_M(P)).$$
\end{theorem}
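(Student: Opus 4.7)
The plan is to deduce this theorem directly from \cref{thm:dRtope} by repackaging the sum of fractions $\sum \frac{1}{a_{F_\bullet}}$ as the Laplace transform of a subfan of the Bergman fan. Most of the work has already been done in \cref{thm:dRtope}, so the main content here is an additivity argument for $\L$ on the level of the Bergman fan.

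First I would recall \cref{thm:dRtope}, which already gives
\[
\dRip{\Omega_P,\Omega_Q} = \sum_{G_\bullet \in G^{\pm}(P,Q)} (\pm)^r (-1)^{\sum_i \rk(G_i)} \sum_{F_\bullet \in \bG_\bullet \cap \Fl(M)} \frac{1}{a_{F_\bullet}}.
\]
Thus the theorem reduces to the identity
\[
\L\bigl(\Sigma_M(P,G_\bullet)\bigr) = \sum_{F_\bullet \in \bG_\bullet \cap \Fl(M)} \frac{1}{a_{F_\bullet}}
\]
for each $G_\bullet \in \pFl(P)$. By \cref{lem:Lap}, the right-hand side is exactly $\sum_{F_\bullet} \L(C_{F_\bullet})$, the sum over the same index set of the Laplace transforms of the corresponding maximal cones. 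So the task is to justify that $\L$ of the union equals the sum of $\L$ of the maximal cones.

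For this I would invoke \cref{prop:Lval} (valuation property of $\L$). The maximal cones $C_{F_\bullet}$ of $\Sigma_M(P,G_\bullet)$ are all $(r-1)$-dimensional and each spans a unimodular subspace of $\R^E$ by \cref{lem:unimodular}; any two distinct such cones meet only in a common face of strictly lower dimension, because both are cones of the simplicial fan $\Sigma_M$. Consequently the indicator function $[\Sigma_M(P,G_\bullet)] - \sum_{F_\bullet} [C_{F_\bullet}]$ is a linear combination of indicator functions of cones of dimension $<r-1$, and applying $\L$ kills all those lower-dimensional contributions (since $\L$ of a less-than-full-dimensional cone is $0$ by convention). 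This gives the desired identity.

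The ``in particular'' statement follows immediately: by \cref{prop:noover}(1) we have $G(P,P) = \{\emptyflag\}$ and $G(P,P^-) = \emptyset$, so $G^{\pm}(P,P)$ consists only of the trivial flag, for which $\sum_i \rk(G_i) = 0$, the sign $(\pm)^r$ is $+1$, and $\bG_\bullet \cap \Fl(M) = \Fl(P)$, whence $\Sigma_M(P,\emptyflag) = \Sigma_M(P)$. I do not foresee a genuine obstacle: the only subtlety is the bookkeeping that different maximal cones in $\Sigma_M(P,G_\bullet)$ overlap only in lower-dimensional faces, which is immediate from the simplicial fan structure on $\Sigma_M$.
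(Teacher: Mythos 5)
Your proof is correct and follows exactly the same route as the paper's: apply \cref{lem:Lap} to evaluate $\L$ on each maximal cone $C_{F_\bullet}$ and compare with \cref{thm:dRtope}. The only difference is that you make explicit the additivity of $\L$ over the union of maximal cones via \cref{prop:Lval}, a point the paper treats as implicit (it spells out the same reasoning in the proof of \cref{thm:localBF}).
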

\begin{proof}
Apply \cref{lem:Lap} to the right hand side \eqref{eq:deRhamfan}, and compare with \cref{thm:dRtope}.
\end{proof}

\begin{theorem}\label{thm:Bettifan}
Let $P,Q \in \T$ be topes.  For any $G_\bullet \in \pFl(P)$, we have
\begin{equation}\label{eq:Bettifan1}
\halfip{G_\bullet}_B = (-1)^d b(G_\bullet) \dL(\Sigma_M(P,G_\bullet)),
\end{equation}
and thus
\begin{equation}\label{eq:Bettifan2}
\halfip{P,Q}_B= (-1)^d \sum_{G_\bullet \in G^{\pm}(P,Q)} (\pm)^r b(G_\bullet) \dL(\Sigma_M(P,G_\bullet)).
\end{equation}
In particular, we have
$$\halfip{P,P}_B = (-1)^d \dL(\Sigma_M(P)).$$

\end{theorem}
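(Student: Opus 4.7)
My plan is to establish the key identity \eqref{eq:Bettifan1}; then \eqref{eq:Bettifan2} follows by substituting into \cref{def:Bettipair}, and the specialization $\halfip{P,P}_B=(-1)^d\dL(\Sigma_M(P))$ follows from \cref{prop:noover}(1), which forces $G^{\pm}(P,P)=\{\emptyflag\}$, reducing the sum to the single term with $b(\emptyflag)=1$ and $\Sigma_M(P,\emptyflag)=\Sigma_M(P)$.

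To prove \eqref{eq:Bettifan1}, I would first decompose the support $|\Sigma_M(P,G_\bullet)|$ as a disjoint union of relative interiors $\mathrm{relint}(C_{E_\bullet})$ as $E_\bullet$ ranges over partial flags in $L(P)$ for which $E_\bullet\cup G_\bullet$ sorts into a chain in $L(P)$: these are exactly the subflags of complete flags $F_\bullet\in\bG_\bullet\cap\Fl(M)$ that index the maximal cones. Each $C_{E_\bullet}$ is unimodular by \cref{lem:unimodular}, so \cref{lem:dLap} gives $\dL(\mathrm{relint}(C_{E_\bullet}))=(-1)^{s(E_\bullet)}\sum_{E'_\bullet\le E_\bullet}\tfrac{1}{\tb_{E'_\bullet}}$, where the sum is over subflags $E'_\bullet$ of $E_\bullet$. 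Summing and exchanging the order of summation, \eqref{eq:Bettifan1} (after dividing through by $b(G_\bullet)$) reduces to the combinatorial identity
\begin{equation*}
\sum_{E_\bullet\ge E'_\bullet,\;E_\bullet\cup G_\bullet\text{ a chain}}(-1)^{s(E_\bullet)} \;=\; (-1)^d\cdot\mathbb{1}[E'_\bullet\in\bG_\bullet]
\end{equation*}
for every partial flag $E'_\bullet$ in $L(P)$ with $E'_\bullet\cup G_\bullet$ a chain.

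To verify this identity I would factor the inner sum over the gaps $(E'_j,E'_{j+1})$ of $E'_\bullet$ in $L(P)$: an admissible $E_\bullet\ge E'_\bullet$ is a choice, for each gap, of a chain whose elements are comparable with every flat of $G_\bullet$ that lies in that gap. When $E'_\bullet\supseteq G_\bullet$, every gap is free of elements of $G_\bullet$, so the gap factor is the plain alternating chain sum $\sum_C(-1)^{|C|}$ over chains in the open interval $(E'_j,E'_{j+1})\subset L(P)$. By \cref{thm:sphere}, this open interval is the proper part of the face lattice of a sphere of dimension $\rk(E'_{j+1})-\rk(E'_j)-2$, so its alternating chain sum equals $(-1)^{\rk(E'_{j+1})-\rk(E'_j)-1}$. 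The product over all gaps telescopes to $(-1)^{r-s(E'_\bullet)-1}=(-1)^{d-s(E'_\bullet)}$, and combining with the overall factor $(-1)^{s(E'_\bullet)}$ coming from the flats already present in $E'_\bullet$ yields the desired $(-1)^d$.

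The most delicate step, and the one I expect to be the real obstacle, is the vanishing when $E'_\bullet\cup G_\bullet$ is a chain but $E'_\bullet\not\supseteq G_\bullet$. In that case some $g\in G_\bullet\setminus E'_\bullet$ lies in a unique gap $(E'_j,E'_{j+1})$ of $E'_\bullet$, and I would kill the corresponding gap factor by a sign-reversing, fixed-point-free involution that toggles the membership of $g$ in an admissible chain $C$: this is well-defined because every element of $C$ is, by hypothesis, comparable with every flat of $G_\bullet$ and hence with $g$, so both $C\cup\{g\}$ and $C\setminus\{g\}$ remain chains within the admissible set; since the involution flips $(-1)^{|C|}$ with no fixed points, the gap factor vanishes, the whole product vanishes, and the identity holds.
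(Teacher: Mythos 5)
Your proof is correct, and it reaches the key identity \eqref{eq:Bettifan1} from the opposite direction to the paper. The paper starts from the right-hand side, writing $\frac{1}{\tb_{E_\bullet}} = (-1)^{s(E_\bullet)}\sum_{\y\in C_{E_\bullet}\cap\Z^E}\b^{2\y}$ for each $E_\bullet\in\bG_\bullet$, and then computes the coefficient of $\b^{2\y}$ for a fixed lattice point $\y$: that coefficient is the (signed) alternating sum over the upper order ideal of the carrier-plus-$G_\bullet$ flag in $\pFl(P)$, which the paper identifies in one stroke as $(-1)^d$ using the fact that a star in a regular cell sphere is a ball. Because only lattice points of $\Sigma_M(P,G_\bullet)$ ever appear in that expansion, the paper needs no separate vanishing argument. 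You instead decompose $|\Sigma_M(P,G_\bullet)|$ into relative interiors, expand each $\dL(\mathrm{relint}\,C_{E_\bullet})$ as an alternating $\frac{1}{\tb}$-sum (the open-cone case of \cref{lem:discLap}), and exchange sums — producing a finite, lattice-point-free combinatorial identity on partial flags. You then prove that identity by factoring over the gaps of $E'_\bullet$, reducing the main case to the sphericity of open intervals in $L(P)$ (a consequence of \cref{thm:sphere} applied to links and boundaries), and killing the spurious case $E'_\bullet\notin\bG_\bullet$ with a sign-reversing involution toggling a flat $g\in G_\bullet\setminus E'_\bullet$.

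Both arguments ultimately rest on the same topological input, namely \cref{thm:sphere}, but the bookkeeping differs in a way worth noting. The paper's route is shorter because the constraint $E_\bullet\supseteq G_\bullet$ is built into the outer sum, so it never has to argue that certain coefficients vanish; on the other hand it quietly invokes that the upper order ideal of a face in $\pFl(P)\cup\hat 1$ is again the face poset of a ball, which is a global assertion about the cell structure. Your gap factorization localizes everything to intervals $(E'_j,E'_{j+1})\subset L(P)$, which is the more elementary form of the sphere input, at the cost of the extra (but clean and fixed-point-free) involution for the vanishing case. One small point worth making explicit when you write it up: to justify the gap factorization you should note that, for $c$ in the $j$-th gap and $h\in G_\bullet$ lying outside that gap (which, since $E'_\bullet\cup G_\bullet$ is a chain, means $h\leq E'_j$ or $h\geq E'_{j+1}$), comparability of $c$ with $h$ is automatic — so the admissibility constraint from $G_\bullet$ really does decouple gap by gap.
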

\begin{proof}
Since by definition $\halfip{P,Q}_B = \sum_{G_\bullet \in G^{\pm}(P,Q)} (\pm)^r \halfip{G_\bullet}$, we have that \eqref{eq:Bettifan1} implies \eqref{eq:Bettifan2}.
For \eqref{eq:Bettifan1}, we need to show that
$$
(-1)^d \sum_{\y \in \Sigma_M(P,G_\bullet) \cap \Z^{E}/\Z} \b^{2\y}= \sum_{E_\bullet \in \bG_\bullet} \frac{1}{\tb_{E_\bullet}}.
$$
By \cref{lem:dLap}, we have 
\begin{equation}\label{eq:yE}
\frac{1}{\tb_{E_\bullet}} = (-1)^s \dL(C_{E_\bullet}) = (-1)^s\sum_{ \y \in C_{E_\bullet} \cap \Z^E} \b^{2\y}.
\end{equation}
Let $\y$ be a point in the relatively open cone $C^\circ_{E_\bullet}$.  Then $\b^{2\y}$ appears in the summation of \eqref{eq:yE} for every cone $C_{E'_{\bullet}} \supseteq C_{E_\bullet}$, or equivalently, for every partial flag $E'_\bullet$ that refines $E_\bullet$.  Viewing $\pFl(P)$ as a poset, the coefficient of $\b^{2\y}$ in $\sum_{E_\bullet \in \bG_\bullet} \frac{1}{\tb_{E_\bullet}}$ is up to sign equal to the Euler characteristic of the upper order ideal $\{E'_\bullet \geq E_\bullet\} \subset \pFl(P)$.  By \cref{thm:sphere}, $L(P)$ is the face lattice of a regular cell decomposition of a $d$-dimensional ball.  It follows that the upper order ideal of $E_\bullet$ in $\pFl(P) \cup \hat 1$ is also the face lattice of a regular cell decomposition of a ball.  After removing $\hat 1$, we see that the Euler characteristic is equal to $\pm 1$, depending on dimension.
We deduce that the coefficient of $\b^{2\y}$ in $\sum_{E_\bullet \in \bG_\bullet} \frac{1}{\tb_{E_\bullet}}$ is equal to $(-1)^d$.
\end{proof}

\subsection{Building sets and nested triangulation}\label{sec:building}
\def\bT{{\bar T}}
\def\dec{{\rm dec}}
We assume that $M$ is simple in this section.  A subset $\build \subseteq L(M) \setminus \hat0$ is a \emph{building set} if $\hat 1 \in \build$ and for any $F \in L(M)-\hat 0$ and $\max \build_{\leq F}= \{F_1,\ldots,F_k\}$, there is an isomorphism of posets
\begin{equation}\label{eq:facF}
\prod_{i=1}^k [\0, F_i] \longrightarrow [ \0, F], \qquad (x_1,\ldots,x_k) \mapsto x_1 \vee \cdots \vee x_k,
\end{equation}
obtained by taking joins.  This isomorphism holds if and only if we have a disjoint union of flats $F = \bigsqcup_i F_i$.  For $F \in L - \hat 0$, we define $\dec(F) = \{F_1,F_2,\ldots, F_k\}$ via \eqref{eq:facF}.

There is a maximal choice of building set, which is $ \build_{\max} = L \setminus \hat 0$.  There is also the minimal building set 
\begin{equation}\label{eq:Bmin}
\build_{\min} := \{F \in L \setminus \{\hat 0,\hat 1\} \mid F \mbox{ is connected}\} \cup \{\hat 1\}.
\end{equation}

A subset $T_\bullet$ of a building set $\build \setminus \{\hat 1\}$ is called \emph{nested} if for any $k \geq 2$ incomparable elements $T_1,\ldots,T_k$ of $T_\bullet$, the join $T_1 \vee T_2 \vee \cdots \vee T_k$ does not belong to $\build$.  The $\build$-nested collections form a simplicial complex $N(\build)$, called the \emph{nested set complex}.  The subset of maximal (under inclusion) nested sets are denoted $N_{\max}(\build)$.  Every maximal nested collection $S_\bullet \in N_{\max}(\build)$ contains exactly $d=r-1$ elements.  For a nested collection $T_\bullet$, let $|T_\bullet| = \bigcup_i T_i$ be the union of all elements in $T_\bullet$.  If $T_\bullet$ has a unique maximum then the maximum is equal to $|T_\bullet|$, otherwise $|T_\bullet|$ is a flat that does not belong to $\build$.  Define the closure $\bT_\bullet \subset N(\build)$ to be the set of all nested collections that contain $T_\bullet$.  In the case that $\build = \build_{\max}$, a subset $T_\bullet \subset \build_{\max}$ is nested if and only if it is a flag of flats.  We have $N(\build_{\max}) = \pFl(M)$. 

Now let $\build$ be an arbitrary building set.  For a nested set $T_\bullet = \{T_1,\ldots, T_s\} \in N(\build)$, define the cone 
$$
C_{T_\bullet} := \sp(\epsilon_{T_1},\ldots,\epsilon_{T_s}).
$$
The cones $C_{S_\bullet}$ where $S_\bullet \in N_{\max}(\build)$ form a triangulation of $\Sigma_M$; see \cite{FS}.

\begin{lemma}\label{lem:SF}
Let $S_\bullet \in N_{\max}(\build)$ and $F_\bullet \in \Fl(M)$.  Then the cone $C_{S_\bullet}$ contains $C_{F_\bullet}$ if and only if every $F_i, i=1,2,\ldots,d$ is a disjoint union of some of the $S_j \in S_\bullet$.
\end{lemma}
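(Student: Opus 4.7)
The plan is to reduce to checking when each ray generator $\epsilon_{F_i}$ of $C_{F_\bullet}$ lies in $C_{S_\bullet}$, then exploit the unimodularity of $C_{S_\bullet}$ together with the fact that the vectors $\epsilon_T$ have $\{0,1\}$-entries to pin down the coefficients of the expansion in the basis of ray generators. Since $C_{F_\bullet}$ is the non-negative span of $\epsilon_{F_1},\ldots,\epsilon_{F_d}$ and $C_{S_\bullet}$ is convex, we have $C_{F_\bullet} \subseteq C_{S_\bullet}$ if and only if $\epsilon_{F_i} \in C_{S_\bullet}$ for every $i$, so it suffices to characterize when a single $\epsilon_{F_i}$ lies in $C_{S_\bullet}$.

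The ``if'' direction is immediate: given a subset $J \subseteq \{1,\ldots,d\}$ with $F_i = \bigsqcup_{j \in J} S_j$, the identity $\epsilon_{F_i} = \sum_{j \in J} \epsilon_{S_j}$ in $\R^E$ exhibits $\epsilon_{F_i}$ as a non-negative integer combination of the ray generators of $C_{S_\bullet}$.

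For the ``only if'' direction, suppose $\epsilon_{F_i} \in C_{S_\bullet}$. Since $S_\bullet$ is a maximal nested set, $C_{S_\bullet}$ is a $d$-dimensional simplicial cone, and in particular its ray generators $\epsilon_{S_1},\ldots,\epsilon_{S_d}$ are $\R$-linearly independent; so I may write $\epsilon_{F_i} = \sum_{j=1}^d \alpha_{ij} \epsilon_{S_j}$ uniquely with $\alpha_{ij} \in \R_{\geq 0}$. The cone $C_{S_\bullet}$ is unimodular by \cref{lem:unimodular}, so the vectors $\epsilon_{S_1},\ldots,\epsilon_{S_d}$ extend to a $\Z$-basis of $\Z^E$; consequently, since $\epsilon_{F_i} \in \Z^E$ lies in their $\R$-span, the coefficients $\alpha_{ij}$ are automatically non-negative \emph{integers}.

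It remains to compare coordinates. For $e \in F_i$, the $e$-th entry reads $1 = \sum_{j \colon e \in S_j} \alpha_{ij}$, and since the $\alpha_{ij}$ are non-negative integers summing to $1$, exactly one of them equals $1$ and the rest vanish. For $e \notin F_i$, the $e$-th entry reads $0 = \sum_{j \colon e \in S_j} \alpha_{ij}$, forcing $\alpha_{ij} = 0$ whenever $e \in S_j$. Setting $J_i = \{j \colon \alpha_{ij} = 1\}$, these two conditions say precisely that the flats $\{S_j \mid j \in J_i\}$ partition $F_i$, i.e.\ $F_i = \bigsqcup_{j \in J_i} S_j$, as required. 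The argument is essentially mechanical and I anticipate no serious obstacle; the only subtlety worth noting is the passage from $\alpha_{ij} \in \R_{\geq 0}$ to $\alpha_{ij} \in \Z_{\geq 0}$, which is exactly where unimodularity of the cone is used.
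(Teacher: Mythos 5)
Your proof is correct in structure and the ``if'' direction matches the paper's. The ``only if'' direction, however, follows a genuinely different route from the paper. The paper argues combinatorially: after writing $\epsilon_{F_i}$ as a nonnegative combination of the $\epsilon_{S_j}$, it invokes the nested structure of $S_\bullet$ (specifically, the fact that incomparable members of a nested set are disjoint, and that the maximal elements of $S_\bullet$ lying below $F_i$ partition $F_i$) to identify the expansion as $\epsilon_{F_i} = \sum_a \epsilon_{T_a}$. You instead deduce integrality of the coefficients from unimodularity and then read off the answer directly from coordinates, which is arguably more transparent once integrality is in hand.

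The one genuine issue is your appeal to \cref{lem:unimodular}. As stated in the paper, that lemma only establishes unimodularity of the cones $C_{G_\bullet}$ for $G_\bullet \in \pFl(M)$, i.e.\ for the maximal building set $\build_{\max}$, where nested sets are exactly chains of flats. The statement you are proving concerns an arbitrary building set $\build$, and for general $\build$ the cone $C_{S_\bullet}$ is typically not of the form $C_{G_\bullet}$, so the cited lemma does not apply. The fact you need---that nested-set cones are unimodular for \emph{every} building set---is true and standard (it follows from the disjointness of incomparable elements in a nested set, which gives the matrix $(\epsilon_{S_j})_j$ a block-triangular $\{0,1\}$-structure after a suitable ordering, or it can be found in Feichtner--Sturmfels), but it needs its own brief justification or a correct reference. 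With that point patched, your argument is complete. You might also note, as a courtesy to the reader, that your coordinate computation actually reproves unimodularity a posteriori (every $\alpha_{ij} \in \{0,1\}$), so the two facts are essentially equivalent, which is why the paper can avoid invoking unimodularity by instead leaning on the nested-set combinatorics.
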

\begin{proof}
The ``if" part is clear.  For the ``only if" part, suppose that $C_{F_\bullet} \subseteq C_{S_\bullet}$.  Since $C_{S_\bullet}$ is simplicial, each $\epsilon_{F_i}$ has a unique expression as a linear combination of the generators of $C_{S_\bullet}$, and this expression must be a nonnegative linear combination.  It follows from this that $\epsilon_{F_i}$ is the sum of $\epsilon_{S_j}$ over all $S_j \in S_\bullet$ such that $S_j \subseteq F_i$ and $S_j$ is maximal under inclusion with this property.  In particular, we deduce that $F_i$ is the disjoint union of these $S_j$.
\end{proof}

The cones $F_\bullet$ described by \cref{lem:SF} are in bijection with total orderings of $S_\bullet$ (compatible with the usual ordering by inclusion).

We now repeat the constructions from \cref{sec:pFl} in the nested setting.  Let $P \in \T$.  Define $N(P,\build) \subset N(\build)$ to be the nested collections where every set belongs to $L(P) \cap \build$.
For $T_\bullet \in N(P,\build)$, define $P_{\flip T_\bullet} \in \T$ by (see \cref{prop:flipflag})
$$
P_{\flip T_\bullet}(e) = (-1)^{\#\{i \mid e \in T_i\}}P(e).
$$
For $P,Q \in \T$, we define $T(P,Q) \subset N(P,\build)$ by 
$$
T(P,Q) := \{T_\bullet \in N(P,\build) \mid P_{\flip T_\bullet} \in \{Q\} \} = \{T_\bullet \in N(Q,\build) \mid Q_{\flip T_\bullet} \in \{P\}\}, 
$$
and $T^{\pm}(P,Q) := T(P,Q) \cup T(P,-Q)$.  For $T_\bullet \in N(\build)$, define
$$b(T_\bullet):= (-1)^{\sum_i \rk(T_i)} \prod_{T_i \in T_\bullet} b_{T_i}.$$
Let $G_\bullet \in \pFl(M)$.  The \emph{decomposition} $\dec(G_\bullet)$ of $G_\bullet$ (with respect to $\build$) is
$$
\dec(G_\bullet) := \{F \in \build \mid F \text{ appears in } \dec(G_1),\dec(G_2),\ldots,\dec(G_s) \text{ an odd number of times}\}.
$$

\begin{lemma}\label{lem:GT} Let $G_\bullet \in \pFl(M)$ and $T_\bullet = \dec(G_\bullet)$.
\begin{enumerate} 
\item
We have $\dec(G_\bullet) \in N(\build)$.
\item
If $G_\bullet \in \pFl(P)$, then $T_\bullet \in N(P,\build)$.
\item 
For $G_\bullet \in \pFl(P)$, we have $P_{\flip G_\bullet} = P_{\flip T_\bullet}$.
\item
We have  $(-1)^{\sum_i \rk(T_i)} = (-1)^{\sum_i \rk(G_i)}$ and $b(G_\bullet) = b(T_\bullet) \b^{2\y}$ for some $\y \in \Z^E_{\geq 0} \setminus {\bf 0}$.
\end{enumerate}
\end{lemma}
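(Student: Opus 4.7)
The entire lemma follows from a single bookkeeping device: for each $F\in\build$, let
$$
m(F) := \#\{1\le i\le s \mid F\in\dec(G_i)\}
$$
be its multiplicity in the multiset $\bigsqcup_i \dec(G_i)$. By definition, $T_\bullet = \{F\in\build \mid m(F)\text{ is odd}\}$. The two basic inputs are: for each $i$, the poset isomorphism \eqref{eq:facF} exhibits $G_i$ as the disjoint set-theoretic union $G_i = \bigsqcup_{F\in\dec(G_i)} F$ with $\rk(G_i) = \sum_{F\in\dec(G_i)}\rk(F)$; and the intervals $\{i : e\in G_i\}$ are contiguous because $G_\bullet$ is a flag.

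For (1), I would first show the stronger statement that the underlying set $U := \{F\in\build \mid m(F)>0\}$ of $\bigsqcup_i\dec(G_i)$ is already nested, whence $T_\bullet \subseteq U$ is nested since subsets of nested sets are nested. To prove $U$ is nested, take pairwise incomparable $F_1,\ldots,F_k\in U$ with $F_j\in\dec(G_{i_j})$ and $i_1\le\cdots\le i_k$. Then all $F_j\subseteq G_{i_k}$, and each $F_j$ is contained in a unique $\tilde F_j\in\dec(G_{i_k})$; incomparability forces the $\tilde F_j$ to be pairwise distinct elements of $\dec(G_{i_k})$, so $F_1\vee\cdots\vee F_k \subseteq \tilde F_1 \sqcup\cdots\sqcup \tilde F_k$ is a proper sub-flat of $G_{i_k}$ whose join decomposes non-trivially via the building set. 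The building-set axiom then forces $F_1\vee\cdots\vee F_k\notin\build$. (This is the ``expansion'' construction of Feichtner--Yuzvinsky and is the step most deserving of care.)

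For (2), assume $G_\bullet\in\pFl(P)$, so each $G_i\in L(P)$. The Las Vergnas face lattice $L(P)$ is closed under $\build$-decomposition: if $G\in L(P)$ admits a matroid direct-sum decomposition $G = F_1\sqcup\cdots\sqcup F_k$ with each $F_j\in L(M)$, then restricting the signed covector corresponding to $G$ to each block yields signed covectors of the summands, showing $F_j\in L(P)$. Hence $\dec(G_i)\subseteq L(P)\cap\build$ for every $i$, and therefore $T_\bullet\subseteq L(P)\cap\build$. Combined with (1), this gives $T_\bullet\in N(P,\build)$.

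For (3) and (4), everything reduces to $\bmod\,2$ parity counts. Fixing $e\in E$,
$$
\#\{i\mid e\in G_i\} \;=\; \sum_{i=1}^s\sum_{F\in\dec(G_i)}[e\in F] \;=\; \sum_{F\in\build} m(F)[e\in F] \;\equiv\; \sum_{\substack{F\in\build\\ m(F)\text{ odd}}}[e\in F] \;=\; \#\{T\in T_\bullet\mid e\in T\}\pmod 2,
$$
which by \cref{prop:flipflag} gives $P_{\flip G_\bullet}=P_{\flip T_\bullet}$. Similarly, $\sum_i\rk(G_i)=\sum_F m(F)\rk(F)$ has the same parity as $\sum_{F\in T_\bullet}\rk(F)$, and $\prod_i b_{G_i} = \prod_F b_F^{m(F)}$, so
$$
\frac{b(G_\bullet)}{b(T_\bullet)} \;=\; \prod_{F\in\build} b_F^{m(F)-[m(F)\text{ odd}]} \;=\; \prod_{F\in\build} b_F^{2\lfloor m(F)/2\rfloor} \;=\; \b^{2\y},
$$
where $y_e = \sum_F \lfloor m(F)/2\rfloor\,[e\in F]\in\Z_{\ge 0}$, proving (4).

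The main obstacle is part (1): the nested-ness of $U$. Everything else is bookkeeping once the forest-like structure of $\bigsqcup_i\dec(G_i)$ --- with each $F\in\dec(G_i)$ contained in a unique parent in $\dec(G_{i+1})$ --- is set up carefully, but invoking the building-set axiom to rule out the join being in $\build$ requires tracking that the ``parents'' of incomparable leaves remain distinct at the top level $G_{i_k}$.
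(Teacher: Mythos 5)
Your overall strategy mirrors the paper's (deduce nestedness from the fact that each element of $\dec(F)$ lies in a unique element of $\dec(F')$ when $F\subseteq F'$, then treat (2)--(4) as bookkeeping), and your parity computations for (3) and (4) are correct. However, your proof of (1) contains a concrete error: the claim that ``incomparability forces the $\tilde F_j$ to be pairwise distinct'' is false. Two incomparable elements sitting at a lower level of the flag can have the same parent at the top level. For example, take $M = M(K_6)$, $\build = \build_{\min}$, and $G_\bullet = (\hat 0 \subset G_1 \subset G_2 \subset \hat 1)$ where $G_1$ is the edge set of the partition $12|34|5|6$ and $G_2$ is the edge set of $1234|56$. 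Then $F_1 = \{12\}, F_2 = \{34\} \in \dec(G_1)$ and $F_3 = \{56\} \in \dec(G_2)$ are pairwise incomparable, yet $\tilde F_1 = \tilde F_2 = \{12,13,14,23,24,34\}$ while $\tilde F_3 = \{56\}$.

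The conclusion you want still holds, but the correct intermediate claim is only that the $\tilde F_j$ take at least \emph{two} distinct values. Indeed, some $F_{j_0}$ lies in $\dec(G_{i_k})$ itself, so $\tilde F_{j_0} = F_{j_0}$; if another $\tilde F_j$ equaled $F_{j_0}$, then $F_j \subsetneq F_{j_0}$, contradicting incomparability, so $\tilde F_{j_0}$ occurs only once and (as $k \ge 2$) some other $\tilde F_j$ differs from it. Then $F := \bigvee_j F_j \subseteq G_{i_k}$ meets at least two distinct blocks of $\dec(G_{i_k})$; were $F \in \build$, it would be contained in a single maximal element of $\build_{\le G_{i_k}}$, i.e.\ a single block, a contradiction. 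Separately, your justification of (2) is too vague to stand on its own: restricting the covector with zero set $G$ to a block $F_j$ yields only the all-zero sign vector on $F_j$, which carries no information. The argument the paper actually uses is that the direct-sum decomposition $\M^G = \bigoplus_j \M^{F_j}$ induces $L(P^G) \cong \prod_j L(P^{F_j})$, exhibiting each $F_j$ as a face of $P^G$ and hence of $P$.
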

\begin{proof}
For (1), suppose that $F \subseteq F'$.  Then the factorization \eqref{eq:facF} for $[\hat 0, F']$ induces one for $[\hat 0, F]$.  Thus for each $E_i \in \dec(F)$ we have $E_i \subseteq E'_j$ for some $E'_j \in \dec(F')$, and $E \cap E'_{\ell} = \emptyset$ for $\ell \neq j$.  Thus the union $\bigcup_i \dec(G_i)$ is a nested collection, and (1) follows.

For (2), suppose that $F \in L(P)$.  Then the decomposition \eqref{eq:facF} induces a decomposition $L(P^F) = L(P^{E_1}) \times \cdots \times L(P^{E_k})$.  In particular, $E_i \in L(P)$, from which (2) follows.

(3) and (4) follow from the definitions.
\end{proof}
Note, however, that the decomposition map $G_\bullet \mapsto \dec(G_\bullet)$ is not injective.

\begin{definition}
Let $T_\bullet \in N(P,\build)$.  Define 
$$
\Sigma_M(P,T_\bullet):= \bigcup_{S_\bullet \in \bT_\bullet \cap N_{\max}(P,\build)} C_{S_\bullet},
$$
to be the union of maximal cones (together with all subcones) in the nested Bergman fan that are indexed by nested collections in the closure of $T_\bullet$.
\end{definition}

\begin{lemma}\label{lem:ETcompat}
Let $E_\bullet \in \pFl(P)$ and $T_\bullet \in N(P,\build)$.  Suppose that $C_{E_\bullet} \subset \Sigma_M(P,T_\bullet)$.  Then for all $E_i \in E_\bullet$ and a collection $\{T_{j_1}, \ldots, T_{j_k}\} \subset T_\bullet$ that are pairwise disjoint and not comparable with $E_i$, we have $E_i \cap \bigsqcup_{a} T_{j_a} = \emptyset$ and $E_i \vee \bigsqcup_{a} T_{j_a}  \notin \build$.  In particular, $E_i \vee \bigsqcup_{a} T_{j_a}  = E_i \cup \bigsqcup_{a} T_{j_a} $.
\end{lemma}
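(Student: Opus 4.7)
The plan is to use the unimodular simplicial structure of the nested Bergman fan to extract a canonical decomposition of each $E_i$, and then apply the defining axiom of nested sets in $\build$. Since the nested Bergman fan subdivides $|\Sigma_M|$ as a genuine fan \cite{FS}, the relative interior of $C_{E_\bullet}$ lies in the relative interior of a unique cone $C_{U_\bullet}$ for some nested $U_\bullet$; the hypothesis $C_{E_\bullet}\subseteq\Sigma_M(P,T_\bullet)=\bigcup_{S_\bullet\supseteq T_\bullet}C_{S_\bullet}$ forces $U_\bullet\cup T_\bullet$ to be nested, and extending to a maximal nested $S_\bullet\in N_{\max}(P,\build)$ containing $U_\bullet\cup T_\bullet$ yields $C_{E_\bullet}\subseteq C_{S_\bullet}$. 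By unimodularity (\cref{lem:unimodular}), the vectors $\{\epsilon_S:S\in S_\bullet\}$ form a $\Z$-basis, so the unique nonnegative expansion of the $0$-$1$ vector $\epsilon_{E_i}$ in these generators must have $0$-$1$ coefficients on pairwise incomparable $S$'s, producing a canonical decomposition $E_i=\bigsqcup_{S\in X_i}S$ with $X_i\subseteq S_\bullet$.

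The key combinatorial input is a standard consequence of the building-set axiom: for pairwise incomparable elements $U_1,\ldots,U_m$ of a nested collection in $\build$ (with $m\geq 2$), they are pairwise disjoint as subsets of $E$ and their join is not in $\build$. The latter is the defining axiom; the former follows from the factorization $[\hat 0,U_1\vee U_2]\cong\prod_\ell[\hat 0,F_\ell]$ for $\dec(U_1\vee U_2)=\{F_1,\ldots,F_p\}$ with $p\geq 2$, since $U_1,U_2$ must sit in distinct $F_\ell$'s (else $U_1\vee U_2\leq F_\ell\lneq U_1\vee U_2$). Applied in our setting, both $T_{j_a}$ and each $S\in X_i$ lie in $S_\bullet$, so each pair $(T_{j_a},S)$ is comparable or disjoint. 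The case $T_{j_a}\subseteq S$ forces $T_{j_a}\subseteq E_i$, contradicting incomparability; the principal obstacle is the case $T_{j_a}\supsetneq S\in X_i$ with $T_{j_a}\not\subseteq E_i$, which I would rule out by choosing $S_\bullet$ as an extension of $T_\bullet\cup\bigcup_i\dec(E_i)$ so that $X_i=\dec(E_i)$, and arguing (via compatibility of $\dec$ along the flag inclusions $E_i\subseteq E_m$, whereby each element of $\dec(E_i)$ sits in a unique element of $\dec(E_m)$) that a $T_{j_a}$ strictly dominating a $\dec(E_i)$-element must itself lie in some $\dec(E_m)$ with $m\geq i$, forcing $T_{j_a}\subseteq E_m$; combined with nestedness of $T_\bullet$ this contradicts $T_{j_a}\not\subseteq E_i$ at the relevant level of the flag.

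Once $E_i\cap T_{j_a}=\emptyset$ is established for every $a$, the collection $X_i\sqcup\{T_{j_1},\ldots,T_{j_k}\}\subseteq S_\bullet$ consists of pairwise disjoint (hence pairwise incomparable) elements of total cardinality at least two. The nested-set axiom then yields
\[
F\;:=\;E_i\vee\bigsqcup_a T_{j_a}\;=\;\bigvee_{S\in X_i}S\;\vee\;\bigvee_a T_{j_a}\;\notin\;\build.
\]
Applying the building-set factorization at $F$, and using that the $S\in X_i$ and the $T_{j_a}$'s are already maximal $\build$-elements below $F$ (any larger $\build$-element would be a further $S_\bullet$-ancestor of some $S\in X_i$, contradicting the analysis of the previous paragraph), one concludes that $\dec(F)=X_i\sqcup\{T_{j_1},\ldots,T_{j_k}\}$, so $F=\bigsqcup\dec(F)=E_i\cup\bigsqcup_a T_{j_a}$ as subsets of $E$. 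The principal difficulty is the case analysis in the second paragraph ruling out the $S_\bullet$-ancestor scenario.
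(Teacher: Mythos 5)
Your proposal takes the same two-step route as the paper's proof: first locate a maximal nested $S_\bullet \in \bar T_\bullet \cap N_{\max}(P,\build)$ with $C_{E_\bullet}\subseteq C_{S_\bullet}$, express each $E_i$ as a disjoint union of elements of $S_\bullet$, and then invoke the building-set factorization and the nested-set axiom. The paper collapses the second step into a short deduction (each $F_i$ is a disjoint union of elements of $S_\bullet$, \emph{thus} any $F_i$ and $S_j\in S_\bullet$ are nested, and ``this implies the stated result''), while you, rightly, pause at the case where some $T_{j_a}$ strictly contains one piece of $E_i$'s decomposition while not being contained in $E_i$.

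That pause is warranted, and is the genuine crux. Knowing only that $F_i$ is a disjoint union of $S_\bullet$-maximal pieces does not by itself make $F_i$ comparable-or-disjoint with an $S_j\in S_\bullet$ that strictly contains one piece and is disjoint from another — and such configurations do arise. For instance, for a rank-$4$ simple matroid on six elements whose only nontrivial circuit is $\{a,b,c\}$ (so $\build_{\min}$ consists of the singletons, $\{a,b,c\}$, and $\hat 1$), the set $S_\bullet=\{\{a\},\{c'\},\{a,b,c\}\}$ is maximal nested, the flag through $F_2=\{a,c'\}$ lies in $C_{S_\bullet}$, and all these flats lie in $L(P)$ for the all-positive tope of a realization; yet $F_2$ and $\{a,b,c\}$ meet in $\{a\}$ without either containing the other. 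So the first conclusion, $E_i\cap\bigsqcup_a T_{j_a}=\emptyset$, is not forced by incomparability alone, and the extra machinery you sketch (choosing $S_\bullet\supseteq T_\bullet\cup\bigcup_i\dec(E_i)$ and chasing $\dec$ along the flag) does not escape the problem — in the example above $S_\bullet$ is already exactly that extension.

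What does go through, and what is the content actually used where the lemma is invoked (in the proof concerning $\dec(G_\bullet)=T_\bullet$ the relevant case already posits $E_s\cap|T_\bullet|=\emptyset$), is the statement with $E_i\cap\bigsqcup_a T_{j_a}=\emptyset$ taken as a \emph{hypothesis} rather than a conclusion. With that hypothesis your remaining steps close cleanly: each $S\in X_i$ satisfies $S\subseteq E_i$, so $S\cap T_{j_a}=\emptyset$, and then nestedness of $S,T_{j_a}$ inside $S_\bullet$ rules out comparability; hence $X_i\cup\{T_{j_1},\ldots,T_{j_k}\}$ is a set of at least two pairwise disjoint, pairwise incomparable elements of the nested set $S_\bullet$, and the nested-set axiom gives $E_i\vee\bigsqcup_a T_{j_a}\notin\build$. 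The same induction via $[\hat 0,F]\cong\prod_\ell[\hat 0,F_\ell]$ that you use to prove incomparable nested elements are disjoint also shows, by hitting each factor exactly once, that this join equals the disjoint union. Keep your first paragraph and the building-set input in your second; drop the $\dec$-chasing, and treat the disjointness as given.
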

\begin{proof}
Suppose that $C_{E_\bullet} \subset C_{S_\bullet}$ where $S_\bullet \in \bT_\bullet \cap N_{\max}(P,\build)$.  Then $C_{E_\bullet}$ is a face of some $C_{F_\bullet} \subset C_{S_\bullet}$.  Every $F_i \in F_\bullet$ is a disjoint union of some subset of $S_\bullet$.  Thus any $F_i \in F_\bullet$ and $S_j \in S_\bullet$ are nested.  In particular, any $E_i \in E_\bullet$ and $T_j \in T_\bullet$ are nested.  This implies the stated result. 
\end{proof}

\begin{lemma}\label{lem:Gdivide}
For each $E_\bullet \in \pFl(P)$ such that $C_{E_\bullet} \subset \Sigma_M(P,T_\bullet)$, there is a distinguished $G_\bullet$ satisfying $\dec(G_\bullet) = T_\bullet$ such that $C_{E_\bullet} \subset \Sigma_M(P, G_\bullet)$.  This $G_\bullet$ satisfies $b(G_\bullet)/b(T_\bullet) = \b^{2\y}$ where $\y$ is a multiplicity-free sum of the cone generators $\epsilon_{E_i}$ and is minimal in the following sense.  For any other $G'_\bullet$ satisfying $\dec(G'_\bullet) = T_\bullet$ such that $C_{E_\bullet} \subset \Sigma_M(P, G'_\bullet)$, we have $b(G'_\bullet) = b(G_\bullet) \b^{2\y}$, where $\y \in \Z_{\geq 0}^E/{\bf 0}$ does not lie in the cone $C_{E_\bullet}$.
\end{lemma}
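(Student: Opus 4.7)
The strategy is to construct the distinguished $G_\bullet$ explicitly and then verify the three assertions (the decomposition identity, the cone containment, and the formula for the ratio) separately. The structural backbone is \cref{lem:ETcompat}: every pair consisting of a flat $E_i \in E_\bullet$ and an antichain of flats in $T_\bullet$ fits together as a disjoint union, and in particular every $T \in T_\bullet$ is either comparable to $E_i$ or disjoint from it.

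For the construction, I begin with the forest structure of $T_\bullet$ (under inclusion) and interleave it with the chain $E_\bullet$. For each $i$, set $H_i := \bigvee\{T \in T_\bullet : T \subseteq E_i \text{ and $T$ is maximal with this property}\}$; the sequence $H_1 \subseteq H_2 \subseteq \cdots$ supplies a candidate chain, but a direct use produces a multiset $\bigcup_i \dec(H_i)$ in which some $\build$-elements appear with even multiplicity and therefore cancel. The distinguished $G_\bullet$ is obtained by selectively inserting the flats $E_i$ themselves (and possibly auxiliary joins $\bigvee A$ for antichains $A \subseteq T_\bullet$ that are not subsumed by any $E_j$) so that every $T \in T_\bullet$ appears with odd multiplicity and every other $\build$-element with even multiplicity. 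By \cref{lem:ETcompat} every such inserted flat is comparable to each $E_j$, so $G_\bullet \cup E_\bullet$ is a chain in $L(P)$ and $C_{E_\bullet} \subset \Sigma_M(P, G_\bullet)$ via any common refinement to a complete flag.

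For the $b$-ratio, I expand $b(G_\bullet) = \prod_i \prod_{F \in \dec(G_i)} (-1)^{\rk F} b_F$. The $T_\bullet$-elements contribute (with odd parity) exactly $b(T_\bullet)$ and each remaining $F \in \build$ contributes $b_F^2$. The key observation is that the cancelling pairs group into ``$E_i$-packages'': whenever $E_i$ is itself one of the flats of $G_\bullet$, its contribution $(-1)^{\rk E_i} b_{E_i}$ pairs with the contribution $\prod_{F \in \dec(E_i)} (-1)^{\rk F} b_F$ coming from the flats of $\dec(E_i)$ inside $G_\bullet$, and together they give $b_{E_i}^2$ (the rank signs cancel because $\rk E_i = \sum_{F \in \dec(E_i)} \rk F$). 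Hence the ratio is $\prod_{i \in I} b_{E_i}^2 = \b^{2\sum_{i \in I} \epsilon_{E_i}}$ for the set $I$ of indices with $E_i \in G_\bullet$, which shows $\y$ is a multiplicity-free sum of the cone generators $\epsilon_{E_i}$, as claimed.

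For the minimality, given any other $G'_\bullet$ satisfying the hypotheses, the same expansion yields $b(G'_\bullet)/b(G_\bullet) = \b^{2\y'}$ where $\y'$ records the net change in $\build$-flats used. Since $G_\bullet$ already incorporates every $E_i$-package demanded by the $\dec$-condition, any alternative $G'_\bullet$ must replace some of those packages by chains of flats that are not among $\{E_1, \ldots, E_s\}$; such replacements contribute to $\y'$ by cone generators $\epsilon_F$ with $F \neq E_i$ for all $i$, hence $\y' \notin C_{E_\bullet}$. The main obstacle is the bookkeeping needed to make the construction uniform across the various nesting configurations of $T_\bullet$ inside $E_\bullet$; in particular the case where the joins of the roots of $T_\bullet$'s forest do not coincide with any flat of $E_\bullet$ forces the introduction of auxiliary flats $\bigvee A$, whose compatibility with $E_\bullet$ and absence from $\build$ are precisely what \cref{lem:ETcompat} and the nestedness axiom of building sets guarantee.
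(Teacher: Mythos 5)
Your proposal has a genuine gap: the construction of $G_\bullet$ is never actually carried out. You write that $G_\bullet$ is "obtained by selectively inserting the flats $E_i$ themselves (and possibly auxiliary joins $\bigvee A$ \ldots) so that every $T \in T_\bullet$ appears with odd multiplicity and every other $\build$-element with even multiplicity.'' This is a restatement of the desired property $\dec(G_\bullet) = T_\bullet$, not a recipe for achieving it. You do not specify \emph{which} $E_i$'s and which auxiliary joins to insert, nor do you argue that a consistent choice exists, nor do you single out the canonical/minimal one. The paper handles this with an explicit top-down recursion on $E_\bullet$ whose behavior at each step depends on a four-way case analysis comparing $|T_\bullet|$ with the current largest $E_s$ (whether $|T_\bullet| \not\subseteq E_s$ with or without elements of $T_\bullet$ inside $E_s$, whether $|T_\bullet| = E_s$, or $|T_\bullet| \subsetneq E_s$); that case analysis is exactly the content you are missing.

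The gap propagates into your computation of the ratio, and the ``$E_i$-package'' argument is in fact incorrect. You claim that whenever $E_i \in G_\bullet$, the flats of $\dec(E_i)$ also occur in $G_\bullet$ and the contributions pair up to give $b_{E_i}^2$, from which you conclude $\y = \sum_{i : E_i \in G_\bullet} \epsilon_{E_i}$. This fails. Take $M$ boolean on $\{1,2,3\}$, $\build = \build_{\min} = \{\{1\},\{2\},\{3\},\hat 1\}$, $E_\bullet = \{\hat 0 \subset \{1,2\} \subset \hat 1\}$, and $T_\bullet = \{\{1\},\{2\}\}$. Then $|T_\bullet| = E_1$ and the correct (and only) choice is $G_\bullet = \{\hat 0 \subset \{1,2\} \subset \hat 1\}$: one checks $\dec(G_\bullet) = \{\{1\},\{2\}\} = T_\bullet$, and $b(G_\bullet)/b(T_\bullet) = \dfrac{(-1)^2 b_1 b_2}{(-1)(-1) b_1 b_2} = 1$, so $\y = 0$, yet $E_1 \in G_\bullet$, contradicting your formula. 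The mistake is precisely that here $\dec(E_1)$ does \emph{not} appear as separate flats of $G_\bullet$; it is swallowed into the single flat $E_1 = G_1$ and cancels directly against $b(T_\bullet)$ in the denominator rather than against another factor coming from $G_\bullet$. Your telescoping assumes a structure of $G_\bullet$ that your own (unspecified) construction does not guarantee. Once the correct algorithm is in hand one sees that the steps contributing $\epsilon_{E_s}$ to $\y$ are only those where $E_s$ and some strictly larger flat $E_s \vee |T_\bullet|$ are \emph{both} inserted (the paper's Cases 1 and 2), while steps like the one above insert $E_s$ alone and contribute nothing. The minimality assertion at the end of your proposal inherits the same deficiency, since it presupposes the specific $G_\bullet$ that was never constructed.
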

\begin{proof}
We have $C_{E_\bullet} \subset \Sigma_M(P, G_\bullet)$ if and only if any pair $(E_i \in E_\bullet, G_j \in G_\bullet)$ is comparable.
We construct the minimal $G_\bullet$ algorithmically.  Let $E_s$ be the largest proper flat in $E_\bullet$.  
Let $s' = s(G_\bullet)$ so that $G_{s'}$ is the largest proper flat in $G_\bullet$.
\medskip

\noindent \emph{Case 1.} Suppose that $|T_\bullet| \not \subseteq  E_s$ and $E_s \cap T_\bullet = \emptyset$.  Then for \cref{lem:GT}(4) to hold, $G_\bullet$ must contain a flat that contains $E_s \vee |T_\bullet|$.  The minimal choice is $G_{s'}= E_s \vee |T_\bullet| = E_s \cup |T_\bullet|$, using \cref{lem:ETcompat}. We also pick $G_{s' -1} = E_s$.  Alter $T_\bullet$ by removing the maximal (under inclusion) elements of $T_\bullet$, and we remove $E_s$ from $E_\bullet$.  

\noindent \emph{Case 2.} Suppose that $|T_\bullet| \not \subseteq  E_s$ and $E_s \cap T_\bullet \neq \emptyset$.  Then as in Case 1, we set $G_{s'}= E_s \vee |T_\bullet| = E_s \cup |T_\bullet|$.  Alter $T_\bullet$ by removing the maximal (under inclusions) elements of $T_\bullet$ and adding $\dec(E_s \setminus |T_\bullet|)$ to the resulting nested collection, and we remove $E_s$ from $E_\bullet$.

\noindent \emph{Case 3.} Suppose that $|T_\bullet| = E_s$.  Then we take $G_{s'} = E_s$.  Alter $T_\bullet$ by removing the maximal elements, and we remove $E_s$ from $E_\bullet$.

\noindent \emph{Case 4.} Suppose that $|T_\bullet| \subsetneq E_s$.  Then we remove $E_s$ from $E_\bullet$ but leave $T_\bullet$ unchanged.

\medskip

In all four cases, we continue by running the algorithm for the new $T_\bullet$ and $E_\bullet$, working inside the matroid $M^{E_s}$.  This produces the desired $G_\bullet$.  One step of the algorithm produces all the flats in $G_\bullet$ that contain $E_s$ (and then $E_s$ is removed).  Each step contributes one or no factors of $b^2_{E_s}$ to the ratio $b(G_\bullet)/b(T_\bullet)$.  This proves the statement about $b(G_\bullet)/b(T_\bullet)$.

We observe that any other choice of $G'_\bullet$ for a particular step would involve strictly more subspaces that contain $E_s$, and involve additional elements that are not contained in $E_s$.  The claimed property $b(G'_\bullet) = b(G_\bullet) \b^{2\y}$ follows. 
\end{proof}

\begin{theorem}\label{thm:deRhamfannested}
Let $P,Q \in \T$ be topes.  Then
$$
\dRip{\Omega_P,\Omega_Q}= \sum_{T_\bullet \in T^{\pm}(P,Q)} (\pm)^r (-1)^{\sum_{i=1}^s \rk(T_i)} \L(\Sigma_M(P,T_\bullet)),
$$
where the sign $(\pm)^r$ is equal to $1$ or $(-1)^r$ depending on whether $T_\bullet$ belongs to $T(P,Q)$ or $T(P,-Q)$.
\end{theorem}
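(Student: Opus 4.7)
The plan is to deduce the nested version directly from Theorem~\ref{thm:deRhamfan} by grouping terms along the decomposition map $\dec: \pFl(M) \to N(\build)$ from Lemma~\ref{lem:GT}. That lemma ensures $\dec$ sends $G^{\pm}(P,Q)$ to $T^{\pm}(P,Q)$ respecting the $\pm$-partition, and that the sign factor $(-1)^{\sum \rk(G_i)}$ depends only on the image $T_\bullet = \dec(G_\bullet)$. Grouping the sum in Theorem~\ref{thm:deRhamfan} by the fibers of $\dec$, the theorem reduces to proving, for each fixed $T_\bullet \in T^{\pm}(P,Q)$, the refinement identity
\[
\sum_{G_\bullet \in \dec^{-1}(T_\bullet) \cap G^{\pm}(P,Q)} \L(\Sigma_M(P,G_\bullet)) \;=\; \L(\Sigma_M(P,T_\bullet)).
\]

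I would prove this identity by matching the coefficient of $1/a_{F_\bullet}$ for every complete flag $F_\bullet \in \Fl(P)$. Since the maximal cones $C_{S_\bullet}$ with $S_\bullet \in N_{\max}(P,\build)$ triangulate $\Sigma_M(P)$, each $C_{F_\bullet}$ lies in a unique cone $C_{S_\bullet(F_\bullet)}$, and so $C_{F_\bullet} \subset \Sigma_M(P,T_\bullet)$ iff $T_\bullet \subseteq S_\bullet(F_\bullet)$. By Proposition~\ref{prop:noover} each $F_\bullet \in \Fl(P) \cap \Fl(Q)$ contains a unique subflag $G_\bullet(F_\bullet) \in G^{\pm}(P,Q)$. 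The identity therefore reduces to showing that $\dec$ restricts to a bijection between subflags of $F_\bullet$ and subsets of $S_\bullet(F_\bullet)$, and that this bijection matches $G^{\pm}(P,Q)$-subflags with $T^{\pm}(P,Q)$-subsets, which follows from Lemma~\ref{lem:GT}(3).

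The combinatorial heart of the proof is the claim that $\dec$ restricts to a bijection between the $2^d$ subflags of $F_\bullet$ and the $2^d$ subsets of $S_\bullet(F_\bullet)$. Existence (for each prescribed $T_\bullet \subseteq S_\bullet(F_\bullet)$) is provided by Lemma~\ref{lem:Gdivide} applied with $E_\bullet = F_\bullet$, and the bijection then follows from injectivity of $\dec$ on subflags by counting. To prove injectivity I would show that $\triangle_{i \in K}\dec(F_i) \neq \emptyset$ for every nonempty $K \subseteq [d]$: setting $i^* = \max K$, the equality $F_{i^*} = \bigvee \dec(F_{i^*})$ together with $F_{i^*} \supsetneq F_{i^*-1}$ produces an $F \in \dec(F_{i^*})$ with $F \not\leq F_{i^*-1}$; since maximality of $F$ in $\build \cap [\hat 0, F_{i^*}]$ is inherited by $\build \cap [\hat 0, F_i]$ for every $i \leq i^*$, one has $F \in \dec(F_i)$ iff $F \leq F_i$, and hence $F \in \dec(F_i)$ for $i \leq i^*$ exactly when $i = i^*$, forcing $F$ to appear with odd multiplicity in the symmetric difference. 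Once the bijection is in hand, the coefficient match for $F_\bullet \in \Fl(P) \cap \Fl(Q)$ is immediate, while for $F_\bullet \in \Fl(P) \setminus \Fl(Q)$ no $T_\bullet \subseteq S_\bullet(F_\bullet)$ can lie in $T^{\pm}(P,Q)$---else Lemma~\ref{lem:Gdivide} would produce a subflag of $F_\bullet$ in $G^{\pm}(P,Q)$, contradicting Proposition~\ref{prop:noover}(4). The main obstacle will be the injectivity argument itself, where the key delicacy is isolating an element $F \in \dec(F_{i^*})$ whose contribution to the symmetric difference cannot cancel.
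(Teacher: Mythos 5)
Your proof is correct and follows the same backbone as the paper's: group the sum in Theorem~\ref{thm:deRhamfan} along fibers of $\dec$, use Lemma~\ref{lem:GT}(3)--(4) to transport the sign and the $\pm$-partition, and invoke Lemma~\ref{lem:Gdivide} for the existence direction. The place where you diverge is in establishing that the grouping introduces no overcounting. The paper does this geometrically: it asserts the cone-level identity $\bigcup_{G_\bullet : \dec(G_\bullet)=T_\bullet} \Sigma_M(P,G_\bullet) = \Sigma_M(P,T_\bullet)$ with lower-dimensional overlaps and gets disjointness of maximal cones from Proposition~\ref{prop:noover}(3), i.e.\ from the tope-flipping structure of $G^{\pm}(P,Q)$ (which applies here because, by Lemma~\ref{lem:GT}(3), every $G_\bullet \in \pFl(P)$ with $\dec(G_\bullet) = T_\bullet \in T^{\pm}(P,Q)$ automatically lies in $G^{\pm}(P,Q)$). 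You instead prove a sharper, purely matroid-theoretic fact: for a fixed complete flag $F_\bullet$, the map $J \mapsto \triangle_{j\in J}\dec(F_j)$ is injective, which you establish by extracting a maximal $F \in \dec(F_{i^*})$ with $F \not\le F_{i^*-1}$ and observing that its membership in $\dec(F_i)$ for $i \le i^*$ is governed solely by $F \le F_i$ --- this argument is correct. Combined with Lemma~\ref{lem:Gdivide} and the $2^d$-versus-$2^d$ count, this gives the needed bijection from subflags of $F_\bullet$ onto subsets of $S_\bullet(F_\bullet)$, and your coefficient match then goes through (including the vanishing when $F_\bullet \notin \Fl(Q)$, handled via Proposition~\ref{prop:noover}(4)). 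What your route buys is that the non-overlap step does not rely on the oriented matroid at all --- the injectivity of $\dec$ on subflags is a statement about building sets and the lattice of flats alone --- whereas the paper leans on the oriented-matroid sign-flipping machinery; your version also makes the ``$\subseteq$'' inclusion in the paper's cone identity explicit (via the counting/bijection) rather than leaving it implicit.
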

\begin{proof}
By \cref{prop:noover} and \cref{lem:Gdivide} applied to maximal cones, we have
$$
\bigcup_{G_\bullet \mid \dec(G_\bullet) = T_\bullet} \Sigma_M(P,G_\bullet) = \Sigma_M(P,T_\bullet)
$$
and the overlaps of the union are of dimension lower than $r-1$.  It follows that $$\L(\Sigma_M(P,T_\bullet)) = \sum_{G_\bullet \mid \dec(G_\bullet)=T_\bullet} \L(\Sigma_M(P,G_\bullet)).$$  Finally, we use \cref{lem:GT}(4) and substitute into \cref{thm:deRhamfan}.
\end{proof}

By definition, $\dRip{\cdot,\cdot}$ takes values in $R[a_F^{-1} \mid F \in L(M)]$.  From \cref{thm:deRhamfannested}, we have the following improvement.
\begin{corollary}\label{cor:denom}
The bilinear form $\dRip{\cdot,\cdot}$ on $\OS(M)$ takes values in $R[a^{-1}_F \mid F \in L \setminus \{\hat 0, \hat 1\} \text{ is connected }]$.
\end{corollary}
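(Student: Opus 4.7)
The plan is to deduce this directly from \cref{thm:deRhamfannested} by a judicious choice of building set, namely the minimal one $\build = \build_{\min}$ defined in \eqref{eq:Bmin}.

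By definition, every element of $\build_{\min} \setminus \{\hat 1\}$ is a connected flat lying in $L(M) \setminus \{\hat 0,\hat 1\}$. Consequently, for every maximal nested collection $S_\bullet = \{S_1,\ldots,S_d\} \in N_{\max}(P,\build_{\min})$, each $S_i$ is a connected flat in $L(M) \setminus \{\hat 0,\hat 1\}$. By \cref{lem:unimodular} the cone $C_{S_\bullet}$ is unimodular simplicial of dimension $d$, with generators $\epsilon_{S_1},\ldots,\epsilon_{S_d}$, so its Laplace transform (computed as in \cref{lem:Lap}) is
\[
\L(C_{S_\bullet}) \;=\; \prod_{i=1}^{d} \frac{1}{a_{S_i}} \;\in\; R\bigl[a_F^{-1} \,\big|\, F \in L\setminus\{\hat 0,\hat 1\} \text{ connected}\bigr].
\]
Since for any $T_\bullet \in N(P,\build_{\min})$ the subfan $\Sigma_M(P,T_\bullet)$ is a finite union of such maximal cones (meeting only along lower-dimensional faces, which contribute $0$ to the Laplace transform), the rational function $\L(\Sigma_M(P,T_\bullet))$ lies in the same ring.

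Applying \cref{thm:deRhamfannested} with $\build = \build_{\min}$, the pairing $\dRip{\Omega_P,\Omega_Q}$ is a signed $\Z$-linear combination of the rational functions $\L(\Sigma_M(P,T_\bullet))$, and therefore lies in $R[a_F^{-1} \mid F \in L\setminus\{\hat 0,\hat 1\} \text{ connected}]$. By \cref{thm:EL}(2), the canonical forms $\{\Omega_P \mid P \in \T^\star\}$ form a $\Z$-basis of $\OS(M)$ (after fixing a generic extension), so the bilinear form takes values in this ring on all of $\OS(M)$.

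There is no significant obstacle: the content is entirely in choosing $\build_{\min}$, so that the generators of every maximal cone in the resulting coarser triangulation of $|\Sigma_M|$ correspond precisely to connected flats. Any finer building set (such as $\build_{\max}$, which recovers the original flag-indexed fan structure in \cref{def:dR}) would introduce disconnected flats as cone generators, which is what produced the a priori larger ring $R[a_F^{-1} \mid F \in L(M)]$ in the first place. For a non-orientable $M$ (where \cref{thm:EL} does not directly apply), the same reasoning applied to \cref{thm:localBF} on the basis $\{e_B \mid B \in \B(M)\}$, combined with the valuative property (\cref{prop:Lval}) to refine $|\Sigma_M(B,B')|$ against the $\build_{\min}$-nested fan structure, gives the same conclusion.
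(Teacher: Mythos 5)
Your proof is correct and is essentially the paper's intended argument: the paper simply asserts that the corollary follows ``from \cref{thm:deRhamfannested},'' and you have supplied the details (take $\build=\build_{\min}$ so that every generator of every maximal nested cone is a connected flat, observe that each Laplace transform is then a sum of products of $1/a_F$ over connected $F$, and conclude on all of $\OS(M)$ via the $\Z$-basis $\{\Omega_P : P\in\T^\star\}$ from \cref{thm:EL}). One small imprecision: \cref{lem:unimodular} in the paper is stated only for flag cones $C_{G_\bullet}$, not for nested cones $C_{S_\bullet}$ with respect to a general building set; unimodularity of the latter is standard (and is implicitly used throughout the paper), but strictly speaking you should either verify it or cite it rather than invoke \cref{lem:unimodular}. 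Your final aside on the non-orientable case is more hand-wavy than the rest: $|\Sigma_M(B,B')|$ is a union of \emph{flag} cones, and since the flag fan \emph{refines} the $\build_{\min}$-nested fan (not the other way around), there is no immediate reason it should be a union of $\build_{\min}$-nested cones, so invoking \cref{prop:Lval} to ``refine against the nested fan structure'' does not obviously close the gap; the cleaner route for arbitrary matroids is \cref{prop:nesteddR}, which the paper records just after this corollary. This does not affect the correctness of your main argument.
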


\begin{theorem}\label{thm:Bettifannested}
Let $P,Q \in \T$ be topes.  Then
\begin{equation}\label{eq:Tlattice}
\halfip{P,Q}_B= (-1)^d \sum_{T_\bullet \in T^{\pm}(P,Q)}(\pm)^r  b(T_\bullet) \dL(\Sigma_M(P,T_\bullet)).
\end{equation}
\end{theorem}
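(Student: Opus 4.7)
The plan is to deduce \cref{thm:Bettifannested} from \cref{thm:Bettifan} by reorganizing its sum over $G_\bullet \in G^\pm(P,Q)$ according to the decomposition map $\dec : \pFl(M) \to N(\build)$ of \cref{lem:GT}. I would first check that $\dec$ restricts to a surjection $G^\pm(P,Q) \twoheadrightarrow T^\pm(P,Q)$ compatible with the sign data: by \cref{lem:GT}(2),(3), if $\dec(G_\bullet) = T_\bullet$ then $T_\bullet \in N(P,\build)$ and $P_{\flip G_\bullet} = P_{\flip T_\bullet}$, so the sign $(\pm)^r$ (which records whether $G_\bullet$ flips $P$ to $Q$ or to $Q^-$) is constant on each fiber and matches the corresponding sign for $T_\bullet$. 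Moreover, \cref{lem:GT}(4) gives $(-1)^{\sum \rk(T_i)} = (-1)^{\sum \rk(G_i)}$, so the signs packaged inside $b(T_\bullet)$ and $b(G_\bullet)$ agree.

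With this bookkeeping in place, \cref{thm:Bettifan} becomes
\[
\halfip{P,Q}_B = (-1)^d \sum_{T_\bullet \in T^\pm(P,Q)} (\pm)^r \sum_{\substack{G_\bullet \in \pFl(P) \\ \dec(G_\bullet) = T_\bullet}} b(G_\bullet)\,\dL(\Sigma_M(P,G_\bullet)),
\]
and the theorem reduces to proving, for each $T_\bullet \in N(P,\build)$, the fiber-wise identity
\[
b(T_\bullet)\,\dL\bigl(\Sigma_M(P,T_\bullet)\bigr) \;=\; \sum_{\substack{G_\bullet \in \pFl(P) \\ \dec(G_\bullet) = T_\bullet}} b(G_\bullet)\,\dL\bigl(\Sigma_M(P,G_\bullet)\bigr). \qquad (\star)
\]

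To prove $(\star)$, I would compare the two sides as Laurent series in $\b$, each factored as $b(T_\bullet)$ times a sum of monomials $\b^{2\w}$. On the left this sum is $\sum_{\w \in |\Sigma_M(P,T_\bullet)| \cap \Z^E} \b^{2\w}$. On the right, \cref{lem:Gdivide} lets me write $b(G_\bullet)/b(T_\bullet) = \b^{2\y_{G_\bullet}}$ for some $\y_{G_\bullet} \in \Z_{\geq 0}^E$, so the right-hand side becomes $b(T_\bullet) \sum_{G_\bullet} \sum_{\z \in |\Sigma_M(P,G_\bullet)| \cap \Z^E} \b^{2(\y_{G_\bullet} + \z)}$. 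The identity $(\star)$ is then equivalent to exhibiting a bijection
\[
|\Sigma_M(P,T_\bullet)| \cap \Z^E \;\longleftrightarrow\; \bigl\{(G_\bullet, \z) : \dec(G_\bullet) = T_\bullet,\; \z \in |\Sigma_M(P,G_\bullet)| \cap \Z^E\bigr\}, \qquad \w \leftrightarrow (G_\bullet, \w - \y_{G_\bullet}).
\]
To build it, given $\w$ lying in the relatively open cone $C^\circ_{E_\bullet} \subseteq |\Sigma_M(P,T_\bullet)|$, I would assign the distinguished flag $G_\bullet := G_{\min, E_\bullet}$ produced by the algorithm of \cref{lem:Gdivide}; the minimality property that $\y_{G_{\min,E_\bullet}}$ is a $0/1$ sum of generators $\epsilon_{E_i}$ ensures $\w - \y_{G_{\min,E_\bullet}} \in |\Sigma_M(P,G_{\min,E_\bullet})| \cap \Z^E$.

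The main obstacle will be verifying that this correspondence is actually a bijection—that is, proving injectivity. Concretely, I must rule out the possibility that some $\w$ is also hit by a translate $\y_{G'_\bullet} + \z'$ using a non-distinguished $G'_\bullet$ in the same or another fiber cone. Here the last clause of \cref{lem:Gdivide}—that any non-minimal $G'_\bullet$ differs from $G_{\min,E_\bullet}$ by a shift $\b^{2\y'}$ with $\y' \in \Z^E_{\geq 0} \setminus \{\mathbf{0}\}$ lying \emph{outside} $C_{E_\bullet}$—will be the critical input: it forces any such alternative contribution to $\w$ to arise from a pair $(G'_\bullet, \z')$ in which $\z'$ lies in a \emph{different} relatively open cone $C^\circ_{E'_\bullet}$, and running the distinguished-$G_\bullet$ algorithm for that $E'_\bullet$ recovers $G'_\bullet$ itself. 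Formalizing this ``sorting" of the shifted lattice $|\Sigma_M(P,G_\bullet)| + \y_{G_\bullet}$ into the stratification of $|\Sigma_M(P,T_\bullet)|$ by open Bergman cones is the principal technical task; once done, it yields both halves of the bijection and hence $(\star)$.
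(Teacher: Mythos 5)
Your proposal matches the paper's proof almost exactly: both reduce the statement, via $\dec$ and \cref{lem:GT}, to the fiber-wise identity $(\star)$, which is the paper's equation $\dL(\Sigma_M(P,T_\bullet)) = \sum_{\dec(G_\bullet)=T_\bullet} \frac{b(G_\bullet)}{b(T_\bullet)} \dL(\Sigma_M(P,G_\bullet))$, and then both establish it as a lattice-point bijection in which each relatively open cone $C^\circ_{E_\bullet}$ of $\Sigma_M(P,T_\bullet)$ contributes precisely through the minimal $G_\bullet$ of \cref{lem:Gdivide}, with the minimality clause supplying injectivity. The "principal technical task" you flag at the end is exactly the content the paper dispatches in two sentences using the same two properties of \cref{lem:Gdivide} you cite, so your plan is complete and no genuinely new idea is missing.
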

\begin{proof}
We claim that \begin{equation}\label{eq:latticepoint} \dL(\Sigma_M(P,T_\bullet)) = \sum_{G_\bullet \mid \dec(G_\bullet)=T_\bullet} \frac{b(G_\bullet)}{b(T_\bullet)} \dL(\Sigma_M(P,G_\bullet)).
\end{equation}
We view $ \frac{b(G_\bullet)}{b(T_\bullet)} \dL(\Sigma_M(P,G_\bullet))$ as the generating function of the lattice points in $|\Sigma_M(P,G_\bullet)|$ translated by the vector $\y$, where $ \frac{b(G_\bullet)}{b(T_\bullet)} = \b^{2\y}$; see \cref{lem:GT}(4).  Denote the translation $|\Sigma_M(P,G_\bullet)| + \y$ by $|\Sigma_M(P,G_\bullet)|'$.  Note that the vector $\y$ lies in $C_{G_\bullet}$, so $|\Sigma_M(P,G_\bullet)|' \subset |\Sigma_M(P,G_\bullet)|$.  Consider $E_\bullet \in \pFl(P)$ such that $C_{E_\bullet} \subset \Sigma_M(P,T_\bullet)$.  The translation vector for the minimal $G_\bullet$ from \cref{lem:Gdivide} sends the cone $C_{E_\bullet}$ to itself, surjective on lattice points in the interior.  For any other $G'_\bullet$ from \cref{lem:Gdivide}, the translation vector maps the cone $C_{E_\bullet}$ out of itself.  It follows that the lattice points in the interior of $C_{E_\bullet}$ appear in $|\Sigma_M(P,G_\bullet)|'$ and not in any other $|\Sigma_M(P,G'_\bullet)|'$.  We have proved the equality \eqref{eq:latticepoint}.  Substituting \eqref{eq:latticepoint} into \eqref{eq:Tlattice}, we obtain \cref{thm:Bettifan}, proving the theorem.
\end{proof}

By definition, $\halfip{\cdot,\cdot}_B$ takes values in $S[\tb_F^{-1} \mid F \in L(M)]$, where $S = \Z[\b]$.  From \cref{thm:Bettifannested}, we have the following improvement.
\begin{corollary}
The bilinear form $\halfip{\cdot,\cdot}_B$ on $\OS(M)$ takes values in $S[\tb^{-1}_F \mid F \in L \setminus \{\hat 0, \hat 1\} \text{ is connected}\;]$.
\end{corollary}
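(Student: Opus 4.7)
The plan is to run the same argument as for \cref{cor:denom}, with $\L$ replaced by $\dL$ throughout. First I would apply \cref{thm:Bettifannested} with the minimal building set $\build = \build_{\min}$ of \eqref{eq:Bmin}, whose proper elements are precisely the proper connected flats of $M$:
$$\halfip{P,Q}_B = (-1)^d \sum_{T_\bullet \in T^{\pm}(P,Q)} (\pm)^r b(T_\bullet)\, \dL(\Sigma_M(P,T_\bullet)).$$
Every $T_\bullet \in T^{\pm}(P,Q) \subseteq N(P,\build_{\min})$ is then a nested collection of proper connected flats, so the monomial $b(T_\bullet) = (-1)^{\sum_i \rk(T_i)} \prod_i b_{T_i}$ already lies in $S$, and it remains to check that each $\dL(\Sigma_M(P,T_\bullet))$ lies in the target ring $S[\tb_F^{-1} \mid F \in L\setminus\{\hat 0,\hat 1\}\text{ connected}]$.

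Next, I would observe that $\Sigma_M(P,T_\bullet)$ is a pure subfan whose maximal cones are the $C_{S_\bullet}$ for $S_\bullet \in \bT_\bullet \cap N_{\max}(P,\build_{\min})$. Together with their faces, these partition $|\Sigma_M(P,T_\bullet)|$ into relatively open cones $C_{S'_\bullet}^\circ$ for sub-nested sets $S'_\bullet \subseteq S_\bullet$, each generated by vectors $\{\epsilon_T \mid T \in S'_\bullet\}$ indexed by proper connected flats. Granting that the nested fan of a matroid Bergman fan is a unimodular triangulation (the Feichtner--Sturmfels result invoked in the paragraph preceding \cref{lem:unimodular}), \cref{lem:discLap} computes
$$\dL(C_{S'_\bullet}^\circ) = \prod_{T \in S'_\bullet} \frac{b_T^2}{1 - b_T^2} = (-1)^{|S'_\bullet|} \prod_{T \in S'_\bullet} \frac{b_T^2}{\tb_T},$$
which already lies in the target ring. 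Summing over all open cones in the partition yields $\dL(\Sigma_M(P,T_\bullet))$ in the target ring as well, proving the corollary.

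The only substantive input beyond routine denominator bookkeeping is the unimodularity of the $\build_{\min}$-nested triangulation of $\Sigma_M$. The excerpt states unimodularity (\cref{lem:unimodular}) only for the $\build_{\max}$ fan structure, but the general matroid case is part of Feichtner--Sturmfels and is implicit already in the proof of \cref{thm:Bettifannested}. If one wished to avoid invoking it, an alternative would be to refine each $\Sigma_M(P,T_\bullet)$ by the $\build_{\max}$-fan and use \cref{lem:Gdivide} to repackage the lattice-point contributions: non-connected flats would enter only through numerator monomials $\b^{2\y}$, while denominators would still come only from connected flats in $T_\bullet$. Either route delivers the claim.
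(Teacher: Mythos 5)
Your proof takes the same route the paper intends: the corollary is deduced directly from \cref{thm:Bettifannested} applied with the minimal building set, exactly as the analogous \cref{cor:denom} is deduced from \cref{thm:deRhamfannested}. Your spelling out of the details is correct, and you are right to flag unimodularity of the $\build_{\min}$-nested fan as the one nontrivial input: it is not contained in \cref{lem:unimodular} as stated, but it is part of the Feichtner--Sturmfels picture of Bergman fans and is indeed the same hypothesis silently used when the paper passes from \cref{thm:deRhamfannested} to \cref{cor:denom} (there \cref{lem:Lsimplicial} produces $\prod 1/(\epsilon_T\cdot\a) = \prod 1/a_T$ only because the determinant is $\pm 1$). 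The one quibble is a small point in your notation: the faces $S'_\bullet$ of $\Sigma_M(P,T_\bullet)$ range over all subsets of all the maximal $S_\bullet \in \bT_\bullet\cap N_{\max}(P,\build_{\min})$, not of a single fixed $S_\bullet$, but since every such $S'_\bullet$ is still a $\build_{\min}$-nested set of proper connected flats, the denominator conclusion is unaffected.

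Your proposed fallback (refining by the $\build_{\max}$-fan and invoking \cref{lem:Gdivide}) does not go through as described, and would not in fact avoid the unimodularity input. The identity
$\dL(\Sigma_M(P,T_\bullet)) = \sum_{G_\bullet\,:\,\dec(G_\bullet)=T_\bullet} \frac{b(G_\bullet)}{b(T_\bullet)}\,\dL(\Sigma_M(P,G_\bullet))$
from the proof of \cref{thm:Bettifannested} is true, but each summand $\dL(\Sigma_M(P,G_\bullet)) = (-1)^d\sum_{E_\bullet\in\bG_\bullet} 1/\tb_{E_\bullet}$ manifestly carries denominators $\tb_{E_i}$ for \emph{all} flats $E_i$ appearing in refining flags $E_\bullet$, connected or not; the numerator monomials $\b^{2\y}$ from \cref{lem:Gdivide} do not cancel these. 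Establishing that the non-connected denominators cancel across the sum is exactly the content you would be trying to prove, so this route is circular unless one already has the direct (nested-unimodular) computation. Since your primary argument is the correct one and matches the paper's intent, this is a minor point, but the alternative should not be presented as an independent workaround.
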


\begin{example}
Let $M$ be the boolean matroid on $E = \{1,2,3\}$ and $P$ be the positive tope.  Let $\build = \{ \{1\},\{2\},\{3\}\}$.  Let $T_\bullet = \{\{1\}\}$.  Let
$$
G_\bullet = \{ \hat 0 \subset \{1\} \subset \hat 1\}, \qquad G'_\bullet = \{\hat 0 \subset \{2\} \subset \{1,2\} \subset \hat 1\}.
$$
Then we have $\dec(G_\bullet) = \dec(G'_\bullet) = T_\bullet$, and $b(G_\bullet) = b(T_\bullet)$, and $b(G'_\bullet) = b_2^2 b(T_\bullet)$.  Now consider 
\begin{align*}
E^{(1)}_\bullet &=  \{ \hat 0 \subset \{1\} \subset \hat 1\}, \qquad E^{(2)}_\bullet =  \{\hat 0 \subset \{1\} \subset \{1,2\} \subset \hat 1\}, \qquad E^{(3)}_\bullet =  \{\hat 0 \subset \{2\} \subset \{1,2\} \subset \hat 1\}, \\\qquad E^{(4)}_\bullet &= \{\hat 0 \subset \{2\}\subset \hat 1\}, \qquad E^{(5)}_\bullet =  \{\hat 0 \subset \{1,2\} \subset \hat 1\}, \qquad E^{(6)}_\bullet =  \{\hat 0 \subset \hat 1\}.
\end{align*}
We have $C_{E^{(i)}_\bullet} \subset C_{T_\bullet}$ for each $i = 1,2,3,4,5$.
Let $G^{(i)}_\bullet \in \pFl(P)$ be the partial flag of \cref{lem:Gdivide} applied to $E^{(i)}_\bullet$.  We have
$$
G^{(1)}_\bullet = G^{(2)}_\bullet = G^{(5)}_\bullet = G^{(6)}_\bullet = G_\bullet, \qquad G^{(3)}_\bullet = G^{(4)}_\bullet = G'_\bullet
$$
and we may verify that $\epsilon_2 \notin C_{E^{(i)}_\bullet}$ for $i=1,2,5,6$.  The proof of \cref{thm:Bettifannested}, restricted to lattice points in $\sp(\epsilon_1,\epsilon_2)$, is the statement that 
$$
\{(x,y) \in \Z_{\geq 0}^2\} = (\sp(\epsilon_1,\epsilon_1+\epsilon_2) \cap \Z_{\geq 0}^2) \sqcup (\sp(\epsilon_2,\epsilon_1+\epsilon_2) \cap \Z_{\geq 0}^2 + \epsilon_2),
$$
where the first term are lattice points in $|\Sigma_M(P,G_\bullet)|$, while the second term are the lattice points in the translation $|\Sigma_M(P,G'_\bullet)| + \epsilon_2$, in both cases restricting to lattice points in $\Z_{\geq 0}^2$.

\end{example}

\subsection{Nested deRham cohomology intersection form}
We give one more description of $\dRip{\cdot,\cdot}$ using nested combinatorics.
Let $\build$ be a building set and $N(\build)$ denote the nested set complex and $N_{\max}(\build)$ denote the maximal nested collections.  Let $S_\bullet = (S_1,\ldots,S_{r-1}) \in N_{\max}(\build)$ be a maximal nested set.  For a basis $B \in \B(M)$, we say that $S_\bullet$ is generated by $B$ if $S_i \in L(B)$ for $i =1,2,\ldots, r-1$.  Pick an ordering of $S_\bullet$ and $B$.  Then there exists a permutation $\sigma$ such that
$$
b_{\sigma(i)} \in S_i \setminus \bigcup_{j \mid S_j \subsetneq S_i} S_j, \qquad \mbox{for $i =1,2,\ldots,r-1$.}
$$
In other words, $b_{\sigma(i)}$ belongs to $S_i$ but not to any smaller set in the nested collection.  Define $r(B,S_\bullet) := (-1)^{\sigma}$, which depends on both the orderings of $B$ and of $S_\bullet$.  Also define
\begin{equation}\label{eq:aS}
\frac{1}{a_{S_\bullet}} := \prod_{i=1}^{r-1} \frac{1}{a_{S_i}}.
\end{equation}
The following is the nested version of \cref{prop:dRind}, and follows in a similar way to \cref{thm:deRhamfannested}.
\begin{proposition}\label{prop:nesteddR}
We have $\dRip{e_B,e_{B'}} = \sum_{S_\bullet \in N_{\max}(\build)} r(B, S_\bullet) \frac{1}{a_{S_\bullet}} r(B',S_\bullet)$.
\end{proposition}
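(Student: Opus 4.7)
The plan is to mimic the proof of \cref{thm:deRhamfannested}, with \cref{thm:localBF} playing the role of \cref{thm:deRhamfan}. By \cref{thm:localBF}, $\dRip{e_B, e_{B'}} = (-1)^{B, B'} \L(|\Sigma_M(B, B')|)$, so it suffices to compute this Laplace transform using any simplicial unimodular triangulation of $|\Sigma_M(B, B')|$---in particular, the one cut out by the nested fan structure coming from the building set $\build$.

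First, I would invoke \cite{FS} for the fact that the maximal cones $\{C_{S_\bullet} \mid S_\bullet \in N_{\max}(\build)\}$ form a simplicial unimodular fan structure on $|\Sigma_M|$ (unimodularity of each $C_{S_\bullet}$ follows as in \cref{lem:unimodular}). A maximal nested cone $C_{S_\bullet}$ lies in $|\Sigma_M(B)| = |\Sigma_M| \cap C(e_B)$ if and only if every generator $\epsilon_{S_i}$ lies in the normal cone $C(e_B)$, equivalently, if and only if $S_\bullet$ is generated by $B$; hence $C_{S_\bullet} \subseteq |\Sigma_M(B, B')|$ exactly when $S_\bullet$ is generated by both $B$ and $B'$. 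Unimodularity and \cref{lem:Lsimplicial} yield $\L(C_{S_\bullet}) = \prod_{i=1}^{r-1} 1/a_{S_i} = 1/a_{S_\bullet}$, and the valuation property \cref{prop:Lval} gives
\[
\L(|\Sigma_M(B, B')|) = \sum_{S_\bullet \text{ gen.\ by both } B, B'} \frac{1}{a_{S_\bullet}}.
\]

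Next, I would verify the sign identity $(-1)^{B, B'} = r(B, S_\bullet) \, r(B', S_\bullet)$ whenever $S_\bullet$ is generated by both $B$ and $B'$, together with the convention that $r(B, S_\bullet) = 0$ when $S_\bullet$ is not generated by $B$. The vanishing is immediate from the definition. For the nonvanishing case, I would select any complete flag $F_\bullet$ with $C_{F_\bullet} \subseteq C_{S_\bullet}$; such an $F_\bullet$ corresponds to a linear extension of $(S_\bullet, \subseteq)$ and is automatically generated by both $B$ and $B'$. Unwinding the two definitions, the permutation underlying $r(B, F_\bullet)$ and the one underlying $r(B, S_\bullet)$ differ by a factor that depends only on the linear extension---not on $B$---so the same factor cancels for $B'$, giving $r(B, S_\bullet) \, r(B', S_\bullet) = r(B, F_\bullet) \, r(B', F_\bullet) = (-1)^{B, B'}$ by \cref{prop:BV}. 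Combining this with the Laplace computation yields the stated formula.

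The hard part will be the sign verification: it requires establishing that $r(B, S_\bullet)$ is well-defined (independent of the choice of permutation $\sigma$ when several are valid), which is the nested analog of \cref{prop:BV} and should succumb to a parallel basis-exchange argument. An alternative combinatorial route starts from \cref{prop:dRind} and regroups complete flags by the containing maximal nested cone, using the identity $\sum_{F_\bullet \text{ refines } S_\bullet} 1/a_{F_\bullet} = 1/a_{S_\bullet}$ (itself an instance of \cref{prop:Lval} applied to the linear-extension triangulation of the unimodular cone $C_{S_\bullet}$), but this route requires the same sign analysis, so it offers no real shortcut.
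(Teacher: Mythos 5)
Your proposal is correct, and both of the routes you describe match the spirit of the paper's (very terse) indication that the statement ``follows in a similar way to \cref{thm:deRhamfannested}.'' Your main route repackages that argument through \cref{thm:localBF}: since $\dRip{e_B,e_{B'}}=(-1)^{B,B'}\,\L(|\Sigma_M(B,B')|)$ is already established, and the nested cones $\{C_{S_\bullet}\}$ form a unimodular simplicial fan structure on $|\Sigma_M|$ that is refined by the flag fan, the Laplace transform computation and the observation that $C_{S_\bullet}\subseteq|\Sigma_M(B,B')|$ iff $S_\bullet$ is generated by both $B$ and $B'$ give exactly what you want. Your ``alternative combinatorial route'' --- regrouping the flags in \cref{prop:dRind} by the nested cone containing them --- is probably closest to what the paper literally means by ``similar to \cref{thm:deRhamfannested},'' but the two routes are not meaningfully different.

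One small clarification on the sign step you flag as the ``hard part.'' You worry about $r(B,S_\bullet)$ being well-defined because ``several $\sigma$ might be valid,'' but for a maximal nested set in a building set, the permutation $\sigma$ is unique once the orderings of $B$ and $S_\bullet$ are fixed (for each $S_i$, exactly one element of $B$ lies in $S_i$ and in none of the $S_j\subsetneq S_i$, because $\rk(S_i)=1+\sum\rk(S'_j)$ over maximal $S'_j\subsetneq S_i$ in $S_\bullet$). What does depend on choices is the ordering of $S_\bullet$, and the paper explicitly records that only the product $r(B,S_\bullet)r(B',S_\bullet)$ is ordering-independent. Your linear-extension computation is then exactly the right verification: for a complete flag $F_\bullet$ refining $S_\bullet$ via a linear extension $\tau$, the permutation underlying $r(B,F_\bullet)$ is that of $r(B,S_\bullet)$ composed with $\tau$, so $r(B,F_\bullet)=(-1)^\tau r(B,S_\bullet)$, and the $(-1)^\tau$ cancels in the product with $B'$. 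So there is no genuine basis-exchange lemma needed beyond \cref{prop:BV}, which you already invoke.
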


While $r(B, S_\bullet)$ depends on the ordering of $S_\bullet$, the product $r(B, S_\bullet)r(B', S_\bullet)$ does not.

\part{Geometry}
\section{Hyperplane arrangement complements}
\def\comp{{\rm comp}}

In this section, we work with central hyperplane arrangements in $\C^r$, or projective hyperplane arrangements in $\P^d$, or affine hyperplane arrangements in $\C^d$, where $d = r-1$.

\subsection{Hyperplane arrangements}
We denote by $\A =  \{H_e \mid e \in E\}  \subset \C^r$ a central hyperplane arrangement in $\C^r$.  Whenever we discuss chambers of $\A$, or the oriented matroid of $\A$, we assume that $\A$ is defined over the reals.  We call $\A$ \emph{essential} if $\bigcap_e H_e = (0)$.  Unless otherwise specified, the hyperplane arrangements we consider are assumed to be essential and non-empty.  We let $M = M(\A)$ denote the rank $r$ matroid associated to $\A$ and $\M = \M(\A)$ a choice of oriented matroid associated to $\A$.  We let $f_e$ be a linear function cutting out $H_e$.

Let $\bA \subset \P^{d}$ denote the projective hyperplane arrangement associated to $\A$.  The hyperplanes of $\bA$ are still denoted by $H_e$, $e \in E$, and the corresponding matroid $M(\bA) = M(\A)$ is the same as for $\A$.  Once a hyperplane at infinity $H_0$ has been chosen, we obtain an affine hyperplane arrangement $(\bA,0) = \{H_e \mid e \in E \setminus 0\} \subset \C^{d}$.  Thus we work with three types of hyperplane arrangements: central $\A$, projective $\bA$, and affine $(\bA,0)$, and the corresponding matroids are denoted $M$, $M$, and $(M,0)$.

Let $\bU = \P^d\setminus \bA = \C^d \setminus (\bA,0)$ denote the hyperplane arrangement complement, which is the same for $\bA$ and $(\bA,0)$.  We use matroid terminology to refer to various hyperplane arrangement notions.  For example, the lattice of flats $L(M)$ is the lattice of all intersections of hyperplanes in $\bA$, with $\hat 0 = \P^d$ and $\hat 1 = \emptyset$ the intersection of all hyperplanes (empty, since we are assuming that $\bA$ is essential).  The connected components of $\bU(\R)$ are called chambers and are identified with the positive topes $\T^+$.  The bounded chambers are identified with the bounded topes $\T^0$.  We use $P$ to both denote a tope of the oriented matroid, or a chamber of $\bU(\R)$.

\subsection{Wonderful compactifications}\label{sec:WC}
The compactification $\P^d$ of $\bU$ typically does not have a normal-crossing boundary divisor.  However, it can be blown up in various ways to obtain a smooth compactification with normal-crossing boundary divisor.

Recall the notion of building sets from \cref{sec:building}.  For a building set $\build$ of $L(M)$, let $X_{\build}$ be the corresponding De Concini-Procesi wonderful compactification \cite{DP}.  The wonderful compactification $X_{\build}$ is a smooth projective compactification of $\bU$ with a normal crossing boundary divisor.  Let $F_1,F_2,\ldots,F_s$ be any ordering of $\build$ such that $F_j > F_i$ implies that $j < i$.  Then $X_{\build}$ is obtained from $\P^d$ by first blowing up $F_1$, then blowing up the proper transform of $F_2$, and so on.  Geometrically, such an ordering can be obtained by blowing up low-dimensional flats before high-dimensional ones.  We write $X_{\max}:= X_{\build_{\max}}$ and $X_{\min}:= X_{\build_{\min}}$ for short.

The wonderful compactification $X_{\build}$ has a stratification induced by the intersection of boundary divisors (\cite[Theorem 3.2]{DP}).  The boundary divisors $D_F$ of $X_{\build}$ are indexed by flats $F \in \build$.  Intersections of the divisors $D_F$ endows $X_{\build}$ a stratification.  For $S_\bullet = (S_1,\ldots,S_k)$ the intersection  
$$
X_{S_\bullet} := D_{S_1} \cap \cdots \cap D_{S_k}
$$
is non-empty if and only if $S_\bullet \in \N(\build)$ is a nested collection, and if so, the intersection is transversal and irreducible.  In other words, the stratification of $X_{\build}$ has the combinatorics of the dual of the nested set complex.  In particular, the vertex strata of $X_{\build}$ are indexed by maximal nested subsets $S_\bullet \in N_{\max}(\build)$.

In the case of $\build = \build_{\max}$, the strata $X_{G_\bullet}$ of the compactification $X_{\max}$ are labeled by partial flags $G_\bullet \in \pFl(M)$.  We let 
$$\mathring{X}_{G_\bullet} := X_{G_\bullet} \setminus \bigcup_{G'_\bullet \leq G_\bullet} X_{G'_\bullet}
$$ denote the relatively open strata, so that $X_{G_\bullet} = \bigsqcup_{G'_\bullet \leq G_\bullet} \mathring{X}_{G'_\bullet}$. Given a tope $P \in \T^+$, the (analytic) closure $\bP \subset X_{\max}(\R)$ is stratified by the intersection with the strata $X_{G_\bullet}$.  Recall that $\pFl(P)$ denotes the set of wonderful faces of $P$, or equivalently, the set of partial flags of flats that belong to $L(P)$.

\begin{proposition}[{\cite[Theorem 4.5]{BEPV}}]\label{prop:wonderfulface}
A stratum $X_{G_\bullet} \cap \bP$ of $\bP$ in the wonderful compactification $X_{\max}$ is non-empty if and only if $G_\bullet \in \pFl(P)$ is a wonderful face of $P$.
\end{proposition}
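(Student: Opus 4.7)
The plan is to prove both directions simultaneously by induction on the rank $r$ (equivalently on the length $s$ of $G_\bullet$, combined with the iterated blow-up structure of $X_{\max}$). Recall that $X_{\max}$ is obtained from $\P^d$ by blowing up flats of $L(M) \setminus \{\hat 0, \hat 1\}$ in order of increasing rank. I will reduce the statement to a single-blow-up step together with an inductive application to a contracted matroid.

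For the base reduction, I will analyze the first blow-up $\pi \colon \mathrm{Bl}_{G_1}\P^d \to \P^d$ along the lowest flat $G_1$ of the flag (if $s = 0$ the statement is vacuous). Choose local affine coordinates near a point of $G_1$ so that a transverse slice looks like $\C^{r-\rk(G_1)}$ with the hyperplanes of $\bA$ containing $G_1$ appearing as the pullbacks of the hyperplanes of the contracted arrangement $\bA/G_1$. In these coordinates, the exceptional divisor $E_{G_1}$ is a $\P^{r-\rk(G_1)-1}$-bundle over $G_1$, and its real points are stratified by the chambers of $\M/G_1$. The forward claim then follows: if $\bP \cap E_{G_1}(\R) \neq \emptyset$, then $P$ must approach $G_1$ from within a single chamber of the transverse arrangement, which forces $G_1 \in L(P)$ by definition of the Las Vergnas face lattice. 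Conversely, if $G_1 \in L(P)$, the chamber $P$ admits a sequence of real points converging to a point of $E_{G_1}$ inside the closure of the chamber $P/G_1 \in \T(\M/G_1)$, so $\widetilde{\bP} \cap E_{G_1}(\R)$ is non-empty and is precisely the analytic closure of $P/G_1$ in the wonderful compactification of $\M/G_1$ fibered over $G_1$.

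For the inductive step I will use the well-known fact (implicit in \cite{DP}) that after the first blow-up, the proper transform of the wonderful compactification of $\bA/G_1$ inside $E_{G_1}$ agrees with the wonderful compactification $X_{\max}(M/G_1)$ of the contracted matroid, and the residual iterated blow-ups restrict to the iterated blow-ups that build $X_{\max}(M/G_1)$. Under this identification, the stratum $X_{G_\bullet}$ of $X_{\max}(M)$ is sent to the stratum $X_{G_\bullet/G_1}$ of $X_{\max}(M/G_1)$, where $G_\bullet/G_1 := (G_2/G_1 \subset \cdots \subset G_s/G_1)$. Since $G_\bullet \in \pFl(P)$ if and only if $G_1 \in L(P)$ and $G_\bullet/G_1 \in \pFl(P/G_1)$ (the interval $[G_1,\hat 1] \cong L(M/G_1)$ restricts $L(P)$ to $L(P/G_1)$), the inductive hypothesis applied to $(M/G_1, P/G_1)$ yields the claim.

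The main obstacle will be making rigorous the identification of the real analytic closure of $P$ with the iterated real analytic closure of chambers of successive contractions. Concretely, one must verify that limits of points in $P(\R)$ under the blow-up map $\pi$ land exactly in the closure of the chamber $P/G_1 \subset E_{G_1}(\R)$, with no spurious extra limit points — this amounts to a transversality statement about how a convex region in $\R^d$ degenerates under blowing up along one of its face subspaces. A clean way to handle this is to use the tangent cone description: the real points of $E_{G_1}$ that lie in $\overline{P(\R)}$ correspond to the projectivized inward-pointing directions of $P$ along $G_1$, which form the (projectivization of the) chamber $P/G_1$ exactly when $G_1 \in L(P)$, and are empty otherwise.
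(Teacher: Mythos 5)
Your high-level strategy (an induction that peels off one flat of $G_\bullet$ and reduces to a contraction) is the right idea, but there is a genuine gap in how you set up the iterated blow-up. You write that $X_{\max}$ is built ``by blowing up flats of $L(M) \setminus \{\hat 0, \hat 1\}$ in order of increasing rank,'' but the admissible orderings go the other way: if $F_j \supsetneq F_i$, then $F_j$ must be blown up \emph{before} $F_i$ (i.e.\ one blows up low-dimensional subvarieties / high-rank flats first). Consequently $\mathrm{Bl}_{G_1}\P^d$ is not the first step of the construction when $G_1$ has small rank, and the divisor $D_{G_1}\subset X_{\max}$ is the exceptional divisor of a blow-up of the \emph{proper transform} of $H_{G_1}$ after all the deeper flats have already been resolved --- not the exceptional divisor of $\mathrm{Bl}_{G_1}\P^d$.

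This is not merely cosmetic: it breaks your forward direction. You argue that $\bP\cap E_{G_1}(\R)\neq\emptyset$ forces $G_1\in L(P)$, but the closure of $P$ in $\mathrm{Bl}_{G_1}\P^d$ (or even in $\P^d$) can meet $H_{G_1}$ at a point that lies in a deeper flat $H_F$, $F\supsetneq G_1$, without $G_1$ being a face of $P$. Concretely, take three concurrent lines $\ell_1,\ell_2,\ell_3$ (plus a generic line at infinity to make the arrangement essential), and let $P$ be a wedge bounded by $\ell_1,\ell_2$. Then $\overline P$ meets $\ell_3$ at the triple point, so your test with $G_1=\{3\}$ is inconclusive, even though $\{3\}\notin L(P)$; it is precisely the \emph{earlier} blow-up at the triple point that separates $\bP$ from $D_{\{3\}}$ in $X_{\max}$. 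The statement you actually need is about $\bP\cap D_{G_1}$ in $X_{\max}$, and the morphism $X_{\max}\to\mathrm{Bl}_{G_1}\P^d$ (which does exist) gives information only in one direction: it shows $\bP\cap D_{G_1}=\emptyset$ \emph{whenever} the closure misses $E_{G_1}$, not the converse. To repair the argument, the recursion should run via a rank-one flat (using $D_a\cong X_{\max}(M/a)$, as in the paper's Lemma 11.10), or via the product decomposition $D_F\cong X_{\max}(M^F)\times X_{\max}(M_F)$ of an arbitrary divisor together with the corresponding splitting of $\bP\cap D_F$; your phrase ``the proper transform of the wonderful compactification of $\bA/G_1$ inside $E_{G_1}$'' is gesturing at this, but it misses the $X_{\max}(M^{G_1})$ factor and conflates $E_{G_1}$ with $D_{G_1}$.
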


\section{Twisted (co)homology}\label{sec:twistedco} 
In this section $\bU$ denotes a projective hyperplane arrangement complement of an essential hyperplane arrangement.  

\subsection{Cohomology}
We write $H^\bullet_\dR(\bU,\C)$ to denote the algebraic deRham cohomology of $\bU$, and let $H^\bullet(\bU,\Z)$ and $H_{\bullet}(\bU,\Z)$ denote the Betti (co)homology groups.  Brieskorn showed that $H^\bullet_\dR(\bU,\C)$ is generated by the forms $\dlog f_e = \frac{df_e}{f_e}$.

\begin{theorem}[\cite{Brie, OS}]\label{thm:Bri}
Let $\A \subset \C^d$ be a central and essential hyperplane arrangement with matroid $M$ and complement $U:= \C^d \setminus \A$.  We have an isomorphism
$$
\OS^\bullet(M) \otimes_\Z \C \cong H_{\dR}^\bullet(U,\C), \qquad e \mapsto [\dlog f_e].
$$
Let $\bA \subset \P^{d}$ be an essential projective hyperplane arrangement with matroid $M$.  We have an isomorphism
$$
\rOS^\bullet(M) \otimes_\Z \C \cong H_{\dR}^\bullet(\bU,\C), \qquad (e - e') \mapsto [\dlog (f_e/f_{e'})].
$$
\end{theorem}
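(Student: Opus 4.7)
The plan is to define the proposed map, verify well-definedness via the circuit relation, prove it is an isomorphism by induction on $|E|$ using deletion--restriction, and finally derive the projective version from the central one via a $\C^*$-bundle structure.

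First, for the central case, define $\Phi_M: \OS^\bullet(M) \otimes_\Z \C \to H^\bullet_\dR(U,\C)$ on generators by $e \mapsto [\dlog f_e]$, extended as a graded algebra homomorphism. The exterior-algebra relations $e \wedge e = 0$ hold on the nose since $\dlog f_e \wedge \dlog f_e = 0$. For the Orlik--Solomon ideal, it suffices to check $\Phi_M(\partial e_C) = 0$ for each circuit $C = \{e_0,\ldots,e_k\} \subseteq E$. A minimal linear dependence $\sum_{i=0}^k c_i f_{e_i} = 0$ with all $c_i \neq 0$, divided through by $\prod_j f_{e_j}$ and paired with the wedge product of the remaining $\dlog f_{e_j}$'s, yields the logarithmic-form identity
$$\sum_{i=0}^{k} (-1)^i \dlog f_{e_0} \wedge \cdots \widehat{\dlog f_{e_i}} \wedge \cdots \wedge \dlog f_{e_k} \;=\; 0$$
on $U$, which is exactly $\Phi_M(\partial e_C)$.

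Next, I would induct on $|E|$ to show $\Phi_M$ is an isomorphism. Pick an atom $\atom \in \At(M)$ with hyperplane $H_\atom$; then the complements associated to $M \setminus \atom$ and $M/\atom$ are $U' = U \cup (H_\atom \setminus \bigcup_{e \neq \atom} H_e)$ and $U'' = H_\atom \setminus \bigcup_{e \neq \atom} H_e$ respectively. The Gysin sequence attached to the open inclusion $U \hookrightarrow U'$, together with Brieskorn's splitting lemma, produces a short exact sequence
$$0 \to H^k_\dR(U') \to H^k_\dR(U) \xrightarrow{\Res_{H_\atom}} H^{k-1}_\dR(U'') \to 0$$
that matches the algebraic sequence of \cref{prop:OSexact}. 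A local computation near a generic point of $H_\atom$ identifies the geometric Poincar\'e residue with the combinatorial $\Res_\atom$: on a logarithmic form $\dlog f_\atom \wedge \eta$ with $\eta$ smooth along $H_\atom$, the residue reads off $\eta|_{H_\atom}$. The inductive hypothesis applied to $M \setminus \atom$ and $M/\atom$, combined with the five lemma, gives the isomorphism for $M$. The base case $|E|=1$ is $H^\bullet_\dR(\C^*) = \C \oplus \C \cdot [\dlog z]$.

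For the projective statement, I would use that the complement $\widetilde U \subset \C^r$ of the central arrangement is a trivial $\C^*$-bundle over $U \subset \P^d$, since the arrangement is a cone; thus $\widetilde U \cong U \times \C^*$ and K\"unneth gives $H^\bullet_\dR(\widetilde U) \cong H^\bullet_\dR(U) \otimes \C[\dlog f_0]$ for any fixed linear form $f_0$. Algebraically, $\OS^\bullet(M) \cong \rOS^\bullet(M) \otimes \Lambda[e_0]$, where $\rOS^\bullet(M) = \ker(\partial)$ is generated by the differences $e - e_0$. Transporting the central-case isomorphism through both decompositions yields the projective isomorphism with $(e-e') \mapsto [\dlog(f_e/f_{e'})]$. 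The main obstacle is the inductive splitting step: one must confirm both that the Gysin long exact sequence breaks into short exact sequences (equivalently, that the connecting homomorphism vanishes) and that the geometric Poincar\'e residue coincides with the combinatorially defined $\Res_\atom$ from \cref{sec:residue}. Brieskorn's original argument handles both simultaneously by analyzing $\dlog$-forms near $H_\atom$, and this compatibility is the technical heart of the theorem.
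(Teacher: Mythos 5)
The paper cites this as a classical theorem of Brieskorn and Orlik--Solomon without giving a proof, so there is no internal argument to compare against; I will instead evaluate your outline on its own terms. Your proposal is the standard deletion--restriction induction and is essentially sound: the circuit relation holds for $\dlog$-forms because a circuit is a minimal linear dependence among the $f_e$; the Gysin triangle for $U \hookrightarrow U'$ with closed complement $U''$ is the right topological input; and your deduction of the projective case from the central one via the trivialization $\widetilde U \cong U \times \C^*$ (by any one of the nonvanishing $f_{e}$), K\"unneth, and the decomposition $\OS^\bullet(M) \cong \rOS^\bullet(M) \oplus e_0 \wedge \rOS^{\bullet-1}(M)$ is the standard route.

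One point deserves more care than your write-up gives it. You invoke ``Brieskorn's splitting lemma'' and the five lemma, but the Gysin sequence does not split \emph{a priori}: the connecting map $H^{k-1}(U'') \to H^{k+1}(U')$ vanishes precisely because the residue $H^k(U) \to H^{k-1}(U'')$ is surjective, and at this stage of the induction the only way to see that surjectivity is to use the inductive hypothesis that $H^{k-1}(U'')$ is generated by logarithmic forms for $M/\atom$, lift a generator along $\Res_\atom: \OS^k(M) \twoheadrightarrow \OS^{k-1}(M/\atom)$ from \cref{prop:OSexact}, and then apply the local identification of the geometric Poincar\'e residue with the combinatorial $\Res_\atom$ (which you correctly flag as the technical heart). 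Once the surjectivity is established in every degree, both rows are short exact and the short five lemma closes the induction. So the logic is an honest simultaneous induction in which the splitting is a \emph{consequence} of the inductive hypothesis, not an independently available lemma; spelling that out would make the argument airtight.
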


The lattice $\rOS^\bullet(M) \subset \rOS^\bullet(M) \otimes_\Z \C$ spans a lattice inside $H^\bullet_\dR(\bU,\C)$.  Under the comparison isomorphism
$$
\comp: H^\bullet_\dR(\bU,\C) \to H^\bullet_B(\bU,\Z) \otimes_\Z \C,
$$
the lattice $\rOS^\bullet(M)$ is identified with the lattice $H^\bullet_B(\bU,2\pi i \Z) \subset H^\bullet_B(\bU,\Z) \otimes_\Z \C$.

\subsection{Twisted (co)homology}\label{ssec:twisted}
Henceforth, we assume we are in the case of a projective arrangement, or an affine arrangement.  Let $a_e$, $e \in E$ be complex parameters. 
Consider the meromorphic 1-form 
\begin{equation}\label{eq:omega}
\omega = \omega_\a = \sum_e a_e \dlog f_e = \sum_{e \in E \setminus 0} a_e \dlog(f_e/f_0) \in \Omega^1(\bU)
\end{equation}
on $\bU$, where we assume that $\sum_{e \in E} a_e = 0$, or equivalently, $a_0 = - \sum_{e \in E \setminus 0} a_e$.  We may also view $\omega$ as an element 
$$
\omega = \sum_e a_e e \in \rOS^1(M).$$
The formula
$$
\nabla_\a := d + \omega \wedge
$$
defines a logarithmic connection $(\O_\bU,\nabla_\a)$ on the trivial rank one vector bundle $\O_\bU$ on $\bU$.  The flat (analytic) sections of $\nabla_\a$ define a complex rank one local system $\L_\a$ on $\bU$.  The local sections of $\L_\a$ are branches of the multi-valued function
\begin{equation}\label{eq:varphi}
\varphi^{-1}:= \prod_e f_e^{-a_e} = \prod_{e \in E \setminus 0} (f_e/f_0)^{-a_e},
\end{equation}
which satisfies $\nabla_\a \varphi^{-1} = 0$.  Fixing a basepoint $u_0 \in \bU$, the local system $\L_\a$ determines a homomorphism
$$
\rho: \pi_1(\bU,u_0) \to \C^\times
$$
which determines the isomorphism class of the local system $\L_\a$.  Let $\gamma_e$ be a loop starting and ending at $u_0$ that goes once around $H_e$.  Then the monodromy of $\L_\a$ around $H_e$ is given by
$$
\rho(\gamma_e) = \exp(-2\pi i a_e) = b_e^2, \qquad \text{where} \qquad b_e := \exp(-\pi i a_e).
$$
The monodromies satisfy $1 = \prod_e \rho(\gamma_e) = \prod_e b_e^2$ which is implied by $\sum_e a_e =0$.  The dual local system $\L^\vee_\a = \L_{-\a}$ has local sections given by the branches of $\varphi = \prod_e f_e^{a_e}$ with $\b$ parameters given by $b^\vee_e = b_e^{-1}$.  We have $\dlog \varphi= \omega$.  

Let $\pi: X_{\build} \to \P^d$ be a wonderful compactification of $\bU$.  Recall that for a proper non-trivial flat $F$ belonging to the building set $\build$, we denote by $D_F$ the corresponding divisor of $X_{\build}$.  If $\rk(F) = 1$ then $D_F$ is the pre-image of a hyperplane $H= H_e \subset \P^d$ and maps birationally to $H$.  Otherwise,  $D_F$ is the exceptional divisor of the blowup of (the proper transform of) the flat $F$.

\begin{lemma}[see \cite{ESV}]\label{lem:monod}\ 
\begin{enumerate}
\item The residue of the connection $\pi^* \nabla_\a$ along $D_F$ is equal to $a_F$.
\item The monodromy of $\L_\a$ once anticlockwise around $D_F$ is equal to $b^2_F$.
\end{enumerate}
\end{lemma}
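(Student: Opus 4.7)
The plan is to prove both statements by a local computation at a generic point of $D_F$, after which the monodromy in (2) falls out from the residue in (1) by the standard formula relating them for a logarithmic connection on a line bundle.

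First I would handle (1). If $\rk(F) = 1$, then $F = \{e\}$ is a single hyperplane $H_e$, the map $\pi$ is an isomorphism at a generic point of $H_e$, and only $\dlog f_e$ among the summands of $\omega$ has a pole there; the residue is $a_e = a_F$ immediately. For $\rk(F) \geq 2$, I would pick a point $x \in D_F$ lying in no other boundary divisor of $X_{\build}$ and choose local coordinates on $X_{\build}$ near $x$ such that $D_F = \{t = 0\}$ for a coordinate $t$. The key input is the order in which the building set is blown up (see \cref{sec:WC}): by the time the (proper transform of) $F$ is blown up, all the lower flats contained in $F$ have already been resolved, so the linear forms $\{f_e \mid e \in F\}$ — which are precisely the linear forms vanishing on $F$ — cut out the proper transform of $F$ smoothly along the locus where we perform the blow-up. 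It follows that
\[
\pi^* f_e = t \cdot g_e \quad (e \in F), \qquad \pi^* f_e = g_e \quad (e \notin F),
\]
where each $g_e$ is regular and non-vanishing at $x$. Consequently
\[
\pi^* \omega \;=\; \Bigl(\sum_{e \in F} a_e\Bigr)\,\frac{dt}{t} \;+\; (\text{regular 1-form}) \;=\; a_F \,\frac{dt}{t} + (\text{regular}),
\]
so $\Res_{D_F} \pi^* \nabla_\a = a_F$.

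For (2), I would use the fact that $\L_\a$ is generated locally on $U$ by the flat section $\varphi^{-1} = \prod_e f_e^{-a_e}$. Near a generic point of $D_F$, factor $\prod_e f_e^{-a_e} = t^{-a_F} \cdot h$ where $h = \prod_{e \in F} g_e^{-a_e} \cdot \prod_{e \notin F} g_e^{-a_e}$ is a (multi-valued, but) locally bounded away from $0$ and $\infty$ function whose monodromy around $D_F$ is trivial once the $g_e$ are regular and nonzero at $x$. A small anticlockwise loop around $D_F$ sends $t \mapsto e^{2\pi i} t$, hence $t^{-a_F} \mapsto e^{-2\pi i a_F} t^{-a_F}$, so the monodromy of $\L_\a$ around $D_F$ is $\exp(-2\pi i a_F) = b_F^2$.

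The only nontrivial step is the local normal form $\pi^* f_e = t \cdot g_e$ for $e \in F$ with $g_e$ a unit at $x$; the main obstacle is verifying this cleanly using the building-set order of blow-ups, i.e.\ checking that at the blow-up step corresponding to $F$ the ideal generated by $\{f_e \mid e \in F\}$ has become the ideal of a smooth center whose exceptional divisor is $D_F$. This is exactly what the definition of a building set and De Concini–Procesi's construction of $X_{\build}$ guarantee, but stating and citing it precisely (e.g.\ from \cite{DP}) is the content of the lemma.
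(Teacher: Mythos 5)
Your proof of part (1) is essentially the paper's proof: a local computation near a generic point of $D_F$, using $\pi^* f_e = t\,g_e$ for $e \in F$ with $g_e$ a unit, and $\pi^* f_e$ a unit for $e \notin F$, so that the residue of $\pi^*\omega$ along $\{t=0\}$ is $\sum_{e\in F} a_e = a_F$.

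For part (2), you take a genuinely different route. The paper deduces (2) from (1) by citing Deligne's general theorem \cite[II.Th\'eor\`eme 1.17]{Del}, which gives $\exp(-2\pi i\,\mathrm{Res})$ as the eigenvalue of local monodromy for any logarithmic connection. You instead compute directly with the explicit flat section $\varphi^{-1} = \prod_e f_e^{-a_e} = t^{-a_F} h$, observing that $h$ is a nowhere-vanishing function on a punctured neighborhood of the generic point of $D_F$ and hence has trivial monodromy, while $t^{-a_F}$ picks up $\exp(-2\pi i a_F) = b_F^2$ under an anticlockwise loop. Both are correct and the signs match the paper's conventions ($b_e = \exp(-\pi i a_e)$). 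Your direct computation is more elementary and self-contained, in effect re-deriving the relevant special case of Deligne's theorem for a rank-one connection with an explicit flat section; the paper's citation is shorter and applies in broader generality. Your closing remark correctly isolates the one step that needs justification (that $\pi^*f_e = t\,g_e$ with $g_e$ a unit for $e \in F$), which the paper also asserts without extended elaboration, relying on the De Concini--Procesi construction.
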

\begin{proof}
We prove (a).  This implies (b) by \cite[II.Th\'eor\`eme 1.17]{Del}.
If $\rk(F) =1$, then the result is clear since $\pi$ is an isomorphism locally near $D_F$ and by definition \eqref{eq:omega} the residue of $\nabla_\a$ along $H_e$ is $a_e$.  Otherwise, let $t$ be the local equation of $D_F$ in the blowup.  For $e \in F$, let the local equation of the proper transform of $H_e$ near the general point of $D_F$ be $g_e$.  Then $\pi^*(f_e) = t g_e$ for $e \in F$.
The pullback of $\omega = \sum_e a_e \dlog f_e$ to $X$ is thus locally given by
$$
\pi^*\omega = \sum_{e \in F} a_e (\dlog g_e+\dlog t) + \sum_{e \notin F} a_e \dlog f_e.
$$
Since $\dlog f_e$ for $e \notin F$ and $\dlog g_e$ for $e \in F$ have no pole along $D_F$, the residue of $\pi^* \nabla_\a = d + \pi^*\omega$ along $D_F$, is equal to the residue of $(\sum_{e \in F}a_e)\dlog t$ at $t = 0$, which is equal to $a_F = \sum_{e \in F} a_e$.
\end{proof}

We consider the four twisted Betti (co)homology groups
\begin{align*}
H_k(\bU,\L_\a) & = \mbox{homology with coefficients in the local system $\L_\a$,} \\
H^{\lf}_k(\bU,\L_\a) &= \mbox{locally-finite homology with coefficients in the local system $\L_\a$,} \\
H^k(\bU,\L_\a) &= \mbox{cohomology with coefficients in the local system $\L_\a$,}\\
H^k_c(\bU,\L_\a)  &= \mbox{compactly supported cohomology with coefficients in the local system $\L_\a$}.
\end{align*}
The locally-finite homology group is also often called Borel-Moore homology.  Duality between homology and cohomology gives canonical isomorphisms
\begin{equation}\label{eq:4dual}
H_k(\bU,\L^\vee_\a) \cong H^k(\bU, \L_\a)^\vee, \qquad H^{\lf}_k(\bU,\L^\vee_\a) \cong H^k_c(\bU, \L_\a)^\vee.
\end{equation}
The following well-known result is proved in \cite[Theorem 1]{Koh} with a stronger assumption.  See also \cite{MHcoh,EV,CDO}.
\begin{theorem}[see \cite{Koh}]\label{thm:Koh}
Under the assumption \eqref{eq:Mon}, the natural morphisms induce isomorphisms
\begin{align*}
H_k(\bU,\L_\a) &\stackrel{\cong}{\longrightarrow} H^{\lf}_k(\bU,\L_\a) \\
H^k_c(\bU,\L_\a) &\stackrel{\cong}{\longrightarrow} H^k(\bU,\L_\a)
\end{align*}
for each $k$.  Furthermore, we have the vanishing 
$$
H_k(\bU,\L_\a)= H^{\lf}_k(\bU,\L_\a)= H^k_c(\bU,\L_\a) = H^k(\bU,\L_\a) = 0 \qquad \mbox{when $k \neq d$}.
$$
\end{theorem}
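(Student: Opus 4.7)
The plan is to reduce both isomorphisms to a single sheaf-theoretic quasi-isomorphism on a wonderful compactification of $U$, and then derive the vanishing from Artin's theorem together with Poincar\'e--Verdier duality.

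The setup I would use: take $X := X_{\build_{\min}}$, the wonderful compactification associated with the minimal building set \eqref{eq:Bmin}, and $j: U \hookrightarrow X$ the open inclusion. As recalled in \cref{sec:WC}, the boundary $D = X \setminus U$ is a simple normal crossing divisor whose irreducible components $D_F$ correspond to the connected proper non-trivial flats $F \in L(M) \setminus \{\hat 0, \hat 1\}$. By \cref{lem:monod}(b), the local monodromy of $\L_\a$ around $D_F$ is $b_F^2 = \exp(-2\pi i a_F)$, and \eqref{eq:Mon} says exactly that $a_F \notin \Z$ for every such $F$, so every local monodromy is nontrivial. The point of working with $\build_{\min}$ rather than $\build_{\max}$ is that its boundary divisors are indexed precisely by connected flats, so the hypothesis \eqref{eq:Mon} on connected flats alone matches the nontriviality condition I need; a decomposable flat $F_1 \sqcup F_2$ could have $a_{F_1} + a_{F_2} \in \Z$ even when the summands are not, but such flats do not contribute boundary divisors.

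The key step is to invoke the following standard principle: if $(X,D)$ is a smooth simple normal crossing pair and a local system $\L$ on the complement has nontrivial monodromy around every irreducible component of $D$, then the canonical morphism $j_!\L \to Rj_*\L$ is a quasi-isomorphism in $D^b(X,\C)$. The proof is a stalk computation at a point lying on $k$ branches of $D$: a small neighborhood in $U$ is homotopy equivalent to a product of $k$ punctured disks, whose local-system cohomology decomposes by K\"unneth, and on each punctured disk both $H^0$ and $H^1$ of a local system with nontrivial monodromy vanish. Taking $\mathbb{H}^*(X,-)$ of this quasi-isomorphism yields
$$H^k_c(U,\L_\a) = \mathbb{H}^k(X, j_!\L_\a) \xrightarrow{\sim} \mathbb{H}^k(X, Rj_*\L_\a) = H^k(U,\L_\a),$$
and applying the same argument to $\L_{-\a} = \L_\a^\vee$ (which also satisfies \eqref{eq:Mon}) and dualizing via \eqref{eq:4dual} gives $H_k(U,\L_\a) \xrightarrow{\sim} H^{\lf}_k(U,\L_\a)$.

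For the vanishing outside degree $d$, observe that $U$ is a smooth affine variety of dimension $d$: in any affine chart of $\P^d$, $U$ is the complement of a hypersurface in $\C^d$. Artin's vanishing theorem gives $H^k(U,\L_\a) = 0$ for $k > d$, and the dual Artin statement gives $H^k_c(U,\L_\a) = 0$ for $k < d$. Combined with the comparison isomorphism above, all four groups vanish in every degree $k \neq d$. The hardest conceptual step is the sheaf-theoretic input in the second paragraph; once the local quasi-isomorphism $j_!\L_\a \xrightarrow{\sim} Rj_*\L_\a$ is in hand, everything else is formal.
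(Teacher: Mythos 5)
Your proof is correct and takes essentially the same route as the paper: both reduce to showing $\iota_! \L_\a \cong R\iota_* \L_\a$ on the minimal wonderful compactification by checking nontrivial local monodromy around each boundary divisor (indexed by connected flats, matching \eqref{eq:Mon}), then use Artin vanishing and Poincar\'e--Verdier duality for the degree statement. Your explicit remark about why $\build_{\min}$ rather than $\build_{\max}$ is the right choice makes the role of the connectedness hypothesis in \eqref{eq:Mon} clearer than the paper's phrasing, but it is not a different argument.
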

\begin{proof}
Our argument is the same as that of \cite{Koh}.  See also \cite[Proposition 6.5]{BD} for a similar argument.
Let $Y \to \P^d$ be the minimal wonderful compactification of $\bA$ (see \cref{sec:WC}).  Thus $Y$ is a smooth compactification of $\bU$ with a normal-crossing boundary divisor $D:=Y \setminus U$, obtained by blowing up all the connected flats $F$.  Let $\iota: \bU \to Y$ be the inclusion.  To prove the first statement, we show that $\iota_! \L_\a \cong R\iota_* \L_\a$ in the derived category on $Y$.  It suffices to check that the stalk of $R\iota_* \L_\a$ vanishes on $D$.  Since by construction $D$ is a normal crossing divisor, it suffices to check that $\L_\a$ has non-trivial monodromy around every irreducible component of $D$.  The irreducible components $D_F$ of $D$ are labeled by the connected flats $F \in L(M)$.  The divisor $D_F$ is the exceptional divisor obtained when (the proper transform of) $F$ is blown up.

By \cref{lem:monod}, the monodromy of $\L_\a$ around $D_F$ is given by $b^2_F = \prod_{e \in F} b^2_e = \exp(-2\pi i a_F)$.  By assumption \eqref{eq:Mon}, this is not equal to 1, so $\L_\a$ has non-trivial monodromy around $D_F$.  We conclude that $\iota_! \L_\a \cong R\iota_* \L_\a$.  Taking derived global sections we obtain the isomorphism $H^k_c(\bU,\L_\a) \cong H^k(\bU,\L_\a)$, and by duality \eqref{eq:4dual} we obtain $H_k(\bU,\L_\a) \cong H^{\lf}_k(\bU,\L_\a)$.

Now we prove the last statement.  Since $\bU$ is affine, by Artin vanishing we have $H^k(\bU,\L_\a) = 0$ for $k > d$ and by Poincar\'e-Verdier duality we have $H^k_c(\bU,\L_\a) = 0$ for $k < d$.  Combining with $H^k_c(\bU,\L_\a) \cong H^k(\bU,\L_\a)$ and \eqref{eq:4dual}, we obtain the vanishing of all four groups when $k \neq d$.
\end{proof}

\subsection{Logarithmic description of twisted cohomology}
The analogue of Brieskorn's theorem (\cref{thm:Bri}) for twisted cohomology is due to Esnault--Schechtman--Viehweg \cite{ESV}, and was extended by Schechtman--Terao--Varchenko \cite{STV}.

Let $(\O^\an_\bU, \nabla^\an_\a)$ denote the analytification of $(\O_\bU,\nabla_\a)$.
Let $(\Omega^\bullet_\bU,\nabla_\a)$ (resp. $(\Omega^{\bullet,\an}_\bU, \nabla^\an_\a)$) denote the complex of algebraic (resp. analytic) differential forms on $\bU$, equipped with the differential $\nabla_\a$ (resp. $\nabla_\a^\an$).  The \emph{algebraic deRham cohomology} (resp. \emph{analytic deRham cohomology}) of the connection $\nabla_\a$ is 
$$
H^\bullet(\bU, \nabla_\a) := H^\bullet(\Gamma(\bU, \Omega_\bU^\bullet),\nabla_\a), \qquad H^\bullet(\bU, \nabla^\an_\a) := H^\bullet(\Gamma(\bU, \Omega_\bU^{\bullet,\an}),\nabla^\an_\a).
$$
In both cases, the cohomology groups are usually defined as hypercohomologies, and we may replace by global sections since $\bU$ is affine (resp. Stein); see \cite[II.6]{Del}.  We have a GAGA-type isomorphism between algebraic and analytic deRham cohomology of the connection
$$
H^\bullet(\bU, \nabla_\a) \cong H^\bullet(\bU, \nabla^\an_\a),
$$
see \cite[II Th\'eor\`eme 6.2]{Del}.  We may thus freely switch between algebraic and analytic deRham cohomologies.  The identification of flat sections of $\nabla_\a$ with the local system $\L_\a$ gives a comparison isomorphism
$$
\comp_\a: H^\bullet(\bU,\nabla_\a) \stackrel{\cong}{\longrightarrow} H^\bullet(\bU, \L_\a).
$$
Let $(\rOS^\bullet,\omega)$ denote the Aomoto complex of \cref{sec:Aomoto}.  Since $d$ acts trivially on $\rOS^\bullet$, we have $\nabla_\a = \omega \wedge$ on $\rOS^\bullet$, and thus $(\rOS^\bullet,\omega)$ is a subcomplex of $\Gamma(\bU, (\Omega_\bU^\bullet,\nabla_\a))$.

\begin{theorem}[\cite{ESV,STV}]\label{thm:ESV}
Under the assumption \eqref{eq:Mon}, the natural inclusion
$$
(\rOS^\bullet, \omega) \hookrightarrow \Gamma(\bU, (\Omega_\bU^\bullet,\nabla_\a))
$$
is a quasi-isomorphism.  In particular, we have
$$
H^\bullet(\bU,\L_\a) \cong H^\bullet(\bU, \nabla_\a) \cong H^\bullet(\rOS^\bullet,\omega).
$$
\end{theorem}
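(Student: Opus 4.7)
The statement has two ingredients: the comparison $H^\bullet(U,\nabla_\a) \cong H^\bullet(U,\L_\a)$, and the quasi-isomorphism $(\rOS^\bullet,\omega) \hookrightarrow (\Gamma(U,\Omega_U^\bullet),\nabla_\a)$. The first is Deligne's classical comparison \cite[II, 6.2]{Del} for regular meromorphic connections, which requires no genericity and follows from identifying $\L_\a$ with the locally constant sheaf of flat sections of $\nabla_\a^{\mathrm{an}}$, combined with GAGA since $U$ is affine. The substantive content is the second quasi-isomorphism, and this is what I would focus on.

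My plan is to transport everything to a wonderful compactification. Choose $\pi : X \to \P^d$ for the minimal building set $\build_{\min}$ (\cref{sec:WC}), so $D := X \setminus U$ is a simple normal crossings divisor whose components $D_F$ are indexed by the connected flats $F \in L(M)\setminus\{\hat 0,\hat 1\}$. By \cref{lem:monod}, the residue of $\nabla_\a$ along $D_F$ is $a_F$, and by hypothesis \eqref{eq:Mon} none of these residues lies in $\Z$. Applying Deligne's non-resonance criterion \cite[II, Prop.~3.13]{Del} to the connection $(\O_X,\nabla_\a)$ with logarithmic poles along $D$, the inclusion of the logarithmic deRham complex
$$
(\Omega^\bullet_X(\log D),\nabla_\a) \hookrightarrow (Rj_*\Omega^\bullet_U,\nabla_\a)
$$
is a quasi-isomorphism of complexes of sheaves on $X$. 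Taking hypercohomology yields $H^\bullet(U,\nabla_\a) \cong \mathbb{H}^\bullet(X,\Omega^\bullet_X(\log D),\nabla_\a)$.

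The next step is to collapse the right side to honest cohomology of global sections. This is the heart of the ESV/STV argument: under \eqref{eq:Mon}, the Hodge-to-deRham spectral sequence
$$
E_1^{p,q} = H^q(X,\Omega^p_X(\log D)) \Longrightarrow \mathbb{H}^{p+q}(X,\Omega^\bullet_X(\log D),\nabla_\a)
$$
degenerates at $E_1$ and the higher sheaf cohomology groups $H^q(X,\Omega^p_X(\log D))$ with $q>0$ can be killed by the twisted differential, leaving
$$
\mathbb{H}^\bullet(X,\Omega^\bullet_X(\log D),\nabla_\a) \cong H^\bullet(\Gamma(X,\Omega^\bullet_X(\log D)),\nabla_\a).
$$
Finally, a residue argument at the normal-crossing strata (refining Brieskorn's identification in \cref{thm:Bri}) identifies $\Gamma(X,\Omega^\bullet_X(\log D))$ with $\rOS^\bullet(M)\otimes\C$ as a differential graded algebra, so that the twisted differential $\nabla_\a = d+\omega\wedge$ corresponds to multiplication by $\omega$ on $\rOS^\bullet$. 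Composing gives the desired quasi-isomorphism.

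The main obstacle is the $E_1$-degeneration / vanishing step: the sheaves $\Omega^p_X(\log D)$ generally have higher cohomology on $X$, and the quasi-isomorphism with the global Aomoto complex hinges on showing that the twisted differential eliminates these contributions. ESV handle this through a careful Hodge-theoretic analysis of the polar filtration, while STV proceed more combinatorially by induction on the number of hyperplanes, filtering the deRham complex by pole order along a chosen hyperplane and analyzing the associated graded via a residue exact sequence and the recursion for $\OS^\bullet$ under deletion-contraction. Either route works, but both require genuinely the hypothesis \eqref{eq:Mon}: without non-resonance the residue maps acquire kernels that obstruct the degeneracy.
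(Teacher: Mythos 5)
Your route is broadly the same as the paper's (wonderful compactification for the minimal building set, Deligne's non-resonance criterion to pass to the logarithmic complex, reduce to global sections, identify with the Aomoto complex), but the middle step is where you diverge, and this is where you have a genuine gap.

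You write down the twisted Hodge-to-deRham spectral sequence
$$E_1^{p,q} = H^q(X,\Omega^p_X(\log D)) \Longrightarrow \mathbb{H}^{p+q}(X,\Omega^\bullet_X(\log D),\nabla_\a)$$
and assert that under \eqref{eq:Mon} it degenerates at $E_1$, and furthermore that "the higher sheaf cohomology groups $H^q(X,\Omega^p_X(\log D))$ with $q>0$ can be killed by the twisted differential." You repeat this in the final paragraph: "the sheaves $\Omega^p_X(\log D)$ generally have higher cohomology on $X$." But the paper's argument shows precisely the opposite on the wonderful compactification: these higher cohomology groups \emph{vanish}. The proof is a dimension count with the \emph{untwisted} spectral sequence, which requires no genericity hypothesis at all. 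Deligne's mixed Hodge theory gives $E_1$-degeneration of
$$E_1^{p,q}=H^q(Y,\Omega^p_Y(\log D))\Longrightarrow H^{p+q}(U,\C),$$
and Brieskorn's theorem (\cref{thm:Bri}) identifies both $H^\bullet(U,\C)$ and $\Gamma(Y,\Omega^\bullet_Y(\log D))$ with $\rOS^\bullet\otimes\C$. Since the $q=0$ row of $E_1$ already has total dimension equal to that of the abutment and the sequence degenerates, all $E_1^{p,q}$ with $q>0$ must vanish. This is an unconditional statement about the sheaves $\Omega^p_Y(\log D)$ on the wonderful compactification $Y$.

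Once you know the sheaves are $\Gamma$-acyclic, the hypercohomology of the \emph{twisted} log complex automatically equals the cohomology of its global sections --- the hypercohomology spectral sequence has only the bottom row. There is no twisted degeneration to prove, no polar-filtration analysis, and no deletion–contraction induction needed at this stage; the hypothesis \eqref{eq:Mon} is used only once, in the first step (replacing $Rj_*\Omega^\bullet_U$ by $\Omega^\bullet_Y(\log D)$ via non-resonance of the residues along the $D_F$). Your claim that "both require genuinely the hypothesis \eqref{eq:Mon}: without non-resonance the residue maps acquire kernels that obstruct the degeneracy" is therefore not how this proof works --- the degeneracy step is purely Hodge-theoretic and holds for any $\a$, and the role of \eqref{eq:Mon} is localized entirely in the passage from $U$ to the log complex on $Y$.
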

\begin{proof}
We work in the minimal wonderful compactification $\pi: Y = X_{\min} \to \P^d$.  Let $D:= Y \setminus \bU$.  By \cref{lem:monod} and assumption \eqref{eq:Mon}, the monodromy of the pullback $\pi^*\L_\a$ is non-trivial around any of the boundary divisors $D_F \subset Y$.  By \cite[II.3.13, 3.14]{Del}, this implies that in the calculation of $H^\bullet(\bU,\nabla_\a)$ we may replace the complex $(\Omega_\bU^\bullet,\nabla_\a)$ (or $(\Omega_\bU^{\bullet,\an},\nabla^\an_\a)$) by the complex $(\Omega^\bullet_Y(\log D), \nabla_\a)$ of logarithmic differential forms on $Y$, equipped with the differential $\nabla_\a$.  Namely, we have an isomorphism
\begin{equation}\label{eq:Lnabla}
H^\bullet(\bU,\L_\a) \cong {\mathbb H}^\bullet(\Omega^\bullet_Y(\log D),\nabla_\a).
\end{equation}
By \cite{Del}, the spectral sequence 
$$
E_1^{p,q} = H^q(Y, \Omega^p_Y(\log D)) \implies H^{p+q}(\bU,\C)
$$
degenerates at $E_1$ and by Brieskorn's theorem (\cref{thm:Bri}) we have an isomorphism $H^\bullet(\bU,\C) \cong \rOS^\bullet \cong \Gamma(Y,\Omega^\bullet_Y(\log D))$.  It follows that the higher cohomologies of the sheaves $\Omega^p_Y(\log D)$ vanish.  Combining with \eqref{eq:Lnabla}, the higher cohomology vanishing gives an isomorphism $H^\bullet(\bU,\L_\a) \cong H^\bullet(\Gamma(Y,\Omega^\bullet_Y(\log D)),\nabla_\a)$.  Composing with the isomorphism $(\rOS^\bullet,\omega) \stackrel{\cong}{\longrightarrow} (\Gamma(Y,\Omega^\bullet_Y(\log D)), \nabla_\a)$, we obtain the stated quasi-isomorphism.
\end{proof}

It follows from \cref{thm:Koh} and \cref{thm:ESV} that in this case $\rOS^k(M,\omega)$ vanishes unless $k =d$; compare with \cref{thm:Yuz}.  Furthermore, we have $\dim H^d(\bU,\L_\a) = \dim H^d(\bU,\nabla_\a) = \beta(M)$.

\subsection{Betti homology twisted intersection form}

The intersection form in Betti homology was first defined by Kita and Yoshida \cite{KY}.
\begin{definition}
Assume \eqref{eq:Mon}.  The \emph{Betti homology intersection form} 
$$
\gBip{\cdot, \cdot}: H^{\lf}_d(\bU,\L^\vee_\a) \otimes H^{\lf}_d(\bU, \L_\a) \to \C
$$
is the composition of the the regularization isomorphism $H^{\lf}_d(\bU,\L_\a) \cong H_d(\bU,\L_\a)$ of \cref{thm:Koh} with the Poincar\'e-Verdier duality perfect pairing
$$
H^{\lf}_d(\bU,\L^\vee_\a) \otimes H_d(\bU, \L_\a) \to \C.
$$
The \emph{Betti cohomology intersection form} $\gDBip{\cdot,\cdot}$ is similarly defined.
\end{definition}

The Betti homology and Betti cohomology intersection forms are related by duality.  Namely, the perfect pairing $\gBip{\cdot,\cdot}$ together with \eqref{eq:4dual} induces isomorphisms $\gamma: H_\bullet(\bU,\L_\a) \cong H^\bullet(\bU, \L_\a^\vee)$ and $\delta: H_\bullet(\bU,\L^\vee_\a) \cong H^\bullet(\bU, \L_\a)$, and we have
$$
\gDBip{a,b} = \gBip{\gamma^{-1}(b),\delta^{-1}(a)}
$$
for $a \in H^d(\bU, \L_\a)$ and $b \in H^d(\bU,\L^\vee_\a)$.

We now describe the twisted homology group $H_p(\bU,\L_\a)$ explicitly.  The $p$-th chain group $C_p(\bU,\L_\a)$ is spanned by twisted $p$-chains $[\Gamma \otimes \varphi^{-1}_\Gamma]$ where $\Gamma$ is a $p$-chain, that is, a map of the $p$-dimensional simplex $\Delta^p$ into $\bU$, and $\varphi^{-1}_\Gamma$ is a choice of a branch of $\varphi^{-1}$ from \eqref{eq:varphi} on $\Gamma$.  The boundary operator $\partial: C_p(\bU,\L_\a) \to C_{p-1}(\bU,\L_\a)$  is given by 
$$
\partial [\Gamma \otimes \varphi^{-1}_\Gamma] = \sum_i (-1)^i [\partial_i \Gamma \otimes (\varphi^{-1}_\Gamma)|_{\partial_i \Gamma}]
$$
where we write $\sum_i$ for the summation over the boundary components of a $p$-simplex.  The twisted $p$-chains in the kernel of $\partial$ are called \emph{twisted $p$-cycles}, and the twisted homology group is defined as
$$
H_p(\bU,\L_\a) := \frac{\ker( \partial: C_p(\bU,\L_a) \to C_{p-1}(\bU,\L_\a))}{\image ( \partial: C_{p+1}(\bU,\L_a) \to C_{p}(\bU,\L_\a))}.
$$
The twisted locally-finite homology group $H^{\lf}_p(\bU,\L_\a)$ is defined similarly, now allowing formal linear combinations of $[\Gamma \otimes \varphi^{-1}]$ that are locally-finite.  Twisted cycles are also called \emph{loaded} cycles in the literature.

\begin{definition}\label{def:standard}
Let $P \in \T$ be a tope of $\bA$.  The \emph{standard twisted $d$-cycle} $[P \otimes \varphi^{-1}_P] \in C_d^{\lf}(\bU,\L_\a)$ is the locally-finite $d$-dimensional cycle where $\varphi^{-1}_P$ is the scalar multiple of the branch of $\varphi^{-1}$ that takes positive real values on $P$ when the $\a$ are real.
\end{definition}
For each $e$, there is a ``real" branch of $f_e^{-a_e}$ that for $a_e \in \R$ takes real values on $\bU(\R) \cap \{f_e > 0\}$.  We have
\begin{equation}\label{eq:standardb}
\varphi^{-1}_P = b_P \varphi^{-1}, \qquad b_P := \prod_{e: P(e) = -}  b_e,
\end{equation}
where $\varphi^{-1}$ denotes the product of the real branches of $f^{-a_e}_e$.  The prefactor $b_P$ is chosen so that $\varphi^{-1}_P$ takes positive real values on $P$.  Indeed,
$$
b_P \varphi^{-1} = \prod_{e \neq 0} b_e (f_e/f_0)^{-a_e} = \prod_{e \neq 0}  \exp(-\pi i a_e) (f_e/f_0)^{-a_e} =  \prod_{e \neq 0}(-f_e/f_0)^{-a_e}.
$$
We remark that formally speaking $[P \otimes \varphi^{-1}_P]$ should be expressed as an infinite sum of chains that head towards the boundary of $P$.

Recall from \cref{thm:Koh} that under assumption \eqref{eq:Mon}, $H_\bullet^{\lf}(\bU,\L_\a) \cong H_\bullet(\bU,\L_\a)$ vanishes unless $\bullet = d$.  We now assume that we are working with an affine hyperplane arrangement $\bA \subset \C^d$, and let $\T^0$ denote the set of bounded topes (or chambers).  By \cref{prop:numbertopes}, we have $|\T^0| = \beta(M) = \dim H_d(\bU,\L_a)$.  The following result is due to Kohno \cite{Koh}; see also \cite[Proposition 3.14]{DT}.  It will also follow indirectly from our results (\cref{thm:Bettinondeg} and \cref{thm:Bettipairmain}).

\begin{proposition}
Assume \eqref{eq:Mon}.  The twisted cycles $C(P) := [P \otimes \varphi^{-1}_P]$ (resp. $C^\vee(P) := [P \otimes \varphi_P]$) for $P \in \T^0$ form a basis of $H_d^{\lf}(\bU,\L_\a)$ (resp. $H_d^{\lf}(\bU,\L^\vee_\a)$).  The regularized twisted cycles $\reg(C(P))$ (resp. $\reg(C^\vee(P))$) for $P \in \T^0$ form a basis of $H_d(\bU,\L_\a)$ (resp. $H_d(\bU,\L^\vee_\a)$).
\end{proposition}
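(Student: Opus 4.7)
The plan is to combine a dimension count with the non-degeneracy of the intersection pairing, using the results already established in the paper to bypass a direct construction.

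First, I would verify that each $C(P) = [P \otimes \varphi^{-1}_P]$ really does define a class in $H_d^{\lf}(U,\L_\a)$. The bounded chamber $P$ is a compact subset of $\C^d$ whose topological boundary lies in the hyperplane arrangement $\bA$, hence outside $U$; viewed as a chain in $U$ the relative interior gives a locally-finite $d$-chain with no boundary in $U$, and the loading $\varphi^{-1}_P$ is a well-defined branch of $\varphi^{-1}$ on this simply connected region (Definition 6.6). The analogous statement for $C^\vee(P)$ is identical with $\a$ replaced by $-\a$.

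Next, the dimension count: by \cref{prop:numbertopes} we have $|\T^b| = \beta(M)$, while \cref{thm:Koh}, \cref{thm:ESV}, and \cref{thm:Yuz} together give $\dim H^{\lf}_d(U,\L_\a) = \dim H_d(U,\L_\a) = \beta(M)$, with the intermediate cohomologies vanishing. Thus it suffices to prove linear independence of $\{C(P) \mid P \in \T^b\}$ (and dually $\{C^\vee(P) \mid P \in \T^b\}$).

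For linear independence, I would exploit the intersection pairing. The composition of regularization with Poincar\'e-Verdier duality gives the pairing
\[
\gBip{\cdot,\cdot}: H^{\lf}_d(U,\L^\vee_\a) \otimes H^{\lf}_d(U,\L_\a) \to \C,
\]
and by \cref{thm:combgeom} (= \cref{thm:Bettipairmain}) the matrix $\left(\gBip{C^\vee(P),C(Q)}\right)_{P,Q \in \T^b}$ of this pairing on standard loaded bounded cycles equals the combinatorial matrix $\bip{P,Q}_B$ restricted to $\Z^{\T^b}$, once the parameters satisfy $\prod_{e \in E} b_e = 1$, which follows from \eqref{eq:sumto0} via $b_e = \exp(-\pi i a_e)$. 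By \cref{thm:Bettinondeg}(2), under the hypothesis \eqref{eq:Mon} (which implies \eqref{eq:bMon}), this $\T^b \times \T^b$ matrix is non-degenerate. A non-degenerate pairing between two spaces of the correct dimension forces each of $\{C(P)\}$ and $\{C^\vee(P)\}$ to be a basis of its respective twisted locally-finite homology group. Finally, the regularization isomorphism of \cref{thm:Koh} carries these bases to bases of $H_d(U,\L_\a)$ and $H_d(U,\L^\vee_\a)$ respectively.

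The main subtlety is the potential circularity of invoking \cref{thm:combgeom}: that identification of the geometric pairing with the combinatorial one is proved independently in \cref{sec:twistedco} by a wonderful-compactification computation on each chamber which does not require prior knowledge of a basis on the homology side, only that the $C(P)$ and $C^\vee(P)$ are well-defined classes. So the argument is logically clean. An alternative, more self-contained route would be Kohno's original approach \cite{Koh}: establish linear independence of the regularizations $\reg(C(P))$ by pairing against explicit twisted cocycles represented by logarithmic forms in $\rOS^d(M,\omega)$ and computing that the resulting period matrix is triangular with nonzero diagonal on a suitable ordering of $\T^b$. Either route works, but the one through \cref{thm:Bettinondeg} and \cref{thm:combgeom} is the shortest given the machinery already in place.
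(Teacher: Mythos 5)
Your proof fills in the details of precisely the route the paper itself flags but does not write out: dimension count $|\T^b| = \beta(M) = \dim H_d^{\lf}(U,\L_\a)$ combined with non-degeneracy of the restricted intersection pairing via \cref{thm:Bettipairmain} and \cref{thm:Bettinondeg}(2), then transport along the regularization isomorphism of \cref{thm:Koh}. Your observation that \cref{thm:Bettipairmain} is established by a direct chamber-by-chamber computation in the wonderful compactification, independent of any prior choice of homology basis, is exactly the right thing to make explicit to rule out circularity.
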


We take $\{[P \otimes \varphi^{-1}_P]\}$ as a basis for $H_d^{\lf}(\bU,\L_\a)$, and $\{[P \otimes \varphi_P]\}$ as a basis for $H_d^{\lf}(\bU,\L^\vee_\a)$.

\begin{proposition}\label{prop:isoB}
There is a natural isomorphism $ H^{\lf}_\bullet(\bU,\L_\a) \cong H^{\lf}_\bullet(\bU, \L_\a^\vee)$ given by $[P \otimes \varphi^{-1}_P] \mapsto [P \otimes \varphi_P]$.
\end{proposition}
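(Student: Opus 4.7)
The plan is to construct the isomorphism directly on distinguished bases. By the preceding proposition applied with parameter $\a$, the set $\{C(P) := [P\otimes\varphi^{-1}_P]\}_{P\in\T^b}$ is a basis of $H^{\lf}_d(U,\L_\a)$. Applying the same proposition with $-\a$ in place of $\a$ — which is legitimate because $\L^\vee_\a = \L_{-\a}$ and the genericity hypothesis \eqref{eq:Mon} is symmetric under $\a\mapsto -\a$ — the set $\{C^\vee(P) := [P\otimes\varphi_P]\}_{P\in\T^b}$ is a basis of $H^{\lf}_d(U,\L^\vee_\a)$. Both bases have cardinality $\beta(M)$. I then define $\iota\colon H^{\lf}_d(U,\L_\a)\to H^{\lf}_d(U,\L^\vee_\a)$ to be the unique $\C$-linear map sending $C(P)\mapsto C^\vee(P)$ for each $P\in\T^b$; being a bijection between bases, $\iota$ is automatically a $\C$-linear isomorphism.

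The substantive content of the word ``natural'' is the canonicity of this recipe. By \cref{def:standard}, each of the loadings $\varphi^{-1}_P$ and $\varphi_P$ is uniquely characterized as the scalar multiple of a branch of its multi-valued function that takes positive real values on the interior of $P$, given the fixed branch convention $b_e = \exp(-\pi i a_e)$. In particular $\varphi^{-1}_P \cdot \varphi_P \equiv 1$ on $P$, since both factors are positive real there while their pointwise product is the constant multi-valued function $1$. Thus $C(P)$ and $C^\vee(P)$ are each canonically attached to the tope $P\in \T^b$ without auxiliary choices, and the prescription $C(P) \mapsto C^\vee(P)$ is intrinsic.

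The main obstacle to emphasize is what this proof does \emph{not} do: the map $\iota$ does not arise from a morphism of local systems $\L_\a \to \L^\vee_\a$. Indeed, these local systems have mutually inverse monodromies and are typically non-isomorphic as rank one local systems on $U$, and pointwise multiplicative inversion of sections is not $\C$-linear on stalks. Thus one cannot hope to prove the proposition by passing through a sheaf-level morphism and taking induced homology. Instead, the naturality is entirely at the combinatorial level of the distinguished bases $\{C(P)\}$ and $\{C^\vee(P)\}$, and no further consistency check beyond the basis-to-basis prescription is required. This is why the cleanest argument is the short one outlined above.
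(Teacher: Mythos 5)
Your proof is correct, and since the paper states this proposition without giving any argument, your reasoning supplies the intended justification. The approach is the expected one: the sets $\{C(P)\}_{P\in\T^b}$ and $\{C^\vee(P)\}_{P\in\T^b}$ are each bases of $\beta(M)$-dimensional spaces by the preceding proposition (which in fact already asserts both halves, so your separate appeal to $-\a$ is a harmless redundancy), and the basis-to-basis prescription is the isomorphism. Your remark on what ``natural'' does and does not mean here --- that $\iota$ does not arise from a morphism $\L_\a \to \L_\a^\vee$ of local systems (the two have mutually inverse monodromies and are generically non-isomorphic), but rather from the canonical standard loadings attached to each $P$ --- is a useful clarification. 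One small caveat worth flagging: the statement of the proposition, read literally, assigns $[P\otimes\varphi^{-1}_P]\mapsto[P\otimes\varphi_P]$ for \emph{every} tope $P$, whereas your construction only \emph{defines} the map on $P\in\T^b$. Verifying that the linear extension also carries $[P\otimes\varphi^{-1}_P]$ to $[P\otimes\varphi_P]$ for $P\in\T^+\setminus\T^b$ would require checking that the relations satisfied by the two collections of twisted cycles agree; this is not hard, but you should either say explicitly that you are reading the proposition as a statement about the distinguished bases, or include that extra verification.
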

Compositing with the isomorphism of \cref{prop:isoB}, $\ip{\cdot,\cdot}_B$ can be viewed as a bilinear form on $ H^{\lf}_\bullet(\bU,\L_\a) $.  The pairing $\gBip{C^\vee, C}$ is given by the formula
\begin{equation}\label{eq:CC}
\gBip{C^\vee, C} = \langle C^\vee, \reg(C) \rangle := \sum_{p \in \Gamma^\vee \cap \Gamma} \varphi_{\Gamma^\vee}(p) \langle\Gamma^\vee,\Gamma \rangle_p \varphi^{-1}_\Gamma(p).
\end{equation}
where $\reg(C) = [\Gamma \otimes \varphi^{-1}_\Gamma] \in H_d(\bU,\L_\a)$ is a regularization of $C$ in general position with respect to $C^\vee = [\Gamma^\vee \otimes \varphi_{\Gamma^\vee}]$, and $\langle\Gamma^\vee,\Gamma \rangle_p$ denotes the topological intersection number of the transverse intersection $\Gamma^\vee \cap \Gamma$ at $p$.  With all the above notation defined, we can state our main theorem for the Betti intersection pairing.

\begin{theorem}\label{thm:Bettipairmain}
For $P,Q \in \T^+$, we have $\gBip{[P \otimes \varphi_P],[Q \otimes \varphi^{-1}_Q]}= \bip{P,Q}_B$, as defined in \cref{def:Bettipair}. 
\end{theorem}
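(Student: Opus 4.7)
The plan is to compute $\gBip{[P\otimes \varphi_P],[Q\otimes \varphi_Q^{-1}]}$ by explicitly regularizing the locally-finite twisted cycle $[Q\otimes \varphi_Q^{-1}]$ on the maximal wonderful compactification $X_{\max}$, and then matching the output term-by-term with the combinatorial expression of \cref{def:Bettipair}, as reorganized in \cref{thm:Bettifan}. The guiding combinatorial observation is that $\bip{P,Q}_B$ is layered into two nested sums: an outer sum over $G_\bullet\in G^\pm(P,Q)$, which should correspond geometrically to boundary strata where $\bar P$ and a flipped $\bar Q$ meet, and an inner sum over refinements $E_\bullet\in \bar G_\bullet$, which should correspond to iterating the regularization through successively deeper wonderful faces.

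First, using \cref{prop:wonderfulface}, I would identify the boundary stratifications of $\bar P$ and $\bar Q$ in $X_{\max}$ with the posets $\pFl(P)$ and $\pFl(Q)$. Near a codimension-$s$ stratum $X_{G_\bullet}$ one can choose local analytic coordinates $(t_1,\ldots,t_s,z)$ with $D_{G_i}=\{t_i=0\}$, and by \cref{lem:monod} a local branch of $\varphi^{-1}$ is of the form $\prod_i t_i^{-a_{G_i}}$ times a single-valued unit, with monodromy $b_{G_i}^2$ around $D_{G_i}$. Following the regularization procedure of Kita--Yoshida \cite{KY,KY2}, I would replace the unbounded tail of $\bar Q$ near $X_{G_\bullet}$ by a sum of (tube $\times$ loop) pieces; summing the geometric series $\sum_{k\geq 0} b_{G_i}^{2k}$ in each normal direction produces the factor $\prod_{i=1}^s (b_{G_i}^2-1)^{-1}$, and iterating this construction for further wonderful refinements $E_\bullet \supseteq G_\bullet$ produces the inner sum $\sum_{E_\bullet \in \bar G_\bullet}\prod_i \tb_{E_i}^{-1}$ appearing in \cref{def:Bettipair}.

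Applying the intersection formula \eqref{eq:CC}, the transverse intersection of $[P\otimes \varphi_P]$ with $\reg[Q\otimes \varphi_Q^{-1}]$ near $X_{G_\bullet}$ is nonzero precisely when $\bar P$ and the flipped closure $\bar{P_{\flip G_\bullet}}$ both accumulate to $X_{G_\bullet}$, which by \cref{prop:flipflag} is equivalent to $G_\bullet\in G^\pm(P,Q)$. The local contribution then factors as (i) a topological intersection sign $(-1)^d(\pm)^r$ matching the overall prefactor in \eqref{eq:halfPQ}, (ii) the regularization rational function described above, and (iii) a branch-comparison monomial: since the standard branches of \cref{def:standard} have $\varphi_P\cdot\varphi_Q^{-1}$ on the relevant piece equal to $\prod_{e\in \sep(P,Q)} b_e^{\pm 1}$, an orientation check on the flip through each $G_i$ (which reverses orientation in $\rk(G_i)$ real directions) supplies exactly the sign $(-1)^{\rk(G_i)}$, yielding $b(G_\bullet)=\prod_i (-1)^{\rk(G_i)} b_{G_i}$. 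Assembling these pieces and summing over $G_\bullet\in G^\pm(P,Q)$ reproduces $\halfip{P,Q}_B$ exactly, and the hypothesis $\sum_e a_e=0$ forced by \eqref{eq:omega} gives $\prod_e b_e=1$ and hence descends the form to $\bip{P,Q}_B$.

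The main obstacle will be rigorously carrying out the regularization and sign bookkeeping in step (iii): one must verify that the Kita--Yoshida tube construction at a codimension-$s$ stratum truly outputs the monomial $b(G_\bullet)$ with signs $(-1)^{\rk(G_i)}$, and more delicately that the cumulative iteration over refinements packages as the clean single sum $\sum_{E_\bullet\in \bar G_\bullet}$ rather than an alternating inclusion-exclusion. The cleanest route I see is an induction on codimension paralleling the residue recursion in \cref{thm:EL} - each passage from $\bar P$ to its boundary face through $D_{G_i}$ mirroring $\Res_{G_i} \Omega_P = P(e)\,\Omega_{P/G_i}$ - combined with the topological-ball property of $\pFl(P)$ from \cref{thm:sphere}, which ensures (exactly as in the proof of \cref{thm:Bettifan}) that the Euler-characteristic bookkeeping across refinements collapses to the product form with overall sign $(-1)^d$.
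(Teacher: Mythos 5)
Your high-level strategy is the paper's: regularize in $X_{\max}$ following Kita--Yoshida, localize the Poincar\'e-Verdier intersection number near the boundary strata of $\bP\cap\bQ$, and match the local contributions with \cref{def:Bettipair}. But two of your steps, as written, would derail the argument. First, the prefactor $1/\tb_{E_i}$ is not the geometric series $\sum_{k\geq 0}b_{E_i}^{2k}$ --- that series equals $1/(1-b^2)=-1/\tb$, the opposite sign --- and in any case the Kita--Yoshida regularization is not an infinite sum: it replaces the half-open tail near a puncture by a \emph{finite} chain $\frac{1}{b^2-1}[S(0,\epsilon)\otimes\psi]+[[\epsilon,1-\epsilon]\otimes\psi]$, the coefficient coming from the boundary identity $\partial[S(0,\epsilon)\otimes\psi]=(b^2-1)[\epsilon\otimes\psi]$. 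Pushing your series heuristic through every normal direction leaves you off by $(-1)^{s(E_\bullet)}$ overall.

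Second, and more substantively, the geometry does not directly produce the nested double sum $\sum_{G_\bullet}\sum_{E_\bullet\in\bG_\bullet}$, and ``iterating the regularization through successively deeper faces'' is a mischaracterization of the mechanism. What the intersection computation actually yields (the paper's \cref{prop:Bettiint}) is a \emph{flat} sum over all wonderful faces $E_\bullet$ of $\bP$ that meet $\bQ$, with one transverse intersection point near each face barycenter; obtaining transversality requires deforming the other cycle off the real locus by a vector field $Z$ vanishing at barycenters (a step your plan omits, without which ``apply \eqref{eq:CC}'' is not yet well defined). For each such $E_\bullet$ the contribution is $(\pm)^r\prod_{i\in I}(-1)^{\rk E_i}\frac{b_{E_i}}{\tb_{E_i}}\prod_{j\in J}\frac{1}{\tb_{E_j}}$, where $I\sqcup J$ partitions the facets of $E_\bullet$ by whether $P$ and $Q$ lie on opposite or the same side; the subflag $G_\bullet=\{E_i\}_{i\in I}$ is only read off \emph{afterwards} from this partition, and the reorganization into the nested form of \cref{def:Bettipair} is a purely combinatorial step via \cref{prop:noover}(3),(4). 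Relatedly, your sign heuristic ``reflection through a rank-$\rk(G_i)$ flat reverses orientation by $(-1)^{\rk G_i}$'' is a downstairs-$\P^d$ picture that does not carry over to $X_{\max}$, where crossing each $D_{G_i}$ flips a single real normal direction; the actual $(-1)^{\rk G_i}$ is the product of the local intersection sign $(-1)^{|I|}$ with the orientation change $\prod_{i\in I}(-1)^{\rk G_i+1}$ from even-codimensional blowups (cf.\ \cite{MHhom}). Finally, note that \cref{thm:sphere} and the Euler-characteristic collapse are used in the combinatorial proof of \cref{thm:Bettifan}, not in the geometric proof of the present statement.
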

The proof of \cref{thm:Bettipairmain} will be given in \cref{sec:Bettipairmain}.

Kita and Yoshida initially studied the Betti homology twisted intersection form in \cite{KY,KY2} and Yoshida \cite{Yos} studied determinantal formulae for the intersection form.  The intersection pairing is studied for the braid arrangement \cite{MOY}, for arrangements appearing in conformal field theory \cite{MY}, and in relation to Lauricella's hypergeometric function \cite{Goto}, and Togi \cite{Tog} gave some closed formulae.  In \cite{GM}, Goto and Matsubara-Heo study the intersection form for GKZ type systems, which involve arrangements of hypersurfaces of higher degree.

\begin{remark}
The symmetry of the bilinear form $\gBip{\cdot,\cdot}$ is specific to the choice of isomorphism in \cref{prop:isoB}, and the choice of standard loading in \cref{def:standard}.  Indeed, the original bilinear form computed in \cite{KY} did not have the symmetry property.
\end{remark}

\subsection{deRham cohomology twisted intersection form}

The intersection form in deRham cohomology was introduced and studied by Cho and Matsumoto \cite{CM}.  The dual connection $\nabla^\vee_\a$ to $\nabla_\a$ is given by the connection $\nabla_{-\a}$.

\begin{definition}
Assume \eqref{eq:Mon}.  The \emph{deRham cohomology intersection form} 
$$
\gdRip{\cdot, \cdot}: H^d(\bU,\nabla^\vee_\a) \otimes H^d(\bU, \nabla_\a) \to \C
$$
is the composition of the the regularization isomorphism $\reg: H^d(\bU,\nabla_\a) \cong H_c^d(\bU,\nabla_\a)$ of \cref{thm:Koh} with the Poincar\'e-Verdier duality perfect pairing
$$
H^d(\bU,\nabla^\vee_\a) \otimes H^d_c(\bU, \nabla_\a) \to \C.
$$
The \emph{deRham homology intersection form} $\gDdRip{\cdot, \cdot}$ is similarly defined.
\end{definition}

Just as in the Betti case, the deRham cohomology and deRham homology intersection forms are related by duality.  

Let $[\theta^\vee] \in H^d(\bU,\nabla^\vee_\a)$ and $[\theta] \in H^d(\bU,\nabla_\a)$ be cohomology classes represented by twisted $d$-forms $\theta^\vee,\theta$.
To compute $\dRip{\theta^\vee,\theta}$, we first find a cohomologous compactly supported form $\reg(\theta)$, representing a class in $[\reg(\theta)] \in H^d_c(\bU, \nabla_\a)$.  The Poincar\'e-Verdier duality pairing is then given by
\begin{equation}\label{eq:PV}
\gdRip{\theta^\vee,\theta} = \ip{\theta^\vee,\reg(\theta)} = \frac{1}{(2\pi i)^d} \int_\bU \theta^\vee \wedge \reg(\theta),
\end{equation}
where the integral is over all (complex) points of $\bU$.  We caution that the factor of $(2\pi i)^{-d}$ is not always included in the literature.

We note that $\omega_{-\a} = - \omega_{\a}$ and $\rOS(M,\omega) \cong \rOS(M,-\omega)$.  By \cref{thm:ESV}, the natural maps 
\begin{equation}\label{eq:rOSaa}
\rOS(M,\omega) \longrightarrow H^\bullet(\bU,\nabla^\vee_\a), \qquad \text{and} \qquad \rOS(M,\omega) \longrightarrow H^\bullet(\bU,\nabla_\a)
\end{equation}
are isomorphisms, so we may view the deRham cohomology intersection form as a bilinear form 
$$
\gdRip{\cdot,\cdot}: \rOS(M,\omega) \otimes \rOS(M,\omega) \to \C.
$$
We can now state our main theorem for the deRham intersection pairing.
\begin{theorem}\label{thm:dRpairmain}
For $x,y \in \rOS(M,\omega)$, we have $\gdRip{x,y} = \bdRip{x,y}$, as defined in \cref{sec:descent}. 
\end{theorem}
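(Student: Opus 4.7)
The plan is to pass to the maximal wonderful compactification $X = X_{\max}$ of $U$ and compute the Poincar\'e--Verdier pairing \eqref{eq:PV} by localizing it at the $0$-dimensional boundary strata. By \cref{thm:ESV}, every class in $H^d(U,\nabla_\a)$ is represented by a logarithmic $d$-form coming from $\rOS^d(M)$, so by bilinearity it suffices to verify $\gdRip{x,y} = \bdRip{x,y}$ for monomial representatives; via \cref{prop:dRpartial} and \cref{prop:OSrOS} this is equivalent to checking the analogous identity for pairs of basis monomials $e_B, e_{B'}$ of $\OS^r(M)$. The right-hand side of \cref{def:dR} is a sum over complete flags $F_\bullet \in \Fl(M)$, and by \cref{prop:wonderfulface} these are precisely the $0$-strata of $X_{\max}$, so one expects a term-by-term match.

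At each corner $p_{F_\bullet}$ of $X_{\max}$ one chooses \'etale coordinates $(t_1,\ldots,t_d)$ such that the boundary divisors $D_{F_j}$ passing through $p_{F_\bullet}$ are cut out by $t_j = 0$. By \cref{lem:monod}, in a suitable trivialization $\nabla_\a = d + \sum_{j=1}^{d} a_{F_j}\,\dlog t_j + (\text{regular})$, and by \cref{lem:rSF} the local leading coefficient of a representative $\theta$ is $\Res_{F_\bullet}(\theta)$. Choosing smooth cut-offs $\phi_j(|t_j|)$ equal to $1$ near $t_j = 0$ and supported in a small polydisk, the standard Cho--Matsumoto argument inductively subtracts $\nabla_\a$-exact corrections of the form $\psi_j = a_{F_j}^{-1}\phi_j\cdot(\text{lower-order log-pole})$, using the non-vanishing of $a_{F_j}$ from \eqref{eq:Mon}, to produce a compactly supported cohomologous representative $\reg(\theta)$ whose contribution near $p_{F_\bullet}$ is
\[
\pm\,\frac{\Res_{F_\bullet}(\theta)}{a_{F_1}\cdots a_{F_d}}\;d\phi_1\wedge\cdots\wedge d\phi_d.
\]

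Substituting into \eqref{eq:PV} decomposes the global integral into local integrals near the corners, each of which is a product of $d$ one-dimensional pieces of the shape $\int \dlog t_j \wedge d\phi_j$. In polar coordinates on each $t_j$-factor this elementary integral equals $2\pi i$ (the angular integral contributes $2\pi i$ and the radial integral of $\phi'_j$ contributes $-1$), so the accumulated $(2\pi i)^d$ exactly cancels the prefactor $1/(2\pi i)^d$ in \eqref{eq:PV}. The contribution of the corner $p_{F_\bullet}$ is therefore $\Res_{F_\bullet}(\theta^\vee)\,\Res_{F_\bullet}(\theta)/a_{F_\bullet}$; summing over $F_\bullet\in\Fl(M)$ reproduces the combinatorial expression of \cref{def:dR}. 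A global sign check against the boolean matroid (\cref{ex:boolean}), where $U\cong(\C^\times)^d$ and the integrals can be computed directly, fixes the remaining orientation conventions.

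The main technical obstacle is the global coherence of the regularization step: the local cut-off constructions must patch into a single globally defined $\reg(\theta)$, and no spurious contributions may arise from the positive-dimensional boundary strata $X_{G_\bullet}$ indexed by partial flags $G_\bullet\in\pFl(M)$. The choice of the \emph{maximal} wonderful compactification is essential here: its stratification is indexed by all flags of flats, the iterated residue along any partial flag is already encoded by \cref{lem:rSF}, and one can arrange $\reg(\theta)$ to vanish near every non-corner stratum so that only the $0$-strata contribute. In any coarser wonderful compactification $X_{\build}$, the non-corner strata would require additional contributions that one would then have to assemble by partial-fraction identities of the type producing \cref{thm:deRhamfannested}.
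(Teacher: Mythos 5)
Your argument follows essentially the same route as the paper: localize the Poincar\'e--Verdier pairing at the zero-dimensional strata of $X_{\max}$ via Cho--Matsumoto regularization, use \cref{lem:monod} to read off the residues of the connection at those corners, and match the resulting iterated geometric residues with the combinatorial residues $\Res_{F_\bullet}$ --- this is \cref{prop:dRresidue} specialized to $\build_{\max}$, together with \cref{lem:ResF}. (The paper handles the residual sign ambiguity by noting that $\Res_{F_\bullet}$ appears twice so the sign squares to one, rather than by calibrating against the boolean example.)

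One small correction to your closing remark: even in a coarser compactification $X_{\build}$, the pairing still localizes only at zero-dimensional strata --- now indexed by $N_{\max}(\build)$ --- with no additional contributions from positive-dimensional boundary strata (this is exactly what \cref{prop:dRresidue} asserts for arbitrary $\build$). The role of the partial-fraction identities in \cref{thm:deRhamfannested} and \cref{prop:nesteddR} is merely to convert the nested-set sum into the flag sum; the paper's preference for $X_{\max}$ is that the recursive structure of \cref{lem:ResF} is simplest there, not that coarser models would produce spurious terms.
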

The proof of \cref{thm:dRpairmain} will be given in \cref{sec:dRpairmain}.  

Cho and Matsumoto \cite{CM} first studied the deRham cohomology twisted intersection pairing in the one-dimensional case.  Matsumoto \cite{Matgen} gave an explicit formula for the intersection pairing in the case of a generic hyperplane arrangement, and specific arrangements were considered, for instance, in \cite{Goto, MOY, MY}.  Goto and Matsubara-Heo \cite{GM} study the intersection form for GKZ type systems.

\begin{remark}
The symmetry of the bilinear form $\ip{\cdot,\cdot}^{\dR}$ has been apparent from the beginning \cite{CM}.  A formula due to Matsubara-Heo \cite[Theorem 2.2]{MHcoh} exhibits this symmetry in a general setting.
\end{remark}

\section{Proof of deRham cohomology pairing}\label{sec:dRpairmain}
We prove \cref{thm:dRpairmain}, following \cite{CM,Matgen}.  Let us first consider the one-dimensional case $\bU = \P^1 \setminus \{z_1,z_2,\ldots,z_t\}$ and let $\theta$ be a holomorphic $1$-form on $\bU$.  Let $g_i$ denote a smooth function that takes value $1$ in a small neighborhood $V_i \ni z_i$ and takes value $0$ outside a larger neighborhood $W_i \supset V_i$.  Also, let $\psi_i$ be a smooth function that is a solution to $\nabla_\a \psi_i = \theta$ in $\overline{W_i}$.  The existence of this solution follows from the assumption \eqref{eq:Mon} that the monodromy of $\L_\a$ is non-trivial around $z_i$, which implies
\begin{equation} \label{eq:DM}
H^\bullet(W_i \setminus z_i, \L_\a) \cong H^\bullet(S^1, \L_\a) = 0.
\end{equation}
See the comprehensive discussion in \cite[Section 2]{DM}.  We then take 
$$
\reg(\theta):= \theta - \nabla_\a(\sum_i \psi_i g_i) = \theta - \sum_i (\psi_i d g_i - g_i \theta) = \theta( 1- \sum_i g_i) - \sum_i \psi_i dg_i.
$$
The first formula shows that $\reg(\theta)$ is $\nabla_\a$-cohomologous to $\theta$, and the rightmost expression shows that $\reg(\theta)$ is compactly supported.  Now, for another holomorphic $1$-form $\theta^\vee$, we have
$$
\gdRip{\theta^\vee,\theta} = \frac{1}{2\pi i} \int_\bU \theta^\vee \wedge \reg(\theta) = - \frac{1}{2\pi i}  \sum_i \int_\bU \theta^\vee \wedge \psi_i dg_i
$$
since $\theta^\vee \wedge \theta = 0$ as there are no holomorphic $2$-forms on $\bU$.  Since $dg_i$ is supported on a small annulus $W_i \setminus V_i$ around $z_i$, the integral localizes to each of these points.  By a limiting procedure, one may replace $g_i$ by a step function, and $dg_i$ is a delta-function on a circle, that is
$$
- \frac{1}{2\pi i}  \sum_i \int_\bU \theta^\vee \wedge \psi_i dg_i = \sum_i \frac{1}{2\pi i} \int_{|z-z_i| = \epsilon} \psi_i \theta^\vee = \sum_i \Res_{z_i} (\psi \theta^\vee).
$$
Matsumoto \cite[Lemma 4.1]{Matgen} shows that 
\begin{equation}\label{eq:Mat}
\Res_{z_i}(\psi \theta^\vee) = \Res_{z_i} (\theta^\vee) \frac{1}{a_i} \Res_{z_i}(\theta), \qquad \text{giving} \qquad \gdRip{\theta^\vee, \theta} = \sum_i \Res_{z_i} (\theta^\vee) \frac{1}{a_i} \Res_{z_i}(\theta),
\end{equation} and further extends this calculation to higher-dimensional cases of generically intersecting collection of hyperplanes.  In that case, the integral localizes to the vertices of the arrangement, and since the hyperplanes are normal crossing at each vertex, the existence of solutions to $\nabla_\a \psi = \theta$ can be obtained by applying the Kunneth formula to \eqref{eq:DM}.  Since the computation is local, it can be applied whenever there is a compactification with normal-crossing divisors.  Matsumoto's local formula gives:

\begin{proposition}\label{prop:dRresidue}
Let $\pi: X_\build \to \P^d$ be a wonderful compactification of $\bU$.  Then for $x, y \in \rOS(M,\omega)$, we have
$$
\gdRip{x,y} = \sum_{S_\bullet \in N_{\max}(\build)} \Res_{S_\bullet}(x) \frac{1}{a_{S_\bullet}} \Res_{S_\bullet}(y),
$$
where the summation is over the maximal nested collections (see \cref{sec:building} and \eqref{eq:aS}), and $\Res_{S_\bullet}$ is the residue at a normal-crossing vertex, obtained as the integral over a multi-dimensional torus $(S^1)^d$:
\begin{equation}\label{eq:ResS}
\Res_{S_\bullet}(x) = \int_{|t_d|=\epsilon} \cdots \int_{|t_1|=\epsilon} x,
\end{equation}
where $t_1,t_2,\ldots,t_d$ are local coordinates cutting out the normal-crossing vertex $X_{S_\bullet}$.
\end{proposition}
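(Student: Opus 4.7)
The plan is to carry out the Cho--Matsumoto regularization procedure on the wonderful compactification $X_{\build}$, whose boundary $D = X_{\build} \setminus U$ is a simple normal crossings divisor whose strata are indexed by nested sets (see \cref{sec:WC}). The key input is that, by \cref{lem:monod}, the monodromy of $\L_\a$ around each boundary divisor $D_F$ is $\exp(-2\pi i a_F)$, which is nontrivial by genericity \eqref{eq:Mon}. In particular, for a tubular neighborhood $W_F$ of the generic point of $D_F$, we get $H^\bullet(W_F \setminus D_F, \L_\a) = 0$, and hence any logarithmic representative $y$ admits a local primitive $\psi_F$ with $\nabla_\a \psi_F = y$ in $\overline{W_F}$. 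Using a partition of unity $\{g_F\}$ subordinate to $\{W_F\}$, set
$$\reg(y) := y - \nabla_\a\!\left(\sum_F g_F \psi_F\right),$$
which is cohomologous to $y$ and compactly supported in $U$; this is the standard regularization that realizes the isomorphism $\reg: H^d(U,\nabla_\a) \to H^d_c(U,\nabla_\a)$ of \cref{thm:Koh}.

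Substituting into \eqref{eq:PV} and using $x \wedge y = 0$ (top-degree reasons), the integral $(2\pi i)^{-d}\int_U x \wedge \reg(y)$ collapses, modulo $\nabla_\a$-exact terms, to a sum of boundary contributions supported where $dg_F$ is non-zero, i.e., on tubular neighborhoods of the divisors $D_F$. Using Stokes/limiting the bumps $g_F$ to step functions turns each contribution into an integral over a small torus linking $D_F$. Now iterate: near a stratum $D_{F_1} \cap D_{F_2}$ one repeats the regularization along $D_{F_2}$ for the one-form obtained after extracting the residue along $D_{F_1}$, again using \eqref{eq:Mon} and the Künneth decomposition to produce a solution $\nabla_\a \psi_{F_1F_2} = (\text{residue of } y)$. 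After $d$ iterations, the support shrinks down to the zero-dimensional strata $X_{S_\bullet}$ of $X_{\build}$, which by the nested set complex description are precisely indexed by $S_\bullet \in N_{\max}(\build)$.

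At each vertex $X_{S_\bullet}$, pick local coordinates $t_1,\ldots,t_d$ cutting out the divisors $D_{S_1},\ldots,D_{S_d}$ transversally. The pullback of the connection has residue $a_{S_i}$ along $D_{S_i}$ by \cref{lem:monod}, so the iterated one-dimensional computation of \cite[Lemma 4.1]{Matgen} applies in each coordinate direction: the local contribution is
$$\Res_{t_d=0}\!\cdots\Res_{t_1=0}(\psi_{S_\bullet}\, x) = \Res_{S_\bullet}(x)\cdot \prod_{i=1}^d \frac{1}{a_{S_i}}\cdot \Res_{S_\bullet}(y),$$
where the local primitives contribute exactly one factor $1/a_{S_i}$ per coordinate, and $\Res_{S_\bullet}$ is the iterated torus residue \eqref{eq:ResS}. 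Summing over $S_\bullet \in N_{\max}(\build)$ gives the claimed formula.

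The main obstacle is ensuring that the iterative regularization is compatible with the blow-up geometry of $X_{\build}$: one has to verify that local primitives $\psi_F$ chosen near higher-codimension strata can be built coherently (e.g., inductively along the stratification, refining the partition of unity to respect the nested set structure), and that each iteration genuinely reduces to the transverse one-dimensional case handled in \cite{Matgen}. Once the SNC geometry of wonderful compactifications is set up properly, the vanishing \eqref{eq:DM} applied at each codimension-one residue, together with the Künneth property at normal-crossing intersections, makes the iteration go through as in the generic arrangement case, and the numerator $a_{S_\bullet} = \prod_i a_{S_i}$ in \eqref{eq:aS} emerges directly from the transverse residues of the connection.
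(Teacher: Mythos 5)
Your proposal is correct and follows essentially the same route as the paper: Cho--Matsumoto regularization carried out on the wonderful compactification $X_{\build}$, using \cref{lem:monod} to get nontrivial monodromy (hence local primitives) around each boundary divisor, localizing the pairing integral to the zero-dimensional strata $X_{S_\bullet}$ indexed by $N_{\max}(\build)$, and applying Matsumoto's local normal-crossing residue formula (\cite[Theorem 2.1]{Matgen}, or equivalently the K\"unneth-iterated version of \cite[Lemma 4.1]{Matgen}) to produce the factor $1/a_{S_\bullet}$. The paper states this more compactly by citing Matsumoto's theorem directly once the SNC geometry and logarithmic residues along $D_F$ are in place, whereas you spell out the iteration explicitly; there is no substantive difference.
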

The residue map $\Res_{S_\bullet}$ is defined up to a sign which appears twice and cancels out.
\begin{proof}
Let $S_\bullet = (S_1,\ldots,S_d)$.  The vertex $X_{S_\bullet}$ of $X_\build$ is the transverse intersection of $D_{S_1},\ldots,D_{S_d}$.  According to \cref{lem:monod}, the residue of $\pi^* \nabla_\a$ around $D_{S_i}$ is equal to $a_{S_i}$.  By Matsumoto's theorem \cite[Theorem 2.1]{Matgen}, or by applying the Kunneth formula to \eqref{eq:Mat}, we see that the local contribution around $X_{S_\bullet}$ is given by $\Res_{S_\bullet}(x) \frac{1}{a_{S_\bullet}} \Res_{S_\bullet}(y)$.
\end{proof}

Matsumoto's approach is generalized significantly by Matsubara-Heo \cite[Theorem 2.2]{MHcoh} from which \cref{prop:dRresidue} could also be obtained directly.  

In the case that $X_\build = X_{\max}$, we have $\N(\build) = \Fl(M)$, so comparing with \cref{def:dR}, the proof of \cref{thm:dRpairmain} is completed by the following.

\begin{lemma}\label{lem:ResF}
The residue $\Res_{F_\bullet}(x)$ agrees with the residue defined in \cref{sec:residue}.
\end{lemma}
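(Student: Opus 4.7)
The plan is to prove the identification by induction on the length $k$ of the saturated flag $F_\bullet \in \Fl^k(M)$, checking that the geometric iterated residue \eqref{eq:ResS} satisfies the same recursion as the combinatorial residue of \cref{sec:residue}. By linearity it suffices to work on a basis of monomials $e_I$ for $I$ an ordered independent set of size $k$, and the goal reduces to establishing the factorization
\[
\Res_{F_\bullet}^{\mathrm{geom}} \;=\; \Res_{F_k/F_{k-1}}^{\mathrm{geom}} \circ \cdots \circ \Res_{F_2/F_1}^{\mathrm{geom}} \circ \Res_{F_1}^{\mathrm{geom}}
\]
for the geometric residue, together with matching the base case for atoms.

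First I will handle the base case $k=1$: for an atom $c \in \At(M)$, the divisor $D_c \subset X_{\max}$ is either the proper transform of a hyperplane $H_e$ (when $c$ has a single parallel class) or the exceptional divisor of the blowup of the flat $c$. In a neighborhood of $D_c$, if $t$ is a local equation for $D_c$, then for $e \in c$ we have $f_e = t \cdot g_e$ with $g_e$ regular and nonvanishing at the general point of $D_c$, while for $e \notin c$ the function $f_e$ is regular along $D_c$. Thus $\dlog f_e = \dlog t + \dlog g_e$ for $e \in c$, and the single-variable geometric residue $\int_{|t|=\epsilon}$ extracts exactly one factor of $\dlog t$ with the remaining factors restricted to $D_c$. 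Matching the convention in \cref{prop:OSexact} for the sign of $\Res_{\atom}(e_I)$, this reproduces $\Res_c(e_I) = e_{I \setminus e}$ when $I \cap c \ne \emptyset$ (with $e$ the first element in the chosen ordering lying in $c$) and $0$ otherwise.

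For the inductive step, I will use the De Concini--Procesi structure theorem \cite[Theorem 3.2]{DP}: the divisor $D_{F_1} \subset X_{\max}$ admits a natural map to (a blowup of) $F_1$, and the fiber of this map is identified with the maximal wonderful compactification $X_{\max}(M/F_1)$ of the hyperplane arrangement complement $U(M/F_1)$ attached to the contraction $M/F_1$. Under this identification, the strata $X_{F_\bullet}$ of $X_{\max}$ with $F_1$ as first step correspond to the strata $X_{F_\bullet/F_1}$ of $X_{\max}(M/F_1)$ indexed by the flag $F_\bullet/F_1 \in \Fl^{k-1}(M/F_1)$. Consequently, the Fubini-type evaluation of the iterated residue \eqref{eq:ResS} can be performed by first integrating over the small circle normal to $D_{F_1}$ and then over the remaining cycle lying in $D_{F_1}$, which shows $\Res_{F_\bullet}^{\mathrm{geom}} = \Res_{F_\bullet/F_1}^{\mathrm{geom}} \circ \Res_{F_1}^{\mathrm{geom}}$. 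Applying the inductive hypothesis to $M/F_1$ and combining with the base case gives the desired equality.

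The main obstacle is the verification that the single geometric residue $\Res_{F_1}^{\mathrm{geom}}$ along the exceptional divisor $D_{F_1}$ intertwines the Orlik--Solomon identifications of $\rOS^\bullet(M)$ and $\rOS^\bullet(M/F_1)$ with logarithmic forms on $U$ and $U(M/F_1)$ in the manner prescribed by \cref{prop:OSexact}. This boils down to a careful local coordinate computation near a general point of $D_{F_1}$: one chooses $t$ a local equation of $D_{F_1}$ together with coordinates adapted to the parallel classes of $M^{F_1}$, expands $\dlog f_e$ for $e \in F_1$ and $e \notin F_1$, and matches the residue formula against $\Res_{F_1}$ on the Orlik--Solomon side. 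Once this local identification is done, the rest of the induction is purely formal.
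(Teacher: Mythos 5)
Your proof follows essentially the same strategy as the paper's: factor the iterated torus integral \eqref{eq:ResS} as a composition of one-dimensional residues, identify the divisor $D_{F_1}\subset X_{\max}$ with the maximal wonderful compactification of the contraction $M/F_1$, and induct on the number of residues via a commuting square comparing the geometric residue with $\Res_{F_1}$ on Orlik--Solomon algebras. A minor remark: since the hyperplane arrangement has no repeated hyperplanes, the matroid is simple, so every atom is a singleton $\{e\}$ and $D_{F_1}$ is simply the proper transform of $H_e$; the alternative ``exceptional divisor of the blowup of the flat'' case you mention does not arise here, and the discussion of parallel classes is superfluous.
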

\begin{proof}
The formula \eqref{eq:ResS} can be written as the composition of one-dimensional residue integrals.  We proceed by induction on the number of integrals $d$.  When $d=1$, we have $X_{\max} = \P^1$, so the result is clear.   Suppose $d > 1$ and let $F_1 = \{e\}$.  The divisor $D_{F_1} \subset X_{\max}$ is isomorphic to the maximal wonderful compactification $X''_{\max}$ of the contraction $\bA'' = \bA \cap H_e$, where the local equation of $H_e$ is $t_1 = 0$.  The following diagram commutes:
$$
\begin{tikzcd}
\rOS(M)\otimes \C  \arrow[r, "\Res_e"] \arrow[d]
& \rOS(M/e)\otimes \C \arrow[d] \\
\Omega^d(\bU) \arrow[r, "\Res_{H_e}"]
& \Omega^{d-1}(\P^{d-1} \setminus \bA'')
\end{tikzcd}
$$
where $\Res_{H_e}$ can be computed by a one-dimensional integral $\int_{|t_1|=\epsilon}$.   The result follows by induction on dimension.
\end{proof}

The analogue of \cref{lem:ResF} also holds for other building sets (see \cref{sec:building}), but the recursive structure in the proof is simpler for $X_{\max}$.

\section{Proof of Betti homology pairing}\label{sec:Bettipairmain}

\def\neg{{\rm neg}}

We work in an affine chart where $f_0$ is positive.  Then $\bU$ (resp. $\bU(\R)$) is identified with an open subset of $\C^d$ (resp. $\R^d$).  Each connected component $P$ of $\bU(\R)$ inherits an orientation from the ambient orientation of $\R^d$.  Viewing each $P$ as a signed covector, our assumption that $f_0$ is positive is that $P(0) = +$.

\subsection{Regularization}
We now discuss the regularization of our twisted cycles $C(P) = [P \otimes \varphi^{-1}_P]$, following \cite{KY,KY2}.  The construction is local.  We first discuss the one-dimensional case.  The regularization of a single interval $C = [[0,1] \otimes \psi]$ is taken to be the following finite sum of twisted $1$-chains:
$$
C_\reg = \frac{1}{b_0^2-1}[S(0,\epsilon)\otimes \psi] + [[\epsilon,1-\epsilon] \otimes \psi]-  \frac{1}{b_1^2-1}[S(1,1-\epsilon)\otimes \psi],
$$
pictured below in the punctured complex plane $\C^1 \setminus \{0,1\}$:
$$
\begin{tikzpicture}
\draw[thick,decoration={markings, mark=at position 0.9 with {\arrow{>}}},postaction={decorate}] ([shift=(3:1)]0,0) arc (3:360:1);
\draw[thick,decoration={markings, mark=at position 0.5 with {\arrow{>}}},postaction={decorate}] (1,0) -- (4,0);
\draw[thick,decoration={markings, mark=at position 0.9 with {\arrow{>}}},postaction={decorate}] (4,0) arc (-180:177:1);
\filldraw (0,0) circle (1pt);
\filldraw (5,0) circle (1pt);
\node[color=blue] at (0,-0.2) {$0$};
\node[color=blue] at (5,-0.2) {$1$};
\end{tikzpicture}
$$
We assume that the analytic continuation of $\psi$ once anticlockwise around $0$ (resp. $1$) produces a factor of $b^2_0$ (resp. $b^2_1$).  For instance,  if $\varphi = z^{-a_0}(1-z)^{-a_1}$ then we would have $b_i = \exp(-\pi i a_i)$.  Here, $S(0,\epsilon)$ (resp. $S(1,1-\epsilon)$ denotes an oriented circle starting at the point $\epsilon$ (resp. $1-\epsilon$) and going once around the point $0$ (resp. $1$) in the anti-clockwise direction.  (Strictly speaking, we should express $S(0,\epsilon)$ and $S(1,1-\epsilon)$ as the sum of two 1-chains, each of which is a half circle.). See \cite{KY,AKbook,Mat} for this formula.  That the factor $b_0^2-1$ is correct follows from the calculation:
$$
\partial [S(0,\epsilon) \otimes \psi] = [\epsilon \otimes b^2_0\psi(\epsilon)] - [\epsilon \otimes \psi(\epsilon)]= (b_0^2-1)[\epsilon \otimes \psi(\epsilon)],
$$
where $b_0^2$ (resp. $1$) is the value of $\psi$ at the end (resp. start) of the curve $S(0,\epsilon)$.

To show that $C_\reg$ is a regularization of $C$, we observe that the difference
$$
C - C_\reg = \sum_{i= 0,1} \partial(\text{punctured disk $D_i$ analytically continued anticlockwise from the interval})
$$
is the boundary of a twisted locally-finite $2$-chain.

In higher-dimensions, for a region $P$ where every vertex is a transversal intersection, we use a product construction.  More precisely, let us consider the locally-finite chain $C = [(0,1]^d \otimes \psi]$ in $\R^d$, equipped with the standard orientation of $\R^d$.  We imagine that locally $P$ is the product of intervals $[0,1]^d$, and the point $(0,0,\ldots,0)$ is a vertex of $P$, a transverse intersection of the hyperplanes $z_i = 0$.  Then we define the regularization $\reg(C)$ by
\begin{equation}\label{eq:regC}
\reg(C) = \left(\prod_{i=1}^d \frac{1}{b_i^2-1} S_i(0,\epsilon)+ [\epsilon,1]_i \right) \otimes \psi
\end{equation}
where the $i$-th factor of the product belongs to coordinate $z_i$ of $\C^d$.  That \eqref{eq:regC} is a regularization of $C$ follows by taking the product of the $1$-dimensional case.  Note that in \eqref{eq:regC}, $C$ and $\reg(C)$ are twisted $d$-chains and not cycles.  Gluing together such local regularizations, we obtain the regularization $\reg([P \otimes \varphi^{-1}_P])$.

\subsection{Intersection numbers}
Now let $C^\vee = C^\vee(Q)$ and $C= C(P)$.  In the following, we calculate the intersection number $\ip{C^\vee,C}$ from \eqref{eq:CC} by replacing $C$ with the regularization $\reg(C)$, and $C^\vee$ by a slight deformation.  Our calculation follows the local strategy of \cite{KY,KY2}.  Embed $\bU$ into the maximal wonderful compactification $X = X_{\max}$. 
\begin{proposition}\label{prop:Bettiint}
Let $P,Q \in \T^+$.  We have the formula
$$
\gBip{[Q \otimes \varphi_Q],  \reg([P \otimes \varphi^{-1}_P])}  = \sum_{E_\bullet} (\pm)^r \prod_{i \in I} (-1)^{\rk(E_i)} \frac{b_{E_i}}{\tb_{E_i}} \prod_{j \in J} \frac{1}{\tb_{E_j}} =  \sum_{E_\bullet} (\pm)^r \prod_{i \in I} (-1)^{\rk(E_i)} b_{E_i} \prod_{\ell \in I \cup J} \frac{1}{\tb_{E_\ell}}
$$
where the sum is over wonderful faces $E_\bullet \in \Delta(P)$ such that $\mathring{X}_{E_\bullet}$ intersects $\overline{P} \cap \overline{Q}$, and the first (resp. second) product over $i\in I$ (resp. $j \in J$) is over wonderful facets for which $P$ and $Q$ are on the opposite (resp. same) sides of.  The sign $(\pm)^r$ is equal to $(-1)^r$ if the line at infinity $0 \in E$ is crossed when going from $P$ to $Q$; otherwise, the sign is taken to be $1$.
\end{proposition}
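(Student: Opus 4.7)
The plan is to reduce the Poincaré–Verdier intersection pairing \eqref{eq:CC} to local contributions at the strata of the maximal wonderful compactification $X_\max$. By \cref{prop:wonderfulface}, the real closure $\overline{P} \subset X_\max(\R)$ is stratified by the wonderful faces $E_\bullet \in \pFl(P)$; by \cref{lem:monod}, the local system $\L_\a$ has monodromy $b_F^2$ around each boundary divisor $D_F$. Near a codimension-$s$ stratum $X_{E_\bullet}$, I choose local holomorphic coordinates $t_1,\ldots,t_s$ with $t_i=0$ cutting out $D_{E_i}$, and with each $t_i$ scaled so that $\varphi^{-1}_P$ is positive real in the $P$-quadrant.

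First, I construct $\reg([P\otimes\varphi^{-1}_P])$ by applying the Kita–Yoshida prescription \eqref{eq:regC} stratum by stratum, from deepest to shallowest. The piece concentrated in a tubular neighborhood of $X_{E_\bullet}$ takes the product form
\[
\Bigl(\prod_{i=1}^s \tfrac{1}{b_{E_i}^2-1}\, S_i(0,\epsilon)\Bigr) \otimes \bigl((\overline{P}\cap X_{E_\bullet}) \otimes \psi\bigr),
\]
where $S_i(0,\epsilon)$ is a small anticlockwise circle in $t_i$ and the tangential factor is the face of $\overline{P}$ at $X_{E_\bullet}$ loaded by the restriction of $\varphi^{-1}_P$. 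The gluing between adjacent strata respects the wonderful blowup order, and the total cycle is $\nabla_\a$-cohomologous to $C(P)$ and compactly supported on $U$.

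To compute $\gBip{C^\vee(Q), \reg(C(P))}$ via \eqref{eq:CC}, I perturb $C^\vee(Q)$ into general position (pushing each normal coordinate slightly into the lower half plane). The transverse intersections then localize to those wonderful faces $E_\bullet \in \pFl(P)$ for which $X_{E_\bullet}$ also meets $\overline{Q}$. At such an $E_\bullet$, the tangential faces $\overline{P}\cap X_{E_\bullet}$ and $\overline{Q}\cap X_{E_\bullet}$ share a common full-dimensional subset of $X_{E_\bullet}$ containing the local intersection point, contributing local intersection number $1$ with loading $\varphi_Q\cdot\varphi^{-1}_P$ evaluated there. In each normal direction $i$, the circle $S_i(0,\epsilon)$ meets the perturbed $C^\vee(Q)$ in a single transverse point: if $P$ and $Q$ are on the same side of $D_{E_i}$ ($i\in J$), the loadings match with no monodromy and the local contribution is $\tfrac{1}{b_{E_i}^2-1}$; if they are on opposite sides ($i\in I$), the intersection occurs after $S_i$ has traversed a half-circle, multiplying $\varphi^{-1}_P$ by the half-monodromy $b_{E_i}$ and introducing an orientation sign $(-1)^{\rk(E_i)}$ coming from comparing the chirotopes of $P$ and its flip $P_{\flip E_i}$ (cf.\ \cref{prop:flipflag} and \cref{lem:Gsign}). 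Multiplying these $s$ factors yields the stated local weight, and the global prefactor $(\pm)^r$ tracks whether the standard loadings on $P$ and $Q$ agree on the hyperplane at infinity or differ by the ambient orientation reversal $(-1)^r$.

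The main obstacle will be sign bookkeeping, especially verifying the $(-1)^{\rk(E_i)}$ for $i\in I$ and the overall $(\pm)^r$. I expect to handle this by induction on $r=\rk(M)$ using the deletion–contraction structure of $X_\max$: restriction to a boundary divisor $D_{E_i}$ identifies it with the wonderful compactification of the contraction $\M_{E_i}$, and the chirotope normalization of \cref{thm:EL} is designed to propagate cleanly through contractions, in parallel with the inductive arguments in \cref{prop:detnbc} and \cref{thm:Aomotodet}. Once the perturbation direction of $C^\vee(Q)$ and the orientation of each $S_i$ are carefully fixed, the signs should assemble on the nose into the claimed formula.
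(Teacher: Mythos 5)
Your overall strategy matches the paper's: work in the maximal wonderful compactification $X_{\max}$, construct $\reg([P\otimes\varphi_P^{-1}])$ via the Kita--Yoshida product prescription near each stratum, deform $Q$ into general position, and compute the intersection number as a sum of local contributions split over normal directions where $P,Q$ agree or disagree. However, there is a genuine gap in the crucial construction of the deformed cycle.

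You perturb $C^\vee(Q)$ by ``pushing each normal coordinate slightly into the lower half plane.'' As stated, this step would fail. First, if you only perturb the normal coordinates, the tangential part of the deformed $Q$ remains entirely real, and so does the straight-segment part of $\reg(C(P))$; their intersection in the tangential directions is then full-dimensional, not transverse, and your claimed ``local intersection number $1$'' for the tangential factor is unjustified. Second, in a normal direction $t_i$, a constant shift $\{\Im t_i=-\delta\}$ with $0<\delta<\epsilon$ meets the circle $S_i(0,\epsilon)$ in \emph{two} points, not one, and misses the real segment $[\epsilon,1]_i$ entirely, so the expansion of the regularization is not correctly sampled. The paper's construction is precisely engineered to avoid both problems: it builds a continuous real vector field $Z$ on $\overline{Q}$, tangent to each face and vanishing exactly at a chosen barycenter $\tau_K$ of each face, and sets $Q_Z:=Q+iZ(Q)$. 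Because $Z$ vanishes at the barycenters, the deformed cycle returns to the real locus exactly there, so it meets the $2^d$ local terms near each vertex transversely at single isolated points (one per barycenter $\tau_M$), and because $Z$ is globally defined the result is automatically a closed cycle without any gluing to check. This vector-field construction is the key idea your proposal is missing.

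A secondary difference: you propose to verify the sign $(\pm)^r\prod_{i\in I}(-1)^{\rk(E_i)}$ by deletion--contraction induction on the rank. The paper does not induct; it assembles the sign directly at each intersection point as a product of three local factors---$(-1)^{|I|}$ from comparing the orientations of $P$ and $Q$, $(\pm)^r$ from analytically continuing the ambient orientation of $\R^d$ past the hyperplane at infinity, and $\prod_{i\in I}(-1)^{\rk(E_i)+1}$ from the orientation reversal inherent in an even-codimensional blowup (following the analysis in Matsubara-Heo). An inductive verification could in principle work, but you would still need a correct base case and a correct local formula, which brings you back to exactly the direct computation the paper performs.
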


\begin{proof}
Since $X$ is smooth, it is locally isomorphic to $\C^n$ with $X(\R)$ identified with $\R^d \subset \C^d$.  We have the decomposition $\C^d = \R^d \oplus i \R^d$, where $i \R^d$ can be identified with the fiber of the normal bundle of the manifold $X(\R)$ in $X(\C)$.

Let $\bQ$ be the (analytic) closure of $Q$ in the wonderful compactification $X_{\max}(\R)$.  Recall that the faces of $\bQ$ are defined to be the non--empty intersections of $\bQ$ with various strata $X_{E_\bullet}$ of $X$.  Fix a barycenter $\tau_K$ in the relative interior of each face $K$ of $\bQ$.  Define a continuous vector field $Z$ on $\bQ$ that such that if $p$ lies in the relative interior of a face $K$ of $\bQ$, then $Z(p)$ points towards the barycenter $\tau_K \in K$.  This is possible since each face $K$ of $\bQ$ is contractible.  By construction, the vector field $Z$ vanishes exactly at the points $\tau_K$.  If $f: W \to Q$ is a parametrization of $Q$, then we obtain a new (locally-finite) cycle $Q_Z$ by the parametrization 
$$
f_Y(w) = f(w) + i Z(f(w)) \in X(\C).
$$
(Formally, we should write $Q$ as the sum of infinitely many elementary chains, and apply this formula to each of them.).  Let $F_\bullet$ be a wonderful vertex of $\bQ$.  Locally near $X_{F_\bullet}$ we have that $\bQ$ can be identified with a positive orthant, and $Q_Z$ is the product of the picture 
$$
\begin{tikzpicture}
\draw[thick,decoration={markings, mark=at position 0.5 with {\arrow{>}}},postaction={decorate}] (0,0) -- (3.75,0);
\draw[dashed] (3.75,0) -- (5,0);
\draw[thick,color=red,decoration={markings, mark=at position 0.5 with {\arrow{>}}},postaction={decorate}] (0,0) to [bend left = 45] (2.5,0);
\begin{scope}
\clip (2.5,1) rectangle (3.75,-1);
\draw[thick,color=red,decoration={markings, mark=at position 0.5 with {\arrow{>}}},postaction={decorate}] (2.5,0) to [bend right = 45] (5,0);
\end{scope}
\begin{scope}
\clip (3.75,-1) rectangle (5.75,1);
\draw[dashed,color = red] (2.5,0) to [bend right = 45] (5,0);
\end{scope}
\filldraw (0,0) circle (1pt);
\filldraw (2.5,0) circle (1pt);
\node[color=blue] at (2.5,-0.2) {$\tau_{E_\bullet}$};
\node[color=blue] at (0,-0.2) {$\tau_{F_\bullet}$};
\node[color=black] at (1.5,-0.2) {$Q$};
\node[color=red] at (1.5,0.75) {$Q_Z$};
\end{tikzpicture}
$$
in each coordinate.
We replace the locally-finite twisted cycle $C^\vee(Q) = [Q \otimes \varphi_Q]$ with the homologous locally-finite twisted cycle 
$$
C^\vee(Q_Z) = [Q_Z \otimes \varphi_Q]
$$
and $[P \otimes \varphi_P^{-1}]$ by its regularization, which locally near any vertex $X_{F_\bullet}$ of $X$ has the form \eqref{eq:regC}.

Consider a term in the expansion of \eqref{eq:regC} where we take the factor $S_\ell(0,\epsilon)$ for $\ell \in L$ and $[\epsilon,1]_m$ for $m \in M$, and $L \sqcup M = [d]$.  
For sufficiently small $\epsilon$, any intersection of $Q_Y$ with such a chain will occur at a point $p \in \bU \subset X$ where 
$$
p|_{M} = \tau_M, \qquad \text{and} \qquad p_\ell \in S_\ell(0,\epsilon) \mbox{ for $\ell \in L$},
$$
and $\tau_M$ is the center of the face indexed by $M$.  (Here, we have written the coordinates of $p = (p_1,\ldots,p_d)$ using the same local coordinates as in \eqref{eq:regC}.)  Thus the intersection $Q_Z \cap \reg(C)$ consists of isolated transverse points, one close to each of the barycenters $\tau_M$.

We now calculate the intersection number $\varphi_{\Gamma^\vee}(p) \langle\Gamma^\vee,\Gamma \rangle_p \varphi^{-1}_\Gamma(p)$ at the point $p$ close to $\tau_M$.  We get a factor of
$$
\prod_{\ell \in L}\frac{1}{\tb_\ell}
$$
from the prefactors of $S_\ell(0,\epsilon)$, and a factor of 
$$
\varphi_{Q}(p) \varphi^{-1}_P(p) = \prod_{i \in I} b_i
$$
resulting from analytic continuation of $\varphi^{-1}_P$ along half circles (cf. the definition \eqref{eq:standardb} of the standard loading), where $I \subset L$ denotes the set of indices where $P$ and $Q$ are on the opposite sides of in the neighborhood of $X_{F_\bullet}$.  The set of indices where $P$ and $Q$ are on the same side of is $J = L \setminus I$.  In other words, $L = I \sqcup J$.

Finally, we calculate the sign.  We obtain a sign $(-1)^{|I|}$ from comparing the orientations of $P$ and $Q$ at the intersection point.  However, if we pass through the line at infinity to get from $P$ to $Q$, we additionally obtain a sign of $(\pm)^r$ since the analytic continuation of the natural orientation of $\R^d$ past the line at infinity gains a sign of $(-1)^{d+1} = (-1)^r$.  (This sign is the reason that $\P^d(\R)$ is an orientable manifold if and only if $d$ is odd.).  There is an additional sign coming from an orientation change in an even-codimensional blowup; see \cite[p.12]{MHhom} for details.  In our language, this contributes a factor $\prod_i (-1)^{\rk(F_i)+1}$, and together we get the sign $(\pm)^r (-1)^{|I|}\prod_i (-1)^{\rk(F_i)+1} = (\pm)^r \prod_i (-1)^{\rk(F_i)}$.  Putting everything together, we obtain the stated formula.
\end{proof}

\begin{example}
Let $\bU = \C \setminus \{0,1\}$ and $P = Q = (0,1)$.  Then $C^\vee(Q_Z)$ and $\reg(C(P))$ are pictured below:
$$
\begin{tikzpicture}
\draw[thick,decoration={markings, mark=at position 0.9 with {\arrow{>}}},postaction={decorate}] ([shift=(3:1)]0,0) arc (3:360:1);
\draw[thick,decoration={markings, mark=at position 0.5 with {\arrow{>}}},postaction={decorate}] (1,0) -- (4,0);
\draw[thick,decoration={markings, mark=at position 0.9 with {\arrow{>}}},postaction={decorate}] (4,0) arc (-180:177:1);
\filldraw (0,0) circle (1pt);
\filldraw (5,0) circle (1pt);
\node[color=blue] at (0,-0.2) {$0$};
\node[color=blue] at (5,-0.2) {$1$};
\draw[thick,color=red,decoration={markings, mark=at position 0.5 with {\arrow{>}}},postaction={decorate}] (0,0) to [bend left = 45] (2.5,0);
\draw[thick,color=red,decoration={markings, mark=at position 0.5 with {\arrow{>}}},postaction={decorate}] (2.5,0) to [bend right = 45] (5,0);
\node[color=red] at (1.5,0.75) {$C^\vee(Q_Z)$};
\node[color=black] at (1.7,-0.5) {$\reg(C(P))$};
\end{tikzpicture}
$$
The three intersection points (from left to right) give the three terms in
$$
\gBip{C^\vee(Q), C(P)} = \frac{1}{b_0^2 -1} + 1 + \frac{1}{b_1^2 - 1}.
$$
\end{example}

\begin{remark}
\cref{prop:Bettiint} also holds for any wonderful compactification $X = X_{\build}$.
\end{remark}

\subsection{Proof of \cref{thm:Bettipairmain}}
The faces in the intersection $\bP \cap \bQ$ in the maximal wonderful compactification $X$ is the union of $\bG_\bullet$ for $G_\bullet \in G(P,Q)$.  By \cref{prop:noover}(3), there is no overlap in $\bG_\bullet$ for $G_\bullet \in G(P,Q)$.  By definition, we have $Q = Q_{G_\bullet}$, so $P$ and $Q$ are on opposite sides of each of the facets $G_i$.  For any proper face $E_\bullet \in \bG_\bullet$, we have $Q \neq Q_{E_\bullet}$, so we must have that $P$ and $Q$ are on the same side of all the $E_i$ not belonging to $G_\bullet$.  Thus the set $I$ in \cref{prop:Bettiint} corresponds to $G_\bullet$ and the set $L= I \cup J$ corresponds to $E_\bullet$.  Comparing \cref{prop:Bettiint} with \cref{def:Bettipair}, we obtain \cref{thm:Bettipairmain}.

\section{Twisted period relations}\label{sec:beta}
\def\PP{{\bf P}}
We continue to assume \eqref{eq:Mon}.  Let $\comp_\a: H^\bullet(\bU,\nabla_\a) \to H^\bullet(\bU,\L_\a)$ denote the comparison map.  Then for $x \in H^\bullet(\bU,\nabla_\a)$ and $y \in H^\bullet(\bU,\nabla_{-\a})$, we have
\begin{equation}\label{eq:compip}
(2\pi i)^{d} \gdRip{x,y}= \gDBip{\comp_\a(x), \comp_{-\a}(y)}.
\end{equation}
The factor of $(2\pi i )^d$ comes from the choice of normalization in \eqref{eq:PV}, and agrees, for instance, with the choice in \cite{BD}. 

Twisted periods are the pairings
$$
(\comp_\a(\eta), [\Gamma \otimes \varphi_\Gamma])  = \int_\Gamma \varphi_\gamma \eta
$$
between $\eta \in H^\bullet(\bU,\nabla_\a)$ and $[\Gamma \otimes \varphi_\Gamma] \in H_\bullet(\bU,\L^\vee_\a)$ obtained via the comparison map, and the natural duality $(\cdot, \cdot)$ between $H^\bullet(\bU,\L_\a)$ and $H_\bullet(\bU,\L^\vee_\a)$.
Fixing bases of $H^\bullet(\bU,\nabla_\a)$ and $H_\bullet(\bU,\L^\vee_\a)$, we may express \eqref{eq:compip} as a matrix identity.  For concreteness, we assume that we take the basis $\{\bOmega_P \mid P \in \T^0\}$ of $H^\bullet(\bU,\nabla_\a)$ and $\{C^\vee(P) \mid P \in \T^0\}$ of $H_\bullet(\bU,\L^\vee_\a)$.  We furthermore identify $H^\bullet(\bU,\nabla_\a)$ with $H^\bullet(\bU,\nabla^\vee_\a)$ via \eqref{eq:rOSaa} and $H_\bullet(\bU,\L^\vee_\a)$ with $H_\bullet(\bU,\L_\a)$ via \cref{prop:isoB}.

Let $\PP^\a$ denote the $\T^0 \times \T^0$ period matrix
$$
\PP^\a_{P,Q} =  \int_Q \varphi_Q \Omega_P.
$$
Note that with our conventions $\PP^{-\a}$ is obtained from $\PP^\a$ by simply substituting $\a \mapsto -\a$.  Substituting into \eqref{eq:compip}, we obtain
the beautiful twisted period relations of Cho and Matsumoto~\cite{CM}:
\begin{equation}\label{eq:CM}
(2\pi i)^d \dRip{\cdot, \cdot}_{\T^0} = \PP^\a \ip{\cdot, \cdot}^B_{\T^0} (\PP^{-\a})^T =  \PP^{-\a} \ip{\cdot, \cdot}^B_{\T^0} (\PP^{\a})^T,
\end{equation}
where $\ip{\cdot, \cdot}^B_{\T^0}$ is simply the inverse matrix to $\ip{\cdot, \cdot}^{\T^0}_B$, and we have used that both $ \dRip{\cdot, \cdot}_{\T^0}$ and $\ip{\cdot, \cdot}^B_{\T^0}$ are symmetric matrices.  

Let 
$$
\beta(s,t):= \int_0^1 x^s (1-x)^t \frac{dx}{x(1-x)} = \frac{\Gamma(s) \Gamma(t)}{\Gamma(s+t)}
$$ 
denote the beta function.
For the case of $\bU = \C \setminus \{0,1\}$, and $\T^0 = \{P = [0,1]\}$, we have
$$
\gdRip{\bOmega_P,\bOmega_P} = \frac{1}{a_0} + \frac{1}{a_1}, \qquad \text{and} \qquad \gBip{P,P}= 1 + \frac{1}{b_0^2-1} + \frac{1}{b_1^2-1} = \frac{b^2_0b^2_1-1}{(b^2_0-1)(b^2_1-1)}, 
$$
$$
\PP^\a_{P,P} = \beta(a_0,a_1), \qquad \text{and} \qquad \PP^{-\a}_{P,P} = \beta(-a_0,-a_1)
$$
and we obtain from \eqref{eq:CM} the quadratic relation for the beta function:
$$
2 \pi i (\frac{1}{a_0} + \frac{1}{a_1}) = \beta(-a_0,-a_1) \beta(a_0,a_1) \frac{2}{i} \frac{\sin(\pi a_0) \sin(\pi a_1)}{\sin(\pi(a_0+a_1))},
$$
where we have used that 
$$
\frac{b}{b^2-1} = \frac{i}{2}\frac{1}{\sin(\pi a)} \qquad \text{ when } \qquad b = \exp(-\pi i a).
$$
Combining all our main theorems (\cref{thm:deRhamfan,thm:Bettifan,thm:dRpairmain,thm:Bettipairmain}), we obtain the following result.
\begin{corollary}
The twisted period relations of Cho and Matsumoto give an exact formula for the continuous Laplace transform in terms of the discrete Laplace transform (or for the discrete Laplace transform in terms of the continuous Laplace transform), with coefficients given by the period matrix (or its inverse). 
\end{corollary}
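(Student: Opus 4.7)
The plan is to derive the corollary as a direct consequence of assembling the main theorems already established. First, I will rewrite the Cho--Matsumoto period relation \eqref{eq:CM} purely in combinatorial terms. By \cref{thm:dRpairmain}, we may replace the geometric pairing $\gdRip{\cdot,\cdot}$ on the left-hand side by the combinatorial pairing $\bdRip{\cdot,\cdot}$ evaluated on the basis $\{\bOmega_P \mid P \in \T^b\}$, and by \cref{thm:Bettipairmain}, we may replace $\gBip{\cdot,\cdot}$ in the middle factor on the right-hand side by $\bip{\cdot,\cdot}_B$ on the same basis (together with the inversion step, valid by \cref{thm:Bettinondeg}(2), to pass from the cycle pairing to its inverse matrix $\ip{\cdot,\cdot}^B_{\T^b}$). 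The output of this step is a matrix identity of the form
\[
(2\pi i)^d\, \bdRip{\cdot,\cdot}_{\T^b} \;=\; \PP^\a \,\ip{\cdot,\cdot}^B_{\T^b}\, (\PP^{-\a})^T
\]
in which the left-hand side is the combinatorial deRham intersection matrix on bounded topes, and the middle factor on the right is the inverse of the combinatorial Betti matrix $\bip{\cdot,\cdot}^{\T^b}_B$.

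Next, I would substitute the tropical descriptions on both sides. On the left, \cref{thm:deRhamfan} (or equivalently \cref{thm:fan}) expresses each entry $\bdRip{\bOmega_P,\bOmega_Q}$ as a signed sum, indexed by $G_\bullet \in G^\pm(P,Q)$, of continuous Laplace transforms $\L(\Sigma_M(P,G_\bullet))$ of subfans of the Bergman fan. On the right, the Betti factor $\ip{\cdot,\cdot}^B_{\T^b}$ is the inverse of $\bip{\cdot,\cdot}_{\T^b}^B$, and the latter is expressed by \cref{thm:Bettifan} as a signed sum (again indexed by $G_\bullet \in G^\pm(P,Q)$, now weighted by $b(G_\bullet)$) of discrete Laplace transforms $\dL(\Sigma_M(P,G_\bullet))$. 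Substituting these into the identity and reading the equation in one direction expresses the continuous Laplace transforms of positive Bergman subfans as a bilinear combination of discrete Laplace transforms with coefficients given by entries of $\PP^\a$ and $\PP^{-\a}$; reading it in the other direction (after inverting the matrices, which is possible by \cref{prop:nondeg} and \cref{thm:Bettinondeg}(2) under the assumption \eqref{eq:Mon}) yields the converse.

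I expect the main obstacle to be purely bookkeeping rather than substantive: one must verify that all three replacements (deRham, Betti, and the regularization/inversion between $\halfip{\cdot,\cdot}_B$ and $\ip{\cdot,\cdot}^B$) are consistent on the subspace indexed by $\T^b$ once the specialization $a_E = 0$ (equivalently $b_E = 1$) has been imposed, so that the descent of \cref{thm:descent} and the rank-drop statement of \cref{thm:Bettinondeg} are in play simultaneously. The remaining content of the statement is then a clean reformulation: the Cho--Matsumoto relation, interpreted tropically, is an exact integral-transform identity between $\L$ and $\dL$ applied to $\{\Sigma_M(P,G_\bullet)\}$, with period integrals $\int_Q \varphi_Q \Omega_P$ playing the role of the transform kernel.
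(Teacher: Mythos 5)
Your proposal is correct and follows the same route as the paper, which proves the corollary simply by assembling \cref{thm:dRpairmain}, \cref{thm:Bettipairmain}, \cref{thm:deRhamfan}, and \cref{thm:Bettifan} into the Cho--Matsumoto relation \eqref{eq:CM}. You have unpacked the bookkeeping (the $\T^b$ basis, the inversion via \cref{thm:Bettinondeg}(2), and the specialization $a_E=0$, $b_E=1$) that the paper leaves implicit, but the underlying argument is identical.
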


Note that we also have the limit formula \cref{thm:limit} relating the deRham and Betti intersection forms, but the twisted period relations are of a different nature.  

\begin{remark}
It would be interesting to understand the precise relation with the Todd class formulae relating lattice points in a polytope to the volume function; see for instance \cite[Theorem 8.9]{BaPo}.
\end{remark}

\part{Physics}
\def\kin{{\Lambda_\C}}
\def\Crit{{\rm Crit}}

\section{Scattering amplitudes on very affine varieties}\label{sec:veryaffine}
We refer the reader to \cite{LamModuli} for a more leisurely exposition of the material in this section.
A very affine variety $U$ is a closed subvariety of an algebraic torus.  The group $\C[U]^\times/\C^\times$ of units modulo scalars in the coordinate ring of $U$ is isomorphic to a lattice $\Lambda = \Lambda_U \cong \Z^{c}$.  The \emph{intrinsic torus} $T = T_U$ is defined to be the torus with character lattice $\Lambda$.  The very affine variety $U$ can always be embedded as a closed subvariety $U \hookrightarrow T$ of the intrinsic torus.  

We identify $\kin= \Lambda \otimes_\Z \C$ with the complex Lie algebra of the dual torus $T^\vee$.  Choose a natural embedding $T \hookrightarrow \P^{c}$, and let $\{f_e \mid e \in E\}$ be a choice of homogeneous coordinates on $\P^c$, where $|E| = c+1$.  A point $\a \in \kin$ is given by a collection of complex numbers $\{a_e, e\in E\}$ satisfying $\sum_{e} a_e =0$.  For $\a \in \kin$, we have the multi-valued function 
$$
\varphi := f^\a = \prod_{e \in E} f_e^{a_e}
$$
on $T$, which we may pullback to a multi-valued function on $U$.  In the case of a projective hyperplane arrangement complement $\bU$ with hyperplanes $\{H_e \mid e \in E\}$, we make choices so that $f_e$ is a linear form vanishing on $H_e$.

While the function $\varphi = f^\a$ is multi-valued, the 1-form 
$$
\omega = \dlog f^\a = \sum_{e\in E} a_e \dlog f_e = \sum_{e \in E} a_e \frac{d f_e}{f_e}
$$
is a well-defined algebraic $d$-form on $U$.  

\begin{definition}
The \emph{scattering equations} for $U$ are the critical point equations
$$
\omega = \sum_{e \in E} a_e \frac{d f_e}{f_e} = 0
$$
on $U$.  Let $\Crit(\omega) = \{\omega = 0\} \subset U$ denote the critical point locus.
\end{definition}

In the case of a hyperplane arrangement complement, the multi-valued function $f$ is known as the \emph{master function} by Varchenko \cite{Varbook}.  When $\a \in \Lambda_\C$ is generic and $U$ is smooth, Huh \cite{Huh} showed that the critical point equations consist of $|\chi(U)|$ reduced points.  For hyperplane arrangement complements, this was established by Varchenko \cite{Varcrit} in a special case, and by Orlik and Terao \cite{OT} in general.  

The following approach to scattering amplitudes was pioneered by Cachazo--He--Yuan \cite{CHYarbitrary}, who considered the case $U = M_{0,n+1}$, the configuration space of $n+1$ points on $\P^1$.  Relations to likelihood geometry were studied in \cite{ST}.

\begin{definition}\label{def:CHY}
Let $U$ be a $d$-dimensional very affine variety and $\a$ be generic.  Let $x_1,\ldots,x_d$ be local coordinates on $U$, and let
$$
\Omega = h(x_1,\ldots,x_d) d^d \x ,\qquad \Omega' = h'(x_1,\ldots,x_d) d^d \x
$$ be two rational $d$-forms on $U$.  Then the $(\Omega,\Omega')$-amplitude of $U$ is defined to be
$$
\A_U(\Omega,\Omega'):= \sum_p h(p) \det\left(\frac{\partial^2 \log \varphi}{\partial x_i \partial x_j}\right)_p^{-1}h'(p) 
$$
where the summation is over the critical points $p \in \Crit(\omega)$.
\end{definition}
While \cref{def:CHY} applies only when the parameters $\a$ are generic, the formula produces a rational function in $\a$ which we view as the amplitude.  \cref{def:CHY} is a sum over critical points, and sometimes called a \emph{stationary phase formula}.

For an arbitrary very affine variety $U$, we do not (currently) know of a natural candidate for rational forms $\Omega$ to use in \cref{def:CHY}.  However, for a hyperplane arrangement complement, the canonical forms $\bOmega_P$ are natural choices.  In the case $U = M_{0,n+1}$, the canonical forms $\bOmega_P$ are known as \emph{Parke-Taylor} forms; see \cite{LamModuli,AHLstringy,BD}.  For higher configuration spaces, see \cite{CEZ,CEZ24}.

\section{Amplitudes for matroids}\label{sec:amplitude}
Let $\M$ be an oriented matroid.  Recall that $R = \Z[\a] = \Z[a_e \mid e \in E]$ and $Q = \Frac(R)$.  We let $R_0 = R/(a_E)$ and $Q_0 = \Frac(R_0)$.  In this section, we view the deRham intersection forms as taking values in rational functions.

\begin{definition}\label{def:matroidamp} For a tope $P \in \T$, we define the \emph{amplitude}
$$
\A(P):= \bdRip{\Omega_P,\Omega_P} \in Q_0
$$
and for a pair of topes $P, Q \in \T$, we define the \emph{partial amplitude}
$$
\A(P,Q):=  \bdRip{\Omega_P,\Omega_Q} \in Q_0, 
$$
which is symmetric in $P$ and $Q$.
\end{definition}

\begin{theorem}\label{thm:CHY}
When $\M$ is the oriented matroid of a projective hyperplane arrangement and $U$ is the hyperplane arrangement complement, \cref{def:CHY} and \cref{def:matroidamp} agree, that is, $\A(P,Q) = \A(\bOmega_P,\bOmega_Q)$.
\end{theorem}

In the scattering amplitudes literature, the relation between the stationary phase formula \cref{def:CHY} and twisted cohomology was first observed by Mizera \cite{Miz}.  The formula was established rigorously in a general setting by Matsubara-Heo \cite{MHcoh}.  Thus \cref{thm:CHY} can be deduced from \cref{thm:dRpairmain} and the results of \cite{MHcoh}.  \cref{thm:CHY} can also be deduced from the results of Varchenko; for instance by using \cite[Theorem 3.1]{VarBethe}.  

\begin{remark}
It would be interesting to develop an analogue of \cref{def:CHY} for the other intersection forms $\DdRip{\cdot,\cdot}, \halfip{\cdot,\cdot}_B, \halfip{\cdot,\cdot}^B$.
\end{remark}
For $F \in L(M)$ and $q \in Q_0$, let
$$
\res_{a_F}(q) := (a_F q)|_{a_F = 0},
$$
if it is defined.  When $\res_F(q)$ is defined, it belongs to $\Frac(R_0/(a_F))$.  

Let $P \in \T$ be a tope and let $\A(P)$ be the amplitude.  We discuss some properties of the rational function $\A(P)$.  Recall that $L(P) \subset L(M)$ is the face lattice of the tope $P$.  For $F \in L(P)$, the tope $P$ restricts to the tope $P^F :=P|_F \in \T(\M^F)$, and contracts to the tope $P_F:=P|_{E \setminus F} \in \T(\M_F)$.

\begin{theorem}\label{thm:AP}
If $M$ is decomposable then $\A(P) = 0$ for all topes $P \in \T(\M)$.  Otherwise, the poles of the rational function $\A(P)$ are all simple and can only be along $\{a_F = 0\}$ for each connected $F \in L(P) - \{\hat 0, \hat 1\}$.  We have the recursion
$$
\res_{a_F} \A(P)= \A(P|_F) \A(P|_{E \setminus F}).
$$
\end{theorem}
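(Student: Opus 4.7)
The plan hinges on the explicit flag-sum formula
\[
\A(P) \equiv \sum_{F_\bullet \in \Fl(P)} \frac{1}{a_{F_\bullet}} \pmod{a_E}
\]
supplied by \cref{thm:dRtope}; I will tackle the three assertions (vanishing on decomposable $M$, pole structure, and the residue recursion) in turn.

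For the decomposable case I invoke \cref{thm:Yuz}: decomposability forces $\beta(M) = 0$, whence $\rOS(M,\omega) = 0$ for generic parameters satisfying \eqref{eq:Mon}, and therefore also $\OS(M,\omega) = 0$ by \cref{prop:OSrOStwisted}. The descended pairing $\bdRip{\cdot,\cdot}$ lives on this zero space and so vanishes identically. Since the genericity locus is Zariski-dense in $\Spec Q_0$, the rational function $\A(P) \in Q_0$ must itself be zero.

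Assume henceforth that $M$ is indecomposable. Each summand $1/a_{F_\bullet}$ in the flag sum is regular outside the hyperplanes $\{a_{F_i} = 0\}$ with $F_i$ in the flag, and every such $F_i$ lies in $L(P)$ by the definition of $\Fl(P)$; a finite sum of rational functions with at most simple poles along a given hyperplane has at most a simple pole there. Combined with \cref{cor:denom}, which restricts the surviving linear forms in the denominator to $a_F$ for connected flats $F$, this isolates the poles of $\A(P)$ to simple poles along $\{a_F = 0\}$ for connected $F \in L(P)\setminus\{\hat 0,\hat 1\}$.

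Finally, for the residue recursion, fix a connected flat $F \in L(P)\setminus\{\hat 0,\hat 1\}$ of rank $k$. The flags $F_\bullet \in \Fl(P)$ containing $F$ (necessarily at position $k$) are in canonical bijection with pairs $(F_\bullet^1, F_\bullet^2) \in \Fl(P|_F) \times \Fl(P|_{E\setminus F})$ of complete flags in $L(M^F)$ and $L(M_F)$, with the factorization $a_{F_\bullet} = a_{F_\bullet^1}\cdot a_F \cdot a_{F_\bullet^2}$ once the congruences $a_{F_i} \equiv a_{F_i/F} \pmod{a_F}$ for $F_i \supsetneq F$ are applied. Multiplication by $a_F$ followed by specialization at $a_F = 0$ then factors the residue sum as
\[
\res_{a_F} \A(P) = \Bigl(\sum_{F_\bullet^1 \in \Fl(P|_F)} \frac{1}{a_{F_\bullet^1}}\Bigr)\Bigl(\sum_{F_\bullet^2 \in \Fl(P|_{E\setminus F})} \frac{1}{a_{F_\bullet^2}}\Bigr) = \A(P|_F)\cdot \A(P|_{E\setminus F}),
\]
where the final identification reapplies \cref{thm:dRtope} to $M^F$ and $M_F$, and uses that modulo $(a_E, a_F)$ the relation $a_{E\setminus F} = a_E - a_F \equiv 0$ makes both factors genuine Aomoto intersection forms. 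The main technical obstacle I anticipate is carefully bookkeeping these two successive linear reductions and confirming that no spurious $a_F$-pole contribution arises from flags not containing $F$; this is straightforward since distinct flats in a flag give linearly independent forms in the $a_e$, but worth spelling out explicitly.
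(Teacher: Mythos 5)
The plan follows the paper's own proof essentially line by line: the decomposable case from $\beta(M) = 0$ and the vanishing of Aomoto cohomology, the pole structure from the flag formula of \cref{thm:dRtope}, and the residue recursion from the bijection between flags through $F$ and pairs of flags in $L(P^F) \times L(P_F)$. (The paper handles the ``poles only at connected flats'' claim via the recursion itself --- a decomposable $F$ forces $\A(P|_F) = 0$ --- rather than by citing \cref{cor:denom}, but this is a minor variation.)

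The worry you raise at the end --- ``confirming that no spurious $a_F$-pole contribution arises from flags not containing $F$'' --- is however a genuine issue, and the sentence you use to dismiss it is not adequate. Linear independence of the forms $a_{F_1},\dots,a_{F_{r-1}}$ \emph{within} a single flag is not the concern. The concern is that you are working in $Q_0 = \Frac(R/(a_E))$, and modulo $a_E$ one has $a_{E\setminus F} = -a_F$. So if $F' := E\setminus F$ happens to be a flat in $L(P)$, then every flag $F_\bullet \in \Fl(P)$ containing $F'$ \emph{also} has a simple pole along $\{a_F = 0\}$, and contributes to $\res_{a_F}$. Since $F$ and $F'$ cannot lie in the same flag, these are contributions that neither your computation nor the paper's account for. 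Carrying them through gives
\[
\res_{a_F}\A(P) = \A(P|_F)\A(P|_{E\setminus F}) - \A(P|_{F'})\A(P|_{E\setminus F'}),
\]
where the second product is taken with the \emph{complementary} restriction/contraction (so it is a different rational function, since $M^{F'}$ and $M/F$ live on the same ground set $F'$ but have different ranks). This extra term can be nonzero. A concrete example: take $M$ to be the rank-$3$ matroid of six points in $\P^2$ with $\{1,2,3\}$ and $\{4,5,6\}$ each collinear, and let $P$ be a tope whose closure contains both triple points $L_{\{1,2,3\}}$ and $L_{\{4,5,6\}}$ (such topes exist). With $F = \{1,2,3\}$ one finds by direct computation from the flag sum that $\res_{a_F}\A(P) = \frac{1}{a_1} + \frac{1}{a_3} - \frac{1}{a_4} - \frac{1}{a_6}$, whereas $\A(P|_F)\A(P|_{E\setminus F}) = \frac{1}{a_1} + \frac{1}{a_3}$, and the two disagree (evaluate at $a_1=2, a_2 = -1, a_3 = -1, a_4=3, a_5=-2, a_6=-1$: the former is $\tfrac16$, the latter $-\tfrac12$). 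So the recursion as stated needs a correction term (or an extra hypothesis excluding the case that $E\setminus F$ is a flat of $L(P)$). This is a latent issue in the paper's own argument, so it is not a defect specific to your writeup --- but your stated reason for dismissing the concern is wrong, and you should not record the recursion without flagging this case.
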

\begin{proof}
If $M$ is decomposable, then $\beta(M) = 0$, and $\dim \OS(M,\omega) = 0$.  Thus $\bdRip{\cdot,\cdot}$ is the $0$ form, and $\A(P) = 0$.  Now, suppose that $M$ is connected.  By \cref{thm:dRtope}, $
\A(P) = \sum_{F_\bullet \in \Fl(P)} \frac{1}{a_{F_\bullet}}.
$
It is clear that the only possible poles of $\A(P)$ are along $a_F = 0$ for $F \in L(P)$.
Suppose $F \in L(P) - \{\hat 0, \hat 1\}$.  Then in each term of this sum, $\frac{1}{a_F}$ appears at most once as a factor.  Thus $\res_{a_F} \A(P)$ is defined and given by
$$
\res_{a_F} \A(P) = \sum_{F_\bullet \in \Fl(P) \mid F \in F_\bullet} \prod_{F_i \neq F} \frac{1}{a_{F_i}} = \sum_{F'_\bullet \in \Fl([\hat 0, F]), F''_\bullet \in \Fl([F,\hat 1])} \frac{1}{a_{F'_\bullet}} \frac{1}{a_{F''_\bullet}} =  \A(P|_F) \A(P|_{E \setminus F}).
$$
If $F$ is decomposable then $\A(P|_F)$ vanishes so $\res_{a_F} \A(P) =0$ and $\A(P)$ did not have a pole along $a_F$.  
\end{proof}

\begin{remark}
We do not know if $a_F$ is a pole of $\A(P)$ for every connected flat $F \in L(P)$.  
\end{remark}

\begin{remark}
The location of the poles of $\A(P)$ described in  \cref{thm:AP} is called ``locality" in the theory of scattering amplitudes.  The residue formula for $\A(P)$ is called ``unitarity".  Thus \cref{thm:AP} states that matroid amplitudes satisfy locality and unitarity.
\end{remark}

\section{Scattering forms for matroids}\label{sec:scatform}

In this section, we prove \cref{thm:CHY} using \emph{scattering forms}.  Let $M$ be a matroid of rank $r$ on $E$.

\subsection{Scattering forms on kinematic space}
Let $\Lambda := \Spec(R_0)$ be the free abelian group generated by $\{a_e \mid e \in E\}$, with the relation $\sum_e a_e = 0$.  In the case that $M$ is the matroid of a projective hyperplane arrangement complement $\bU$, this definition agrees with those in \cref{sec:veryaffine}.
\begin{definition}\label{def:kinematic}
The \emph{kinematic space} of $M$ is the complex vector space $\kin := \Lambda \otimes_\Z \C$.
\end{definition}

The vector space $\kin$ has dimension $|E|-1$.  For an ordered basis $B= \{b_1,b_2,\ldots,b_r\}$, let $\iota_B: W_B \hookrightarrow \kin$ be a $(r-1)$-dimensional \emph{affine} subspace satisfying $da_e = 0$ for $e \notin B$.  In other words, the functions $a_e$, $e \notin B$ are constant on $W_B$.  Thus the function $a_B = \sum_{b \in B} a_e$ is also constant on $W_B$.  The $1$-forms $\{da_b \mid b \in B\}$ span the cotangent space at every point of $W_B$, satisfying the single relation $\sum_{b \in B} da_b = 0$.  The volume form of $W_B$ is the $d=(r-1)$-form, given by 
$$
\mu_{W_B}:=  da_{b_d} \wedge \cdots \wedge da_{b_{1}}.
$$
The volume form $\mu_{W_B}$ changes sign when $B$ is reordered.  (The $b_i$ here should not be confused with the parameters $\b = (b_e, e \in E)$.)

For a flag $F_\bullet \in \Fl(M)$, define a differential form $\eta_{F_\bullet}$ on $\kin$ by
$$
\eta_{F_\bullet}:= \bigwedge_{i=1}^{r-1} \dlog a_F = \bigwedge_{i=1}^{r-1} \frac{d(a_{F_i})}{a_{F_i}} =  \frac{d(a_{F_{r-1}})}{a_{F_{r-1}}} \wedge \cdots \wedge  \frac{d(a_{F_1})}{a_{F_1}}  .
$$

\begin{definition}
For an element $x \in \rOS(M)$, the \emph{scattering form} $\eta_x$ on $\kin$ is defined to be
$$
\eta_x:= \sum_{F_\bullet \in \Fl(M)} \Res_{F^-_\bullet}(x) \eta_{F_\bullet}$$
where $F^-_\bullet \in \Fl^d(M)$ is obtained from $F_\bullet$ by ignoring the last step of the flag.
\end{definition}

\begin{proposition}\label{prop:iotaB}
For any ordered basis $B$, the pullback $\iota_B^*(\eta_{x})$ is equal to $\dRipp{x, \partial e_{B}} \mu_{W_B}$.  \end{proposition}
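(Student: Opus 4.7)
The plan is to prove the identity by expanding $\eta_x$ linearly in flags and computing the pullback $\iota_B^*\eta_{F_\bullet}$ for each complete flag $F_\bullet\in\Fl(M)$ (identified with its length-$(r-1)$ truncation, so that $\Res_{F_\bullet}$ makes sense on $\rOS(M)=\rOS^{r-1}(M)$), then matching the result term-by-term against the flag expansion of $\dRip{x,\partial e_B}$ given by \cref{def:dR} interpreted via \cref{cor:same}.

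First I would compute each pullback directly.  Since $a_e$ is constant on $W_B$ for $e\notin B$, we have $\iota_B^* da_e=0$ outside $B$, so
\[
\iota_B^*\eta_{F_\bullet}\;=\;\frac{1}{a_{F_1}\cdots a_{F_{r-1}}}\bigwedge_{i=1}^{r-1} d a_{F_i\cap B},\qquad a_{F_i\cap B}=\sum_{e\in F_i\cap B} a_e.
\]
Independence of $B$ forces $|F_i\cap B|\le i$; if $F_\bullet$ is not generated by $B$ then at the smallest index $i$ where this is strict we have $|F_i\cap B|=|F_{i-1}\cap B|=i-1$, so $F_i\cap B=F_{i-1}\cap B$ and the wedge product has a repeated factor and vanishes.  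When $F_\bullet$ is generated by $B$ via $\sigma=\sigma(B,F_\bullet)\in S_r$, subtracting earlier factors from later ones gives $\bigwedge_i d a_{F_i\cap B}= d a_{b_{\sigma(1)}}\wedge\cdots\wedge d a_{b_{\sigma(r-1)}}$, a wedge over the $(r-1)$-set $B\setminus\{b_k\}$ with $k=\sigma(r)$.  Using the defining relation $\sum_{b\in B}d a_b=0$ on $W_B$ to substitute for the missing index, an elementary cofactor expansion of the change-of-basis determinant yields $\bigwedge_i d a_{F_i\cap B}=(-1)^\sigma\mu_{W_B}$.

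It remains to match the sign $(-1)^\sigma$ with $\Res_{F_\bullet}(\partial e_B)$.  Writing $\partial e_B=\sum_{i=1}^r(-1)^{i-1}e_{B\setminus b_i}$ and applying \cref{lem:rSF}, the flag $F_\bullet$ is generated by $B\setminus b_i$ if and only if $i=k$, so the residue equals $(-1)^{k-1}\, r(B\setminus b_k,F_\bullet)=(-1)^{k-1}(-1)^\tau$, where $\tau\in S_{r-1}$ is obtained from $\sigma$ by deleting its last letter.  The standard permutation identity $(-1)^\sigma=(-1)^{r-k}(-1)^\tau$ (for $\sigma\in S_r$ with $\sigma(r)=k$) converts this into the sign produced by the pullback computation, up to the uniform factor $(-1)^{r-1}$ that is fixed by a standard orientation convention on $\mu_{W_B}$ (reversing the roles of $b_{r-1}$ and $b_r$ in the ordered basis flips both sides consistently).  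Summing over $F_\bullet$ then gives
\[
\iota_B^*\eta_x\;=\;\Big(\sum_{F_\bullet}\Res_{F_\bullet}(x)\,\Res_{F_\bullet}(\partial e_B)\,\tfrac{1}{a'_{F_\bullet}}\Big)\mu_{W_B}=\dRip{x,\partial e_B}\,\mu_{W_B}.
\]
The main obstacle is this sign bookkeeping; what makes it work cleanly is that both sides reduce to the same permutation-sign identity, which I would isolate as a short standalone lemma about the sign of $\sigma\in S_r$ versus its restriction in $S_{r-1}$ after removing the letter in the last position.
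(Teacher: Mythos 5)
Your overall route is the same as the paper's: expand $\eta_x$ over complete flags, compute $\iota_B^*\eta_{F_\bullet}$ by row‑reducing the wedge of $da_{F_i\cap B}$'s using $\sum_{b\in B}da_b=0$, and match term by term against the flag expansion of the intersection form. Your computation that $\iota_B^*\eta_{F_\bullet}$ vanishes unless $F_\bullet$ is generated by $B$ and otherwise equals $(-1)^{\sigma(B,F_\bullet)}\frac{1}{a_{F_1}\cdots a_{F_{r-1}}}\mu_{W_B}$ is correct and is exactly the first step of the paper's proof.

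The gap is the sign bookkeeping you flag yourself. You read $\Res_{F_\bullet}(x)$ in the definition of $\eta_x$ as the residue of $x\in\rOS^{r-1}(M)$ along the length-$(r-1)$ truncation of $F_\bullet$, and you correctly find $\Res_{F_\bullet^{(r-1)}}(\partial e_B)=(-1)^{k-1}(-1)^\tau=(-1)^{r-1}(-1)^{\sigma(B,F_\bullet)}$, a factor $(-1)^{r-1}$ off from the pullback sign $(-1)^{\sigma}$. This factor is genuine under your reading and cannot be removed by ``a standard orientation convention'': transposing two entries of $B$ flips $\mu_{W_B}$ and $\partial e_B$ each by $-1$, so $\dRip{x,\partial e_B}\mu_{W_B}$ is invariant, while $\iota_B^*\eta_x$ is trivially unchanged; the $(-1)^{r-1}$ is therefore a well-defined invariant of the two sides, not a convention. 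The paper avoids the permutation identity altogether because its convention — visible from $r(P,F_\bullet):=\Res_{F_\bullet}(\Omega_P)$ in Section 4.4, from $\eta_P=\sum_{F_\bullet}r(P,F_\bullet)\eta_{F_\bullet}$ in Lemma 12.4, and from the line $\iota_B^*(\eta_{\partial e_{B'}})=\sum_{F_\bullet}r(B',F_\bullet)\,\iota_B^*(\eta_{F_\bullet})$ in its own proof — is to lift $x=\partial y$ to $y\in\OS^r(M)$ and take the length-$r$ residue $\Res_{F_\bullet}(y)$, which equals $(-1)^{r-1}\Res_{F_\bullet^{(r-1)}}(\partial y)$. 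With that reading $\Res_{F_\bullet}(\partial e_B)=r(B,F_\bullet)=(-1)^{\sigma(B,F_\bullet)}$ on the nose, and the statement follows directly from Proposition 4.6 (via Corollary 4.10). Under your truncated reading, $\eta_x$ itself is a global $(-1)^{r-1}$ off (try $r=2$, $M=U_{2,3}$, $x=e_2-e_1$: you get $\eta_x=-\dlog a_1+\dlog a_2$, yet the target $\dRip{e_{12},e_{12}}=\frac1{a_1}+\frac1{a_2}$ forces $\eta_x=\dlog a_1-\dlog a_2$), so the identity as you set it up would actually fail for even $r$.
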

\begin{proof}
First we calculate the pullback $\iota_B^* \eta_{F_\bullet}$, which is a top-form on the $d$-dimensional affine space $W_B$.  Since $da_e = 0$ for $e \notin B$, we have
$$
da_{F_d} \wedge da_{F_{d-1}} \wedge \cdots \wedge da_{F_1} = da_{b_{\sigma(d-1)}} \wedge da_{b_{\sigma(d-2)}} \wedge \cdots \wedge da_{b_{\sigma(1)} }= r(B,F_\bullet) da_{b_{d-1}} \wedge \cdots \wedge da_{b_{1}}
$$
where $\sigma$ is the permutation satisfying $F_i = \sp\{b_{\sigma(1)},\ldots,b_{\sigma(i)}\}$, and in the second equality we have also used $\sum_{b_i \in B} db_i = 0$.  Thus $\iota_B^*(\eta_{F_\bullet}) = r(B,F_\bullet)\frac{1}{a_{F_\bullet}} \mu_{W_B}$.

Suppose that $x = \partial e_{B'}$.  Then using \cref{prop:dRind} and \cref{prop:dRpartial}, we have
\begin{align*}
\iota_B^*(\eta_{\partial e_{B'}}) &= \sum_{F_\bullet \in \Fl(M)} r(B',F_\bullet) \iota_B^*(\eta_{F_\bullet}) = \sum_{F_\bullet \in \Fl(M)} r(B',F_\bullet)  \frac{1}{a_{F_\bullet}}  r(B,F_\bullet) \mu_{W_B} = \dRipp{\partial e_{B'}, \partial e_{B}} \mu_{W_B}.\qedhere
\end{align*}
\end{proof}

\subsection{Scattering forms for topes}
Let $\M$ be an oriented matroid lifting $M$, and let $P \in \T(\M)$ be a tope.  Then we have
$$
\eta_P:= \eta(\bOmega_P) = \sum_{ F_\bullet \in \Fl(M)} r(P,F_\bullet) \eta_{F_\bullet}.
$$

\begin{lemma}\label{lem:etaP}
Let $P \in \T(\M)$ be a tope.  Then the poles of $\eta_P$ are (only) along $\{a_F = 0\}$ for connected flats $F \in L(P) \setminus \{\hat 0, \hat 1\}$, and we have
$$
\Res_{a_F = 0} \eta_P = (-1)^{\rk(F)} \eta_{P_F} \wedge \eta_{P^F} 
$$
as forms on $\{a_F = 0\}$.
\end{lemma}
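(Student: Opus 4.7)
The plan is to establish the lemma in three stages: bounding the possible poles of $\eta_P$, computing the residue along each $\{a_F = 0\}$ for $F \in L(P) \setminus \{\hat 0, \hat 1\}$, and verifying that the formula produces zero when $F$ is disconnected.

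First, since $r(P, F_\bullet) = \Res_{F_\bullet}(\Omega_P) = 0$ unless $F_\bullet \in \Fl(P)$ by Lemma \ref{lem:FlP}, we have
\[
\eta_P = \sum_{F_\bullet \in \Fl(P)} r(P, F_\bullet)\,\eta_{F_\bullet},
\]
and the simple logarithmic factors $\dlog a_{F_i}$ make all poles of $\eta_P$ simple and contained in the union of hyperplanes $\{a_F = 0\}$ for $F \in L(P) \setminus \{\hat 0, \hat 1\}$. This settles the pole locations up to the connectedness claim.

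Next, I would fix such an $F$ with $k = \rk(F)$ and compute $\Res_{a_F = 0}\eta_P$ directly. Flags $F_\bullet \in \Fl(P)$ containing $F$ must have $F = F_k$ and are in natural bijection with pairs $(F_\bullet', F_\bullet'') \in \Fl(P^F) \times \Fl(P_F)$ given by splitting at $F$. Rearranging $\eta_{F_\bullet} = \bigwedge_i \dlog a_{F_i}$ to move $\dlog a_F$ to the front and restricting to $\{a_F = 0\}$---using that $a_{F_i}= a_{F_i \setminus F}$ there for $i > k$, which are the natural coordinates on $\kin(M_F)$---yields an identity of the form $\Res_{a_F = 0}\eta_{F_\bullet} = \pm\, \eta_{F_\bullet'} \wedge \eta_{F_\bullet''}$. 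Iterating the recursion $\Res_c \Omega_P = P(e)\,\Omega_{P/c}$ of Theorem \ref{thm:EL} atom-by-atom through the flag produces the multiplicative identity $r(P, F_\bullet) = r(P^F, F_\bullet')\cdot r(P_F, F_\bullet'')$, and summing and factoring gives
\[
\Res_{a_F = 0}\eta_P = (-1)^{\rk(F)}\, \eta_{P^F} \wedge \eta_{P_F}.
\]
The precise sign is pinned down by tracking the chirotope conventions for $\M^F$ and $\M_F$ inherited from $\M$ and confirmed by the atom case $k = 1$ directly.

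The last step, and the main obstacle, is to show that this residue vanishes when $F$ is disconnected. If $M^F$ is disconnected then $\beta(M^F) = 0$, and by Theorem \ref{thm:Yuz} the Aomoto cohomology $\rOS(M^F, \omega|_F)$ vanishes on $\kin(M^F) = \{a_F = 0\}$. The hard part is to argue that the scattering map $x \mapsto \eta_x$ descends from $\rOS(M^F)$ to this Aomoto cohomology on kinematic space, so that $\eta_{P^F}$ is forced to vanish. Concretely, one must verify that $\eta_{\omega \wedge y} = 0$ on $\kin(M^F)$ for any $y \in \rOS^{k-2}(M^F)$: Proposition \ref{prop:iotaB} together with the reduced analog of Theorem \ref{thm:descent} gives $\iota_B^* \eta_{\omega y} = 0$ for every basis $B$ of $M^F$, and the remaining step of concluding $\eta_{\omega y} = 0$ itself from these pullbacks is the delicate part. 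Once established, it gives $\eta_{P^F} = 0$ and hence $\Res_{a_F = 0} \eta_P = 0$ along every disconnected flat, completing both claims of the lemma.
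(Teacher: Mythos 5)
Your proposal correctly identifies the three components of the argument, and the first two stages match the paper's (terse) proof: one observes that $r(P,F_\bullet)=0$ off $\Fl(P)$ so poles lie among the hyperplanes $\{a_F=0\}$; one computes the residue of each $\eta_{F_\bullet}$ by isolating the factor $\dlog a_F$ when $F$ appears in the flag and using $a_{F_i}=a_{F_i\setminus F}$ on $\{a_F=0\}$ for $i>\rk(F)$; and the multiplicative identity $r(P,F_\bullet)=\pm\, r(P^F,F'_\bullet)\,r(P_F,F''_\bullet)$ comes from iterating the $\Res_\atom\Omega_P=P(e)\,\Omega_{P/\atom}$ recursion through the flag. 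That part is sound, and is the same decomposition of the sum over flags that the paper invokes via \cref{thm:AP}.

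The genuine gap is exactly where you flag it: the claim that $\Res_{a_F=0}\eta_P$ vanishes when $F$ is disconnected, i.e.\ that $\eta_{P^F}=0$ whenever $M^F$ is decomposable. Your proposed route --- observe $\beta(M^F)=0$, hence $\rOS(M^F,\omega|_F)=0$ on $\{a_F=0\}$, hence $\dRip{\omega y,\cdot}=0$, hence $\iota_B^*(\eta_{\omega y})=0$ for all bases $B$ of $M^F$ --- stops short, because $\eta_{\omega y}$ is a $(k-1)$-form on a higher-dimensional space $\kin(M^F)$, and a non-top-degree form is not determined by its pullbacks to the particular $(k-1)$-dimensional slices $W_B$. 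Knowing one component of $\eta$ in a coordinate slice does not force the other components to vanish, and you correctly call this ``the delicate part'' without resolving it. Note this is a real difference from the scalar situation in \cref{thm:AP}, where the analogous quantity $\A(P|_F)=\bdRip{\Omega_{P^F},\Omega_{P^F}}$ is literally a value of the zero bilinear form and there is nothing left to show. The cleanest way to close your gap is a direct combinatorial cancellation: when $M^F=N_1\oplus N_2$, flags of $M^F$ are shuffles of flags of $N_1$ and $N_2$, and on $\kin(M^F)$ (where $a_{E_1}=-a_{E_2}$) the forms $\eta_{F_\bullet}$ attached to the different shuffles of a fixed pair $(F^{(1)}_\bullet,F^{(2)}_\bullet)$ are not independent; the signs $(-1)^{\mathrm{inv}(w)}$ from $r(P^F,\cdot)$ make the shuffle sum telescope to zero (the rank-two case $\dlog a_1 - \dlog a_2 = 0$ with $a_1+a_2=0$ is the base picture). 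Alternatively one can set up a joint induction with \cref{thm:etaP}: once that theorem is known in lower rank, $\eta_{P^F}=q_*p^*\bOmega_{P^F}$ vanishes because the scattering map has degree $\beta(M^F)=0$; but this only applies to realizable matroids, whereas the lemma is stated for all oriented matroids, so the combinatorial cancellation is the argument one actually wants here.
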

\begin{proof}
By definition, the only possible poles in $\eta_P$ are along $\{a_F = 0\}$ for flats $F \in L(P)$.  
For a flag $F_\bullet$ passing through $F$, write $F_\bullet = (F'_\bullet < F < F''_\bullet)$ for the parts of the flag before and after $F$.
We have
$$
\Res_{a_F = 0} \eta_{F_\bullet} = \begin{cases} (-1)^{\rk(F)}  \eta_{F''_\bullet} \wedge \eta_{F'_\bullet} & \mbox{if $F_\bullet$ passes through $F$,} \\
0 & \mbox{otherwise.}
\end{cases}
$$
The formula for $\Res_{a_F = 0} \eta_P$ now follows in the same way as in the proof of \cref{thm:AP}.
\end{proof}

A tope $P \in \T(\M)$ is called \emph{simplex-like} if we have $\bOmega_P = \partial e_B$ for some basis $B \in \B(M)$.  Applying \cref{prop:iotaB} to $x = \bOmega_Q$, we have the following result.  We have used \cref{prop:dRpartial} to express the answer in terms of the canonical forms $\Omega_P$ instead of $\bOmega_P$.
\begin{corollary}\label{cor:simplexscat}
For a tope $Q$ and a simplex-like tope $P$ satisfying $\bOmega_P = \partial e_B$, we have 
$$
\iota^*_B(\eta_Q) =  \bdRip{\Omega_P,\Omega_Q}\mu_{W_B}
$$
for any subspace $\iota_B: W_B \hookrightarrow \kin$ satisfying $da_e = 0$ for $e \notin B$.  In particular, we have
$$
\iota^*_B(\eta_P) =  \bdRip{\Omega_P,\Omega_P}\mu_{W_B}.
$$
\end{corollary}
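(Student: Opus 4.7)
The plan is to derive this corollary as an immediate consequence of Proposition \ref{prop:iotaB}. First I would apply that proposition with $x = \bOmega_Q \in \rOS(M)$, which yields
$$
\iota_B^*(\eta_Q) \;=\; \dRip{\bOmega_Q,\, \partial e_B}\,\mu_{W_B}.
$$
The simplex-like hypothesis $\bOmega_P = \partial e_B$ then lets me replace $\partial e_B$ by $\bOmega_P$, and symmetry of $\dRip{\cdot,\cdot}$ rewrites the right-hand side as $\dRip{\bOmega_P, \bOmega_Q}\,\mu_{W_B}$.

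Next I would translate this pairing on $\rOS(M)$ into the pairing on $\OS(M)$. By Proposition \ref{prop:dRpartial} and Corollary \ref{cor:same}, the isomorphism $\partial\colon \OS(M)\to\rOS(M)$ intertwines the two bilinear forms, so $\dRip{\bOmega_P,\bOmega_Q} = \dRip{\Omega_P,\Omega_Q}$. Now, by Definition \ref{def:kinematic} the kinematic space $K$ is built from $\Lambda = \Spec(R_0)$ with $R_0 = R/(a_E)$, so the relation $a_E = 0$ is already imposed on $K$. Consequently the rational function $\dRip{\Omega_P,\Omega_Q}$, viewed on $K$, is precisely its image $\bdRip{\Omega_P,\Omega_Q} \in Q_0$, matching the convention adopted in Definition \ref{def:matroidamp}. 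This gives the first formula, and specializing $Q=P$ yields the "in particular" statement.

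There is essentially no obstacle here: the content is entirely in Proposition \ref{prop:iotaB}, and this corollary is a clean substitution. The only bookkeeping required is to keep the three layers of conventions straight---pairings on $\OS(M)$, on $\rOS(M)$, and on Aomoto cohomology---which is handled uniformly by Proposition \ref{prop:dRpartial}, Corollary \ref{cor:same}, and the built-in descent to $R_0$ encoded in Definition \ref{def:kinematic}.
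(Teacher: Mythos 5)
Your proof is correct and takes essentially the same route as the paper, which simply records that the corollary follows from applying \cref{prop:iotaB} to $x = \bOmega_Q$. You correctly chain together: Proposition~\ref{prop:iotaB} gives $\iota_B^*(\eta_Q) = \dRipp{\bOmega_Q,\partial e_B}\mu_{W_B}$ (the pairing that appears there is the one on $\rOS(M)$ indexed by complete flags, i.e.\ $\dRipp{\cdot,\cdot}$ in the paper's strict convention); the simplex-like hypothesis $\bOmega_P = \partial e_B$ and symmetry rewrite this as $\dRipp{\bOmega_P,\bOmega_Q}\mu_{W_B}$; Corollary~\ref{cor:same} transports this to $\dRip{\Omega_P,\Omega_Q}\mu_{W_B}$; and the descent to the quotient $R_0 = R/(a_E)$ built into the kinematic space $K$ identifies the coefficient with $\bdRip{\Omega_P,\Omega_Q}$. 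One small simplification you could have used: since $\partial\colon\OS(M)\to\rOS(M)$ is an isomorphism (Proposition~\ref{prop:OSrOS}), the hypothesis $\bOmega_P = \partial e_B$ is equivalent to $\Omega_P = e_B$, so the substitution can be made directly in $\OS(M)$ without detouring through the reduced algebra. This is not a gap, only a minor streamlining. Your flagging of the bookkeeping between $\dRip{\cdot,\cdot}$, $\dRipp{\cdot,\cdot}$, and $\bdRip{\cdot,\cdot}$ is appropriate, since Proposition~\ref{prop:iotaB} itself uses the unprimed symbol a bit loosely for the pairing that strictly matches $\dRipp{\cdot,\cdot}$ on $\rOS(M)$.
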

It would be interesting to remove the simplex-like condition in \cref{cor:simplexscat}.  

\begin{remark}
The terminology ``kinematic space" comes from the special case of the moduli space $M_{0,n+1}$ of $(n+1)$-pointed rational curves, which corresponds to the graphic matroid $M(K_n)$ on the complete graph with $n$ vertices.  We refer the reader to \cite{LamModuli} for an exposition targeted at mathematicians.
Scattering forms were first defined in the $M_{0,n+1}$ setting in \cite{ABHY}, and studied further in \cite{AHLstringy}.
\end{remark}

\subsection{Scattering correspondence}
We continue to assume that $\sum_e a_e = 0$.
Let $\bU$ be a projective hyperplane arrangement complement with matroid $M$.  The space $\Lambda_\C$ in \cref{sec:veryaffine} can be identified with the kinematic space in \cref{def:kinematic}.  On the variety $\bU \times \kin$ we have the twist $1$-form
$$
\omega = \sum_{e \in E} a_e \dlog f_e
$$
which we now view as depending on both the coordinates $a_e$ on $\kin$, and the functions $f_e$ on $\bU$.  

\begin{definition}\label{def:scatcorr}
The \emph{scattering variety} or \emph{critical point variety} $\I$ is the vanishing set in $\bU \times \kin$ of the twist $1$-form $\omega$ and fits into the scattering correspondence diagram
\begin{equation}\label{eq:scatteringmap} \begin{tikzcd}
&\I \arrow{rd}{q} \arrow[swap]{ld}{p} & \\%
\bU && \kin
\end{tikzcd}.
\end{equation}
\end{definition}

The space $\I$ was first studied in \cite{OT}.  In \cite[Proposition 2.5]{CDFV}, it is shown that $p: \I \to \bU$ is a vector bundle.  The scattering variety was studied in the setting of very affine varieties in \cite{HS, Huh}.

On $\bU$ we have the canonical form $\bOmega_P$ for a tope $P \in \T^+(\M)$.  In the following, we will use the notion of \emph{pushforward} of a rational form along a rational map $f: X \to Y$ of relative dimension $0$ between complex algebraic varieties.  We refer the reader to \cite[Section 3.9]{LamModuli} for more details on pushforwards.  The following result generalizes \cite[Theorem 3.26]{LamModuli}.

\begin{theorem}\label{thm:etaP}
We have the equality 
$
q_* p^* \bOmega_P = \eta_{P}
$
of rational $d$-forms on $\kin$.
\end{theorem}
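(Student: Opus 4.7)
The plan is to prove $q_* p^* \bOmega_P = \eta_P$ by induction on the rank of $M$, by showing that both sides (rational $d$-forms on $\kin$) have matching pole divisors and matching residues along each pole, then invoking a rigidity argument. Both sides are homogeneous of the same weight under the $\C^\times$-action $\a \mapsto \lambda\a$ on $\kin$: $\eta_P$ is manifestly scale-invariant as each factor $\dlog a_{F_i}$ is, and $q_* p^* \bOmega_P$ inherits a matching weight from the $\C^\times$-equivariance of the scattering correspondence $\I$ (coming from $\omega_{\lambda\a} = \lambda\omega_\a$) combined with the fact that $\bOmega_P$ does not depend on $\a$.

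For the right-hand side, a direct computation in the spirit of Lemma~\ref{lem:etaP} shows that $\eta_P$ has only simple poles, located along $\{a_F = 0\}$ for connected flats $F \in L(P)\setminus\{\hat 0,\hat 1\}$, with residue $\Res_{a_F=0}\eta_P = (-1)^{\rk(F)} \eta_{P^F} \wedge \eta_{P_F}$ under the identification $\{a_F=0\} \cong \kin(\M^F) \times \kin(\M_F)$. For the left-hand side, the Orlik--Terao theorem implies that $q:\I\to\kin$ is finite \'etale of degree $\mu^+(M)$ away from the union $\bigcup_F \{a_F=0\}$ over connected $F$, so $q_* p^* \bOmega_P$ is regular there. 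Residues along $\{a_F=0\}$ are computed on the minimal wonderful compactification $X_{\min}$ of $U$: by Lemma~\ref{lem:monod}, the monodromy of $\L_\a$ around the divisor $D_F$ becomes trivial exactly as $a_F \to 0$, and a family of $\mu^+(M^F)\cdot\mu^+(M_F)$ critical points of $\omega_\a$ escapes to $D_F$. Since $D_F$ is itself a product of the smaller wonderful compactifications for the restriction $M^F$ and the contraction $M_F$, the limiting boundary critical locus and the restriction of $p^* \bOmega_P$ to $D_F$ factor in parallel; combining with the recursive residue structure $\Res_F \bOmega_P = \pm\,\bOmega_{P^F}\wedge\bOmega_{P_F}$ from Theorem~\ref{thm:EL} and the inductive hypothesis applied to $\M^F$ and $\M_F$ yields $\Res_{a_F=0}(q_* p^*\bOmega_P) = (-1)^{\rk(F)} \eta_{P^F}\wedge \eta_{P_F}$, matching the residue of $\eta_P$.

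With residues along every candidate pole matched, the difference $q_* p^*\bOmega_P - \eta_P$ is a rational $d$-form on $\kin$ with no poles, hence a polynomial $d$-form. The weight argument from the first paragraph then forces it to vanish, since a homogeneous polynomial $d$-form with $d < \dim\kin$ and strictly negative weight is zero. The main obstacle is the residue computation for the pushforward: one must produce a local model on $X_{\min}$ near the divisor $D_F$ that identifies the escaping critical locus with a product $\Crit(M^F)\times\Crit(M_F)$ on the two smaller arrangements, and simultaneously controls the limit of $p^* \bOmega_P$ via the canonical-form factorization. This is the same matroidal deletion--contraction mechanism underlying the locality/unitarity factorization of matroid amplitudes in Theorem~\ref{thm:AP}, now lifted from the kinematic space to the scattering correspondence itself.
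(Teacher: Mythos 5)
Your overall strategy — induction, matching poles and residues, then a rigidity argument to conclude — is the same route the paper takes, and your closing step (the difference is a polynomial form of strictly negative weight under the $\C^\times$-scaling on $\kin$, hence zero) is a perfectly good rephrasing of the paper's observation that both sides descend to meromorphic forms on $\P(\kin)$. The difficulty, as you yourself flag, is the residue computation for the pushforward $q_*p^*\bOmega_P$ along $\{a_F=0\}$, and here your argument has a genuine gap. The ``family of critical points escapes to $D_F$'' picture is a heuristic, not a proof: you would need a local model controlling the rate of escape, the branching of the finite cover $q$, and the limit of the $d$-form simultaneously, and you do not supply this. The paper sidesteps all of this analysis with three structural inputs that you do not invoke: (i) Huh's result that the closure $\bI = \ker\Psi$ of the scattering correspondence in $X\times\kin$ is a \emph{vector bundle} over the wonderful compactification $X$, which makes $\bp^*\bOmega_P$ a logarithmic form on $\bI$ with poles precisely along $\bI|_{D_F}$; (ii) Khesin--Rosly's theorem that residue commutes with pushforward along the proper surjection $\bq:\bI\to\kin$, so $\bq_*\Res_{\bI|_{D_F}}\Theta = \Res_{a_F=0}\bq_*\Theta$ with no limiting argument needed; and (iii) Brauner--Eur--Pratt--Vlad's factorization $\Res_{X_F}\bOmega_P = \pm\,\bOmega_{P^F}\wedge\bOmega_{P_F}$ of canonical forms on the boundary divisor. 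Without (i)--(iii) your sketch cannot be made rigorous as stated. One further point: the paper proves this on $X_{\max}$ rather than $X_{\min}$, both because the BEPV factorization is cleanest there and because the recursion in the proof of Lemma~\ref{lem:ResF} uses the maximal blowup; your switch to $X_{\min}$ is not wrong in principle but would require re-deriving the analogue of (iii).

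Two smaller corrections. The degree of the scattering map $q$ over the generic locus is $\beta(M)$ (equal to $|\chi(U)|$ for the projective complement $U\subset\P^d$), not $\mu^+(M)$, and similarly the boundary factorization should involve $\beta(M^F)$ and $\beta(M_F)$, not $\mu^+$; $\mu^+$ would be the count for the central arrangement. Also, the assertion that a homogeneous polynomial $d$-form of weight $-d$ vanishes does not need the hypothesis $d<\dim\kin$ — a nonzero polynomial coefficient already has nonnegative degree — so that condition can be dropped from your rigidity step.
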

\begin{proof}
\def\bI{{\overline{\I}}}
\def\bp{\bar p}
\def\bq{\bar q}
We proceed by induction on $d$ and $|E|$.  The claim reduces to that for simple matroids.  The base case is the rank $1$ matroid with a single element.  In this case, $\kin$ is a single point (a 0-dimensional vector space), and the result is trivial.  Now suppose $d \geq 1$.

Consider the maximal wonderful compactification $\pi: X= X_{\max} \to \P^d$ of $\bU$ and view $X \times \kin$ as a vector bundle over $X$.  Let $Z = X \setminus \bU$.  There is an evaluation map 
$$
\Psi: X \times \kin \to \Omega^1_X(\log Z), \qquad \Psi: (x,\a) \longmapsto \omega_\a(x),
$$
sending $X \times \kin$ to the vector bundle $\Omega^1_X(\log Z)$ of logarithmic one-forms on $X$.  Note that the statement that $\Omega^1_X(\log Z)$ is a vector bundle requires that $X$ is smooth with normal-crossing boundary divisor.  By \cite[Proof of Theorem 3.8]{Huh}, the kernel $\bI:= \ker \Psi$ is a vector bundle on $X$ that coincides with the closure of the scattering correspondence $\I \subset \bU \times \kin \subset X \times \kin$.  The diagram \eqref{eq:scatteringmap} sits inside the diagram
\[ \begin{tikzcd}
&\bI \arrow{rd}{\bq} \arrow[swap]{ld}{\bp} & \\
X && \kin
\end{tikzcd}
\]
and it suffices to show that $\bq_* \bp^* \bOmega_P = \eta_P$.  Let $\Theta:= \bp^* \bOmega_P$.  Since $\bI$ is a vector bundle over $X$, the poles of $\Theta$ are the divisors $\bI|_D \subset \bI$ for each polar divisor $D \subset X$ of $\bOmega_P$.  According to \cite{BEPV}, the poles of $\bOmega_P$ on $X$ are all simple and along the divisors $X_F$ for each flat $F \in L(P)$.  The image $\bq(X_F)$ is contained in the subspace $\{a_F = 0\} \subset \kin$.  Indeed, for the case $F = \{e\}$ is a single element, this is part of the ``geometric deletion-restriction formula" of Denham--Garrousian--Schulze \cite[Theorem 3.1]{DGS}.  For a non-atom flat, the result follows in the same way after using \cref{lem:monod}.

The map $\bq: \bI \to \kin$ is proper and surjective.  According to the ``residue commutes with pushforward" result of Khesin and Rosly \cite[Proposition 2.5]{KR}, we have
$$
\bq_* \Res_{\bI|_D} \Theta = \Res_{a_F = 0} \bq_* \Theta.
$$
Brauner-Eur-Pratt-Vlad \cite{BEPV} show that the (analytic) closure $\bP$ of $P$ in $X$ is a positive geometry \cite{ABL} with canonical form $\Omega(\bP) :=\pi^*\bOmega_P$.  The intersection of $X_F$ with $\bP$ is the product of corresponding (closures of) topes $ \bP_F \times \bP^F$ and has canonical form the product $ \Omega(\bP_F) \wedge \Omega(\bP^F)$ of the canonical forms, and \cite[Theorem 4.5]{BEPV} show that $\Res_{\bI|_D} \Theta = \bp^* \left(\bOmega_{P_F} \wedge \bOmega_{P^F}\right)$.  We have an isomorphism of bundles over $X_F = X(M_F) \times X(M^F)$: 
$$
\bI|_{D_F} \cong \bI(M_F) \times  \bI(M^F) .
$$
(Note that even if $M$ is simple, the matroid $M_F$ may not be simple, and the kinematic space $\kin(M_F)$ is typically of a higher dimension than the kinematic space of its simplification.)  By the inductive hypothesis applied to $M^F$ and $M_F$, we calculate
\begin{align*}
\bq_* \Res_{\bI|_D} \Theta  &= \bq_* \bp^* \left( \bOmega_{P_F} \wedge \bOmega_{P^F} \right)= \pm \eta_{P_F} \wedge  \eta_{P^F},
\end{align*}
where $\eta_{P^F}$ (resp. $\eta_{P_F}$) is a differential form on the subspace $\kin|_{F}$ (resp. $\kin|_{E \setminus F}$), direct summands of the vector space $\{a_F = 0\} \subset \kin$.  The sign $\pm$ can be calculated to be $(-1)^{\rk(F)}$ and is the same sign appearing in the proof of \cref{prop:Bettiint}.  These are all the possible poles of $\bq_* \bp^* \bOmega_P$.  By \cref{lem:etaP}, the poles and residues of $\bq_* \bp^* \bOmega_P$ and $\eta_P$ agree and it follows that they are equal since both forms are pullbacks of meromorphic forms on the projective space $\P(\kin)$.  This proves the induction step, and the theorem.
\end{proof}

\begin{example}
Let $d = 1$ and suppose $\bU = \C \setminus \{z_1,z_2,\ldots,z_n\}$, so that $E = \{1,2,\ldots,n\} \cup \{0\}$.  The canonical form for $P = [z_e,z_{e+1}]$ is $\bOmega_P = \dlog (z-z_e) - \dlog (z-z_{e+1})$.  We work in affine coordinates, using $a_1,a_2,\ldots,a_n$ as coordinates on $\kin$, eliminating $a_0 = -(a_1+\cdots+a_n)$.  Then 
$$
\omega = \sum_{e=1}^n a_e \frac{dz}{z-z_e}, \qquad \text{and} \qquad \I = \{(z,\a) \mid p(z) := \sum_{e=1}^n \frac{a_e}{z-z_e}  = 0\}.
$$
Write $(z-z_e) p(z) = a_e + q(z)$, so that on $\I$ we have 
$$
0 = da_e + q'(z) dz + \mbox{terms involving other $da_{e'}$}.
$$
Thus $dz = -da_e/q'(z) + \text{other terms}$, and the definition of pushforward gives 
$$
q_*p^* \frac{dz}{z-z_f} = - \sum_{z_* \in \Crit(\omega_\a)} \frac{1}{q'(z_*) (z_*-z_f)} da_e + \mbox{terms involving other $da_{e'}$}.
$$
Let $u(z):= (z-z_f)(a_e+q(z))$ which has zeroes at $z= z_f, \infty$ and $z= z_* \in \Crit(\omega_\a)$.  The global residue theorem\footnote{We thank Simon Telen for explaining to us the use of global residue theorems for computing pushforwards.} for the rational function $1/u(z)$ states that the sum of the residues vanishes, which is the identity
$$
\frac{1}{u'(z_f)} + \text{ residue at }\infty + \sum_{z_* \in \Crit(\omega_\a)} \frac{1}{u'(z_*)} = 0.
$$
Now, the residue $r_\infty$ at $\infty$ 
does not depend on the choice of $f \in E \setminus 0$.  Since $u'(z) = (a_e+q(z)) + (z-z_f)q'(z)$, we have
$$
r_\infty + \frac{1}{a_e+q(z_f)} + \sum_{z_* \text{ roots }} \frac{1}{(z_*-z_f)q'(z_*)} = 0, $$
giving
$$
\qquad - \sum_{z_* \text{ roots }} \frac{1}{(z_*-z_f)q'(z_*)} = r_\infty + \frac{1}{a_e + q(z_f)} = r_\infty + \delta_{e,f} \frac{1}{a_f},
$$
where $\delta_{e,f}$ is the Kronecker delta function.  Thus the pushforward of $dz/(z-z_f)-dz/(z-z_g)$ is equal to $da_f/a_f -da_g/a_g$, agreeing with \cref{thm:etaP}.  
\end{example}

\begin{remark}
\cref{thm:etaP} is a variant of the results of \cite{ABL,AHLstringy} where the pushforward of the canonical form along the algebraic moment map of a toric variety is computed.  
\end{remark}

\subsection{Proof of \cref{thm:CHY}}
Recall the description of $p: \I \to \bU$ as a (trivial) vector bundle from \cite[Proposition 2.5]{CDFV}.  Let $z_1,\ldots,z_d$ be coordinates on $\C^d$, and write 
$$
\omega = \sum_{i=1}^d h_i(\z) dz_i, \qquad \text{where} \qquad h_i(\z) = \sum_{e \in E\setminus 0} h_i^e(\z) a_e,
$$
and $h_i^e(\z)$ are rational functions in $\z$, well-defined on $\bU$.  The fiber $p^{-1}(\z)$ is the codimension $d$ subspace of $\kin$ cut out by the linear equations $h_1(\z) =0, h_2(\z)=0,\ldots,h_d(\z) = 0$.  

Now let $W \subset \kin$ be an affine subspace of dimension $d$.  The condition for $W \cap p^{-1}(\z)$ to have positive dimension is the zero-set of a rational function in $\z$.  Thus for $\z$ belonging to a dense subset $\bU' \subset \bU$, the intersection $W \cap p^{-1}(\z)$ will be a single point.  We conclude that $q: \I \to \kin$ restricts to a rational map $q_W := q|_W: \bU \to W$.  For a generic $W \subset \kin$, the rational map $q_W$ has degree equal to $\beta(M)$.  This is the \emph{scattering map}; see \cite[Section 3.10]{LamModuli}.  Since pushforwards and pullbacks of rational forms can be calculated on dense subsets, for a rational top-form $\Omega$ on $\bU$, we have
$$
\iota_W^* q_* p^* \Omega = (q_W)_* p^* \Omega = (q_W)_* \Omega, \qquad \text{where} \qquad q_W : \bU \to W
$$
and $\iota_W: W \hookrightarrow \kin$ denotes the inclusion.

We work in affine coordinates.  Let $B = \{b_1,b_2,\ldots,b_d,0\} \in \B(M)$ be a basis containing $0 \in E$, so that $\partial e_B = \be_{b_d} \wedge \be_{b_{d-1}} \wedge \cdots \wedge \be_{b_1}$.  Let $W = W_B$ be a generic $d$-dimensional affine subspace such that $da_e = 0$ for $e \notin B$.  We claim that
\begin{equation}\label{eq:pushres}
(q_W)_* \Omega = \A_\bU(\Omega,\partial e_B) \mu_{W_B} =  \A_\bU(\Omega,\partial e_B) da_{b_d} \wedge \cdots \wedge da_{b_1},
\end{equation}
where $\A_\bU(\Omega,\partial e_B)$ is defined in \cref{def:CHY}. 
At a point $\a \in W$, the left hand side is defined as the sum over pre-images $q_W^{-1}(\a) \in \bU$, which are exactly the critical points of $\omega_\a$ that the right hand side is defined as a sum over.  The contribution of each critical point $\z \in q_W^{-1}(\z)$ to the coefficient of $\mu_{W_B}$ is the evaluation of a rational function.  By a direct calculation, for both sides this rational function is equal to
$$
 \det\left(\frac{\partial^2 \log \varphi}{\partial f_{b_i} \partial f_{b_j}}\right)^{-1} \times \frac{\Omega}{\partial e_B}.
$$
See also \cite[Lemma 3.19 and Proposition 2.23]{LamModuli}.

Combining \eqref{eq:pushres} with \cref{cor:simplexscat} and \cref{thm:etaP}, we obtain $\bdRip{\Omega_P, e_B} =  \A_{\bU}(\bOmega_P,\partial e_B)$ for any tope $P$ and any basis $B$.  Extending by linearity, we see that $\A(P,Q) =  \A_\bU(\bOmega_P,\bOmega_Q)$, so the two definitions of amplitudes agree. 

\section{Configuration space of $n+1$ points on $\P^1$}\label{sec:M0n}
\subsection{Complete graphic matroid}
Let $M$ be the graphic matroid of the complete graph $K_n$, on the ground set $E = \{(i,j) \mid 1 \leq i < j < n\}$ of edges, where we identify $(j,i)$ with $(i,j)$.  The lattice of flats $L(M)$ is the partition lattice $\Pi_n$.  The elements of $\Pi_n$ are the set partitions of $[n]$.  For two set partitions $\pi, \pi'$, we have $\pi \leq\pi'$ if $\pi$ refines $\pi'$.  As a flat, the set partition $\pi$ corresponds to the following subset of $E$:
$$
\pi = \{(i,j) \in E \mid i \text{ and } j \text{ belong to the same block of }\pi\}.
$$
For example, the set partition $\pi = (145|26|3)$ of $\{1,2,3,4,5,6\}$ can be viewed as the collection of edges $\{(1,4),(1,5),(4,5),(2,6)\}$.

Let $\M$ be the oriented graphic matroid associated to the orientation where $(i,j)$ is oriented $i \to j$ for $i < j$.  The oriented matroid $\M$ arises from the braid arrangement $\B_n$, consisting of the hyperplanes $H_{i,j} = \{z_i - z_j = 0 \mid 1 \leq i < j < n\}$ in $\R^n$.  The braid arrangement is not essential, and instead we prefer to consider $\M$ as arising from the affine graphic hyperplane arrangement $\bA \subset \R^{n-2} = \{(z_2,\ldots,z_{n-1})\}$ with hyperplanes
$$
\{z_i = 0 \mid i = 2,3,\ldots,n-1\} \cup \{z_i = 1 \mid i=2,3,\ldots,n-1\} \cup \{z_i - z_j = 0 \mid 2 \leq i < j  \leq n-1\}.
$$
See \cref{fig:M05}.  In other words, the coordinate $z_1$ has been set to $0$ and the coordinate $z_n$ has been set to $1$.  The hyperplane at infinity should be thought of as $\{z_1 - z_n = 0\}$.  We still denote the $\binom{n}{2}$ hyperplanes as $H_{ij}$.  The parameters $a_e$ will be taken to be $\{a_{(i,j)} \mid 1 \leq i < j \leq n\}$.  The following result gives a standard description of $M_{0,n+1}$, the \emph{configuration space of $n+1$ points on $\P^1$}, also known as the \emph{moduli space of rational curves with $n+1$ marked points}; see for instance \cite{LamModuli}.

\begin{figure}
\begin{center}
$$
\begin{tikzpicture}[scale=1.5,extended line/.style={shorten >=-#1,shorten <=-#1},
 extended line/.default=1cm]
\draw (0,-1) -- (0,3);
\draw (1,-1) -- (1,3);
\draw (-1,0) -- (3,0);
\draw (-1,1) -- (3,1);
\draw(-1,-1) -- (3,3);
\node at (0,3.2) {$z_2 = 0$};
\node at (1,3.2) {$z_2 = 1$};
\node at (3,0.15) {$z_3 = 0$};
\node at (3,1.15) {$z_3 = 1$};
\node at (3,3.2) {$z_2 = z_3$};
\node[color=blue] at (0.2,2.8) { $(12)$};
\node[color=blue] at (1.2,2.8) { $(24)$};
\node[color=blue] at (3.1,2.8) { $(23)$};
\node[color=blue] at (2.8,-0.15) { $(13)$};
\node[color=blue] at (2.8,.85) { $(34)$};
\node[color=red] at (-0.5,2) {$2143$};
\node[color=red] at (0.5,2) {$1243$};
\node[color=red] at (1.5,2) {$1423$};
\node[color=red] at (2.5,2) {$1432$};

\node[color=red] at (2,0.5) {$1342$};
\node[color=red] at (0.6,0.25) {$1324$};
\node[color=red] at (0.3,0.65) {$1234$};
\node[color=red] at (-0.5,0.5) {$2134$};

\node[color=red] at (2,-0.5) {$3142$};
\node[color=red] at (0.5,-0.5) {$3124$};
\node[color=red] at (-0.4,-0.75) {$3214$};
\node[color=red] at (-0.7,-0.3) {$2314$};
\end{tikzpicture}
$$
\end{center}
\caption{The configuration space $M_{0,5}$ of $5$ points on $\P^1$ as a hyperplane arrangement complement.}
\label{fig:M05}
\end{figure}

\begin{proposition}
For the affine graphic arrangement $\bA$, the complement $\bU$ is isomorphic to $M_{0,n+1}$.
\end{proposition}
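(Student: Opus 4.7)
The plan is to exhibit an explicit isomorphism via the standard normalization of three marked points on $\P^1$, using that $PGL_2$ acts simply transitively on ordered triples of distinct points of $\P^1$.

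First I would recall that $M_{0,n+1}$ is the quotient
$$
M_{0,n+1} = \{(p_1,\ldots,p_{n+1}) \in (\P^1)^{n+1} \mid p_i \neq p_j \text{ for } i \neq j\}/PGL_2,
$$
and that for any configuration $(p_1,\ldots,p_{n+1})$ there is a unique $g \in PGL_2$ such that $g(p_1) = 0$, $g(p_n) = 1$, $g(p_{n+1}) = \infty$. Setting $z_i := g(p_i)$ identifies each $PGL_2$-orbit with a unique tuple $(z_2,\ldots,z_{n-1}) \in \C^{n-2}$ (together with the fixed values $z_1 = 0$, $z_n = 1$, $z_{n+1} = \infty$), yielding a set-theoretic bijection
$$
M_{0,n+1} \;\stackrel{\cong}{\longrightarrow}\; \{(z_2,\ldots,z_{n-1}) \in \C^{n-2} \mid z_1,z_2,\ldots,z_n \text{ are pairwise distinct}\}.
$$

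Next I would verify that the distinctness conditions on $(z_2,\ldots,z_{n-1})$ translate precisely into the hyperplanes of the affine graphic arrangement $\bA$. The conditions $p_i \neq p_1$ give $z_i \neq 0$ for $i = 2,\ldots,n-1$; the conditions $p_i \neq p_n$ give $z_i \neq 1$ for $i = 2,\ldots,n-1$; and $p_i \neq p_j$ for $2 \leq i < j \leq n-1$ gives $z_i \neq z_j$. The conditions $p_i \neq p_{n+1} = \infty$ are automatic on $\C^{n-2}$. These are exactly the hyperplanes $\{z_i = 0\}$, $\{z_i = 1\}$, and $\{z_i = z_j\}$ defining $\bA$, so the complement of $\bA$ in $\C^{n-2}$ coincides with the image of the normalization map.

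Finally, I would note that this bijection is algebraic in both directions: the map $M_{0,n+1} \to U$ is induced by the cross-ratio coordinates $z_i = (p_i - p_1)(p_n - p_{n+1})/((p_i - p_{n+1})(p_n - p_1))$, and the inverse sends $(z_2,\ldots,z_{n-1}) \in U$ to the $PGL_2$-orbit of $(0, z_2, \ldots, z_{n-1}, 1, \infty)$. There is no real obstacle here; the only thing to be careful about is the bookkeeping matching the labels so that the hyperplane $\{z_i = z_j\}$ corresponds to the graphic matroid edge $(i,j) \in E$ and the hyperplane at infinity corresponds to the edge $(1,n)$, consistent with the identification $M = M(K_n)$ recorded at the start of the section.
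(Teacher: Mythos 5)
Your proof is correct and is the standard cross-ratio normalization argument; the paper does not supply its own proof, instead citing the literature, so there is nothing to compare against. Your bookkeeping checks out: normalizing $p_1 \mapsto 0$, $p_n \mapsto 1$, $p_{n+1} \mapsto \infty$ turns the distinctness conditions among $p_1,\ldots,p_n$ into exactly the hyperplanes $\{z_i = 0\}$, $\{z_i = 1\}$, $\{z_i = z_j\}$ of $\bA$ in the coordinates $z_2,\ldots,z_{n-1}$, with the edge $(1,n)$ playing the role of the hyperplane at infinity, consistent with the paper's conventions.
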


The set $\T^+$ of positive topes has cardinality $n!/2$ and can be indexed by permutations $\sigma \in S_n$ satisfying $\sigma^{-1}(1) < \sigma^{-1}(n)$.  For example, for $n = 4$, we have 12 chambers, as shown in \cref{fig:M05}.

For a permutation $\sigma = \sigma(1) \sigma(2) \cdots \sigma(n)$, the tope $P_\sigma$ satisfies
$$
P_{\sigma}((i,j)) = \begin{cases} + & \mbox{if $\sigma^{-1}(i) < \sigma^{-1}(j)$,} \\
- & \mbox{if $\sigma^{-1}(i) > \sigma^{-1}(j)$.}
\end{cases}
$$
Geometrically, $P_\sigma$ is the region in $\R^{n-2}$ satisfying
$$
z_{\sigma(1)} < z_{\sigma(2)} < \cdots < z_{\sigma(n)}, \qquad \mbox{where $z_1 = 0$ and $z_n = 1$.}
$$
The facets of the tope $P_{\sigma}$ are the $n-1$ hyperplanes $H_{(\sigma(i),\sigma(i+1))}$, $i=1,2,\ldots,n-1$.  The lattice of positive flats $L(P)$ associated to $P_\sigma$ is isomorphic to the boolean lattice, and consists of the set partitions
\begin{equation}\label{eq:pisigma}
\pi_{c_1,c_2,\ldots, c_{r-1}} = \{\sigma(1),\ldots,\sigma(c_1)| \sigma(c_1+1),\ldots,\sigma(c_2)| \cdots | \sigma(c_{r-1}+1),\ldots,\sigma(n)\}
\end{equation}
for $1 \leq c_1 < c_2 < \cdots < c_{r-1} < n$. 

\begin{lemma}
The set of bounded topes $\T^0$ has cardinality $(n-2)!$ and are indexed by permutations $\sigma$ satisfying $\sigma(1) = 1$ and $\sigma(n) = n$.
\end{lemma}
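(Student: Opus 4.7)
The plan is to directly identify the bounded chambers in the affine arrangement $\bA \subset \R^{n-2} = \{(z_2,\ldots,z_{n-1})\}$, using the explicit description of the topes $P_\sigma$ preceding the lemma. Recall that the chamber $P_\sigma$ is cut out by the open conditions
\[
z_{\sigma(1)} < z_{\sigma(2)} < \cdots < z_{\sigma(n)}, \qquad z_1 = 0,\ z_n = 1,
\]
and that the affine hyperplane at infinity corresponds to the edge $(1,n)$, so boundedness is measured with respect to the ``coalescence'' of $z_1$ and $z_n$.

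First, I will check the ``if'' direction: suppose $\sigma(1)=1$ and $\sigma(n)=n$. Then $P_\sigma$ equals
\[
\{(z_2,\ldots,z_{n-1}) \in \R^{n-2} \mid 0 = z_1 < z_{\sigma(2)} < \cdots < z_{\sigma(n-1)} < z_n = 1\},
\]
which is an open simplex in $\R^{n-2}$ and hence is a bounded region. Next, for the ``only if'' direction, suppose $\sigma(1)\neq 1$ (the case $\sigma(n)\neq n$ is symmetric). Then some $z_{\sigma(i)}$ with $i<\sigma^{-1}(1)$ satisfies $z_{\sigma(i)} < 0$, and letting $z_{\sigma(i)}\to -\infty$ while keeping the other ordering constraints intact shows that $P_\sigma$ is unbounded. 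Thus the bounded topes are exactly those $P_\sigma$ with $\sigma(1)=1$ and $\sigma(n)=n$, and such permutations are in bijection with permutations of $\{2,\ldots,n-1\}$, giving $(n-2)!$ of them.

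As a consistency check, this matches \cref{prop:numbertopes}, which asserts $|\T^b| = \beta(M)$: the beta invariant of the complete graphic matroid $M(K_n)$ is classically $\beta(M(K_n)) = (n-2)!$, e.g.\ by the deletion–contraction recursion \eqref{eq:betaeq} applied to an edge of $K_n$, or equivalently from the well-known formula for the reduced characteristic polynomial of the braid arrangement. No step here is a serious obstacle; the only thing to be careful with is correctly identifying ``boundedness with respect to $0 = (1,n)$'' with ordinary topological boundedness in the affine chart $z_1=0$, $z_n=1$, which is immediate from the setup of the arrangement $\bA$.
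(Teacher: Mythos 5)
Your proof is correct, and it reaches the same conclusion by a slightly different route from the paper. The paper works purely combinatorially: it uses the definition of $\T^b$ in terms of the Las Vergnas face lattice (a tope $P$ is bounded precisely when the element $0 = (1,n)$ lies in no flat of $L(P) \setminus \hat 1$), and then reads off from the explicit description \eqref{eq:pisigma} of the flats of $L(P_\sigma)$ as ``interval set partitions'' of $\sigma$ that this forces $1$ and $n$ to sit at the two ends of $\sigma$, i.e.\ $\sigma(1)=1$ and $\sigma(n)=n$. You instead use the geometric incarnation of boundedness (topological boundedness of the chamber in the affine chart $z_1 = 0$, $z_n = 1$) and exhibit an unbounded direction whenever $\sigma(1)\neq 1$ or $\sigma(n)\neq n$. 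The translation between the two notions of boundedness is standard and correct. The paper's version has the advantage of staying inside oriented-matroid language, so it would apply verbatim to any oriented matroid with the same Las Vergnas lattice; your version has the advantage of being visually immediate from the inequalities defining $P_\sigma$. Both give the $(n-2)!$ count directly, and your consistency check via $\beta(M(K_n)) = (n-2)!$ (from \cref{prop:numbertopes}) is a nice sanity check but not needed for the argument.
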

\begin{proof}
It follows from \eqref{eq:pisigma} that the only way for $(1,n)$ not to belong to any of the flats in $L(P)\setminus \hat 1$ is to have $\sigma(1) = 1$ and $\sigma(n) = n$.
\end{proof}

Let us also pick a general extension $\star$, given by the hyperplane $H_\star = \{z_2 + z_3 + \cdots + z_{n-1} = - \epsilon < 0\}$, pictured in \cref{fig:M05star}.  With this choice of extension, we have
\begin{lemma}\label{lem:Knstar}
The set of bounded topes $\T^\star$ has cardinality $(n-1)!$ and is indexed by permutations $\sigma$ satisfying $\sigma(1) = 1$.
\end{lemma}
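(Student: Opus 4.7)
The plan is to combine two facts: the numerical identity $|\T^\star|=\mu^+(M(K_n))$ from \cref{prop:numbertopes}, and a direct geometric description of which topes $P_\sigma$ of the affine graphic arrangement $\bA$ survive passage to $\T^\star$ after the generic extension by $H_\star=\{z_2+\cdots+z_{n-1}=-\epsilon\}$. First I would compute $\mu^+(M(K_n))$ by recalling that the characteristic polynomial factors as $\chi_{M(K_n)}(t)=(t-1)(t-2)\cdots(t-(n-1))$ (dividing the chromatic polynomial of $K_n$ by $t$), so that $\mu^+(M(K_n))=|\chi_{M(K_n)}(0)|=(n-1)!$. Together with \cref{prop:numbertopes} this fixes $|\T^\star|$, and it agrees with the six regions exhibited in \cref{fig:M05star} for $n=4$.

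Next I would set up the indexing bijection. The positive topes $\T^+(\M)$ are parametrized by permutations $\sigma\in S_n$ with $\sigma^{-1}(1)<\sigma^{-1}(n)$, where $P_\sigma$ is the region $\{z_{\sigma(1)}<\cdots<z_{\sigma(n)}\}$ subject to the normalization $z_1=0$, $z_n=1$. The generic extension $\tM$ is realized by the hyperplane $H_\star=\{z_2+\cdots+z_{n-1}=-\epsilon\}$ with $\epsilon>0$ small; I will use \eqref{eq:Tstar} to identify $\T^\star$ as those $P_\sigma$ for which $(+,P_\sigma)$ is a tope of $\tM$ but $(-,P_\sigma)$ is not. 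Geometrically this amounts to $P_\sigma$ lying entirely on the $(+)$-side of $H_\star$, i.e.\ bounded below in the direction $\sum_{i=2}^{n-1}z_i$.

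The key step is a direct analysis of unboundedness. If $\sigma(1)=1$, then $z_1=0=\min_i z_i$ on $P_\sigma$, so every $z_i\geq 0$ and in particular $\sum_{i=2}^{n-1}z_i\geq0>-\epsilon$ throughout $P_\sigma$; hence $P_\sigma$ is disjoint from the $(-)$-side of $H_\star$, giving $P_\sigma\in\T^\star$. Conversely, if $\sigma(1)\neq 1$ (and necessarily $\sigma(1)\neq n$ by the convention $\sigma^{-1}(1)<\sigma^{-1}(n)$), then the minimum coordinate $z_{\sigma(1)}$ of $P_\sigma$ is unconstrained from below, so $P_\sigma$ contains points with $\sum_{i=2}^{n-1}z_i$ arbitrarily negative, hence meets both sides of $H_\star$ and $(-,P_\sigma)$ is a tope of $\tM$; thus $P_\sigma\notin\T^\star$.

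The main obstacle is the reconciliation of the numerical and combinatorial counts: the permutations $\sigma\in S_n$ with $\sigma(1)=1$ number $(n-1)!$, which matches the computation of $\mu^+(M(K_n))$ above. This confirms the bijection $\T^\star\longleftrightarrow\{\sigma\in S_n:\sigma(1)=1\}$ and finishes the proof. In writing this up I would keep the argument short, since all the nontrivial input (namely \cref{prop:numbertopes} and the description of $\T^+(\M)$) has already been set up in the surrounding discussion.
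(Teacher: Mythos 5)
Your argument is correct and is essentially the paper's own proof: the same direct geometric check that $P_\sigma$ lies entirely on the positive side of $H_\star$ when $\sigma(1)=1$ (since then $z_1=0$ is the minimal coordinate, so $z_2+\cdots+z_{n-1}>0>-\epsilon$), and that when $\sigma(1)\neq 1$ the free minimal coordinate $z_{\sigma(1)}$ can be decreased to push $P_\sigma$ across $H_\star$, so $P_\sigma\notin\T^\star$; the extra count via \cref{prop:numbertopes} and $\mu^+(M(K_n))$ is not in the paper's proof but is consistent. One point worth flagging explicitly rather than passing over silently: your cardinality $(n-1)!$ is the correct one (it equals $\mu^+(M(K_n))$ and matches the six regions in \cref{fig:M05star} for $n=4$), so the ``$(n-3)!$'' in the statement is a typo in the paper, not a gap in your argument.
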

\begin{proof}
Suppose that $\sigma(1) = 1$.  Then in $P_\sigma$ we have $z_i > z_1 = 0$ for $i=2,\ldots,n-1$.  It follows that $P_\sigma$ does not intersect $H_\star$ and belongs to $\T^\star$.  Conversely, if $\sigma(1) > 1$, then there is clearly a point $z^+ \in P_\sigma$ such that $z^+_2 + z^+_3 + \cdots + z^+_{n-1} > - \epsilon$.  By decreasing $z^*_{\sigma(1)}$ a large amount, there is also a point $z^- \in P_\sigma$ such that $z^+_2 + z^+_3 + \cdots + z^+_{n-1} < - \epsilon$.  Thus $P_\sigma \notin \T^\star$ if $\sigma(1) > 1$.
\end{proof}

\subsection{Temporal Feynman diagrams}
We shall use a reindexing of variables that is common in physics: for $A \subset [n]$, write
$$
s_A := \sum_{(i<j) \in A} a_{(i,j)}
$$
so that, for example, $s_{ij} = a_{(i,j)}$ and $s_{ijk} = a_{(i,j)} + a_{(i,k)} + a_{(j,k)}$.

A \emph{planar tree} on $[n+1]$ is a planarly embedded tree $T$ with leaves labeled $1,2,\ldots,n+1$ in cyclic clockwise order.  A planar tree $T$ is called \emph{cubic} if all non-leaf vertices have degree three.  An edge $e$ of $T$ is called \emph{internal} if it is not incident to any of the leaves.  Let $\prec$ be the partial order on the internal edges $I(T)$ of $T$ such that $e \preceq e'$ if and only if the path from $e$ to $n+1$ passes through $e'$.  Denote by $\T_{n+1}$ the set of planarly embedded trees on $[n+1]$ and $\T^{(3)}_{n+1}$ the subset of cubic planar trees.  Define a partial order $<$ on $\T_{n+1}$ by $T \leq T'$ if $T$ can be obtained from $T'$ be contracting internal edges.  Then the maximal elements of $\T_{n+1}$ are the cubic planar trees and $\T_{n+1}$ is isomorphic to the dual of the face poset of the $(n-2)$-dimensional associahedron.

A \emph{temporal Feynman diagram} is a pair $(T,\leq)$ consisting of a cubic planar tree $T$ together with a total ordering $<$ of the internal edges such that if $e \preceq e'$ then $e \leq e'$.  It is easy to see that the number of temporal Feynman diagrams is equal to $(n-1)!$.

To each edge $e$ of a cubic planar tree let $S_e \subset E = \{1,2,\ldots,n\}$ denote the set of leaves in the component of $T \setminus e$ \emph{not} containing $n+1$.  Note that $S_e$ is always an interval in $[n]$.  Define
$$
F(S_e) = \{(i,j) \mid i < j  \text{ and } i,j \in S_e\}.
$$
To each temporal Feynman diagram $(T,\leq)$ and internal edge $e \in I(T)$ of $T$, let $I_{\leq e} := \{e' \in I \mid e' \leq e\}$ and
$$
D(e):= \{e' \in I_{\leq e} \mid e' \text{ is } \preceq \text{ maximal in } I_{\leq e}\}.
$$
Define 
\begin{equation}\label{eq:Fe}
F_\leq(e) := \bigcup_{e' \in D(e)} F(S_{e'}).
\end{equation}
It is easy to see that $F(e')$ for $e' \in D(e)$ are disjoint intervals, so that \eqref{eq:Fe} is a disjoint union.  For an internal edge $e$, we define the (usual) \emph{propagator} $X_e$ and the \emph{temporal propagator} $Y_e$ by
$$
X_e := \sum_{(i,j) \in F(S_e)} a_{(i,j)} = s_{S_e}, \qquad \text{ and } \qquad
Y_e := \sum_{(i,j) \in F_\leq(e)} a_{(i,j)} = \sum_{e' \in D(e)} X_{e'} = \sum_{e' \in D(e)} s_{S_{e'}}.
$$

\begin{remark}
We may interpret temporal Feynman diagrams as a particle scattering process.  Let $(T, \leq)$ be a temporal Feynman diagram.  We orient every edge of $T$ towards the leaf $n+1$, viewing $T$ as a scattering process where $n$ incoming particles $1,2,\ldots,n$ produce a single outgoing particle $n+1$.  At each cubic vertex of $T$, two incoming particles collide to produce a single outgoing particle.  The data of $\leq$ is a total ordering on when these cubic collisions occur, illustrated in \cref{fig:temporal}.
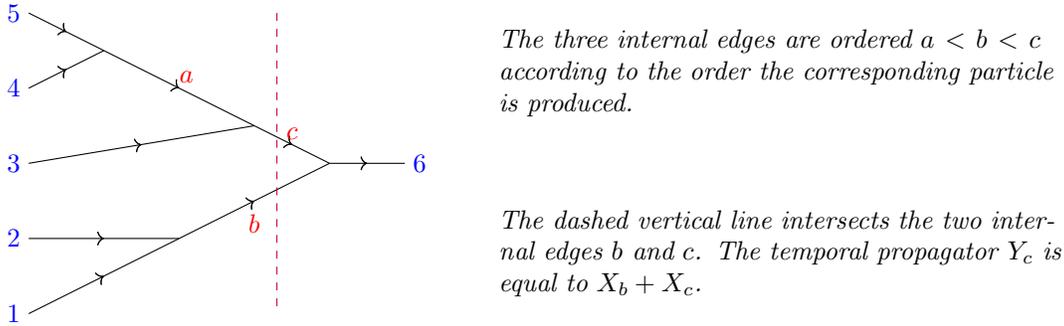
\begin{figure}
\begin{center}
\begin{tikzpicture}
\begin{scope}[decoration={
    markings,
    mark=at position 0.5 with {\arrow{>}}}
    ] 
\draw[decoration={markings, mark=at position 0.5 with {\arrow{>}}},postaction={decorate}] (0,5) -- (1,4.5);
\draw[decoration={markings, mark=at position 0.5 with {\arrow{>}}},postaction={decorate}] (0,4) -- (1,4.5);
\draw[decoration={markings, mark=at position 0.5 with {\arrow{>}}},postaction={decorate}] (1,4.5) -- (3,3.5);
\draw[decoration={markings, mark=at position 0.5 with {\arrow{>}}},postaction={decorate}] (0,3) -- (3,3.5);
\draw[decoration={markings, mark=at position 0.5 with {\arrow{>}}},postaction={decorate}] (3,3.5) -- (4,3);
\draw[decoration={markings, mark=at position 0.5 with {\arrow{>}}},postaction={decorate}] (4,3) -- (5,3);
\draw[decoration={markings, mark=at position 0.5 with {\arrow{>}}},postaction={decorate}] (0,2) -- (2,2);
\draw[decoration={markings, mark=at position 0.5 with {\arrow{>}}},postaction={decorate}] (0,1) -- (2,2);
\draw[decoration={markings, mark=at position 0.5 with {\arrow{>}}},postaction={decorate}] (2,2) -- (4,3);

\node[color=blue] at (-0.2,1) {$1$};
\node[color=blue] at (-0.2,2) {$2$};
\node[color=blue] at (-0.2,3) {$3$};
\node[color=blue] at (-0.2,4) {$4$};
\node[color=blue] at (-0.2,5) {$5$};
\node[color=blue] at (5.2,3) {$6$};

\node[color=red] at (2.1,4.15) {$a$};
\node[color=red] at (3,2.2) {$b$};
\node[color=red] at (3.5,3.4) {$c$};

\draw[dashed,color=purple] (3.3, 5)--(3.3,1);

\node[text width=7.5cm] at (10,4.2) {The three internal edges are ordered $a < b < c$ according to the order the corresponding particle is produced.};
\node[text width=7.5cm] at (10,1.8) {The dashed vertical line intersects the two internal edges $b$ and $c$.  The temporal propagator $Y_c$ is equal to $X_b+ X_c$.};
\end{scope}
\end{tikzpicture}
\end{center}
\caption{A temporal Fenyman diagram can be viewed as a recording of a particle scattering process.}
\label{fig:temporal}
\end{figure}
The usual propagator $X(e)$ has the physical interpretation as the square of the momentum along the internal edge $e$.  The temporal propagator $Y(e)$ is the sum of squares of momenta traveling along internal edges immediately after the particle traveling along $e$ has been produced.
\end{remark}

Let $P_{\id}$ denote the chamber indexed by the identity permutation.
\begin{definition}
The $(n+1)$-point planar $\phi^3$-amplitude is defined to be $A_{n+1}^{\phi^3} = \A(P_\id)$.
\end{definition}
The amplitude $\A(P_\id)$ is also commonly called the \emph{biadjoint scalar amplitude}.  By \cref{thm:CHY}, it can be computed using the formalism of scattering equations (\cref{def:CHY}), and is the simplest and most fundamental of the class of amplitudes originally considered by Cachazo-He-Yuan.

\begin{theorem}\label{thm:temporal}
The $(n+1)$-point planar $\phi^3$-amplitude is given by
$$
A_{n+1}^{\phi^3} = \sum_{(T,\leq)} \prod_{e \in I(T)} \frac{1}{Y_e},
$$
summed over temporal Feynman diagrams $(T,\leq)$ on $[n+1]$.
\end{theorem}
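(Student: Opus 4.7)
The plan is to apply Theorem~\ref{thm:dRtope} with $P = Q = P_\id$ and identify the resulting sum with the temporal Feynman diagram expansion via an explicit bijection.

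By Theorem~\ref{thm:CHY} together with Definition~\ref{def:matroidamp}, and the specialization of Theorem~\ref{thm:dRtope} to $P = P_\id$, we obtain
$$A_{n+1}^{\phi^3} \;=\; \bdRip{\Omega_{P_\id},\Omega_{P_\id}} \;=\; \sum_{F_\bullet \in \Fl(P_\id)} \frac{1}{a_{F_\bullet}}.$$
The remaining task is purely combinatorial. I would first identify $L(P_\id)$: since flats of $M = M(K_n)$ are set partitions of $[n]$ and $P_\id$ is the chamber $z_1 < z_2 < \cdots < z_n$ of the braid-type arrangement, a set partition $\pi$ lies in $L(P_\id)$ if and only if every block of $\pi$ is a contiguous interval of $[n]$. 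A cover $F_i \lessdot F_{i+1}$ is the merge of two adjacent contiguous blocks, and a complete flag in $\Fl(P_\id)$ is a sequence of $n-2$ such merges beginning at the discrete partition $\hat 0$ and terminating at $\{[n]\} = \hat 1$.

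The main construction is then a bijection between $\Fl(P_\id)$ and the set of temporal Feynman diagrams $(T,\leq)$ on $[n+1]$. Given $(T,\leq)$ with internal edges ordered $e_1 < e_2 < \cdots < e_{n-2}$, I would associate the flag $F_\bullet$ whose $i$-th flat $F_i$ is the contiguous partition with non-singleton blocks $\{S_{e'} : e' \in D(e_i)\}$; the hypothesis that $\leq$ refines $\preceq$ ensures that both ``children'' of $e_{i+1}$ in $T$ are already present as blocks of $F_i$ (either as some $S_{e'}$ or as singletons), so that the passage from $F_i$ to $F_{i+1}$ is a single cover fusing those two blocks into $S_{e_{i+1}}$. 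Conversely, a complete flag in $L(P_\id)$ records a sequence of $n-2$ merges of adjacent contiguous blocks; the data of \emph{which two blocks are merged at each step} determines a cubic planar tree on $[n+1]$ (after attaching a root edge for the leaf $n+1$), while the \emph{order} in which the merges occur is precisely a linear extension of $\preceq$.

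Finally, I would match the weights. Viewing a contiguous partition $F_i$ as a subset of the ground set $E = \{(j,k) : j < k\}$ of pairs lying in a common block, we have $F_i = \bigsqcup_{e' \in D(e_i)} F(S_{e'})$, so
$$a_{F_i} \;=\; \sum_{e' \in D(e_i)} s_{S_{e'}} \;=\; Y_{e_i}.$$
Taking the product over $i = 1, \dots, n-2$ gives $a_{F_\bullet}^{-1} = \prod_{e \in I(T)} Y_e^{-1}$, and summing over the bijection proves the theorem. The main obstacle is the careful verification of the bijection -- especially checking that the indexing set $D(e_i)$ appearing in the definition of the temporal propagator is exactly the set labelling the non-singleton blocks of the flat $F_i$, and that the linear extension condition on $\leq$ corresponds precisely to the admissibility of the sequence of cover merges in $L(P_\id)$; once these points are rigorously established the identification of weights is immediate.
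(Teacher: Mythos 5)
Your proof is correct and follows essentially the same route as the paper: apply \cref{thm:dRtope} with $P=Q=P_{\id}$, identify $L(P_{\id})\subset\Pi_n$ with interval partitions, and match flags $F_\bullet\in\Fl(P_{\id})$ bijectively with temporal Feynman diagrams $(T,\leq)$ so that $a_{F_i}=Y_{e_i}$ — you merely spell out the bijection and weight-matching more explicitly than the paper does. One small slip: a saturated flag $\hat 0\subset F_1\subset\cdots\subset\hat 1$ in $\Pi_n$ consists of $n-1$ merges (not $n-2$), though you correctly index the intermediate flats $F_1,\dots,F_{n-2}$ by internal edges, since the final merge to $\hat 1$ carries no weight and corresponds to the cubic vertex adjacent to leaf $n+1$, which has no internal edge toward it.
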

\begin{proof} 
We apply \cref{thm:dRtope} to \cref{def:matroidamp}.  The Las Vergnas face lattice $L(P_\id) \subset \Pi_n$ is the subposet consisting of set partitions $\pi$ where every block is an interval.  These are exactly the set partitions that can be written by inserting dividers into the identity permutation.  A flag of flats $F_\bullet \in \Fl(P_\id)$ then corresponds to a temporal Feynman diagram in a natural way: each step in the flag $F_\bullet$ produces a cubic vertex.  For example, the temporal Feynman diagram of \cref{fig:temporal} corresponds to the flag
\begin{equation*}
(1|2|3|4|5) < (1|2|3|45) < (12|3|45)< (12|345) < (12345). \qedhere
\end{equation*}
\end{proof}

\begin{example}
Consider $n=4$, corresponding to $M_{0,5}$.  The six temporal Feynman diagrams are illustrated here,, where ${\bf 1}, {\bf 2}$ are used to indicate orderings on internal edges:
$$
\begin{tikzpicture}[scale=0.7]
\coordinate (A) at (0,0);
\coordinate (B) at (1,0);
\coordinate (C) at (1.5,-0.866);
\node (L1) at (-0.5,-0.866) {$1$};
\node (L2) at (-0.5,+0.866) {$2$};
\node (L3) at (1.5,+0.866) {$3$};
\node (L4) at (2.5,-0.866) {$4$};
\node (L5) at (1,-2*0.866) {$5$};
\draw[thick] (A) --(B)--(C)--(L4);
\draw[thick] (L1)--(A)--(L2);
\draw[thick] (L3)--(B);
\draw[thick] (C)--(L5);

\begin{scope}[shift={(4,-0.866)}]
\coordinate (A) at (0,0);
\coordinate (B) at (1,0);
\coordinate (C) at (1.5,+0.866);
\node (L1) at (-0.5,-0.866) {$1$};
\node (L2) at (-0.5,+0.866) {$2$};
\node (L3) at (1,1.732) {$3$};
\node (L4) at (2.5,+0.866) {$4$};
\node (L5) at (1.5,-0.866) {$5$};
\draw[thick] (A) --(B)--(C)--(L4);
\draw[thick] (L1)--(A)--(L2);
\draw[thick] (L3)--(C);
\draw[thick] (B)--(L5);
\node[align=left] at (0.5,-0.2) {$\bf 1$};
\node[align=left] at (1.4,0.3) {$\bf 2$};
\end{scope}

\begin{scope}[shift={(8,-0.866)}]
\coordinate (A) at (0,0);
\coordinate (B) at (1,0);
\coordinate (C) at (1.5,+0.866);
\node (L1) at (-0.5,-0.866) {$1$};
\node (L2) at (-0.5,+0.866) {$2$};
\node (L3) at (1,1.732) {$3$};
\node (L4) at (2.5,+0.866) {$4$};
\node (L5) at (1.5,-0.866) {$5$};
\draw[thick] (A) --(B)--(C)--(L4);
\draw[thick] (L1)--(A)--(L2);
\draw[thick] (L3)--(C);
\draw[thick] (B)--(L5);
\node[align=left] at (0.5,-0.2) {$\bf 2$};
\node[align=left] at (1.4,0.3) {$\bf 1$};
\end{scope}

\begin{scope}[shift={(12,0)}]
\coordinate (A) at (0,0);
\coordinate (B) at (1,0);
\coordinate (C) at (1.5,-0.866);
\node (L5) at (-0.5,-0.866) {$5$};
\node (L1) at (-0.5,+0.866) {$1$};
\node (L2) at (1.5,+0.866) {$2$};
\node (L3) at (2.5,-0.866) {$3$};
\node (L4) at (1,-2*0.866) {$4$};
\draw[thick] (A) --(B)--(C)--(L3);
\draw[thick] (L5)--(A)--(L1);
\draw[thick] (L2)--(B);
\draw[thick] (C)--(L4);
\end{scope}

\begin{scope}[shift={(16,-0.866)}]
\coordinate (A) at (0,0);
\coordinate (B) at (1,0);
\coordinate (C) at (1.5,+0.866);
\node (L5) at (-0.5,-0.866) {$5$};
\node (L1) at (-0.5,+0.866) {$1$};
\node (L2) at (1,1.732) {$2$};
\node (L3) at (2.5,+0.866) {$3$};
\node (L4) at (1.5,-0.866) {$4$};
\draw[thick] (A) --(B)--(C)--(L3);
\draw[thick] (L5)--(A)--(L1);
\draw[thick] (L2)--(C);
\draw[thick] (B)--(L4);
\end{scope}

\begin{scope}[shift={(20,0)}]
\coordinate (A) at (0,0);
\coordinate (B) at (1,0);
\coordinate (C) at (1.5,-0.866);
\node (L4) at (-0.5,-0.866) {$4$};
\node (L5) at (-0.5,+0.866) {$5$};
\node (L1) at (1.5,+0.866) {$1$};
\node (L2) at (2.5,-0.866) {$2$};
\node (L3) at (1,-2*0.866) {$3$};
\draw[thick] (A) --(B)--(C)--(L2);
\draw[thick] (L5)--(A)--(L4);
\draw[thick] (L1)--(B);
\draw[thick] (C)--(L3);
\end{scope}
\end{tikzpicture}
$$
The six temporal Feynman diagrams give the expression
\begin{align*}
A_5^{\phi^3} &= \frac{1}{a_{12}(a_{12}+a_{13}+a_{23})} +  \frac{1}{a_{12}(a_{12}+a_{34})} +  \frac{1}{a_{34}(a_{12}+a_{34})} \\
&\;\;+  \frac{1}{a_{34}(a_{23}+a_{24}+a_{34})} +  \frac{1}{a_{23}(a_{23}+a_{24}+a_{34})} +  \frac{1}{a_{23}(a_{12}+a_{13}+a_{23})} \\
&=  \frac{1}{s_{12}s_{123}} +  \frac{1}{s_{12}(s_{12}+s_{34})} +  \frac{1}{s_{34}(s_{12}+s_{34})} +  \frac{1}{s_{34}s_{234}}+  \frac{1}{s_{23}s_{234}} +  \frac{1}{s_{23}s_{123}} 
\end{align*}
which equals the usual sum over five Feynman diagrams (or cubic planar trees)
\begin{align*}
A_5^{\phi^3} &= \frac{1}{a_{12}(a_{12}+a_{13}+a_{23})} +  \frac{1}{a_{12}a_{34}} +  \frac{1}{a_{23}(a_{23}+a_{24}+a_{34})} +  \frac{1}{a_{23}(a_{12}+a_{13}+a_{23})} +  \frac{1}{a_{12}(a_{12}+a_{13}+a_{23})} \\
&=  \frac{1}{s_{12}s_{123}} +  \frac{1}{s_{12}s_{34}} +  \frac{1}{s_{34}s_{234}}+  \frac{1}{s_{23}s_{234}} +  \frac{1}{s_{23}s_{123}} 
\end{align*}
\end{example}

\begin{lemma}
A flat $\pi$ of $L(M(K_n))$ is connected exactly when it has one non-singleton block.
\end{lemma}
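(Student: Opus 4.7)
The plan is to identify the restriction matroid $M(K_n)^\pi$ associated to a flat $\pi$ and then apply the standard direct-sum decomposition of a graphic matroid into its (topological) components.

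First I would recall that flats of $M(K_n)$ correspond to partitions $\pi$ of $[n]$ via the identification of edge sets with transitively closed relations, so the restriction $M(K_n)^\pi$ has ground set $\pi \subset E$ consisting of all pairs $(i,j)$ with $i,j$ in the same block of $\pi$. Geometrically this is the edge set of the graph $G_\pi$ on vertex set $[n]$ that is the disjoint union of the complete graphs $K_B$, one for each block $B$ of $\pi$. Singleton blocks contribute isolated vertices and no edges, so $G_\pi$ (as an edge-labelled graph) is the disjoint union $\bigsqcup_{B:\,|B|\ge 2} K_B$.

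Next I would invoke the standard fact that the graphic matroid of a disjoint union of graphs is the direct sum of the graphic matroids of the pieces, giving
\[
M(K_n)^\pi \;=\; \bigoplus_{B : |B|\ge 2} M(K_B).
\]
For every $|B|\ge 2$, the matroid $M(K_B)$ is connected: when $|B|=2$ it is a single coloop (trivially connected), and when $|B|\ge 3$ the graph $K_B$ is $2$-vertex-connected, so its graphic matroid is connected (equivalently, $\beta(M(K_B))=1$ as one sees from Whitney's theorem relating $2$-connectivity of $G$ to indecomposability of $M(G)$). Thus each summand in the decomposition above is itself indecomposable.

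Finally, since a direct sum of nonempty connected matroids is connected if and only if there is exactly one summand, $M(K_n)^\pi$ is connected precisely when $\pi$ has exactly one non-singleton block. The main (minor) obstacle is simply to spell out the correspondence between the flat $\pi$ and the graph $G_\pi$ cleanly; once that is in place, the rest is the standard graphic-matroid decomposition together with the easy $2$-connectedness of complete graphs.
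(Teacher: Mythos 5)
The paper states this lemma without proof, so there is no in-paper argument to compare against; your proposal supplies a correct one. The chain of reasoning is sound: the restriction $M(K_n)^\pi$ is the graphic matroid of the disjoint union $\bigsqcup_{B:|B|\ge 2} K_B$, graphic matroids of disjoint unions decompose as direct sums, each $M(K_B)$ with $|B|\ge 2$ is indecomposable (a coloop when $|B|=2$, and connected via Whitney's $2$-connectivity criterion when $|B|\ge 3$), and a direct sum is indecomposable exactly when there is a single summand — which matches the paper's convention that a flat with zero non-singleton blocks (i.e.\ $\hat 0$) is not counted as connected, consistent with its statement that $\beta(M)=0$ for the empty matroid.
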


Applying \cref{thm:deRhamfannested} with the choice of minimal building set (see \cref{sec:building}), we find that $\A(P_\id)$ is also given by a sum over cubic planar trees, giving the alternative formula
$$
A_{n+1}^{\phi^3} = \sum_{T \in \T^{(3)}_{n+1}} \prod_{e \in I(T)} \frac{1}{X_e}.
$$
This formula is the usual definition of the planar $\phi^3$-amplitude.  For brevity, we do not discuss the partial amplitudes $\A(P,Q)$.

\subsection{KLT matrix}
The KLT (Kawai-Lewellen-Tye) relation is an important identity in the physics of scattering amplitudes.  In its original formulation in \cite{KLT}, it expresses closed string amplitudes in terms of open string amplitudes via a \emph{string theory KLT kernel}.  Later, a \emph{field theory KLT kernel} was studied, relating gravity amplitudes to gauge theory amplitudes.  We show that our formula (\cref{thm:dRmain}) for the inverse of $\dRip{\cdot,\cdot}$ recovers the field theory KLT kernel in its formulation in \cite[(2.8)]{Frost}; see also \cite{BDSV,FMM}.  

\begin{theorem}\label{thm:Frost}
Let $\id = 1 2 \cdots (n-1) n$ and $\sigma = 1 \sigma(2) \cdots \sigma'(n-1) \sigma(n)$.  With the choices in \cref{lem:Knstar}, we have
\begin{equation}\label{eq:Frost}
\DdRip{P_\id,P_{\sigma}} =  \prod_{j = 2}^{n} (a_{(1,j)} + \sum_{k \in [2,j-1] \mid \sigma^{-1}(k) < \sigma^{-1}(j)} a_{(k,j)}).
\end{equation}
\end{theorem}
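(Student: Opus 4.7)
My plan is to identify $\B(P_\id, P_\sigma)$ as the set of ``doubly increasing'' spanning trees of $K_n$ and then observe that the sum $\sum_B a^B$ factorizes directly into the product (\ref{eq:Frost}). Recall $\DdRip{\bOmega_{P_\id}, \bOmega_{P_\sigma}} = \sum_{B \in \B(P_\id, P_\sigma)} a^B$ by definition, so the task reduces to enumerating those bases $B$ whose bounded simplex contains both $P_\id$ and $P_\sigma$, and matching the resulting monomial sum with the right-hand side.

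The crux of the argument is a combinatorial description of the bounded topes of a basis: \emph{for any spanning tree $B$ of $K_n$ and any permutation $\pi$ with $\pi(1) = 1$, we have $P_\pi \in \T^B$ if and only if $\pi$ is a linear extension of the rooted tree order $<_B$ on $[n]$ obtained by rooting $B$ at the vertex $1$}. Equivalently, writing the parent function of the rooted tree as $f\colon \{2,\ldots,n\} \to [n]$, the condition is that $f(j)$ appears before $j$ in $\pi$ for every $j \neq 1$. To prove this I would compute the signed fundamental circuit $C_B$ on $B \cup \star$ in $\widetilde{\M}$ directly. The generic extension $\star = H_\star$ corresponds to the vector $v_\star = \sum_{i=2}^{n-1} e_i$ (up to the translation lattice) lying in the negative half-space of $H_\star$. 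Expressing $v_\star$ in the tree basis $\{e_{f(j)} - e_j\}_{j \neq 1}$, the coefficient of the edge $(f(j), j)$ equals (up to an overall sign) the number of leaves in the subtree of $B$ rooted at $j$ that lie in $\{2,\ldots,n-1\}$, and in particular has a definite sign. A careful tracking of the chirotope convention for the oriented graphic matroid then shows that the constraint $P|_B = C_B|_B$ defining $\T^B$ is precisely that $\pi$ respect the tree order.

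Granted the claim, a spanning tree $B$ lies in $\B(P_\id, P_\sigma)$ if and only if both $\id$ and $\sigma$ are linear extensions of $<_B$. Rooting $B$ at $1$ with parent function $f$, this amounts to $f(j) < j$ for all $j$ (for $\id$) together with $\sigma^{-1}(f(j)) < \sigma^{-1}(j)$ for all $j$ (for $\sigma$). Since $\sigma(1) = 1$ forces $\sigma^{-1}(1) = 1$, the choice $f(j) = 1$ is always admissible, while $f(j) \in [2, j-1]$ requires the extra condition $\sigma^{-1}(f(j)) < \sigma^{-1}(j)$. The key point is that these constraints on $f$ are \emph{independent} across different values of $j$, so summing over all admissible parent functions factorizes:
\begin{align*}
\DdRip{\bOmega_{P_\id}, \bOmega_{P_\sigma}} \;=\; \sum_{B \in \B(P_\id, P_\sigma)} a^B \;=\; \sum_{f} \prod_{j=2}^n a_{(f(j), j)} \;=\; \prod_{j=2}^n \Bigl( a_{(1,j)} + \sum_{\substack{k \in [2, j-1]\\ \sigma^{-1}(k) < \sigma^{-1}(j)}} a_{(k,j)} \Bigr),
\end{align*}
which is (\ref{eq:Frost}).

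The main obstacle is the combinatorial description of $\T^B$ above: unwinding $\T^B$ through the signed fundamental circuit requires careful tracking of the orientation conventions for the graphic matroid together with the explicit form of the generic extension $H_\star$, and it is there that the asymmetric role of the root $1$ (and hence the restriction to permutations $\pi$ with $\pi(1) = 1$, consistent with $\T^\star$ from Lemma \ref{lem:Knstar}) naturally appears. Once this description is established, the factorization step is essentially a bookkeeping exercise enabled by the parent-function bijection for increasing rooted trees.
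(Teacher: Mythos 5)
Your proof is correct and follows essentially the same route as the paper: both characterize $\T^B$ as the linear extensions of the rooted-tree order $<_B$ on $T(B)$ rooted at $1$ (your parent-function formulation is equivalent to the paper's \cref{lem:treeB}; note that the paper's statement restricts $i,j\in[2,n-1]$, but as your argument and the $j=n$ factor in \eqref{eq:Frost} make clear, the condition must hold for $j$ up to $n$ as well), and then factor the resulting sum over admissible parent functions $f$. The paper states the $\T^B$-characterization without proof, whereas you sketch a derivation via the signed fundamental circuit $C_B$; that sketch is plausible, though the coefficient of the edge $\{f(j),j\}$ is the number of vertices (not leaves) of the subtree at $j$ lying in $\{2,\ldots,n-1\}$, corrected by $\epsilon$ precisely along the $1$--$n$ path, and one must track the orientation convention $\ell_{ij}=z_j-z_i$ to see that every such coefficient is strictly positive.
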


Each basis $B \in \B(M)$ can be identified with a spanning tree $T(B)$ on $[n]$ whose edges are the elements of $B$.  The following observation is straightforward.
\begin{lemma}\label{lem:treeB}
With the choices in \cref{lem:Knstar}, we have $P_\sigma \in \T^B$ if and only if for each distinct $i, j \in [2,n-1]$ such that $i$ belongs to the path from $j$ to $1$ in $T(B)$ we have $\sigma^{-1}(i) < \sigma^{-1}(j)$.
\end{lemma}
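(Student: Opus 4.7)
The plan is to compute the signed fundamental circuit $C_B$ explicitly from the linear forms defining the arrangement, and then to translate the defining condition $P_\sigma|_B = C_B|_B$ of $\T^B$ into the stated combinatorial condition on the rooted tree $T = T(B)$. In projective coordinates with $z_1 = 0$ and $z_n = z_0$, the hyperplanes of $\bA$ are cut out by $F_{(i,j)} = z_j - z_i$, while the generic extension $H_\star$ is cut out by $F_\star = z_2 + z_3 + \cdots + z_{n-1} + \epsilon z_0$ for some small $\epsilon > 0$. Root $T$ at the vertex $1$; for each edge $e = (i_e, j_e) \in B$ with $i_e < j_e$, write $p(e), c(e)$ for its parent and child endpoints, set $\epsilon_e := +1$ if $p(e) = i_e$ (parent numerically smaller) and $\epsilon_e := -1$ otherwise, and let $V_e \subseteq [n]$ be the vertex set of the component of $T \setminus e$ not containing the root.

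The central step is a telescoping identity on $T$: for each $k \in [n]$,
$$
z_k = \sum_{e \in \mathrm{path}(1, k)} \epsilon_e F_e,
$$
obtained by summing the edgewise identity $z_{c(e)} - z_{p(e)} = \epsilon_e F_e$ along the unique path in $T$ from $1$ to $k$. Summing over $k \in [2, n-1]$ and substituting $z_0 = z_n$ in the remaining term of $F_\star$ produces
$$
F_\star = \sum_{e \in B} \gamma_e F_e, \qquad \gamma_e := \epsilon_e \bigl( \lvert V_e \cap [2, n-1] \rvert + \epsilon \cdot [n \in V_e] \bigr),
$$
where $[\,\cdot\,]$ denotes the Iverson bracket. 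Since the unique linear dependence among $\{F_e : e \in B\} \cup \{F_\star\}$ normalized by $c_\star = -1$ (so that $C_B(\star) = -$) thereby has coefficients $c_e = \gamma_e$, and since the parenthesized factor is a strictly positive sum (because $V_e \neq \varnothing$), one concludes $C_B(e) = \sign(\gamma_e) = \epsilon_e$ for every $e \in B$, independently of the auxiliary parameter $\epsilon$.

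From here the argument is purely combinatorial. With the chirotope convention $P_\sigma((i,j)) = +$ iff $\sigma^{-1}(i) < \sigma^{-1}(j)$, a direct case check shows $P_\sigma(e) = \epsilon_e$ is equivalent, for every $e \in B$, to $\sigma^{-1}(p(e)) < \sigma^{-1}(c(e))$ --- that is, the parent precedes the child in $\sigma$. By transitivity of the ancestor relation in $T$ and of $\sigma^{-1}$, this edgewise requirement is equivalent to demanding that every proper ancestor of $j$ in $T$ precedes $j$ in $\sigma$, which upon restriction of $i, j$ to $[2, n-1]$ is the lemma's condition. The restriction itself is justified via $\T^B \subseteq \T^b$: on $\T^b$ one has $\sigma(n) = n$, so conditions with $j = n$ become automatic, while conditions with $i = n$ would force $n = \sigma^{-1}(n) < \sigma^{-1}(j) \leq n-1$, which is impossible unless $n$ is a leaf of $T(B)$. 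The main obstacle is the explicit telescoping identity in the middle paragraph; once the circuit signs are identified with $\epsilon_e$ and seen to be independent of $\epsilon$, the rest is combinatorial bookkeeping.
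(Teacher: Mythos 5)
The paper offers no proof of this lemma, so there is nothing to compare your argument against directly; I will instead assess it on its own merits.

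Your central computation is correct and is the natural argument: realize the arrangement with the linear forms $F_{(i,j)} = z_j - z_i$ (with $z_1=0$, $z_n = z_0$), telescope along the rooted tree to produce $z_k = \sum_{e \in \mathrm{path}(1,k)} \epsilon_e F_e$, express $F_\star$ as $\sum_{e\in B}\gamma_e F_e$, check that the parenthesized factor in $\gamma_e$ is strictly positive, and conclude $C_B(e) = \epsilon_e$. Translating $P_\sigma|_B = C_B|_B$ into the statement that the parent of every non-root vertex precedes it in $\sigma$ is also right, and by transitivity this is exactly the ancestor condition. That part is a complete argument.

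However, your final paragraph, which tries to justify restricting the indices to $[2,n-1]$, is wrong. You invoke ``$\T^B \subseteq \T^b$, and on $\T^b$ one has $\sigma(n)=n$.'' But the containment in the paper's lemma is $\T^B \subset \T^\star$, i.e.\ boundedness with respect to the generic extension $\star$; this only forces $\sigma(1)=1$, not $\sigma(n)=n$. (The set $\T^b$ with $\sigma(1)=1$ and $\sigma(n)=n$ is the bounded complex for $(\M,(1,n))$, which is a different and smaller set.) Consequently, the conditions with $j=n$ --- that the parent of $n$ must precede $n$ in $\sigma$ --- are genuine, non-automatic constraints missed by the range $[2,n-1]$. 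As a concrete check: for $n=4$, $B = \{(1,2),(2,3),(3,4)\}$ (the path tree), your circuit computation gives $C_B = (+,+,+)$, so $\T^B = \{P_{1234}\}$; but for $\sigma = 1243$ the only $[2,n-1]$-condition is $\sigma^{-1}(2) < \sigma^{-1}(3)$, which holds, so the lemma as literally printed would (incorrectly) place $P_{1243}$ in $\T^B$.

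The resolution is that the printed range $[2,n-1]$ is a typo for $[2,n]$. This is corroborated by the paper's own use of the lemma in proving \cref{thm:Frost}: the product in \eqref{eq:Frost} runs over $j=2,\dots,n$, and the deduction ``every vertex $i\in[2,n]$ is connected to exactly one vertex $j<i$'' requires the condition at $j=n$ as well. With $[2,n]$ in place of $[2,n-1]$, your argument --- with the last paragraph simply deleted --- is a correct and complete proof: the case $i=1$ is automatic since $\sigma^{-1}(1)=1$, and the case $j=1$ is vacuous, so restricting to $i,j\in[2,n]$ loses nothing.
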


\begin{proof}[Proof of \cref{thm:Frost}]
Expanding the right hand side of \eqref{eq:Frost} each term is a monomial in $a_{(i,j)}$ of degree $n-1$.  Since each index $i \in [n]$ appears, this monomial corresponds to a base $B \in \B(M)$ and a tree $T(B)$.  By \cref{def:DdR}, we need to show that these are exactly the trees $T(B)$ such that $P_\id,P_{\sigma} \in \T^B$.  By \cref{lem:treeB} applied to $P_\id$, any such tree has the property that if the path from $j$ to $1$ contains $i$ then $i < j$.  Each such tree has the property that every vertex $i \in [2,n]$ is connected to exactly one vertex $j < i$, and therefore correspond exactly to the expansion of 
$$
 \prod_{j = 2}^{n} (a_{(1,j)} + \sum_{k \in [2,j-1] } a_{(k,j)}).
$$
Of these trees $T(B)$, the ones satisfying $P_{\sigma} \in \T^B$ are given by the right hand side of \eqref{eq:Frost}, again by \cref{lem:treeB}.
\end{proof}

Our \cref{thm:dRmain} recovers the well-known relation between partial scalar amplitudes and the KLT kernel; see for example \cite[Proposition 2.2]{Frost} or \cite{MizKLT}.  See \cref{ex:KLTexample}.

\subsection{String theory KLT matrix and $\alpha'$-corrected amplitudes} 
The Betti homology intersection pairings $\ip{P,Q}_B$ are called \emph{$\alpha'$-corrected amplitudes} in \cite{MizKLT}.  They were studied from a mathematical point of view in \cite{MHhom}.  
A \emph{temporal planar tree} is a pair $(T, \leq)$ where $T \in \T_{n+1}$ is a planar tree on $[n+1]$ and $\leq$ is a total ordering of the internal edges.  Let $\T_{n+1,\leq}$ denote the set of temporal planar trees on $[n+1]$.

Our formula \cref{def:Bettipair} for $\ip{P,P}_B$ is likely novel, and gives:
$$
\ip{P,P}_B = \sum_{(T,\leq) \in \T_{n+1,\leq}}\prod_{e \in I(T)} \frac{1}{b^2_{F_\leq(e)}-1}.
$$
The formula in \cite{MizKLT,MHhom} is obtained by taking the minimal building set for $L(M(K_n))$:
$$
\ip{P,P}_B =  \sum_{T \in \T_{n+1}}\prod_{e \in I(T)} \frac{1}{a_{F_\leq(e)}} = \sum_{T \in \T_{n+1}}\prod_{e \in I(T)} \frac{1}{b^2_{F(e)}-1}.
$$
Frost \cite{FroAss} interpreted this in terms of a sum over lattice points, analogous to our \cref{thm:Bettifan}.

The Betti cohomology intersection pairing $\bip{\cdot,\cdot}^B$ is known as the \emph{string theory KLT matrix}.  The intersection form matrix $\ip{\cdot,\cdot}^B_{\T^+}$ of \cref{thm:Bettiinverse} has dimensions $n!/2 \times n!/2$, and as such differs from the dimensions of the usual KLT matrix in the literature (which has dimension $(n-1)!$ or $(n-2)!$).  Our matrix $\ip{\cdot,\cdot}^B_{\T^+}$, for a one-dimensional arrangement, appeared in \cite[Section 6.4.3]{BMRS} under the name of ``overcomplete form of KLT relations".

\subsection{Determinants}
We list some of the determinantal formulae obtained for various amplitudes.

We have $\mu(M(K_n)) =(n-1)!$ and $\beta(M(K_n)) = (n-2)!$.  A set partition $\pi$ with multiple non-singleton blocks is a decomposable flat and $\beta(\pi) = 0$. For a set partition $\pi_S$ with a one non-singleton block $S$, the interval $[\hat 0, \pi_S]$ is isomorphic to $\Pi_{|S|}$, so $\beta(\pi) = (|S|-2)!$.  The interval $[\pi_S,\hat 1] \subset \Pi_n$ is isomorphic to $\Pi_{n-|S|+1}$, so $\mu(M_{\pi_S}) = (n-|S|)!$.  From \cref{thm:Aomotodet} and \cref{thm:SVdet} we obtain the following determinantal formulae.

\begin{corollary}\label{cor:det2}
The determinant of the $(n-2)! \times (n-2)!$ matrix $\A(P_\sigma,P_{\sigma'})$
where $\sigma,\sigma'$ satisfy $\sigma(1) =\sigma'(1) = 1$ and $\sigma(n) =\sigma'(n) = n$ is given by 
$$
\pm \frac{\prod_{\{1,n\} \subset S \subsetneq [n]}{a_{F(S)}^{(n-|S|-1)! (|S|-2)!}}}{\prod_{\substack{S \subseteq [n] \\ |S| \geq 2 \text{ and } |S \cap \{1,n\}|\leq 1}}{a_{F(S)}^{(n-|S|-1)!(|S|-2)!}}}.
$$
\end{corollary}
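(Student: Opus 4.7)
The plan is to derive the formula as an immediate specialization of \cref{thm:Aomotodet} to the affine matroid $(M,0) = (M(K_n), (1,n))$, where the distinguished element is the edge joining the fixed vertices $1$ and $n$ (corresponding to the hyperplane at infinity in the realization discussed in \cref{sec:M0n}). Since $\beta(M(K_n)) = (n-2)!$ and the bounded topes $\T^b$ are indexed by permutations with $\sigma(1) = 1$ and $\sigma(n) = n$, the matrix has the stated size, and it remains to evaluate
\[
R_M(\a) = \frac{\prod_{F \in L_0 \setminus \hat 1} a_F^{\beta(F)\beta(M_F)}}{\prod_{F \in L \setminus (L_0 \cup \hat 0)} a_F^{\beta(F)\beta(M_F)}}
\]
explicitly in this setting.

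First I would translate all the relevant matroid invariants into graph-theoretic data. A flat of $M(K_n)$ is a set partition $\pi$ of $[n]$, and such a flat is connected precisely when $\pi$ has exactly one non-singleton block $S$; write $\pi_S$ for this flat, so that all disconnected flats drop out of $R_M(\a)$ because $\beta(M^\pi)$ vanishes on them. For a connected $\pi_S$, the lower interval $[\hat 0, \pi_S]$ is isomorphic to the partition lattice $\Pi_{|S|}$, giving $M^{\pi_S} \cong M(K_{|S|})$ and hence $\beta(M^{\pi_S}) = (|S|-2)!$. Dually, the upper interval $[\pi_S, \hat 1]$ is isomorphic to $\Pi_{n-|S|+1}$, giving $M_{\pi_S} \cong M(K_{n-|S|+1})$ and $\beta(M_{\pi_S}) = (n-|S|-1)!$. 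The linear form attached to $\pi_S$ is $a_{\pi_S} = \sum_{i<j,\ i,j \in S} a_{(i,j)} = a_{F(S)}$.

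The final step is to sort connected flats by whether they lie in $L_0$. Since $L_0$ consists of partitions whose edge set contains $(1,n)$ --- equivalently, those in which $1$ and $n$ lie in the same block --- the connected flats in $L_0 \setminus \hat 1$ are the $\pi_S$ with $\{1,n\} \subset S \subsetneq [n]$, while the connected flats outside $L_0 \cup \hat 0$ are the $\pi_S$ with $|S| \geq 2$ and $|S \cap \{1,n\}| \leq 1$. Substituting into $R_M(\a)$ and combining exponents as $\beta(M^{\pi_S})\beta(M_{\pi_S}) = (|S|-2)!\,(n-|S|-1)!$ yields exactly the stated formula, with the sign ambiguity inherited from \cref{thm:Aomotodet}. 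No real obstacle arises beyond careful bookkeeping at the boundary cases $|S| \in \{2, n-1\}$, where both factorials collapse to $(n-3)!$, and at $S = [n]$, which is correctly excluded from the numerator as $\pi_{[n]} = \hat 1$ and automatically absent from the denominator (since $\{1,n\} \subset [n]$ violates the constraint there).
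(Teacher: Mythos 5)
Your proposal is correct and takes the same route as the paper: specialize \cref{thm:Aomotodet} to the affine matroid $(M(K_n),0)$ with $0$ the edge $(1,n)$, translate flats into set partitions, compute $\beta(M^{\pi_S}) = (|S|-2)!$ and $\beta(M_{\pi_S}) = (n-|S|-1)!$ for connected flats $\pi_S$ with single non-singleton block $S$, and sort those flats by whether $\{1,n\} \subseteq S$ to read off the numerator and denominator of $R_M(\a)$. The only cosmetic slip is the parenthetical claim that at $|S| \in \{2,n-1\}$ ``both factorials collapse to $(n-3)!$''; in each case one factorial is $0!=1$ and the other is $(n-3)!$, so it is the product (the exponent) that equals $(n-3)!$, not both factors — but this does not affect the argument.
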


\begin{corollary}\label{cor:det1}
The determinant of the $(n-1)! \times (n-1)!$ matrix $\dRip{\Omega_{P_\sigma},\Omega_{P_{\sigma'}}}$ where $\sigma,\sigma'$ satisfy $\sigma(1) =\sigma'(1) = 1$ is given by 
$$
 \frac{a^{(n-1)!-(n-2)!}_{F([n])} }{\prod_{S \subsetneq [n] \mid |S| \geq 2} a_{F(S)}^{(n-|S|)! (|S|-2)!}}.
$$
\end{corollary}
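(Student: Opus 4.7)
The strategy is to identify \cref{cor:det1} as a direct specialization of the Schechtman--Varchenko determinant formula \cref{thm:SVdet} to the matroid $M = M(K_n)$, once the indexing set of permutations with $\sigma(1)=1$ is matched with a basis $\{\Omega_P \mid P \in \T^\star\}$ of $\OS(M)$ coming from a generic extension $\star$.

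First I would recall (as in the discussion of \cref{fig:M05star}) that the choice of generic extension at infinity used here produces $\T^\star = \{P_\sigma \mid \sigma(1) = 1\}$, of cardinality $(n-1)! = \mu^+(M(K_n))$. By \cref{thm:ELintro}/\cref{thm:EL}(2), the corresponding canonical forms $\{\Omega_{P_\sigma} \mid \sigma(1) = 1\}$ form a basis of $\OS(M)$. Hence the $(n-1)! \times (n-1)!$ matrix in question is precisely the matrix of the bilinear form $\dRip{\cdot,\cdot}$ on $\OS(M)$ with respect to this basis, and its determinant is given by the formula $\Delta$ of \cref{thm:SVdet}.

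The second, and entirely combinatorial, step is to evaluate
$$
\Delta = \frac{a_E^{\mu^+(M)-\beta(M)}}{\prod_{F \in L(M)\setminus\{\hat0,\hat1\}} a_F^{\beta(M^F)\mu^+(M_F)}}
$$
for $M = M(K_n)$. I would use:
\begin{itemize}
\item $\mu^+(M(K_n)) = (n-1)!$ and $\beta(M(K_n)) = (n-2)!$, so the numerator exponent is $(n-1)! - (n-2)!$ and $a_E = a_{F([n])}$.
\item Flats of $M(K_n)$ are set partitions $\pi \in \Pi_n$. Since $\beta$ vanishes on decomposable matroids, only partitions with a unique non-singleton block $S$ contribute, and for those $M^{\pi_S} \cong M(K_{|S|})$, giving $\beta(M^{\pi_S}) = (|S|-2)!$.
\item The upper interval $[\pi_S, \hat 1]$ in $\Pi_n$ is isomorphic to $\Pi_{n-|S|+1}$, identifying $L(M_{\pi_S}) \cong L(M(K_{n-|S|+1}))$ and giving $\mu^+(M_{\pi_S}) = (n-|S|)!$.
\end{itemize}
Therefore the product over $F$ collapses to a product over subsets $S \subsetneq [n]$ with $|S| \geq 2$, with exponent $(n-|S|)!(|S|-2)!$, yielding exactly the stated formula.

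There is no real obstacle here beyond careful bookkeeping: the only content is the matroid-theoretic identification of which flats are connected (partitions with exactly one non-singleton block) and the evaluations of $\mu^+$ and $\beta$ on $M(K_n)$ and its minors. All of these are standard and, in fact, are precisely the calculations summarized in the excerpt just preceding \cref{cor:det2} and used there as well.
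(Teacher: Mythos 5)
Your proof is correct and takes exactly the route the paper sketches just before the corollary: identify the $(n-1)!$ permutations with $\sigma(1)=1$ with $\T^\star$ so that $\{\Omega_{P_\sigma}\}$ is a $\Z$-basis of $\OS(M(K_n))$ (by \cref{thm:EL}(2), this makes $\det\dRip{\cdot,\cdot}$ well-defined and equal to $\Delta$ from \cref{thm:SVdet}), then evaluate $\mu^+$, $\beta$, and the connected flats of the partition lattice as you do. The only thing to flag is a typo in the paper's \cref{lem:Knstar}, which states the cardinality of $\T^\star$ as $(n-3)!$; you correctly use $(n-1)! = \mu^+(M(K_n))$, which is consistent with \cref{prop:numbertopes} and with the size of the matrix in the corollary.
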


\begin{example}
Let $n = 4$, corresponding to $M_{0,5}$.  Then \cref{cor:det2} and \cref{cor:det1} give the determinants of a $2 \times 2$ and a $6 \times 6$ matrix.
$$
\begin{bmatrix}
 \frac{1}{s_{12} s_{123}}+\frac{1}{s_{12} s_{34}}+\frac{1}{s_{123} s_{23}}+\frac{1}{s_{23} s_{234}}+\frac{1}{s_{234} s_{34}} & -\frac{1}{s_{123} s_{23}}-\frac{1}{s_{23} s_{234}} \\
 -\frac{1}{s_{123} s_{23}}-\frac{1}{s_{23} s_{234}} & \frac{1}{s_{123} s_{13}}+\frac{1}{s_{123} s_{23}}+\frac{1}{s_{13} s_{24}}+\frac{1}{s_{23} s_{234}}+\frac{1}{s_{234} s_{24}} \\
\end{bmatrix}
$$
has determinant
$$
\frac{s_{134}s_{124}s_{14}}{s_{12}s_{13}s_{23}s_{24}s_{34}s_{123}s_{234}}.
 $$
 
 {\tiny \begin{multline*} \hspace{-25pt}
\left[
\begin{matrix}
 \frac{1}{s_{12} s_{123}}+\frac{1}{s_{12} s_{34}}+\frac{1}{s_{123} s_{23}}+\frac{1}{s_{23} s_{234}}+\frac{1}{s_{234} s_{34}} & -\frac{1}{s_{123} s_{23}}-\frac{1}{s_{23} s_{234}} & -\frac{1}{s_{234} s_{34}} \\
 -\frac{1}{s_{123} s_{23}}-\frac{1}{s_{23} s_{234}} & \frac{1}{s_{123} s_{13}}+\frac{1}{s_{123} s_{23}}+\frac{1}{s_{13} s_{24}}+\frac{1}{s_{23} s_{234}}+\frac{1}{s_{234} s_{24}} & -\frac{1}{s_{13} s_{24}}-\frac{1}{s_{234} s_{24}} \\
 -\frac{1}{s_{234} s_{34}} & -\frac{1}{s_{13} s_{24}}-\frac{1}{s_{234} s_{24}} & \frac{1}{s_{13} s_{134}}+\frac{1}{s_{13} s_{24}}+\frac{1}{s_{134} s_{34}}+\frac{1}{s_{234} s_{24}}+\frac{1}{s_{234} s_{34}} \\
 \frac{1}{s_{23} s_{234}}+\frac{1}{s_{234} s_{34}} & -\frac{1}{s_{23} s_{234}} & -\frac{1}{s_{134} s_{34}}-\frac{1}{s_{234} s_{34}} \\
 -\frac{1}{s_{23} s_{234}} & \frac{1}{s_{23} s_{234}}+\frac{1}{s_{234} s_{24}} & -\frac{1}{s_{234} s_{24}} \\ -\frac{1}{s_{12} s_{34}}-\frac{1}{s_{234} s_{34}} & -\frac{1}{s_{234} s_{24}} & \frac{1}{s_{234} s_{24}}+\frac{1}{s_{234} s_{34}}\\ 
\end{matrix}
\right.
\\
 \hspace{-25pt}
\left.
\begin{matrix}
 \frac{1}{s_{23} s_{234}}+\frac{1}{s_{234} s_{34}} & -\frac{1}{s_{23} s_{234}} & -\frac{1}{s_{12} s_{34}}-\frac{1}{s_{234} s_{34}} \\
  -\frac{1}{s_{23} s_{234}} & \frac{1}{s_{23} s_{234}}+\frac{1}{s_{234} s_{24}} & -\frac{1}{s_{234} s_{24}} \\
  -\frac{1}{s_{134} s_{34}}-\frac{1}{s_{234} s_{34}} & -\frac{1}{s_{234} s_{24}} & \frac{1}{s_{234} s_{24}}+\frac{1}{s_{234} s_{34}} \\
  \frac{1}{s_{134} s_{14}}+\frac{1}{s_{134} s_{34}}+\frac{1}{s_{14} s_{23}}+\frac{1}{s_{23} s_{234}}+\frac{1}{s_{234} s_{34}} & -\frac{1}{s_{14} s_{23}}-\frac{1}{s_{23} s_{234}} & -\frac{1}{s_{234} s_{34}} \\
  -\frac{1}{s_{14} s_{23}}-\frac{1}{s_{23} s_{234}} & \frac{1}{s_{124} s_{14}}+\frac{1}{s_{124} s_{24}}+\frac{1}{s_{14} s_{23}}+\frac{1}{s_{23} s_{234}}+\frac{1}{s_{234} s_{24}} & -\frac{1}{s_{124} s_{24}}-\frac{1}{s_{234} s_{24}} \\
 -\frac{1}{s_{234} s_{34}} & -\frac{1}{s_{124} s_{24}}-\frac{1}{s_{234} s_{24}} & \frac{1}{s_{12} s_{124}}+\frac{1}{s_{12} s_{34}}+\frac{1}{s_{124} s_{24}}+\frac{1}{s_{234} s_{24}}+\frac{1}{s_{234} s_{34}} 
\end{matrix}
\right]
\end{multline*}
}
has determinant
$$
\frac{s_{1234}^4}{s_{12}^2 s_{13}^2 s_{14}^2 s_{23}^2 s_{24}^2 s_{34}^2s_{123}s_{124}s_{134}s_{234}}.
$$
\end{example}

Other applications and directions to the physics of amplitudes and correlators will be explored in future work.

\bibliographystyle{alpha}
\bibliography{refs}

\end{document}